\def\BState{\State\hskip-\ALG@thistlm}
\newcommand{\specialcell}[1]{\ifmeasuring@#1\else\omit$\displaystyle#1$\ignorespaces\fi}
\newcommand\pFq[5]{{_{#1}F_{#2}}\left({#3 \atop #4};#5\right)}
\newcommand\pFqReg[5]{{_{#1}\mathbf F_{#2}}\left({#3 \atop #4};#5\right)}
\newcommand\pFqmod[5]{{_{#1}\mathcal F_{#2}}\left({#3 \atop #4};#5\right)}
\newcommand*\pve[0]{\mathsf{PVE}}
\newcommand*\pr[0]{\mathsf P}
\newcommand*\var[0]{\mathsf{Var}}
\newcommand*\skewness[0]{\mathsf{Skew}}
\newcommand*\ev[0]{\mathsf E}
\newcommand*\cv[0]{\mathsf{CV}}
\newcommand*\arb[0]{\mathsf{ARB}}
\newcommand*\sarb[0]{\mathsf{arb}}
\newcommand*\acv[0]{\mathsf{ACV}}
\newcommand*\sacv[0]{\mathsf{acv}}
\newcommand*\bias[0]{\mathsf{Bias}}
\newcommand*\mse[0]{\mathsf{MSE}}
\mathchardef\mhyphen="2D
\newcommand{\electron}{e\mhyphen}
\newcommand{\nudag}{{\nu^{\smash\dagger}}}
\newcommand{\nudaghat}{{{\hat\nu}^{\smash\dagger}}}
\DeclareMathOperator*{\arginf}{arg\,inf}
\newtheorem{lemma}{Lemma}
\newtheorem{theorem}{Theorem}
\newtheorem{definition}{Definition}
\newtheorem{corollary}{Corollary}
\newtheorem{remark}{Remark}
\newtheorem{relation}{Relation}
\newtheorem{proposition}{Proposition}
\newtheorem{conjecture}{Conjecture}
\newenvironment{subproof}[1][\proofname]{%
  \begin{proof}[#1]%
}{%
  \end{proof}%
}
\newcolumntype{L}{>{$}l<{$}}
\newcolumntype{C}{>{$}c<{$}}
\newcolumntype{R}{>{$}r<{$}}
\makeatletter\@addtoreset{chapter}{part}\makeatother%
\title{A novel approach to photon transfer conversion gain estimation}
\author{Aaron Hendrickson\\ ahendr16@jh.edu}
\begin{document}
\maketitle

\begin{abstract}
Nonuniformities in the imaging characteristics of modern image sensors are a primary factor in the push to develop a pixel-level generalization of the photon transfer characterization method. In this paper, we seek to develop a body of theoretical results leading toward a comprehensive approach for tackling the biggest obstacle in the way of this goal: a means of pixel-level conversion gain estimation. This is accomplished by developing an estimator for the reciprocal-difference of normal variances and then using this to construct a novel estimator of the conversion gain. The first two moments of this estimator are derived and used to construct exact and approximate confidence intervals for its absolute relative bias and absolute coefficient of variation, respectively. A means of approximating and computing optimal sample sizes are also discussed and used to demonstrate the process of pixel-level conversion gain estimation for a real image sensor.
\end{abstract}
\begin{table}[b]\footnotesize\hrule\vspace{1mm}
	Keywords: Photon transfer, conversion gain, confidence intervals, summability calculus, hypergeometric function.\\
	2010 Mathematics Subject Classification:
	Primary 62F10, 62P35 Secondary 40G99, 33C20.
\end{table}

\tableofcontents


\chapter{Introduction}
\label{chap:introduction}

Photon Transfer ({\sc pt}) is a methodology initially developed back in the 1970s for the design, characterization, and optimization of solid state image sensors. Since its inception, {\sc pt} has evolved to become the standard for image sensor characterization, culminating is its use as the basis for the European Machine Vision Association ({\sc emva}) 1288 standard in 2005 \cite{EMVA_1288}. To fully characterize the performance of an image sensor many types of performance metrics are measured including but not limited to conversion gain, read noise, and dynamic range.

Of all performance metrics prescribed by the {\sc pt} method, the so-called conversion gain, $g$, is fundamental for two reasons. First, $g$ is a conversion constant that facilitates unit conversion of sensor measurements from arbitrary units of digital numbers ({\sc dn}) into units of electrons $(\electron)$. For example, the read noise of a pixel is found according to the formula
\[
\sigma_{\mathrm{READ}}\,(\electron)=\sigma_{\mathrm d}\,(\mathrm{DN})\times g\,(\electron/\mathrm{DN}),
\]
where $\sigma_{\mathrm d}$ is the population standard deviation of the pixel's noise in the absence of illumination, i.e.~darkness\footnote{Throughout this work we will consider the characterization of individual pixels via the {\sc pt} method as not to impose the unnecessary assumption of \emph{sensor uniformity}. If however a sensor is \emph{uniform}, so that it is comprised of an array of identical pixels, we may implement {\sc pt} in such a manner as to characterize the entire array with global estimates of key population parameters (see Section \ref{sec:CG_estimation_KAI0407M}).}. Since units of $\mathrm{DN}$ are physically meaningless it is only after multiplying by $g$ that the measurement of $\sigma_{\mathrm d}$ represent a physical quantity. For this reason, nearly all imaging performance metrics measurable by the {\sc pt} method at some point require multiplying quantities in $\mathrm{DN}$ by $g$. Second, in actual implementation of {\sc pt}, $g$ is an estimated quantity; thus, the precision and accuracy of its measurement fundamentally limits the precision and accuracy of the entire {\sc pt} method.  This is easily seen when looking back at the equation for $\sigma_{\mathrm{READ}}$ above, for even if $\sigma_{\mathrm d}$ is measured with perfect certainty, that is, it's a known constant, we still have
\[
\bias\,\hat\sigma_{\mathrm{READ}}=\sigma_{\mathrm d}\times\bias\, G
\]
and
\[
\var\,\hat\sigma_{\mathrm{READ}}=\sigma^2_{\mathrm d}\times\var\, G,
\]
with $G$ being a random variable representing some estimator of $g$.

In some cases the pixels comprising the sensor will exhibit a linear \emph{transfer function} so that $g$ can be expressed by the formula \cite[Eqs.~$5.1$,$6.1$]{janesick_2007}
\begin{equation}
\label{eq:gain_formula}
g=\frac{\mu_{\mathrm p+\mathrm d}-\mu_{\mathrm d}}{\sigma_{\mathrm p+\mathrm d}^2-\sigma_{\mathrm d}^2}=\frac{\mu_{\mathrm p}}{\sigma_{\mathrm p}^2},
\end{equation}
where $\mu_{\mathrm p+\mathrm d}\,(\mathrm{DN})$ and $\sigma_{\mathrm p+\mathrm d}^2\, (\mathrm{DN}^2)$ are the population mean and variance of a pixel's output when exposed to some amount of incident illumination and $\mu_{\mathrm d}\, (\mathrm{DN})$ and $\sigma_{\mathrm d}^2\, (\mathrm{DN}^2)$ are the corresponding population parameters for the pixel's output under dark conditions.  In this way, $g$ is found from the ratio of the photon induced mean, $\mu_{\mathrm p}\,(\mathrm{DN})$, and photon induced variance $\sigma_{\mathrm p}^2\,(\mathrm{DN}^2)$. Linearity of the pixel's transfer function means that despite $\mu_{\mathrm p+\mathrm d}$ and $\sigma_{\mathrm p+\mathrm d}^2$ increasing with increasing illumination, $g$ remains constant and thus can be measured at any illumination level. While linearity makes $g$ easy to compute it tends to be an over idealized assumption for many modern pixel architectures such as {\sc cmos} active-pixel sensors.  Fortunately, the formula for $g$ in $(\ref{eq:gain_formula})$ can still be used so long as the illumination level is sufficiently low \cite{janesick_2007}.

Assuming all quantities in $(\ref{eq:gain_formula})$ are finite, a natural estimator for $g$ is \cite{Hendrickson:17,janesick_2007}
\begin{equation}
\label{eq:dark_corrected_g_est}
G=\frac{\bar X-\bar Y}{\hat X-\hat Y}=\frac{\bar P}{\hat P},
\end{equation}
where $\bar X=n_1^{-1}\sum_{k=1}^{n_1}X_i$ and $\hat X=(n_1-1)^{-1}\sum_{k=1}^{n_1}(X_i-\bar X)^2$ are the sample mean and variance computed from a sample of $n_1$ digital observations of a single pixel when exposed to illumination and $\bar Y$ and $\hat Y$ are the corresponding sample statistics taken from an independent sample of $n_2$ observations of the same pixel in the dark. It follows that $\bar P=\bar X-\bar Y$ and $\hat P=\hat X-\hat Y$ estimate the unknown photon induced mean and variance, respectively. Apart from its simplicity, what makes this estimator for $g$ attractive is that it is independent of distributional assumptions on the pixel's noise since $T=(\bar X,\bar Y,\hat X,\hat Y)$ is an unbiased estimator of $\theta=(\mu_{\mathrm p+\mathrm d},\mu_{\mathrm d},\sigma_{\mathrm p+\mathrm d}^2,\sigma_{\mathrm d}^2)$; regardless of the underlying distribution \cite[Thm.~$5.2.6$]{casella_2002}. This simplicity and lack of distributional assumptions does however come at the cost of some particularly undesirable statistical properties. 

In virtually any conceivable distributional model for the pixel noise, the density of $\hat P$ will be nonzero at the origin. As a result, $\hat P^{-1}$ and subsequently $G$ fall into the domain of attraction of the Cauchy law and thus have no finite moments \cite{piegorsch_1985}. To avoid the Cauchy-like behavior of the estimator $(\ref{eq:dark_corrected_g_est})$ that results from this lack of well-defined moments experimenters typically measure $g$ under high illumination conditions. By doing this, the probability of $\hat P$ being in the neighborhood of zero is negligible, which results in $G$ being quasi well-behaved. However, in the case of nonlinear pixels one is forced to measure $g$ under low illumination where the behavior of $G$ is the most ill-behaved.  In this scenario, one must capture increasingly large samples to force $\pr (|\hat P|<\epsilon)\approx 0$ and produce a well-behaved estimate of $\hat P^{-1}$. This tension between the need to measure $g$ at low-illumination and the large sample sizes it entails ultimately led to the \emph{low-illumination problem} of conversion gain measurement \cite{hendrickson_2019}.  

While convenient, the lack of distributional assumptions on the estimator $(\ref{eq:dark_corrected_g_est})$ is not all that important. Indeed, many authors have shown that most image sensors produce noise that is accurately modeled as normal \cite{janesick_2007,Beecken:96,Hendrickson:17}. Even in the case where the pixel noise exhibits departures from normality, {\sc pt} typically requires large samples sizes such that the distributions of the sample statistics $(\bar X, \bar Y,\hat X,\hat Y)$ show excellent agreement with what is predicted by a normal model. As such, this paper seeks an improved estimator for $g$ under the normal model of pixel noise which does not have the undesirable properties of the estimator $(\ref{eq:dark_corrected_g_est})$.  Since the normal model implies $(\hat X,\hat Y)$ is independent of $(\bar X, \bar Y)$, this task amounts to deriving an estimator for $(\sigma_{\mathrm p+\mathrm d}^2-\sigma_{\mathrm d}^2)^{-1}$ to replace $\hat P^{-1}$.

With the task at hand, this paper is organized as follows. We will begin by closing out this chapter with sections \ref{sec:mathematical_prelims}-\ref{sec:previous_work}, which introduce some of the most important functions in the subsequent analysis and provide a brief background on the progression of estimators for $g$. Chapter \ref{chap:estimation_of_recip_diff_norm_variances} then tackles the problem of finding an improved estimator, denoted $\mathscr T_\nu$, for the reciprocal difference of variance $\tau=(\sigma_{\mathrm p+\mathrm d}^2-\sigma_{\mathrm d}^2)^{-1}$ under the normal model. Several important results pertaining to $\mathscr T_\nu$ will be established including: (1) a proof of its first moment (Section \ref{sec:main_results}), an asymptotic expansion for large sample sizes (Section \ref{subsec:Tv_asym_expansion_derivation}), its second moment (Section \ref{subsec:variance}), and exact confidence intervals for its absolute relative bias and absolute coefficient of variation (Section \ref{sec:confidence_intervals}).

Chapter \ref{chap:new_g_estimator} will then apply the results of Chapter \ref{chap:estimation_of_recip_diff_norm_variances} to construct a novel estimator, $\mathscr G_\nu$, for the conversion gain based on $\mathscr T_\nu$. Section \ref{sec:g_estimation_demo} will utilize this new estimator as well as the theoretical results of the previous sections in a Monte Carlo experiment to demonstrate the process of estimating $g$ with $\mathscr G_\nu$. Since reducing sample sizes is an important consideration, Section \ref{sec:cv_analysis} investigates the behavior of optimal sample sizes for both $\mathscr T_\nu$ and $\mathscr G_\nu$. In particular, a close look at the behavior of the optimal sample sizes in low illumination will be conducted (Section \ref{subsec:characteristics_at_low_illumination}) followed by a detailed discussion on the approximation and computation of the optimal sample sizes (Section \ref{subsec:comp_of_opt_samp_sizes_for_Tv}). Finally, Section \ref{sec:CG_estimation_KAI0407M} will apply all of the preceding results to present the design and control of experiment for pixel-level conversion gain estimation using a real image sensor; opening the door to a comprehensive approach of pixel-level {\sc pt} characterization.


\section{Mathematical preliminaries}
\label{sec:mathematical_prelims}

This work will make extensive use of gamma, Pochhammer, hypergeometric, and related functions.  The purpose of this section is to introduce some notation and present key properties of these functions. It is recommended the reader also briefly familiarize themselves with the additional list of definitions and relations in Appendix \ref{sec:definitions}.

For $\Re s>0$ the gamma function can be expressed in the form of the integral
\[
\Gamma(s)=\int_0^\infty t^{s-1}e^{-t}\,\mathrm dt
\]
and is defined by analytic continuation of this integral to a meromorphic function in the complex plane via the reflection formula $\Gamma(s)\Gamma(1-s)=\pi\csc\pi s$. Directly tied to the gamma function are three additional functions that are great importance, namely, the beta function
\[
\operatorname B (s,z) \coloneqq \frac{\Gamma(s)\Gamma(z)}{\Gamma(s+z)},
\]
Pochhammer symbol (rising factorial)
\[
(s)_n \coloneqq \frac{\Gamma(s+n)}{\Gamma(s)}=%
\begin{cases}
1, &n=0\\
\prod_{k=0}^{n-1}(s+k), &n\in\Bbb N,
\end{cases}
\]
and factorial power (falling factorial)
\[
(s)^{(n)}\coloneqq \frac{\Gamma(s+1)}{\Gamma(s-n+1)}=%
\begin{cases}
1, &n=0\\
\prod_{k=0}^{n-1}(s-k), &n\in\Bbb N,
\end{cases}
\]
of which the latter two are related by $(s)^{(n)}=(-1)^n(-s)_n$ when $n\in\Bbb Z$. Through application of the gamma reflection formula we can also easily derive for $n\in\Bbb Z$ the following transformations
\[
(s)_n=(-1)^n(1-s-n)_n=\frac{(-1)^n}{(1-s)_{-n}}.
\]
Additionally, the Pochhammer symbol is related to the Stirling numbers by
\[
\begin{aligned}
s^n &=\sum_{k=0}^n (-1)^k{_2{\mathcal S}}_n^{(k)}(-s)_k,\\
(s)_n &=\sum_{k=0}^n (-1)^{n-k}\mathcal S_n^{(k)}s^k,
\end{aligned}
\]
where $\mathcal S_n^{(k)}$ and ${_2{\mathcal S}}_n^{(k)}$ represent Stirling numbers of the $1$st-kind and $2$nd-kinds as well as the binomial coefficient via
\[
\binom{n}{k}=\frac{(-1)^k(-n)_k}{k!}.
\]
Of perhaps greater significance is the use of the Pochhammer symbol in defining generalized hypergeometric functions which may be formally represented through the generalized hypergeometric series
\[
\pFq{p}{q}{a_1,\dots,a_p}{b_1,\dots,b_q}{z}=\sum_{k=0}^\infty\frac{(a_1)_k\cdots (a_p)_k}{(b_1)_k\cdots (b_q)_k}\frac{z^k}{k!}.
\]
If any of the top parameters $a_j$ is a nonpositive integer then this series reduces to a polynomial in $z$ since $n\in\Bbb N_0\implies (-n)_k=0$ for all $k=n+1,n+2,\dots$. For the specific case $p=q+1$ the generalized hypergeometric series has a radius of convergence of one and is defined by analytic continuation in $z$ for $|z|>1$. On the unit disk $|z|=1$ the series representation for ${_{q+1}F_q}(z)$ is absolutely convergent if $\Re\gamma_q>0$, convergent except at $z=1$ if $-1<\Re\gamma_q\leq 0$, and divergent if $\Re\gamma_q\leq -1$ where \cite[$\S 16.2(\mathrm{iii})$]{nist_2010}
\[
\gamma_q=b_1+\cdots+b_q-(a_1+\cdots+a_{q+1}).
\]
For convenience, we will use several differing notations for generalized hypergeometric functions when appropriate. These include ${_pF_q}(\mathbf a; \mathbf b; z)$  as well as the regularized version
\[
\pFqReg{p}{q}{a_1,\dots,a_p}{b_1,\dots,b_q}{z}\coloneqq \frac{1}{\prod_{k=1}^q\Gamma(b_k)}\pFq{p}{q}{a_1,\dots,a_p}{b_1,\dots,b_q}{z},
\]
which is an entire function of the parameters $a_1,\dots,a_p,b_1,\dots,b_q$. Of particular importance here is ``the'' hypergeometric function ${_2F_1}(a,b;c;z)$, which due to its prolific use in the literature is commonly denoted simply as $F(a,b;c;z)$. There are several properties of the hypergeometric function that will be heavily used here including the transformations \cite[Eq.~$15.8.1$]{nist_2010}
\begin{equation}
\label{eq:hyper_2F1_xForms}
\pFqReg{}{}{a,b}{c}{z}=%
\begin{cases}
(1-z)^{-a}\pFqReg{}{}{a,c-b}{c}{\frac{z}{z-1}} &(\mathrm i)\\[1.5ex]
(1-z)^{-b}\pFqReg{}{}{c-a,b}{c}{\frac{z}{z-1}} &(\mathrm{ii})\\[1.5ex]
(1-z)^{c-a-b}\pFqReg{}{}{c-a,c-b}{c}{z} &(\mathrm{iii}),
\end{cases}
\end{equation}
which hold for $|\operatorname{ph}(1-z)|<\pi$ as well as the integral form
\[
\mathbf F(a,b;c;z)=\int_0^1\frac{t^{b-1}(1-t)^{c-b-1}(1-zt)^{-a}}{\Gamma(b)\Gamma(c-b)}\,\mathrm dt,
\]
when $|\operatorname{ph}(1-z)|<\pi$ and $\Re c>\Re b>0$. Additionally, we note the special cases
\[
F(a,b;c;1)=\frac{\Gamma(c)\Gamma(c-a-b)}{\Gamma(c-a)\Gamma(c-b)},\quad\Re\{c-a-b\}>0
\]
and
\[
F(1,b;c;z)=(c-1)z^{1-c}(1-z)^{-(b-c+1)}\operatorname B_z(c-1,b-c+1),
\]
where $\operatorname B_z(\alpha,\beta)$ denotes the incomplete beta function of Definition \ref{def:betaInc_fun}. Lastly, for clarity we will use the following notation to denote various sets of numbers.
\[
\begin{array}{*3{>{\displaystyle}l}}
\Bbb N &=\{1,2,\dots\} &\text{natural numbers}\\
\Bbb N_0 &=\Bbb N\cup\{0\} &\text{nonnegative integers}\\
\Bbb Z &=-\Bbb N\cup\Bbb N_0 &\text{integers}\\
\Bbb R &=(-\infty,\infty) &\text{real numbers}\\
\Bbb R^+ &= (0,\infty) &\text{positive real numbers}\\
\Bbb R^+_0 &=\Bbb R^+\cup\{0\} &\text{nonnegative real numbers}
\end{array}
\]


\section{Previous work}
\label{sec:previous_work}

Statistical analysis of estimators for the conversion gain date back to the work of Beecken and Fossum \cite{Beecken:96}. For sensors that are able to achieve a \emph{shot noise limited response}, illuminating the sensor with a sufficiently high illumination implies $\mu_{\mathrm p+\mathrm d}\gg\mu_{\mathrm d}$ and $\sigma_{\mathrm p+\mathrm d}^2\gg\sigma_{\mathrm d}^2$ such that $g\approx\mu_{\mathrm p+\mathrm d}/\sigma_{\mathrm p+\mathrm d}^2$. Consequently, the gain can be estimated with \cite{Beecken:96}
\begin{equation}
\label{eq:basic_gain_estimator}
G=\frac{\bar X}{\hat X},
\end{equation}
where the normal model dictates $\bar X\sim\mathcal N(\mu_{\mathrm p+\mathrm d},\sigma_{\mathrm p+\mathrm d}^2/n_1)$ and $\hat X\sim\mathcal G(\alpha_1,\beta_1)$ with $\alpha_1=(n_1-1)/2$ and $\beta_1=\alpha_1/\sigma_{\mathrm p+\mathrm d}^2$ are independent normal and gamma random variables, respectively. One of the important conclusions in this work was that under sufficiently high illumination the variance of $G$ is dominated by the variance of $\hat X^{-1}$, that is,
\[
\var G\approx \bar X^2\,\var\hat X^{-1}.
\]
Hence, confidence intervals for $G$ can be approximated by scaling confidence intervals for $\hat X^{-1}$ by $\bar X^2$.

For sensors that cannot achieve a shot noise limited response one cannot ignore the noise produced by the pixel in the absence of illumination. As such, Hendrickson studied the estimator \cite{Hendrickson:17}
\begin{equation}
\label{eq:dark_corrected_G_estimator}
G=\frac{\bar X-\bar Y}{\hat X-\hat Y}=\frac{\bar P}{\hat P},
\end{equation}
where $\bar Y\sim\mathcal N(\mu_{\mathrm d},\sigma_{\mathrm d}^2/n_2)$ and $\hat Y\sim\mathcal G(\alpha_2,\beta_2)$ with $\alpha_2=(n_2-1)/2$ and $\beta_2=\alpha_2/\sigma_{\mathrm d}^2$.
The distribution of this estimator was derived in the form of the centralized inverse-Fano distribution and it was noted that this distribution had no finite moments due the density of $\hat P$ being nonzero at the origin.  In an effort to gain some insight about this estimator, further investigations were carried out in \cite{hendrickson_2019} to derive the first moment in the sense of the Cauchy principal value
\[
\pve G=\lim_{R\to\infty}\int_{-R}^R g\,f_G(g)\,\mathrm dg=(\ev\bar P)(\pve\hat P^{-1}).
\]
An expression for $\pve G$ opened the door to discussing the bias of $G$ in experimental settings and thus was useful from both theoretical and practical viewpoints. In particular, knowing $G$ is an estimator for $g$ one could define its principal-valued absolute relative bias as $\arb G=|\pve G/g-1|$ so that
\[
\arb G=\arb \hat P^{-1}=|\mathsf{RB}\hat P^{-1}-1|,
\]
where
\begin{multline*}
\mathsf{RB}\hat P^{-1}=%
\frac{\alpha_1\left(1-\zeta\right)\left(\frac{\alpha_2}{\alpha_1}\zeta^{-1}\right)^{\alpha_2}\left(1+\frac{\alpha_2}{\alpha_1}\zeta^{-1}\right)^{1-\alpha _1-\alpha _2}}{(\alpha_1+\alpha_2-1) \operatorname{B}(\alpha_1,\alpha_2)}%
\biggl(%
\psi(\alpha_1)-\psi(\alpha_2)\\
-\log\left(\frac{\alpha_1}{\alpha_2}\zeta\right)
-\frac{\alpha_2-1}{\alpha_2}\zeta\,\pFq{3}{2}{1,1,2-\alpha_2}{2,\alpha_1+1}{-\frac{\alpha_1}{\alpha_2}\zeta}\\
+\frac{\alpha_1-1}{\alpha_1} \zeta^{-1}\,\pFq{3}{2}{1,1,2-\alpha_1}{2,\alpha_2+1}{-\frac{\alpha_2}{\alpha_1}\zeta^{-1}}%
\biggr),
\end{multline*}
$\zeta=\sigma_{\mathrm d}^2/\sigma_{\mathrm p+\mathrm d}^2$, $\psi(z)\coloneqq\partial_z\log\Gamma(z)$ is the digamma function, and $\log z$ is the natural logarithm. This shows that the absolute relative bias of $G$ is eqaul to that of $\hat P^{-1}$, which is a function of only the sample sizes and variance ratio $\zeta$. To expound on the findings by Beecken and Fossum it was further demonstrated in a simulation that $\hat P^{-1}/\pve\hat P^{-1}$ converges in distribution to $G/\pve G$ as illumination increases. In other words, the dispersion of $G$ is dominated by that of $\hat P^{-1}$ at sufficiently high illumination. While these results led to increased theoretical understanding, the estimator $(\ref{eq:dark_corrected_G_estimator})$ still presented major challenges for the purpose of low illumination conversion gain measurement and thus motivated further work\footnote{Although it does not fit in with the natural progression of estimators discussed here, recent work into conversion gain estimation for more exotic technologies like deep sub-electron read noise image sensors have also been studied \cite{starkey_2016}.}.



\chapter{Estimation of the Reciprocal Difference of Normal Variances}
\label{chap:estimation_of_recip_diff_norm_variances}

We are now ready to turn to deriving an estimator for $(\sigma_{\mathrm p+\mathrm d}^2-\sigma_{\mathrm d}^2)^{-1}$. Under the normal model of pixel noise recall that $\hat X\sim\mathcal G(\alpha_1,\beta_1)$ and $\hat Y\sim\mathcal G(\alpha_2,\beta_2)$ are independent gamma random variables with shape $\alpha_i=(n_i-1)/2$ and $\beta_1=\alpha_1/\sigma_{\mathrm p+\mathrm d}^2$, $\beta_2=\alpha_2/\sigma_{\mathrm d}^2$. For the sake of brevity, we will modify our notation by considering the independent gamma random variables $Y_1$ and $Y_2$ where $Y_i\sim\mathcal G(\alpha_i,\beta_i)$, $\alpha_i$ is known, and $\beta_i=\alpha_i/\kappa_i$ with the understanding that these random variables have the same distributional form as that of $\hat X$ and $\hat Y$. Our first result establishes a key statistical property of these random variables.

\begin{lemma}
\label{lem:Y1_Y2_complete_statistic}
Let $Y_1\sim\mathcal G(\alpha_1,\beta_1)$ and $Y_2\sim\mathcal G(\alpha_2,\beta_2)$ be independent gamma random variables parameterized in terms of known shape $\alpha_i$ and unknown rate $\beta_i=\alpha_i/\kappa_i$, $i=1,2$. Then, $T(Y_1,Y_2)=(Y_1,Y_2)$ is a complete-sufficient statistic for $\theta=(\kappa_1,\kappa_2)$.
\end{lemma}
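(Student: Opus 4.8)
The plan is to exhibit the joint law of $(Y_1,Y_2)$ as a full-rank two-parameter exponential family and then invoke the standard completeness theorem for such families. By independence and the substitution $\beta_i=\alpha_i/\kappa_i$, the joint density is
\[
f(y_1,y_2;\theta)=\frac{\beta_1^{\alpha_1}\beta_2^{\alpha_2}}{\Gamma(\alpha_1)\Gamma(\alpha_2)}\,y_1^{\alpha_1-1}y_2^{\alpha_2-1}\,e^{-\beta_1 y_1-\beta_2 y_2},\qquad y_1,y_2>0.
\]
Because $T(Y_1,Y_2)=(Y_1,Y_2)$ is the identity map on the data, it retains the entire sample and sufficiency is automatic; the substantive claim is completeness, so that is where I would direct all of the effort.

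To establish completeness I would first rewrite the density in canonical form, reading off the natural parameters $\eta_i=-\beta_i$ against the natural statistics $y_i$. Since each $\kappa_i\in\Bbb R^+$ (both $\kappa_1=\sigma_{\mathrm p+\mathrm d}^2$ and $\kappa_2=\sigma_{\mathrm d}^2$ are positive variances), the map $\kappa_i\mapsto-\alpha_i/\kappa_i$ carries the parameter set onto the open quadrant $\{(\eta_1,\eta_2):\eta_i<0\}$, which contains a nonempty open rectangle. This is precisely the full-rank hypothesis required by the exponential-family completeness theorem \cite{casella_2002}, which then delivers completeness of $(Y_1,Y_2)$ at once.

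For a self-contained alternative I would argue through uniqueness of the Laplace transform. Assuming $\ev_\theta\,g(Y_1,Y_2)=0$ for every admissible $\theta$ and discarding the strictly positive normalizing constant yields
\[
\int_0^\infty\!\!\int_0^\infty g(y_1,y_2)\,y_1^{\alpha_1-1}y_2^{\alpha_2-1}\,e^{-\beta_1 y_1-\beta_2 y_2}\,\mathrm dy_1\,\mathrm dy_2=0
\]
for all $\beta_1,\beta_2>0$. The left-hand side is the bivariate Laplace transform of $h(y_1,y_2)\coloneqq g(y_1,y_2)\,y_1^{\alpha_1-1}y_2^{\alpha_2-1}$ evaluated on an open set of transform variables; uniqueness of the transform forces $h\equiv 0$ almost everywhere, and since $y_1^{\alpha_1-1}y_2^{\alpha_2-1}>0$ on $(0,\infty)^2$ this gives $g\equiv 0$ almost everywhere, i.e.\ completeness.

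The step demanding the most care is completeness itself. Along the theorem-citing route the crux is verifying the open-set (full-rank) condition on the natural parameter space, after which nothing remains to check. Along the direct route the delicate point is that $g$ need not be sign-definite, so invoking Laplace-transform uniqueness cleanly requires passing to the positive and negative parts of $h$ (or phrasing the vanishing transform in terms of the associated finite signed measure) before concluding that $h$ vanishes. Sufficiency, by contrast, costs essentially nothing here.
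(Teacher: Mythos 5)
Your proof is correct and takes essentially the same route as the paper's: both exhibit the gamma family in exponential-family form and invoke the standard Casella--Berger theorems (sufficiency for exponential families, completeness when the natural parameter space contains an open set), with the reparameterization $\beta_i=\alpha_i/\kappa_i$ handled by noting the map is a bijection onto an open set. The only differences are cosmetic and, if anything, in your favor: the paper reduces to a single gamma variable and declares the extension to the independent pair ``trivial,'' whereas you verify the full-rank condition for the joint two-parameter family directly, and your Laplace-transform alternative (with the correct caveat about splitting $h$ into positive and negative parts) is a sound self-contained backup that the paper does not supply.
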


\begin{proof}
We only need to establish the proof for a single gamma variable with the extension to two independent gamma variables being trivial. For known $\alpha$, the gamma density $f(y|\beta)=h(y)c(\beta)\exp(w(\beta)T(y))$ with $h(y)=\mathds 1_{(0,\infty)}(y)y^{\alpha-1}$, $c(\beta)=\beta^\alpha/\Gamma(\alpha)$, $w(\beta)=-\beta$, and $T(y)=y$ is a member of the exponential family. It follows that that $T(Y)=Y$ is a sufficient statistic for $\beta$ \cite[Thm.~$6.2.10$]{casella_2002}. Furthermore, the parameter space $\beta\in(0,\infty)$ is an open subset of $\Bbb R$ which implies that $T(Y)$ must also be complete \cite[Thm.~$6.2.25$]{casella_2002}. Since $\kappa=\alpha/\beta$ is a one-to-one function of $\beta$ any complete-sufficient statistic for $\beta$ must also be complete-sufficient for $\kappa$. The proof is now complete.
\end{proof}


\section{The estimator $\mathscr T_n$}
\label{sec:Tn_estimmator}

In light of Lemma \ref{lem:Y1_Y2_complete_statistic}, to find an unbiased estimator $\mathscr T(Y_1,Y_2)$ of the estimand $\tau=(\kappa_1-\kappa_2)^{-1}$ one would need to solve the integral equation
\begin{equation}
\label{eq:double_int_eq}
\int_{\Bbb R^+\times\Bbb R^+}\mathscr T(y_1,y_2)f_{Y_1}(y_1)f_{Y_2}(y_2)\,\mathrm d(y_1,y_2)=\frac{1}{\kappa_1-\kappa_2}.
\end{equation}
Taking into account the form of the gamma densities at hand, namely,
\[
f_{Y_i}(y_i)=\frac{\beta_i^{\alpha_i}}{\Gamma(\alpha_i)}y_i^{\alpha_i-1}e^{-\beta_iy_i},\quad i=1,2,
\]
and noting $\kappa_i=\alpha_i/\beta_i$, the l.h.s.~of $(\ref{eq:double_int_eq})$ can be interpreted as an iterated Laplace transform of two variables to produce the equivalent expression
\begin{equation}
\label{eq:double_Laplace_xform}
\mathcal L\{y_2^{\alpha_2-1}\mathcal L\{y_1^{\alpha_1-1} \mathscr T(y_1,y_2)\}(\beta_1)\}(\beta_2)=\Gamma(\alpha_1)\Gamma(\alpha_2)\frac{\beta_1^{-\alpha_1}\beta_2^{-\alpha_2}}{\alpha_1/\beta_1-\alpha_2/\beta_2}.
\end{equation}
Recovering $\mathscr T$ is subsequently achieved by successively inverting the r.h.s.~of $(\ref{eq:double_Laplace_xform})$. Given the rich theory of Laplace transforms one might hope that published tables of transform pairs will provide the necessary result to invert this equation. Indeed, inversion w.r.t.~$\beta_2$ is achieved via \cite[Eq.~$5.4.9$]{california1954tables} yielding an unbiased estimator for $\tau$ when $\kappa_1$ is known (see Appendix \ref{sec:limiting estimator_nu_to_inf}). However, the final inversion w.r.t.~$\beta_1$ leads to an intractable problem; suggesting the possibility that the estimator $\mathscr T$ does not exist.  

\begin{conjecture}
There is no estimator $\mathscr T$ satisfying $(\ref{eq:double_int_eq})$ for unknown $\kappa_1$ and $\kappa_2$.
\end{conjecture}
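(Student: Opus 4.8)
The plan is to argue by contradiction, exploiting the fact that the right-hand side of $(\ref{eq:double_Laplace_xform})$ carries a genuine pole inside the parameter region while any legitimately convergent Laplace transform must stay bounded there. Suppose such an estimator $\mathscr T$ existed and satisfied $(\ref{eq:double_int_eq})$ in the usual sense, i.e.\ with $\ev|\mathscr T|<\infty$ at every admissible $(\kappa_1,\kappa_2)$, so that the defining expectation is unambiguous. Stripping the gamma normalizations from $(\ref{eq:double_int_eq})$ recasts unbiasedness as the statement that the double Laplace transform
\[
\Phi(\beta_1,\beta_2)\coloneqq\int_{\Bbb R^+\times\Bbb R^+}\mathscr T(y_1,y_2)\,y_1^{\alpha_1-1}y_2^{\alpha_2-1}e^{-\beta_1 y_1-\beta_2 y_2}\,\mathrm d(y_1,y_2)
\]
coincides with the explicit right-hand side of $(\ref{eq:double_Laplace_xform})$, namely $\Gamma(\alpha_1)\Gamma(\alpha_2)\,\beta_1^{-\alpha_1}\beta_2^{-\alpha_2}/(\alpha_1/\beta_1-\alpha_2/\beta_2)$, for every $(\beta_1,\beta_2)$ with $\alpha_1/\beta_1\neq\alpha_2/\beta_2$. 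The essential feature is that this closed form carries a simple pole along the surface $\alpha_1\beta_2=\alpha_2\beta_1$, equivalently $\kappa_1=\kappa_2$, which cuts through the interior of the positive quadrant.

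The key step is then a domination estimate. Fix $\beta_2=b>0$ and set $\beta_1^\ast\coloneqq\alpha_1 b/\alpha_2$, the location of the pole in the $\beta_1$ variable. Choosing any admissible abscissa $\beta_1^0\in(0,\beta_1^\ast)$ (so that $\kappa_1>\kappa_2$ there), finiteness of $\ev|\mathscr T|$ at the parameter point $(\beta_1^0,b)$ is exactly the statement that $\int|\mathscr T|\,y_1^{\alpha_1-1}y_2^{\alpha_2-1}e^{-\beta_1^0 y_1-b y_2}\,\mathrm d(y_1,y_2)=M<\infty$. Since $e^{-\beta_1 y_1}\le e^{-\beta_1^0 y_1}$ for all $y_1>0$ and $\beta_1\ge\beta_1^0$, this single bound dominates the transform uniformly, giving $|\Phi(\beta_1,b)|\le M$ for every $\beta_1\ge\beta_1^0$. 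On the other hand, as $\beta_1\uparrow\beta_1^\ast$ from within the admissible region $\kappa_1>\kappa_2$, the closed form forces $\Phi(\beta_1,b)=\Gamma(\alpha_1)\Gamma(\alpha_2)\,\beta_1^{-\alpha_1}b^{-\alpha_2}/(\alpha_1/\beta_1-\alpha_2/b)\to+\infty$, since the denominator decreases to $0^+$ while the numerator stays finite and positive. This contradicts the uniform bound $|\Phi(\cdot,b)|\le M$ on $[\beta_1^0,\infty)$, and the contradiction shows that no such $\mathscr T$ can exist. An equivalent, slightly more robust route replaces domination by analytic continuation: absolute convergence at $\beta_1^0$ makes $\Phi(\cdot,b)$ holomorphic on $\Re\beta_1>\beta_1^0$, where it must agree with the closed form by the identity theorem, once more contradicting the pole at $\beta_1^\ast$.

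The main obstacle lies not in the algebra but in the convergence hypothesis underpinning the whole argument: the domination (and the holomorphy) step breaks down precisely when $\ev|\mathscr T|=\infty$ and $(\ref{eq:double_int_eq})$ is interpreted only in a conditional or principal-value sense --- exactly the regime in which the naive estimator $\hat P^{-1}$ was previously handled through its Cauchy principal value $\pve\hat P^{-1}$. Ruling out such generalized estimators, for which the transform near the pole $\beta_1^\ast$ is only conditionally convergent and so need not be bounded by a single dominating integral, is the delicate part and is what keeps the statement at the level of a conjecture; I would attack it by trying to show that any conditionally convergent evaluation of $\Phi$ must still inherit the pole of its regularized (Abel-summed) value, thereby forcing the same blow-up.
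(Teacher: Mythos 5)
The paper does not prove this statement at all --- it is left as a conjecture, supported only by the heuristic remark that the second inverse Laplace transform (w.r.t.~$\beta_1$) appears intractable --- so there is no paper proof to compare against and your proposal must be judged on its own terms. On those terms the core argument is sound, and in fact it settles the conjecture under the literal (Lebesgue) reading of $(\ref{eq:double_int_eq})$: for the displayed double integral to equal the finite right-hand side at a point $(\beta_1^0,b)$ with $\kappa_1>\kappa_2$, one needs $\ev|\mathscr T|<\infty$ there, and your pointwise bound $e^{-\beta_1 y_1}\le e^{-\beta_1^0 y_1}$ for $\beta_1\ge\beta_1^0$ then bounds $\ev_{(\kappa_1,\kappa_2)}|\mathscr T|$ uniformly along the segment $\beta_1\in[\beta_1^0,\beta_1^\ast)$, while unbiasedness on that segment forces $\ev\mathscr T=(\kappa_1-\kappa_2)^{-1}\to+\infty$ as $\beta_1\uparrow\beta_1^\ast$ (you have the signs right: approach is from the $\kappa_1>\kappa_2$ side, so the blow-up is to $+\infty$). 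Two points in your favor are worth making explicit. First, your hypothesis that $\ev|\mathscr T|<\infty$ at \emph{every} admissible point is stronger than needed --- absolute convergence at the single point $(\beta_1^0,b)$ propagates to all $\beta_1\ge\beta_1^0$ and already yields the contradiction. Second, the directionality of the Laplace domination is exactly what reconciles your nonexistence claim with the known one-parameter estimators $\mathscr U$ and $\mathscr V$ of Theorem \ref{thm:T_est_nlim}: when $\kappa_1$ is known, approach to the diagonal corresponds to $\beta_2\downarrow\beta_2^\ast$, so the pole sits at the \emph{left endpoint} of the half-line of absolute convergence rather than in its interior, and no contradiction arises --- your argument correctly distinguishes the two-unknown case from the one-unknown case, which is a genuine structural insight the paper's heuristic does not provide.

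The gap you flag at the end is, however, real, and it is the reason your proposal resolves only part of the conjecture. If $(\ref{eq:double_int_eq})$ is allowed to hold merely as an iterated, conditionally convergent, or principal-value integral --- the regime in which the paper's earlier treatment of $\hat P^{-1}$ via $\pve$ lives --- then $M$ may be infinite and both the domination bound and the holomorphy of $\Phi(\cdot,b)$ on $\Re\beta_1>\beta_1^0$ are lost. This relaxed regime is plausibly what the author has in mind: Theorem \ref{thm:no_finite_variance_estimator} and its appendix proof freely manipulate conditional expectations of a hypothetical $\mathscr T$, and under your strict-sense result that theorem's hypothesis would be vacuous, which suggests the conjecture is meant to encompass generalized estimators. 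So the hard content is precisely the step you defer --- showing that any conditionally convergent evaluation of $\Phi$ inherits the pole of its Abel-regularized value --- and your closing sentence is a reasonable plan of attack but not yet an argument; as it stands you have a correct proof of a natural, literal version of the conjecture together with an honestly identified open case, not a proof of the conjecture in full generality.
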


Even if $\mathscr T$ does exist there is no guarantee that it will have desirable properties outside of unbiasedness. As the following theorem shows, if $\mathscr T$ exists, it must have infinite variance for at least certain values of the parameters $\kappa_1$ and $\kappa_2$.  

\begin{theorem}
\label{thm:no_finite_variance_estimator}
Let $Y_1$ and $Y_2$ be as in Lemma \ref{lem:Y1_Y2_complete_statistic}. If an estimator $\mathscr T$ satisfying $(\ref{eq:double_int_eq})$ exists, then $\var\mathscr T=\infty$ for at least $1/2<\kappa_2/\kappa_1<2$.
\end{theorem}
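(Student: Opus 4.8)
The plan is to lower-bound the second moment $\ev_{\theta_0}[\mathscr T^2]$ using nothing more than the defining unbiasedness relation, so that the (merely conjectured) estimator $\mathscr T$ never has to be exhibited. Since $\tau=(\kappa_1-\kappa_2)^{-1}$ is finite whenever $\kappa_1\neq\kappa_2$, we have $\var\mathscr T=\ev_{\theta_0}[\mathscr T^2]-\tau^2$, so it suffices to prove $\ev_{\theta_0}[\mathscr T^2]=\infty$. Fixing the evaluation point $\theta_0=(\kappa_1,\kappa_2)$ and letting $\theta_1=(\kappa_1',\kappa_2')$ be any other admissible parameter, write $L=f_{\theta_1}/f_{\theta_0}$ for the likelihood ratio of the joint gamma densities. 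Unbiasedness at $\theta_1$ reads $(\kappa_1'-\kappa_2')^{-1}=\ev_{\theta_1}\mathscr T=\ev_{\theta_0}[\mathscr T L]$, and Cauchy--Schwarz (equivalently, the Hammersley--Chapman--Robbins bound) then gives $\ev_{\theta_0}[\mathscr T^2]\ge (\kappa_1'-\kappa_2')^{-2}\big/\ev_{\theta_0}[L^2]$.

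First I would evaluate $\ev_{\theta_0}[L^2]$ in closed form. Because $Y_1,Y_2$ are independent with known shapes, $L^2$ factors over the two coordinates and each factor is an elementary gamma integral, $\ev_{\theta_0}[L^2]=\prod_{i=1}^2\frac{(\beta_i')^{2\alpha_i}}{\beta_i^{\alpha_i}\Gamma(\alpha_i)}\int_0^\infty y^{\alpha_i-1}e^{-(2\beta_i'-\beta_i)y}\,\mathrm dy$, which converges precisely when $2\beta_i'>\beta_i$ and then collapses to $\prod_{i=1}^2\big(\kappa_i^2/[\kappa_i'(2\kappa_i-\kappa_i')]\big)^{\alpha_i}$ after substituting $\beta_i=\alpha_i/\kappa_i$. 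The convergence condition becomes $\kappa_i'<2\kappa_i$ for $i=1,2$, and each factor is minimized (equal to one) at $\kappa_i'=\kappa_i$, blowing up as $\kappa_i'$ tends to either $0$ or $2\kappa_i$.

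With these two ingredients I would select a sequence $\theta_1^{(m)}=(\kappa_1^{(m)},\kappa_2^{(m)})$ with $\kappa_1^{(m)}\neq\kappa_2^{(m)}$ converging to a diagonal point $(c,c)$: then the numerator $(\kappa_1^{(m)}-\kappa_2^{(m)})^{-2}\to\infty$ while, provided $0<c<2\kappa_1$ and $0<c<2\kappa_2$, the denominator $\ev_{\theta_0}[L^2]$ stays finite, so the lower bound diverges and $\ev_{\theta_0}[\mathscr T^2]=\infty$. The parameter range is precisely what governs the existence of an admissible diagonal target $c$: the symmetric choice $c=\max(\kappa_1,\kappa_2)$ requires $\max(\kappa_1,\kappa_2)<2\min(\kappa_1,\kappa_2)$, i.e.\ $1/2<\kappa_2/\kappa_1<2$, which is exactly the stated (and, since $\kappa_2/\kappa_1=\zeta$, the practically relevant low-illumination) regime. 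I expect the main obstacle to be the bookkeeping of the likelihood-ratio second moment together with verifying that the approaching sequence keeps it bounded as $(\kappa_1^{(m)}-\kappa_2^{(m)})^{-1}\to\infty$, rather than the inequality itself, which is automatic; the ``at least'' in the statement merely reflects that a less symmetric target such as $c=\min(\kappa_1,\kappa_2)$ would push the conclusion to every $\kappa_1\neq\kappa_2$.
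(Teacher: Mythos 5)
Your proposal is correct, and it takes a genuinely different route from the paper. The paper's own proof is a Rao--Blackwell argument: it conditions on the complete-sufficient statistic $Y_2$, uses the law of total variance to get $\var g(\kappa_1,Y_2)\leq\var\mathscr T$ with $g(\kappa_1,Y_2)=\ev(\mathscr T\mid Y_2)$, invokes Lehmann--Scheff\'e uniqueness to identify $g$ with the explicit known-$\kappa_1$ estimator $\mathscr U=\kappa_1^{-1}{_1F_1}(1;\alpha_2;\alpha_2Y_2/\kappa_1)$ of Theorem \ref{thm:T_est_nlim}, and then computes $\var\mathscr U$ in closed form via an Appell $F_2$ reduction, finding divergence exactly when $\kappa_2/\kappa_1>1/2$; the mirrored estimator $\mathscr V$ covers $\kappa_2/\kappa_1<2$, and intersecting gives $1/2<\zeta<2$. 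You instead run a Hammersley--Chapman--Robbins/Cauchy--Schwarz argument: your chi-square computation $\ev_{\theta_0}L^2=\prod_{i=1}^2\bigl(\kappa_i^2/[\kappa_i'(2\kappa_i-\kappa_i')]\bigr)^{\alpha_i}$, with convergence precisely when $\kappa_i'<2\kappa_i$, is right (it is just the gamma moment generating function), and letting $\theta_1^{(m)}\to(c,c)$ with $0<c<2\min(\kappa_1,\kappa_2)$ makes the numerator $(\kappa_1^{(m)}-\kappa_2^{(m)})^{-2}$ blow up while the denominator stays bounded, forcing $\ev_{\theta_0}\mathscr T^2=\infty$. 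As you observe, such a $c$ exists for \emph{every} $\theta_0$ with $\kappa_1\neq\kappa_2$, so your argument actually proves the strictly stronger statement that $\var\mathscr T=\infty$ on the whole parameter space; this does not contradict the paper, because the finite-variance region $\zeta<1/2$ of $\mathscr U$ pertains only to the known-$\kappa_1$ sub-model, where $\mathscr U$ (which depends on $\kappa_1$) is not a statistic of the two-parameter problem, so HCR over the full family does not apply to it. What each approach buys: yours is far more elementary (no completeness, no hypergeometric or Appell machinery) and yields a stronger range, whereas the paper's conditioning argument produces the explicit limiting estimators $\mathscr U$ and $\mathscr V$ with their exact variances, which it reuses elsewhere (the Laplace-inversion remark and the $|\nu|\to\infty$ limits of $\mathscr T_\nu$). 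Two small points of hygiene for a final write-up: phrase the Cauchy--Schwarz step contrapositively (assume $\ev_{\theta_0}\mathscr T^2<\infty$ and derive a contradiction, since the inequality presupposes square-integrability), and note that Lebesgue unbiasedness at each $\theta_1^{(m)}$ gives $\ev_{\theta_1^{(m)}}|\mathscr T|<\infty$, which justifies the change of measure $\ev_{\theta_1^{(m)}}\mathscr T=\ev_{\theta_0}[\mathscr T L_m]$.
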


\begin{proof}
Read Lemma \ref{lem:T_estimator_discrete} and Lemma \ref{lem:T_est_variance_discrete}. Then see Appendix \ref{sec:limiting estimator_nu_to_inf}.
\end{proof}

Theorem \ref{thm:no_finite_variance_estimator} is significant because even if $\mathscr T$ exists and has finite variance outside $1/2<\kappa_2/\kappa_1<2$ there would be no way to know if any given estimate $\hat\tau=\mathscr T(y_1,y_2)$ has finite variance due to the unknown nature of $\kappa_1$ and $\kappa_2$. To overcome these challenges we appeal to the statistical principle of bias-variance tradeoff and expand our search to include biased estimators of $\tau$. Upon inspection, note that $\tau=\kappa_1^{-1}(1-\kappa_2/\kappa_1)^{-1}$. Letting $\zeta=\kappa_2/\kappa_1$, if we assume $\zeta<1$ it follows that the r.h.s.~of $(\ref{eq:double_int_eq})$ can be approximated by the incomplete geometric series
\[
\tau_n=\kappa_1^{-1}(1+\zeta+\cdots+\zeta^{n-1})=\frac{1-\zeta^n}{\kappa_1-\kappa_2},
\]
where the magnitude of the approximation error $E_n=-\tau\times \zeta^n$ can be made arbitrarily small with increasing $n$. In addition to the ability to achieve arbitrarily small error, this approximation is attractive since it is a finite sum of simple terms; thus, rendering it compatible with term-wise inversion to find a biased estimator $\mathscr T_n$. Here we state a simple but useful result and then proceed with the deriving the estimator $\mathscr T_n$.

\begin{lemma}
\label{lem:gamma_moments}
For $Y\sim\mathcal G(\alpha,\beta)$ and $s\in\Bbb C$ with $\alpha+\Re s>0$, $\ev Y^s=\beta^{-s}(\alpha)_s$.
\end{lemma}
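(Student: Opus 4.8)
The plan is to evaluate $\ev Y^s$ directly as an integral against the gamma density and to recognize the result as a ratio of gamma functions. First I would write out the expectation, merging the factor $y^s$ into the density so that a single power of $y$ remains:
\[
\ev Y^s=\int_0^\infty y^s\,\frac{\beta^\alpha}{\Gamma(\alpha)}\,y^{\alpha-1}e^{-\beta y}\,\mathrm dy=\frac{\beta^\alpha}{\Gamma(\alpha)}\int_0^\infty y^{\alpha+s-1}e^{-\beta y}\,\mathrm dy.
\]

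Next I would perform the substitution $t=\beta y$ to reduce the remaining integral to the standard Euler integral for the gamma function:
\[
\int_0^\infty y^{\alpha+s-1}e^{-\beta y}\,\mathrm dy=\beta^{-(\alpha+s)}\int_0^\infty t^{\alpha+s-1}e^{-t}\,\mathrm dt=\beta^{-(\alpha+s)}\Gamma(\alpha+s).
\]
Substituting this back and cancelling the prefactor $\beta^\alpha/\Gamma(\alpha)$ then yields
\[
\ev Y^s=\beta^{-s}\,\frac{\Gamma(\alpha+s)}{\Gamma(\alpha)}=\beta^{-s}(\alpha)_s,
\]
where the last equality is just the definition of the Pochhammer symbol $(\alpha)_s=\Gamma(\alpha+s)/\Gamma(\alpha)$ extended to complex argument through the gamma function.

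The only point requiring care — and the closest thing to an obstacle in an otherwise routine computation — is justifying convergence of the integral for complex $s$. Since $|y^{\alpha+s-1}e^{-\beta y}|=y^{\alpha+\Re s-1}e^{-\beta y}$, the integrand behaves like $y^{\alpha+\Re s-1}$ near the origin, so absolute integrability there holds precisely when $\alpha+\Re s>0$, while the exponential factor secures integrability at infinity for every $s$. This is exactly the stated hypothesis, and it is likewise the condition under which the Euler integral represents $\Gamma(\alpha+s)$; hence the manipulations above are valid throughout the claimed domain and the proof is complete.
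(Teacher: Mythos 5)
Your proof is correct; the paper states this lemma without proof, treating it as standard, and your direct computation via the Euler integral with the substitution $t=\beta y$ is exactly the argument it implicitly relies on. Your convergence discussion correctly identifies $\alpha+\Re s>0$ as the condition for integrability at the origin (the case $\Re s\geq 0$ being automatic), which is precisely why the hypothesis appears in the statement.
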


\begin{lemma}
\label{lem:T_estimator_discrete}
Let $Y_1$ and $Y_2$ be as in Lemma \ref{lem:Y1_Y2_complete_statistic}. If $n\in\Bbb N_0:0\leq n<\alpha_1$ then
\[
\mathscr T_n=\frac{1}{\alpha_1 Y_1}\sum_{k=0}^{n-1}\frac{1}{(\alpha_1)_{-k-1}(\alpha_2)_k}\left(\frac{\alpha_2Y_2}{\alpha_1Y_1}\right)^k,
\]
is an unbiased estimator of $\tau_n$ where $\mathscr T_0\coloneqq 0$ is the empty sum.
\end{lemma}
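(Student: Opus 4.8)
The plan is to verify unbiasedness directly by computing $\ev\mathscr T_n$ term by term. First I would expand the defining sum so that each summand is an explicit product of a power of $Y_1$ and a power of $Y_2$, writing
\[
\mathscr T_n=\sum_{k=0}^{n-1}\frac{1}{(\alpha_1)_{-k-1}(\alpha_2)_k}\,\frac{\alpha_2^k}{\alpha_1^{k+1}}\,Y_1^{-k-1}Y_2^{k}.
\]
Because $Y_1$ and $Y_2$ are independent, linearity of expectation factors each term into $\ev Y_1^{-k-1}\cdot\ev Y_2^{k}$, reducing the problem to evaluating negative and positive moments of independent gamma variables.

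Next I would apply Lemma \ref{lem:gamma_moments}, which gives $\ev Y_2^{k}=\beta_2^{-k}(\alpha_2)_k$ and $\ev Y_1^{-k-1}=\beta_1^{k+1}(\alpha_1)_{-k-1}$. The hypothesis of that lemma requires $\alpha_1+\Re(-k-1)>0$, i.e.~$k<\alpha_1-1$, for the negative moment to exist, and this is exactly where the standing assumption $0\le n<\alpha_1$ enters: since $k$ ranges only up to $n-1<\alpha_1-1$, every negative moment appearing in the sum is finite (and, relatedly, each factorial-power denominator $(\alpha_1)_{-k-1}=\Gamma(\alpha_1-k-1)/\Gamma(\alpha_1)$ stays finite and nonzero). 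Substituting these moments causes the Pochhammer prefactors $(\alpha_1)_{-k-1}$ and $(\alpha_2)_k$ to cancel cleanly, leaving
\[
\ev\mathscr T_n=\sum_{k=0}^{n-1}\frac{\alpha_2^k}{\alpha_1^{k+1}}\,\beta_1^{k+1}\beta_2^{-k}.
\]

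Finally I would substitute the parameterization $\beta_i=\alpha_i/\kappa_i$, whereupon all of the shape parameters $\alpha_1,\alpha_2$ cancel and each summand collapses to $\kappa_1^{-1}(\kappa_2/\kappa_1)^k=\kappa_1^{-1}\zeta^k$. The result is the incomplete geometric series $\ev\mathscr T_n=\kappa_1^{-1}\sum_{k=0}^{n-1}\zeta^k=\tau_n$, as claimed, with the degenerate case $n=0$ handled by the convention that the empty sum $\mathscr T_0=0=\tau_0$. I do not anticipate a genuine obstacle, since the computation is elementary once the moment formula is invoked; the only point requiring care is confirming that the moment-existence condition of Lemma \ref{lem:gamma_moments} holds for every term, which is precisely what the constraint $n<\alpha_1$ secures.
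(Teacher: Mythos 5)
Your proposal is correct and follows essentially the same route as the paper's proof: both reduce unbiasedness to the gamma moment formula $\ev Y^s=\beta^{-s}(\alpha)_s$ of Lemma \ref{lem:gamma_moments}, invoke independence of $Y_1$ and $Y_2$ to factor each term, and note that $n<\alpha_1$ is exactly what makes the negative moments $\ev Y_1^{-k-1}$ finite. The only cosmetic difference is direction—you verify the stated expression term by term while the paper assembles it from unbiased estimators of $\kappa_2^k$ and $\kappa_1^{-(k+1)}$ (and derives $\mathscr T_0=0$ from the recurrence rather than by convention)—which amounts to the same computation.
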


\begin{proof}
Assume $n\in\Bbb N$. We seek an estimator $\mathscr T_n$ with the property
\[
\ev\mathscr T_n=\tau_n=\sum_{k=0}^{n-1}\kappa_2^k\,\kappa_1^{-(k+1)}.
\]
From Lemma \ref{lem:gamma_moments} we know $\alpha_2^k/(\alpha_2)_kY_2^k$ and $\alpha_1^{-k-1}/(\alpha_1)_{-k-1}Y_1^{-k-1}$ are unbiased estimators of $\kappa_2^k$ and $\kappa_1^{-(k+1)}$ with the latter having finite expected value if $n<\alpha_1$. Since $Y_1$ and $Y_2$ are independent, the desired expression for $\mathscr T_n$ immediately follows. Now writing $\mathscr T_n=\sum_{k=0}^{n-1}g(k)$ we have $\mathscr T_n=\mathscr T_{n-1}+g(n-1)$. Substituting $n=1$ into this recurrence formula and noting $\mathscr T_1=g(0)$ implies $\mathscr T_0=0$. The proof is now complete.
\end{proof}

\begin{remark}
\label{rem:discrete_est_bias}
Lemma \ref{lem:T_estimator_discrete} makes no mention of the restriction $\kappa_1>\kappa_2$.  However, if one is to use $\mathscr T_n$ as an estimator for $\tau$ this is required for the absolute bias $|\mathsf{Bias}\mathscr T_n|=\zeta^n|\tau|$ to be less than $|\tau|$.
\end{remark}

The discrete nature of the parameter $n$ leads to a simple derivation of the estimator $\mathscr T_n$, however, this simplicity comes at the cost of flexibility. This is easily seen by noting that for fixed $\alpha_i$ and $\kappa_i$ the moments $\ev\mathscr T_n^m$ can only take on a countable set of values which prohibits arbitrary choices of key statistical properties such as bias. To illustrate why this is problematic, suppose there exists a generalized estimator $\mathscr T_\nu$, continuous in $\nu$, that interpolates $\mathscr T_n$ and its moments, that is, $\mathscr T_\nu=\mathscr T_n$ and $\ev\mathscr T_\nu^m=\ev\mathscr T_n^m$ when $\nu=n$. Then again fixing $\alpha_i$ and $\kappa_i$, if one wishes to minimize mean-squared-error we know it must be the case that $\min_{\nu\in\Bbb R_0^+}\mse\mathscr T_\nu\leq\min_{n\in\Bbb N_0}\mse\mathscr T_n$; rendering $\mathscr T_\nu$ the superior estimator in terms of mean-squared-error. Figure \ref{fig:MSE_plot} plots $\mse\mathscr T_n$ for some sample parameters along with one possible continuous generalization showing a discrepancy between their minimum values. Since the additional degree of freedom afforded by a continuous generalization of $\mathscr T_n$ can improve performance the question arises: How do we go about seeking such a generalization? In a rather remarkable fashion we will see how this can be achieved through the methods of \emph{summability calculus}.

\begin{figure}[htb]
\centering
\includegraphics[scale=1]{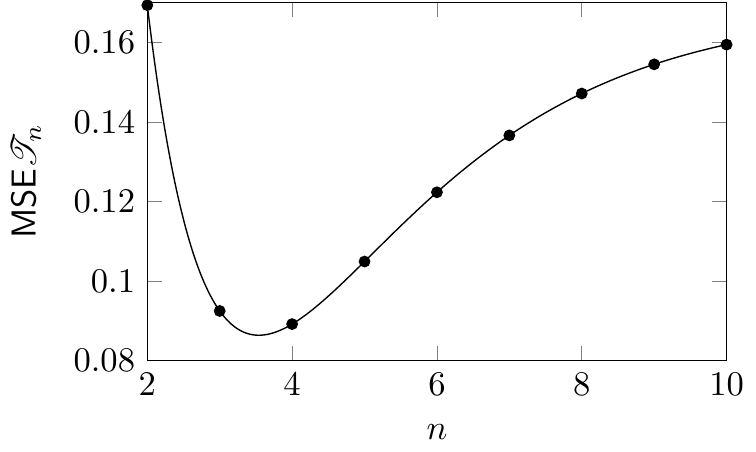}
\caption{Plot of $\mse\mathscr T_n$ versus $n$ (points) with one possible continuous interpolation (line).}
\label{fig:MSE_plot}
\end{figure}


\section{Summability calculus}
\label{sec:frac_sums}

Summability calculus is in essence a theoretical framework for generalizing finite sums $f(n)=\sum_{k=0}^{n-1}s_k g(k,n)$ for a periodic sequence $s_k$ and analytic function $g(k,n)$ to complex-valued $n$ and performing infinitesimal calculus on these generalized sums. A key point of the theory of summability calculus is that $f(n)$ is itself an analytic form leading to the ability to perform infinitesimal calculus on the generalized sum even if the explicit form of the generalized sum is not known. To see how such a generalization is obtained, we will limit the discussion to so-called \emph{simple finite sums} of the form $f(n)=\sum_{k=0}^{n-1}g(k)$. By definition, the simple finite sum $f(n)$ satisfies the recurrence relation $f(n)=f(n-1)+g(n-1)$ and upon substituting $n=1$ we find $f(1)=g(0)+f(0)\implies f(0)=0$. Consequently, one can fully characterize $f(n)$ by the recurrence relation and initial condition
\[
f(n)=g(n-1)+f(n-1),\quad f(0)=0.
\]
With this information we can then construct a generalized \emph{fractional finite sum} $f_G(\nu):\Bbb C\to\Bbb C$ via an iterative polynomial approximation scheme. At the $r$th iteration we approximate generalized sum with
\[
f_{G,r}(\nu)=%
\begin{cases}
p_r(\nu) &\nu\in[0,1]\\
g(\nu-1)+f_{G,r}(\nu-1) &\text{otherwise,}
\end{cases}
\]
where $p_r(\nu)=a_1\nu+\dots+a_r\nu^r$ is a polynomial of degree $r$. If for each iteration we require $f_{G,r}(\nu)$ to be $(r-1)$-times continuously differentiable on $\nu\in(0,2)$ then the coefficients of $p_r(\nu)$ are unique and so is the limiting function $f_G(\nu)=\lim_{r\to\infty}f_{G,r}(\nu)$.

\begin{theorem}[Statement of uniqueness: {\cite[Thm.~$2.1$]{alabdulmohsin_2018}}]
\label{thm:FracSum_uniqueness}
Given a simple finite sum $f(n)=\sum_{k=0}^{n-1}g(k)$ where $g:\Bbb C\to\Bbb C$ is analytic at the origin, let $p_r(\nu)$ be a polynomial in $\nu$ of degree $r$ and define
\[
f_{G,r}(\nu)=%
\begin{cases}
p_r(\nu) &\nu\in[0,1]\\
g(\nu-1)+f_{G,r}(\nu-1) &\text{otherwise.}
\end{cases}
\]
If we require $f_{G,r}(\nu)$ to be $(r-1)$-times differentiable on $\nu\in(0,2)$, then the limiting function $f_G(\nu)=\lim_{r\to\infty}f_{G,r}(\nu)$ is unique, satisfies the recurrence identity $f_G(\nu)=g(\nu-1)+f_G(\nu-1)$, and the initial condition $f_G(0)=0$.
\end{theorem}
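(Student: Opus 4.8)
The plan is to reduce the $(r-1)$-times differentiability requirement to a finite linear system imposed at the single breakpoint $\nu=1$, solve that system to obtain uniqueness of each iterate $p_r$ (hence of $f_{G,r}$), and then pass to the limit to recover the recurrence and the initial condition. The genuine difficulty is not the uniqueness but the \emph{existence} of the limiting function.

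First I would observe that the only point of $(0,2)$ at which $f_{G,r}$ can fail to be smooth is the breakpoint $\nu=1$: on $(0,1)$ it equals the polynomial $p_r$, and just to the right of $1$ it equals $g(\nu-1)+p_r(\nu-1)$, which is analytic near $\nu=1$ because $g$ is analytic at the origin. Hence requiring $f_{G,r}\in C^{r-1}$ near $\nu=1$ is equivalent to matching the one-sided derivatives of orders $0,1,\dots,r-1$ there, namely
\[
p_r^{(j)}(1)=g^{(j)}(0)+p_r^{(j)}(0),\qquad j=0,1,\dots,r-1.
\]
Writing $p_r(\nu)=\sum_{i=1}^r a_i\nu^i$ with no constant term (as in the construction), these are exactly $r$ linear equations in the $r$ unknowns $a_1,\dots,a_r$, and I note that $p_r(0)=0$ already forces $f_{G,r}(0)=0$ for every $r$.

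The algebraic core is to show this system is uniquely solvable. Using $p_r^{(j)}(0)=j!\,a_j$ and $p_r^{(j)}(1)=\sum_{i=j}^r a_i\,(i)^{(j)}$, the $i=j$ terms cancel and each condition collapses to $\sum_{i=j+1}^r a_i\,(i)^{(j)}=g^{(j)}(0)$ for $j\geq 1$, together with $\sum_{i=1}^r a_i=g(0)$ for $j=0$. Read from the top down, the equation $j=r-1$ fixes $a_r$, then $j=r-2$ fixes $a_{r-1}$, and so on until $j=0$ fixes $a_1$: the system is triangular with nonzero diagonal entries $(i)^{(i-1)}=i!$, hence nonsingular. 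This establishes that $p_r$, and therefore $f_{G,r}$, is unique for each $r$. The recurrence $f_{G,r}(\nu)=g(\nu-1)+f_{G,r}(\nu-1)$ holds by construction, and $f_{G,r}(0)=0$ for all $r$, so both the identity and the initial condition survive the pointwise limit $r\to\infty$, yielding $f_G(\nu)=g(\nu-1)+f_G(\nu-1)$ and $f_G(0)=0$; uniqueness of $f_G$ is then inherited from the uniqueness of the iterates.

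The main obstacle is the convergence of $f_{G,r}$ as $r\to\infty$. The coefficients do not stabilize — solving the degree-$(r+1)$ system perturbs every $a_i$ relative to the degree-$r$ system — so convergence cannot be read off termwise and must instead be extracted from the analyticity of $g$. I expect to control the derivative data $g^{(j)}(0)$ through Cauchy estimates on a disk about the origin, and to identify the limit with the Euler--Maclaurin expansion of the finite sum, whose convergence on a neighborhood of $[0,1]$ is the crux. This analytic convergence step, not the triangular uniqueness argument, is where the real work lies, and it is the part for which I would lean most heavily on the cited framework of Alabdulmohsin.
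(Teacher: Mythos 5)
Your proposal is correct, but note that there is nothing in the paper to compare it against: Theorem \ref{thm:FracSum_uniqueness} is quoted verbatim from Alabdulmohsin (Thm.~2.1 of the cited monograph) and the paper supplies no proof, only the informal sketch of the iterative polynomial scheme in the surrounding text. Your argument makes that sketch rigorous in exactly the intended way. The reduction of the $(r-1)$-times differentiability requirement on $(0,2)$ to derivative matching at the single breakpoint $\nu=1$ is valid, since on $(0,1)$ the iterate is the polynomial $p_r$ while on $(1,2)$ it equals $g(\nu-1)+p_r(\nu-1)$, which is analytic near $\nu=1$ by the hypothesis on $g$; and your linear algebra checks out: with $p_r^{(j)}(0)=j!\,a_j$ and $p_r^{(j)}(1)=\sum_{i=j}^r a_i\,(i)^{(j)}$ the $i=j$ terms cancel, so equation $j$ involves only $a_{j+1},\dots,a_r$ with leading coefficient $(j+1)^{(j)}=(j+1)!\neq 0$, and back-substitution from $j=r-1$ down to $j=0$ determines $p_r$, hence $f_{G,r}$, uniquely. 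The recurrence and $f_{G,r}(0)=0$ then pass to the pointwise limit trivially.

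Two remarks. First, your parenthetical ``with no constant term (as in the construction)'' is doing real work and should be promoted to an explicit hypothesis: the theorem statement alone says only ``a polynomial of degree $r$,'' and if a constant term $a_0$ were admitted you would face $r+1$ unknowns against $r$ matching conditions, so uniqueness and the conclusion $f_G(0)=0$ would both fail; the normalization $p_r(0)=0$ comes from the paper's preamble (and Alabdulmohsin's construction), not from the quoted statement. Second, you correctly identify existence of the limit $\lim_{r\to\infty}f_{G,r}$ as the genuinely analytic issue — the coefficients are re-solved at every $r$ and do not stabilize, and analyticity of $g$ at the origin alone does not guarantee convergence in general; this is visible in the paper itself, which presents the resulting expansion $(\ref{eq:f_Gv_Taylor_series})$ as a formal series that ``may or may not converge.'' Since the theorem as quoted presupposes the limit and asserts only uniqueness, the recurrence identity, and the initial condition, your proof covers everything actually claimed, and deferring the convergence theory to the cited framework is the appropriate resolution rather than a defect.
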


Using this iterative polynomial approximation procedure one subsequently obtains the generalized sum $f_G(\nu)$ formally given by the Taylor series
\begin{equation}
\label{eq:f_Gv_Taylor_series}
f_G(\nu)=\sum_{k=1}^\infty \frac{\nu^k}{k!}\sum_{\ell=0}^\infty \frac{B_\ell}{\ell!}g^{(k+\ell-1)}(0),
\end{equation}
where $B_n=\{1,-\tfrac 12,\tfrac 16,0,\dots\}$ are the Bernoulli numbers\footnote{It is interesting to note that if we interchange the order of summation in $(\ref{eq:f_Gv_Taylor_series})$ we find
\[
f_G(\nu)=\sum_{\ell=0}^\infty \frac{B_\ell}{\ell !}\left(g^{(\ell-1)}(\nu)-g^{(\ell-1)}(0)\right),
\]
which is precisey the Euler-Maclaurin formula for $\sum_{k=0}^{\nu-1}g(k)$.}. This formal series expansion for $f_G(\nu)$ may or may not converge and so explicit methods for evaluating fractional sums are desired. The following presents a summability method and theorem that will be used extensively for evaluating fractional finite sums in this work.

\begin{definition}[$\mathfrak T$-summation: {\cite[Def.~$4.1$]{alabdulmohsin_2018}}]
\label{def:T_summation}
Let $h(s)$ be the function whose Taylor series about the origin is given by
\[
h(s)=\sum_{k=0}^\infty g(k)s^k.
\]
If $h(s)$ is analytic on $s\in[0,1]$ then we define the $\mathfrak T$-sum of $\sum_{k=0}^\infty g(k)$ by
\[
\sum_{k=0}^\infty g(k)\overset{\mathfrak T}{=}h(1).
\]
\end{definition}

\begin{lemma}[Properties of $\mathfrak T$ {\cite[Prop.~$4.2$]{alabdulmohsin_2018}}]
\label{lem:Tsummability_properties}
The summability method $\mathfrak T$ is regular, linear, and stable.
\end{lemma}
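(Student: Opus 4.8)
The plan is to exploit the fact that $\mathfrak T$-summation is nothing more than evaluating the generating function $h(s)=\sum_{k=0}^\infty g(k)s^k$ at the endpoint $s=1$, subject to the standing hypothesis that $h$ extends analytically to all of $[0,1]$. Each of the three properties then reduces to an elementary statement about $h$, so the strategy is to dispatch them one at a time. I would begin by recording the unwinding identity $h(s)=g(0)+s\,\tilde h(s)$, where $\tilde h(s)=\sum_{k=0}^\infty g(k+1)s^k$, since this single observation drives the stability argument and is worth isolating up front.

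For regularity I would take a series $\sum_{k=0}^\infty g(k)$ that converges in the ordinary sense to some $S$ and whose $\mathfrak T$-sum is defined, so that $h$ is analytic---and in particular continuous---on $[0,1]$. Ordinary convergence forces the radius of convergence of $h$ to be at least $1$, and Abel's theorem then gives $\lim_{s\to 1^-}h(s)=S$; continuity of $h$ at the endpoint forces $h(1)=\lim_{s\to 1^-}h(s)=S$, so the $\mathfrak T$-sum reproduces the ordinary sum. For linearity I would take two sequences $g_1,g_2$ whose generating functions $h_1,h_2$ are analytic on $[0,1]$ and form $g=\alpha g_1+\beta g_2$; its generating function is $h=\alpha h_1+\beta h_2$, which is again analytic on $[0,1]$ as a linear combination of analytic functions, and evaluation at $s=1$ yields $h(1)=\alpha h_1(1)+\beta h_2(1)$. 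For stability I would apply the unwinding identity: if $h$ is analytic on $[0,1]$ then $\tilde h(s)=\bigl(h(s)-g(0)\bigr)/s$ is analytic on $[0,1]$ as well, the apparent singularity at the origin being removable since $h(s)-g(0)=O(s)$ as $s\to 0$; hence the shifted series is itself $\mathfrak T$-summable, and setting $s=1$ in the identity gives $h(1)=g(0)+\tilde h(1)$, which is exactly the stability relation.

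The only genuinely delicate point is the invocation of Abel's theorem in the regularity step, together with the need to verify that the analyticity hypothesis is preserved under the linear-combination and shift operations; once these are secured, the evaluations at $s=1$ are immediate. I expect the main obstacle to be purely bookkeeping---confirming in the stability case that the removable singularity at $s=0$ does not spoil analyticity on the \emph{closed} interval---rather than anything conceptually deep, since the method has been engineered precisely so that these three properties are essentially built into its definition.
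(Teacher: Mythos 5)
Your proof is correct, but there is nothing in the paper to compare it against: the lemma is quoted directly from \cite[Prop.~$4.2$]{alabdulmohsin_2018} and the paper supplies no proof of its own, deferring entirely to that reference. Your self-contained three-part argument is the standard one: regularity via Abel's theorem together with continuity of the analytic extension at $s=1$, linearity from the fact that analyticity on $[0,1]$ is preserved under linear combinations, and stability from the unwinding identity $h(s)=g(0)+s\,\tilde h(s)$ with the removable singularity at the origin handled exactly as you describe. Two small bookkeeping points are worth making explicit if you write this up. First, in the regularity step you should invoke the identity theorem to connect the function $h$ of Definition \ref{def:T_summation} to the power-series sum on $[0,1)$: ordinary convergence at $s=1$ forces $g(k)\to 0$, hence radius of convergence at least one, so the series sum is analytic on the open unit disc and agrees with $h$ there since both have the same Taylor coefficients at the origin; only then do Abel's theorem and endpoint continuity combine to give $h(1)=S$. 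Second, stability is usually stated as an equivalence, so you should also record the trivial converse direction: if $\tilde h$ is analytic on $[0,1]$, then $h(s)=g(0)+s\,\tilde h(s)$ is as well, so $\mathfrak T$-summability of the shifted series implies that of the original, with the values related consistently. Neither point is a substantive gap; your approach is sound and is, in essence, the proof the cited reference gives.
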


\begin{theorem}[Evaluating simple finite sums: {\cite[Thm.~$5.5$]{alabdulmohsin_2018}}]
\label{thm:FracSum_form1}
Given a simple finite sum $f(n)=\sum_{k=0}^{n-1}g(k)$, if $\sum_{k=0}^\infty g(k)$ is $\mathfrak T$-summable then the unique generalization of $f(n)$ consistent with Theorem \ref{thm:FracSum_uniqueness} is given by
\[
f_G(\nu)\overset{\mathfrak T}{=}\sum_{k=0}^\infty g(k)-\sum_{k=0}^\infty g(k+\nu).
\]
\end{theorem}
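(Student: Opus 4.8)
The plan is to show that the right-hand side, which I denote $F(\nu)=\sum_{k=0}^\infty g(k)-\sum_{k=0}^\infty g(k+\nu)$ with both series read as $\mathfrak T$-sums, is none other than the unique generalization $f_G(\nu)$ furnished by Theorem \ref{thm:FracSum_uniqueness}. That theorem identifies $f_G$ as the limit of the iterative polynomial scheme and pins it down through three features: it satisfies the recurrence $f_G(\nu)=g(\nu-1)+f_G(\nu-1)$, the initial condition $f_G(0)=0$, and—being the analytic form produced by the scheme—it carries the Taylor expansion $(\ref{eq:f_Gv_Taylor_series})$ at the origin. I would verify that $F$ possesses all three.

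The initial condition is immediate: at $\nu=0$ the two $\mathfrak T$-sums in $F$ coincide, so $F(0)=0$. For the recurrence, linearity of $\mathfrak T$ (Lemma \ref{lem:Tsummability_properties}) lets me cancel the common $\sum_{k=0}^\infty g(k)$ and write $F(\nu)-F(\nu-1)=\sum_{k=0}^\infty g(k+\nu-1)-\sum_{k=0}^\infty g(k+\nu)$. Applying stability of $\mathfrak T$ to the sequence $a_k=g(k+\nu-1)$, for which $a_0=g(\nu-1)$ and $a_{k+1}=g(k+\nu)$, gives $\sum_{k=0}^\infty g(k+\nu-1)=g(\nu-1)+\sum_{k=0}^\infty g(k+\nu)$; substituting collapses the difference to $g(\nu-1)$, exactly the required recurrence. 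Regularity of $\mathfrak T$ supplies a consistency check: whenever $\sum_{k=0}^\infty g(k)$ converges in the ordinary sense, $F$ reduces to the classical telescoped identity $\sum_{k=0}^{n-1}g(k)=\sum_{k=0}^\infty g(k)-\sum_{k=0}^\infty g(k+n)$ at integer arguments.

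The main obstacle is the third feature, since the recurrence together with $F(0)=0$ does not by itself single out $f_G$: any analytic $1$-periodic function may be added to a solution without disturbing either property. To remove this ambiguity I would match the Taylor coefficients of $F$ at the origin against those prescribed by $(\ref{eq:f_Gv_Taylor_series})$. Justifying term-by-term differentiation of the $\mathfrak T$-sum in $\nu$ yields $F^{(m)}(0)=-\sum_{k=0}^\infty g^{(m)}(k)$ (again a $\mathfrak T$-sum) for each $m\geq 1$, so the identification reduces to establishing the Euler--Maclaurin-type evaluation
\[
\sum_{k=0}^\infty g^{(m)}(k)\overset{\mathfrak T}{=}-\sum_{\ell=0}^\infty\frac{B_\ell}{\ell!}\,g^{(m+\ell-1)}(0),
\]
i.e.\ that $\mathfrak T$-summation reproduces the Bernoulli-number expansion underlying $(\ref{eq:f_Gv_Taylor_series})$. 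This is the delicate step: it requires unfolding Definition \ref{def:T_summation} (forming $\sum_k g^{(m)}(k)s^k$ and evaluating the analytic continuation at $s=1$) and reconciling it with the Bernoulli series, and it is precisely what forces $F$ to be the analytic generalization rather than a periodic perturbation of it.

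Once this coefficient match is in hand, $F$ and $f_G$ share a Taylor expansion at the origin, hence agree on a neighborhood of $0$; the common recurrence then propagates the agreement across $\Bbb C$, completing the identification $F=f_G$ and establishing the stated formula.
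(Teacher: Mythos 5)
A preliminary remark: the paper does not prove this statement at all---it is imported verbatim from \cite[Thm.~5.5]{alabdulmohsin_2018}---so your attempt must stand on its own merits. The easy parts of your plan are sound: $F(0)=0$ is immediate, and your derivation of the recurrence $F(\nu)=g(\nu-1)+F(\nu-1)$ from linearity and stability (Lemma \ref{lem:Tsummability_properties}) is exactly right. You are also correct, and commendably explicit, that these two properties cannot identify $F$ with $f_G$, since any analytic $1$-periodic perturbation vanishing at the integers preserves both.

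The genuine gap is that your resolution of this ambiguity is deferred at precisely the point where the theorem's content lives, and the closure you propose would fail even if the deferred identity were granted. First, the compatibility $\sum_{k=0}^\infty g^{(m)}(k)\overset{\mathfrak T}{=}-\sum_{\ell=0}^\infty\frac{B_\ell}{\ell!}g^{(m+\ell-1)}(0)$ is not available anywhere in the paper; it is essentially equivalent to the theorem being proved, and you give no argument for it (nor for the term-by-term $\nu$-differentiation of the $\mathfrak T$-sum that precedes it, which requires unpacking Definition \ref{def:T_summation} and commuting $\partial_\nu$ with the analytic continuation in $s$). Second, the paper explicitly warns that the series $(\ref{eq:f_Gv_Taylor_series})$ ``may or may not converge.'' When it diverges, your final step---``$F$ and $f_G$ share a Taylor expansion at the origin, hence agree on a neighborhood of $0$''---is invalid: agreement of formal or asymptotic expansions does not imply local agreement of functions, and indeed an $F$ analytic at $0$ cannot have divergent Taylor coefficients, so in that regime your coefficient formula would have to break down and the Bernoulli series on the right would itself need a summability interpretation you have not supplied. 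A route that sidesteps the formal series entirely: telescoping the recurrence for the unique generalization gives $\sum_{k=0}^{N-1}g(k+\nu)=f_G(\nu+N)-f_G(\nu)$ and $\sum_{k=0}^{N-1}g(k)=f_G(N)$, hence $\sum_{k=0}^{N-1}\bigl(g(k)-g(k+\nu)\bigr)=f_G(\nu)+f_G(N)-f_G(\nu+N)$, and the theorem reduces to showing that the tail $f_G(N)-f_G(\nu+N)$ is annihilated under the $\mathfrak T$-evaluation. This is exactly where the periodic ambiguity dies: replacing $f_G$ by $f_G+P$ with $P$ $1$-periodic and $P(0)=0$ shifts the tail by the $N$-independent constant $-P(\nu)$, which does not vanish unless $P\equiv 0$. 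Establishing that tail evaluation is the substantive work carried out in the cited source, and it is the piece missing from your proposal.
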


With these results at hand, we will take the next few sections to present three fractional finite sums for later use in deriving the generalized estimator $\mathscr T_\nu$ and its associated properties.


\subsection{Incomplete geometric series}
\label{subsec:incGeomSeries}

The first fractional sum we study is the incomplete geometric series which is foundational to the evaluation of many other fractional sums.

\begin{definition}[Incomplete geometric series]
\label{def:inc_1F0}
The incomplete geometric series ${_1F_0}(1,- ;z)_\nu$ is given by
\[
{_1F_0}(1;-;z)_\nu\coloneqq \nu F(1,1-\nu;2;1-z).
\]
\end{definition}

\begin{lemma}
\label{lem:incGeomSeries_general}
${_1}F_0(1;-;z)_\nu$ is the unique generalization of $f(n)=\sum_{k=0}^{n-1}z^k$ consistent with Theorem \ref{thm:FracSum_uniqueness}.
\end{lemma}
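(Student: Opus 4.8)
The plan is to invoke Theorem~\ref{thm:FracSum_form1} directly, since it delivers \emph{the} unique generalization consistent with Theorem~\ref{thm:FracSum_uniqueness} as soon as $\mathfrak T$-summability is verified, and then to identify the resulting closed form with the hypergeometric expression of Definition~\ref{def:inc_1F0}. Here the summand is $g(k)=z^k$, whose ordinary generating function is $h(s)=\sum_{k=0}^\infty z^k s^k=(1-zs)^{-1}$. This $h$ is analytic on $s\in[0,1]$ precisely when the pole $s=1/z$ avoids that interval, i.e.\ when $z\notin[1,\infty)$, so for such $z$ the series $\sum_{k=0}^\infty z^k$ is $\mathfrak T$-summable with $\mathfrak T$-value $h(1)=(1-z)^{-1}$. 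The shifted series $\sum_{k=0}^\infty g(k+\nu)=\sum_{k=0}^\infty z^{k+\nu}$ has generating function $z^\nu(1-zs)^{-1}$ and is $\mathfrak T$-summable under the same condition with value $z^\nu(1-z)^{-1}$.

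First I would feed these two evaluations into Theorem~\ref{thm:FracSum_form1} to obtain
\[
f_G(\nu)\overset{\mathfrak T}{=}\sum_{k=0}^\infty z^k-\sum_{k=0}^\infty z^{k+\nu}=\frac{1-z^\nu}{1-z}.
\]
Next I would show this equals $\nu F(1,1-\nu;2;1-z)$ by a routine power-series match. Writing $w=1-z$ and expanding via the binomial theorem gives $(1-w)^\nu=\sum_{k\ge 0}\tfrac{(-\nu)_k}{k!}w^k$, whence
\[
\frac{1-(1-w)^\nu}{w}=-\sum_{j\ge 0}\frac{(-\nu)_{j+1}}{(j+1)!}\,w^{j}
=\nu\sum_{j\ge 0}\frac{(1-\nu)_j}{(2)_j}\,w^{j},
\]
using $(-\nu)_{j+1}=-\nu\,(1-\nu)_j$ and $(j+1)!=(2)_j$. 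Since $(1)_j=j!$, the last sum is exactly $F(1,1-\nu;2;w)$, so $\tfrac{1-z^\nu}{1-z}=\nu F(1,1-\nu;2;1-z)={_1F_0}(1;-;z)_\nu$, matching Definition~\ref{def:inc_1F0}.

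Finally, I would remove the restriction $z\notin[1,\infty)$ by analytic continuation in $z$: both $\tfrac{1-z^\nu}{1-z}$ and $\nu F(1,1-\nu;2;1-z)$ are analytic where the hypergeometric series is defined, so the identity persists, and the removable case $z=1$ follows from $F(1,1-\nu;2;0)=1$, giving $f_G(\nu)=\nu$ in agreement with $f(n)=\sum_{k=0}^{n-1}1^k=n$. Uniqueness is then inherited verbatim from Theorem~\ref{thm:FracSum_form1}. As a consistency check one may confirm directly that $\nu F(1,1-\nu;2;1-z)$ satisfies the recurrence $f_G(\nu)=z^{\nu-1}+f_G(\nu-1)$ and the initial value $f_G(0)=0$ demanded by Theorem~\ref{thm:FracSum_uniqueness}.

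The main obstacle I anticipate is the summability bookkeeping rather than the algebra: one must be careful that $\mathfrak T$-summation applies only on the cut-free region $z\notin[1,\infty)$, and that the subsequent analytic continuation in $z$ (together with the chosen branch of $z^\nu$) is legitimate on the domain where the hypergeometric function is single-valued. By contrast, the power-series identification in the second step is elementary once the Pochhammer reindexing $(-\nu)_{j+1}=-\nu\,(1-\nu)_j$ is handled correctly.
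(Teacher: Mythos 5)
Your proposal is correct and takes essentially the same route as the paper's proof: both verify that $h(s)=(1-zs)^{-1}$ is analytic on $s\in[0,1]$ for $z\notin[1,\infty)$, invoke Theorem \ref{thm:FracSum_form1} to get $f_G(\nu)=(1-z^\nu)/(1-z)$, and then rewrite this as $\nu F(1,1-\nu;2;1-z)$ via the generalized binomial theorem and the Pochhammer reindexing $(-\nu)_{j+1}=-\nu(1-\nu)_j$. The only cosmetic difference is the order of domain extension --- the paper first extends the $\mathfrak T$-sum to $z\in\Bbb C\setminus\{1\}$ by taking regularity, linearity, and stability as axioms before applying Theorem \ref{thm:FracSum_form1}, whereas you apply the theorem on the cut-free region and analytically continue the resulting identity afterward --- and both are legitimate.
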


\begin{proof}
Defining $h(s)\coloneqq(1-zs)^{-1}$ we see that $h(s)$ is analytic on $s\in[0,1]$ if $z\notin[1,\infty)$; thus
\[
\sum_{k=0}^\infty z^k\overset{\mathfrak T}{=}h(1)=\frac{1}{1-z},\quad z\in\Bbb C\setminus[1,\infty).
\]
Taking regularity, linearity, and stability as axioms we then extend this results to include all $z\in\Bbb C\setminus\{1\}$. In accordance with Theorem \ref{thm:FracSum_form1} it then follows that the fractional generalization of $f(n)$ is
\[
f_G(\nu)=(1-z^\nu)\sum_{k=0}^\infty z^k\overset{\mathfrak T}{=}\frac{1-z^\nu}{1-z},\quad z\in\Bbb C\setminus\{1\}.
\]
To derive the form in Definition \ref{def:inc_1F0} we use the generalized binomial theorem to obtain the formal expression
\[
(1-z)f_G(\nu)=1-(1-(1-z))^\nu=1-\sum_{k=0}^\infty\binom{\nu}{k}(-1)^k(1-z)^k.
\]
The first term in the series expansion is one, therefore after some algebraic manipulations we arrive at
\[
(1-z)f_G(\nu)=(1-z)\nu\sum_{k=0}^\infty\frac{(1)_k(1-\nu)_k}{(2)_k\,k!}(1-z)^k,
\]
which upon dividing both sides by $(1-z)$ yields the desired result.
\end{proof}


\subsection{Incomplete Lerch Transcendent}
\label{subsec:incLerchPhi}

We now present our second fractional finite sum in the form of the incomplete Lerch Transcendent.  This fractional finite sum will be used in deriving a series expansion for $\ev\mathscr T_\nu^2$ in Section \ref{subsec:variance} as well as an asymptotic expansion of $\mathscr T_\nu$ in Section \ref{subsec:Tv_asym_expansion_derivation}. We first begin with the definition of the (complete) Lerch Transcendent.

\begin{definition}[Lerch transcendent]
\label{def:LerchPhi}
The Lerch transcendent $\Phi(z,s,\omega)$ is defined as the analytic continuation of the series
\[
\Phi(z,s,\omega)\coloneqq\sum_{k=0}^\infty (k+\omega)^{-s}z^k,
\]
with $\Phi(z,0,\omega)\coloneqq{_1F_0}(1;-;z)$.
\end{definition}

\begin{definition}[Incomplete Lerch transcendent]
\label{def:incomplete_LerchPhi}
The incomplete Lerch transcendent $\Phi(z,s,\omega)_\nu$ is given by
\[
\Phi(z,s,\omega)_\nu\coloneqq\Phi(z,s,\omega)-z^\nu\Phi(z,s,\omega+\nu),
\]
with $\Phi(z,0,\omega)_\nu\coloneqq{_1F_0}(1,- ;z)_\nu$.
\end{definition}

\begin{lemma}
\label{lem:inc_Lerch_trans_uniqueness}
$\Phi(z,s,\omega)_\nu$ is the unique generalization of $f(n)=\sum_{k=0}^{n-1}(k+\omega)^{-s}z^k$ consistent with Theorem \ref{thm:FracSum_uniqueness}.
\end{lemma}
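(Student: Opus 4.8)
The plan is to follow the same template as the proof of Lemma \ref{lem:incGeomSeries_general}, since the incomplete Lerch transcendent is built from exactly the kind of simple finite sum that Theorem \ref{thm:FracSum_form1} is designed to handle. Here the summand is $g(k)=(k+\omega)^{-s}z^k$, which is analytic at the origin as a function of $k$ provided $\omega\notin-\Bbb N_0$, so that the power $(k+\omega)^{-s}$ stays off its branch point; this meets the hypothesis of Theorem \ref{thm:FracSum_uniqueness}. The whole argument then reduces to two tasks: verifying that $\sum_{k=0}^\infty g(k)$ is $\mathfrak T$-summable, and evaluating the resulting difference of series supplied by Theorem \ref{thm:FracSum_form1}.

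First I would establish $\mathfrak T$-summability. Following Definition \ref{def:T_summation}, I form the generating function
\[
h(t)=\sum_{k=0}^\infty g(k)\,t^k=\sum_{k=0}^\infty (k+\omega)^{-s}(zt)^k=\Phi(zt,s,\omega),
\]
recognizing the series as the Lerch transcendent of Definition \ref{def:LerchPhi} evaluated at argument $zt$. Since $\Phi(w,s,\omega)$ is analytic in $w$ on $\Bbb C\setminus[1,\infty)$, the composition $h(t)=\Phi(zt,s,\omega)$ is analytic on the segment $t\in[0,1]$ whenever $z\notin[1,\infty)$, so that
\[
\sum_{k=0}^\infty (k+\omega)^{-s}z^k\overset{\mathfrak T}{=}h(1)=\Phi(z,s,\omega),\quad z\in\Bbb C\setminus[1,\infty).
\]
Exactly as in the geometric case, I would then invoke the regularity, linearity, and stability of $\mathfrak T$ (Lemma \ref{lem:Tsummability_properties}) to extend this identification to the full domain on which $\Phi(z,s,\omega)$ is defined by analytic continuation.

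With summability in hand, Theorem \ref{thm:FracSum_form1} gives the unique generalization as
\[
f_G(\nu)\overset{\mathfrak T}{=}\sum_{k=0}^\infty (k+\omega)^{-s}z^k-\sum_{k=0}^\infty (k+\nu+\omega)^{-s}z^{k+\nu}.
\]
The first sum is $\Phi(z,s,\omega)$. For the second, factoring out $z^\nu$ and regarding $\omega+\nu$ as a shifted third argument yields
\[
\sum_{k=0}^\infty (k+\nu+\omega)^{-s}z^{k+\nu}=z^\nu\sum_{k=0}^\infty\bigl(k+(\omega+\nu)\bigr)^{-s}z^k=z^\nu\Phi(z,s,\omega+\nu).
\]
Substituting these back reproduces precisely $\Phi(z,s,\omega)-z^\nu\Phi(z,s,\omega+\nu)$, which is the definition of $\Phi(z,s,\omega)_\nu$ in Definition \ref{def:incomplete_LerchPhi}. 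As a consistency check, setting $s=0$ collapses $(k+\omega)^{-s}$ to unity and the construction reduces to the incomplete geometric series of Lemma \ref{lem:incGeomSeries_general}, in agreement with the stipulated value $\Phi(z,0,\omega)_\nu={_1F_0}(1,-;z)_\nu$.

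I expect the only genuine obstacle to be the careful justification of the $\mathfrak T$-summability step: one must confirm that $h(t)=\Phi(zt,s,\omega)$ is truly analytic across the closed segment $[0,1]$, which hinges on the location of the singularity of the Lerch transcendent at unit argument and on the branch conventions for $(k+\omega)^{-s}$ and $z^\nu$. The term-wise index shift producing $z^\nu\Phi(z,s,\omega+\nu)$ is formally routine but is legitimate only because $\mathfrak T$ is stable; this is where the properties of Lemma \ref{lem:Tsummability_properties} do the real work, and I would flag the restriction $\omega\notin-\Bbb N_0$ together with the cut $z\notin[1,\infty)$ prior to continuation as the hypotheses that keep every step well defined.
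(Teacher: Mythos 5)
Your proof is correct and follows essentially the same route as the paper's: form the generating function $h(t)=\Phi(zt,s,\omega)$, verify analyticity on $t\in[0,1]$ (the paper simply takes $|z|<1$, the disk of series convergence, rather than your slightly larger cut plane $z\notin[1,\infty)$), apply Theorem \ref{thm:FracSum_form1}, and extend to general $z$ via analytic continuation of the Lerch transcendent. Your additional steps---explicitly rewriting the shifted series as $z^\nu\Phi(z,s,\omega+\nu)$ to match Definition \ref{def:incomplete_LerchPhi}, flagging $\omega\notin-\Bbb N_0$, and checking the $s=0$ degeneration against Lemma \ref{lem:incGeomSeries_general}---are sound elaborations of the same argument, not a departure from it.
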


\begin{proof}
This follows from the fact that
\[
h(s)\coloneqq\Phi(sz,t,\omega)\overset{\mathfrak T}{=}\sum_{k=0}^\infty(k+\omega)^{-t}(sz)^k
\]
is analytic for $s\in[0,1]$ when $|z|<1$. Therefore, by Theorem \ref{thm:FracSum_form1} the unique generalization is
\[
f_G(\nu)\overset{\mathfrak T}{=}\sum_{k=0}^\infty(k+\omega)^{-s}z^k-\sum_{k=0}^\infty(k+\omega+\nu)^{-s}z^{k+\nu},
\]
which is extended to $z\in\Bbb C$ via analytic continuation of $\Phi(z,s,\omega)$.
\end{proof}

To obtain a deeper understanding of the incomplete Lerch Transcendent we now introduce three differential operators and present many identities relating them. These identities will then allow us to derive properties of $\Phi(z,s,\omega)_\nu$; including its relationship to the incomplete geometric series for use in Section \ref{subsec:gnw_tilde_gnw}.

\begin{definition}[Theta operator]
\label{def:theta_operator}
$\vartheta\coloneqq z\,\partial_z$
\end{definition}

\begin{definition}[Lowering operator]
\label{def:lowering_operator}
$\Lambda_\omega\coloneqq \omega+\vartheta$
\end{definition}

\begin{definition}[Factorial operator]
\label{def:factorial_differential_operator}
Let $\mathcal D$ be a differential operator, then
\[
(\mathcal D)^{(n)}\coloneqq%
\begin{cases}
1, &n=0\\
\prod_{k=0}^{n-1}(\mathcal D-k), &n\in\Bbb N.
\end{cases}
\]
\end{definition}

\begin{lemma}[Operator identities]
\label{lem:diff_op_identities}
For $n\in\Bbb N_0$ and $\omega\in\Bbb Z$,
\[
\begin{array}{*2{>{\displaystyle}l}}
(i) &\textstyle{\Lambda_\omega z^s=z^s\Lambda_{\omega+s}}\\[0.5ex]
(ii) &(\vartheta z)^n=z^n\partial_z^n z^n,\\[0.5ex]
(iii) &(z\vartheta)^{n}=z^{n+1}\partial^n_zz^{n-1},\\[0.5ex]
(iv) &\Lambda_\omega^n=z^{-\omega}\vartheta^nz^\omega,\\[0.5ex]
(v) &(\vartheta)^{(n)}=z^n\partial^n_z,\\[0.5ex]
(vi) &(\Lambda_\omega)^{(n)}=z^{n-\omega}\partial^n_zz^\omega,\\[0.5ex]
(vii) &\textstyle{\vartheta^n=\sum_{k=0}^n{_2\mathcal S}_n^{(k)}z^k\partial_z^k}.
\end{array}
\]
\end{lemma}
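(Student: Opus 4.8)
The plan is to prove each of the seven operator identities in Lemma~\ref{lem:diff_op_identities} by establishing a small number of base relations and then bootstrapping the rest through induction and composition. First I would unpack the definitions: $\vartheta = z\,\partial_z$ and $\Lambda_\omega = \omega + \vartheta$, so that $\Lambda_\omega$ acts as $\Lambda_\omega = \omega + z\,\partial_z$. Identity $(i)$ is the natural starting point since it is the ``commutation'' rule that governs how $\Lambda_\omega$ passes through a power $z^s$. To prove it I would simply compute $\Lambda_\omega(z^s f) = \omega z^s f + z\,\partial_z(z^s f) = \omega z^s f + s z^s f + z^{s+1} f' = z^s(\omega + s + z\partial_z)f = z^s \Lambda_{\omega+s} f$, which is a one-line product-rule calculation. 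I expect $(i)$ to serve as a workhorse for the later identities.

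Next I would establish identities $(v)$ and $(vii)$, which characterize $\vartheta$ directly. For $(v)$, the factorial-operator form $(\vartheta)^{(n)} = \vartheta(\vartheta-1)\cdots(\vartheta-n+1) = z^n\partial_z^n$, I would proceed by induction on $n$: the base case $n=1$ is $\vartheta = z\partial_z$, and for the inductive step I would apply $(\vartheta - n)$ to $z^n\partial_z^n$ and check via the product rule that $(\vartheta - n)z^n\partial_z^n = z^{n+1}\partial_z^{n+1}$, using $\vartheta z^n\partial_z^n = z\partial_z(z^n\partial_z^n) = n z^n\partial_z^n + z^{n+1}\partial_z^{n+1}$. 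Identity $(vii)$, the expansion $\vartheta^n = \sum_{k=0}^n {_2\mathcal S}_n^{(k)} z^k\partial_z^k$ in Stirling numbers of the second kind, I would prove by induction using the recurrence ${_2\mathcal S}_{n+1}^{(k)} = k\,{_2\mathcal S}_n^{(k)} + {_2\mathcal S}_n^{(k-1)}$: applying $\vartheta$ to the sum and matching coefficients of $z^k\partial_z^k$ reproduces exactly this recurrence. Equivalently, $(vii)$ can be read as the Stirling-number conversion between the ordinary powers $\vartheta^n$ and the factorial powers $(\vartheta)^{(k)}$ from $(v)$, so the two identities are really two faces of the same combinatorial fact.

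For the remaining four identities I would exploit $(i)$ and $(v)$ as building blocks. Identity $(iv)$, namely $\Lambda_\omega^n = z^{-\omega}\vartheta^n z^\omega$, follows from the special case $f = z^\omega g$ of $(i)$: repeatedly, $\Lambda_\omega(z^\omega g) = z^\omega \Lambda_0 g = z^\omega \vartheta g$, whence $\Lambda_\omega^n(z^\omega g)\cdot z^{-\omega}$ telescopes to $\vartheta^n g$; I would formalize this by conjugation, observing that $z^{-\omega}\Lambda_\omega z^\omega = \vartheta$ as operators (a direct consequence of $(i)$ with $s=\omega$ absorbed) and raising to the $n$th power. Identity $(vi)$ then drops out by combining $(iv)$ and $(v)$: $(\Lambda_\omega)^{(n)} = z^{-\omega}(\vartheta)^{(n)} z^\omega = z^{-\omega} z^n \partial_z^n z^\omega = z^{n-\omega}\partial_z^n z^\omega$, where the conjugation relation $z^{-\omega}(\vartheta)^{(n)}z^\omega = (z^{-\omega}\vartheta z^\omega)^{(n)}$ is justified because conjugation is an algebra homomorphism and hence commutes with forming the falling-factorial polynomial. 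Identities $(ii)$ and $(iii)$, which express powers of the \emph{products} $\vartheta z$ and $z\vartheta$ in terms of $\partial_z$, I would obtain by a parallel induction, using $\vartheta z = z\partial_z z = z + z^2\partial_z$ and the shift in the power of $z$ that each application introduces; alternatively $(ii)$ and $(iii)$ follow from $(v)$ after noting $\vartheta z = z(\vartheta+1)$ and $z\vartheta = z\vartheta$, which let me rewrite $(\vartheta z)^n$ and $(z\vartheta)^n$ as conjugates of $\vartheta^{(n)}$-type expressions.

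The main obstacle I anticipate is purely bookkeeping rather than conceptual: keeping the powers of $z$ and the orders of $\partial_z$ aligned correctly through the inductive steps, especially in $(ii)$ and $(iii)$ where the operators $\vartheta z$ and $z\vartheta$ are not conjugate to $\vartheta$ by a \emph{pure} power of $z$ and the index shifts interact with the differentiation order in a way that is easy to miscount. To guard against sign and index errors I would verify each identity on a monomial test function $z^m$ — on which $\vartheta z^m = m z^m$, $\partial_z^k z^m = (m)^{(k)} z^{m-k}$, and $(\vartheta)^{(n)} z^m = (m)^{(n)} z^m$ — since matching the scalar eigenvalue-like action on the full basis $\{z^m\}_{m\in\Bbb N_0}$ suffices to prove operator equality on the ring of formal power series where these act. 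This monomial check also transparently confirms $(vii)$, because $\vartheta^n z^m = m^n z^m$ while $\sum_k {_2\mathcal S}_n^{(k)} z^k\partial_z^k z^m = \sum_k {_2\mathcal S}_n^{(k)} (m)^{(k)} z^m$, and the identity $m^n = \sum_k {_2\mathcal S}_n^{(k)} (m)^{(k)}$ is the defining property of the Stirling numbers of the second kind.
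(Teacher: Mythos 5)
Your proposal is correct in substance but routes the dependencies differently from the paper, and mostly in a way that makes the lemma more self-contained. The paper proves $(i)$ exactly as you do, but then outsources $(ii)$ to a citation (Fleury) and $(vii)$ to the NIST identity $\vartheta^n=\sum_k{_2\mathcal S}_n^{(k)}(\vartheta)^{(k)}$, and its chain of inductions runs $(ii)\Rightarrow(iii)$, $(i)\Rightarrow(iv)$, $(iv)+(iii)\Rightarrow(v)$, $(i)+(v)\Rightarrow(vi)$. You reverse part of this flow: you prove $(v)$ by a direct induction on $(\vartheta-n)z^n\partial_z^n=z^{n+1}\partial_z^{n+1}$, get $(iv)$ by conjugation from $(i)$, get $(vi)$ from $(iv)+(v)$ using that conjugation by $z^\omega$ is an algebra homomorphism (a cleaner argument than the paper's separate induction), and then obtain $(ii)$ and $(iii)$ either by induction or as conjugates, rather than importing $(ii)$ from the literature. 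Your Stirling-recurrence proof of $(vii)$, and especially the observation that all seven identities can be certified by evaluation on the monomial basis $\{z^m\}$ (where $m^n=\sum_k{_2\mathcal S}_n^{(k)}(m)^{(k)}$ is the defining Stirling identity), is more elementary and arguably a better proof than the paper's citation-based one; the only caveat is that for $\omega\in\Bbb Z$ you should say you are acting on formal Laurent series so that $z^{-\omega}$ makes sense.

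One step, as literally written, is wrong and would need repair: in your argument for $(iv)$ you assert $\Lambda_\omega(z^\omega g)=z^\omega\Lambda_0 g$ and conclude $z^{-\omega}\Lambda_\omega z^\omega=\vartheta$. By your own identity $(i)$, $\Lambda_\omega z^\omega=z^\omega\Lambda_{2\omega}$, so $z^{-\omega}\Lambda_\omega z^\omega=\Lambda_{2\omega}\neq\vartheta$ unless $\omega=0$; indeed your claimed relation would force $\Lambda_\omega=z^\omega\vartheta z^{-\omega}=\Lambda_{-\omega}$, which is false. The correct instance of $(i)$ is $s=-\omega$: $\Lambda_\omega z^{-\omega}=z^{-\omega}\Lambda_0=z^{-\omega}\vartheta$, equivalently $\Lambda_\omega=z^{-\omega}\vartheta z^\omega$, and raising this to the $n$th power gives $(iv)$. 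The slip is isolated — you silently use the correct direction $(\Lambda_\omega)^{(n)}=z^{-\omega}(\vartheta)^{(n)}z^\omega$ when deriving $(vi)$ — but since the entire conjugation bootstrap for $(iv)$ and $(vi)$ hangs on which side the powers of $z$ sit, you should fix the direction explicitly before relying on it.
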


\begin{proof}
See Appendix \ref{sec:differential_op_props}.
\end{proof}

\begin{lemma}
\label{lem:LerchPhi_z_derivative}
For $n\in\Bbb N_0$, the Lerch Transcendent satisfies
\[
\Lambda_\omega^n\Phi(z,s,\omega) =\Phi(z,s-n,\omega).
\]
\end{lemma}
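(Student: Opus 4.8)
The plan is to exploit the fact that the monomials $z^k$ are eigenfunctions of the lowering operator. Since $\vartheta z^k = z\partial_z z^k = k z^k$, applying $\Lambda_\omega = \omega + \vartheta$ gives $\Lambda_\omega z^k = (k+\omega)z^k$, so each $z^k$ is an eigenfunction with eigenvalue $k+\omega$. Iterating, $\Lambda_\omega^n z^k = (k+\omega)^n z^k$, which I would establish by a one-line induction on $n$.

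With this in hand, the main step is to apply $\Lambda_\omega^n$ term by term to the defining series of Definition \ref{def:LerchPhi}. For $|z|<1$ the series $\sum_{k=0}^\infty (k+\omega)^{-s}z^k$ converges locally uniformly, and the same holds for the series obtained after applying the differential operator $z\partial_z$ any finite number of times; since $\Lambda_\omega^n$ is a degree-$n$ polynomial in $\vartheta$, this legitimizes interchanging $\Lambda_\omega^n$ with the summation. The eigenvalue relation then yields
\[
\Lambda_\omega^n \Phi(z,s,\omega) = \sum_{k=0}^\infty (k+\omega)^{-s}(k+\omega)^n z^k = \sum_{k=0}^\infty (k+\omega)^{-(s-n)}z^k = \Phi(z,s-n,\omega)
\]
on the disk $|z|<1$.

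Finally, I would extend the identity from $|z|<1$ to the full domain of $\Phi$ by analytic continuation: both sides are analytic in $z$ wherever $\Phi$ is defined (the operator $\Lambda_\omega$ only involves $z\partial_z$ and so preserves analyticity), and they agree on an open set, hence everywhere by the identity theorem. The step I expect to require the most care is precisely this continuation together with the justification of term-by-term operation, rather than the algebra itself.

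As an alternative that sidesteps the eigenvalue bookkeeping, one could invoke identity $(iv)$ of Lemma \ref{lem:diff_op_identities}, namely $\Lambda_\omega^n = z^{-\omega}\vartheta^n z^\omega$: writing $z^\omega\Phi(z,s,\omega) = \sum_{k=0}^\infty (k+\omega)^{-s}z^{k+\omega}$ and using $\vartheta z^{k+\omega} = (k+\omega)z^{k+\omega}$ reduces the claim to the same shift $s\mapsto s-n$. One could equally reduce to the base case $n=1$ and induct, since $\Lambda_\omega\Phi(z,s-m,\omega)=\Phi(z,s-m-1,\omega)$ for each $m$.
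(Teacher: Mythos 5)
Your proposal is correct. Note that the paper states Lemma \ref{lem:LerchPhi_z_derivative} without any proof at all, treating it as a standard property of the Lerch transcendent; your argument supplies exactly the justification the paper leaves implicit. The eigenfunction computation $\Lambda_\omega z^k=(k+\omega)z^k$, termwise application on $|z|<1$ justified by the locally uniform convergence of the differentiated power series, and extension by the identity theorem to the domain of continuation of $\Phi$ (which is connected and contains the unit disk, per Definition \ref{def:LerchPhi}) is the canonical route, and your alternative via identity $(iv)$ of Lemma \ref{lem:diff_op_identities}, $\Lambda_\omega^n=z^{-\omega}\vartheta^n z^\omega$, is precisely the operator machinery the paper itself develops and uses in the neighboring proofs (e.g.\ of Lemma \ref{lem:IncLerchPhi_z_derivative} and Theorem \ref{thm:IncLerch_sum}), so either version fits the paper's framework. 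Two minor points worth making explicit if you write this up: the manipulation $(k+\omega)^{-s}(k+\omega)^n=(k+\omega)^{n-s}$ is branch-safe only because $n$ is an integer, and one should tacitly assume $\omega$ is not a nonpositive integer so that every term of the defining series is finite.
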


We can now establish the properties of the incomplete Lerch transcendent that are needed in the following sections.

\begin{lemma}
\label{lem:IncLerchPhi_z_derivative}
For $n\in\Bbb N_0$, the incomplete Lerch Transcendent satisfies the same differential relation as that of Lemma \ref{lem:LerchPhi_z_derivative}, namely,
\[
\Lambda_\omega^n\Phi(z,s,\omega)_\nu =\Phi(z,s-n,\omega)_\nu.
\]
\end{lemma}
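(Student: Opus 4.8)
The plan is to reduce the incomplete case to the complete case already settled in Lemma \ref{lem:LerchPhi_z_derivative}, using the operator commutation identity (i) of Lemma \ref{lem:diff_op_identities} to move $\Lambda_\omega^n$ past the prefactor $z^\nu$. Starting from the definition
\[
\Phi(z,s,\omega)_\nu=\Phi(z,s,\omega)-z^\nu\Phi(z,s,\omega+\nu),
\]
I would apply $\Lambda_\omega^n$ and split by linearity into two terms. The first term is immediate, since Lemma \ref{lem:LerchPhi_z_derivative} gives $\Lambda_\omega^n\Phi(z,s,\omega)=\Phi(z,s-n,\omega)$. All the content therefore sits in the second term, where $\Lambda_\omega^n$ has to be pushed through $z^\nu$ so that Lemma \ref{lem:LerchPhi_z_derivative} can be reused, now based at $\omega+\nu$.

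The key step is the operator identity $\Lambda_\omega^n z^\nu=z^\nu\Lambda_{\omega+\nu}^n$. This follows by induction on $n$ from identity (i), $\Lambda_\omega z^s=z^s\Lambda_{\omega+s}$, taken with $s=\nu$: the base case $n=0$ is trivial, and the inductive step reads
\[
\Lambda_\omega^{n+1}z^\nu=\Lambda_\omega\bigl(z^\nu\Lambda_{\omega+\nu}^n\bigr)=z^\nu\Lambda_{\omega+\nu}\Lambda_{\omega+\nu}^n=z^\nu\Lambda_{\omega+\nu}^{n+1},
\]
the middle equality being a single application of (i). With this commutation established,
\[
\Lambda_\omega^n\bigl(z^\nu\Phi(z,s,\omega+\nu)\bigr)=z^\nu\Lambda_{\omega+\nu}^n\Phi(z,s,\omega+\nu)=z^\nu\Phi(z,s-n,\omega+\nu),
\]
where the last equality is Lemma \ref{lem:LerchPhi_z_derivative} applied with the base point $\omega$ replaced by $\omega+\nu$, which is legitimate because that lemma is a statement about the Lerch transcendent for a generic third argument.

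Combining the two pieces and recognizing the definition of the incomplete Lerch transcendent with $s$ replaced by $s-n$,
\[
\Lambda_\omega^n\Phi(z,s,\omega)_\nu=\Phi(z,s-n,\omega)-z^\nu\Phi(z,s-n,\omega+\nu)=\Phi(z,s-n,\omega)_\nu,
\]
which is exactly the claim. I expect the only point requiring genuine care to be the justification that Lemma \ref{lem:LerchPhi_z_derivative} may be invoked at the shifted base point $\omega+\nu$ with $\nu$ not necessarily an integer; since that lemma holds for an arbitrary third argument and identity (i) is valid for arbitrary exponent $s=\nu$, no further hypotheses are needed, and everything else in the argument is linear and routine.
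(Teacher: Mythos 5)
Your proof is correct and follows essentially the same route as the paper: both rest on Lemma \ref{lem:diff_op_identities}~$(i)$ together with Lemma \ref{lem:LerchPhi_z_derivative} applied at the shifted base point $\omega+\nu$, with the only difference being that the paper inducts on the full claim $P(n)$ while you induct on the commutation identity $\Lambda_\omega^n z^\nu=z^\nu\Lambda_{\omega+\nu}^n$ and then apply it in one stroke. Your closing remark is also on target: since identity $(i)$ holds for arbitrary exponent $s=\nu$ and Lemma \ref{lem:LerchPhi_z_derivative} is stated for a generic third argument, invoking it at $\omega+\nu$ needs no extra hypotheses.
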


\begin{proof}
Defining $P(n):\Lambda_\omega^n\Phi(z,s,\omega)_\nu =\Phi(z,s-n,\omega)_\nu$ we observe that $P(0)$ trivially holds. Now assuming $P(n)$ and using Lemma \ref{lem:diff_op_identities} $(i)$ and Lemma \ref{lem:LerchPhi_z_derivative} we have
\[
\begin{aligned}
\Lambda_\omega^{n+1}\Phi(z,s,\omega)_\nu
&=\Lambda_\omega\Phi(z,s-n,\omega)_\nu\\
&=\Lambda_\omega\Phi(z,s-n,\omega)-\Lambda_\omega z^\nu\Phi(z,s-n,\omega+\nu)\\
&=\Lambda_\omega\Phi(z,s-n,\omega)-z^\nu\Lambda_{\omega+\nu}\Phi(z,s-n,\omega+\nu)\\
&=\Phi(z,s-(n+1),\omega)-z^\nu\Phi(z,s-(n+1),\omega+\nu)\\
&=\Phi(z,s-(n+1),\omega)_\nu.
\end{aligned}
\]
Thus, $P(n)\implies P(n+1)$ which completes the proof.
\end{proof}

\begin{corollary}
\label{cor:inc_LerchPhi_derivative_integer_n}
Setting $s=0$ in Lemma \ref{lem:IncLerchPhi_z_derivative} yields the formula,
\[
\Phi(z,-n,\omega)_\nu=\Lambda_\omega^n{_1F_0}(1;- ;z)_\nu.
\]
If in addition $\omega=0$ one finds
\[
\Phi(z,-n,0)_\nu=\vartheta^n{_1F_0}(1;- ;z)_\nu.
\]
\end{corollary}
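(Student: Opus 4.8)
The plan is to derive both identities by direct substitution into the master differential relation of Lemma~\ref{lem:IncLerchPhi_z_derivative}, unwinding the relevant definitions; essentially no computation is required, which is why the statement is phrased as a corollary rather than a standalone lemma.

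First I would set $s=0$ in the relation $\Lambda_\omega^n\Phi(z,s,\omega)_\nu =\Phi(z,s-n,\omega)_\nu$ supplied by Lemma~\ref{lem:IncLerchPhi_z_derivative}, obtaining $\Lambda_\omega^n\Phi(z,0,\omega)_\nu =\Phi(z,-n,\omega)_\nu$. I would then invoke the boundary convention of Definition~\ref{def:incomplete_LerchPhi}, namely $\Phi(z,0,\omega)_\nu\coloneqq{_1F_0}(1;-;z)_\nu$, to rewrite the left-hand side as $\Lambda_\omega^n{_1F_0}(1;-;z)_\nu$. Equating and transposing then produces the first display $\Phi(z,-n,\omega)_\nu=\Lambda_\omega^n{_1F_0}(1;-;z)_\nu$.

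For the second identity I would specialize the first to $\omega=0$. The only additional ingredient is the observation that $\Lambda_0=\vartheta$, which is immediate from Definition~\ref{def:lowering_operator} since $\Lambda_\omega\coloneqq\omega+\vartheta$; consequently $\Lambda_0^n=\vartheta^n$ as operators. Substituting this into the first display yields $\Phi(z,-n,0)_\nu=\vartheta^n{_1F_0}(1;-;z)_\nu$, as claimed.

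Since the argument is pure substitution, I do not anticipate any real obstacle. The single point meriting a moment's attention is confirming that the constant shift $\omega$ genuinely drops out of the operator power $\Lambda_0^n$, so that the entire additive term vanishes and not merely its leading contribution; this follows at once from the additive form $\Lambda_\omega\coloneqq\omega+\vartheta$ of Definition~\ref{def:lowering_operator}.
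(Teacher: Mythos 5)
Your proposal is correct and is exactly the argument the paper intends: the corollary is obtained by setting $s=0$ in Lemma \ref{lem:IncLerchPhi_z_derivative}, invoking the convention $\Phi(z,0,\omega)_\nu\coloneqq{_1F_0}(1;-;z)_\nu$ from Definition \ref{def:incomplete_LerchPhi}, and then specializing to $\omega=0$ via $\Lambda_0=\vartheta$ from Definition \ref{def:lowering_operator}. Your closing check that the additive shift $\omega$ drops out of the full operator power $\Lambda_0^n$, not just its first application, is the right point to verify and is handled correctly.
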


\begin{corollary}
\label{cor:IncPolyLog_HyperSum_form}
For $n\in\Bbb N_0$ and $\lvert\operatorname{ph}z\rvert<\pi$
\[
\Phi(z,-n,0)_\nu=\frac 1z\sum_{k=0}^n\frac{{_2\mathcal S}_n^{(k)}(\nu)^{(k+1)}}{k+1}\pFq{}{}{1+k,1+\nu}{2+k}{1-\frac 1z}.
\]
\end{corollary}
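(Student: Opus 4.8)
The plan is to reduce the claim to a term-by-term differentiation and a single Pfaff transformation applied to each resulting hypergeometric function. I would start from Corollary \ref{cor:inc_LerchPhi_derivative_integer_n}, which already supplies $\Phi(z,-n,0)_\nu=\vartheta^n{_1F_0}(1;-;z)_\nu$, and then expand the theta power by identity $(vii)$ of Lemma \ref{lem:diff_op_identities}, writing $\vartheta^n=\sum_{k=0}^n{_2\mathcal S}_n^{(k)}z^k\partial_z^k$. This collapses the whole problem to evaluating $z^k\partial_z^k{_1F_0}(1;-;z)_\nu$ for each $0\le k\le n$ and then reattaching the Stirling weights ${_2\mathcal S}_n^{(k)}$; the factor $1/z$ and the summand structure of the target formula must emerge from this single per-term computation.

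For the per-term derivative I would use the closed form ${_1F_0}(1;-;z)_\nu=\nu\,F(1,1-\nu;2;1-z)$ of Definition \ref{def:inc_1F0}. Setting $u=1-z$ so that $\partial_z=-\partial_u$, the standard rule $\partial_u^k F(a,b;c;u)=\frac{(a)_k(b)_k}{(c)_k}F(a+k,b+k;c+k;u)$ gives
\[
\partial_z^k{_1F_0}(1;-;z)_\nu=\nu(-1)^k\frac{(1)_k(1-\nu)_k}{(2)_k}\,F(1+k,1-\nu+k;2+k;1-z).
\]
The next step converts the argument $1-z$ into $1-1/z$ by transformation $(\mathrm i)$ of $(\ref{eq:hyper_2F1_xForms})$; since the lower parameter $c=2+k$ is unchanged, the $\Gamma(c)$ factors cancel and the identity holds verbatim for the unregularized $F$. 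With $a=1+k$, $b=1-\nu+k$, $c=2+k$ one computes $1-(1-z)=z$, $(1-z)/\bigl((1-z)-1\bigr)=1-1/z$, and $c-b=1+\nu$, so
\[
F(1+k,1-\nu+k;2+k;1-z)=z^{-(1+k)}\,F\bigl(1+k,1+\nu;2+k;1-\tfrac1z\bigr).
\]

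Substituting this back and collecting prefactors finishes the argument: the powers of $z$ collapse as $z^k\cdot z^{-(1+k)}=z^{-1}$, the ratio simplifies as $(1)_k/(2)_k=k!/(k+1)!=1/(k+1)$, and the leftover constant reduces via the factorial-power relation $(s)^{(n)}=(-1)^n(-s)_n$: indeed $\nu(-1)^k(1-\nu)_k=\nu(\nu-1)^{(k)}=(\nu)^{(k+1)}$. This yields $z^k\partial_z^k{_1F_0}(1;-;z)_\nu=\tfrac1z\tfrac{(\nu)^{(k+1)}}{k+1}F(1+k,1+\nu;2+k;1-\tfrac1z)$, and weighting by ${_2\mathcal S}_n^{(k)}$ and summing reproduces the stated identity. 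The computation is essentially mechanical, so I do not expect a genuine obstacle; the two points needing care are choosing the correct Pfaff variant (the one fixing $c$ and sending $b\mapsto c-b$, so that the top parameter becomes $1+\nu$ and the power is exactly $z^{-(1+k)}$) and verifying that the phase hypothesis of $(\ref{eq:hyper_2F1_xForms})$, namely $|\operatorname{ph}(1-(1-z))|=|\operatorname{ph}z|<\pi$, is precisely the restriction imposed in the corollary. The secondary bookkeeping point is the coefficient collapse into $(\nu)^{(k+1)}/(k+1)$, which rests on recognizing $\nu(\nu-1)^{(k)}=(\nu)^{(k+1)}$.
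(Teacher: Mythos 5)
Your proposal is correct and follows essentially the same route as the paper's proof: expand $\vartheta^n$ via Lemma \ref{lem:diff_op_identities} $(vii)$, differentiate $\nu F(1,1-\nu;2;1-z)$ termwise (the paper cites NIST 15.5.2, which is your derivative rule after the substitution $u=1-z$), then apply the Pfaff transformation $(\mathrm i)$ of $(\ref{eq:hyper_2F1_xForms})$ and simplify via $(1)_k/(2)_k=1/(k+1)$ and $(-1)^k\nu(1-\nu)_k=(\nu)^{(k+1)}$. Your intermediate expression matches the paper's displayed one exactly, and your checks of the phase hypothesis and the regularized-versus-unregularized cancellation are sound.
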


\begin{proof}
With Lemma \ref{lem:diff_op_identities} $(vii)$ and \cite[Eq.~$15.5.2$]{nist_2010}, we have
\[
\Phi(z,-n,0)_\nu=\sum_{k=0}^n\frac{(1)_k}{(2)_k}\,{_2\mathcal S}_n^{(k)}(-1)^k\nu(1-\nu)_kz^k\pFq{}{}{1+k,1-\nu+k}{2+k}{1-z}.
\]
Using $(1)_k/(2)_k=1/(k+1)$, $(-1)^k\nu(1-\nu)_k=(\nu)^{(k+1)}$, and applying the transformation \cite[Eq.~$15.8.1(\mathrm{i})$]{nist_2010} to the hypergeometric term yields the desired result.
\end{proof}

%


\subsection{Sine-modulated incomplete hypergeometric function}
\label{subsec:sine_mod_2F1_inc}

We now introduce our last fractional finite sum: the sine-modulated incomplete hypergeometric function.  This fractional finite sum will serve as the cornerstone for defining the generalized estimator $\mathscr T_\nu$.




\begin{definition}[Sine-modulated incomplete hypergeometric function]
\label{def:Inc2F1}
For $-\gamma\notin\Bbb N_0$, the sine-modulated incomplete hypergeometric functions is given by
\[
\mathcal F(\alpha,\beta;\gamma;z)_\nu\coloneqq\pFq{}{}{\alpha,\beta}{\gamma}{z}
-\frac{(\alpha)_\nu\, (-z)^\nu}{(1)_\nu(1-\beta)_{-\nu}(\gamma)_\nu}\pFq{3}{2}{1,\alpha+\nu,\beta+\nu}{1+\nu,\gamma+\nu}{z}.
\]
\end{definition}

\begin{lemma}
\label{lem:incHyper_unique_generalization}
$\mathcal F(\alpha,\beta;\gamma;-z)_\nu$ is the unique generalization of
\[
f(n)=\sum_{k=0}^{n-1}\frac{\sin(\pi(\beta+k))}{\sin\pi\beta}\frac{(\alpha)_k(\beta)_k}{(\gamma)_k}\frac{z^k}{k!}
\]
consistent with Theorem \ref{thm:FracSum_uniqueness}.
\end{lemma}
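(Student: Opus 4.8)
The plan is to reuse, verbatim in structure, the template that proves Lemma~\ref{lem:incGeomSeries_general} and Lemma~\ref{lem:inc_Lerch_trans_uniqueness}: identify the summand $g$ and check it is analytic at the origin, establish $\mathfrak T$-summability through an auxiliary function $h(s)$, invoke Theorem~\ref{thm:FracSum_form1} to write $f_G(\nu)\overset{\mathfrak T}{=}\sum_{k=0}^\infty g(k)-\sum_{k=0}^\infty g(k+\nu)$, and finally recognize the two surviving series as the two terms in Definition~\ref{def:Inc2F1}.

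First I would simplify the summand. Writing $g(x)=\frac{\sin(\pi(\beta+x))}{\sin\pi\beta}\frac{(\alpha)_x(\beta)_x}{(\gamma)_x}\frac{z^x}{x!}$ and applying the reflection formula $\sin(\pi(\beta+x))=\pi/[\Gamma(\beta+x)\Gamma(1-\beta-x)]$, the $\Gamma(\beta+x)$ produced by $(\beta)_x$ cancels, leaving the cleaner rational-in-gamma form $g(x)=\frac{(\alpha)_x}{(1)_x(1-\beta)_{-x}(\gamma)_x}z^x$. Evaluated at an integer $x=k$ this collapses to $\frac{(\alpha)_k(\beta)_k}{(\gamma)_k}\frac{(-z)^k}{k!}$, so $f(n)$ is exactly the $n$th partial sum of $\pFq{}{}{\alpha,\beta}{\gamma}{-z}$ and the sine factor is merely the analytic carrier of the alternating sign $(-1)^k$. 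This representation also exhibits $g$ as analytic at the origin (the only caveat being the hypothesis $-\gamma\notin\Bbb N_0$), so Theorems~\ref{thm:FracSum_uniqueness} and~\ref{thm:FracSum_form1} apply.

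Next I would establish $\mathfrak T$-summability by taking $h(s)=\sum_{k=0}^\infty g(k)s^k=\pFq{}{}{\alpha,\beta}{\gamma}{-zs}$, which is analytic on $s\in[0,1]$ whenever the segment $\{-zs:s\in[0,1]\}$ stays off the branch cut $[1,\infty)$; regularity, linearity, and stability (Lemma~\ref{lem:Tsummability_properties}) then extend the conclusion to all admissible $z$, precisely as in the two earlier lemmas. Theorem~\ref{thm:FracSum_form1} now yields $f_G(\nu)\overset{\mathfrak T}{=}\sum_{k=0}^\infty g(k)-\sum_{k=0}^\infty g(k+\nu)$, whose first sum is $\pFq{}{}{\alpha,\beta}{\gamma}{-z}$, matching the leading term of $\mathcal F(\alpha,\beta;\gamma;-z)_\nu$.

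The crux is the second sum. I would factor out $g(\nu)=\frac{(\alpha)_\nu z^\nu}{(1)_\nu(1-\beta)_{-\nu}(\gamma)_\nu}$ and reduce the ratio $g(k+\nu)/g(\nu)$ to Pochhammer symbols in the integer index $k$ via $(\alpha)_{k+\nu}/(\alpha)_\nu=(\alpha+\nu)_k$ and the analogous identities for the $(1)$ and $(\gamma)$ factors. The delicate factor is $(1-\beta)_{-\nu}/(1-\beta)_{-(k+\nu)}=\Gamma(1-\beta-\nu)/\Gamma(1-\beta-\nu-k)$; here I would apply the transformation $(s)_n=(-1)^n(1-s-n)_n$ with the \emph{integer} $n=k$ and $s=\beta+\nu$ to rewrite it as $(-1)^k(\beta+\nu)_k$. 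Collecting the pieces and reinserting $k!=(1)_k$ gives $\sum_{k=0}^\infty g(k+\nu)=g(\nu)\,\pFq{3}{2}{1,\alpha+\nu,\beta+\nu}{1+\nu,\gamma+\nu}{-z}$, which is exactly the subtracted term of Definition~\ref{def:Inc2F1} at argument $-z$; subtracting the two sums produces $\mathcal F(\alpha,\beta;\gamma;-z)_\nu$, with uniqueness inherited from Theorem~\ref{thm:FracSum_uniqueness}. I expect the main obstacle to be precisely this manipulation of $(1-\beta)_{-(k+\nu)}$: the negative, non-integer subscript mixes the continuous shift $\nu$ with the integer summation variable, so the reflection/Pochhammer identity and its sign must be applied carefully to avoid a spurious $(-1)^k$ or a branch error, and the series identifications should first be carried out on $|z|<1$ where everything converges classically and then continued analytically in $z$.
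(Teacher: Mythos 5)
Your proposal is correct and takes essentially the same route as the paper: $\mathfrak T$-summability via $h(s)=\pFq{}{}{\alpha,\beta}{\gamma}{-sz}$ on $|z|<1$, Theorem \ref{thm:FracSum_form1} to get $f_G(\nu)=\sum_{k\ge 0}g(k)-\sum_{k\ge 0}g(k+\nu)$, the Pochhammer shift $(s)_{k+\nu}=(s)_\nu(s+\nu)_k$ to identify the tail as the $_3F_2$, and the gamma reflection formula to produce the $(1-\beta)_{-\nu}$ factor, followed by analytic continuation in $z$. The only cosmetic difference is that you apply the reflection formula to the summand at the outset (writing $g(x)=(\alpha)_x z^x/[(1)_x(1-\beta)_{-x}(\gamma)_x]$, with the sign identity $(1-\beta-\nu-k)_k=(-1)^k(\beta+\nu)_k$ handled correctly), whereas the paper carries the sine factor through and converts $\frac{\sin(\pi(\beta+\nu))}{\sin\pi\beta}(\beta)_\nu=\frac{1}{(1-\beta)_{-\nu}}$ only at the final step.
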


\begin{proof}
Again with the help of Theorem \ref{def:T_summation} we define
\[
h(s)\coloneqq\pFq{}{}{\alpha,\beta}{\gamma}{-sz}\overset{\mathfrak T}{=}\sum_{k=0}^\infty\frac{\sin(\pi(\beta+k))}{\sin\pi\beta}\frac{(\alpha)_k(\beta)_k}{(\gamma)_k}\frac{(sz)^k}{k!},
\]
which is analytic on $s\in[0,1]$ for all $|z|<1$; thus, by Theorem \ref{thm:FracSum_form1} the fractional generalization of $f(n)$ is
\[
f_G(\nu)\overset{\mathfrak T}{=}\sum_{k=0}^\infty\frac{(\alpha)_k(\beta)_k}{(\gamma)_k}\frac{(-z)^k}{k!}-\frac{\sin(\pi(\beta+\nu))}{\sin\pi\beta}\sum_{k=0}^\infty(-1)^k\frac{(\alpha)_{k+\nu}(\beta)_{k+\nu}}{(\gamma)_{k+\nu}\Gamma(1+\nu+k)}z^{k+\nu},
\]
for $|z|<1$. Then using the identity $(s)_{z+r}=(s)_z(s+z)_r$ produces
\begin{multline*}
f_G(\nu)\overset{\mathfrak T}{=}\sum_{k=0}^\infty\frac{(\alpha)_k(\beta)_k}{(\gamma)_k}\frac{(-z)^k}{k!}\\
-\frac{\sin(\pi(\beta+\nu))}{\sin\pi\beta}\frac{(\alpha)_\nu(\beta)_\nu}{(1)_\nu(\gamma)_\nu}z^\nu\sum_{k=0}^\infty\frac{(1)_k(\alpha+\nu)_k(\beta+\nu)_k}{(1+\nu)_k(\gamma+\nu)_k}\frac{(-z)^k}{k!},
\end{multline*}
where each series is now identified as being generalized hypergeometric series. Finally, from the gamma reflection formula we have
\[
\frac{\sin(\pi(\beta+\nu))}{\sin\pi\beta}(\beta)_\nu=\frac{1}{(1-\beta)_{-\nu}}.
\]
Substituting this result into $f_G(\nu)$ yields the desired form of $\mathcal F(\alpha,\beta;\gamma;-z)_\nu$, which is extended to $z\in\Bbb C$ via analytic continuation of the generalized hypergeometric function. The proof is now complete.
\end{proof}

\begin{corollary}[Special case for $\alpha=1$]
\label{cor:inc2F1_special_case}
\[
\mathcal F(1,\beta;\gamma;-z)_\nu=\pFq{}{}{1,\beta}{\gamma}{-z}
-\frac{z^\nu}{(1-\beta)_{-\nu}(\gamma)_\nu}\pFq{}{}{1,\beta+\nu}{\gamma+\nu}{-z}
\]
\end{corollary}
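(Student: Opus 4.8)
The plan is to derive this special case directly from Definition \ref{def:Inc2F1} by setting $\alpha=1$ and then substituting $-z$ for the argument $z$. First I would write out the defining formula with $\alpha=1$, producing a factor $(1)_\nu$ in the numerator of the coefficient that cancels against the $(1)_\nu$ already appearing in its denominator. Simultaneously, the top parameter $\alpha+\nu$ of the ${}_3F_2$ collapses to $1+\nu$, which now coincides exactly with one of its bottom parameters.

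The key step is recognizing that this parameter coincidence reduces the ${}_3F_2$ to a ${}_2F_1$. Examining the series
\[
\pFq{3}{2}{1,1+\nu,\beta+\nu}{1+\nu,\gamma+\nu}{z}=\sum_{k=0}^\infty\frac{(1)_k(1+\nu)_k(\beta+\nu)_k}{(1+\nu)_k(\gamma+\nu)_k}\frac{z^k}{k!},
\]
the Pochhammer factor $(1+\nu)_k$ divides out termwise, leaving $\pFq{}{}{1,\beta+\nu}{\gamma+\nu}{z}$. After these two cancellations the defining expression becomes
\[
\mathcal F(1,\beta;\gamma;z)_\nu=\pFq{}{}{1,\beta}{\gamma}{z}-\frac{(-z)^\nu}{(1-\beta)_{-\nu}(\gamma)_\nu}\pFq{}{}{1,\beta+\nu}{\gamma+\nu}{z}.
\]

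Finally I would replace $z$ by $-z$ throughout; since the power $(-z)^\nu$ then becomes $(z)^\nu=z^\nu$, the coefficient acquires exactly the stated form and the corollary follows. There is essentially no obstacle here, as the argument is purely a matter of specialization and termwise cancellation; the only points demanding care are the sign bookkeeping in $(-z)^\nu$ under the substitution $z\mapsto -z$, and the observation that the reduction of the ${}_3F_2$ to a ${}_2F_1$ arises from the genuine coincidence of a numerator and denominator parameter rather than from any contiguous-function identity.
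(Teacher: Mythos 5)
Your proposal is correct and is precisely the argument the paper intends: the corollary is stated without proof because it follows immediately from Definition \ref{def:Inc2F1} by setting $\alpha=1$, cancelling $(1)_\nu$ against $(\alpha)_\nu$, reducing the ${}_3F_2$ via the coincident parameter $1+\nu$, and substituting $z\mapsto -z$ so that $(-z)^\nu$ becomes $z^\nu$. Your added care about the parameter coincidence being a genuine termwise cancellation (rather than a contiguous relation) is exactly the right observation.
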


\begin{corollary}
\label{cor:Tn_inc_2F1_form}
For $n\in\{n\in\Bbb N_0:0\leq n<\alpha_1\}$
\[
\mathscr T_n=\frac{\alpha_1-1}{\alpha_1Y_1}\pFqmod{}{}{1,2-\alpha_1}{\alpha_2}{-\frac{\alpha_2 Y_2}{\alpha_1 Y_1}}_{\!n}.
\]
\end{corollary}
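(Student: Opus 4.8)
The plan is to show that the finite sum defining $\mathscr T_n$ in Lemma \ref{lem:T_estimator_discrete} is, term for term, a constant multiple of the finite sum that $\mathcal F(1,2-\alpha_1;\alpha_2;-z)_\nu$ generalizes, and then invoke the fact that any such generalization reproduces its defining sum at integer arguments.

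First I would introduce the shorthand $z=\frac{\alpha_2 Y_2}{\alpha_1 Y_1}$, so that Lemma \ref{lem:T_estimator_discrete} reads $\mathscr T_n=\frac{1}{\alpha_1 Y_1}\sum_{k=0}^{n-1}\frac{z^k}{(\alpha_1)_{-k-1}(\alpha_2)_k}$. The crux is to re-express the reciprocal Pochhammer symbol $(\alpha_1)_{-k-1}^{-1}$, whose index is a negative integer, in terms of a rising factorial with a nonnegative index. Applying the reflection identity $(s)_m=(-1)^m/(1-s)_{-m}$ (stated in Section \ref{sec:mathematical_prelims}) with $s=\alpha_1$ and $m=-k-1$ gives $(\alpha_1)_{-k-1}=(-1)^{k+1}/(1-\alpha_1)_{k+1}$, and then splitting off the first factor via $(1-\alpha_1)_{k+1}=(1-\alpha_1)(2-\alpha_1)_k$ yields
\[
\frac{1}{(\alpha_1)_{-k-1}}=(-1)^k(\alpha_1-1)(2-\alpha_1)_k.
\]
Substituting this back and absorbing $(-1)^k$ into $z^k$ turns $\mathscr T_n$ into $\frac{\alpha_1-1}{\alpha_1 Y_1}\sum_{k=0}^{n-1}\frac{(2-\alpha_1)_k}{(\alpha_2)_k}(-z)^k$.

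It then remains to recognize this sum as the value at $\nu=n$ of the sine-modulated incomplete hypergeometric function. By Lemma \ref{lem:incHyper_unique_generalization}, $\mathcal F(\alpha,\beta;\gamma;-z)_\nu$ is the unique generalization of $f(n)=\sum_{k=0}^{n-1}\frac{\sin\pi(\beta+k)}{\sin\pi\beta}\frac{(\alpha)_k(\beta)_k}{(\gamma)_k}\frac{z^k}{k!}$, so in particular it agrees with $f(n)$ at every nonnegative integer. Specializing $\alpha=1$, $\beta=2-\alpha_1$, $\gamma=\alpha_2$ and using that $\sin\pi(\beta+k)/\sin\pi\beta=(-1)^k$ together with $(1)_k/k!=1$ for integer $k$, the summand collapses to $\frac{(2-\alpha_1)_k}{(\alpha_2)_k}(-z)^k$, matching the sum obtained above. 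This identifies $\mathscr T_n=\frac{\alpha_1-1}{\alpha_1 Y_1}\mathcal F(1,2-\alpha_1;\alpha_2;-z)_n$, which is the claim once $-z=-\frac{\alpha_2 Y_2}{\alpha_1 Y_1}$ is written out.

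The main obstacle is purely bookkeeping in the second step: correctly handling the negative index in $(\alpha_1)_{-k-1}$ and tracking the signs $(-1)^{k+1}$ through the reflection identity and the factorization of $(1-\alpha_1)_{k+1}$. A secondary point worth confirming is that the explicit closed form in Definition \ref{def:Inc2F1} genuinely reduces to the finite sum $f(n)$ at integer $\nu$; rather than leaning only on the abstract generalization property, I would verify this directly from Corollary \ref{cor:inc2F1_special_case}, checking that the subtracted $\nu=n$ term equals the tail $\sum_{k\geq n}\frac{(2-\alpha_1)_k}{(\alpha_2)_k}(-z)^k$ after an index shift $j=k-n$ and an application of $(\beta)_{n+j}=(\beta)_n(\beta+n)_j$. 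That equality again reduces to the same Pochhammer reflection identity, so no new ingredient is needed. The hypotheses $0\leq n<\alpha_1$ and $\gamma=\alpha_2>0$ ensure every symbol is finite and that $-\gamma\notin\Bbb N_0$, so no degenerate case arises.
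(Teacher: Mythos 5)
Your proposal is correct and follows essentially the same route as the paper: the paper likewise rewrites $(\alpha_1)_{-k-1}^{-1}$ via the gamma reflection formula (your Pochhammer identity $(s)_m=(-1)^m/(1-s)_{-m}$ is the same fact in different notation) and then matches the resulting sum to the $f(n)$ of Lemma \ref{lem:incHyper_unique_generalization} with $\alpha=1$, $\beta=2-\alpha_1$, $\gamma=\alpha_2$, $z=\alpha_2Y_2(\alpha_1Y_1)^{-1}$. The only cosmetic difference is that the paper keeps the sign as the sine ratio $\sin(\pi(\alpha_1-k))/\sin\pi\alpha_1=\sin(\pi(2-\alpha_1+k))/\sin(\pi(2-\alpha_1))$ to exhibit the sine-modulated form literally, while you absorb $(-1)^k$ into $(-z)^k$ and appeal to the integer-argument agreement, which is equally valid.
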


\begin{proof}
Recalling the expression for $\mathscr T_n$, application of the gamma reflection formula gives
\[
\mathscr T_n=\frac{\alpha_1-1}{\alpha_1 Y_1}\sum_{k=0}^{n-1}\frac{\sin(\pi(\alpha_1-k))}{\sin\pi\alpha_1}\frac{(1)_k(2-\alpha_1)_k}{(\alpha_2)_k k!}\left(\frac{\alpha_2Y_2}{\alpha_1Y_1}\right)^k.
\]
By the properties of the sine function we can write
\[
\frac{\sin(\pi(\alpha_1-k))}{\sin\pi\alpha_1}=\frac{\sin(\pi(2-\alpha_1+k))}{\sin(\pi(2-\alpha_1))},
\]
which upon substitution into the expression for $\mathscr T_n$ yields the form of $f(n)$ in Lemma \ref{lem:incHyper_unique_generalization} for $\alpha=1$, $\beta=2-\alpha_1$, $\gamma=\alpha_2$, and $z=\alpha_2Y_2(\alpha_1Y_1)^{-1}$.
\end{proof}


\section{The generalized estimator $\mathscr T_\nu$}
\label{sec:main_results}

This section introduces the unique fractional generalization of the estimator $\mathscr T_n$ and its properties. In light of Corollary \ref{cor:Tn_inc_2F1_form}, we may now suspect that the sine-modulated incomplete hypergeometric function can be used to extend the domain of $\mathscr T_n$ beyond integer valued $n$ while preserving statistical properties. The following theorem proves this conjecture to be true. We will first present the estimator $\mathscr T_\nu$ with complex parameter $\nu$ in its full generality and then show how \emph{a priori} knowledge of $\kappa_1$ and $\kappa_2$ can be incorporated to produce a useful estimator for $\tau=(\kappa_1-\kappa_2)^{-1}$.

\begin{theorem}
\label{thm:T_estimator}
Let $Y_1$ and $Y_2$ be as in Lemma \ref{lem:Y1_Y2_complete_statistic}. If $\nu\in\Bbb C:-\alpha_2<\Re\,\nu<\alpha_1$, then
\[
\mathscr T_\nu=\frac{\alpha_1-1}{\alpha_1Y_1}\pFqmod{}{}{1,2-\alpha_1}{\alpha_2}{-\frac{\alpha_2 Y_2}{\alpha_1 Y_1}}_{\!\nu},
\]
is an unbiased estimator of $\tau_{\nu}=(1-\zeta^\nu)/(\kappa_1-\kappa_2)$.
\end{theorem}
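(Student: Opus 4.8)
The plan is to leverage the machinery already assembled so that the expectation can be pushed through the summability-calculus generalization, thereby reducing the claim to the geometric-series case. By Corollary \ref{cor:Tn_inc_2F1_form} together with Lemma \ref{lem:incHyper_unique_generalization}, the random variable $\mathscr T_\nu$ is precisely the unique fractional generalization (in the sense of Theorem \ref{thm:FracSum_uniqueness}) of the finite sum $\mathscr T_n=\sum_{k=0}^{n-1}g(k)$, where $g(k)=(\alpha_1 Y_1)^{-1}[(\alpha_1)_{-k-1}(\alpha_2)_k]^{-1}(\alpha_2 Y_2/\alpha_1 Y_1)^k$ is the summand of Lemma \ref{lem:T_estimator_discrete}. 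First I would record the pointwise moment $\ev g(k)$: since $Y_1$ and $Y_2$ are independent, Lemma \ref{lem:gamma_moments} gives $\ev Y_1^{-k-1}=\beta_1^{k+1}(\alpha_1)_{-k-1}$ and $\ev Y_2^{k}=\beta_2^{-k}(\alpha_2)_k$, and after inserting $\beta_i=\alpha_i/\kappa_i$ the Pochhammer factors cancel and everything collapses to $\ev g(k)=\kappa_1^{-1}\zeta^k$ with $\zeta=\kappa_2/\kappa_1$. This is the crucial algebraic simplification: the seemingly complicated summand has a purely geometric expectation.

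Next I would invoke the $\mathfrak T$-summation representation. By Theorem \ref{thm:FracSum_form1},
\[
\mathscr T_\nu \overset{\mathfrak T}{=} \sum_{k=0}^\infty g(k) - \sum_{k=0}^\infty g(k+\nu),
\]
and I would take the expectation of both sides, using the linearity, regularity, and stability of $\mathfrak T$ (Lemma \ref{lem:Tsummability_properties}) to justify exchanging $\ev$ with the $\mathfrak T$-sum. Since $\ev g(k)=\kappa_1^{-1}\zeta^k$ and $\ev g(k+\nu)=\kappa_1^{-1}\zeta^{k+\nu}$, each resulting sum is a geometric $\mathfrak T$-sum handled exactly as in Lemma \ref{lem:incGeomSeries_general}: $\sum_{k\ge 0}\kappa_1^{-1}\zeta^k\overset{\mathfrak T}{=}\kappa_1^{-1}/(1-\zeta)$, and the shifted sum contributes the additional factor $\zeta^\nu$. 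Subtracting yields $\ev\mathscr T_\nu=\kappa_1^{-1}(1-\zeta^\nu)/(1-\zeta)=(1-\zeta^\nu)/(\kappa_1-\kappa_2)=\tau_\nu$, which is the assertion.

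The main obstacle is rigorously justifying the interchange of expectation with the fractional generalization, because the individual moment $\ev g(k)$ is finite only while $\Re k<\alpha_1-1$; for larger indices the negative moments of $Y_1$ diverge, so a naive term-by-term argument on the divergent series is not legitimate. I would close this gap in one of two ways. The cleaner route is to argue via the defining recurrence: Theorem \ref{thm:FracSum_uniqueness} gives $\mathscr T_\nu=\mathscr T_{\nu-1}+g(\nu-1)$ with $\mathscr T_0=0$, and taking expectations on a sub-strip where Lemma \ref{lem:gamma_moments} applies to $g(\nu-1)$ (there $\ev g(\nu-1)=\kappa_1^{-1}\zeta^{\nu-1}$) shows that $\ev\mathscr T_\nu$ satisfies the same recurrence and initial condition as $\tau_\nu=\kappa_1^{-1}(1-\zeta^\nu)/(1-\zeta)$; combined with analyticity of $\nu\mapsto\ev\mathscr T_\nu$ and the uniqueness of the generalization, this forces equality throughout $-\alpha_2<\Re\nu<\alpha_1$. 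The alternative, fully self-contained route is a direct evaluation: reduce to standard gammas by setting $G_i=\beta_i Y_i$, so the argument becomes $-\zeta G_2/G_1$ and the prefactor becomes $(\alpha_1-1)\kappa_1^{-1}G_1^{-1}$, change variables to the Beta-prime ratio $V=G_2/G_1$, integrate out $G_1$, and evaluate the resulting single Euler-type integral of ${}_2F_1$ against $V^{\alpha_2-1}(1+V)^{-(\alpha_1+\alpha_2-1)}$ in closed form, confirming the value $\tau_\nu$ on a sub-strip and extending by the identity theorem. Either way I would establish the required analyticity by differentiating under the integral sign, the integrability being guaranteed precisely by the hypothesis $-\alpha_2<\Re\nu<\alpha_1$.
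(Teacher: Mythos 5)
Your fallback route is, in essence, the paper's own proof. After standardizing $G_i=\beta_iY_i$, the paper conditions on the ratio, passes to $W\sim\beta^\prime(\alpha_2,\alpha_1-1)$, and evaluates $\ev\bigl[W^\nu\,{}_2F_1(1,2-\alpha_1+\nu;\alpha_2+\nu;-\zeta W)\bigr]$ in closed form via the Euler-type integral \cite[Eq.~$7.512.10$]{gradshteyn_ryzhik_2014}; the finiteness condition of that evaluation is exactly $-\alpha_2<\Re\nu<\alpha_1$, so the formula is obtained on the \emph{whole} strip directly and your proposed identity-theorem extension from a sub-strip is unnecessary (though harmless). The paper then recognizes the result as ${_1F_0}(1;-;\zeta)_{1-\alpha_1+\nu}-{_1F_0}(1;-;\zeta)_{1-\alpha_1}$ and simplifies to $\tau_\nu$. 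Your preliminary observation that $\ev g(k)=\kappa_1^{-1}\zeta^k$ is correct and is the same algebraic collapse that drives the paper's computation.

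Your preferred route, however, has a genuine gap. On the sub-strip $1-\alpha_2<\Re\nu<\alpha_1$ you do correctly get that $F(\nu)\coloneqq\ev\mathscr T_\nu$ is analytic, satisfies $F(\nu)-F(\nu-1)=\kappa_1^{-1}\zeta^{\nu-1}$, and has $F(0)=0$; but these three facts do \emph{not} force $F=\tau_\nu$. The difference $D=F-\tau_\nu$ solves the homogeneous recurrence $D(\nu)=D(\nu-1)$ with $D(0)=0$, and $D(\nu)=c\sin(2\pi\nu)$ is an analytic nonzero solution, so the recurrence pins $F$ down only at the finitely many integers inside the strip; to exclude period-one obstructions you would need a growth bound in the spirit of Carlson's theorem, which you do not supply. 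Nor can you simply invoke the uniqueness in Theorem \ref{thm:FracSum_uniqueness}: that theorem characterizes the fractional sum as the limit of a specific iterative polynomial-interpolation scheme, so concluding that $\ev\mathscr T_\nu$ is ``the'' unique generalization of $\tau_n$ would require showing that expectation commutes with that construction (with each $f_{G,r}$ and with the limit $r\to\infty$) — which is precisely the interchange you set out to avoid, and which cannot be done termwise because $\ev g(k)=+\infty$ for $k\geq\alpha_1-1$. As written, route (a) is circular; route (b), i.e.\ the direct evaluation matching the paper, is the argument that actually closes the theorem.
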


\begin{proof}
We simply need to show that $\mathscr T_\nu$ yields the correct expected value. Given $Y_i\sim\mathcal G(\alpha_i,\alpha_i/\kappa_i)$ one has $\alpha_i Y_i\sim\kappa_i Y_i^\ast$ where $Y_i^\ast\sim\mathcal G(\alpha_i,1)$. Let $U=Y_1^\ast$, $V=Y_2^\ast/Y_1^\ast$, $\zeta=\kappa_2/\kappa_1$, and $\mathcal F(-\zeta V)_{\nu}=\mathcal F(1,2-\alpha_1;\alpha_2;-\zeta V)_{\nu}$, then
\[
\mathscr T_\nu=\frac{\alpha_1-1}{\kappa_1}U^{-1}\mathcal F(-\zeta V)_{\nu}.
\]
Given $Y_1\perp Y_2$ we use change of variables to write the joint density
\[
f_{UV}(u,v)=\frac{(1+v)^{\alpha_1+\alpha_2}}{\Gamma(\alpha_1+\alpha_2)}u^{\alpha_1+\alpha_2-1}e^{-(1+v)u}\frac{v^{\alpha_2-1}(1+v)^{-\alpha_1-\alpha_2}}{\operatorname{B}(\alpha_1,\alpha_2)},
\]
which upon inspection has the form $f_{UV}(u,v)=f_{U|V}(u)f_V(v)$ where $U|V\sim\mathcal G(\alpha_1+\alpha_2,1+V)$ and $V\sim\beta^\prime(\alpha_2,\alpha_1)$, which is a \emph{beta prime} random variable. Now making use of Lemma \ref{lem:gamma_moments} we write
\[
\ev\mathscr T_\nu=\ev\ev(\mathscr T_\nu|V)=\frac{1}{\kappa_1}\frac{\alpha_1-1}{\alpha_1+\alpha_2-1}\ev\left[ (1+V)\mathcal F(-\zeta V)_\nu\right]=\frac{1}{\kappa_1}\ev{\mathcal F}(-\zeta W)_{\nu},
\]
where $W\sim\beta^\prime(\alpha_2,\alpha_1-1)$\footnote{The last equality can be easily reach upon multiplying the density $f_V$ with $\frac{(\alpha_1-1)(1+v)}{\alpha_1+\alpha_2-1}$.}. Reintroducing the expression for $\mathcal F(-\zeta W)_{\nu}$ we arrive at
\[
\kappa_1\,\ev\mathscr T_\nu=\ev\pFq{}{}{1,2-\alpha_1}{\alpha_2}{-\zeta W}
-\zeta^\nu\frac{\ev W^\nu\pFq{}{}{1,2-\alpha_1+\nu}{\alpha_2+\nu}{-\zeta W}}{(\alpha_1-1)_{-\nu}(\alpha_2)_\nu}.
\]
The expected value of each term can be evaluated via \cite[Eq.~$7.512.10$]{gradshteyn_ryzhik_2014} to find
\begin{multline*}
\kappa_1\zeta^{1-\alpha_1}\ev\mathscr T_\nu=-(1-\alpha_1)\pFq{}{}{1,\alpha_1}{2}{1-\zeta}+(1-\alpha_1+\nu)\pFq{}{}{1,\alpha_1-\nu}{2}{1-\zeta},
\end{multline*}
where the second term is finite if only if $-\alpha_2<\Re\,\nu<\alpha_1$. Now, upon inspection of Definition \ref{def:inc_1F0} we see that this result is equivalent to
\[
\kappa_1\zeta^{1-\alpha_1}\ev\mathscr T_\nu={_1F_0}(1;-;\zeta)_{1-\alpha_1+\nu}-{_1F_0}(1;-;\zeta)_{1-\alpha_1}.
\]
Therefore, given the identity ${_1F_0}(1;-;z)_\nu=(1-z^\nu)(1-z)^{-1}$ we have
\[
\ev\mathscr T_\nu=\frac{1-\zeta^{\alpha_1-1}}{\kappa_1-\kappa_2}-\frac{\zeta^\nu-\zeta^{\alpha_1-1}}{\kappa_1-\kappa_2},
\]
which upon simplifying yields the desired result.
\end{proof}

%

\begin{corollary}[Minimum variance estimation]
\label{cor:Tv_is_an_UMVUE}
$\mathscr T_\nu$ is the uniformly minimum variance unbiased estimator of $\tau_\nu$.
\end{corollary}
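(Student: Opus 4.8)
The plan is to obtain this as an immediate corollary of the Lehmann--Scheffé theorem, since all of its hypotheses have already been verified in the preceding results. The three ingredients are: (i) $\mathscr T_\nu$ is, by the formula of Theorem \ref{thm:T_estimator}, a measurable function of $(Y_1,Y_2)$ alone; (ii) by Lemma \ref{lem:Y1_Y2_complete_statistic}, $(Y_1,Y_2)$ is a complete--sufficient statistic for $\theta=(\kappa_1,\kappa_2)$; and (iii) by Theorem \ref{thm:T_estimator}, $\mathscr T_\nu$ is unbiased for $\tau_\nu$ on the strip $-\alpha_2<\Re\,\nu<\alpha_1$. Given these, the conclusion is essentially automatic.

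First I would recall the underlying mechanism so that its applicability is transparent. Let $W$ be any unbiased estimator of $\tau_\nu$. Rao--Blackwellizing against the sufficient statistic, the conditional expectation $\ev[W\mid(Y_1,Y_2)]$ is again unbiased for $\tau_\nu$ and, by the law of total variance, satisfies $\var\ev[W\mid(Y_1,Y_2)]\le\var W$. Both $\ev[W\mid(Y_1,Y_2)]$ and $\mathscr T_\nu$ are unbiased functions of $(Y_1,Y_2)$, so their difference is a function of $(Y_1,Y_2)$ whose expectation vanishes identically in $\theta$; completeness then forces $\ev[W\mid(Y_1,Y_2)]=\mathscr T_\nu$ almost surely. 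Hence $\var\mathscr T_\nu\le\var W$ for every unbiased $W$, and since this holds uniformly over the admissible parameter values, $\mathscr T_\nu$ is the UMVUE, with completeness additionally supplying essential uniqueness.

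There is no genuine obstacle here beyond bookkeeping; the entire content is the verification of hypotheses already carried out in Lemma \ref{lem:Y1_Y2_complete_statistic} and Theorem \ref{thm:T_estimator}. The only points warranting a word of care are that the statement should be read over the range $-\alpha_2<\Re\,\nu<\alpha_1$ on which unbiasedness is guaranteed, and that the optimality is substantive precisely where $\var\mathscr T_\nu$ is finite, holding vacuously otherwise. In the written proof I would simply invoke the Lehmann--Scheffé theorem \cite[Thm.~$7.3.23$]{casella_2002} and cite the two prior results as establishing its premises.
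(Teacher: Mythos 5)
Your proposal is correct and follows the same route as the paper's own proof: both invoke the Lehmann--Scheff\'e theorem using the completeness and sufficiency of $(Y_1,Y_2)$ from Lemma \ref{lem:Y1_Y2_complete_statistic} together with the unbiasedness of $\mathscr T_\nu$ for $\tau_\nu$ from Theorem \ref{thm:T_estimator}. Your explicit unpacking of the Rao--Blackwell mechanism and the remarks on the parameter range are sound but add nothing beyond what the paper's two-line argument already delivers.
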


\begin{proof}
From Lemma \ref{lem:Y1_Y2_complete_statistic} we know that $T=(Y_1,Y_2)$ is a complete-sufficient statistic of $(\kappa_1,\kappa_2)$. Since $\mathscr T_\nu(Y_1,Y_2)$ is a function of $T$ it follows from the Lehmann-Scheff\' e theorem that $\mathscr T_\nu$ is the unique uniformly minimum variance unbiased estimator of its expected value $\tau_\nu$.
\end{proof}

\begin{corollary}
\label{corollary:T0_distribution}
$\mathscr T_0\sim\delta_0$ is a degenerate random variable with $\ev\mathscr T_0=0$.
\end{corollary}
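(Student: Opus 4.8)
The plan is to prove the stronger degeneracy claim directly, from which $\ev\mathscr T_0 = 0$ follows at once: if $\mathscr T_0 = 0$ almost surely then it is trivially integrable with mean zero. Since the prefactor $(\alpha_1-1)/(\alpha_1 Y_1)$ is finite almost surely (as $Y_1 > 0$ a.s.), the entire task reduces to establishing the pointwise identity $\mathcal F(1,2-\alpha_1;\alpha_2;-z)_0 = 0$ for every admissible argument $z = \alpha_2 Y_2/(\alpha_1 Y_1)$. Note that $\nu = 0$ lies in the admissible strip $-\alpha_2 < \Re\,\nu < \alpha_1$ of Theorem \ref{thm:T_estimator}, since $\alpha_1,\alpha_2 > 0$, so the expression for $\mathscr T_\nu$ is valid there.

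First I would invoke the special-case form of Corollary \ref{cor:inc2F1_special_case},
\[
\mathcal F(1,\beta;\gamma;-z)_\nu=\pFq{}{}{1,\beta}{\gamma}{-z}
-\frac{z^\nu}{(1-\beta)_{-\nu}(\gamma)_\nu}\pFq{}{}{1,\beta+\nu}{\gamma+\nu}{-z},
\]
and set $\nu = 0$. Evaluating each factor gives $z^0 = 1$, $(1-\beta)_0 = (\gamma)_0 = 1$, and the second hypergeometric term collapses to $\pFq{}{}{1,\beta}{\gamma}{-z}$ because its parameters coincide with those of the first. (Equivalently, one may start from Definition \ref{def:Inc2F1} and observe that at $\nu = 0$ the trailing ${}_3F_2$ has a numerator parameter equal to a denominator parameter, which cancel.) The two terms are therefore identical and subtract to zero, yielding $\mathcal F(1,\beta;\gamma;-z)_0 = 0$ identically in $z$.

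Finally I would substitute $\beta = 2-\alpha_1$, $\gamma = \alpha_2$, and $z = \alpha_2 Y_2/(\alpha_1 Y_1)$ back into the estimator of Theorem \ref{thm:T_estimator}, concluding that $\mathscr T_0 = \frac{\alpha_1-1}{\alpha_1 Y_1}\cdot 0 = 0$ almost surely. Hence $\mathscr T_0 \sim \delta_0$ and $\ev\mathscr T_0 = 0$; as a consistency check this agrees with $\tau_0 = (1-\zeta^0)/(\kappa_1-\kappa_2) = 0$ from Theorem \ref{thm:T_estimator}, and with the convention $\mathscr T_0 := 0$ for the empty sum in Lemma \ref{lem:T_estimator_discrete}. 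I do not expect a genuine obstacle; the computation is routine. The only point meriting care is that the corollary asserts \emph{degeneracy} rather than merely a vanishing mean, but this is automatic here because the incomplete hypergeometric function vanishes pointwise in its argument at $\nu = 0$, independently of the realized values of $Y_1$ and $Y_2$.
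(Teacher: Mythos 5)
Your proof is correct, but it takes a different route from the paper. The paper's proof is a one-liner: it appeals to the fundamental property of fractional finite sums, namely the initial condition $f_G(0)=0$ guaranteed by Theorem \ref{thm:FracSum_uniqueness}, of which $\mathscr T_\nu$ is an instance by construction. You instead verify the vanishing directly from the closed form: at $\nu=0$ the prefactor $(-z)^\nu\big/\bigl((1)_\nu(1-\beta)_{-\nu}(\gamma)_\nu\bigr)$ collapses to $1$ and the trailing ${}_3F_2$ (or the ${}_2F_1$ in Corollary \ref{cor:inc2F1_special_case}) degenerates to the leading ${}_2F_1$, so $\mathcal F(1,2-\alpha_1;\alpha_2;-z)_0=0$ identically in $z$, whence $\mathscr T_0=0$ a.s. Both arguments are sound; your computation is essentially the concrete instantiation of the abstract fact the paper cites. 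What your version buys is self-containedness — it needs only Definition \ref{def:Inc2F1} and routine Pochhammer evaluations, and it makes explicit that the vanishing is pointwise in $(Y_1,Y_2)$, which is exactly what the degeneracy claim $\mathscr T_0\sim\delta_0$ requires (a point the paper leaves implicit, since $f_G(0)=0$ holds for every realization of the argument). What the paper's version buys is generality and brevity: the same one-line argument applies to any estimator built from a fractional finite sum, without touching the hypergeometric representation. Your side remarks — that $\nu=0$ lies in the admissible strip, and the consistency check against $\tau_0=0$ and the empty-sum convention $\mathscr T_0\coloneqq 0$ of Lemma \ref{lem:T_estimator_discrete} — are all accurate.
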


\begin{proof}
This follows directly from the fundamental property of fractional finite sums: $f_G(0)=0$.
\end{proof}

At this point we have established $\mathscr T_\nu$ to be the unique generalization of $\mathscr T_n$ and presented a couple of its properties. We now proceed to show that $\mathscr T_\nu$ satisfies a reflection formula, which will be used many times in later sections.

\begin{theorem}[Reflection formula]
\label{thm:reflection_formula}
\[
\mathscr T_\nu(Y_1,Y_2,\alpha_1,\alpha_2)=-\mathscr T_{-\nu}(Y_2,Y_1,\alpha_2,\alpha_1)
\]
\end{theorem}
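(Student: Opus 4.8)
The plan is to avoid any direct manipulation of the sine-modulated incomplete hypergeometric function and instead exploit the fact, just established in Corollary \ref{cor:Tv_is_an_UMVUE}, that $\mathscr T_\nu$ is the \emph{unique} UMVUE of its expectation. The key observation is that the right-hand side $-\mathscr T_{-\nu}(Y_2,Y_1,\alpha_2,\alpha_1)$ is itself an instance of the estimator of Theorem \ref{thm:T_estimator}, obtained by interchanging the two gamma variables together with their shape parameters and negating the index. Since $(Y_1,Y_2)$ is a complete-sufficient statistic for $(\kappa_1,\kappa_2)$ by Lemma \ref{lem:Y1_Y2_complete_statistic} regardless of which variable is labelled ``first,'' both sides of the reflection formula are functions of the \emph{same} complete-sufficient statistic, so it suffices to show they share a common expectation.

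First I would apply Theorem \ref{thm:T_estimator} verbatim under the substitution $(Y_1,\alpha_1,\kappa_1)\leftrightarrow(Y_2,\alpha_2,\kappa_2)$ and $\nu\mapsto-\nu$. Writing $\zeta=\kappa_2/\kappa_1$ as in the theorem, this swapped instance replaces $\zeta$ by $\zeta^{-1}=\kappa_1/\kappa_2$ and hence gives
\[
\ev\bigl[\mathscr T_{-\nu}(Y_2,Y_1,\alpha_2,\alpha_1)\bigr]=\frac{1-(\zeta^{-1})^{-\nu}}{\kappa_2-\kappa_1}=\frac{1-\zeta^{\nu}}{\kappa_2-\kappa_1}=-\tau_\nu,
\]
which is valid precisely when $-\alpha_1<\Re(-\nu)<\alpha_2$, i.e.\ on the same strip $-\alpha_2<\Re\nu<\alpha_1$ assumed in the statement. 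Multiplying through by $-1$ yields
\[
\ev\bigl[-\mathscr T_{-\nu}(Y_2,Y_1,\alpha_2,\alpha_1)\bigr]=\tau_\nu=\ev\bigl[\mathscr T_\nu(Y_1,Y_2,\alpha_1,\alpha_2)\bigr].
\]

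With both sides shown to be unbiased for the single quantity $\tau_\nu$, I would invoke completeness. The difference $D\coloneqq\mathscr T_\nu(Y_1,Y_2,\alpha_1,\alpha_2)+\mathscr T_{-\nu}(Y_2,Y_1,\alpha_2,\alpha_1)$ is a function of $(Y_1,Y_2)$ with $\ev D=0$ for every admissible $(\kappa_1,\kappa_2)$; decomposing $D$ into its real and imaginary parts, each of which then has vanishing expectation, and applying completeness to each part separately, I conclude $D=0$ almost surely. Because both estimators are analytic, and in particular continuous, functions of $(Y_1,Y_2)$ on the full-support domain $\Bbb R^+\times\Bbb R^+$, almost-sure equality upgrades to the pointwise identity asserted by the reflection formula.

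The main obstacle is bookkeeping rather than analysis: one must check that interchanging the labels genuinely produces another legitimate instance of Theorem \ref{thm:T_estimator}, so that both its expectation formula and its convergence strip transfer correctly after the swap, and one must justify that the completeness step is valid for a complex-valued statistic, which is exactly why the real/imaginary splitting is introduced. As a purely analytic alternative that sidesteps the statistical machinery, one could instead expand both sides via Corollary \ref{cor:inc2F1_special_case}, represent each $F(1,\,\cdot\,;\,\cdot\,;-z)$ through the incomplete-beta form recorded in Section \ref{sec:mathematical_prelims}, and reconcile the two expressions using the transformations (\ref{eq:hyper_2F1_xForms})(i),(iii) to convert the argument $-z$ into $-1/z$; this route is elementary but computationally heavy, so I would retain the UMVUE argument as the primary proof.
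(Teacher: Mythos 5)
Your argument is correct, but it takes a genuinely different route from the paper. The paper's proof is purely analytic: it writes out $\mathscr T_\nu$ in full, derives from \cite[Eq.~$15.8.2$]{nist_2010} a transformation sending $F(1,\beta;\gamma;-z)$ to an expression in $-1/z$, applies it to both hypergeometric terms, and simplifies term by term until $-\mathscr T_{-\nu}(Y_2,Y_1,\alpha_2,\alpha_1)$ appears — this is essentially the ``computationally heavy'' alternative you sketch in your closing remarks. Your primary route instead recycles Theorem \ref{thm:T_estimator} under the symmetric relabelling $(Y_1,\alpha_1,\kappa_1)\leftrightarrow(Y_2,\alpha_2,\kappa_2)$, $\nu\mapsto-\nu$ (correctly checking that the convergence strip maps onto itself), and then uses completeness of $(Y_1,Y_2)$ from Lemma \ref{lem:Y1_Y2_complete_statistic} to force the two unbiased estimators of $\tau_\nu$ to agree; the real/imaginary splitting for the complex-valued statistic and the upgrade from almost-sure to pointwise equality via continuity on $\Bbb R^+\times\Bbb R^+$ (where all the gamma measures are mutually absolutely continuous and equivalent to Lebesgue measure) are exactly the right technical patches. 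What each approach buys: the paper's computation is self-contained, makes no appeal to integrability, and establishes the identity for arbitrary $\nu$ in one stroke, whereas your argument is shorter and conceptually cleaner but directly proves the identity only on the strip $-\alpha_2<\Re\nu<\alpha_1$ where the expectations exist. Since the theorem is stated without that restriction, you should add one line extending beyond the strip: for fixed $(y_1,y_2)$ both sides are analytic in $\nu$ (away from the discrete parameter poles), so equality on the open strip propagates everywhere by the identity theorem. With that remark included, your proof is complete and a legitimate substitute for the paper's.
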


\begin{proof}
We begin with the general expression for $\mathscr T_\nu$, namely,
\begin{multline*}
\mathscr T_\nu(Y_1,Y_2,\alpha_1,\alpha_2)=\frac{\alpha_1-1}{\alpha_1Y_1}\biggl(%
\pFq{}{}{1,2-\alpha_1}{\alpha_2}{-\frac{\alpha_2 Y_2}{\alpha_1 Y_1}}\\
-\frac{\left(\frac{\alpha_2 Y_2}{\alpha_1Y_1}\right)^\nu}{(\alpha_1-1)_{-\nu}(\alpha_2)_\nu}\pFq{}{}{1,2-\alpha_1+\nu}{\alpha_2+\nu}{-\frac{\alpha_2 Y_2}{\alpha_1 Y_1}}%
\biggr).
\end{multline*}
Then, \cite[Eq.~$15.8.2$]{nist_2010} provides the necessary result to derive the transformation formula
\[
\pFq{}{}{1,\beta}{\gamma}{-z}=
\frac{\Gamma(1-\beta)z^{1-\gamma}}{(\gamma)_{-\beta}(1+z)^{\beta-\gamma+1}}
-\frac{\gamma-1}{z(1-\beta)}\pFq{}{}{1,2-\gamma}{2-\beta}{-\frac{1}{z}},
\]
which is subject to the constraint $|\operatorname{ph}z|<\pi$. Applying this transformation to each hypergeometric term in $\mathscr T_\nu$ and simplifying yields
\[
\begin{aligned}
\mathscr T_\nu(Y_1,Y_2,\alpha_1,\alpha_2)%
&=-\frac{\alpha_2-1}{\alpha_2 Y_2}\biggl(%
\pFq{}{}{1,2-\alpha_2}{\alpha_1}{-\frac{\alpha_1 Y_1}{\alpha_2 Y_2}}\\
&\quad\ \ -\frac{\left(\frac{\alpha_1 Y_1}{\alpha_2Y_2}\right)^{-\nu}}{(\alpha_2-1)_\nu(\alpha_1)_{-\nu}}\pFq{}{}{1,2-\alpha_2-\nu}{\alpha_1-\nu}{-\frac{\alpha_1 Y_1}{\alpha_2 Y_2}}%
\biggr)\\
&=-\mathscr T_{-\nu}(Y_2,Y_1,\alpha_2,\alpha_1),
\end{aligned}
\]
which completes the proof.
\end{proof}

\begin{corollary}
\label{cor:moment_reflection_formula}
$(\ev\mathscr T_\nu^n)(\kappa_1,\kappa_2,\alpha_1,\alpha_2)=(-1)^n(\ev\mathscr T_{-\nu}^n)(\kappa_2,\kappa_1,\alpha_2,\alpha_1)$.
\end{corollary}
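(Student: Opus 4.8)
The plan is to derive this moment identity directly from the pathwise reflection formula of Theorem \ref{thm:reflection_formula}, which is an almost-sure identity between the random variables themselves, not merely between their expectations. First I would raise both sides of
\[
\mathscr T_\nu(Y_1,Y_2,\alpha_1,\alpha_2)=-\mathscr T_{-\nu}(Y_2,Y_1,\alpha_2,\alpha_1)
\]
to the $n$th power, which immediately converts the single sign into $(-1)^n$:
\[
\mathscr T_\nu(Y_1,Y_2,\alpha_1,\alpha_2)^n=(-1)^n\mathscr T_{-\nu}(Y_2,Y_1,\alpha_2,\alpha_1)^n.
\]
Taking expectations of both sides and pulling the constant $(-1)^n$ outside then gives the skeleton of the result; everything that remains is the correct identification of each expectation with the appropriate evaluation of the function $\ev\mathscr T^n$.

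The crucial step is the bookkeeping of distributions. By the convention established in Lemma \ref{lem:Y1_Y2_complete_statistic} and used throughout, $(\ev\mathscr T_\nu^n)(\kappa_1,\kappa_2,\alpha_1,\alpha_2)$ means $\ev[\mathscr T_\nu(Y_1,Y_2,\alpha_1,\alpha_2)^n]$ where $Y_1\sim\mathcal G(\alpha_1,\alpha_1/\kappa_1)$ and $Y_2\sim\mathcal G(\alpha_2,\alpha_2/\kappa_2)$ are independent; this is exactly the left-hand side above. For the right-hand side, note that in $\mathscr T_{-\nu}(Y_2,Y_1,\alpha_2,\alpha_1)$ the first gamma slot is occupied by $Y_2\sim\mathcal G(\alpha_2,\alpha_2/\kappa_2)$ and the second by $Y_1\sim\mathcal G(\alpha_1,\alpha_1/\kappa_1)$, with the shape parameters correspondingly in the order $(\alpha_2,\alpha_1)$. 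Since the expectation depends only on the joint distribution of its gamma arguments, and since $Y_1\perp Y_2$, relabelling the dummy variables shows that this expectation is precisely the function $\ev\mathscr T_{-\nu}^n$ evaluated in the permuted order, that is, $(\ev\mathscr T_{-\nu}^n)(\kappa_2,\kappa_1,\alpha_2,\alpha_1)$. Chaining the two identifications yields
\[
(\ev\mathscr T_\nu^n)(\kappa_1,\kappa_2,\alpha_1,\alpha_2)=(-1)^n(\ev\mathscr T_{-\nu}^n)(\kappa_2,\kappa_1,\alpha_2,\alpha_1),
\]
as claimed.

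I expect the main obstacle to be conceptual rather than computational: one must argue cleanly that simultaneously swapping the two random arguments and the two shape parameters on the right is equivalent to transposing the pair $(\kappa_1,\alpha_1)\leftrightarrow(\kappa_2,\alpha_2)$ in the definition of the moment functional, which is where the parameter interchange on the right-hand side originates. A secondary point worth flagging is the existence of the moments: the identity should be understood to hold on the common domain of the sign parameter $\nu$ for which both $n$th moments are finite (governed, as in Theorem \ref{thm:T_estimator}, by constraints such as $-\alpha_2<\Re\,\nu<\alpha_1$ and its reflected counterpart), so that the expectations in the argument are well defined and the passage from the pathwise identity to the moment identity is legitimate.
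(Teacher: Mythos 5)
Your proposal is correct and follows essentially the same route as the paper's own proof: both deduce the moment identity by applying the pathwise reflection formula of Theorem \ref{thm:reflection_formula} and observing, via the standardization $Y_i\sim(\kappa_i/\alpha_i)Y_i^\ast$ with $Y_i^\ast\sim\mathcal G(\alpha_i,1)$, that interchanging the gamma arguments interchanges $\kappa_1$ and $\kappa_2$ (and $\alpha_1$, $\alpha_2$) in the expectation. Your more explicit distributional bookkeeping and the remark on restricting $\nu$ to the common domain where both $n$th moments are finite only make precise what the paper's two-line proof leaves implicit.
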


\begin{proof}
The proof follows immediately from the reflection formula of Corollary \ref{cor:Tv_is_an_UMVUE} and $Y_i\sim \kappa_i/\alpha_i Y_i^\ast$ with $Y_i^\ast\sim\mathcal G(\alpha_i,1)$. In other words, interchanging $Y_1$ and $Y_2$ interchanges $\kappa_1$ and $\kappa_2$ in the expected value.
\end{proof}

\begin{proposition}
\label{prop:T_est_bias}
As an estimator for $\tau=(\kappa_1-\kappa_2)^{-1}$, $\mathscr T_\nu$ is biased with bias equal to
\[
\bias\mathscr T_\nu\coloneqq\ev\mathscr T_\nu-\tau=-\frac{\zeta^\nu}{\kappa_1-\kappa_2}
\]
and absolute relative bias equal to
\[
\arb\mathscr T_\nu\coloneqq\left\lvert\frac{\ev\mathscr T_\nu-\tau}{\tau}\right\rvert=\zeta^\nu.
\]
\end{proposition}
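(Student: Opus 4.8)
The plan is to obtain this proposition as an essentially immediate consequence of Theorem \ref{thm:T_estimator}, which already supplies the closed-form expectation $\ev\mathscr T_\nu=\tau_\nu=(1-\zeta^\nu)/(\kappa_1-\kappa_2)$. With that expectation in hand, both claimed identities follow from elementary algebra, so the work reduces to two short computations rather than any new analysis.

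First I would compute the bias by subtracting the estimand $\tau=(\kappa_1-\kappa_2)^{-1}$ from the known expectation,
\[
\bias\mathscr T_\nu=\ev\mathscr T_\nu-\tau=\frac{1-\zeta^\nu}{\kappa_1-\kappa_2}-\frac{1}{\kappa_1-\kappa_2}=-\frac{\zeta^\nu}{\kappa_1-\kappa_2},
\]
which is precisely the first stated formula. Next I would divide this bias by $\tau$ and take the modulus; the common factor $(\kappa_1-\kappa_2)^{-1}$ cancels cleanly, leaving $\arb\mathscr T_\nu=\lvert-\zeta^\nu\rvert=\lvert\zeta^\nu\rvert$.

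The only point requiring a word of justification is the passage from $\lvert\zeta^\nu\rvert$ to the bare $\zeta^\nu$ appearing in the statement. Since $\zeta=\kappa_2/\kappa_1$ is a ratio of positive parameters, and since the estimator is used with real $\nu$ under the operating assumption $\kappa_1>\kappa_2$ (cf.\ Remark \ref{rem:discrete_est_bias}), we have $\zeta\in(0,1)$ and hence $\zeta^\nu>0$, so the absolute-value bars may be dropped. I do not expect any genuine obstacle here: the entire proposition is a direct corollary of the expectation already established in Theorem \ref{thm:T_estimator}, and the main \emph{difficulty} is merely noting the positivity of $\zeta^\nu$.
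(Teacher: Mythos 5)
Your proof is correct and coincides with the paper's own treatment: the proposition is presented there without a separate proof, precisely because it is the immediate algebraic consequence of $\ev\mathscr T_\nu=(1-\zeta^\nu)/(\kappa_1-\kappa_2)$ from Theorem \ref{thm:T_estimator} that you spell out. Your added remark that $\zeta^\nu>0$ (for real $\nu$ and $\zeta=\kappa_2/\kappa_1>0$) justifies dropping the absolute value and is a sound, if minor, point the paper leaves implicit.
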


Corollary \ref{corollary:T0_distribution}, Theorem \ref{thm:reflection_formula}, and Proposition \ref{prop:T_est_bias} all hint at the significance of the parameter $\nu$ in controlling the bias, dispersion, and sign of $\mathscr T_\nu$. To gain a better intuition for these relationships it would be informative to plot the density of $\mathscr T_\nu$ for several values $\nu$. Deriving explicit forms of this density is likely impossible so we shall turn to estimating it with Monte Carlo methods. Estimating the density was done by first generating a total of $10^6$ i.i.d.~pseudo-random observations of $Y_1$ and $Y_2$ using the parameters $\alpha_1=20$, $\alpha_2=15$, $\kappa_1=2$, and $\kappa_2=1$, which correspond to $\tau=1$. These pseudo-random observations and parameters were then used to generate equally sized samples of $\mathscr T_\nu(Y_1,Y_2,20,15)$ for the eight selected values of $\nu$ in Table \ref{tbl:special_constants}\footnote{Since $\mathscr T_\nu$ estimates $\tau_\nu$ for any $\nu\in\Bbb C$ along the strip $-\alpha_2<\Re\nu<\alpha_1$, it seemed proper to choose interesting values of $\nu$ for this demonstration.}.
\begin{table}[htb]
\centering
\begin{tabular}{LCR}\toprule
\textbf{Symbol} &\textbf{Name/Expression} &\multicolumn{1}{C}{\textbf{Decimal Expansion}}\\\midrule
\zeta^\prime(2)  &\frac{1}{6}\pi^2(\gamma+\log 2\pi-12\log A) &-0.9375482543\dots  \\[0.5ex]
\delta_1  &\text{Hall--Montgomery constant}  &-0.6569990137\dots  \\[0.5ex]
m_{1,4}  &\text{Meissel--Mertens constant}  &-0.2867420562\dots  \\[0.5ex]
i^i  &\exp(-\pi/2)  &0.2078795764\dots  \\[0.5ex]
K  &\text{Landau--Ramanujan constant}  &0.7642236535\dots  \\[0.5ex]
x_\Gamma  &\text{Minimizer of $\Gamma(x)$ on $\Bbb R^+$}  &1.4616321449\dots  \\[0.5ex]
L  &\text{L\'{e}vy constant,}\ \exp(\pi^2/\log 64)  &10.7310157948\dots\\[0.5ex]
\zeta_0 &\text{First non-trivial zero of $\zeta(s)$} &1/2+14.1347251417i\\\bottomrule
\end{tabular}
\caption{Special constants (see \cite{finch_2003} for details). Here, $\gamma$ is the Euler--Mascheroni constant and $A$ denotes the Glaisher--Kinkelin constant.}
\label{tbl:special_constants}
\end{table}

Kernel density estimates were computed for all eight samples corresponding to each value of $\nu$ as plotted in Figure \ref{fig:bias_variance_tradeoff}. Looking in particular at the plots of the kernel density estimates for the real-valued $\nu$ we see that $\operatorname{sign}(\mathscr T_\nu)=\operatorname{sign}(\nu)$ and that the dispersion of the density increases with increasing $|\nu|$. Notice also that only the estimates generated with values of $\nu>0$ serve as useful estimates for $\tau$; showing that one must incorporate a priori information into $\mathscr T_\nu$ by requiring $\operatorname{sign}(\nu)=\operatorname{sign}(\tau)$ in order to render it a useful estimator of $\tau$.
\begin{figure}[htb]
\centering
\includegraphics[scale=1]{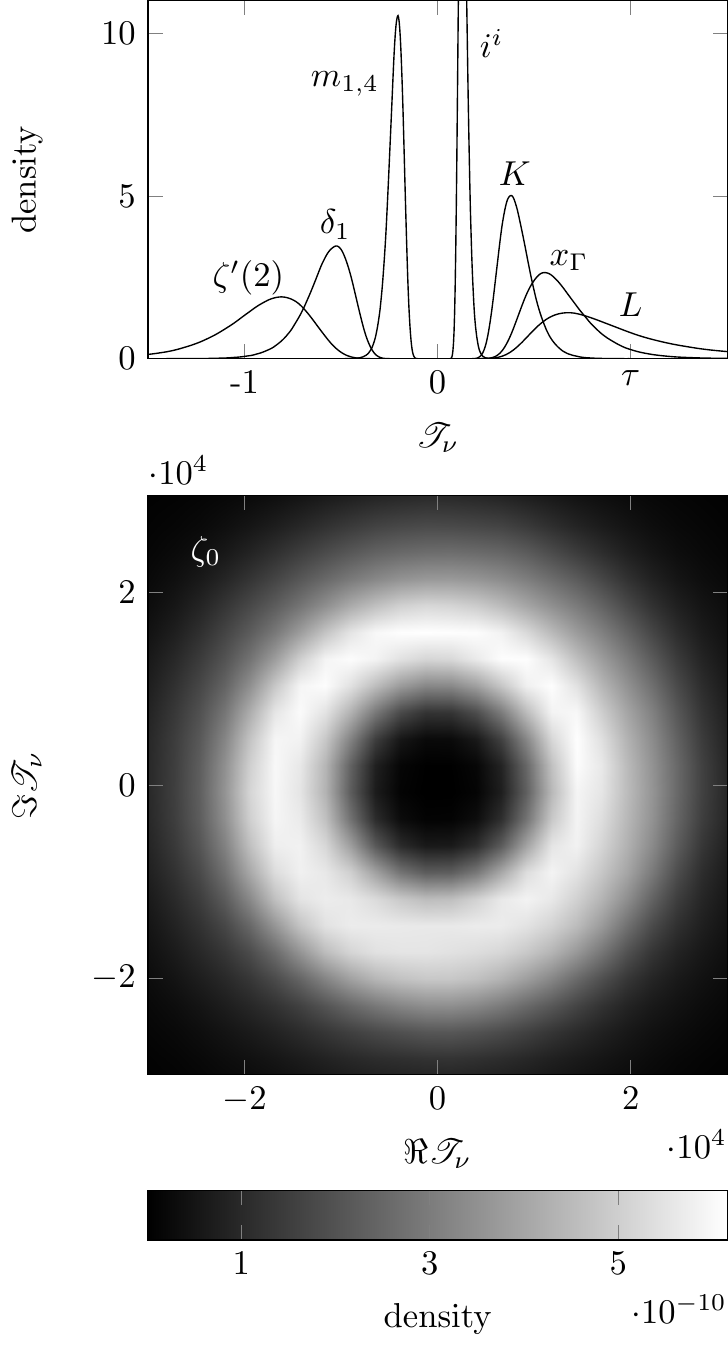}
\caption{Kernel density estimates of the $\mathscr T_\nu$ probability density for select values of $\nu$ given in Table \ref{tbl:special_constants}. Each density estimate is labeled with the value of $\nu$ used to generate it.}
\label{fig:bias_variance_tradeoff}
\end{figure}

\begin{remark}[Incorporating \emph{a priori} knowledge of $\kappa_1$ and $\kappa_2$]
\label{rmk:Tv_as_estimator_for_tauv}
As in Lemma \ref{lem:T_estimator_discrete}, Theorem \ref{thm:T_estimator} makes no mention of a restriction on the relative magnitudes of $\kappa_1$ and $\kappa_2$, i.e.~$\kappa_1>\kappa_2$ or $\kappa_1<\kappa_2$.  By permitting $\nu\in\Bbb R$, $\mathscr T_\nu$ can be used as an estimator for $\tau=(\kappa_1-\kappa_2)^{-1}$ in both cases whereby one imposes the rule $\nu>0$ when $\kappa_1>\kappa_2$ or $\nu<0$ when $\kappa_1<\kappa_2$ to ensure that $\arb\mathscr T_\nu<1$.
\end{remark}


\subsection{Asymptotic expansion for large $\alpha_1$ and $\alpha_2$}
\label{subsec:Tv_asym_expansion_derivation}

As $\alpha_1$ and $\alpha_2$ become large, the estimator $\mathscr T_\nu$ becomes increasingly difficult to evaluate; rendering it incompatible with practical applications involving large shape parameters.  Here we derive a few preliminary results and then proceed to present an asymptotic expansion of $\mathscr T_\nu$ in Theorem \ref{thm:Test_asym_exp}.
\begin{lemma}[\cite{tricomi_1951}]
\label{lem:gamma_ratio_asym_series}
As $z\to\infty$ in the sector $|\operatorname{ph}z|<\pi$
\[
\frac{\Gamma(z+\alpha)}{\Gamma(z+\beta)}\sim z^{\alpha-\beta}\sum_{k=0}^\infty\binom{\alpha-\beta}{k}B_k^{(\alpha-\beta+1)}(\alpha)\frac{1}{z^k},
\]
where $B_n^{(\ell)}(x)$ is the generalized N{\o}rlund polynomial given by Definition \ref{def:GenNorlundB}.
\end{lemma}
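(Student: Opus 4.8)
The plan is to reduce the gamma ratio to a Laplace-type integral and then apply Watson's lemma, recognizing the generalized N\o rlund polynomials of Definition \ref{def:GenNorlundB} through their generating function. First I would start from the Beta-integral representation
\[
\frac{\Gamma(z+\alpha)}{\Gamma(z+\beta)}=\frac{\operatorname B(z+\alpha,\beta-\alpha)}{\Gamma(\beta-\alpha)}=\frac{1}{\Gamma(\beta-\alpha)}\int_0^1 t^{z+\alpha-1}(1-t)^{\beta-\alpha-1}\,\mathrm dt,
\]
valid for $\Re(\beta-\alpha)>0$, and substitute $t=e^{-s}$ to obtain the Laplace integral
\[
\frac{\Gamma(z+\alpha)}{\Gamma(z+\beta)}=\frac{1}{\Gamma(\beta-\alpha)}\int_0^\infty e^{-s(z+\alpha)}(1-e^{-s})^{\beta-\alpha-1}\,\mathrm ds.
\]

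The crucial step is to expose the N\o rlund polynomials hidden in the integrand. Writing $\ell=\alpha-\beta+1$ and factoring $(1-e^{-s})^{\beta-\alpha-1}=s^{-\ell}\bigl(\tfrac{s}{1-e^{-s}}\bigr)^{\ell}$, I would absorb the $e^{-\alpha s}$ coming from $e^{-s(z+\alpha)}$ into the analytic factor. Replacing $s$ by $-s$ in the generating identity $\bigl(\tfrac{t}{e^t-1}\bigr)^{\ell}e^{xt}=\sum_{n\ge0}B_n^{(\ell)}(x)\tfrac{t^n}{n!}$ gives
\[
e^{-\alpha s}\Bigl(\frac{s}{1-e^{-s}}\Bigr)^{\ell}=\sum_{n=0}^\infty(-1)^nB_n^{(\ell)}(\alpha)\frac{s^n}{n!},
\]
so the integrand becomes $e^{-sz}s^{-\ell}\sum_n(-1)^nB_n^{(\ell)}(\alpha)s^n/n!$, analytic near the origin once the $s^{-\ell}$ factor is stripped off.

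Next I would integrate term by term via Watson's lemma, using $\int_0^\infty e^{-sz}s^{n-\ell}\,\mathrm ds=\Gamma(n-\ell+1)z^{\ell-1-n}$ with $\ell-1=\alpha-\beta$, to arrive at
\[
\frac{\Gamma(z+\alpha)}{\Gamma(z+\beta)}\sim z^{\alpha-\beta}\sum_{n=0}^\infty\frac{(-1)^n\Gamma(n-\ell+1)}{\Gamma(\beta-\alpha)\,n!}B_n^{(\ell)}(\alpha)\,z^{-n}.
\]
The coefficient simplifies exactly to the claimed binomial: since $n-\ell+1=n-(\alpha-\beta)$, one has $\Gamma(n-\ell+1)/\Gamma(\beta-\alpha)=(\beta-\alpha)_n$, and the paper's identity $\binom{m}{k}=(-1)^k(-m)_k/k!$ with $m=\alpha-\beta$ yields $(-1)^n(\beta-\alpha)_n/n!=\binom{\alpha-\beta}{n}$, giving the stated expansion. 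Finally I would remove the temporary hypotheses, recovering both the restriction $\Re(\beta-\alpha)>0$ and the full sector $|\operatorname{ph}z|<\pi$ by analytic continuation in $\alpha,\beta$ and $z$.

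\textbf{The main obstacle} is the rigorous justification of the term-by-term integration together with its error control: verifying the analyticity and the growth of $\bigl(\tfrac{s}{1-e^{-s}}\bigr)^{\ell}$ required for Watson's lemma, and handling the degenerate case $\alpha-\beta\in\Bbb N_0$, where $1/\Gamma(\beta-\alpha)$ has a pole but the series terminates and the asymptotic relation collapses to an exact polynomial identity.
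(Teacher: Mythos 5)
The paper itself offers no proof of this lemma --- it is imported verbatim from the cited reference, and the Tricomi--Erd\'elyi derivation is essentially the argument you give: the Beta-integral/Laplace representation of the gamma ratio followed by Watson's lemma, with the N{\o}rlund polynomials entering through their generating function under $t\mapsto -s$. Your computation is correct, including the coefficient bookkeeping $(-1)^n(\beta-\alpha)_n/n!=\binom{\alpha-\beta}{n}$, and your flagged degenerate case $\alpha-\beta\in\Bbb N_0$ is genuinely exceptional but harmless: there the series terminates and the relation is the exact N{\o}rlund identity for the polynomial $(z+\beta)_{\alpha-\beta}$. One correction to your closing step, though: an asymptotic relation is \emph{not} preserved under analytic continuation in $z$, so the sector $|\operatorname{ph}z|<\pi$ cannot be reached that way. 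The standard mechanism is to rotate the ray of integration: since $e^{-\alpha s}\bigl(s/(1-e^{-s})\bigr)^{\ell}$ is analytic in $|\operatorname{ph}s|<\pi/2$ (nearest singularities at $s=\pm 2\pi i$) with at most exponential growth, the path $s\in e^{i\theta}\Bbb R^+$ is admissible for each $|\theta|<\pi/2$, and Watson's lemma on the rotated ray gives the expansion for $|\operatorname{ph}(ze^{i\theta})|<\pi/2-\delta$; the union of these sectors is $|\operatorname{ph}z|<\pi$. Similarly, the hypothesis $\Re(\beta-\alpha)>0$ is removed not by continuing the asymptotic statement but by finitely many integer shifts via $\Gamma(z+\beta)=\Gamma(z+\beta+1)/(z+\beta)$, using that the coefficients depend polynomially on $\alpha$ and $\beta$ so both sides transform consistently.
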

\begin{corollary}
\label{cor:power_pochhammer_asym_series}
As $z\to\infty$ in the sector $|\operatorname{ph}z|<\pi$
\[
\frac{z^\alpha}{(z)_\alpha}\sim\sum_{k=0}^\infty\binom{-\alpha}{k}B_k^{(1-\alpha)}\frac{1}{z^k},
\]
where $B_n^{(\ell)}=B_n^{(\ell)}(0)$.
\end{corollary}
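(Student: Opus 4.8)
The plan is to reduce the corollary directly to Lemma \ref{lem:gamma_ratio_asym_series} by a single algebraic rewriting together with one well-chosen parameter substitution. The key observation is that, since $(z)_\alpha = \Gamma(z+\alpha)/\Gamma(z)$ by the definition of the Pochhammer symbol, one has
\[
\frac{z^\alpha}{(z)_\alpha} = z^\alpha\,\frac{\Gamma(z)}{\Gamma(z+\alpha)}.
\]
Thus the problem reduces to obtaining the asymptotic behavior of the gamma ratio $\Gamma(z)/\Gamma(z+\alpha)$ and then cancelling the leading power.

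First I would apply Lemma \ref{lem:gamma_ratio_asym_series} with its numerator shift set to $0$ and its denominator shift set to $\alpha$ (i.e., the ``$\alpha$'' and ``$\beta$'' appearing in the statement of that lemma are taken here to be $0$ and $\alpha$, respectively). With these choices the exponent $\alpha-\beta$ in the lemma becomes $-\alpha$, the N\o rlund order $\alpha-\beta+1$ becomes $1-\alpha$, and the polynomial argument becomes $0$, yielding
\[
\frac{\Gamma(z)}{\Gamma(z+\alpha)} \sim z^{-\alpha}\sum_{k=0}^\infty \binom{-\alpha}{k} B_k^{(1-\alpha)}(0)\,\frac{1}{z^k}
\]
as $z\to\infty$ in $|\operatorname{ph}z|<\pi$, valid on exactly the sector asserted in the lemma.

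Next I would multiply through by $z^\alpha$. Since $z^\alpha$ is a single fixed factor independent of the summation index, this is legitimate term-by-term, and it cancels against the $z^{-\alpha}$ produced by the asymptotic series, leaving
\[
\frac{z^\alpha}{(z)_\alpha} \sim \sum_{k=0}^\infty \binom{-\alpha}{k} B_k^{(1-\alpha)}(0)\,\frac{1}{z^k};
\]
invoking the stated abbreviation $B_n^{(\ell)} = B_n^{(\ell)}(0)$ then gives precisely the claimed expansion. Both the sector of validity and the asymptotic (rather than convergent) character of the series are inherited verbatim from Lemma \ref{lem:gamma_ratio_asym_series}.

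There is essentially no serious obstacle here, so the entire difficulty lies in the bookkeeping of the substitution. The one point deserving care is confirming that the evaluation point of the generalized N\o rlund polynomial collapses to $0$ precisely because the numerator shift is taken to be $0$ — this is what produces the simpler symbol $B_k^{(1-\alpha)}$ rather than one with a nonzero argument. Everything else is routine manipulation of the prefactor $z^\alpha$.
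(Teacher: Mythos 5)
Your proposal is correct and is exactly the intended derivation: the paper states the corollary without a separate proof precisely because it follows from Lemma \ref{lem:gamma_ratio_asym_series} by writing $z^\alpha/(z)_\alpha = z^\alpha\,\Gamma(z)/\Gamma(z+\alpha)$ and substituting the lemma's parameters as $(0,\alpha)$, which makes the N{\o}rlund polynomial's argument collapse to $0$ just as you note. Your bookkeeping of the exponent $-\alpha$, the order $1-\alpha$, and the sector of validity all matches the paper's usage (cf.\ the identical form invoked in the proof of Theorem \ref{thm:Test_asym_exp}).
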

\begin{lemma}
\label{lem:power_pochhammer_asym_series_converges}
For $\alpha=n\in\Bbb Z$, the asymptotic series in Corollary \ref{cor:power_pochhammer_asym_series} converges absolutely for all $z>\max\{0,n-1\}$.
\end{lemma}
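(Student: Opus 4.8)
The plan is to exploit the fact that for integer $\alpha=n$ the ratio $z^n/(z)_n$ is an \emph{elementary} function of $w\coloneqq 1/z$: a finite product of simple rational factors (or, in the nonpositive case, an outright polynomial in $w$). By the uniqueness of asymptotic power series expansions, the asymptotic series of Corollary \ref{cor:power_pochhammer_asym_series} must coincide term-by-term with the Taylor expansion of this function about $w=0$, so the convergence claim reduces to a routine determination of the distance from $w=0$ to the nearest singularity.

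First I would treat $n\in\Bbb N$. Writing $(z)_n=\prod_{j=0}^{n-1}(z+j)$ gives $\frac{z^n}{(z)_n}=\prod_{j=0}^{n-1}\frac{1}{1+jw}$, which is holomorphic at $w=0$ with simple poles at $w=-1/j$ for $j=1,\dots,n-1$ (the $j=0$ factor is identically $1$). Since the numerator is constant, none of these poles cancel, so for $n\geq 2$ the Taylor series in $w$ has radius of convergence equal to $\min_{1\le j\le n-1}|{-1/j}|=1/(n-1)$, attained at $j=n-1$. Absolute convergence therefore holds for $|w|<1/(n-1)$, i.e. $|z|>n-1$, and in particular for real $z>n-1=\max\{0,n-1\}$. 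For $n=1$ the product reduces to the single factor $1$ and the series terminates, giving convergence for every $z>0=\max\{0,n-1\}$.

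Next I would dispatch the nonpositive case $n=-m$ with $m\in\Bbb N_0$. Here $(z)_{-m}=\Gamma(z-m)/\Gamma(z)=1\big/\prod_{j=1}^{m}(z-j)$, so $\frac{z^{-m}}{(z)_{-m}}=z^{-m}\prod_{j=1}^{m}(z-j)=\prod_{j=1}^{m}(1-jw)$ is a \emph{polynomial} of degree $m$ in $w$ (the empty product $1$ when $m=0$). Its Taylor expansion terminates—in agreement with $\binom{-\alpha}{k}=\binom{m}{k}$ vanishing for $k>m$—and hence converges for all $z\neq 0$, a fortiori for $z>0=\max\{0,n-1\}$.

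The one step requiring care, and the main obstacle, is the identification of the formal asymptotic series with the genuine Taylor series in $w$. I would justify this by appealing to the uniqueness of asymptotic expansions: the coefficients of an asymptotic power series are determined by the successive limits $\lim_{z\to\infty}z^{k}\bigl(f(z)-\sum_{j<k}a_j z^{-j}\bigr)$, and when $f(1/w)$ is holomorphic at $w=0$ these limits are precisely its Taylor coefficients. Once this is in place, the radius-of-convergence computations above are elementary nearest-pole complex analysis, and the boundary case $z=n-1$ is correctly excluded by the strict inequality, since the pole at $w=-1/(n-1)$ lies exactly on the circle $|w|=1/(n-1)$.
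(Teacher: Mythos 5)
Your proof is correct, and it takes a genuinely different route from the paper's. The paper dispatches $n\leq 0$ exactly as you do (the series terminates), but for $n\geq 1$ it proceeds by induction on $n$: it invokes Carlitz's identity to rewrite the N{\o}rlund-polynomial coefficients as Stirling numbers of the second kind, $S_n=\sum_{k\geq 0}(-1)^k\,{_2}\mathcal S_{k+n-1}^{(n-1)}z^{-k}$, checks the base case $S_1=1$, and then uses a Cauchy-product identity to get $S_{n+1}=S_n\cdot\sum_{k\geq 0}(-n)^k z^{-k}$, concluding absolute convergence for $z>n$ since both factors converge absolutely there. You bypass the coefficient identities entirely: recognizing $z^n/(z)_n$ as the rational function $\prod_{j=0}^{n-1}(1+jw)^{-1}$ in $w=1/z$ (and the polynomial $\prod_{j=1}^{m}(1-jw)$ when $n=-m\leq 0$), you identify the formal series of Corollary \ref{cor:power_pochhammer_asym_series} with the Taylor series at $w=0$ via uniqueness of asymptotic expansions, and read the radius of convergence off the nearest pole $w=-1/(n-1)$. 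The step you flag as delicate is indeed the crux and you handle it correctly: Lemma \ref{lem:gamma_ratio_asym_series} asserts a Poincar\'e-type asymptotic expansion on a sector containing the positive real axis, and for a function holomorphic at $w=0$ the successive-limit characterization of asymptotic coefficients forces them to coincide with the Taylor coefficients. Your approach buys something the paper's induction does not state: since the numerator is constant the poles cannot cancel, so the radius is \emph{exactly} $1/(n-1)$, giving divergence for $|z|<n-1$ and convergence on the full complex region $|z|>n-1$ rather than only on the real ray; what the paper's argument buys in exchange is an explicit Stirling-number form for the coefficients and a derivation that stays within the series-manipulation framework used elsewhere in the text.
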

\begin{proof}
If $n\leq 0$ then
\[
\frac{z^n}{(z)_n}\sim\sum_{k=0}^{-n}\binom{-n}{k}B_k^{(1-n)}\frac{1}{z^k},
\]
which is a sum of a finite number of terms and thus converges absolutely for all $z>0$. We now prove absolute convergence for all remaining $n\geq 1$ by induction. Beginning with the relationship between N{\o}rlund's polynomial and the Stirling number of the second-kind we write \cite[Eq.~3]{carlitz_1960}
\[
\frac{z^n}{(z)_n}\sim\sum_{k=0}^\infty(-1)^k {_2}\mathcal S_{k+n-1}^{(n-1)}\frac{1}{z^k}\eqqcolon S_n.
\]
Substituting $n=1$ gives
\[
S_1=\sum_{k=0}^\infty(-1)^k \delta_k\frac{1}{z^k}=1+0+0+\cdots,
\]
which clearly converges absolutely for all $z>0$. Assuming $S_n$ converges absolutely for all $z>n-1$ we write with the help of \cite[Eq.~$04.15.17.0002.01$]{wolfram_functions}
\[
S_{n+1}=\sum_{k=0}^\infty\sum_{\ell=0}^k(-n)^{k-\ell}(-1)^\ell {_2}\mathcal S_{\ell+n-1}^{(n-1)}\frac{1}{z^k}=S_n\sum_{k=0}^\infty(-n)^k\frac{1}{z^k}.
\]
But $\sum_{k=0}^\infty(-n)^k\frac{1}{z^k}=(1+n/z)^{-1}$ converges absolutely if $z>n$; hence, if $z>n$ then $S_{n+1}$ is the product of absolutely convergent series, which is itself absolutely convergent. The proof is now complete.
\end{proof}

\begin{theorem}
\label{thm:Test_asym_exp}
As $\alpha_1,\alpha_2\to\infty$
\[
\mathscr T_\nu\sim\frac{1}{Y_1}\sum_{k=0}^\infty\sum_{\ell=0}^{2k}p_{k,\ell}(\alpha_1,\alpha_2)\Phi(Y_2/Y_1,-\ell,0)_\nu,
\]
where $p_{k,\ell}(\alpha_1,\alpha_2)=\partial_x^\ell P_{2k}(0)/\ell !$,
\[
P_{2k}(x)=\sum_{m=0}^k\frac{1}{\alpha_1^{m}\alpha_2^{k-m}}\binom{x+1}{m}\binom{-x}{k-m}B_m^{(2+x)}B_{k-m}^{(1-x)},
\]
and $B_n^{(s)}$ is the N{\o}rlund polynomial defined by the generating function in Definitions \ref{def:GenNorlundB}-\ref{def:NorlundB}.
\end{theorem}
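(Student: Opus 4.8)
The plan is to bypass the two-term sine-modulated form of $\mathscr T_\nu$ entirely and instead expand the \emph{discrete} estimator of Lemma \ref{lem:T_estimator_discrete} summand-by-summand, deferring the fractional generalization to the very last step. Writing the summation index as $r$ and setting $z=Y_2/Y_1$, the summand of $\mathscr T_n=\sum_{r=0}^{n-1}g(r)$ is
\[
g(r)=\frac{1}{Y_1}\,\frac{\alpha_1^{-(r+1)}}{(\alpha_1)_{-(r+1)}}\,\frac{\alpha_2^{\,r}}{(\alpha_2)_{r}}\,z^{r},
\]
so that all $\alpha_i$-dependence sits inside two ratios of Pochhammer type. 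First I would expand each with Corollary \ref{cor:power_pochhammer_asym_series}: taking the index parameter to be $-(r+1)$ in the first ratio and $r$ in the second produces
\[
\frac{\alpha_1^{-(r+1)}}{(\alpha_1)_{-(r+1)}}\sim\sum_{m=0}^\infty\binom{r+1}{m}B_m^{(r+2)}\alpha_1^{-m},\qquad
\frac{\alpha_2^{\,r}}{(\alpha_2)_{r}}\sim\sum_{i=0}^\infty\binom{-r}{i}B_i^{(1-r)}\alpha_2^{-i}.
\]
Because $r$ is a nonnegative integer, Lemma \ref{lem:power_pochhammer_asym_series_converges} upgrades these to genuinely convergent expansions (the first in fact terminating), so no formal manipulation is needed at the level of a single summand.

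Next I would form the Cauchy product of the two series and regroup by the total order $k=m+i$ in the combined $\alpha$-expansion. The coefficient of order $k$ is exactly
\[
\sum_{m=0}^{k}\binom{r+1}{m}\binom{-r}{k-m}B_m^{(r+2)}B_{k-m}^{(1-r)}\,\alpha_1^{-m}\alpha_2^{-(k-m)}=P_{2k}(r),
\]
i.e.~the polynomial $P_{2k}(x)$ of the statement evaluated at $x=r$. The key structural observation is that, viewed as a function of the summation index, $P_{2k}$ is a \emph{polynomial} of degree $2k$: each N{\o}rlund polynomial $B_m^{(2+x)}$ is of degree $m$ in $x$, each binomial contributes degree $m$ or $k-m$, and the four factors in the $m$-th term total $2k$. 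This is precisely what forces the inner index $\ell$ to run only to $2k$. Expanding in its Maclaurin series $P_{2k}(x)=\sum_{\ell=0}^{2k}p_{k,\ell}x^{\ell}$ with $p_{k,\ell}=\partial_x^\ell P_{2k}(0)/\ell!$ then yields
\[
g(r)\sim\frac{1}{Y_1}\sum_{k=0}^\infty\sum_{\ell=0}^{2k}p_{k,\ell}(\alpha_1,\alpha_2)\,r^{\ell}z^{r}.
\]

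Finally I would sum over $r$ and pass to the fractional sum. Since the generalization of a simple finite sum is linear and is determined solely by the summand, I would replace each monomial contribution $\sum_{r=0}^{n-1}r^{\ell}z^{r}$ by its unique generalization, which by Lemma \ref{lem:inc_Lerch_trans_uniqueness} (with $\omega=0$ and $s=-\ell$) is precisely the incomplete Lerch transcendent $\Phi(Y_2/Y_1,-\ell,0)_\nu$; this is the reason the Lerch object, rather than the bare incomplete geometric series, appears in the statement. Collecting terms produces the claimed expansion. The hard part will be justifying that the joint asymptotic expansion survives the fractional summation, i.e.~that generalizing the truncated expansion term-by-term reproduces the asymptotics of $\mathscr T_\nu$ itself and not merely of $\mathscr T_n$. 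The clean route is to truncate at a fixed order $K$, so that the polynomial part is a genuine finite linear combination of the $\Phi(\cdot,-\ell,0)_\nu$ whose generalization is unambiguous, and then to argue that the generalized remainder stays $O(\alpha^{-(K+1)})$ uniformly on the strip $-\alpha_2<\Re\nu<\alpha_1$; controlling this remainder under the summability operator, rather than the essentially algebraic bookkeeping above, is where the real work lies.
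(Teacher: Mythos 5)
Your proposal follows essentially the same route as the paper's proof: expand both Pochhammer ratios via Corollary \ref{cor:power_pochhammer_asym_series}, take the Cauchy product and regroup by total order in $(\alpha_1,\alpha_2)$, observe that the coefficient is a polynomial of degree $2k$ in the summation index, and then identify $\sum_{r=0}^{n-1}r^{\ell}z^{r}$ with $\Phi(z,-\ell,0)_\nu$ through the uniqueness of the fractional finite sum (Lemma \ref{lem:inc_Lerch_trans_uniqueness}). The only differences are cosmetic --- you establish the degree-$2k$ claim by direct counting where the paper cites an external result on N{\o}rlund polynomials, and the remainder-control issue you flag at the end is one the paper's proof likewise leaves at the formal level.
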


\begin{proof}
Beginning with the expression for $\mathscr T_n$ we have
\[
\mathscr T_n=\frac{1}{Y_1}\sum_{k=0}^{n-1}\frac{\alpha_1^{-k-1}\alpha_2^k}{(\alpha_1)_{-k-1}(\alpha_2)_k}\left(\frac{Y_2}{Y_1}\right)^k.
\]
Now consider the result of Corollary \ref{cor:power_pochhammer_asym_series}, which states $z^s/(z)_s\sim\sum_{\ell=0}^\infty G_\ell(s)z^{-\ell}$ as $z\to\infty$ where \cite[Eq.~$5.11.13$]{tricomi_1951,nist_2010}
\[
G_\ell(s)=\binom{-s}{\ell}B_\ell^{(1-s)}.
\]
Substituting in appropriate values, we obtain two separate asymptotic series in $\alpha_1$ and $\alpha_2$ for which the product yields
\begin{equation}
\label{eq:pochhammer_asym_expansion}
\frac{\alpha_1^{-k-1}\alpha_2^k}{(\alpha_1)_{-k-1}(\alpha_2)_k}\sim\sum_{\ell=0}^\infty\sum_{m=0}^\ell G_m(-k-1)G_{\ell-m}(k)\alpha_1^{-m}\alpha_2^{m-\ell}.
\end{equation}
Substituting this asymptotic expansion in $\mathscr T_\nu$ and rearranging the order of summation then gives
\begin{equation}
\label{eq:T_est_asym_form1}
\mathscr T_n\sim\frac{1}{Y_1}\sum_{\ell=0}^\infty\sum_{k=0}^{n-1}\left(\sum_{m=0}^\ell\frac{G_m(-k-1)G_{\ell-m}(k)}{\alpha_1^{m}\alpha_2^{\ell-m}}\right)\left(\frac{Y_2}{Y_1}\right)^k.
\end{equation}
With the help of Definition \ref{def:StirlingS1} and \cite[Thm.~$1$]{Liu_2006} the quantity $G_m(-k-1)G_{\ell-m}(k)$ can be easily shown to be a polynomial in $k$ of degree $2\ell$; thus, the entire sum inside the parentheses of $(\ref{eq:T_est_asym_form1})$ must also be a polynomial in $k$ of degree $2\ell$. Let,
\[
\begin{aligned}
P_{2\ell}(k)%
&\coloneqq\sum_{m=0}^\ell\frac{G_m(-k-1)G_{\ell-m}(k)}{\alpha_1^{m}\alpha_2^{\ell-m}}\\
&=p_{\ell,0}(\alpha_1,\alpha_2)+p_{\ell,1}(\alpha_1,\alpha_2)k+\dots+p_{\ell,2\ell}(\alpha_1,\alpha_2)k^{2\ell}.
\end{aligned}
\]
We can then write
\[
\mathscr T_n\sim\frac{1}{Y_1}\sum_{\ell=0}^\infty\sum_{m=0}^{2\ell}p_{\ell,m}(\alpha_1,\alpha_2)\sum_{k=0}^{n-1}k^m\left(\frac{Y_2}{Y_1}\right)^k.
\]
The interior sum over $k$ can now be expressed by $\Phi(Y_2/Y_1,-m,0)_n$ which is uniquely extended to $n\in\Bbb C$ via Lemma \ref{lem:inc_Lerch_trans_uniqueness}.
\end{proof}

\begin{corollary}[Zeroth order approximation]
Truncating the asymptotic expansion for $\mathscr T_\nu$ at $k=0$ yields
\[
\mathscr T_{\nu,0}=\frac{1-(Y_2/Y_1)^\nu}{Y_1-Y_2}=\tau_\nu(Y_1,Y_2).
\]
\end{corollary}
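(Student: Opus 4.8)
The plan is to simply isolate and evaluate the $k=0$ term of the double sum in Theorem~\ref{thm:Test_asym_exp}. Setting $k=0$ collapses the inner sum over $\ell$ to the single index $\ell=0$, since its upper limit is $2k=0$. Thus the truncated expansion is
\[
\mathscr T_{\nu,0}=\frac{1}{Y_1}\,p_{0,0}(\alpha_1,\alpha_2)\,\Phi(Y_2/Y_1,0,0)_\nu,
\]
and the proof reduces to two independent computations: showing that the coefficient $p_{0,0}$ equals $1$, and rewriting the Lerch factor $\Phi(Y_2/Y_1,0,0)_\nu$ in closed form.

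First I would compute the coefficient. From the defining relation $p_{k,\ell}=\partial_x^\ell P_{2k}(0)/\ell!$, taking $k=\ell=0$ gives $p_{0,0}(\alpha_1,\alpha_2)=P_0(0)$. Evaluating $P_{2k}(x)$ at $k=0$ leaves only the $m=0$ summand, namely $P_0(x)=\binom{x+1}{0}\binom{-x}{0}B_0^{(2+x)}B_0^{(1-x)}$. The empty binomial coefficients are $1$, and the zeroth N\o{}rlund polynomial satisfies $B_0^{(s)}=1$ for every $s$ (it is the constant term of the generating function in Definitions~\ref{def:GenNorlundB}--\ref{def:NorlundB}), so $P_0(x)\equiv 1$ and hence $p_{0,0}=1$.

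Next I would treat the Lerch factor. By Definition~\ref{def:incomplete_LerchPhi} we have $\Phi(z,0,\omega)_\nu={_1F_0}(1,-;z)_\nu$, so $\Phi(Y_2/Y_1,0,0)_\nu={_1F_0}(1;-;Y_2/Y_1)_\nu$. Applying the identity ${_1F_0}(1;-;z)_\nu=(1-z^\nu)/(1-z)$ established in the proof of Lemma~\ref{lem:incGeomSeries_general} with $z=Y_2/Y_1$, and combining with $p_{0,0}=1$, yields
\[
\mathscr T_{\nu,0}=\frac{1}{Y_1}\cdot\frac{1-(Y_2/Y_1)^\nu}{1-Y_2/Y_1}=\frac{1-(Y_2/Y_1)^\nu}{Y_1-Y_2},
\]
where the last step clears $1/Y_1$ against $1-Y_2/Y_1=(Y_1-Y_2)/Y_1$. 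The concluding equality with $\tau_\nu(Y_1,Y_2)$ is then the plug-in form of $\tau_\nu=(1-\zeta^\nu)/(\kappa_1-\kappa_2)$ under the substitutions $\kappa_i\mapsto Y_i$ and $\zeta\mapsto Y_2/Y_1$.

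There is no genuinely hard step here. The only points requiring a moment's care are confirming that $B_0^{(s)}=1$ uniformly in $s$, which is immediate from the generating-function definition, and noting that the upper limit $2k$ of the inner sum forces the single term $\ell=0$ when $k=0$. Everything else is algebraic simplification, so I expect the entire argument to be short.
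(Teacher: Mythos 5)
Your proof is correct and is exactly the intended verification: the paper states this corollary without proof, and the direct evaluation you give --- $p_{0,0}=P_0(0)=1$ via $B_0^{(s)}=1$, together with $\Phi(z,0,0)_\nu={_1F_0}(1;-;z)_\nu=(1-z^\nu)/(1-z)$ from Definition \ref{def:incomplete_LerchPhi} and Lemma \ref{lem:incGeomSeries_general} --- is the computation the paper leaves to the reader. No gaps; the identification with $\tau_\nu(Y_1,Y_2)$ under $\kappa_i\mapsto Y_i$ is also as intended.
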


\begin{proposition}[Asymptotic expansion convergence]
\label{prop:asym_expansion_convergence}
For the special case $\nu=n$ with $n\in\Bbb N$, the asymptotic expansion for $\mathscr T_\nu$ given in Theorem \ref{thm:Test_asym_exp} converges absolutely whenever $\alpha_2>n-2$.
\end{proposition}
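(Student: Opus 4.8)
The plan is to reduce everything to the convergence already established in Lemma \ref{lem:power_pochhammer_asym_series_converges}, taking advantage of the fact that for integer $\nu=n$ the estimator $\mathscr T_n$ is a \emph{finite} sum. Recall from the proof of Theorem \ref{thm:Test_asym_exp} that the expansion is generated by replacing, for each $k\in\{0,\dots,n-1\}$, the Pochhammer ratio $\alpha_1^{-k-1}\alpha_2^{k}/[(\alpha_1)_{-k-1}(\alpha_2)_k]$ with the product of the two asymptotic series of Corollary \ref{cor:power_pochhammer_asym_series} and then regrouping by total order in $(1/\alpha_1,1/\alpha_2)$. The order-by-order series displayed in the theorem is therefore just a rearrangement of an underlying multi-indexed series, so it will suffice to prove that this underlying series converges absolutely; absolute convergence will simultaneously justify the regrouping and establish the claim.

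First I would treat the two factors of the ratio separately. The factor $\alpha_1^{-k-1}/(\alpha_1)_{-k-1}$ is the instance of Corollary \ref{cor:power_pochhammer_asym_series} with parameter $-k-1\in-\Bbb N$; by the $n\le 0$ case of Lemma \ref{lem:power_pochhammer_asym_series_converges} its series \emph{terminates}, so it is a polynomial in $1/\alpha_1$, converges absolutely for every $\alpha_1>0$, and contributes no constraint. The factor $\alpha_2^{k}/(\alpha_2)_{k}$ is the instance with nonnegative integer parameter $k$, whose series converges absolutely exactly when $\alpha_2>\max\{0,k-1\}$. The expansion of the full ratio is thus the Cauchy product of a finite sum with an absolutely convergent series and so is itself absolutely convergent whenever $\alpha_2>\max\{0,k-1\}$.

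Next I would assemble the finite sum $\mathscr T_n=Y_1^{-1}\sum_{k=0}^{n-1}(Y_2/Y_1)^k\,\alpha_1^{-k-1}\alpha_2^{k}/[(\alpha_1)_{-k-1}(\alpha_2)_k]$. Since this is a finite sum over $k$ of absolutely convergent series, the entire multi-indexed series converges absolutely as soon as $\alpha_2>\max\{0,k-1\}$ holds for every $k\le n-1$. The binding case is $k=n-1$, which gives $\alpha_2>\max\{0,n-2\}$ and is implied by $\alpha_2>n-2$ (for $n=1$ this reduces to the trivially satisfied $\alpha_2>0$). Because the multi-indexed series is absolutely convergent, its regrouping into the order-by-order form of Theorem \ref{thm:Test_asym_exp}---in which each $\Phi(Y_2/Y_1,-\ell,0)_n$ reduces to the finite sum $\sum_{j=0}^{n-1}j^{\ell}(Y_2/Y_1)^j$---converges absolutely to the same value, which is exactly the assertion.

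The hard part will be the careful bookkeeping of this rearrangement: one must confirm that the underlying double series, terminating in the $\alpha_1$-index but infinite in the $\alpha_2$-index, is genuinely absolutely convergent as a single object, so that regrouping by asymptotic order is legitimate. Once that is in hand, every remaining step is a direct invocation of Lemma \ref{lem:power_pochhammer_asym_series_converges}, and the threshold $\alpha_2>n-2$ is inherited verbatim from its worst term, $k=n-1$.
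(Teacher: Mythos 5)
Your proposal is correct and follows essentially the same route as the paper: the paper's proof likewise invokes Corollary \ref{cor:power_pochhammer_asym_series} together with Lemma \ref{lem:power_pochhammer_asym_series_converges} to observe that the expansion of $\alpha_1^{-k-1}/(\alpha_1)_{-k-1}$ terminates while that of $\alpha_2^k/(\alpha_2)_k$ converges absolutely for $\alpha_2>n-2$, and concludes that the product (and hence the finite sum over $k\le n-1$) is absolutely convergent. Your additional bookkeeping---identifying $k=n-1$ as the binding case and noting that absolute convergence licenses the regrouping into the order-by-order form of Theorem \ref{thm:Test_asym_exp}---is a welcome explicit justification of a step the paper leaves implicit, but it is the same argument.
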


\begin{proof}
According to Lemma \ref{cor:power_pochhammer_asym_series}, the asymptotic expansion for $\alpha_1^{-k-1}/(\alpha_1)_{-k-1}$ consists of a finite number of terms while the asymptotic expansion for $\alpha_2^k/(\alpha_2)_k$ converges absolutely for all $\alpha_2>n-2$. Hence, the product of these epansions must also be absolutely convergent, which leads to the desired conclusion.
\end{proof}

From a numerical standpoint, using the form of the asymptotic expansion for $\mathscr T_\nu$ in Theorem \ref{thm:Test_asym_exp} is problematic since $\Phi(z,-n,0)_\nu$ has a removable singularity at $z=1$. As such, to construct an asymptotic approximation for practical applications, Corollary \ref{cor:IncPolyLog_HyperSum_form} leads to the more convenient form
\begin{equation}
\label{eq:Tv_asym_hyper_form}
\mathscr T_{\nu,K}=\frac{1}{Y_2}\sum_{m=0}^{2K}c_{m,K}(\alpha_1,\alpha_2)(\nu)^{(m+1)}\pFq{}{}{m+1,1+\nu}{m+2}{1-\frac{Y_1}{Y_2}},
\end{equation}
where
\[
c_{m,K}(\alpha_1,\alpha_2)=\frac 1{m+1}\sum_{\ell=m}^{2K}\sum_{k=0}^K {_2\mathcal S}_\ell^{(m)}p_{k,\ell}(\alpha_1,\alpha_2).
\]


\section{Measures of dispersion}
\label{sec:measures_of_dispersion}

In this section we derive absolutely convergent series expansions and integrals on the unit square for the variance and absolute coefficient of variation of $\mathscr T_\nu$.  Deriving these expressions directly by integrating powers of $\mathscr T_\nu$ w.r.t. the joint density of $(Y_1,Y_2)$ prove to be intractable. As such, the following derivations rely on deriving the appropriate quantities for the integer-valued estimator $\mathscr T_n$ and then using the uniqueness of the fractional finite sum to generalize to noninteger $\nu$. Central to the following derivations are the functions $g_{n,\omega}(z,\nu)$ and $\tilde g_{n,\omega}(z,\nu)$ presented in section \ref{subsec:gnw_tilde_gnw}. While somewhat extensive, section \ref{subsec:gnw_tilde_gnw} provides many useful results for characterizing the behavior of $\var\mathscr T_\nu$ and $\acv\mathscr T_\nu$ and in particular for establishing the monotonicity of $\acv\mathscr T_\nu$ used in Section \ref{sec:confidence_intervals} on confidence intervals.


\subsection{The functions $g_{n,\omega}$ and $\tilde g_{n,\omega}$}
\label{subsec:gnw_tilde_gnw}

\begin{definition}
\label{def:g_nw_function}
Let $\mathcal N=\{(n,\omega):(n,\omega)\in\Bbb N_0^2\ \land\ \omega\leq n\}$. Then for $(n,\omega)\in\mathcal N$, $z\in\Bbb R^+_0$, and $\nu\in\Bbb R$
\[
g_{n,\omega}(z,\nu)\coloneqq (\Lambda_\omega)^{(n)}{_1F_0}(1;- ;z)_\nu
\]
and
\[
\tilde g_{n,\omega}(z,\nu)\coloneqq \frac{g_{n,\omega}(z,\nu)}{{_1F_0}(1;- ;z)_\nu},
\]
where $\Lambda_\omega$ and $(\mathcal D)^{(n)}$ are the lowering operator of Definition \ref{def:lowering_operator} and factorial operator of Definition \ref{def:factorial_differential_operator}, respectively.
\end{definition}

It is important to note that Definition \ref{def:g_nw_function} explicitly defines $g_{n,\omega}$ and its regularized counterpart $\tilde g_{n,\omega}$ only for natural $n$ and $\omega$ on the set $\mathcal N$. As such, from here on out we will assume $(n,\omega)\in\mathcal N$ whenever discussing these functions unless specified otherwise. In the following Theorem we will derive two explicit forms for $g_{n,\omega}$. The expression given by form $(\mathrm{ii})$ possesses a natural extension to noninteger values of $n$ and $\omega$ via the gamma function and so form $(\mathrm{ii})$ can be viewed as a more general, continuous extension of form $(\mathrm i)$. This property will will come in handy when we get to Lemma \ref{lem:hyper_ratio_monotonicity} where the ability to differentiate these functions w.r.t~$n$ and $\omega$ greatly simplifies the proof. Furthermore, the incomplete Lerch transcendent functions $\Phi(\cdot)_\nu$ in form $(\mathrm i)$ have removable singularities at $z=1$ whereas form $(\mathrm{ii})$ does not and so form $(\mathrm{ii})$ is also more desirable for numerical computation in the neighborhood of $z=1$.

\begin{theorem}
\label{thm:IncLerch_sum}
\[
g_{n,\omega}(z,\nu)=%
\begin{cases}
\sum_{k=0}^n\mathcal S_n^{(k)}\Phi(z,-k,\omega)_\nu &(\mathrm i)\\[0.5em]
n!\, (\omega+\nu)^{(n+1)}z^\nu\mathbf F(1,\omega+\nu+1;n+2;1-z) &(\mathrm{ii})
\end{cases}
\]
\end{theorem}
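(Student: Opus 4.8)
My plan is to establish form $(\mathrm i)$ first by direct computation, then derive form $(\mathrm{ii})$ either by summing the hypergeometric series in $(\mathrm i)$ or—more cleanly—by verifying that the proposed closed form satisfies the same defining recurrence and initial data as $g_{n,\omega}$, invoking the uniqueness machinery of summability calculus.

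For form $(\mathrm i)$, I would start from the definition $g_{n,\omega}(z,\nu)=(\Lambda_\omega)^{(n)}\,{_1F_0}(1;-;z)_\nu$ and expand the factorial operator. By Lemma \ref{lem:diff_op_identities}$(vi)$ we have $(\Lambda_\omega)^{(n)}=z^{n-\omega}\partial_z^n z^\omega$, but the more useful route is to re-express the falling-factorial operator in terms of powers of $\Lambda_\omega$. Since $(\Lambda_\omega)^{(n)}=\prod_{k=0}^{n-1}(\Lambda_\omega-k)$ and the falling factorial of an operator expands in Stirling numbers of the first kind via the relation $(s)^{(n)}=\sum_{k=0}^n \mathcal S_n^{(k)}s^k$ stated in the preliminaries, I would write $(\Lambda_\omega)^{(n)}=\sum_{k=0}^n \mathcal S_n^{(k)}\Lambda_\omega^k$. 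Applying this to ${_1F_0}(1;-;z)_\nu$ and using Corollary \ref{cor:inc_LerchPhi_derivative_integer_n}, which gives $\Lambda_\omega^k\,{_1F_0}(1;-;z)_\nu=\Phi(z,-k,\omega)_\nu$, yields form $(\mathrm i)$ immediately. The main point to check is that the operator identity for falling factorials genuinely transfers from scalars to the commuting-at-nothing operator $\Lambda_\omega$; since $(\Lambda_\omega)^{(n)}$ is defined as a product of shifted copies of the single operator $\Lambda_\omega$, all factors commute and the scalar Stirling expansion applies verbatim.

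For form $(\mathrm{ii})$, the cleanest approach is to verify the claimed expression agrees with $g_{n,\omega}$. I would treat $g_{n,\omega}$ as a fractional finite sum in $\nu$: by Definition \ref{def:g_nw_function} together with Lemma \ref{lem:incGeomSeries_general}, ${_1F_0}(1;-;z)_\nu$ generalizes $\sum_{k=0}^{\nu-1}z^k$, and applying $(\Lambda_\omega)^{(n)}$ (which acts in the $z$-variable and commutes with the fractional-sum generalization in $\nu$) shows $g_{n,\omega}(z,\nu)$ is the unique generalization of $\sum_{k=0}^{\nu-1}(\Lambda_\omega)^{(n)}z^k$. A short computation gives $(\Lambda_\omega)^{(n)}z^k=(k+\omega)^{(n)}z^k$, so the integer case is $g_{n,\omega}(z,n')=\sum_{k=0}^{n'-1}(k+\omega)^{(n)}z^k$ for $n'\in\Bbb N_0$. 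I would then show the right-hand side of $(\mathrm{ii})$, namely $n!\,(\omega+\nu)^{(n+1)}z^\nu\mathbf F(1,\omega+\nu+1;n+2;1-z)$, reproduces exactly this finite sum when $\nu=n'$ and is consistent with uniqueness (Theorem \ref{thm:FracSum_uniqueness}). The natural tool is the special case $F(1,b;c;z)=(c-1)z^{1-c}(1-z)^{-(b-c+1)}\operatorname B_z(c-1,b-c+1)$ recorded in the preliminaries, which converts the regularized $\mathbf F$ into an incomplete beta function whose series expansion produces the factorial-power coefficients $(k+\omega)^{(n)}$.

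The main obstacle I anticipate is matching form $(\mathrm{ii})$ to form $(\mathrm i)$ term by term, i.e.\ showing $\sum_{k=0}^n \mathcal S_n^{(k)}\Phi(z,-k,\omega)_\nu$ collapses into the single hypergeometric expression. The cleaner path, and the one I would commit to, is the uniqueness argument: establish that both sides satisfy the fractional-sum recurrence $f_G(\nu)=f_G(\nu-1)+(\nu-1+\omega)^{(n)}z^{\nu-1}$ with $f_G(0)=0$, and that the $\mathbf F$-expression is analytic in $\nu$, so Theorem \ref{thm:FracSum_uniqueness} forces equality. Verifying the initial condition $f_G(0)=0$ is automatic from the $z^\nu$ prefactor, while checking the recurrence reduces to a contiguous-relation identity for the hypergeometric $\mathbf F(1,\omega+\nu+1;n+2;1-z)$ under $\nu\mapsto\nu-1$; this step, involving a standard but slightly delicate manipulation of the factorial power $(\omega+\nu)^{(n+1)}$ together with a Gauss contiguous relation, is where the real calculation lives and is where I would focus the care.
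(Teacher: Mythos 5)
Your form $(\mathrm i)$ argument is precisely the paper's: expand $(\Lambda_\omega)^{(n)}=\sum_{k=0}^n\mathcal S_n^{(k)}\Lambda_\omega^k$ (the scalar Stirling expansion transfers since all factors are shifts of a single operator) and apply Corollary \ref{cor:inc_LerchPhi_derivative_integer_n}; that part is sound. For form $(\mathrm{ii})$, however, your plan rests on a uniqueness principle that Theorem \ref{thm:FracSum_uniqueness} does not supply. That theorem asserts uniqueness of the particular limit function produced by the iterative polynomial-approximation scheme; it does \emph{not} say that every function analytic in $\nu$ satisfying the recurrence $f(\nu)=f(\nu-1)+(\nu-1+\omega)^{(n)}z^{\nu-1}$ with $f(0)=0$ equals $f_G$. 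It cannot: if $p(\nu)$ is any entire $1$-periodic function with $p(0)=0$, e.g.\ $p(\nu)=\sin^2(\pi\nu)$, then $f_G+p$ is analytic, satisfies the same recurrence and initial condition, and agrees with $f_G$ at every integer --- so your supplementary check that $(\mathrm{ii})$ reproduces the finite sum $\sum_{k=0}^{n'-1}(k+\omega)^{(n)}z^k$ at $\nu=n'$ closes nothing. To make your route airtight you must verify the closed form against the explicit evaluator, Theorem \ref{thm:FracSum_form1}, i.e.\ show
\[
n!\,(\omega+\nu)^{(n+1)}z^\nu\,\mathbf F(1,\omega+\nu+1;n+2;1-z)\overset{\mathfrak T}{=}\sum_{k=0}^\infty(k+\omega)^{(n)}z^k-\sum_{k=0}^\infty(k+\omega+\nu)^{(n)}z^{k+\nu},
\]
which pins down the generalization rather than merely a solution of the recurrence. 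The paper sidesteps all of this with a direct computation: it writes $g_{n,\omega}=\nu z^{n-\omega}\partial_z^nz^\omega F(1,1-\nu;2;1-z)$ via Lemma \ref{lem:diff_op_identities}$(vi)$, factors the derivative using identity $(ii)$ of the same lemma, applies the hypergeometric differentiation formulas under the substitution $t=1-z$, and finishes with a linear transformation to land on form $(\mathrm{ii})$ --- no appeal to uniqueness beyond what already entered Definition \ref{def:g_nw_function}.

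Two smaller slips. First, your claim that the initial condition ``is automatic from the $z^\nu$ prefactor'' is wrong: $z^0=1$. What actually vanishes at $\nu=0$ is the factorial power $(\omega+\nu)^{(n+1)}=\prod_{j=0}^n(\omega+\nu-j)$, which at $\nu=0$ contains the zero factor $\omega-j$ at $j=\omega$ for every $(n,\omega)\in\mathcal N$ (and $\mathbf F$ is entire in its parameters, so the product is indeed $0$). Second, your assertion that $(\Lambda_\omega)^{(n)}$ ``commutes with the fractional-sum generalization in $\nu$'' is true but not free; the clean justification is your own form $(\mathrm i)$ together with Lemma \ref{lem:inc_Lerch_trans_uniqueness} and linearity of the $\mathfrak T$-construction (Lemma \ref{lem:Tsummability_properties}), which identifies $g_{n,\omega}(z,\nu)$ as the unique generalization of $\sum_{k=0}^{n'-1}(k+\omega)^{(n)}z^k$ without any unproven commutation principle.
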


\begin{proof}
With Relation \ref{rel:rising_falling_factorial} and Definition \ref{def:StirlingS1} we expand the factorial operator yielding
\[
g_{n,\omega}(z,\nu)=\left(\sum_{k=0}^n\mathcal S_n^{(k)}\Lambda_\omega^k\right){_1F_0}(1;- ;z)_\nu.
\]
Upon inspection of Corollary \ref{cor:inc_LerchPhi_derivative_integer_n} we write $\Lambda_\omega^k{_1F_0}(1;- ;z)_\nu=\Phi(z,-k,\omega)_\nu$; hence, form $(\mathrm i)$ is obtained.

To derive form $(\mathrm{ii})$ begin by using Lemma \ref{lem:diff_op_identities} $(vi)$ and the incomplete geometric series in Definition \ref{def:inc_1F0} to write
\[
\begin{aligned}
g_{n,\omega}(z,\nu)
&=\nu z^{n-\omega}\partial^n_zz^\omega F(1,1-\nu;2;1-z)\\
&=\nu z^{n-\omega}\partial^{n-\omega}_zz^{-\omega}\left[(z\partial_zz)^\omega F(1,1-\nu;2;1-z)\right],
\end{aligned}
\]
where the last equality is a consequence of Lemma \ref{lem:diff_op_identities} $(ii)$. Now consider the term in square brackets and substitute $t=1-z$. Noting that $\partial_{1-t}=-\partial_t$ we have
\[
(z\partial_zz)^\omega F(1,1-\nu;2;1-z)\mapsto (-1)^\omega((1-t)\partial_t(1-t))^\omega F(1,1-\nu;2;t).
\]
The resulting differential formula in $t$ is the same form as that of \cite[Eq.~$15.5.7$]{nist_2010} giving
\[
(-1)^\omega((1-t)\partial_t(1-t))^\omega F(1,1-\nu;2;t)=\frac{(\nu+1)_\omega}{\omega+1}(1-t)^\omega F(\omega+1,1-\nu;\omega+2;t).
\]
Reintroducing $z=1-t$ and substituting the result back into $g_{n,\omega}$ then gives
\[
g_{n,\omega}(z,\nu)%
=\nu \frac{(\nu+1)_\omega}{\omega+1}z^{n-\omega}\partial^{n-\omega}_z F(\omega+1,1-\nu;\omega+2;1-z).
\]
Evaluating the remaining derivative with \cite[Eq.~$15.5.2$]{nist_2010} and simplifying we have
\[
g_{n,\omega}(z,\nu)%
=n!\, (\omega+\nu)^{(n+1)}z^{n-\omega}{\mathbf F}(n+1,1-\nu+n-\omega;n+2;1-z),
\]
which upon applying the linear transformation in \cite[Eq.~$15.8.1\,(\mathrm{iii})$]{nist_2010} at last produces form $(\mathrm{ii})$.
\end{proof}

\begin{corollary}[Explicit forms for $\tilde g_{n,\omega}$]
\label{cor:gTilde_hyper_form}
\[
\tilde g_{n,\omega}(z,\nu)=%
\begin{cases}
n!\frac{(\omega+\nu)^{(n+1)}}{\nu}\frac{\mathbf F(1,\omega+\nu+1;n+2;1-z)}{\mathbf F(1,\nu+1;2;1-z)} &(\mathrm i)\\[0.5em]
n!\frac{z^{n-\omega}\operatorname I_{1-z}(n+1,\omega+\nu-n)}{(1-z)^n(1-z^\nu)} &(\mathrm{ii})\\[0.5em]
n!\frac{z^{n-\omega}-z^\nu\sum_{k=0}^n\frac{(\omega+\nu-n)_k}{k!} (1-z)^k}{(1-z)^n(1-z^\nu)} &(\mathrm{iii})
\end{cases}
\]
\end{corollary}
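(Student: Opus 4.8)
The plan is to obtain all three forms from the single closed expression furnished by Theorem~\ref{thm:IncLerch_sum}$(\mathrm{ii})$, namely $g_{n,\omega}(z,\nu)=n!\,(\omega+\nu)^{(n+1)}z^\nu\mathbf F(1,\omega+\nu+1;n+2;1-z)$, combined with the definition $\tilde g_{n,\omega}=g_{n,\omega}/{_1F_0}(1;-;z)_\nu$ and the two representations of the incomplete geometric series: ${_1F_0}(1;-;z)_\nu=\nu F(1,1-\nu;2;1-z)$ from Definition~\ref{def:inc_1F0} and ${_1F_0}(1;-;z)_\nu=(1-z^\nu)/(1-z)$ from Lemma~\ref{lem:incGeomSeries_general}. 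Each form is then simply a different way of normalizing the numerator.

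For form $(\mathrm i)$ I would rewrite the denominator before dividing. Applying the Euler transformation $(\ref{eq:hyper_2F1_xForms})(\mathrm{iii})$ to $F(1,1-\nu;2;1-z)$ sends it to $z^\nu\mathbf F(1,\nu+1;2;1-z)$, so that ${_1F_0}(1;-;z)_\nu=\nu z^\nu\mathbf F(1,\nu+1;2;1-z)$. Dividing the numerator $g_{n,\omega}$ by this quantity cancels the common factor $z^\nu$ and leaves $\tilde g_{n,\omega}=\tfrac{n!}{\nu}(\omega+\nu)^{(n+1)}\,\mathbf F(1,\omega+\nu+1;n+2;1-z)/\mathbf F(1,\nu+1;2;1-z)$, which is precisely form $(\mathrm i)$.

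For form $(\mathrm{ii})$ the idea is to turn the numerator hypergeometric into an incomplete beta function. I would invoke the special case $F(1,b;c;w)=(c-1)w^{1-c}(1-w)^{c-b-1}\operatorname B_w(c-1,b-c+1)$ recorded in Section~\ref{sec:mathematical_prelims}, taking $b=\omega+\nu+1$, $c=n+2$, $w=1-z$; after dividing by $\Gamma(n+2)=(n+1)!$ the prefactor $(c-1)/\Gamma(c)$ collapses to $1/n!$, giving $\mathbf F(1,\omega+\nu+1;n+2;1-z)=\tfrac1{n!}(1-z)^{-(n+1)}z^{n-\omega-\nu}\operatorname B_{1-z}(n+1,\omega+\nu-n)$. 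Substituting into $g_{n,\omega}$ and writing the falling factorial as $(\omega+\nu)^{(n+1)}=\Gamma(\omega+\nu+1)/\Gamma(\omega+\nu-n)$, the product $(\omega+\nu)^{(n+1)}\operatorname B_{1-z}(n+1,\omega+\nu-n)$ combines with $\operatorname B(n+1,\omega+\nu-n)=\Gamma(n+1)\Gamma(\omega+\nu-n)/\Gamma(\omega+\nu+1)$ to collapse to $n!\,\operatorname I_{1-z}(n+1,\omega+\nu-n)$. Dividing by ${_1F_0}(1;-;z)_\nu=(1-z^\nu)/(1-z)$ then yields form $(\mathrm{ii})$.

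Form $(\mathrm{iii})$ follows from form $(\mathrm{ii})$ by expanding the regularized incomplete beta function with integer first parameter as the finite sum $\operatorname I_{1-z}(n+1,\mu)=1-z^{\mu}\sum_{k=0}^n\frac{(\mu)_k}{k!}(1-z)^k$ with $\mu=\omega+\nu-n$. I would prove this auxiliary identity by differentiating both sides in $z$ (equivalently in $x=1-z$): both reduce to the same multiple of the beta integrand $x^n(1-x)^{\mu-1}$, and both agree at the endpoint $z=1$, so they coincide; the cancellation of the intermediate powers of $x$ is a short telescoping using $(\mu+1)_m-(\mu)_m=m(\mu+1)_{m-1}$. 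Multiplying through by $z^{n-\omega}$ and noting $n-\omega+\mu=\nu$ converts $z^{n-\omega}z^{\mu}$ into $z^\nu$, matching the numerator of form $(\mathrm{iii})$. The main obstacle is bookkeeping rather than conceptual: correctly tracking the $z^\nu$ factors and gamma-function normalizations so that the prefactors collapse to $1/n!$ in form $(\mathrm{ii})$, and verifying the telescoping in the finite-sum identity needed for form $(\mathrm{iii})$.
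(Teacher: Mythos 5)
Your proposal is correct and follows essentially the same route as the paper's proof: form $(\mathrm i)$ via the transformation $(\ref{eq:hyper_2F1_xForms})(\mathrm{iii})$ giving ${_1F_0}(1;-;z)_\nu=\nu z^\nu\mathbf F(1,\nu+1;2;1-z)$, form $(\mathrm{ii})$ via Relation~\ref{rel:2F1_BetaInc} together with Definition~\ref{def:betaIncReg_fun}, and form $(\mathrm{iii})$ by expanding the regularized incomplete beta function with integer first parameter as a finite sum. The only difference is cosmetic: where the paper cites the tabulated identity \cite[Eq.~$06.19.03.0003.01$]{wolfram_functions} for that last expansion, you prove it directly by differentiation and the telescoping relation $(\mu+1)_m-(\mu)_m=m(\mu+1)_{m-1}$, which checks out.
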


\begin{proof}
The proof for form $(\mathrm i)$ follows from the definition of $\tilde g_{n,\omega}$ and the hypergeometric transformation \cite[Eq.~$15.8.1(\mathrm{iii})$]{nist_2010} to write ${_1F_0}(1;-;z)_\nu=\nu z^\nu\mathbf F(1,\nu+1;2;1-z)$. Form $(\mathrm{ii})$ is derived with the aid of Relation \ref{rel:2F1_BetaInc} and Definition \ref{def:betaIncReg_fun}. Form $(\mathrm{iii})$ is then found using \cite[Eqs.~$06.19.03.0003.01$]{wolfram_functions} on form $(\mathrm{ii})$.
\end{proof}

\begin{lemma}[Recurrence relation]
\label{lem:tilde_gnw_recurrence_relation}
Both $g_{n,\omega}$ and $\tilde g_{n,\omega}$ satisfy
\[
g_{n+1,\omega+1}=g_{n+1,\omega}+(n+1)g_{n,\omega}.
\]
\end{lemma}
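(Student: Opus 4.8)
The plan is to establish the recurrence relation $g_{n+1,\omega+1}=g_{n+1,\omega}+(n+1)g_{n,\omega}$ directly from the definition $g_{n,\omega}=(\Lambda_\omega)^{(n)}\,{_1F_0}(1;-;z)_\nu$, reducing it to a purely operator-theoretic identity on the factorial operators $(\Lambda_\omega)^{(n)}$. The key observation is that the dependence on the incomplete geometric series ${_1F_0}(1;-;z)_\nu$ is inert: since both sides of the claimed recurrence apply differential operators to the \emph{same} function, it suffices to prove the operator identity
\[
(\Lambda_{\omega+1})^{(n+1)}=(\Lambda_\omega)^{(n+1)}+(n+1)(\Lambda_\omega)^{(n)},
\]
after which the result for $g_{n,\omega}$ follows by letting both sides act on ${_1F_0}(1;-;z)_\nu$, and the result for $\tilde g_{n,\omega}$ follows immediately upon dividing through by ${_1F_0}(1;-;z)_\nu$, which is nonzero away from the removable singularity.

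\textbf{First} I would unpack the factorial operator using Definition \ref{def:factorial_differential_operator}, so that $(\Lambda_\omega)^{(n)}=\prod_{k=0}^{n-1}(\Lambda_\omega-k)$, and recall from Definition \ref{def:lowering_operator} that $\Lambda_{\omega+1}=\Lambda_\omega+1$. The crucial point is that $\Lambda_\omega$ and the scalar shift commute, so the whole calculation takes place in the commutative polynomial algebra generated by $\Lambda_\omega$; I may therefore replace $\Lambda_\omega$ by a formal indeterminate $x$ and prove the scalar identity
\[
(x+1)^{(n+1)}=x^{(n+1)}+(n+1)\,x^{(n)},
\]
where $x^{(m)}=\prod_{k=0}^{m-1}(x-k)$ is the falling factorial of the mathematical preliminaries. \textbf{Then} this is a standard and elementary fact about falling factorials: writing $(x+1)^{(n+1)}=(x+1)\,x\,(x-1)\cdots(x-n+1)$ and $x^{(n+1)}=x\,(x-1)\cdots(x-n)$, one factors out the common product $x^{(n)}=x(x-1)\cdots(x-n+1)$ to get $(x+1)^{(n+1)}=(x+1)\,x^{(n)}$ and $x^{(n+1)}=(x-n)\,x^{(n)}$, whence the difference is $(x+1)^{(n+1)}-x^{(n+1)}=\bigl((x+1)-(x-n)\bigr)x^{(n)}=(n+1)\,x^{(n)}$, which is exactly the claim.

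\textbf{The only subtlety} worth flagging is the legitimacy of passing from the commutative scalar identity back to the operator identity. This is justified because $\Lambda_{\omega+1}-\Lambda_\omega=1$ is a scalar multiple of the identity operator, so $\Lambda_\omega$ together with $1$ generate a commutative subalgebra of differential operators, and any polynomial identity valid for a single indeterminate holds verbatim for $\Lambda_\omega$; no commutator terms arise. I expect this step to be the main (though mild) obstacle, since one must be careful that the manipulation is algebraic in $\Lambda_\omega$ alone and does not inadvertently require commuting $\vartheta$ with the variable $z$, which it does not. Once the operator identity is in hand, applying both sides to ${_1F_0}(1;-;z)_\nu$ gives the recurrence for $g_{n,\omega}$, and dividing by ${_1F_0}(1;-;z)_\nu$ gives it for $\tilde g_{n,\omega}$, completing the proof.
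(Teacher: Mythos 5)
Your proof is correct, but it takes a genuinely different route from the paper. The paper works through the explicit differential realization of Lemma \ref{lem:diff_op_identities} $(vi)$, writing $g_{n+1,\omega+1}=z^{n-\omega}\partial_z^{n+1}z^{\omega+1}f$ with $f={_1F_0}(1;-;z)_\nu$ and expanding $\partial_z^{n+1}(z\cdot z^\omega f)$ by the Leibniz rule, where all but the $k=0,1$ terms vanish since $\partial_z^k z=0$ for $k\geq 2$; this yields the two surviving terms $z^{n+1-\omega}\partial_z^{n+1}z^\omega f$ and $(n+1)z^{n-\omega}\partial_z^n z^\omega f$. You instead stay entirely inside the commutative algebra of polynomials in $\Lambda_\omega$: since $\Lambda_{\omega+1}=\Lambda_\omega+1$ exactly (Definition \ref{def:lowering_operator} gives $\Lambda_{\omega+1}=\omega+1+\vartheta$), the recurrence reduces to the operator identity
\[
(\Lambda_{\omega+1})^{(n+1)}=(\Lambda_\omega)^{(n+1)}+(n+1)(\Lambda_\omega)^{(n)},
\]
which in turn is the Pascal-type falling-factorial identity $(x+1)^{(n+1)}=x^{(n+1)}+(n+1)x^{(n)}$ specialized to $x=\Lambda_\omega$; your factorizations $(x+1)^{(n+1)}=(x+1)x^{(n)}$ and $x^{(n+1)}=(x-n)x^{(n)}$ are exactly right, and your commutativity justification is sound because every factor $\Lambda_\omega-k$ is a polynomial in $\vartheta$ with scalar coefficients, so no commutators with $z$ ever enter. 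Your approach buys independence from Lemma \ref{lem:diff_op_identities} $(vi)$ and the Leibniz rule, and it exposes the structural content of the lemma: the recurrence is a universal polynomial identity in the lowering operator, valid with any function in place of ${_1F_0}(1;-;z)_\nu$. The paper's computation, conversely, stays within the concrete $z^{n-\omega}\partial_z^n z^\omega$ calculus it uses throughout Section \ref{subsec:gnw_tilde_gnw}, at the cost of invoking that representation. Your closing step — dividing by ${_1F_0}(1;-;z)_\nu$ to obtain the $\tilde g_{n,\omega}$ version, with the caveat about its zeros — matches the paper's treatment.
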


\begin{proof}
Denoting $f(z)={_1F_0}(1;- ;z)_\nu$ we have
\[
\begin{aligned}
g_{n+1,\omega+1}%
&=z^{n-\omega}\partial_z^{n+1}z^{\omega+1}f\\
&=z^{n-\omega}\sum_{k=0}^{n+1}\binom{n+1}{k}(\partial_z^k z)(\partial_z^{n+1-k}z^\omega f)\\
&=z^{n+1-\omega}\partial_z^{n+1}z^\omega f+(n+1)z^{n-\omega}\partial_z^nz^\omega f,
\end{aligned}
\]
which is the desired result. Dividing sides of the last equality by $f$ gives the corresponding result for $\tilde g_{n,\omega}$.
\end{proof}

\begin{lemma}[Reflection formula]
\label{lem:tilde_gnw_reflection_formula}
\[
\tilde g_{n,\omega}(z,\nu)=(-1)^n\tilde g_{n,n-\omega}(1/z,-\nu)
\]
\end{lemma}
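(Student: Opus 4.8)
The plan is to avoid the hypergeometric forms and instead exploit the closed form of the incomplete geometric series together with the operator definition of $g_{n,\omega}$. Writing $f_\nu(z):={_1F_0}(1;-;z)_\nu$, Lemma \ref{lem:incGeomSeries_general} gives $f_\nu(z)=(1-z^\nu)/(1-z)$, from which a one-line computation yields the involution identity $f_{-\nu}(1/z)=-z\,f_\nu(z)$. This is the engine of the whole argument: inverting $z$ and negating $\nu$ reproduces the geometric series up to the explicit factor $-z$. Since the functions live on $z\in\Bbb R^+$ and $z\mapsto 1/z$ is an involution of $\Bbb R^+$, no analytic-continuation subtleties arise.

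Next I would track the differential operators under $w=1/z$. Because $\vartheta=z\partial_z$, the chain rule shows the Euler operator changes sign, $\vartheta_w\mapsto-\vartheta_z$, so the lowering operator transforms as $\Lambda^w_{n-\omega}=(n-\omega)+\vartheta_w\mapsto(n-\omega)-\vartheta_z$. Using $g_{n,\omega}=(\Lambda_\omega)^{(n)}f_\nu$ from Definition \ref{def:g_nw_function} and the factorial operator of Definition \ref{def:factorial_differential_operator}, I would expand $(\Lambda^w_{n-\omega})^{(n)}=\prod_{k=0}^{n-1}\big((n-\omega)+\vartheta_w-k\big)$, substitute $\vartheta_w\mapsto-\vartheta_z$, pull a $-1$ out of each of the $n$ factors, and reindex by $m=n-k$ to collapse the product to
\[
(\Lambda^w_{n-\omega})^{(n)}\;\longmapsto\;(-1)^n\prod_{m=1}^{n}\big(\Lambda^z_\omega-m\big).
\]

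The crux is then a commutation identity. From $\vartheta_z(z\phi)=z(\vartheta_z+1)\phi$ one gets $P(\vartheta_z)\,z=z\,P(\vartheta_z+1)$ for any polynomial $P$, whence $\prod_{m=1}^n(\Lambda^z_\omega-m)\,z=z\prod_{m=0}^{n-1}(\Lambda^z_\omega-m)=z\,(\Lambda^z_\omega)^{(n)}$. Applying the transformed operator to $f_{-\nu}(1/z)=-z\,f_\nu(z)$ and using this commutation to slide the factor $z$ to the left reconstitutes exactly the factorial operator defining $g_{n,\omega}$, giving $g_{n,n-\omega}(1/z,-\nu)=(-1)^{n+1}z\,g_{n,\omega}(z,\nu)$. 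Dividing by $f_{-\nu}(1/z)=-z\,f_\nu(z)$ passes from $g$ to $\tilde g$; the two factors of $z$ cancel, leaving $\tilde g_{n,n-\omega}(1/z,-\nu)=(-1)^n\tilde g_{n,\omega}(z,\nu)$, which is the claim after multiplying through by $(-1)^n$.

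I expect the main obstacle to be bookkeeping the index shift in the factorial operator: confirming that the sign flip $\vartheta_w\mapsto-\vartheta_z$ plus reindexing sends $\prod_{k=0}^{n-1}(\Lambda_\omega-k)$ to $(-1)^n\prod_{m=1}^n(\Lambda_\omega-m)$, and that the commutation with $z$ then shifts this back to $\prod_{m=0}^{n-1}(\Lambda_\omega-m)$. These two shifts must align perfectly for the operator to reassemble itself, so a single off-by-one error would spoil the cancellation of signs and of the $z$ factors. As a safeguard I would cross-check against form $(\mathrm{iii})$ of Corollary \ref{cor:gTilde_hyper_form}: substituting $(n,n-\omega,1/z,-\nu)$ and using $(1-1/z)^k=(-1)^k(1-z)^kz^{-k}$ reduces the statement to a finite Pochhammer identity, which I confirmed directly in the cases $n=0$ and $n=1$.
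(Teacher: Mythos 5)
Your proposal is correct, and it proves the lemma by a genuinely different route than the paper. The paper's proof stays entirely inside the hypergeometric representation: it takes form $(\mathrm i)$ of Corollary \ref{cor:gTilde_hyper_form}, applies the Pfaff transformation \cite[Eq.~$15.8.1(\mathrm i)$]{nist_2010} to the $\mathbf F$ terms in numerator and denominator (sending the argument $1-z$ to $1-1/z$), and absorbs the sign through the falling-factorial reflection $(n-\omega^\prime-\nu^\prime)^{(n+1)}=(-1)^{n+1}(\omega^\prime+\nu^\prime)^{(n+1)}$ — two lines, given the machinery already built. You instead work directly from Definition \ref{def:g_nw_function}, and your three ingredients all check out: the involution $f_{-\nu}(1/z)=-z\,f_\nu(z)$ is immediate from $(1-z^\nu)/(1-z)$; the Euler operator does flip sign under inversion, since $w=1/z$ gives $\partial_w=-z^2\partial_z$ and hence $\vartheta_w=-\vartheta_z$; and the commutation $P(\vartheta)z=zP(\vartheta+1)$ is exactly right. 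I verified the bookkeeping you worried about: substituting $\vartheta_w\mapsto-\vartheta_z$ into $\prod_{k=0}^{n-1}\bigl((n-\omega)+\vartheta_w-k\bigr)$ and reindexing $m=n-k$ yields $(-1)^n\prod_{m=1}^{n}(\Lambda_\omega^z-m)$, and commuting past the factor $z$ shifts each $\Lambda_\omega-m$ to $\Lambda_\omega-(m-1)$, reassembling $(\Lambda_\omega^z)^{(n)}$ — the two index shifts cancel exactly, and the sign count $(-1)^{n+1}z\,g_{n,\omega}$ divided by $-z\,f_\nu$ gives $(-1)^n\tilde g_{n,\omega}$ as claimed. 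What each approach buys: the paper's is shorter and stays in the $\mathbf F$ notation used throughout the surrounding results, whereas yours is more structural — it exhibits the reflection as the composition of two elementary facts (inversion negates $\vartheta$; the incomplete geometric series satisfies an involution), needs no hypergeometric transformation theory, and would apply verbatim to any kernel obeying such an involution. The only caveat, which affects both proofs equally, is that the division by $f_\nu$ fails at $\nu=0$ and at the removable singularity $z=1$; both sides extend there by continuity, consistent with how the paper treats these points, so this is not a gap.
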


\begin{proof}
Applying the linear transformation in \cite[Eq.~$15.8.1(\mathrm i)$]{nist_2010} to each hypergeometric term in Corollary \ref{cor:gTilde_hyper_form} and defining $\omega^\prime=n-\omega$ and $\nu^\prime=-\nu$ yields
\[
\begin{aligned}
\tilde g_{n,\omega}(z,\nu)%
&=\frac{n!(n-\omega^\prime-\nu^\prime)^{(n+1)}}{-\nu^\prime}\frac{\mathbf F(1,\omega^\prime+\nu^\prime+1;n+2;1-1/z)}{\mathbf F(1,\nu^\prime+1,2,1-1/z)}\\
&=-\frac{n!(-1)^{n+1}(\omega^\prime+\nu^\prime)^{(n+1)}}{\nu^\prime}\frac{\mathbf F(1,\omega^\prime+\nu^\prime+1;n+2;1-1/z)}{\mathbf F(1,\nu^\prime+1,2,1-1/z)}\\
&=(-1)^n\tilde g_{n,\omega^\prime}(1/z,\nu^\prime),
\end{aligned}
\]
which completes the proof.
\end{proof}

\begin{lemma}
\label{lem:tilde_g_boundary_values}
\[
\begin{aligned}
\lim_{z\to 0}\tilde g_{n,\omega}(z,\nu) &=(\omega+\nu\mathds 1_{\nu<0})^{(n)}\\
\lim_{z\to\infty}\tilde g_{n,\omega}(z,\nu) &=(\omega+\nu\mathds 1_{\nu>0}-1)^{(n)}.
\end{aligned}
\]
\end{lemma}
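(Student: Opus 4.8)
The plan is to work entirely from the elementary closed form $(\mathrm{iii})$ of $\tilde g_{n,\omega}$ in Corollary \ref{cor:gTilde_hyper_form}, namely
\[
\tilde g_{n,\omega}(z,\nu)=n!\,\frac{z^{n-\omega}-z^\nu S(z)}{(1-z)^n(1-z^\nu)},\qquad S(z)\coloneqq\sum_{k=0}^n\frac{(\omega+\nu-n)_k}{k!}(1-z)^k,
\]
since both limits then reduce to comparing elementary powers of $z$. The single arithmetic fact I need is the finite binomial summation
\[
S(0)=\sum_{k=0}^n\frac{(\omega+\nu-n)_k}{k!}=\frac{(\omega+\nu)^{(n)}}{n!},
\]
a Chu--Vandermonde/hockey-stick identity following from $\tfrac{(a)_k}{k!}=\binom{a+k-1}{k}$, the sum $\sum_{k=0}^n\binom{a+k-1}{k}=\binom{a+n}{n}$, and the rewriting $\binom{a+n}{n}=(a+1)_n/n!=(\omega+\nu)^{(n)}/n!$ with $a=\omega+\nu-n$. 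Recall also that $\omega\le n$ on $\mathcal N$, so $n-\omega\ge 0$.

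For $z\to 0$ I would split on the sign of $\nu$. When $\nu>0$ the denominator tends to $1$, the term $z^\nu S(z)\to0$, and $z^{n-\omega}\to\mathds 1_{\omega=n}$, so $\tilde g_{n,\omega}\to n!\,\mathds 1_{\omega=n}=(\omega)^{(n)}$, the falling factorial $(\omega)^{(n)}=\prod_{k=0}^{n-1}(\omega-k)$ carrying the factor $\omega-\omega=0$ precisely when $\omega<n$. When $\nu<0$ we have $z^\nu\to\infty$, so both numerator and denominator are dominated by their $z^\nu$ terms: the denominator behaves like $-z^\nu$ and the numerator like $-z^\nu S(0)$, whence $\tilde g_{n,\omega}\to n!\,S(0)=(\omega+\nu)^{(n)}$. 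In both cases the value is $(\omega+\nu\mathds 1_{\nu<0})^{(n)}$, as claimed.

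For $z\to\infty$ I would not repeat the computation but instead invoke the reflection formula of Lemma \ref{lem:tilde_gnw_reflection_formula}, which after the substitution $w=1/z$ gives $\lim_{z\to\infty}\tilde g_{n,\omega}(z,\nu)=(-1)^n\lim_{w\to0}\tilde g_{n,n-\omega}(w,-\nu)$. Applying the $z\to0$ result already obtained, with $\omega\mapsto n-\omega$ and $\nu\mapsto-\nu$ (so that $\mathds 1_{-\nu<0}=\mathds 1_{\nu>0}$ and $(-\nu)\mathds 1_{-\nu<0}=-\nu\mathds 1_{\nu>0}$), this equals $(-1)^n(n-\omega-\nu\mathds 1_{\nu>0})^{(n)}$. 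Using the preliminaries' identity $(s)^{(n)}=(-1)^n(-s)_n$ turns this into the rising factorial $(\omega+\nu\mathds 1_{\nu>0}-n)_n$, and since both products run over the same factors $\omega+\nu\mathds 1_{\nu>0}-n,\dots,\omega+\nu\mathds 1_{\nu>0}-1$ this rewrites as $(\omega+\nu\mathds 1_{\nu>0}-1)^{(n)}$, the desired value.

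The one place demanding care, and the step I expect to be the main obstacle, is the dominant-balance argument in the $\nu<0$ case when $S(0)=0$, i.e.\ when $(\omega+\nu)^{(n)}=0$: there the $z^\nu$ term no longer dominates the numerator. I would then expand $S(z)=\sum_k c_k(1-z)^k$ about $z=0$, use $\sum_k c_k=0$ to get $S(z)=O(z)$, and verify directly that the quotient tends to $0$, consistent with the vanishing target; the analogous vanishing-coefficient bookkeeping at $z=0$ for $\nu>0$ is handled the same way. Once these edge cases are dispatched, the two stated limits follow.
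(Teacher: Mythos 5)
Your proof is correct, but it takes a genuinely more elementary route than the paper's. The paper works from form $(\mathrm{ii})$ of Corollary \ref{cor:gTilde_hyper_form}: assuming $\nu>0$, it obtains the $z\to 0$ limit from the asymptotic $\operatorname I_{1-z}(\alpha,\beta)\sim 1+\mathcal O(z^\beta)$ and the $z\to\infty$ limit from the transformation \cite[Eq.~$15.8.1(\mathrm i)$]{nist_2010} together with Gauss's evaluation at unit argument (Relation \ref{rel:2F1_zeq1}), and only then invokes the reflection formula of Lemma \ref{lem:tilde_gnw_reflection_formula} to transfer both limits to $\nu<0$. You instead work from the rational form $(\mathrm{iii})$, dispose of both signs of $\nu$ at $z\to 0$ using the Chu--Vandermonde evaluation $\sum_{k=0}^n(\omega+\nu-n)_k/k!=(\omega+\nu)^{(n)}/n!$ (which is correct), and then use the same reflection formula to convert $z\to\infty$ into $z\to 0$; your factorial-power bookkeeping $(-1)^n(n-\omega-\nu\mathds 1_{\nu>0})^{(n)}=(\omega+\nu\mathds 1_{\nu>0}-n)_n=(\omega+\nu\mathds 1_{\nu>0}-1)^{(n)}$ also checks out. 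What your route buys is elementarity: no hypergeometric asymptotics or Gauss summation, just powers of $z$ and one binomial identity. What the paper's route buys is the intermediate asymptotic $(\ref{eq:gTilde_asym_form})$, which is reused later (e.g.~in the proof of Theorem \ref{thm:acvTv_converges_to_acvGv}), so the hypergeometric computation earns more than the bare limits. One remark: the edge case you flag as the main obstacle ($\nu<0$ with $S(0)=0$) is not actually an obstacle. Dividing numerator and denominator exactly by $-z^{\nu}$ gives
\[
\tilde g_{n,\omega}(z,\nu)=n!\,\frac{z^{n-\omega-\nu}-S(z)}{(1-z)^n\left(z^{-\nu}-1\right)},
\]
and since $n-\omega\geq 0$ and $-\nu>0$ imply $n-\omega-\nu>0$ while $z^{-\nu}\to 0$, the quotient tends to $n!\,S(0)$ with no dominant-balance argument and hence no case distinction on whether $S(0)$ vanishes; your fallback expansion of $S$ does work, but this exact rewriting makes it unnecessary.
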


\begin{proof}
Assume $\nu>0$ and consider the limit approaching zero. As $z\to 0$, $\operatorname I_{1-z}(\alpha,\beta)\sim 1+\mathcal O(z^\beta)$ \cite[Eq.~$06.21.06.0044.01$]{wolfram_functions}. Applying this result to Corollary \ref{cor:gTilde_hyper_form} form $(\mathrm{ii})$ then gives
\begin{equation}
\label{eq:gTilde_asym_form}
\tilde g_{n,\omega}(z,\nu)\sim n!\, (1+\mathcal O(z))(1+\mathcal O(z^\nu))\\
\left(z^{n-\omega}+\mathcal O(z^\nu)\right),
\end{equation}
which upon passing to the limit yields
\[
\lim_{z\to 0}\tilde g_{n,\omega}(z,\nu)=n!\,\mathds 1_{n=\omega}=(\omega)^{(n)}.
\]
Now considering the limit $z\to\infty$ we use the transformation \cite[Eq.~$15.8.1(\mathrm i)$]{nist_2010} to write
\[
\tilde g_{n,\omega}(z,\nu)=\frac{n!(\omega+\nu)^{(n+1)}}{\nu\Gamma(n+2)}\frac{F(1,n-\omega+1-\nu;n+2;1-1/z)}{F(1,1-\nu,2,1-1/z)}.
\]
According to Relation \ref{rel:2F1_zeq1}, the limits of the numerator and denominator are finite if $\omega+\nu>0$ and $\nu>0$, respectively; thus,
\[
\lim_{z\to \infty}\tilde g_{n,\omega}(z,\nu)=\frac{(\omega+\nu)^{(n+1)}}{\nu}\frac{\Gamma(\omega+\nu)\Gamma(\nu+1)}{\Gamma(\omega+\nu+1)\Gamma(\nu)}=(\omega+\nu-1)^{(n)}.
\]
Still assuming $\nu>0$ we may now use the reflection formula in Lemma \ref{lem:tilde_gnw_reflection_formula} to deduce
\[
\begin{aligned}
\lim_{z\to 0}\tilde g_{n,\omega}(z,-\nu) &=(-1)^n\lim_{z\to\infty}\tilde g_{n,n-\omega}(z,\nu)\\
\lim_{z\to\infty}\tilde g_{n,\omega}(z,-\nu) &=(-1)^n\lim_{z\to 0}\tilde g_{n,n-\omega}(z,\nu).
\end{aligned}
\]
But notice that the limit on the r.h.s.~of each equality can be found from our previous results by substituting $\omega\mapsto n-\omega$ and then scaling by $(-1)^n$. It then follows after some simplification that
\[
\begin{aligned}
\lim_{z\to 0}\tilde g_{n,\omega}(z,-\nu) &=(\omega+\nu)^{(n)}\\
\lim_{z\to\infty}\tilde g_{n,\omega}(z,-\nu) &=(\omega-1)^{(n)}.
\end{aligned}
\]
Combining all four result then produces the desired solution.
\end{proof}

The following definition and relation will be used extensively in the rest of this section.

\begin{definition}[Gauss-Hypergeometric distribution]
\label{def:GH_distribution}
For $\alpha,\beta>0$, $\gamma\in\Bbb R$, and $\xi>-1$, the Gauss Hypergeometric random variable $X\sim\mathcal{GH}(\alpha,\beta,\gamma,\xi)$ admits a distribution and density function on $x\in(0,1)$ of the form
\[
\begin{array}{*3{>{\displaystyle}l}}
F_X(x;\alpha,\beta,\gamma,\xi) &=\frac{x^\alpha F_1(\alpha;1-\beta,\gamma;\alpha+1;x,-\xi x)}{\alpha \operatorname B(\alpha,\beta)F(\alpha,\gamma;\alpha+\beta;-\xi)},\\[3ex]
f_X(x;\alpha,\beta,\gamma,\xi) &=\frac{x^{\alpha-1}(1-x)^{\beta-1}(1+\xi x)^{-\gamma}}{\operatorname B(\alpha,\beta)F(\alpha,\gamma;\alpha+\beta;-\xi)},
\end{array}
\]
where $F_1(a;b,b^\prime;c;s,z)$ is Appell's $1$st hypergeometric function of two variables (see Definition \ref{def:appell_F1}).
\end{definition}

\begin{relation}
\label{rel:Hyper2F1_integral}
For $|\operatorname{ph}(1-z)|<\pi$ and $\Re c>\Re d>0$
\[
\mathbf F(a,b;c;z)=\int_0^1F(a,b;d;zt)\frac{t^{d-1}(1-t)^{c-d-1}}{\Gamma(d)\Gamma(c-d)}\,\mathrm dt.
\]
In particular if $\Re c>\Re b>0$ then substituting $d=b$ gives
\[
\mathbf F(a,b;c;z)=\int_0^1(1-zt)^{-a}\frac{t^{b-1}(1-t)^{c-b-1}}{\Gamma(b)\Gamma(c-b)}\,\mathrm dt.
\]
\end{relation}

An important result needed for obtaining a better understanding of $\mathscr \acv\mathscr T_\nu$ are the zeros of $\tilde g_{n,\omega}$. The following lemma will serve as a preliminary result needed to find these zeros.
\begin{lemma}
\label{lem:EVX_equals_one}
Let $\nu\in\Bbb R$, $z\in\Bbb R_0^+$, and $X\sim\mathcal{GH}(1,1,1+\nu,z-1)$. Then $\ev X=1\iff\nu\in\Bbb R_0^+\land z=0$.
\end{lemma}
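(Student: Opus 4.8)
The plan is to reduce $\ev X$ to a ratio of two elementary integrals and then treat the two directions of the equivalence separately, with the boundary $z=0$ carrying essentially all of the difficulty.

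First I would substitute the parameters $(\alpha,\beta,\gamma,\xi)=(1,1,1+\nu,z-1)$ into the density of Definition~\ref{def:GH_distribution}. Since $\operatorname B(1,1)=1$ and $-\xi=1-z$, the density collapses to $f_X(x)=(1+(z-1)x)^{-(1+\nu)}/F(1,1+\nu;2;1-z)$ on $(0,1)$, and by the integral representation (Relation~\ref{rel:Hyper2F1_integral}) the normalizing constant is exactly $\int_0^1(1+(z-1)t)^{-(1+\nu)}\,\mathrm dt$. Hence
\[
\ev X=\frac{\int_0^1 x\,(1+(z-1)x)^{-(1+\nu)}\,\mathrm dx}{\int_0^1(1+(z-1)x)^{-(1+\nu)}\,\mathrm dx},
\]
an expression I can evaluate in closed form with elementary antiderivatives (the numerator is likewise $\tfrac12 F(1+\nu,2;3;1-z)$).

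For the necessity of $z=0$ I would argue by support. When $z>0$ we have $\xi=z-1>-1$, so $X\sim\mathcal{GH}(1,1,1+\nu,z-1)$ is a genuine probability law supported on the open interval $(0,1)$; since $x<1$ there and the law is non-degenerate, $\ev X=\int_0^1 x\,f_X<\int_0^1 f_X=1$ strictly. Thus $\ev X=1$ is impossible for any $z>0$, forcing $z=0$. Equivalently, one may set the closed form equal to $1$, clear denominators, and check that the resulting equation $z^{1-\nu}-1-(1-\nu)(z-1)=0$ has no positive root other than the spurious $z=1$ introduced by the clearing (a convexity argument on $h(z)=z^{1-\nu}-1-(1-\nu)(z-1)$, whose unique critical point is the double root $z=1$, confirms this).

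The remaining and genuinely delicate step is the boundary $z=0$, where $\xi=-1$ and the weight becomes $(1-x)^{-(1+\nu)}$. Here I would split on the sign of $\nu$. If $\nu<0$ the weight is integrable and the law is exactly $\operatorname{Beta}(1,-\nu)$, whose mean $\tfrac1{1-\nu}<1$ differs from $1$, ruling out $\nu<0$. If $\nu\ge 0$ the weight is non-integrable near $x=1$, so $\ev X$ must be read as the limit $\lim_{z\to0^+}\ev X$ (equivalently, the measure $f_X\,\mathrm dx$ converges weakly to the point mass $\delta_1$). The hard part will be the asymptotic bookkeeping showing this limit equals $1$: both integrals diverge and are dominated by the contribution near $x=1$, of order $z^{-\nu}$ when $\nu>0$ and of order $\log(1/z)$ when $\nu=0$, and one must verify that their ratio tends to $1$ while separately disposing of the special exponents $\nu\in\{0,1\}$, where the antiderivatives acquire logarithms. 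Combining the two directions then yields $\ev X=1\iff\nu\in\Bbb R_0^+\land z=0$.
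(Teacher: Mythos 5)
Your proof is correct, but it reaches the crucial inequality by a genuinely different route than the paper. The paper first shows that the family $\{f_X(\cdot\,|z)\}_{z>0}$ admits a monotone likelihood ratio, deduces that $\operatorname{sign}(\partial_z\ev X)$ is governed by the sign of $\nu+1$ (equation $(\ref{eq:monotonicity_of_EX})$), and then derives $z>0\implies\ev X<1$ by comparing $\ev X$ against the boundary values $\ev X_0$ and $\ev X_\infty$ computed from the same closed form you obtain, with the case $\nu=-1$ (where the MLR derivative vanishes) treated separately. Your support argument short-circuits all of this: for $z>0$ the weight $(1-(1-z)x)^{-(1+\nu)}$ is continuous, positive, and bounded on $[0,1]$, so $X$ is a genuine absolutely continuous law on $(0,1)$, and $\int_0^1(1-x)f_X(x)\,\mathrm dx>0$ gives $\ev X<1$ strictly, uniformly in $\nu$, with no case analysis at all. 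What the paper's heavier machinery buys is the monotonicity statement itself, which is reused later (the proof of Lemma \ref{lem:hyper_ratio_monotonicity} invokes ``an argument very similar to that of Lemma \ref{lem:EVX_equals_one}''), so it is not wasted work in context, but for the lemma as stated your argument is leaner. Two small remarks on your side. First, your alternative algebraic route degenerates at the exponents $\nu\in\{0,1\}$: there $h(z)=z^{1-\nu}-1-(1-\nu)(z-1)$ vanishes \emph{identically}, because clearing denominators multiplies by factors ($1-z^\nu$, respectively $1-\nu$) that are identically zero at those values — your convexity analysis via $h''(z)=-\nu(1-\nu)z^{-\nu-1}$ is silent precisely there, though your primary support argument covers these exponents anyway. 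Second, the ``hard'' $z\searrow 0$ bookkeeping is lighter than you fear: once the closed form $\ev X=\frac{1-(1-z)\nu-z^\nu}{(1-\nu)(1-z)(1-z^\nu)}$ is in hand (this is exactly what the paper uses, defining the value at $z=0$ as $\ev X_0(\nu)=\lim_{z\to 0}\ev X$, which matches your weak-$\delta_1$ reading for $\nu\geq 0$), the limit is immediate for $\nu>0$ and for $\nu<0$, leaving only the logarithmic exponents: at $\nu=0$ one computes directly $\ev X=\frac{1}{1-z}+\frac{1}{\log z}\to 1$, and at $\nu=1$, $\ev X=\frac{1}{1-z}+\frac{z\log z}{(1-z)^2}\to 1$, so no asymptotic matching of two divergent integrals is actually required.
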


\begin{proof}
With the help of Definition \ref{def:GH_distribution} we write the density of $X$ as
\[
f_X(x|z)=\frac{(1-(1-z) x)^{-1-\nu}}{F(1,1+\nu;2;1-z)}\mathds 1_{x\in[0,1]}.
\]
Consider the family of density functions $f_z=\{f_X(x|z):z\in\Bbb R_0^+\}$ and let $\delta>0$. We have for the likelihood ratio
\[
L(x)\coloneqq\frac{f_{z+\delta}}{f_z}(x)=C\left(\frac{1-(1-z)x}{1-(1-z-\delta)x}\right)^{1+\nu},
\]
where $C$ is a positive constant. Differentiating $L$ gives
\[
\partial_xL(x)=-\delta C(1+\nu)\frac{(1-(1-z)x)^\nu}{(1-(1-z-\delta)x)^{2+\nu}}.
\]
For fixed $\nu$, $\operatorname{sign}(\partial_xL(x))$ is constant on the interior of $\operatorname{supp}(X)=[0,1]$. Consequently, the family of density functions $f_z$ admits a monotone likelihood ratio and
\begin{equation}
\label{eq:monotonicity_of_EX}
\operatorname{sign}(\partial_z\ev X)=\operatorname{sign}(\partial_xL(x))=%
\begin{cases}
-1, &\nu>-1\\
0, &\nu=-1\\
1, &\nu<-1.
\end{cases}
\end{equation}
We now want to show that $z>0\implies \ev X<1$. Evaluating the expected value yields
\[
\ev X=\frac{1-(1-z)\nu-z^\nu}{(1-\nu)(1-z)(1-z^\nu)}.
\]
Letting $\ev X(\nu)$ denote the expected value as a function of $\nu$ and $\ev X_a(\nu)\coloneqq\lim_{z\to a}\ev X(\nu)$ we find $\ev X_0(\nu)=\frac{1}{1-\nu\mathds 1_{\nu<0}}$, $\ev X_\infty(\nu)=\frac{\nu}{\nu-1}\mathds 1_{\nu<0}$, and
\[
\max\{\ev X_0,\ev X_\infty\}(\nu)=\frac{\mathds 1_{\nu\geq -1}-\nu\mathds 1_{\nu<-1}}{1-\nu\mathds 1_{\nu<0}}.
\]
Utilizing the monotonicity of $\ev X$ in $(\ref{eq:monotonicity_of_EX})$ we have $\ev X(-1)=\max\{\ev X_0,\ev X_\infty\}(-1)=1/2$ and $\ev X(\nu)<\max\{\ev X_0,\ev X_\infty\}(\nu)\leq 1$ for all $\nu\in\Bbb R\setminus\{-1\}$; thus, $z>0\implies \ev X<1$. Furthermore, when $z=0$
\[
\ev X=\ev X_0(\nu)%
\begin{cases}
<1, &\nu<0\\
=1, &\nu\geq 0
\end{cases}
\]
hence, $\ev X=1\iff \nu\in\Bbb R_0^+\land z=0$. The proof is now complete.
\end{proof}

\begin{figure}[htb]
\centering
\includegraphics[scale=1]{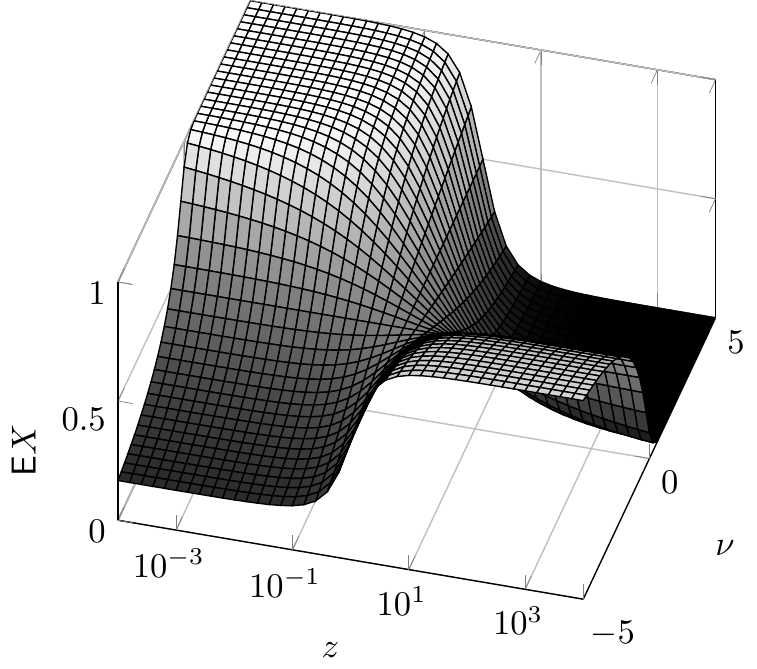}
\caption{Surface plot of $\ev X$ from Lemma \ref{lem:EVX_equals_one}.}
\label{fig:EVX_plot}
\end{figure}

Many of the proofs that follow will utilize a probabilistic interpretation of $\tilde g_{n,\omega}$ to derive some of its most useful properties. The following proposition presents a representation of $\tilde g_{n,\omega}$ in terms of an expected value, which will prove to be immensely helpful. 
\begin{proposition}[$\tilde g_{n,\omega}$ expected value representation]
\label{prop:gTilde_expected_val_form}
Let $C_{n,\omega}(\nu)=\nu^{-1}(\omega+\nu)^{(n+1)}$, $h_{n,\omega}(x,z)=(1-x)^n(1-(1-z)x)^{-\omega}$, and $X\sim\mathcal{GH}(1,1,1+\nu,z-1)$. Then,
\[
\tilde g_{n,\omega}(z,\nu)=C_{n,\omega}(\nu)\ev h_{n,\omega}(X,z).
\]
\end{proposition}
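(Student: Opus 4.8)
The plan is to evaluate the expectation on the right-hand side directly from the explicit density of $X$ supplied by Definition \ref{def:GH_distribution}, and then recognize the resulting integral as an Euler-type integral representation of a hypergeometric function, matching it against form $(\mathrm i)$ of Corollary \ref{cor:gTilde_hyper_form}.

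First I would write out the density of $X\sim\mathcal{GH}(1,1,1+\nu,z-1)$. With $\alpha=\beta=1$, $\gamma=1+\nu$, and $\xi=z-1$, the normalizing constants collapse since $\operatorname B(1,1)=1$ and $-\xi=1-z$, leaving
\[
f_X(x)=\frac{(1-(1-z)x)^{-(1+\nu)}}{F(1,1+\nu;2;1-z)},\qquad x\in(0,1),
\]
which is valid for $z>0$ so that the constraint $\xi>-1$ of Definition \ref{def:GH_distribution} holds. The key algebraic simplification is that multiplying $h_{n,\omega}(x,z)=(1-x)^n(1-(1-z)x)^{-\omega}$ by $f_X$ merges the two powers of $1-(1-z)x$ into a single exponent, giving
\[
\ev h_{n,\omega}(X,z)=\frac{1}{F(1,1+\nu;2;1-z)}\int_0^1(1-x)^n(1-(1-z)x)^{-(\omega+\nu+1)}\,\mathrm dx.
\]

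Next I would identify the integral through the integral representation of Relation \ref{rel:Hyper2F1_integral}, taking $a=\omega+\nu+1$, $b=1$, $c=n+2$, and argument $1-z$; the hypothesis $\Re c>\Re b>0$ is automatic since $b=1$ and $c=n+2\geq 2$ for $n\in\Bbb N_0$. This produces $n!\,\mathbf F(1,\omega+\nu+1;n+2;1-z)$. Using $F(1,1+\nu;2;1-z)=\mathbf F(1,\nu+1;2;1-z)$ in the denominator (the two coincide because $\Gamma(2)=1$) and multiplying by $C_{n,\omega}(\nu)=\nu^{-1}(\omega+\nu)^{(n+1)}$ then yields
\[
C_{n,\omega}(\nu)\,\ev h_{n,\omega}(X,z)=n!\,\frac{(\omega+\nu)^{(n+1)}}{\nu}\,\frac{\mathbf F(1,\omega+\nu+1;n+2;1-z)}{\mathbf F(1,\nu+1;2;1-z)},
\]
which is exactly form $(\mathrm i)$ of Corollary \ref{cor:gTilde_hyper_form}, completing the proof.

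The computation is essentially routine once the density is in hand, so there is no deep obstacle. The two points requiring care are (a) keeping the regularized and unregularized hypergeometric functions straight — the Euler integral naturally produces the regularized $\mathbf F$, whereas the $\mathcal{GH}$ normalizing constant is the unregularized $F$, and these agree here only because $\Gamma(2)=1$; and (b) the domain restriction $z>0$ inherited from the requirement $\xi=z-1>-1$, with the boundary value $z=0$ to be recovered by continuity should it be needed elsewhere.
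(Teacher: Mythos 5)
Your proof is correct and is precisely the route the paper intends: the proposition is stated without an explicit proof, but its justification---reading the $\mathcal{GH}(1,1,1+\nu,z-1)$ density off Definition \ref{def:GH_distribution}, merging $h_{n,\omega}$ into the kernel, and invoking the Euler integral of Relation \ref{rel:Hyper2F1_integral} to match form $(\mathrm i)$ of Corollary \ref{cor:gTilde_hyper_form}---is exactly the chain the paper relies on, with your intermediate identity appearing verbatim as equation $(\ref{eq:Ehnw_closed_form})$ in the proof of Lemma \ref{lem:hyper_ratio_monotonicity}. Your two cautionary remarks, on keeping $\mathbf F$ versus $F$ straight via $\Gamma(2)=1$ and on the restriction $z>0$ from $\xi>-1$ with $z=0$ recovered by continuity, are sound and consistent with how the paper treats the $z\to 0$ boundary elsewhere (e.g.~Lemma \ref{lem:tilde_g_boundary_values}).
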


\begin{lemma}[Zeros of $\tilde g_{n,\omega}$]
\label{lem:tilde_gnw_zeros}
For all permissible values of the parameters $n$, $\omega$, $z$, and $\nu$ in Definition \ref{def:g_nw_function}
\[
\tilde g_{n,\omega}(z,\nu)=0\iff (n-\omega-\nu\in\Bbb N_0\land \nu\neq 0)\lor (n\in\Bbb N\land n>\omega\land\nu\in\Bbb R_0^+\land z=0).
\]
\end{lemma}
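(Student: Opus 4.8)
The plan is to exploit the factorization $\tilde g_{n,\omega}(z,\nu)=C_{n,\omega}(\nu)\,\ev h_{n,\omega}(X,z)$ supplied by Proposition \ref{prop:gTilde_expected_val_form}, which cleanly separates the problem into locating the zeros of the scalar prefactor $C_{n,\omega}(\nu)=\nu^{-1}(\omega+\nu)^{(n+1)}$ and verifying that the expectation factor $\ev h_{n,\omega}(X,z)$ neither vanishes nor blows up so as to cancel a pole of $C_{n,\omega}$. I would first dispatch the bulk case $z\in\Bbb R^+$, where $X\sim\mathcal{GH}(1,1,1+\nu,z-1)$ is a genuine random variable supported on $(0,1)$ (the requirement $\xi=z-1>-1$ holds). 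On that support the integrand $h_{n,\omega}(x,z)=(1-x)^n(1-(1-z)x)^{-\omega}$ is strictly positive, since $1-x>0$ and $1-(1-z)x$ stays bounded away from $0$ once $z>0$; combined with the positivity of the $\mathcal{GH}$ density this forces $0<\ev h_{n,\omega}(X,z)<\infty$. Hence for $z>0$ the equivalence collapses to $\tilde g_{n,\omega}(z,\nu)=0\iff C_{n,\omega}(\nu)=0$.

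Next I would analyze the prefactor. For $\nu\neq 0$ the leading $\nu^{-1}$ is finite and nonzero, so $C_{n,\omega}(\nu)=0$ exactly when the falling factorial $(\omega+\nu)^{(n+1)}=\prod_{k=0}^{n}(\omega+\nu-k)$ vanishes. Reading this through the Gamma-ratio $(\omega+\nu)^{(n+1)}=\Gamma(\omega+\nu+1)/\Gamma(\omega+\nu-n)$, the zeros are those points where $\Gamma(\omega+\nu-n)$ acquires a pole while the numerator remains finite, i.e. $n-\omega-\nu\in\Bbb N_0$, which produces the first clause $(n-\omega-\nu\in\Bbb N_0\land\nu\neq0)$. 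Care is needed here: the falling factorial genuinely vanishes only when $\omega+\nu$ is a nonnegative integer not exceeding $n$, so I would record the bookkeeping that keeps $\Gamma(\omega+\nu+1)$ away from its own poles, ensuring the Gamma-ratio is an honest zero rather than an indeterminate $\infty/\infty$.

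The boundary $z=0$ must be treated separately, since the $\mathcal{GH}$ representation degenerates there ($\xi=-1$) and the closed forms of Corollary \ref{cor:gTilde_hyper_form} carry singular $z^\nu$ and $z^{n-\omega}$ factors. Here I would instead invoke the boundary values of Lemma \ref{lem:tilde_g_boundary_values}, namely $\tilde g_{n,\omega}(0,\nu)=(\omega+\nu\mathds 1_{\nu<0})^{(n)}$. For $\nu\geq0$ this is $(\omega)^{(n)}=\prod_{k=0}^{n-1}(\omega-k)$, which vanishes precisely when $n\geq1$ and $\omega<n$, giving the second clause $(n\in\Bbb N\land n>\omega\land\nu\in\Bbb R_0^+\land z=0)$; for $\nu<0$ it is $(\omega+\nu)^{(n)}$, whose vanishing again falls under the first clause, so the two regimes stitch together. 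The reflection formula of Lemma \ref{lem:tilde_gnw_reflection_formula} offers an alternative route for the $\nu<0$ boundary, reducing it to the already-settled $\nu>0$ computation.

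I expect the main obstacle to be the $z>0$ positivity-and-finiteness claim for $\ev h_{n,\omega}(X,z)$ across the entire real range of $\nu$, including $\nu<-1$ where the density exponent $-(1+\nu)$ turns positive: one must confirm that the normalizing constant $F(1,1+\nu;2;1-z)$ stays positive and the integrand remains integrable, so that the expectation can contribute no zeros and cancel no poles of $C_{n,\omega}$. The secondary delicate point is the seamless matching of the $z\to0$ limit to the two clauses, which I would secure directly from Lemma \ref{lem:tilde_g_boundary_values} rather than from the singular closed forms.
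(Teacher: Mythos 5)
Your proposal follows the same skeleton as the paper's proof: both rest on the factorization $\tilde g_{n,\omega}(z,\nu)=C_{n,\omega}(\nu)\,\ev h_{n,\omega}(X,z)$ of Proposition \ref{prop:gTilde_expected_val_form}, read the first clause off the factorial power in $C_{n,\omega}$, and attribute the second clause to the boundary $z=0$. Your genuine departures are in the expectation factor. For $z>0$ you argue $\ev h_{n,\omega}(X,z)>0$ directly from strict positivity of the integrand and of the density on $(0,1)$; this is cleaner than the paper's route, which instead bounds $0\leq\ev h_{n,\omega}\leq 1$, locates every zero of $h_{n,\omega}$ at $x=1$, and uses the monotone-likelihood-ratio result of Lemma \ref{lem:EVX_equals_one} to conclude that $\ev h_{n,\omega}=0$ forces $X\sim\delta_1$, which happens precisely when $\nu\in\Bbb R_0^+\land z=0$. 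Your worry about the normalizing constant is easily laid to rest: $F(1,1+\nu;2;1-z)=z^{-\nu}(1-z^\nu)/(\nu(1-z))$, which is positive for all $z\in\Bbb R^+$ and $\nu\in\Bbb R$ (with the obvious limits at $\nu=0$ and $z=1$), and the fact that $1-(1-z)x$ is bounded away from $0$ and $\infty$ on $x\in[0,1]$ settles integrability for every real $\nu$, including $\nu<-1$. Your handling of $z=0$ via the boundary values of Lemma \ref{lem:tilde_g_boundary_values} legitimately replaces the paper's degeneracy argument and stitches the two clauses together correctly.

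Two items must still be filled in for the iff to be complete. First, you analyze $C_{n,\omega}$ only for $\nu\neq 0$, but the lemma also asserts $\tilde g_{n,\omega}(z,\nu)\neq 0$ at $\nu=0$ with $z>0$, where $C_{n,\omega}$ is indeterminate: $(\omega+\nu)^{(n+1)}$ vanishes at $\nu=0$ (because $0\leq\omega\leq n$) against the pole of $\nu^{-1}$. The paper resolves this by computing $\lim_{\nu\to 0}C_{n,\omega}(\nu)=(-1)^{n-\omega}\,\omega!\,(n-\omega)!\neq 0$; without that limit the restriction ``$\nu\neq 0$'' in the first clause is unsupported. Second, the bookkeeping you defer is load-bearing, not a footnote: the honest zero set of the factorial power is $\omega+\nu\in\{0,1,\dots,n\}$, which is strictly smaller than ``$n-\omega-\nu\in\Bbb N_0$'' whenever $\omega+\nu$ is a negative integer, since there both gamma factors have poles and the product form $\prod_{k=0}^{n}(\omega+\nu-k)$ is nonzero. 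For instance $(n,\omega,\nu)=(1,0,-1)$ has $n-\omega-\nu=2\in\Bbb N_0$, yet $(\nu)^{(2)}=\nu(\nu-1)=2$, so $C_{1,0}(-1)=-2$, while $\ev h_{1,0}=1/2$ (the density is uniform since $1+\nu=0$), giving $\tilde g_{1,0}(z,-1)=-1\neq 0$; the same value follows from Corollary \ref{cor:gTilde_hyper_form}. The paper's gamma-ratio shortcut glosses over this case (harmlessly for its later applications, where $\nu\geq 0$ or matters reduce to $\nu>0$ via the reflection formula of Lemma \ref{lem:tilde_gnw_reflection_formula}), so if you carry your product-form characterization through to the end you should state explicitly that your argument establishes the first clause in the form $\omega+\nu\in\{0,\dots,n\}\land\nu\neq 0$, which coincides with the clause as printed exactly on the regime $\nu\geq-\omega$.
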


\begin{proof}
We begin with the expression for $\tilde g_{n,\omega}$ in Proposition \ref{prop:gTilde_expected_val_form}, namely,
\[
\tilde g_{n,\omega}(z,\nu)=C_{n,\omega}(\nu)\, \ev h_{n,\omega}(X,z),
\]
where $C_{n,\omega}(\nu)=\nu^{-1}(\omega+\nu)^{(n+1)}$, $h_{n,\omega}(x,z)=(1-x)^n(1-(1-z)x)^{-\omega}$ and $X\sim\mathcal{GH}(1,1,\nu+1,z-1)$. If both terms are bounded then
\[
\tilde g_{n,\omega}(z,\nu)=0\iff C_{n,\omega}(\nu)=0\lor \ev h_{n,\omega}(X,z)=0.
\]
As such, our approach will be to show that $C_{n,\omega}$ and $\ev h_{n,\omega}$ are bounded and then determine the location of their zeros. Combining the zeros of each term will subsequently give the zeros of $\tilde g_{n,\omega}$.

From Relation \ref{rel:falling_factorial_prodRep} we can easily show that $(\omega+\nu)^{(n+1)}$ is a polynomial in $(\omega+\nu)$ of degree no more than $n+1$. Therefore, the only point of concern for verifying that $C_{n,\omega}(\nu)$ is bounded is at $\nu=0$. With the help of \cite[Eq.~$06.05.06.0026.01$]{wolfram_functions} we have as $\nu\to 0$
\[
C_{n,\omega}(\nu)\sim(-1)^{n-\omega}\Gamma(\omega+\nu+1)(n-\omega)!(1+\mathcal O(\nu)),
\]
which upon passing to the limit yields,
\[
\lim_{\nu\to 0}C_{n,\omega}(\nu)=(-1)^{n-\omega}\omega!(n-\omega)!.
\]
From this result it follows that $|C_{n,\omega}(\nu)|<\infty$ for all permissable values of the parameters $n$, $\omega$, and $\nu$. Now, the zeros of the factorial power term can be easily identified upon writing
\[
(\omega+\nu)^{(n+1)}=\frac{\Gamma(\omega+\nu+1)}{\Gamma(\omega+\nu-n)}=0\iff n-\omega-\nu\in\Bbb N_0.
\]
Noting that $C_{n,\omega}(0)\neq 0$ then gives the result
\[
C_{n,\omega}(\nu)=0\iff n-\omega-\nu\in\Bbb N_0\land \nu\neq 0.
\]

Next we turn our attention to the quantity $\ev h_{n,\omega}(X,z)$. As a function of $x$, $h_{n,\omega}(x,z)=(1-x)^n(1-(1-z)x)^{-\omega}$ is nonnegative on $x\in[0,1]$ for all permissable parameters $n$, $\omega$, and $z$. Since, $\operatorname{supp}(X)=[0,1]$ it follows that $0\leq\ev h_{n,\omega}(X,z)$. Additionally, note that
\[
\partial_xh_{n,\omega}(x,z)=-\underbrace{\frac{(1-x)^{n-1}}{(1-(1-z)x)^{\omega+1}}}_{\geq0}\underbrace{((n-\omega)(1-x)+(n-\omega)zx+\omega z)}_{\geq 0}\leq 0.
\]
By continuity of $h_{n,\omega}$ we have $h_{n,\omega}(x,z)\leq h_{n,\omega}(0,z)=1\implies \ev h_{n,\omega}(X,z)\leq \ev h_{n,\omega}(0,z)=1$ with the last inequality again being a consequence of $\operatorname{supp}(X)=[0,1]$. Bringing both results together we have shown for all permissible parameters that $0\leq\mathsf Eh_{n,\omega}(X,z)\leq 1$. With boundedness establish we turn to locating the zeros of $\mathsf Eh_{n,\omega}(X,z)$. Observe that both $h_{n,\omega}(x,z)$ and the support of $X$ are both nonnegative; thus, the only way for $\mathsf Eh_{n,\omega}(X,z)=0$ is if $X\sim\delta_{x_0}$ where $h_{n,\omega}(x_0,z)=0$ and $\delta_\mu$ is the degenerate density function centered at $\mu$. Combing the facts $0\leq h_{n,\omega}(x,z)$ and $0\geq \partial_xh_{n,\omega}(x,z)$ shows that all zeros of $h_{n,\omega}$ must reside at $x=1$. Upon further inspection we determine
\begin{equation}
\label{eq:zeros_of_hnw}
h_{n,\omega}(1,z)=0\iff n\in\Bbb N\land(z>0\lor(z=0\land n>\omega)).
\end{equation}
Next, we need to determine the parameters for which $X\sim\delta_1$. Since $\operatorname{supp}(X)=[0,1]$ this is equivalent to finding the parameters for which $\ev X=1$, which are given by Lemma \ref{lem:EVX_equals_one}. Combining the conditions given in Lemma \ref{lem:EVX_equals_one} with $(\ref{eq:zeros_of_hnw})$ subsequently shows
\[
\ev h_{n,\omega}(X,z)=0\iff n\in\Bbb N\land n>\omega\land\nu\in\Bbb R_0^+\land z=0.
\]
Combining these zeros with those of $C_{n,\omega}$ then yields the desired result.
\end{proof}

The next two results establish properties about the expected value $\ev h_{n,\omega}(X,z)$. In particular, Lemma \ref{lem:hyper_ratio_monotonicity} establishes the conditions under which this expected value is a monotone function of the parameter $z$. This will then be used in Theorem \ref{thm:gnw_is_monotone} to establish analogous conditions for when $|\tilde g_{n,\omega}|$ is monotone in $z$.
\begin{lemma}
\label{lem:Ehnw_monotonicity_in_n_and_w}
For fized $z$ and $\nu$, $\ev h_{n,\omega}(X,z)$ is decreasing in $n$, increasing in $\omega$ if $z\in[0,1]$, and decreasing in $\omega$ if $z\in[1,\infty)$. Furthermore, $\ev h_{n+t,\omega+t}(X,z)$ is decreasing in $t$.
\end{lemma}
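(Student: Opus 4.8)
The plan is to exploit the expected-value representation $\tilde g_{n,\omega}(z,\nu)=C_{n,\omega}(\nu)\,\ev h_{n,\omega}(X,z)$ from Proposition \ref{prop:gTilde_expected_val_form}, the crucial point being that the law of $X\sim\mathcal{GH}(1,1,1+\nu,z-1)$ depends only on $\nu$ and $z$ and is therefore held \emph{fixed} as $n$ and $\omega$ vary. Consequently each of the four assertions reduces to a statement about the pointwise monotonicity of the integrand $h_{n,\omega}(x,z)=(1-x)^n(1-(1-z)x)^{-\omega}$ over the support $\operatorname{supp}(X)=[0,1]$: if $h$ is pointwise nonincreasing in a parameter, then integrating against the nonnegative density $f_X$ (which does not involve that parameter) shows $\ev h_{n,\omega}(X,z)$ is decreasing as well, and likewise for an increase. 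No differentiation under the integral sign is even needed, only monotone comparison of integrands; finiteness of the relevant integrals is guaranteed by the bound $0\le\ev h_{n,\omega}(X,z)\le 1$ already established in the proof of Lemma \ref{lem:tilde_gnw_zeros}.

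First I would treat the dependence on $n$ and $\omega$ separately by examining $\partial_n\log h_{n,\omega}=\log(1-x)$ and $\partial_\omega\log h_{n,\omega}=-\log(1-(1-z)x)$. On $x\in[0,1]$ one has $1-x\in[0,1]$, so $\log(1-x)\le 0$ and $h_{n,\omega}$ is pointwise nonincreasing in $n$. For the $\omega$ dependence the sign splits according to the base $1-(1-z)x$: when $z\in[0,1]$ the product $(1-z)x$ lies in $[0,1]$, so $1-(1-z)x\in[0,1]$ and $-\log(1-(1-z)x)\ge 0$, giving a pointwise increase in $\omega$; when $z\in[1,\infty)$ we have $(1-z)x\le 0$, hence $1-(1-z)x\ge 1$ and $-\log(1-(1-z)x)\le 0$, giving a pointwise decrease in $\omega$. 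Transferring each pointwise inequality to the expectation yields the first three claims.

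For the diagonal claim I would factor $h_{n+t,\omega+t}(x,z)=h_{n,\omega}(x,z)\,r(x,z)^t$ with $r(x,z)=(1-x)/(1-(1-z)x)$, and show $r(x,z)\in[0,1]$ for every $x\in[0,1]$ and $z\ge 0$. Nonnegativity is immediate since the numerator is nonnegative and the denominator positive, while the inequality $r\le 1$ is equivalent, after clearing the positive denominator, to $0\le zx$, which always holds. Because $0\le r\le 1$, the power $r(x,z)^t$ is nonincreasing in $t$, and since $h_{n,\omega}\ge 0$ the entire integrand $h_{n+t,\omega+t}(x,z)$ is pointwise nonincreasing in $t$; integrating against $f_X$ shows $\ev h_{n+t,\omega+t}(X,z)$ is decreasing in $t$.

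The argument is essentially sign bookkeeping once the fixed-law observation is in place, so I do not anticipate a serious obstacle. The points requiring care are the regime split $z\le 1$ versus $z\ge 1$ for the base $1-(1-z)x$ (which is exactly what switches the $\omega$-monotonicity from increasing to decreasing), and the degenerate boundary $z=0$, where $\xi=z-1=-1$ sits at the edge of the admissible parameter range of Definition \ref{def:GH_distribution} and the diagonal integrand collapses to $(1-x)^{n-\omega}$, independent of $t$, consistent with monotonicity in the nonstrict sense. The small technical detail to nail down, underlying both the $\omega$-monotonicity and the bound $r\le 1$, is verifying that $1-(1-z)x$ stays positive uniformly on $[0,1]$ in each regime for $z>0$.
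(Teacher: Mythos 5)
Your proposal is correct and follows essentially the same route as the paper: the paper's proof likewise observes that the law of $X$ is independent of $n$ and $\omega$, checks the pointwise signs $\partial_t h_{t,\omega}\leq 0$, $\partial_t h_{n,t}\geq 0$ for $z\in[0,1]$, $\partial_t h_{n,t}\leq 0$ for $z\in[1,\infty)$, and $\partial_t h_{n+t,\omega+t}\leq 0$ on $\operatorname{supp}(X)=[0,1]$, and transfers these signs to the expectation. Your logarithmic-derivative bookkeeping and the factorization $h_{n+t,\omega+t}=h_{n,\omega}\,r^t$ with $r=(1-x)/(1-(1-z)x)\in[0,1]$ are just explicit renditions of the same pointwise-monotonicity argument.
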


\begin{proof}
Recall from Lemma \ref{lem:tilde_gnw_zeros} that $h_{n,\omega}(x,z)$ is nonnegative for all permissible values of $z$, $\nu$, $n$, and $\omega$ on $\operatorname{supp}(X)=[0,1]$. Furthermore, the following results hold everywhere on $\operatorname{supp}(X)$: $\partial_t h_{t,\omega}\leq 0$, $\partial_t h_{n,t}\geq 0$ if $z\in[0,1]$, $\partial_t h_{n,t}\leq 0$ if $z\in[1,\infty)$, and $\partial_t h_{n+t,\omega+t}\leq 0$. Since, $h_{n,\omega}$ and $\operatorname{supp}(X)$ are nonnegative, the sign of these derivatives will be equal to the sign of their corresponding expected values. The proof is now complete.
\end{proof}

\begin{lemma}
\label{lem:hyper_ratio_monotonicity}
Let $\mathcal N$ be given by Definition \ref{def:g_nw_function}. Then, $\nu>1\implies \ev h_{n,\omega}(X,z)$ is strictly increasing in $z$ on $z\in\Bbb R^+$ for all $(n,\omega)\in\mathcal N\setminus\{(0,0)\}$.
\end{lemma}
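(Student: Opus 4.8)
The plan is to work from the expected-value representation of Proposition~\ref{prop:gTilde_expected_val_form}. Since $X\sim\mathcal{GH}(1,1,1+\nu,z-1)$ has density proportional to $(1-(1-z)x)^{-(1+\nu)}$ on $[0,1]$, I would write
\[
\ev h_{n,\omega}(X,z)=\frac{\int_0^1 (1-x)^n(1-(1-z)x)^{-(\omega+1+\nu)}\,\mathrm dx}{\int_0^1 (1-(1-z)x)^{-(1+\nu)}\,\mathrm dx}.
\]
Strict monotonicity in $z$ is equivalent to $\partial_z\ev h_{n,\omega}(X,z)>0$, so I would first justify differentiation under the integral sign (the integrands are smooth in $z$ and dominated uniformly on compact subintervals of $\Bbb R^+$) and reduce the lemma to a single sign computation.

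Abbreviating $g(x,z)=x/(1-(1-z)x)$, which is nonnegative and strictly increasing in $x$ on $[0,1]$ for $z>0$, the two $z$-dependencies — one in the test function $h_{n,\omega}$, one in the density — combine, via $\partial_z h_{n,\omega}=-\omega g\,h_{n,\omega}$ and $\partial_z\log f_X=-(1+\nu)g$ (modulo the $x$-free normalizer, which drops out of the covariance), into
\[
\partial_z\ev h_{n,\omega}=(1+\nu)\,\ev h_{n,\omega}\,\ev g-(\omega+1+\nu)\,\ev[g\,h_{n,\omega}]=-\omega\,\ev[g\,h_{n,\omega}]-(1+\nu)\cov(h_{n,\omega},g).
\]
Because $h_{n,\omega}(\cdot,z)$ is decreasing in $x$ (established in the proof of Lemma~\ref{lem:tilde_gnw_zeros}) while $g$ is increasing, the covariance is nonpositive. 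This already settles the subfamily $\omega=0$: there $\partial_z h_{n,0}=0$, so $\partial_z\ev h_{n,0}=-(1+\nu)\cov(h_{n,0},g)>0$ strictly, since $h_{n,0}=(1-x)^n$ is strictly decreasing for $n\ge1$ and $g$ is strictly increasing (a Chebyshev/FKG correlation inequality against the non-degenerate law of $X$).

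The remaining cases $1\le\omega\le n$ are the crux, because the negative term $-\omega\,\ev[g\,h_{n,\omega}]$ must be dominated by $(1+\nu)|\cov(h_{n,\omega},g)|$; a mere sign of the covariance no longer suffices. Here I would exploit the continuous representation of form~$(\mathrm{ii})$ in Theorem~\ref{thm:IncLerch_sum}, $\ev h_{n,\omega}(X,z)=n!\,\mathbf F(1,\omega+\nu+1;n+2;1-z)/\mathbf F(1,\nu+1;2;1-z)$, which extends analytically to real $n,\omega$ and is therefore differentiable in those parameters. Setting $w=1-z$ and using the contiguous/derivative relations for ${}_2F_1$ — the identity $(w\partial_w+a){}_2F_1(a,b;c;w)=a\,{}_2F_1(a+1,b;c;w)$ together with the fact that $\partial_w$ raises $a,b,c$ each by one — I would re-express $\partial_z\ev h_{n,\omega}$ as a linear combination of $\ev h$ at neighbouring index pairs, equivalently of $\partial_n\ev h_{n,\omega}$ and $\partial_\omega\ev h_{n,\omega}$. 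The payoff is that the signs of these parameter-derivatives are already known from Lemma~\ref{lem:Ehnw_monotonicity_in_n_and_w}: $\ev h$ is decreasing in $n$ for all $z$, increasing in $\omega$ for $z\in[0,1]$, and decreasing in $\omega$ for $z\in[1,\infty)$.

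The main obstacle is the third step: producing the precise identity relating $\partial_z\ev h_{n,\omega}$ to $\partial_n\ev h_{n,\omega}$ and $\partial_\omega\ev h_{n,\omega}$ with coefficients of provably correct sign, and checking that the known parameter-monotonicities then sum to a \emph{strictly positive} total in both regimes $z\in(0,1)$ and $z>1$ — this is exactly where the hypotheses $\nu>1$ and $\omega\le n$ should enter. For the regime $z>1$ I would reduce to $z<1$ through the reflection formula of Lemma~\ref{lem:tilde_gnw_reflection_formula}, $\tilde g_{n,\omega}(z,\nu)=(-1)^n\tilde g_{n,n-\omega}(1/z,-\nu)$, which swaps the two endpoints and converts the $z<1$ statement into the $z>1$ one. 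Strictness throughout would follow from the strict inequality $\ev X<1$ for $z>0$ in Lemma~\ref{lem:EVX_equals_one}, which guarantees the relevant covariance and parameter-derivatives never degenerate except at the excluded pair $(0,0)$.
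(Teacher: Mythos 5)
Your setup is sound: the differentiation under the integral sign, the decomposition $\partial_z\ev h_{n,\omega}=-\omega\,\ev[g\,h_{n,\omega}]-(1+\nu)\cov(h_{n,\omega},g)$, and the resulting one-line disposal of the subfamily $\omega=0$ are all correct (and that $\omega=0$ argument is more elementary than anything in the paper, though it notably never uses $\nu>1$). But there is a genuine gap exactly where you flag it: for $1\le\omega\le n$ you never produce the identity expressing $\partial_z\ev h_{n,\omega}$ through $\partial_n\ev h_{n,\omega}$ and $\partial_\omega\ev h_{n,\omega}$ with sign-controlled coefficients, and there is a structural reason to doubt the plan as stated. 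The parameter-monotonicities you want to feed in (Lemma \ref{lem:Ehnw_monotonicity_in_n_and_w}) hold for every admissible $\nu$, while the conclusion is sensitive to the hypothesis $\nu>1$; so any argument whose only sign inputs are those monotonicities cannot close, and the entire burden falls on the unexhibited $\nu$-dependent coefficients. Worse, your fallback for $z>1$ is circular: the reflection formula sends $(z,\nu,\omega)\mapsto(1/z,-\nu,n-\omega)$, so reducing $z>1$ at $\nu>1$ to $z<1$ requires the monotonicity statement at parameter $-\nu<-1$ --- which is not independently available; in the paper it is a \emph{consequence} of this lemma (via the reflection argument in Theorem \ref{thm:gnw_is_monotone}), not an input to it.

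For comparison, the paper avoids any case split at $z=1$: it writes $\partial_z\log\ev h_{n,\omega}=r_{0,0}-r_{n,\omega}$ with $r_{n,\omega}=(\gamma/z)\,\ev W$, where $W=zX(1-(1-z)X)^{-1}$ and $X\sim\mathcal{GH}(1,n+1,\omega+\nu+1,z-1)$, and proves $r_{n,\omega}\le r_{\omega,\omega}<r_{0,0}$ on all of $z\in\Bbb R^+$ in two steps: first a monotone-likelihood-ratio argument in the first index giving $\partial_n\ev W<0$, hence $r_{n,\omega}\le r_{\omega,\omega}$; then a computation (contiguous relations, the derivative formula for the incomplete beta function, and a ${}_3F_2$ integral representation) recasting $\partial_\omega r_{\omega,\omega}$ as $\ev\mathcal H_\omega(X,z,\nu)$ with $\partial_x\mathcal H_\omega\propto(\nu-1)$ --- this is precisely where $\nu>1$ enters, forcing $\mathcal H_\omega<\mathcal H_\omega(1,\cdot)=0$ on the interior of the support and hence strict negativity of $\partial_\omega r_{\omega,\omega}$ (non-degeneracy coming from $\ev X<1$, as you correctly anticipate). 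If you want to salvage your route, you need an analogue of this second step; the covariance decomposition and the known $n$- and $\omega$-monotonicities of $\ev h_{n,\omega}$ alone do not supply it.
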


\begin{proof}
We begin with the explicit form
\begin{equation}
\label{eq:Ehnw_closed_form}
\ev h_{n,\omega}(X,z)=n! \frac{\mathbf F(1,\omega+\nu+1;n+2;1-z)}{\mathbf F(1,\nu+1;2;1-z)}.
\end{equation}
According to Lemma \ref{lem:tilde_gnw_zeros}, $z\in\Bbb R^+\implies 0<\ev h_{n,\omega}(X,z)$ for all $n$, $\omega$, and $\nu$ permissible by Definition \ref{def:g_nw_function}. As such, we will proceed to establish $\ev h_{n,\omega}(X,z)$ is strictly increasing in $z$ by showing it has a strictly positive logarithmic derivative.  Using Definition \ref{def:GH_distribution} and Relation \ref{rel:Hyper2F1_integral} gives
\[
\partial_z\log\ev h_{n,\omega}(X,z)=r_{0,0}(z,\nu)-r_{n,\omega}(z,\nu),
\]
where
\[
r_{n,\omega}(z,\nu)=\gamma\frac{\mathbf F(2,\gamma+1;\beta+2;1-z)}{\mathbf F(1,\gamma;\beta+1;1-z)}=\frac{\gamma}{z}\mathsf EW,
\]
$\beta=n+1$, $\gamma=\omega+\nu+1$, $W=zX(1-(1-z)X)^{-1}$, and $X\sim\mathcal{GH}(1,\beta,\gamma,z-1)$. Consequently, $\partial_z\log\ev h_{n,\omega}(X,z)$ is strictly positive for all $(n,\omega)\in\mathcal N\setminus\{(0,0)\}$ if $r_{n,\omega}<r_{0,0}$. We will prove this by successively maximizing $r_{n,\omega}$ in $n$ and $\omega$, respectively.


\begin{enumerate}
\item[$(1)$] $r_{n,\omega}(z,\nu)\leq r_{\omega,\omega}(z,\nu)\ \ \forall (n,\omega)\in\mathcal N$.

\begin{subproof}
The random variable $W$ is a monotone increasing transformation of $X$ for all $z>0$; thus,
\[
F_W(w)=\pr(X\leq (z(w^{-1}-1)+1)^{-1}).
\]
After a bit of work we find
\[
F_W(w)=1-\frac{\operatorname B_{(1-z)(1-w)}(\beta,\gamma-\beta)}{\operatorname B_{1-z}(\beta,\gamma-\beta)},
\]
where $\operatorname B_s(\alpha,\beta)$ is the incomplete beta function of definition \ref{def:betaInc_fun} and $\operatorname{supp}(W)=[0,1]$. Evaluating $\partial_wF_W$ then provides the density
\[
f_W(w)=\frac{(1-w)^{\beta-1}(1-(1-z)(1-w))^{\gamma-\beta-1}}{(1-z)^{-\beta}\operatorname B_{1-z}(\beta,\gamma-\beta)}\mathds 1_{w\in[0,1]}.
\]
Consider the family of density functions $f_\beta=\{f_W(w|\beta):\beta\geq 1\}$ and let $\delta>0$. We have for the likelihood ratio
\[
L(w)\coloneqq\frac{f_{\beta+\delta}}{f_\beta}(w)=C\left(\frac{1-w}{1-(1-z)(1-w)}\right)^\delta,
\]
where $C$ is a positive constant. Differentiating $L$ yields
\[
\partial_wL(w)=-\delta C\frac{(1-w)^{\delta-1}}{(1-(1-z)(1-w))^{\delta+1}}.
\]
Observe that $\operatorname{sign}(\partial_wL(w))=-1$ on the interior of $\operatorname{supp}(W)$. Consequently, the family of density functions $f_\beta$ admits a monotone likelihood ratio and
\[
\operatorname{sign}(\partial_\beta\ev W)=\operatorname{sign}(\partial_xL(x))=-1.
\]
But now recall that $\beta=n+1$ which shows that $\partial_n\ev W<0$; hence,
\[
\frac{\gamma}{z}\ev W\leq\frac{\gamma}{z}\ev W|_{n=\omega}\implies r_{n,\omega}(z,\nu)\leq r_{\omega,\omega}(z,\nu).
\]
\end{subproof}


\item[$(2)$] $\nu>1\implies r_{\omega,\omega}(z,\nu)<r_{0,0}(z,\nu)\ \ \forall\omega>0$

\begin{subproof}
We will establish this claim by showing that $r_{\omega,\omega}(z,\nu)$ is strictly decreasing in $\omega$. To aid in the following calculations we introduce the operators
\[
\begin{aligned}
\mathcal A_1^k F(a_1,a_2;a_3;s) &=F(a_1+k,a_2;a_3;s),\\
\mathcal A_2^k F(a_1,a_2;a_3;s) &=F(a_1,a_2+k;a_3;s),\\
\mathcal A_3^k F(a_1,a_2;a_3;s) &=F(a_1,a_2;a_3+k;s),
\end{aligned}
\]
for which $\mathcal A_i^0=\mathcal I$ is the identity. In terms of these operators we have
\[
r_{\omega,\omega}(z,\nu)%
=\frac{\frac{\omega+\nu+1}{\omega+2}\mathcal A_1\mathcal A_2\mathcal A_3F(1,\omega+\nu+1;\omega+2;1-z)}{F(1,\omega+\nu+1;\omega+2;1-z)}.
\]
Now combining the identities \cite[Eqs.~$10$, $13$]{ibrahim_2012}
\begin{gather*}
\mathcal A_1=\mathcal I+\frac{a_2}{a_3}s\mathcal A_1\mathcal A_2\mathcal A_3,\\
\mathcal A_1^{-1}=\frac{a_1(s-1)}{a_1-a_3}\mathcal A_1+\frac{2a_1+(a_2-a_1)s-a_3}{a_1-a_3}\mathcal I,
\end{gather*}
we deduce for $a_1=1$, $a_2=\omega+\nu+1$, $a_3=\omega+2$, and $s=1-z$
\[
\frac{\omega+\nu+1}{\omega+2}\mathcal A_1\mathcal A_2\mathcal A_3=\frac{1}{1-z}\left(\frac{\omega+1}{z}\mathcal A_1^{-1}+\frac{1}{z}((\omega+\nu)(1-z)-\omega)\mathcal I\right),
\]
which permits us to write
\begin{multline}
\label{eq:Psi_ww_form1}
r_{\omega,\omega}(z,\nu)%
=\frac{1}{1-z}\biggl(\frac{\omega+1}{z F(1,\omega+\nu+1;\omega+2;1-z)}\\
+\frac{1}{z}\left((\omega+\nu)(1-z)-\omega\right)\biggr).
\end{multline}
Now using Relation \ref{rel:2F1_BetaInc} we obtain the form
\[
F(1,\omega+\nu+1;\omega+2;1-z)=(\omega+1)(1-z)^{-(\omega+1)}z^{-\nu}\operatorname{B}_{1-z}(\omega+1,\nu).
\]
Substituting this expression into $(\ref{eq:Psi_ww_form1})$ and applying the differential formula \cite[Eq.~$06.19.20.0003.01$]{wolfram_functions}
\[
\partial_\alpha\operatorname B_s(\alpha,\beta)=\operatorname B_s(\alpha,\beta)\log s-\frac{s^\alpha}{\alpha^2}{_3F_2}(1-\beta,\alpha,\alpha;\alpha+1,\alpha+1;s)
\]
gives
\begin{equation}
\label{eq:dwPsi_ww_form1}
\partial_\omega r_{\omega,\omega}(z,\nu)=\frac{1}{1-z}\left(\frac{{_3F_2}(1-\nu,\omega+1,\omega+1;\omega+2,\omega+2;1-z)}{z^{\nu+1}F(1,\omega+\nu+1;\omega+2;1-z)^2}-1\right).
\end{equation}
But in accordance with the integral representation \cite[Eq.~$16.5.2$]{nist_2010}
\[
\pFq{3}{2}{a_0,a_1,a_2}{b_0,b_1}{z}=\int_0^1\frac{t^{a_0-1}(1-t)^{b_0-a_0-1}}{\operatorname{B}(a_0,b_0-a_0)}\pFq{}{}{a_1,a_2}{b_1}{zt}\,\mathrm dt,
\]
which holds for $\Re b_0>\Re a_0>0$, the result in $(\ref{eq:dwPsi_ww_form1})$ equivalent to the expected value
\begin{equation}
\label{eq:dw_r_ww_expectedVal_form}
\partial_\omega r_{\omega,\omega}(z,\nu)=\ev\mathcal H_\omega(X,z,\nu),
\end{equation}
where
\[
\mathcal H_\omega(x,z,\nu)=\left(\frac{h_\omega(x,z,\nu)/h_\omega(1,z,\nu)-1}{1-z}\right),
\]
\[
h_\omega(x,z,\nu)=(1-(1-z)x)F(1,\omega+\nu+1;\omega+2;(1-z)x),
\]
and $X\sim\mathcal{GH}(\omega+1,1,1-\nu,z-1)$. To establish the conditions for when $(\ref{eq:dw_r_ww_expectedVal_form})$ is negative we use \cite[Eq.~$15.5.7$]{nist_2010} to write
\[
\partial_x\mathcal H_\omega(x,z,\nu)=(\nu-1)\frac{\mathbf F(2,\omega+\nu+1;\omega+3;(1-z) x)}{z\mathbf F(1,\omega+\nu+1;\omega+2;1-z)}.
\]
For $z>0$, the ratio of hypergeometric terms is positive resulting in $\operatorname{sign}(\partial_x\mathcal H_\omega(x,z,\nu))=\operatorname{sign}(\nu-1)$. Therefore, if $\nu>1$ then $\mathcal H_\omega(x,z,\nu)<\mathcal H_\omega(1,z,\nu)=0$ on the interior of $\operatorname{supp}(X)=[0,1]$. Furthermore, by an argument very similar to that of Lemma \ref{lem:EVX_equals_one}, we can show that $z>0\implies \ev X<1$; thus, guaranteeing the density of $X$ is not degenerate at $x=1$. Combining these facts leads to the conclusion that $\partial_\omega r_{\omega,\omega}(z,\nu)<0$ and so\footnote{If the density of $X$ was degenerate at the point $x=1$ then we would have $\ev\mathcal H_\omega(X,z,\nu)=\int\mathcal H_\omega(x,z,\nu)\delta(x-1)\,\mathrm dx=\mathcal H_\omega(1,z,\nu)=0.$}
\[
\nu>1\implies r_{\omega,\omega}(z,\nu)<r_{0,0}(z,\nu)\ \ \forall\omega>0,
\]
which is the desired result.
\end{subproof}

\end{enumerate}

Combining claims $(1)$ and $(2)$ we have shown $\nu>1\implies r_{n,\omega}(z,\nu)<r_{0,0}(z,\nu)$ for all $(n,\omega)\in\mathcal N\setminus\{(0,0)\}$ which completes the proof.
\end{proof}

We are finally able to now establish the conditions for which $|\tilde g_{n,\omega}|$ is a monotone function of $z$. This result will serve as the foundation for establishing exact confidence intervals of $\acv\mathscr T_\nu$.
\begin{theorem}[Monotony of $|\tilde g_{n,\omega}|$ in $z$]
\label{thm:gnw_is_monotone}
If $\nu>1$ then
\begin{enumerate}
\item[$(\mathrm i)$]  $|\tilde g_{n,\omega}(z,\nu)|$ is increasing in $z$ for all $(n,\omega)\in\mathcal N$,
\item[$(\mathrm{ii})$] $\exists(n,\omega)\in\mathcal N$ such that $|\tilde g_{n,\omega}(z,\nu)|$ is strictly increasing in $z$.
\end{enumerate}
If instead $\nu<-1$ replace increasing with decreasing.
\end{theorem}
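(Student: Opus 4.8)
The plan is to read the monotonicity off the expected-value representation of Proposition~\ref{prop:gTilde_expected_val_form}, which gives $\tilde g_{n,\omega}(z,\nu)=C_{n,\omega}(\nu)\,\ev h_{n,\omega}(X,z)$ with the sign factor $C_{n,\omega}(\nu)=\nu^{-1}(\omega+\nu)^{(n+1)}$ \emph{independent of $z$} and $X\sim\mathcal{GH}(1,1,1+\nu,z-1)$. The key observation is that $h_{n,\omega}(x,z)=(1-x)^n(1-(1-z)x)^{-\omega}$ is nonnegative on $\operatorname{supp}(X)=[0,1]$; indeed the bounds $0\le\ev h_{n,\omega}(X,z)\le 1$ were already established in the proof of Lemma~\ref{lem:tilde_gnw_zeros}. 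Consequently the absolute value factors cleanly as $|\tilde g_{n,\omega}(z,\nu)|=|C_{n,\omega}(\nu)|\,\ev h_{n,\omega}(X,z)$, and since $|C_{n,\omega}(\nu)|$ is a constant in $z$, the monotonicity of $|\tilde g_{n,\omega}|$ reduces entirely to that of the expectation.

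First I would treat $\nu>1$. If $(n,\omega)=(0,0)$ then $\tilde g_{0,0}\equiv 1$, and if $C_{n,\omega}(\nu)=0$ then $\tilde g_{n,\omega}\equiv 0$; in both degenerate cases $|\tilde g_{n,\omega}|$ is constant, hence increasing. For every remaining pair $(n,\omega)\in\mathcal N\setminus\{(0,0)\}$, Lemma~\ref{lem:hyper_ratio_monotonicity} gives that $\ev h_{n,\omega}(X,z)$ is strictly increasing in $z$ on $\Bbb R^+$, so multiplying by the nonnegative constant $|C_{n,\omega}(\nu)|$ shows $|\tilde g_{n,\omega}(z,\nu)|$ is increasing, and strictly so whenever $C_{n,\omega}(\nu)\neq 0$. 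This gives part~$(\mathrm i)$. For part~$(\mathrm{ii})$ it suffices to produce one admissible pair with nonvanishing constant: taking $(n,\omega)=(1,1)$ gives $C_{1,1}(\nu)=\nu^{-1}(1+\nu)^{(2)}=1+\nu\neq 0$ for $\nu>1$, whence $|\tilde g_{1,1}(z,\nu)|$ is strictly increasing.

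The case $\nu<-1$ I would obtain for free from the reflection formula of Lemma~\ref{lem:tilde_gnw_reflection_formula}, which yields $|\tilde g_{n,\omega}(z,\nu)|=|\tilde g_{n,n-\omega}(1/z,-\nu)|$. Setting $\mu=-\nu>1$ and noting $(n,n-\omega)\in\mathcal N$ whenever $(n,\omega)\in\mathcal N$, the case just proved shows $w\mapsto|\tilde g_{n,n-\omega}(w,\mu)|$ is increasing on $\Bbb R^+$; composing with the strictly decreasing map $z\mapsto 1/z$ makes $|\tilde g_{n,\omega}(z,\nu)|$ decreasing, which is part~$(\mathrm i)$. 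For part~$(\mathrm{ii})$ the witness $(1,1)$ for $\mu$ reflects to $(n,\omega)=(1,0)$ for $\nu$, and a direct computation gives $C_{1,0}(\nu)=\nu-1\neq 0$ for $\nu<-1$, so $|\tilde g_{1,0}(z,\nu)|$ is strictly decreasing. The behaviour at the endpoints $z=0$ and $z\to\infty$ then follows by continuity from Lemma~\ref{lem:tilde_g_boundary_values}.

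The substantive work has already been carried out in Lemma~\ref{lem:hyper_ratio_monotonicity}: showing that the $z$-dependence entering through \emph{both} the integrand $h_{n,\omega}$ and the law of $X$ combines so that $\ev h_{n,\omega}(X,z)$ is monotone. Granting that lemma, the remaining effort is purely organizational — verifying nonnegativity of the expectation so the modulus splits, isolating the two constant families where $|\tilde g_{n,\omega}|$ carries no $z$-dependence, and the one-line checks that the sign constant is nonzero at the witness pairs used in part~$(\mathrm{ii})$. I expect no genuine obstacle beyond ensuring these edge cases are accounted for.
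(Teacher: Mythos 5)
Your proof is correct and follows essentially the same route as the paper: the factorization $|\tilde g_{n,\omega}|=|C_{n,\omega}(\nu)|\,\ev h_{n,\omega}(X,z)$, Lemma \ref{lem:hyper_ratio_monotonicity} for the strict monotonicity of the expectation, and the reflection formula of Lemma \ref{lem:tilde_gnw_reflection_formula} to transfer the $\nu>1$ case to $\nu<-1$. The only (harmless) deviation is in part $(\mathrm{ii})$, where the paper argues nonvanishing of $C_{n,\omega}(\nu)$ via the image of $f(n,\omega)=n-\omega-\nu$ under $\mathcal N\setminus\{(0,0)\}$, whereas you exhibit the explicit witnesses $(1,1)$ with $C_{1,1}(\nu)=1+\nu$ and, after reflection, $(1,0)$ with $C_{1,0}(\nu)=\nu-1$ --- a concrete check that is equally valid.
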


\begin{proof}
First consider the special case $(n,\omega)=(0,0)\implies|\tilde g_{n,\omega}(z,\nu)|=1$ which is trivially increasing. Next assume $\nu>1$ and consider the general case $(n,\omega)\in\mathcal N\setminus\{(0,0)\}$. Using the notation of Lemma \ref{lem:tilde_gnw_zeros} we write
\begin{equation}
\label{eq:absVal_g_tilde}
\partial_z|\tilde g_{n,\omega}(z,\nu)|=|C_{n,\omega}(\nu)|\partial_z\ev h_{n,\omega}(X,z).
\end{equation}
From Lemma \ref{lem:hyper_ratio_monotonicity} we know $\nu>1\implies \partial_z\ev h_{n,\omega}(X,z)>0$ and since $|C_{n,\omega}(\nu)|\geq 0$ we can conclude that $\partial_z|\tilde g_{n,\omega}(z,\nu)|\geq 0$ for all $(n,\omega)\in\mathcal N\setminus\{(0,0)\}$; thus, criteria $(\mathrm i)$ is satisfied. To satisfy criteria $(\mathrm{ii})$ all we need to show is that there is some $(n,\omega)$ for which $C_{n,\omega}(\nu)\neq 0$. To accomplish this, consider the image of $f(n,\omega)=n-\omega-\nu$ under $\mathcal N\setminus\{(0,0)\}$:
\[
f[\mathcal N\setminus\{(0,0)\}]=\{-\nu,1-\nu,2-\nu,\dots\}.
\]
Since $\nu>1$ it follows that $f[\mathcal N\setminus\{(0,0)\}]$ always contains at least one element not in $\Bbb N_0$ which according to Lemma \ref{lem:tilde_gnw_zeros} implies that $C_{n,\omega}(\nu)\neq0$. Hence, $\nu>1\implies\exists(n,\omega)\in\mathcal N\setminus\{(0,0)\}:C_{n,\omega}(\nu)\neq0$ and the proof for $\nu>1$ is complete.

To obtain the proof for $\nu<-1$ observe that if $|\tilde g_{n,\omega}(z,\nu)|$ is increasing (strictly increasing) in $z$ then $|\tilde g_{n,\omega}(1/z,\nu)|$ must be decreasing (strictly decreasing) in $z$. With this observation assume $\nu>1$ and use the reflection formula in Lemma \ref{lem:tilde_gnw_reflection_formula} to write
\begin{equation}
\label{eq:tilde_g_trans}
|\tilde g_{n,\omega}(1/z,\nu)|=|\tilde g_{n,\omega^\prime}(z,-\nu)|,
\end{equation}
where $\omega^\prime=n-\omega$. Since the l.h.s.~side of $(\ref{eq:tilde_g_trans})$ is  decreasing in $z$ and $(n,\omega)\in\mathcal N\implies(n,\omega^\prime)\in\mathcal N$ the proof for $\nu<-1$ immediately follows.
\end{proof}


\subsection{Variance}
\label{subsec:variance}

We are now at last ready to begin deriving expressions for the variance.  To accomplish this task we will derive the second moment for the integer-valued estimator $\mathscr T_n$ and then rely on the uniqueness of the fractional finite sum to obtain the desired result for real-valued $\nu$.

\begin{lemma}
\label{lem:T_est_variance_discrete}
For $n\in\Bbb N_0:0\leq 2n<\alpha_1$,
\[
\ev\mathscr T_n^2=%
\frac{1}{\kappa_1^2}\sum_{k,\ell=0}^{n-1}\pFq{}{}{k+1,\ell+1}{\alpha_1}{1}\pFq{}{}{-k,-\ell}{\alpha_2}{1}\zeta^{k+\ell}.
\]
\end{lemma}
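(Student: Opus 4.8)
The plan is to compute $\ev\mathscr T_n^2$ by brute force: expand $\mathscr T_n$ as a finite sum of monomials in $Y_1^{-1}$ and $Y_2$, square it, and then integrate term by term using the independence of $Y_1$ and $Y_2$ together with the gamma-moment formula of Lemma \ref{lem:gamma_moments}. Since everything in sight is a finite sum, the only analytic subtlety is the existence of the relevant negative moment of $Y_1$, which is exactly where the hypothesis $2n<\alpha_1$ will be consumed.

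First I would rewrite the estimator of Lemma \ref{lem:T_estimator_discrete} in monomial form,
\[
\mathscr T_n=\sum_{k=0}^{n-1}\frac{\alpha_2^{\,k}}{\alpha_1^{\,k+1}(\alpha_1)_{-k-1}(\alpha_2)_k}\,Y_1^{-(k+1)}Y_2^{\,k},
\]
and square it to obtain
\[
\mathscr T_n^2=\sum_{k,\ell=0}^{n-1}\frac{\alpha_2^{\,k+\ell}}{\alpha_1^{\,k+\ell+2}(\alpha_1)_{-k-1}(\alpha_1)_{-\ell-1}(\alpha_2)_k(\alpha_2)_\ell}\,Y_1^{-(k+\ell+2)}Y_2^{\,k+\ell}.
\]
Because the double sum is finite, expectation passes inside termwise, and $Y_1\perp Y_2$ lets me factor each term as $\ev Y_1^{-(k+\ell+2)}\,\ev Y_2^{\,k+\ell}$.

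Next I would evaluate these two moments via Lemma \ref{lem:gamma_moments}, recalling $\beta_1=\alpha_1/\kappa_1$ and $\beta_2=\alpha_2/\kappa_2$, which gives $\ev Y_1^{-(k+\ell+2)}=(\alpha_1/\kappa_1)^{k+\ell+2}(\alpha_1)_{-k-\ell-2}$ and $\ev Y_2^{\,k+\ell}=(\kappa_2/\alpha_2)^{k+\ell}(\alpha_2)_{k+\ell}$. Here the constraint $2n<\alpha_1$ is essential: the first moment is finite only when $\alpha_1+\Re(-(k+\ell+2))>0$, and the worst case $k=\ell=n-1$ forces precisely $\alpha_1>2n$. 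Upon substitution every power of $\alpha_1$ and $\alpha_2$ cancels, leaving $\kappa_1^{-(k+\ell+2)}\kappa_2^{\,k+\ell}=\kappa_1^{-2}\zeta^{\,k+\ell}$ (with $\zeta=\kappa_2/\kappa_1$) multiplied by the two Pochhammer ratios $(\alpha_1)_{-k-\ell-2}/[(\alpha_1)_{-k-1}(\alpha_1)_{-\ell-1}]$ and $(\alpha_2)_{k+\ell}/[(\alpha_2)_k(\alpha_2)_\ell]$.

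The last step, which is the one deserving the most care, is to recognize each Pochhammer ratio as a Gauss hypergeometric function at unity. Expressing each symbol as a ratio of gamma functions via $(s)_m=\Gamma(s+m)/\Gamma(s)$ and applying the Gauss summation $F(a,b;c;1)=\Gamma(c)\Gamma(c-a-b)/[\Gamma(c-a)\Gamma(c-b)]$ from the preliminaries, I would identify
\[
\frac{(\alpha_1)_{-k-\ell-2}}{(\alpha_1)_{-k-1}(\alpha_1)_{-\ell-1}}=\pFq{}{}{k+1,\ell+1}{\alpha_1}{1},\qquad
\frac{(\alpha_2)_{k+\ell}}{(\alpha_2)_k(\alpha_2)_\ell}=\pFq{}{}{-k,-\ell}{\alpha_2}{1}.
\]
The first identity requires $\Re(\alpha_1-(k+1)-(\ell+1))>0$, i.e.\ $\alpha_1>k+\ell+2$, which is again guaranteed by $2n<\alpha_1$, so the same hypothesis secures both moment existence and the validity of Gauss's theorem; the second ratio terminates since $-k,-\ell\in-\Bbb N_0$, and its convergence condition $\alpha_2+k+\ell>0$ holds automatically. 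Substituting these closed forms and pulling out the common factor $\kappa_1^{-2}$ produces exactly the claimed double sum, completing the proof.
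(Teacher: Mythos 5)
Your proposal is correct and follows essentially the same route as the paper's proof: square the finite sum, take expectations termwise via Lemma \ref{lem:gamma_moments} (with $2n<\alpha_1$ securing the worst-case negative moment of $Y_1$), and recognize the resulting gamma/Pochhammer ratios as ${_2F_1}$ values at unity via Gauss's summation (Relation \ref{rel:2F1_zeq1}). Your added observation that the same hypothesis simultaneously validates Gauss's theorem for the $\alpha_1$-term is a nice explicit touch, but the argument is the paper's.
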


\begin{proof}
We begin by squaring $\mathscr T_n$ to find
\[
\mathscr T_n^2=\sum_{k,\ell=0}^{n-1}\frac{(\alpha_1Y_1)^{-k-\ell-2}(\alpha_2Y_2)^{k+\ell}}{(\alpha_1)_{-k-1}(\alpha_1)_{-\ell-1}(\alpha_2)_k(\alpha_2)_\ell}.
\]
Now using the result of Lemma \ref{lem:gamma_moments} we evaluate the expected value term-wise yielding
\[
\ev\mathscr T_n^2=\frac{1}{\kappa_1^2}\sum_{k,\ell=0}^{n-1}\frac{\Gamma(\alpha_1)\Gamma(\alpha_1-k-\ell-2)\Gamma(\alpha_2)\Gamma(\alpha_2+k+\ell)}{\Gamma(\alpha_1-k-1)\Gamma(\alpha_1-\ell-1)\Gamma(\alpha_2+k)\Gamma(\alpha_2+\ell)}\zeta^{k+\ell},
\]
which requires $2n<\alpha_1$ to guarantee each term is finite. Relation \ref{rel:2F1_zeq1} provides the necessary result to write the summand as the product of hypergeometric functions of unity argument; hence, the desired result is obtained.
\end{proof}

As will be seen in Theorem \ref{thm:T_est_variance_series}, the second moment of the generalized estimator $\mathscr T_\nu$ is rather complicated. To determine the convergence criteria for $\ev\mathscr T_\nu^2$ we will derive an integral representation $\ev\mathscr T_\nu^2=\int_{\mathcal S}f\,\mathrm dA$ on the unit square $\mathcal S=[0,1]^2$ and bound the integrand $f$ with a simpler function, $\Upsilon$, that still captures essential characteristics of $f$ such as its behavior near singular points. Since these singular points ultimately dictate the convergence of the integral representation of $\ev\mathscr T_\nu^2$, studying the convergence of $\int_{\mathcal S}\Upsilon\,\mathrm dA$ will subsequently reveal the parameters which which $\ev\mathscr T_\nu^2<\infty$. However, even the integral of $\Upsilon$ is complicated and in most cases cannot be evaluated in closed-form. As such, Lemmas \ref{lem:double_log_power_integral_1_bound} and \ref{lem:double_log_power_integral_2_bound} study the convergence properties of various double integrals that will used to aid in determining the convergence of $\int_{\mathcal S}\Upsilon\,\mathrm dA$.

\begin{relation}[Integral representation of harmonic numbers]
\label{rel:harmonic_number_integral_rep}
\[
H_z=\int_0^1\frac{1-t^{z-1}}{1-t}\,\mathrm dt,\quad \Re z>-1.
\]
\end{relation}

\begin{lemma}
\label{lem:double_log_power_integral_1_bound}
Let $n\in\Bbb N_0$, $f(x,y)=1-(1-x)(1-y)$, and
\[
I_n=\int_0^1\int_0^1f^{s-2}(x,y)(\log\circ f)^n(x,y)\,\mathrm dx\mathrm dy.
\]
Then, $I_n$ converges for all $s>0$.
\end{lemma}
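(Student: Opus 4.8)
The plan is to isolate the only singularity of the integrand and reduce the problem to the integrability of a pure corner singularity of the form $(x+y)^{s-2}$ on the unit square $\mathcal S=[0,1]^2$. First I would record the elementary geometric facts about $f(x,y)=x+y-xy$ on $\mathcal S$: since $\partial_x f=1-y\ge 0$ and $\partial_y f=1-x\ge 0$, the function $f$ is nondecreasing in each variable, so $0=f(0,0)\le f\le f(1,1)=1$, and $f$ vanishes only at the origin. In particular $f\in(0,1]$ on $\mathcal S\setminus\{(0,0)\}$, so the integrand is continuous there and the sole obstruction to convergence is the behavior near $(0,0)$.

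Next I would establish the two-sided comparison $\tfrac12(x+y)\le f(x,y)\le x+y$ valid on all of $\mathcal S$. The upper bound is immediate since $xy\ge 0$. For the lower bound, writing $f=(x+y)\bigl(1-\tfrac{xy}{x+y}\bigr)$ and using $xy\le\tfrac14(x+y)^2$ gives $\tfrac{xy}{x+y}\le\tfrac14(x+y)\le\tfrac12$, whence $f\ge\tfrac12(x+y)$. Thus $f$ is comparable to $x+y$ throughout $\mathcal S$, so near the origin $f^{s-2}$ behaves exactly like the corner singularity $(x+y)^{s-2}$.

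Then I would dispose of the logarithmic factor. Since $f\in(0,1]$ on the square, for every $\delta>0$ the map $u\mapsto u^{\delta}|\log u|^n$ is bounded on $(0,1]$ (it tends to $0$ as $u\to 0^+$ and is continuous on any $[\epsilon,1]$), giving a global bound $|\log f|^n\le C_\delta f^{-\delta}$. Combining this with the comparison of the previous step yields, for a constant $C$ depending on $n,\delta,s$,
\[
\bigl|f^{s-2}(\log\circ f)^n\bigr|\le C\,(x+y)^{s-2-\delta}\quad\text{on }\mathcal S\setminus\{(0,0)\}.
\]
It therefore suffices to prove that $\int_{\mathcal S}(x+y)^{\beta}\,\mathrm dA$ converges whenever $\beta>-2$, and then to choose any $\delta\in(0,s)$ so that $\beta=s-2-\delta>-2$.

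Finally, the convergence of $\int_0^1\!\int_0^1(x+y)^\beta\,\mathrm dx\,\mathrm dy$ for $\beta>-2$ is a direct computation: integrating in $x$ produces $\bigl((1+y)^{\beta+1}-y^{\beta+1}\bigr)/(\beta+1)$ (with the obvious logarithmic form when $\beta=-1$), and the outer integral in $y$ converges precisely because $\int_0^1 y^{\beta+1}\,\mathrm dy<\infty\iff\beta+1>-1$, i.e.\ $\beta>-2$, while the $(1+y)^{\beta+1}$ contribution stays bounded. This establishes absolute convergence of $I_n$ for every $s>0$. The only genuinely substantive point is the dimensional bookkeeping in this last step --- that the two-dimensional corner singularity $(x+y)^{s-2}$ gains an extra unit of integrability from the second variable --- whereas the apparently threatening power of the logarithm is harmless, being absorbed into an arbitrarily small power of $f$.
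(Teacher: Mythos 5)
Your proof is correct, and it takes a genuinely different route from the paper's. The paper collapses the double integral to one dimension via the change of variables $(t,v)=((1-x)(1-y),x)$, obtaining $|I_n|=\int_0^1(-\log(1-t))^n(-\log t)(1-t)^{s-2}\,\mathrm dt$; it then evaluates $I_0$ exactly as $H_{s-1}/(s-1)$ using the integral representation of harmonic numbers, and an integration by parts (with the boundary term killed by a series expansion of the antiderivative) yields the recurrence $|I_n|\le\tfrac{n}{s}|I_{n-1}|$ and hence the explicit closed-form bound $|I_n|\le\tfrac{n!}{s^n}\tfrac{H_{s-1}}{s-1}$. You instead argue by pure domination: your two-sided comparison $\tfrac12(x+y)\le f\le x+y$ on the square is correct (the lower bound via $xy\le\tfrac14(x+y)^2$ and $x+y\le 2$ is sound), the logarithm is legitimately absorbed into an arbitrarily small power through $u^\delta|\log u|^n$ being bounded on $(0,1]$, and the problem reduces to the elementary integrability of the corner singularity $(x+y)^{\beta}$ on the unit square for $\beta>-2$, with $\delta\in(0,s)$ ensuring $s-2-\delta>-2$; since the integrand has constant sign $(-1)^n$, your domination gives absolute convergence, which is exactly what the lemma asserts and all that is used downstream (Lemma \ref{lem:double_log_power_integral_2_bound} invokes only convergence, never the explicit bound). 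One pedantic point you could add in a clause: for $-2<\beta\le -1$ the inner $x$-integral diverges at the single value $y=0$, but this null set is harmless by Tonelli applied to the nonnegative dominating function. As for what each approach buys: yours is shorter, localizes the difficulty transparently to the origin, and is robust under any perturbation of $f$ with the same vanishing behavior, whereas the paper's computation delivers the quantitative estimate $\tfrac{n!}{s^n}\tfrac{H_{s-1}}{s-1}$, making the factorial growth in $n$ and the blow-up as $s\searrow 0$ explicit, at the cost of a more delicate argument.
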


\begin{proof}
We will first evaluate $I_0$. Noting that $|I_n|=(-1)^nI_n$ we perform the change of variables $(t,v)=((1-x)(1-y),x)$ and integrate over $v$ yielding
\[
|I_n|=\int_0^1(-1)^n\log^n(1-t)(-\log t) (1-t)^{s-2}\,\mathrm dt.
\]
Substituting $n=0$, we again change variables via $x=1-t$ and then integrate by parts with $u=-\log(1-x)$ and $\mathrm dv=x^{s-2}\,\mathrm dx$ to find
\[
I_0=\frac{1}{s-1}\int_0^1\frac{1-x^{s-1}}{1-x}\,\mathrm dx+\log(1-x)\frac{1-x^{s-1}}{s-1}\bigg|_{x=0}^1.
\]
If $s>0$ the limit term vanishes and upon inspection of the integral representation for the harmonic numbers in Relation \ref{rel:harmonic_number_integral_rep}
\[
I_0=\frac{H_{s-1}}{s-1}.
\]
Now consider the general case $I_n$. Without loss of generality assume $n\geq 1$ and perform integration by parts with $u=(-1)^n\log^n(1-t)$ and $\mathrm dv=-\log t(1-t)^{s-2}\,\mathrm dt$. Expanding the logarithm in $\mathrm dv$ as a power series in $(1-t)$ and integrating termwise we find
\[
v=-\sum_{k=0}^\infty\frac{(1-t)^{s+k}}{(s+k)(1+k)}.
\]
In this form, it becomes clear that the limit term $uv|_{t=0}^1$ vanishes if $n\geq 1$ so that $|I_n|=\int_0^1(-v)\,\mathrm du$. Furthermore, we observe for $s>0$:
\[
-v=\frac{1}{s}(1-t)^{s-1}\sum_{k=0}^\infty\frac{s(1-t)^{k+1}}{(s+k)(1+k)}\leq \frac{1}{s}(-\log t)(1-t)^{s-1}.
\]
Hence,
\[
|I_n|\leq\frac{n}{s}\int_0^1(-1)^{n-1}\log^{n-1}(1-t)(-\log t)(1-t)^{s-2}\,\mathrm du=\frac{n}{s}|I_{n-1}|.
\]
Solving the recurrence relation and calling on the result for $I_0$ we find for $s>0$:
\[
|I_n|\leq\frac{n!}{s^n}\frac{H_{s-1}}{s-1}<\infty,
\]
which completes the proof.
\end{proof}

\begin{lemma}
\label{lem:double_log_power_integral_2_bound}
Let $z\in\Bbb R^+$, $m,n\in\Bbb N_0$, $a,b\in(-\infty,2]$,
\[
f(x,y,z) =1-\frac{(1-x)(1-y)}{(1-(1-z)x)(1-(1-z) y)},
\]
\[
F(x,y,z,l,s)=(\log\circ f)^l(x,y,z)f^{s-2}(x,y,z),
\]
and
\[
I=\int_0^1\int_0^1F(x,y,z,n,a)F(x,y,1,m,b)\,\mathrm dx\mathrm dy.
\]
Then $I$ converges when $a+b>2$\footnote{One can easily extend this result to other values of $a$ and $b$ using similar methods to those used here and noting that for $s\geq 2$, $0\leq f^{s-2}(z,y,x)\leq 1$.}.
\end{lemma}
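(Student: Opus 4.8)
The plan is to reduce the whole statement to the already-established Lemma \ref{lem:double_log_power_integral_1_bound} by showing that, throughout the square, $f(x,y,z)$ is comparable to $f(x,y,1)=1-(1-x)(1-y)$. The starting observation is that for $z>0$ each factor of the integrand is singular at only one point. Writing $f(x,y,z)=N(x,y,z)/D(x,y,z)$ with denominator $D(x,y,z)=(1-(1-z)x)(1-(1-z)y)>0$, a direct expansion gives the numerator in the clean form
\[
N(x,y,z)=z\bigl[(x+y)+(z-2)xy\bigr].
\]
Since $D>0$ on $[0,1]^2$ and $N(x,y,z)>0$ except at the origin (for $0<z<2$ this uses $xy\le (x+y)/2$, while for $z\ge 2$ it is immediate), $f(x,y,z)$ vanishes on the square only at $(0,0)$; the same is true of $f(x,y,1)=x+y-xy$. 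Away from the origin the integrand is therefore continuous and bounded, so only the behaviour near $(0,0)$ can threaten convergence.

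Next I would establish the comparison. The ratio
\[
R(x,y)=\frac{f(x,y,z)}{f(x,y,1)}=\frac{z}{D(x,y,z)}\cdot\frac{(x+y)+(z-2)xy}{(x+y)-xy}
\]
is continuous and strictly positive on $[0,1]^2\setminus\{(0,0)\}$, and writing $u=x+y$, $p=xy$ with $p\le u^2/4$ shows $R\to z>0$ as $(x,y)\to(0,0)$. Hence $R$ extends to a continuous, strictly positive function on the compact square and is bounded between positive constants $c_1\le R\le c_2$, giving $c_1\,f(x,y,1)\le f(x,y,z)\le c_2\,f(x,y,1)$ everywhere on $[0,1]^2$.

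With this comparison in hand the bound is routine. Because $a\le 2$ the map $t\mapsto t^{a-2}$ is nonincreasing, so $f(x,y,z)^{a-2}\le c_1^{a-2}f(x,y,1)^{a-2}$; and since $R\in[c_1,c_2]$ we have $|\log f(x,y,z)|\le |\log f(x,y,1)|+C$ with $C=\max\{|\log c_1|,|\log c_2|\}$. Expanding $(|\log f(x,y,1)|+C)^n$ by the binomial theorem then bounds the modulus of the integrand by a finite sum of terms of the form $\text{const}\times |\log f(x,y,1)|^{\ell}\,f(x,y,1)^{(a+b-2)-2}$ with $0\le \ell\le m+n$. Each such term is exactly the integrand of $I_\ell$ in Lemma \ref{lem:double_log_power_integral_1_bound} with parameter $s=a+b-2$, which converges precisely when $s>0$, i.e.\ when $a+b>2$. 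Summing the finitely many convergent bounds shows $I$ converges absolutely.

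I expect the main obstacle to be the global comparison step: one must verify that $R(x,y)$ has no hidden zeros or poles on the square and that its limit at the origin is genuinely path-independent. Once the explicit numerator $N(x,y,z)=z[(x+y)+(z-2)xy]$ is computed, both facts follow from compactness, but obtaining that closed form and checking positivity of $(x+y)+(z-2)xy$ in the regime $0<z<2$ is where the real work lies; everything after that is a mechanical reduction to Lemma \ref{lem:double_log_power_integral_1_bound}.
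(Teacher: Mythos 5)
Your proof is correct, but it reaches Lemma \ref{lem:double_log_power_integral_1_bound} by a genuinely different route than the paper. The paper never compares $f(\cdot,z)$ to $f(\cdot,1)$ pointwise; instead it exploits the sign structure ($0\leq f\leq 1$, so $|F|=(-1)^lF$) together with the monotonicity of $f(x,y,\cdot)$ in $z$, which makes $|F(x,y,z,l,s)|$ decreasing in $z$ for $s\leq 2$. This lets it replace both factors by their common value at $z^\ast=\min\{1,z\}$, merging the product into the single integrand $|F(x,y,z^\ast,m+n,a+b-2)|$, after which the M\"obius change of variables $1-u=(1-x)/(1-(1-z^\ast)x)$ (and likewise in $y$) transports the integral to the $z=1$ geometry with Jacobian controlled by $\max\{1,z^{-2}\}$, landing directly on Lemma \ref{lem:double_log_power_integral_1_bound} with $s=a+b-2$ and the explicit one-term bound $|I|\leq\max\{1,z^{-2}\}\,|I_{m+n}|$. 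You instead establish the two-sided equivalence $c_1 f(x,y,1)\leq f(x,y,z)\leq c_2 f(x,y,1)$ from the closed form $N=z[(x+y)+(z-2)xy]$ plus compactness of the square, and absorb the constants via the binomial expansion of $(|\log f(x,y,1)|+C)^n$, reducing to finitely many instances of Lemma \ref{lem:double_log_power_integral_1_bound} with logarithm powers up to $m+n$. Your route is more elementary: it needs neither the monotonicity property (which the paper asserts as $(\mathrm{FII})$ without proof) nor any change-of-variables and Jacobian bookkeeping, and it makes transparent that the only obstruction lives at the common zero $(0,0)$; the price is that $c_1,c_2$ are non-explicit compactness constants, so you obtain convergence but not the paper's clean quantitative bound, and your final estimate is a sum of $n+1$ terms rather than one. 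Two small remarks: positivity of $(x+y)+(z-2)xy$ off the origin holds for every $z>0$ at once via the identity $(x+y)+(z-2)xy=x(1-y)+y(1-x)+zxy$, which spares your case split at $z=2$ (and quietly covers the corner $(1,1)$, where $(x+y)-2xy$ alone vanishes); and since the proof of Lemma \ref{lem:double_log_power_integral_1_bound} actually bounds $|I_n|$, your termwise domination does deliver the absolute convergence you claim.
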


\begin{proof}
For convenience we will introduce the auxillary function
\[
J(x,y,z)=\frac{z^2}{(z+(1-z)x)^2(z+(1-z)y)^2}
\]
as well as the following properties:
\[
\begin{array}{*2{>{\displaystyle}l}}
(\mathrm{FI}) &\text{For $l\in\Bbb N_0$, $z\in\Bbb R^+$, and $(x,y)\in[0,1]^2$: $|F|=(-1)^lF$}.\\[1ex]
(\mathrm{FII}) &\text{$f(\cdot,z)$ is increasing on $z\in\Bbb R^+$ from $f(\cdot,0)=0$ to $f(\cdot,\infty)=1$}.\\[1ex]
(\mathrm{J}) &\text{For all $z\in\Bbb R^+$ and $(x,y)\in[0,1]^2$: $0\leq J(x,y,z)\leq z^{2\operatorname{sign}(z-1)}$}.
\end{array}
\]
Upon inspection of property $(\mathrm{FI})$ we see that $|I|=(-1)^{m+n}$ and so
\[
|I|=\int_0^1\int_0^1|F(x,y,z,n,a)|\, |F(x,y,1,m,b)|\,\mathrm dx\mathrm dy.
\]
Furthermore, since $n\in\Bbb N_0$ and $a\leq 2$ we are able to deduce from property $(\mathrm{FII})$ that $|F(x,y,z,n,a)|$ is nonnegative and a decreasing function of $z$ for all $z\in\Bbb R^+$. Denoting $z^\ast=\min\{1,z\}$, it follows for all $z\in\Bbb R^+$
\[
|I|\leq\int_0^1\int_0^1|F(x,y,z^\ast,m+n,a+b-2)|\,\mathrm dx\mathrm dy.
\]
Performing the change of variables $(1-u,1-v)=((1-x)/(1-(1-z^\ast)x),(1-y),(1-(1-z^\ast)y))$ and then using property $(\mathrm J)$ subsequently gives
\[
|I|\leq\max\{1,z^{-2}\}\int_0^1\int_0^1|F(u,v,1,m+n,a+b-2)|\,\mathrm du\mathrm dv,
\]
which according to Lemma \ref{lem:double_log_power_integral_1_bound} converges if $a+b>2$. The proof is now complete.
\end{proof}

Now that we have the preliminary results needed to study the integral of $\Upsilon$ we derive one last result that will be used to construct $\Upsilon$. The following lemma describes a function $\upsilon(\alpha,\beta,x)$ that bounds a particular case of the hypergeometric function $F(\alpha,\alpha;\beta;x)$ on the interval $x\in[0,1]$. Comparing $\upsilon$ with the asymptotic properties of the hypergeometric function in \cite[$\S 15.4(\mathrm{ii})$]{nist_2010} will reveal that it not only bounds our special case of the hypergeometric function but also has the critical property
\[
F(\alpha,\alpha;\beta;x)\sim\upsilon(\alpha,\beta,x),\quad \text{as } x\searrow 1.
\]

\begin{lemma}
\label{lem:hyper2f1_bounding_functions}
For $\alpha\in\Bbb R$, $\beta\in\Bbb R^+$, and $x\in[0,1]$:
\[
F(\alpha,\alpha;\beta;x)\leq \upsilon(\alpha,\beta,x),
\]
where
\[
\upsilon(\alpha,\beta,x)=%
\begin{cases}
\frac{\Gamma(\beta)\Gamma(\beta-2\alpha)}{\Gamma(\beta-\alpha)\Gamma(\beta-\alpha)}, &\beta>2\alpha\\[1ex]
1-\frac{\Gamma(2\alpha)}{\Gamma(\alpha)\Gamma(\alpha)}\log(1-x), &\beta=2\alpha\\[1ex]
\frac{\Gamma(\beta)\Gamma(2\alpha-\beta)}{\Gamma(\alpha)\Gamma(\alpha)}(1-x)^{\beta-2\alpha}, &\beta<2\alpha.
\end{cases}
\]
\end{lemma}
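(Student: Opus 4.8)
The plan is to exploit the fact that the Taylor coefficients of $F(\alpha,\alpha;\beta;x)=\sum_{k=0}^\infty \frac{(\alpha)_k^2}{(\beta)_k\,k!}x^k$ are nonnegative for every real $\alpha$ and every $\beta>0$, since $(\alpha)_k^2\ge 0$, $(\beta)_k>0$, and $k!>0$. Consequently $F(\alpha,\alpha;\beta;\cdot)$ is nonnegative and nondecreasing on $[0,1]$, and (the coefficients being summable in the first case) continuous up to $x=1$ by Abel's theorem. This monotonicity is the backbone I will use repeatedly. I would then treat the three regimes in the order $\beta>2\alpha$, $\beta<2\alpha$, $\beta=2\alpha$, with the first serving as a base case and the other two reduced to it.

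For $\beta>2\alpha$ the monotonicity gives $F(\alpha,\alpha;\beta;x)\le F(\alpha,\alpha;\beta;1)$ on $[0,1]$, and since $\beta-2\alpha>0$ the Gauss value $F(\alpha,\alpha;\beta;1)=\frac{\Gamma(\beta)\Gamma(\beta-2\alpha)}{\Gamma(\beta-\alpha)^2}$ is finite and equals the (constant-in-$x$) function $\upsilon(\alpha,\beta,x)$, which settles the first case. For $\beta<2\alpha$ I would apply the Euler transformation $(\mathrm{iii})$ of \eqref{eq:hyper_2F1_xForms} in the form $F(\alpha,\alpha;\beta;x)=(1-x)^{\beta-2\alpha}F(\beta-\alpha,\beta-\alpha;\beta;x)$, valid on $[0,1)$. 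The transformed function has parameter excess $\beta-2(\beta-\alpha)=2\alpha-\beta>0$, so it falls under the already-proved first case and is bounded above by its value at unity, namely $\frac{\Gamma(\beta)\Gamma(2\alpha-\beta)}{\Gamma(\alpha)^2}$. Multiplying this bound by the nonnegative factor $(1-x)^{\beta-2\alpha}$ reproduces exactly $\upsilon(\alpha,\beta,x)$, closing the third case.

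The delicate case is $\beta=2\alpha$ (which forces $\alpha>0$), where the Euler transformation degenerates and $\upsilon$ is logarithmic. Here I would compare derivatives. Differentiating termwise, equivalently via the standard relation $\tfrac{\mathrm d}{\mathrm dx}F(a,b;c;x)=\tfrac{ab}{c}F(a+1,b+1;c+1;x)$, gives $\tfrac{\mathrm d}{\mathrm dx}F(\alpha,\alpha;2\alpha;x)=\tfrac{\alpha}{2}F(\alpha+1,\alpha+1;2\alpha+1;x)$. The new hypergeometric function has parameter excess $(2\alpha+1)-2(\alpha+1)=-1<0$, so the \emph{third} case applies to it and yields $F(\alpha+1,\alpha+1;2\alpha+1;x)\le \frac{\Gamma(2\alpha+1)}{\Gamma(\alpha+1)^2}(1-x)^{-1}$. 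After simplifying the gamma factors one finds $\tfrac{\alpha}{2}\cdot\frac{\Gamma(2\alpha+1)}{\Gamma(\alpha+1)^2}=\frac{\Gamma(2\alpha)}{\Gamma(\alpha)^2}$, so that $\tfrac{\mathrm d}{\mathrm dx}F(\alpha,\alpha;2\alpha;x)\le \frac{\Gamma(2\alpha)}{\Gamma(\alpha)^2}\,\frac{1}{1-x}=\tfrac{\mathrm d}{\mathrm dx}\upsilon(\alpha,2\alpha,x)$. Since $F(\alpha,\alpha;2\alpha;0)=1=\upsilon(\alpha,2\alpha,0)$, integrating this derivative inequality from $0$ to $x$ yields $F\le\upsilon$ on $[0,1)$.

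I expect the main obstacle to be the $\beta=2\alpha$ case: one must recognize that its own derivative lands in the strictly-below-threshold regime so that the previously established bound can be bootstrapped, and then verify that the accumulated gamma constant collapses precisely to $\Gamma(2\alpha)/\Gamma(\alpha)^2$ so the two derivatives match. Minor care is also needed at the endpoints — confirming nonnegativity of the coefficients across negative and integer $\alpha$, invoking the Gauss summation at $x=1$ only when the parameter excess is positive, and treating $x=1$ in the second and third cases as a limit — but these are routine once the monotonicity-and-transformation skeleton is in place.
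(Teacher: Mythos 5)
Your proof is correct, and for two of the three cases it coincides with the paper's: for $\beta>2\alpha$ the paper likewise uses nonnegativity of the series coefficients (via the derivative $\partial_xF(\alpha,\alpha;\beta;x)=(\alpha^2/\beta)F(\alpha+1,\alpha+1;\beta+1;x)$) to get monotonicity and then Gauss's summation at $x=1$, and for $\beta<2\alpha$ it applies exactly the Euler transformation $(\ref{eq:hyper_2F1_xForms})(\mathrm{iii})$ to reduce to the first case. The genuine difference is the boundary case $\beta=2\alpha$: the paper simply cites \cite[Lem.~$2.3$]{anderson_1995} (restated later as Lemma \ref{lem:balanced_hypergeometric_bound}), whereas you derive the logarithmic bound from scratch by differentiating, observing that $F(\alpha+1,\alpha+1;2\alpha+1;x)$ has parameter excess $-1$ so your already-proved third case yields $\partial_xF(\alpha,\alpha;2\alpha;x)\leq\frac{\Gamma(2\alpha)}{\Gamma(\alpha)^2}\frac{1}{1-x}=\partial_x\upsilon(\alpha,2\alpha,x)$, and integrating from the common value $1$ at $x=0$. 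I checked the constant: $\frac{\alpha}{2}\cdot\frac{\Gamma(2\alpha+1)}{\Gamma(\alpha+1)^2}=\frac{\Gamma(2\alpha)}{\Gamma(\alpha)^2}=\frac{1}{\operatorname B(\alpha,\alpha)}$, so your bootstrap in fact reproves the $a=b$ specialization of the cited lemma. What each approach buys: the paper's citation is shorter and inherits the sharpness discussion from \cite{anderson_1995}, while your derivative-comparison argument makes the lemma fully self-contained and nicely exhibits the three cases as interlocking (case three feeding case two). Your endpoint care is also sound — in the first case $\beta>2\alpha$ forces $\beta-\alpha>0$ so the Gauss value has no gamma poles, and in the other two cases $\upsilon\to\infty$ as $x\to 1$ so the inequality there is trivial.
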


\begin{proof}
Begin by noting that $\partial_x F(\alpha,\alpha;\beta;x)=(\alpha^2/\beta) F(\alpha+1,\alpha+1;\beta+1;x)$, which can be represented as an absolutely convergent power series in $x$ on $[0,1)$. Since this power series representation also has nonnegative coefficients it follows that $F(\alpha,\alpha;\beta;x)$ must an increasing function of $x$ on this interval. Hence, $F(\alpha,\alpha;\beta;x)\leq F(\alpha,\alpha;\beta;1)$ which upon inspection of Relation \ref{rel:2F1_zeq1} yields the desired result for the case $\beta>2\alpha$. The $\beta=2\alpha$ case in given in \cite[Lem.~$2.3$]{anderson_1995}. For $\beta<2\alpha$ we apply the transformation \cite[Eq.~$15.8.1(\mathrm{iii})$]{nist_2010} yielding $F(\alpha,\alpha;\beta;x)=(1-x)^{\beta-2\alpha}F(\beta-\alpha,\beta-\alpha;\beta;x)\leq (1-x)^{\beta-2\alpha}F(\beta-\alpha,\beta-\alpha;\beta;1)$. The proof is now complete.
\end{proof}

After much effort we may finally derive the second moment of the generalized estimator $\mathscr T_\nu$.
\begin{theorem}[The second moment of $\mathscr T_\nu$]
\label{thm:T_est_variance_series}
Let $\zeta\in\Bbb R_0^+$, $(\alpha_1,\alpha_2)\in\Bbb R^+\times\Bbb R^+:\alpha_1+\alpha_2>2$, and $\nu\in\Bbb R:-\alpha_2<2\nu<\alpha_1$. Then, $\ev\mathscr T_\nu^2<\infty$ where
\[
\ev\mathscr T_\nu^2%
=\frac{1}{\kappa_1^2}\sum_{k=0}^\infty\sum_{\ell=0}^k\frac{g_{k,\ell}^2(\zeta,\nu)}{(\alpha_1)_\ell(\alpha_2)_{k-\ell}\,\ell!\,(k-\ell)!},
\]
and $g_{n,\omega}$ is given by Theorem \ref{thm:IncLerch_sum} $(\mathrm{ii})$. Additionally,
\begin{multline*}
\ev\mathscr T_\nu^2%
=\frac{\nu^2\zeta^{2\nu}}{\kappa_1^2}%
\int_0^1\int_0^1\pFq{}{}{1+\nu,1+\nu}{\alpha_1}{\frac{(1-x)(1-y)}{(1-(1-\zeta)x)(1-(1-\zeta)y)}}\\
\pFq{}{}{1-\nu,1-\nu}{\alpha_2}{(1-x)(1-y)}%
\left((1-(1-\zeta)x)(1-(1-\zeta)y)\right)^{-\nu-1}\,\mathrm dx\mathrm dy.
\end{multline*}
\end{theorem}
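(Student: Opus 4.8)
The plan is to carry out the section's stated strategy: prove the series identity first at an integer $\nu=n$, lift it to real $\nu$ by uniqueness of the fractional finite sum, and only then recast it as an integral from which finiteness is read off. First I would take the integer second moment of Lemma \ref{lem:T_est_variance_discrete} as the target and show the proposed series reproduces it. By Theorem \ref{thm:IncLerch_sum}$(\mathrm i)$ together with the Stirling expansion of the falling factorial, $g_{k,\ell}(\zeta,n)=\sum_{p=0}^{n-1}(\ell+p)^{(k)}\zeta^{p}$, so that $g_{k,\ell}^2(\zeta,n)=\sum_{p,p'=0}^{n-1}(\ell+p)^{(k)}(\ell+p')^{(k)}\zeta^{p+p'}$. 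Substituting this into the proposed series, interchanging the (absolutely convergent, since $2n<\alpha_1$) order of summation so that $p,p'$ become outermost, and reindexing $(k,\ell)\mapsto(\ell,j=k-\ell)$ splits the weight $[(\alpha_1)_\ell(\alpha_2)_{k-\ell}\ell!(k-\ell)!]^{-1}$ into a product over $\ell$ and $j$. A short Pochhammer computation then collapses the inner double series to $F(p+1,p'+1;\alpha_1;1)\,F(-p,-p';\alpha_2;1)$, matching Lemma \ref{lem:T_est_variance_discrete} termwise and establishing the series form at $\nu=n$.

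To pass to real $\nu$ I would invoke Theorem \ref{thm:FracSum_uniqueness} on both sides. Writing $\mathscr T_\nu=\sum_{p=0}^{\nu-1}g(p)$, the recurrence $\mathscr T_\nu^2-\mathscr T_{\nu-1}^2=2\mathscr T_{\nu-1}g(\nu-1)+g(\nu-1)^2$ exhibits $\mathscr T_\nu^2$ as a simple fractional finite sum with analytic summand and $\mathscr T_0^2=0$; taking expectations and using linearity of the summability method (Lemma \ref{lem:Tsummability_properties}) shows $\ev\mathscr T_\nu^2$ is the unique generalization of $\ev\mathscr T_n^2$. The identical recurrence applied to each $g_{k,\ell}(\zeta,\cdot)$ shows $g_{k,\ell}^2(\zeta,\nu)$ is the unique generalization of $g_{k,\ell}^2(\zeta,n)$, whence the convergent series of these squares is the unique generalization of its own integer restriction. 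Since both functions of $\nu$ are unique generalizations of the common integer sequence of the previous step, they coincide, proving the series form.

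For the integral representation I would feed the series the integral form of $g_{k,\ell}$. Combining Theorem \ref{thm:IncLerch_sum}$(\mathrm{ii})$ with Relation \ref{rel:Hyper2F1_integral} (equivalently, Proposition \ref{prop:gTilde_expected_val_form}) gives $g_{k,\ell}(\zeta,\nu)=\zeta^\nu(\ell+\nu)^{(k+1)}\int_0^1(1-x)^k D_x^{-\ell-1-\nu}\,\mathrm dx$ with $D_x=1-(1-\zeta)x$; squaring introduces the second integration variable $y$. After interchanging summation and integration I would reindex $(k,\ell)\mapsto(\ell,j=k-\ell)$ and use $[(\ell+\nu)^{(\ell+j+1)}]^2=\Gamma(\ell+\nu+1)^2/\Gamma(\nu-j)^2$, which factors the $\ell$- and $j$-sums. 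The $\ell$-sum resums to $\Gamma(\nu+1)^2\,F(1+\nu,1+\nu;\alpha_1;\frac{(1-x)(1-y)}{D_xD_y})$ and the $j$-sum (via $\Gamma(\nu-j)^{-1}=(-1)^j(1-\nu)_j/\Gamma(\nu)$) to $\Gamma(\nu)^{-2}F(1-\nu,1-\nu;\alpha_2;(1-x)(1-y))$; the prefactor then becomes $\nu^2\zeta^{2\nu}/\kappa_1^2$ and the residual powers collect as $(D_xD_y)^{-\nu-1}$, reproducing the stated integrand. Fubini will be justified a posteriori by the finiteness proved next.

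The principal obstacle is the finiteness claim, which I would extract from the integral form. The integrand is smooth on the open square and can only diverge as $(x,y)\to(0,0)$, where both Gauss arguments tend to $1$. There I would bound $F(1+\nu,1+\nu;\alpha_1;\cdot)$ and $F(1-\nu,1-\nu;\alpha_2;\cdot)$ by the function $\upsilon$ of Lemma \ref{lem:hyper2f1_bounding_functions}, which reproduces their exact endpoint behaviour: noting that $1-\frac{(1-x)(1-y)}{D_xD_y}$ and $1-(1-x)(1-y)$ are precisely the functions $f(x,y,\zeta)$ and $f(x,y,1)$ of Lemma \ref{lem:double_log_power_integral_2_bound}, the first factor contributes a singularity of order $\alpha_1-2(1+\nu)$ and the second one of order $\alpha_2-2(1-\nu)$, with logarithmic borderline cases. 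Matching these to the model integrals of Lemmas \ref{lem:double_log_power_integral_1_bound} and \ref{lem:double_log_power_integral_2_bound} reduces convergence to $a+b>2$ with $a=\alpha_1-2\nu$ and $b=\alpha_2+2\nu$, i.e.~exactly $\alpha_1+\alpha_2>2$, while positivity of $a$ and $b$ yields the one-sided constraints $-\alpha_2<2\nu<\alpha_1$. The delicate part will be the change of variables carrying the two singular denominators onto those model integrals, together with the bookkeeping of the borderline logarithmic ($\beta=2\alpha$) cases and of the $\zeta=0$ boundary, where $D_x=1-x$ introduces an extra endpoint factor.
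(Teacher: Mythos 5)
Your proposal is correct and follows essentially the same route as the paper's proof: establish the series at integer $\nu$ from Lemma \ref{lem:T_est_variance_discrete} (you verify the identity in the reverse direction, but the Pochhammer/Stirling algebra is the same), lift to real $\nu$ via the uniqueness of the fractional finite sum, resum against the Euler-type integral representation of $g_{k,\ell}$ (the paper's Proposition \ref{prop:gTilde_expected_val_form} is this same integral in probabilistic form), and obtain finiteness by bounding the two Gauss factors with Lemma \ref{lem:hyper2f1_bounding_functions} and applying Lemma \ref{lem:double_log_power_integral_2_bound} with $a=\alpha_1-2\nu$ and $b=\alpha_2+2\nu$, exactly matching the paper's region table. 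Your recurrence-based framing of the lift (viewing $\mathscr T_\nu^2$ itself as a fractional finite sum) differs cosmetically from the paper's substitution $f(n)\mapsto f_G(\nu)$ inside the outer convergent series, but it rests on the same summability-calculus identification and is at the same level of rigor, so nothing essential differs.
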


\begin{proof}
We begin with the expression for $\ev\mathscr T_n^2$ derived in Lemma \ref{lem:T_est_variance_discrete} which is finite for $n\in\Bbb N_0:0\leq 2n<\alpha_1$. According to \cite[$\S 16.2(\mathrm{iii})$]{nist_2010}, $F(k+1,\ell+1;\alpha_1;1)$ can be expressed as an absolutely convergent series so long as $k+\ell+2<\alpha_1$ which is covered by the already imposed restriction on $\alpha_1$. Furthermore, $F(-k,-\ell;\alpha_2;1)$ is degenerate and truncates after $m=\min\{k,\ell\}+1$ terms due to the presence of nonpositive integers $-k$ and $-\ell$ in the top two arguments. Consequently, the product\footnote{For the purpose of taking the product we can formally express the degenerate hypergeometric term as the infinite series $\sum_{s=0}^\infty\frac{(-k)_s(-\ell)_s}{(\alpha_2)_s\, s!}=\sum_{s=0}^{\min\{k,\ell\}}\frac{(-k)_s(-\ell)_s}{(\alpha_2)_s\, s!}+0+0+\cdots$.}
\begin{multline*}
\pFq{}{}{k+1,\ell+1}{\alpha_1}{1}\pFq{}{}{-k,-\ell}{\alpha_2}{1}\\
=\sum_{r=0}^\infty\sum_{s=0}^r\frac{(k+1)_s(\ell+1)_s(-k)_{r-s}(-\ell)_{r-s}}{(\alpha_1)_s(\alpha_2)_{r-s}\,s!\,(r-s)!}
\end{multline*}
is absolutely convergent and thus permits rearrangement of its terms \cite[Ch.~$13$]{rudin1976}. Substituting the derived product series into the expression for $\ev\mathscr T_n^2$ and rearranging the order of summation produces
\begin{equation}
\label{eq:ETn_fn_form}
\ev\mathscr T_n^2=\frac{1}{\kappa_1^2}\sum_{r=0}^\infty\sum_{s=0}^r\frac{f^2(n)}{(\alpha_1)_s(\alpha_2)_{r-s}\,s!\,(r-s)!},
\end{equation}
with $f(n)=\sum_{k=0}^{n-1}(k+1)_s(-k)_{r-s}\zeta^k$. Focusing on the Pochhammer terms in the summand of $f(n)$ we use Relations \ref{rel:pochhammer_inversion1}--\ref{rel:pochhammer_inversion2} to write
\[
(k+1)_s(-k)_{r-s}=(-1)^{r-s}(k+1)_s(k+1-(r-s))_{r-s}=(-1)^{s}(-k-s)_r,
\]
which upon expanding with Definition \ref{def:StirlingS1} and substituting into $f(n)$ gives
\[
f(n)=(-1)^{r+s}\sum_{\ell=0}^r \mathcal S_r^{(\ell)}\sum_{k=0}^{n-1}(k+s)^\ell\zeta^k.
\]
Identifying the interior sum over $k$ as the incomplete Lerch Transcendent we have by Theorem \ref{thm:IncLerch_sum} the unique generalization of $f(n)$ being given by $f_G(\nu)=(-1)^{r+s}g_{r,s}(\zeta,\nu)$. Substituting $f(n)\mapsto f_G(\nu)$ in $(\ref{eq:ETn_fn_form})$ then gives the desired expression for the series expansion of $\ev\mathscr T_\nu^2$.

Now, to obtain the integral representation we begin by writing $\ev\mathscr T_\nu^2$ in the form
\[
\ev\mathscr T_\nu^2%
=(\ev\mathscr T_\nu)^2\sum_{k=0}^\infty\sum_{\ell=0}^k\frac{\tilde g_{k,\ell}^2(\zeta,\nu)}{(\alpha_1)_\ell(\alpha_2)_{k-\ell}\,\ell!\,(k-\ell)!}.
\]
Upon inspection of Proposition \ref{prop:gTilde_expected_val_form} we see that $\tilde g_{k,\ell}$ can be recast in terms of an expected value to produce
\[
\ev\mathscr T_\nu^2%
=\left(\frac{\nu \zeta^\nu}{\kappa_1}\right)^2\lim_{n\to\infty}\int_0^1\int_0^1f_n(x,y)\,\mathrm dx\mathrm dy,
\]
where
\[
f_n(x,y)=\sum_{k=0}^n\sum_{\ell=0}^k\frac{(\nu^{-1}(\ell+\nu)^{(k+1)})^2}{(\alpha_1)_\ell(\alpha_2)_{k-\ell}\,\ell!\,(k-\ell)!}\frac{((1-x)(1-y))^k}{((1-(1-\zeta)x)(1-(1-\zeta)y))^{\ell+1+\nu}}.
\]
To interchange limits we must find an integrable function $\Upsilon$ that dominates the sequence $|f_n|$ on $(x,y)\in[0,1]^2$. With this goal in mind let's first find an expression for $f=\lim_{n\to\infty}f_n$. Using properties of the Pochhammer symbol we may write $\nu^{-1}(\ell+\nu)^{(k+1)}=(-1)^k(1-\nu)_k(1+\nu)_\ell/(\nu-k)_\ell$ and $[(\alpha_2)_{k-\ell}(k-\ell)!]^{-1}=(-k)_\ell(1-\alpha_2-k)_\ell/(k!(\alpha_2)_k)$, which upon substituting into $f_n$, taking the limit $n\to\infty$, and utilizing Lemma \ref{lem:2F1_Cauchy_product} gives
\[
f(x,y)=t^{-1-\nu}\pFq{}{}{1+\nu,1+\nu}{\alpha_1}{\frac{s}{t}}\pFq{}{}{1-\nu,1-\nu}{\alpha_2}{s},
\]
where $s=(1-x)(1-y)$ and $t=(1-(1-\zeta)x)(1-(1-\zeta)y)$. Now upon closer inspection of $f_n$ we see that it is the sum of nonnegative terms; hence, $0\leq f_n<f$ showing that $f$ dominates $f_n$. Given this observation we could choose $\Upsilon=f$, however, the complexity of $f$ hinders demonstrating its integrability. As such, we seek a simpler function for $\Upsilon$ that bounds $f$ from above.

Looking back at our expression for $f$ we can see that it is the product of three nonnegative components and so we will build $\Upsilon$ by successively bounding each component from above. To bound $t^{-1-\nu}$ we decompose the set $(\zeta,\nu)\in\Bbb R^+\times\Bbb R$ into the four regions: $(0,1]\times(-\infty,-1]$, $(0,1]\times[-1,\infty)$, $[1,\infty)\times(-\infty,-1]$, and $[1,\infty)\times[-1,\infty)$. Combining upper and lower bounds of each case gives
\[
\min\{1,\zeta^{-2-2\nu}\}\leq t^{-1-\nu}\leq\max\{1,\zeta^{-2-2\nu}\}.
\]
Furthermore, observe that $0\leq s\leq 1$ and $0\leq s/t\leq 1$ everywhere on $(x,y)\in[0,1]^2$ so that we may call on Lemma \ref{lem:hyper2f1_bounding_functions} to bound each hypergeometric term in $f$. This leads us to propose
\[
\Upsilon(x,y)=\max\{1,\zeta^{-2-2\nu}\}\,\upsilon(1+\nu,\alpha_1,s/t)\,\upsilon(1-\nu,\alpha_2,s),
\]
with $\upsilon(\alpha,\beta,x)$ given by Lemma \ref{lem:hyper2f1_bounding_functions}. The function $\upsilon(\alpha,\beta,x)$ is piecewise with three possible cases depending on the value of $\operatorname{sign}(\beta-2\alpha)$. Therefore, we must determine the integrability of $\Upsilon$ for nine different cases which we categorize into four regions of the set $(\alpha_1,\alpha_2,\nu)\in\Bbb R^+\times\Bbb R^+\times\Bbb R$. Table \ref{tbl:Upsilon_integrability} presents the four regions which are roughly ordered according to the severity of singularities present in $\Upsilon$ ($\mathrm{IV}$ most severe). Included in the table is the domain of each region, number of cases contained in each region, and any additional constraints determined from Lemma \ref{lem:double_log_power_integral_2_bound} (use $a=\alpha_1+2\nu$ and $b=\alpha_2-2\nu$ in lemma) such that $\Upsilon$ remains integrable. Working with the domains and additional constraints we determine that each of the four regions implicitly requires $\alpha_1+\alpha_2>2$. As such, we may combine the domains and constraints of each region to deduce
\[
\alpha_1+\alpha_2>2\land -\alpha_2<2\nu<\alpha_1\implies \int_0^1\int_0^1\Upsilon(x,y)\,\mathrm dx\mathrm dy<\infty.
\]
Hence if the parameters satisfy these constraints, $\Upsilon$ is integrable and $\ev\mathscr T_\nu^2=\iint f(x,y)\,\mathrm dx\mathrm dy<\infty$. The proof is now complete.
\begin{table}[h]
\centering
\begin{tabular}{LCCC}\toprule
\text{Region} &\text{Domain} &\text{\# Cases} &\text{Constraint(s)}\\\midrule
(\mathrm{I})    &\alpha_1-2\nu\geq2\land\alpha_2+2\nu\geq2 &4 &-                         \\[0.5ex]
(\mathrm{II})   &\alpha_1-2\nu<2\land\alpha_2+2\nu\geq2     &2 &0<\alpha_1-2\nu  \\[0.5ex]
(\mathrm{III})  &\alpha_1-2\nu\geq2\land\alpha_2+2\nu<2     &2 &0<\alpha_2+2\nu \\[0.5ex]
(\mathrm{IV})  &\alpha_1-2\nu<2\land\alpha_2+2\nu<2         &1                              &\substack{0<\alpha_1-2\nu\\[0.25ex] 0<\alpha_2+2\nu\\[0.25ex] 2<\alpha_1+\alpha_2}\\\bottomrule
\end{tabular}
\caption{Regions of the parameter space $(\alpha_1,\alpha_2,\nu)\in\Bbb R^+\times\Bbb R^+\times\Bbb R$ ordered by severity of singularities in $\Upsilon$ along with number of cases contained in each region and additional constraint determined by Lemma \ref{lem:double_log_power_integral_2_bound}.}
\label{tbl:Upsilon_integrability}
\end{table}
\end{proof}

\begin{corollary}
\label{cor:Tv_variance}
For the parameter constraints given by Theorem \ref{thm:T_est_variance_series}
\[
\var\mathscr T_\nu=\ev\mathscr T_\nu^2-(\ev\mathscr T_\nu)^2<\infty.
\]
In particular,
\[
\var\mathscr T_\nu%
=\frac{1}{\kappa_1^2}\sum_{k=1}^\infty\sum_{\ell=0}^k\frac{g_{k,\ell}^2(\zeta,\nu)}{(\alpha_1)_\ell(\alpha_2)_{k-\ell}\,\ell!\,(k-\ell)!},
\]
where $g_{n,\omega}$ is given in Theorem \ref{thm:IncLerch_sum}.
\end{corollary}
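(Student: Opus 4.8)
The plan is to read off the variance from the second moment computed in Theorem \ref{thm:T_est_variance_series} and to recognize that subtracting $(\ev\mathscr T_\nu)^2$ simply annihilates the $k=0$ term of the double series. Finiteness requires no new work: Theorem \ref{thm:T_est_variance_series} already guarantees $\ev\mathscr T_\nu^2<\infty$ under the stated constraints on $(\alpha_1,\alpha_2,\nu)$, while Theorem \ref{thm:T_estimator} supplies the finite value $\ev\mathscr T_\nu=\tau_\nu$. Hence $\var\mathscr T_\nu=\ev\mathscr T_\nu^2-(\ev\mathscr T_\nu)^2$ is well-defined and finite, which disposes of the first assertion.

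For the explicit series, I would isolate the $k=0$ contribution to the expansion of $\ev\mathscr T_\nu^2$. Only $\ell=0$ survives there, and since $(\alpha_1)_0=(\alpha_2)_0=0!=1$ that term is $g_{0,0}^2(\zeta,\nu)/\kappa_1^2$. Invoking Definition \ref{def:g_nw_function} together with the convention that the factorial operator is the identity at $n=0$ (Definition \ref{def:factorial_differential_operator}), one has $g_{0,0}(\zeta,\nu)={_1F_0}(1;-;\zeta)_\nu$, whose closed form $(1-\zeta^\nu)/(1-\zeta)$ is established in Lemma \ref{lem:incGeomSeries_general}.

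Next I would compute $(\ev\mathscr T_\nu)^2=\tau_\nu^2=(1-\zeta^\nu)^2/(\kappa_1-\kappa_2)^2$ using Theorem \ref{thm:T_estimator}. Writing $\kappa_1-\kappa_2=\kappa_1(1-\zeta)$ recasts this as $(1-\zeta^\nu)^2/(\kappa_1^2(1-\zeta)^2)$, which is exactly the $k=0$ term identified above. Subtracting $(\ev\mathscr T_\nu)^2$ therefore deletes that single term and shifts the outer summation to start at $k=1$, producing the stated formula for $\var\mathscr T_\nu$.

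There is no substantive obstacle; the argument is a bookkeeping verification that the square of the mean coincides with the lowest-order term of the second-moment series. The only point demanding care is the correct evaluation of $g_{0,0}$ through the identity-operator convention of the factorial operator and the closed form of the incomplete geometric series, after which the cancellation is purely algebraic.
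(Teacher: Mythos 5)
Your proposal is correct and takes essentially the same route as the paper: the paper's proof likewise writes $\ev\mathscr T_\nu^2=\sum_{r\geq 0}a_r$ and observes $a_0=\kappa_1^{-2}g_{0,0}^2(\zeta,\nu)=\bigl(\kappa_1^{-1}\,{_1F_0}(1;-;\zeta)_\nu\bigr)^2=(\ev\mathscr T_\nu)^2$, so subtracting the $k=0$ term yields the stated series. Your additional bookkeeping (the identity-operator convention giving $g_{0,0}={_1F_0}(1;-;\zeta)_\nu$, the closed form $(1-\zeta^\nu)/(1-\zeta)$, and the finiteness appeal to Theorems \ref{thm:T_est_variance_series} and \ref{thm:T_estimator}) merely makes explicit what the paper leaves implicit.
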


\begin{proof}
Using the shorthand $\ev\mathscr T_\nu^2=\sum_{r=0}^\infty a_r$ observe that
\[
a_0=\kappa_1^{-2}g_{0,0}^2(\zeta,\nu)=(\kappa_1^{-1}{_1F_0}(1;-;z)_\nu)^2=(\ev\mathscr T_\nu)^2;
\]
thus, subtracting the $a_0$ term from $\ev\mathscr T_\nu^2$ yields $\var\mathscr T_\nu$ as desired.
\end{proof}

\begin{proposition}
\label{prop:VarTv_1st_term}
Evaluating the $k=1$ term of $\var\mathscr T_\nu$ yields the following asymptotic approximation for large $\alpha_1$ and $\alpha_2$.
\begin{multline*}
\var\mathscr T_\nu=
\frac{1}{\alpha_1}\left(\frac{1-\zeta^\nu (1+\nu (1-\zeta))}{\kappa_1(1-\zeta)^2}\right)^2\\
+\frac{1}{\alpha_2}\left(\frac{\zeta-\zeta^\nu (1+(\nu-1)(1-\zeta))}{\kappa_1(1-\zeta)^2}\right)^2+\cdots
\end{multline*}
\end{proposition}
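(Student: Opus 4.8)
The plan is to isolate the $k=1$ term of the variance series in Corollary \ref{cor:Tv_variance} and identify it with the stated leading-order expression, then argue that the remaining terms vanish faster as $\alpha_1,\alpha_2\to\infty$. Extracting the $k=1$ slice of
\[
\var\mathscr T_\nu=\frac{1}{\kappa_1^2}\sum_{k=1}^\infty\sum_{\ell=0}^k\frac{g_{k,\ell}^2(\zeta,\nu)}{(\alpha_1)_\ell(\alpha_2)_{k-\ell}\,\ell!\,(k-\ell)!},
\]
only $\ell=0$ and $\ell=1$ survive, leaving
\[
\frac{1}{\kappa_1^2}\left(\frac{g_{1,1}^2(\zeta,\nu)}{\alpha_1}+\frac{g_{1,0}^2(\zeta,\nu)}{\alpha_2}\right),
\]
so the entire computation reduces to evaluating the two quantities $g_{1,0}$ and $g_{1,1}$ in closed form.

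For this I would bypass the hypergeometric forms of Theorem \ref{thm:IncLerch_sum} and work straight from Definition \ref{def:g_nw_function}, since for $n=1$ the factorial operator of Definition \ref{def:factorial_differential_operator} collapses to $(\Lambda_\omega)^{(1)}=\Lambda_\omega=\omega+\vartheta$. Using ${_1F_0}(1;-;z)_\nu=(1-z^\nu)/(1-z)$ from Lemma \ref{lem:incGeomSeries_general}, I would compute the single derivative
\[
g_{1,\omega}(z,\nu)=(\omega+z\partial_z)\frac{1-z^\nu}{1-z}
\]
and specialize to $\omega=0,1$. The routine calculation yields
\[
g_{1,0}(z,\nu)=\frac{z-\nu z^\nu+(\nu-1)z^{\nu+1}}{(1-z)^2},\qquad g_{1,1}(z,\nu)=\frac{1-(1+\nu)z^\nu+\nu z^{\nu+1}}{(1-z)^2},
\]
the second obtained from the first by adding ${_1F_0}(1;-;z)_\nu$. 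Setting $z=\zeta$ and regrouping the numerators as $\zeta-\zeta^\nu\bigl(1+(\nu-1)(1-\zeta)\bigr)$ and $1-\zeta^\nu\bigl(1+\nu(1-\zeta)\bigr)$ respectively matches them to the numerators displayed in the statement, and substituting back into the $k=1$ slice reproduces the quoted formula verbatim.

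It then remains to justify the ``$+\cdots$'' by showing each $k\geq 2$ term is of strictly smaller order. Here I would note that $g_{k,\ell}(\zeta,\nu)$ carries no dependence on the shape parameters, while the denominator $(\alpha_1)_\ell(\alpha_2)_{k-\ell}$ behaves like $\alpha_1^\ell\alpha_2^{k-\ell}$ for large shapes; hence each $k\geq 2$ term is $O(\alpha_1^{-a}\alpha_2^{-b})$ with $a+b=k\geq 2$, uniformly dominated by the $O(\alpha_1^{-1})$ and $O(\alpha_2^{-1})$ contributions of the $k=1$ slice. The only point requiring genuine care — and the main obstacle — is to control the tail $\sum_{k\geq 2}$ \emph{uniformly} rather than merely term-by-term; I would handle this by invoking the absolute convergence of the variance series guaranteed by Theorem \ref{thm:T_est_variance_series} for the stated parameter range, which lets me factor out a common $(\min\{\alpha_1,\alpha_2\})^{-2}$ and bound the residual sum by a convergent constant independent of $\alpha_1,\alpha_2$.
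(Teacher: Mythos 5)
Your proposal is correct and takes essentially the same route the proposition itself prescribes (the paper supplies no separate proof): extract the $k=1$ slice of the series in Corollary \ref{cor:Tv_variance}, where $(\alpha_1)_1=\alpha_1$ and $(\alpha_2)_1=\alpha_2$ make the two displayed terms exact, and your operator computation $g_{1,0}(\zeta,\nu)=\bigl(\zeta-\nu\zeta^\nu+(\nu-1)\zeta^{\nu+1}\bigr)/(1-\zeta)^2$ together with $g_{1,1}=g_{1,0}+{_1F_0}(1;-;\zeta)_\nu$ regroups precisely to the stated numerators. Your tail control — monotone decrease of each term in $\alpha_1,\alpha_2$ plus $(\alpha_1)_\ell(\alpha_2)_{k-\ell}\geq m^2\times(\text{fixed reference Pochhammers})$ for $k\geq 2$ with $m=\min\{\alpha_1,\alpha_2\}$, giving a uniform $\mathcal O(m^{-2})$ remainder — is sound and suffices.
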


The following is a generalization of Theorem \ref{thm:T_est_variance_series}, which shows that the mixed partial derivatives of $\ev\mathscr T_\nu^2$ w.r.t. $\alpha_1$ and $\alpha_2$ converge for the same set of parameters as $\ev\mathscr T_\nu^2$ itself. Such a result is a key ingredient to Corollary \ref{cor:convergence_of_ACVTv2_alpha_derivatives} and Section \ref{subsec:comp_of_opt_samp_sizes_for_Tv} on optimal sample sizes.

\begin{theorem}
\label{thm:convergence_of_ETv2_alpha_derivatives}
For all $n,m\in\Bbb N_0$, $\partial_{\alpha_1}^n\partial_{\alpha_2}^m\ev\mathscr T_\nu^2<\infty$ whenever $\ev\mathscr T_\nu^2<\infty$.
\end{theorem}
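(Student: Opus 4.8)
The plan is to run the entire argument through the integral representation of $\ev\mathscr T_\nu^2$ established in Theorem \ref{thm:T_est_variance_series} and to differentiate under the integral sign. The decisive structural observation is that $\alpha_1$ and $\alpha_2$ enter the integrand \emph{only} through the lower parameters of the two hypergeometric factors ${_2F_1}(1+\nu,1+\nu;\alpha_1;s/t)$ and ${_2F_1}(1-\nu,1-\nu;\alpha_2;s)$, where $s=(1-x)(1-y)$ and $t=(1-(1-\zeta)x)(1-(1-\zeta)y)$; the remaining factor $t^{-1-\nu}$ and the prefactor $\nu^2\zeta^{2\nu}/\kappa_1^2$ are independent of both. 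Hence $\partial_{\alpha_1}^n\partial_{\alpha_2}^m$ acts on the integrand by differentiating each hypergeometric factor separately in its own lower parameter, and the whole problem reduces to controlling how such differentiation affects the endpoint singularities that drove the convergence analysis of Theorem \ref{thm:T_est_variance_series}.

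First I would establish the key estimate that differentiating ${_2F_1}(\alpha,\alpha;\beta;w)=\sum_k \tfrac{((\alpha)_k)^2}{(\beta)_k\,k!}w^k$ in its lower parameter $\beta$ preserves the power-law singularity at $w=1$ and merely accumulates logarithmic factors. Since $\partial_\beta\log(\beta)_k^{-1}=-\sum_{j=0}^{k-1}(\beta+j)^{-1}=\mathcal O(\log k)$ while all higher logarithmic derivatives $\partial_\beta^r\log(\beta)_k^{-1}$ with $r\ge 2$ stay bounded in $k$, the Bell-polynomial expansion of $\partial_\beta^n(\beta)_k^{-1}$ yields $|\partial_\beta^n(\beta)_k^{-1}|\le C(\beta,n)\,(\beta)_k^{-1}(1+\log k)^n$. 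Feeding this into the large-$k$ asymptotics $\tfrac{((\alpha)_k)^2}{(\beta)_k\,k!}\sim \tfrac{\Gamma(\beta)}{\Gamma(\alpha)^2}k^{2\alpha-\beta-1}$ shows that $\partial_\beta^n {_2F_1}(\alpha,\alpha;\beta;w)$ is dominated, up to a constant, by the bounding function $\upsilon(\alpha,\beta,w)$ of Lemma \ref{lem:hyper2f1_bounding_functions} multiplied by a factor $(\log\tfrac{1}{1-w})^n$ near $w=1$; crucially, the singular exponent $\beta-2\alpha$ is unchanged. I would package this as a log-weighted bounding function $\upsilon_n$, analogous to $\upsilon$ but carrying these extra logarithms.

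With these estimates in hand, I would assemble a dominating function $\Upsilon_{n,m}$ for the differentiated integrand by replacing $\upsilon(1+\nu,\alpha_1,s/t)$ and $\upsilon(1-\nu,\alpha_2,s)$ in the original $\Upsilon$ of Theorem \ref{thm:T_est_variance_series} by their log-weighted counterparts $\upsilon_n$ and $\upsilon_m$, and then differentiate under the integral via dominated convergence: because the bounds are continuous and monotone in $(\alpha_1,\alpha_2)$ and the constraint region is open, they hold uniformly on a neighborhood, which legitimizes each successive differentiation. The point that makes the whole scheme work is that Lemma \ref{lem:double_log_power_integral_2_bound} guarantees convergence of precisely these log-power-weighted double integrals under the single threshold $a+b>2$ with $a=\alpha_1-2\nu$ and $b=\alpha_2+2\nu$ (so $a+b=\alpha_1+\alpha_2$), a criterion that is insensitive to the number of logarithmic factors. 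Consequently the very region $\alpha_1+\alpha_2>2$, $-\alpha_2<2\nu<\alpha_1$ securing $\ev\mathscr T_\nu^2<\infty$ also secures finiteness of every mixed derivative.

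The main obstacle I expect is the endpoint bookkeeping in the middle step: verifying that the accumulated logarithms genuinely leave the singular exponent untouched across all three regimes $\beta>2\alpha$, $\beta=2\alpha$, and $\beta<2\alpha$ of Lemma \ref{lem:hyper2f1_bounding_functions} — the logarithmic boundary case $\beta=2\alpha$ being the delicate one, where differentiation raises the log power by exactly one — and then confirming region by region, following Table \ref{tbl:Upsilon_integrability}, that $\Upsilon_{n,m}$ still lies within the scope of Lemma \ref{lem:double_log_power_integral_2_bound}, invoking its footnoted extension (for $s\ge 2$, $0\le f^{s-2}\le 1$) to dispose of the cases where one of $a,b$ exceeds $2$. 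Once this matching with the original convergence analysis is in place, the conclusion is immediate.
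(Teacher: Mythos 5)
Your proposal is correct and follows the paper's proof architecture exactly: differentiate the double-integral representation of $\ev\mathscr T_\nu^2$ under the integral sign (legitimate because $\alpha_1$ and $\alpha_2$ enter only through the lower parameters of the two hypergeometric factors), dominate the differentiated integrand by $\Upsilon_{n,m}=\max\{1,\zeta^{-2-2\nu}\}\,\upsilon_n(1+\nu,\alpha_1,s/t)\,\upsilon_m(1-\nu,\alpha_2,s)$, and observe that the threshold $a+b>2$ of Lemma \ref{lem:double_log_power_integral_2_bound} is insensitive to logarithmic weights, so the region $\alpha_1+\alpha_2>2$, $-\alpha_2<2\nu<\alpha_1$ is unchanged. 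The one place you genuinely diverge is the derivation of the log-weighted bounding functions: the paper obtains $\upsilon_n$ in Appendix \ref{sec:parameter_derivatives_of_2F1} from the Beta-log-moment identity $(a)_b\partial_a^n(a)_b^{-1}=\ev\log^nX$ (Lemma \ref{lem:nth_order_reciprocal_Pochhammer_derivative}), the resulting integral representation of $\partial_\gamma^nF$ (Lemma \ref{lem:2F1_c_derivative_order_n_integral_rep}), and iterated integration by parts (Theorem \ref{thm:hyper2f1_c_derivative_bounding_functions}), whereas you differentiate the series termwise, controlling $|\partial_\beta^n(\beta)_k^{-1}|\leq C(\beta,n)(\beta)_k^{-1}(1+\log k)^n$ via Fa\`a di Bruno, using $\partial_\beta\log(\beta)_k^{-1}=\mathcal O(\log k)$ with bounded higher logarithmic derivatives. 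Both routes work, and your singularity bookkeeping across the three regimes is sound. What the paper's route buys is explicit constants and bounds uniform on all of $x\in[0,1]$ — and, in the $\beta>2\alpha$ regime, a bound with no logarithm at all — while your coefficient-asymptotics route is more elementary but delivers the bound only as $w\nearrow 1$, so you still owe the routine step of upgrading it to a uniform bound on the closed square (positivity and monotonicity of the series coefficients make this immediate); the mild lossiness of your bounds (an extra $(\log\tfrac{1}{1-w})^n$ factor when $\beta>2\alpha$, and log power $n$ versus the paper's $n+1$ when $\beta<2\alpha$) is harmless for Lemma \ref{lem:double_log_power_integral_2_bound}. Incidentally, your identification $a=\alpha_1-2\nu$, $b=\alpha_2+2\nu$ is the one consistent with the constraints of Table \ref{tbl:Upsilon_integrability} (the parenthetical in Theorem \ref{thm:T_est_variance_series} has the signs swapped), and your explicit appeal to the footnoted extension of Lemma \ref{lem:double_log_power_integral_2_bound} for the cases where $a$ or $b$ exceeds $2$ makes precise what the paper's sketch leaves implicit.
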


\begin{proof}[Proof sketch]
The proof amounts to studying the convergence of the double integral representation for $\partial_{\alpha_1}^n\partial_{\alpha_2}^m\ev\mathscr T_\nu^2$. Using the results of Theorem \ref{thm:hyper2f1_c_derivative_bounding_functions} in Appendix \ref{sec:parameter_derivatives_of_2F1} we may construct a function that bounds the magnitude of the differentiated integrand in the form
\[
\Upsilon_{n,m}(x,y)=\max\{1,\zeta^{-2-2\nu}\}\upsilon_n(1+\nu,\alpha_1,s/t)\upsilon_m(1-\nu,\alpha_2,s),
\]
where $s=(1-x)(1-y)$, $t=(1-(1-\zeta) x)(1-(1-\zeta)y)$, and
\[
\upsilon_n(\alpha,\beta,x)=%
\begin{cases}
\frac{n!}{\operatorname B(|\alpha|,|\alpha|)}\frac{1}{(\beta-2\alpha\mathds 1_{\alpha>0})^{n+1}}+\mathds 1_{n=0}, &\beta>2\alpha\\[1ex]
\frac{n!}{\operatorname B(\alpha,\alpha)}(\frac{1}{2\alpha}-\log(1-x))^{n+1}+\mathds 1_{n=0}, &\beta=2\alpha\\[1ex]
\frac{n!}{\operatorname B(\alpha,\alpha)}(1-x)^{\beta-2\alpha}(\frac{1}{\beta}-\log(1-x))^{n+1}+\mathds 1_{n=0}, &\beta<2\alpha.
\end{cases}
\]
Notice that $\Upsilon_{n,m}$ is very similar to the bounding function $\Upsilon$ in Theorem \ref{thm:T_est_variance_series} with the exception of possibly additional constant and higher-order multiplicative logarithmic terms. Using Lemma \ref{lem:double_log_power_integral_2_bound}, we can easily verify that these additional constant and logarithmic terms to not affect convergence of the integral $\iint\Upsilon_{n,m}\,\mathrm dx\mathrm dy$ and so the claim follows.
\end{proof}


\subsection{Coefficient of variation}

With minimal effort we can now derive an expression for the coefficient of variation which gives us a signed measure of relative uncertainty in $\mathscr T_\nu$. Making use of the results for $\tilde g_{n,\omega}$ back in Section \ref{subsec:gnw_tilde_gnw}, we will show that this quantity is a monotone function of the unknown parameter $\zeta$ which will lead directly to obtaining exact confidence intervals of its absolute value in Section \ref{sec:confidence_intervals}. Also important is the absolute coefficient of variation which will be discussed extensively in latter sections and--in particular--used to define optimal sample sizes for $\mathscr T_\nu$ in Section \ref{sec:cv_analysis}.

\begin{corollary}[Corollary of Theorem \ref{thm:T_est_variance_series}]
\label{cor:cvT_nu}
For the parameter constraints given by Theorem \ref{thm:T_est_variance_series}
\[
\begin{aligned}
\cv\mathscr T_\nu &= \operatorname{sign}(\ev\mathscr T_\nu)\sqrt{\var\mathscr T_\nu/(\ev\mathscr T_\nu)^2}\\
\acv\mathscr T_\nu &= |\cv\mathscr T_\nu|,
\end{aligned}
\]
with $\var\mathscr T_\nu$ given in Corollary \ref{cor:Tv_variance} are finite. In particular,
\[
\cv\mathscr T_\nu%
=\operatorname{sign}(\nu)\left(\sum_{k=1}^\infty\sum_{\ell=0}^k\frac{\tilde g_{k,\ell}^2(\zeta,\nu)}{(\alpha_1)_\ell(\alpha_2)_{k-\ell}\,\ell!\,(k-\ell)!}\right)^{1/2},
\]
with $\tilde g_{n,\omega}$ given by Corollary \ref{cor:gTilde_hyper_form}.
\end{corollary}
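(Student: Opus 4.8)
The plan is to obtain both the finiteness claim and the explicit series directly from the second-moment results already in hand, the only genuinely new ingredient being a sign computation. First I would observe that the signed coefficient of variation as defined satisfies $\cv\mathscr T_\nu=\operatorname{sign}(\ev\mathscr T_\nu)\sqrt{\var\mathscr T_\nu}/|\ev\mathscr T_\nu|$, so that finiteness reduces to two facts: $\var\mathscr T_\nu<\infty$, which holds under the constraints of Theorem~\ref{thm:T_est_variance_series} by Corollary~\ref{cor:Tv_variance}, and $\ev\mathscr T_\nu\neq 0$. For the latter I would use $\ev\mathscr T_\nu=\tau_\nu=(1-\zeta^\nu)/(\kappa_1-\kappa_2)$ from Theorem~\ref{thm:T_estimator}, which is nonzero precisely when $\nu\neq 0$; hence $\cv\mathscr T_\nu$ and $\acv\mathscr T_\nu=|\cv\mathscr T_\nu|$ are finite. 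The case $\nu=0$ is excluded since there $\mathscr T_0\sim\delta_0$ by Corollary~\ref{corollary:T0_distribution}, making numerator and denominator both vanish.

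The one step requiring real care is identifying the sign factor, i.e.\ proving $\operatorname{sign}(\ev\mathscr T_\nu)=\operatorname{sign}(\nu)$. I would argue this by a short case split on $\operatorname{sign}(\kappa_1-\kappa_2)$, equivalently on whether $\zeta<1$ or $\zeta>1$. When $\zeta<1$ the denominator $\kappa_1-\kappa_2$ is positive and $\zeta^\nu<1\iff\nu>0$, so the numerator $1-\zeta^\nu$ shares the sign of $\nu$; when $\zeta>1$ both the denominator and the numerator flip sign, and the quotient again tracks $\operatorname{sign}(\nu)$. Checking the four combinations of $\operatorname{sign}(\kappa_1-\kappa_2)$ and $\operatorname{sign}(\nu)$ shows the quotient has the sign of $\nu$ in every regime, establishing the identity.

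With the sign pinned down, I would assemble the explicit form by dividing the variance series of Corollary~\ref{cor:Tv_variance} by $(\ev\mathscr T_\nu)^2$. Since $g_{0,0}(z,\nu)={_1F_0}(1;-;z)_\nu$ (the factorial operator to the zeroth power being the identity), one has $(\ev\mathscr T_\nu)^2=\kappa_1^{-2}g_{0,0}^2(\zeta,\nu)$, which is exactly the $k=0$ term removed in the proof of Corollary~\ref{cor:Tv_variance}. Dividing term-by-term then converts each $g_{k,\ell}^2/\kappa_1^2$ into $(g_{k,\ell}/g_{0,0})^2=\tilde g_{k,\ell}^2$ by the definition of $\tilde g_{n,\omega}$, cancelling the $\kappa_1^{-2}$ factors, and yields
\[
\frac{\var\mathscr T_\nu}{(\ev\mathscr T_\nu)^2}=\sum_{k=1}^\infty\sum_{\ell=0}^k\frac{\tilde g_{k,\ell}^2(\zeta,\nu)}{(\alpha_1)_\ell(\alpha_2)_{k-\ell}\,\ell!\,(k-\ell)!}.
\]
Taking the nonnegative square root and multiplying by $\operatorname{sign}(\nu)=\operatorname{sign}(\ev\mathscr T_\nu)$ produces the stated formula for $\cv\mathscr T_\nu$, and its absolute value gives $\acv\mathscr T_\nu$.

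I expect the sign identity to be the only place where anything can go wrong: the arithmetic of the series division is routine given Corollary~\ref{cor:Tv_variance} and the definition of $\tilde g_{n,\omega}$, whereas the sign argument must correctly pair $\operatorname{sign}(1-\zeta^\nu)$ with $\operatorname{sign}(\kappa_1-\kappa_2)$ across all parameter regimes to land uniformly on $\operatorname{sign}(\nu)$.
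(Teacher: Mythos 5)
Your proposal is correct and takes essentially the same route as the paper, whose proof simply cites the two facts you establish in detail---$\operatorname{sign}(\ev\mathscr T_\nu)=\operatorname{sign}(\nu)$ and $g_{k,\ell}^2(\zeta,\nu)/(\ev\mathscr T_\nu)^2=\kappa_1^2\,\tilde g_{k,\ell}^2(\zeta,\nu)$---followed by the same term-wise division of the variance series from Corollary \ref{cor:Tv_variance}, using exactly your observation that $(\ev\mathscr T_\nu)^2=\kappa_1^{-2}g_{0,0}^2(\zeta,\nu)$ is the removed $k=0$ term. The only cosmetic remark is that your case split on $\zeta<1$ versus $\zeta>1$ tacitly omits $\zeta=1$, where the sign identity still holds because ${_1F_0}(1;-;\zeta)_\nu=(1-\zeta^\nu)/(1-\zeta)$ has the removable value $\nu$ there.
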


\begin{proof}
The proof follows from the following facts:
\begin{enumerate}
\item[$(1)$] $\operatorname{sign}(\ev\mathscr T_\nu)=\operatorname{sign}(\nu)$,
\item[$(2)$] $g_{k,\ell}^2(\zeta,\nu)/(\ev\mathscr T_\nu)^2=\kappa_1^2\,\tilde g_{k,\ell}^2(\zeta,\nu)$.
\end{enumerate}
A few simple algebraic manipulations then yield the desired form for $\cv\mathscr T_\nu$.
\end{proof}

The results of the previous section can now be used to show that mixed partial derivatives of $\acv^2\mathscr T_\nu$ w.r.t. $\alpha_1$ and $\alpha_2$ converge. This will be used in Section \ref{subsec:comp_of_opt_samp_sizes_for_Tv} when studying optimal sample sizes.
\begin{corollary}[Corollary of Theorem \ref{thm:convergence_of_ETv2_alpha_derivatives}]
\label{cor:convergence_of_ACVTv2_alpha_derivatives}
Let $\acv^2\mathscr T_\nu(\alpha_1,\alpha_2)$ denote the squared absolute coefficient of variation of $\mathscr T_\nu$ as a function of $\alpha_1$ and $\alpha_2$. Then for all $n\in\Bbb N_0$, $\partial_{\alpha_2}^n\acv^2\mathscr T_\nu(\mathrm A-\alpha_2,\alpha_2)|_{\mathrm A=\alpha_1+\alpha_2}<\infty$ whenever $\ev\mathscr T_\nu^2<\infty$.
\end{corollary}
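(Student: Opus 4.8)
The plan is to reduce everything to Theorem \ref{thm:convergence_of_ETv2_alpha_derivatives} by exploiting the fact that, as a function of $\alpha_1$ and $\alpha_2$, the quantity $\acv^2\mathscr T_\nu$ differs from $\ev\mathscr T_\nu^2$ only by an affine transformation with constant coefficients. From Corollary \ref{cor:cvT_nu} we have $\acv^2\mathscr T_\nu=\var\mathscr T_\nu/(\ev\mathscr T_\nu)^2=\ev\mathscr T_\nu^2/(\ev\mathscr T_\nu)^2-1$, and Theorem \ref{thm:T_estimator} gives $\ev\mathscr T_\nu=(1-\zeta^\nu)/(\kappa_1-\kappa_2)$, which depends only on $\zeta$, $\nu$, $\kappa_1$, and $\kappa_2$. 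Hence $(\ev\mathscr T_\nu)^2$ is a nonzero constant with respect to $\alpha_1$ and $\alpha_2$, and for every mixed partial of positive order
\[
\partial_{\alpha_1}^k\partial_{\alpha_2}^m\acv^2\mathscr T_\nu=\frac{1}{(\ev\mathscr T_\nu)^2}\,\partial_{\alpha_1}^k\partial_{\alpha_2}^m\ev\mathscr T_\nu^2,\qquad(k,m)\neq(0,0),
\]
so finiteness of the left side reduces to finiteness of the right side.

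Next I would recognize the constrained derivative as a directional derivative along the line $\alpha_1+\alpha_2=\mathrm A$. Setting $\phi(\alpha_2)=\acv^2\mathscr T_\nu(\mathrm A-\alpha_2,\alpha_2)$, the chain rule gives $\phi'=(\partial_{\alpha_2}-\partial_{\alpha_1})\acv^2\mathscr T_\nu$ evaluated at $(\mathrm A-\alpha_2,\alpha_2)$, and iterating yields $\phi^{(n)}=(\partial_{\alpha_2}-\partial_{\alpha_1})^n\acv^2\mathscr T_\nu$ at the same point. Evaluating at $\mathrm A=\alpha_1+\alpha_2$, so that $\mathrm A-\alpha_2=\alpha_1$, and expanding binomially produces
\[
\partial_{\alpha_2}^n\acv^2\mathscr T_\nu(\mathrm A-\alpha_2,\alpha_2)\big|_{\mathrm A=\alpha_1+\alpha_2}=\sum_{k=0}^n\binom{n}{k}(-1)^k\,\partial_{\alpha_1}^k\partial_{\alpha_2}^{n-k}\acv^2\mathscr T_\nu(\alpha_1,\alpha_2),
\]
a finite linear combination of mixed partial derivatives.

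To conclude I would apply Theorem \ref{thm:convergence_of_ETv2_alpha_derivatives}: whenever $\ev\mathscr T_\nu^2<\infty$, each $\partial_{\alpha_1}^k\partial_{\alpha_2}^{n-k}\ev\mathscr T_\nu^2$ is finite, so by the reduction above every summand with $(k,n-k)\neq(0,0)$ is finite; the lone term with $(k,n-k)=(0,0)$, present only when $n=0$, equals $\acv^2\mathscr T_\nu$, which is finite by Corollary \ref{cor:cvT_nu}. A finite sum of finite terms is finite, giving the claim.

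The step demanding the most care is the passage to the binomial expansion, which tacitly assumes that $\ev\mathscr T_\nu^2$ is smooth in $(\alpha_1,\alpha_2)$ on the admissible region and that its mixed partials commute. Both are furnished by Theorem \ref{thm:convergence_of_ETv2_alpha_derivatives}: the dominating function $\Upsilon_{n,m}$ of its proof legitimizes differentiation under the integral sign to all orders, so each mixed partial exists as a finite, continuous function of the parameters, and Schwarz's theorem then secures $\partial_{\alpha_1}\partial_{\alpha_2}=\partial_{\alpha_2}\partial_{\alpha_1}$, validating the chain-rule manipulation. The remainder is routine bookkeeping.
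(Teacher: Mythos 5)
Your proposal is correct and follows essentially the same route as the paper's own proof: reduce $\acv^2\mathscr T_\nu$ to $\ev\mathscr T_\nu^2$ using the fact that $\ev\mathscr T_\nu$ is independent of $\alpha_1$ and $\alpha_2$, expand the constrained derivative binomially as $\sum_{k=0}^n\binom{n}{k}(-1)^k\partial_{\alpha_1}^k\partial_{\alpha_2}^{n-k}\ev\mathscr T_\nu^2$, and invoke Theorem \ref{thm:convergence_of_ETv2_alpha_derivatives} term by term. Your closing remarks on differentiation under the integral sign and Schwarz's theorem make explicit the smoothness assumptions the paper leaves tacit, which is a welcome refinement but not a different argument.
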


\begin{proof}
By definition $\acv^2\mathscr T_\nu=\ev\mathscr T_\nu^2/(\ev\mathscr T_\nu)^2-1$ and since $\ev\mathscr T_\nu$ is independent of $\alpha_1$ and $\alpha_2$ we conclude that the convergence of $\partial_{\alpha_2}^n\acv^2\mathscr T_\nu(\mathrm A-\alpha_2,\alpha_2)|_{\mathrm A=\alpha_1+\alpha_2}$ will agree with that of $\partial_{\alpha_2}^n\ev\mathscr T_\nu^2(\mathrm A-\alpha_2,\alpha_2)|_{\mathrm A=\alpha_1+\alpha_2}$. With a straightforward calculation we find
\[
\partial_{\alpha_2}^n\ev\mathscr T_\nu^2(\mathrm A-\alpha_2,\alpha_2)|_{\mathrm A=\alpha_1+\alpha_2}=\sum_{k=0}^n\binom{n}{k}(-1)^k\partial_{\alpha_1}^k\partial_{\alpha_2}^{n-k}\ev\mathscr T_\nu^2(\alpha_1,\alpha_2),
\]
where according to Theorem \ref{thm:convergence_of_ETv2_alpha_derivatives} the magnitude of each term is finite whenever $\ev\mathscr T_\nu^2$ is finite.
\end{proof}

With an expression for $\cv\mathscr T_\nu$ at hand we seek to establish its monotony in the parameter $\zeta$. We will accomplish this by working with the series expansion of $\cv\mathscr T_\nu$ to show that: $(1)$ the series converges at the boundary values $\zeta\to 0$ and $\zeta\to\infty$ and $(2)$ the series terms are montone functions in $\zeta$.  With these results we will call upon Fubini's theorem on differentiation to conclude that $\partial_\zeta\cv\mathscr T_\nu$ is positive. Before proceeding we present an explicit expression for product of hypergeometric series that will be used many times in the rest of the work.

\begin{lemma}[Hypergeometric product, {\cite[Eq.~$07.23.16.0006.01$]{wolfram_functions}}]
\label{lem:2F1_Cauchy_product}
\[
\pFq{}{}{a,b}{c}{gz}\pFq{}{}{\alpha,\beta}{\gamma}{hz}=\sum_{k=0}^\infty c_k z^k,
\]
where
\[
c_k=\frac{g^k(a)_k(b)_k}{k!\,(c)_k}\pFq{4}{3}{-k,1-c-k,\alpha,\beta}{1-a-k,1-b-k,\gamma}{\frac{h}{g}}.
\]
\end{lemma}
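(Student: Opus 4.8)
The plan is to establish this identity as a Cauchy product of the two defining hypergeometric series, followed by a reindexing and a single Pochhammer-symbol identity. First I would expand both factors as power series in $z$,
\[
\pFq{}{}{a,b}{c}{gz}=\sum_{m=0}^\infty\frac{(a)_m(b)_m}{(c)_m\,m!}(gz)^m,\qquad
\pFq{}{}{\alpha,\beta}{\gamma}{hz}=\sum_{n=0}^\infty\frac{(\alpha)_n(\beta)_n}{(\gamma)_n\,n!}(hz)^n,
\]
which converge absolutely on the common disk $|gz|<1$, $|hz|<1$. There the Cauchy product is justified and the coefficient of $z^k$ is the finite sum
\[
c_k=\sum_{m=0}^k\frac{(a)_m(b)_m\,g^m}{(c)_m\,m!}\,\frac{(\alpha)_{k-m}(\beta)_{k-m}\,h^{k-m}}{(\gamma)_{k-m}\,(k-m)!}.
\]

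Next I would reindex by $j=k-m$ and pull out the common factor $g^k(a)_k(b)_k/(k!\,(c)_k)$, so that the claim reduces to verifying the single identity
\[
\frac{(a)_{k-j}(b)_{k-j}}{(c)_{k-j}\,(k-j)!}=\frac{(a)_k(b)_k}{(c)_k\,k!}\,\frac{(-k)_j(1-c-k)_j}{(1-a-k)_j(1-b-k)_j}.
\]
To prove this I would apply the reflection relation $(s)_{k-j}=(-1)^j(s)_k/(1-s-k)_j$, a direct consequence of $(s)_n=(-1)^n(1-s-n)_n$ from Section \ref{sec:mathematical_prelims}, to each of $(a)_{k-j}$, $(b)_{k-j}$, and $(c)_{k-j}$, together with $1/(k-j)!=(-1)^j(-k)_j/k!$ coming from $k!/(k-j)!=(k)^{(j)}=(-1)^j(-k)_j$. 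The four resulting factors of $(-1)^j$ multiply to unity, and collecting the remaining terms reproduces the right-hand side exactly.

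Finally, substituting this identity back and factoring out $g^k(a)_k(b)_k/(k!\,(c)_k)$, the residual sum over $j$ is
\[
\sum_{j=0}^k\frac{(-k)_j(1-c-k)_j(\alpha)_j(\beta)_j}{(1-a-k)_j(1-b-k)_j(\gamma)_j\,j!}\left(\frac hg\right)^j,
\]
which is precisely $\pFq{4}{3}{-k,1-c-k,\alpha,\beta}{1-a-k,1-b-k,\gamma}{h/g}$, the factor $(-k)_j$ forcing termination at $j=k$ so the upper limit is immaterial. Because the identity holds as an equality of analytic functions on a disk (equivalently, as a formal power series identity verified coefficientwise), it extends by analytic continuation to all parameter and argument values where both sides are defined. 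I do not anticipate a genuine obstacle: the only delicate point is the sign bookkeeping and the correct application of the reflection formula in the step reducing everything to the Pochhammer identity above.
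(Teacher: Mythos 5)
Your proof is correct, and it is worth noting that the paper offers no proof at all here: the lemma is simply quoted from the Wolfram functions database (Eq.~$07.23.16.0006.01$), so you have supplied an argument where the paper defers to a table. Your route is the natural one and it checks out in detail: the Cauchy product is legitimate on the common disk of absolute convergence $|z|<\min\{|g|^{-1},|h|^{-1}\}$; the reindexing $j=k-m$ is clean; and the key Pochhammer step is exactly the identity $(s)_{k-j}=(-1)^j(s)_k/(1-s-k)_j$, which follows from $(s)_k=(s)_{k-j}(s+k-j)_j$ together with $(x)_j=(-1)^j(1-x-j)_j$ — both of which the paper itself records in its preliminaries and appendix relations. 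The sign bookkeeping works because the four factors of $(-1)^j$ (from $(a)_{k-j}$, $(b)_{k-j}$, $1/(c)_{k-j}$, and $1/(k-j)!$) multiply to $(-1)^{4j}=1$, and the residual sum over $j$ terminates at $j=k$ by virtue of $(-k)_j$, so it is a polynomial in $h/g$ and there is no inner convergence issue; your closing continuation remark correctly disposes of the extension beyond the disk. Two minor caveats, both inherent to the tabulated formula rather than to your argument, deserve a sentence if you write this up: the displayed $c_k$ is degenerate when $g=0$ (the argument $h/g$ is undefined; one should then swap the roles of the two factors), and $(1-a-k)_j$ or $(1-b-k)_j$ can vanish when $a$ or $b$ is a nonpositive integer, in which case the expression is to be read as the appropriate limit in the parameters — your observation that the identity holds coefficientwise for generic parameters and extends by continuity covers this, since each $c_k$ is a rational function of the parameters.
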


\begin{lemma}[Boundary values of $\cv\mathscr T_\nu$]
\label{lem:cvTv_boundary_values}
If the parameters $\alpha_1$, $\alpha_2$, and $\nu$ adhere to the constraints given by Theorem \ref{thm:T_est_variance_series}
\[
\begin{array}{*3{>{\displaystyle}l}}
(\mathrm{i}) &\lim_{\zeta\to 0}\cv\mathscr T_\nu%
&=%
\begin{cases}
-\left(\frac{(\alpha_1-\nu-1)_{-\nu-1}(\alpha_2+\nu)_{\nu}}{(\alpha_1)_{-\nu-1}(\alpha_2)_{\nu}}-1\right)^{1/2}, &\nu<0\\
\frac{1}{\sqrt{\alpha_1-2}}, &\nu>0
\end{cases}\\[2em]
(\mathrm{ii}) &\lim_{\zeta\to\infty}\cv\mathscr T_\nu%
&=%
\begin{cases}
-\frac{1}{\sqrt{\alpha_2-2}}, &\nu<0\\
\left(\frac{(\alpha_1-\nu)_{-\nu}(\alpha_2+\nu-1)_{\nu-1}}{(\alpha_1)_{-\nu}(\alpha_2)_{\nu-1}}-1\right)^{1/2}, &\nu>0.
\end{cases}
\end{array}
\]
\end{lemma}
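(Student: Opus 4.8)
The plan is to start from the series representation of $\cv\mathscr T_\nu$ in Corollary \ref{cor:cvT_nu}, push the boundary limits inside the double sum, and then collapse the resulting series in closed form. Squaring gives $\acv^2\mathscr T_\nu=\sum_{k=1}^\infty\sum_{\ell=0}^k \tilde g_{k,\ell}^2(\zeta,\nu)/[(\alpha_1)_\ell(\alpha_2)_{k-\ell}\,\ell!\,(k-\ell)!]$, and by Proposition \ref{prop:gTilde_expected_val_form} each factor satisfies $|\tilde g_{k,\ell}(\zeta,\nu)|\le|C_{k,\ell}(\nu)|$ uniformly in $\zeta$, because $0\le\ev h_{k,\ell}(X,\zeta)\le 1$ as established in Lemma \ref{lem:tilde_gnw_zeros}. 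This uniform majorant, together with the convergence already secured in Theorem \ref{thm:T_est_variance_series} and the monotonicity of $|\tilde g_{k,\ell}|$ in $\zeta$ from Theorem \ref{thm:gnw_is_monotone}, is what I would use to invoke a dominated-convergence (Tannery) argument and exchange the limit with the double sum. I expect this interchange to be the main obstacle, since the crudest majorant is summable only under constraints slightly stronger than those of Theorem \ref{thm:T_est_variance_series}; near the edge of the admissible region the bound must be sharpened to exploit the vanishing of the off-diagonal boundary values.

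Once the interchange is granted, I would substitute the boundary values from Lemma \ref{lem:tilde_g_boundary_values}, namely $\lim_{\zeta\to0}\tilde g_{k,\ell}(\zeta,\nu)=(\ell+\nu\mathds 1_{\nu<0})^{(k)}$, and reindex by $j=k-\ell$. The key observation is that, writing $c=\nu\mathds 1_{\nu<0}$, the falling factorial separates as $(\ell+c)^{(\ell+j)}=\Gamma(\ell+c+1)/\Gamma(c-j+1)$, so its square splits into a factor depending only on $\ell$ and one depending only on $j$. Reinstating the $k=0$ term (equal to $1$) so that the sums run from the origin, the limit factors as a product of two one-dimensional series,
\[
\lim_{\zeta\to0}\left(\acv^2\mathscr T_\nu+1\right)=\left(\sum_{\ell=0}^\infty\frac{\Gamma(\ell+c+1)^2}{(\alpha_1)_\ell\,\ell!}\right)\left(\sum_{j=0}^\infty\frac{1}{\Gamma(c-j+1)^2(\alpha_2)_j\,j!}\right),
\]
which I would recognise, after pulling out $\Gamma(c+1)^2$ and using $1/\Gamma(c-j+1)=(-1)^j(-c)_j/\Gamma(c+1)$, as $\Gamma(c+1)^2\,F(c+1,c+1;\alpha_1;1)$ times $\Gamma(c+1)^{-2}\,F(-c,-c;\alpha_2;1)$. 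Evaluating both by the Gauss summation formula $F(a,b;c;1)=\Gamma(c)\Gamma(c-a-b)/[\Gamma(c-a)\Gamma(c-b)]$ of Section \ref{sec:mathematical_prelims}, the common $\Gamma(c+1)^2$ cancels, and rewriting the surviving gamma ratios as the Pochhammer quotients $(\alpha_1-\nu-1)_{-\nu-1}/(\alpha_1)_{-\nu-1}$ and $(\alpha_2+\nu)_\nu/(\alpha_2)_\nu$ reproduces the claimed value. The indicator sets $c=\nu$ for $\nu<0$ (the full product) and $c=0$ for $\nu>0$, in which case the $j$-sum truncates to its single $j=0$ term and the $\ell$-sum reduces to $F(1,1;\alpha_1;1)=\tfrac{\alpha_1-1}{\alpha_1-2}$, giving $\tfrac{1}{\alpha_1-2}$. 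Taking the square root and attaching $\operatorname{sign}(\nu)$ from Corollary \ref{cor:cvT_nu} then yields case $(\mathrm i)$.

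Finally, I would obtain the two $\zeta\to\infty$ limits of $(\mathrm{ii})$ by reflection rather than by repeating the summation. Corollary \ref{cor:moment_reflection_formula} gives $\ev\mathscr T_\nu^2(\kappa_1,\kappa_2,\alpha_1,\alpha_2)=\ev\mathscr T_{-\nu}^2(\kappa_2,\kappa_1,\alpha_2,\alpha_1)$ while leaving $(\ev\mathscr T_\nu)^2$ invariant, so $\acv^2\mathscr T_\nu$ is unchanged under the simultaneous swap $\nu\mapsto-\nu$, $\alpha_1\leftrightarrow\alpha_2$, $\zeta\mapsto1/\zeta$. Sending $\zeta\to\infty$ on the left corresponds to $1/\zeta\to0$ on the right, so each $\zeta\to\infty$ limit equals an already-computed $\zeta\to0$ limit with the sign of $\nu$ flipped and the shapes interchanged; matching the Pochhammer expressions confirms the stated forms, while the outer factor $\operatorname{sign}(\nu)$ is restored directly from Corollary \ref{cor:cvT_nu}. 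In this way $(\mathrm i)$ supplies $(\mathrm{ii})$ with no further evaluation.
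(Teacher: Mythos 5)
Your proposal is correct, and it reaches the result by a genuinely different route than the paper in two places. For the $\zeta\to 0$ limit the paper likewise substitutes the boundary values of Lemma \ref{lem:tilde_g_boundary_values} termwise into the series for $\cv^2\mathscr T_\nu$, but it then massages the quotient $(-\ell-\nu)_k^2/[(\alpha_2)_{k-\ell}(k-\ell)!]$ into a degenerate ${_4F_3}$ of unit argument and recognizes the remaining $k$-series as a Cauchy product via Lemma \ref{lem:2F1_Cauchy_product}, arriving at $F(\nu+1,\nu+1;\alpha_1;1)\,F(-\nu,-\nu;\alpha_2;1)-1$ before applying Relation \ref{rel:2F1_zeq1}; your reindexing $j=k-\ell$ together with the separation $(\ell+c)^{(\ell+j)}=\Gamma(\ell+c+1)/\Gamma(c-j+1)$ reaches the same product of two Gauss-summable series directly and more transparently, and your $c=0$ truncation of the $j$-sum handles $\nu>0$ exactly as the paper's substitution $\nu=0$ into its $\nu<0$ formula does. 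For part $(\mathrm{ii})$ the paper simply declares the proof ``nearly identical'' and would repeat the summation with $\lim_{\zeta\to\infty}\tilde g_{k,\ell}$; your transport of $(\mathrm i)$ into $(\mathrm{ii})$ via Corollary \ref{cor:moment_reflection_formula}, noting that swapping $\kappa_1\leftrightarrow\kappa_2$ sends $\zeta\mapsto 1/\zeta$ while leaving $\acv^2\mathscr T_\nu$ invariant under $\nu\mapsto-\nu$, $\alpha_1\leftrightarrow\alpha_2$, is a genuine economy, and the resulting Pochhammer expressions match the stated ones. One remark on the limit--sum interchange: the paper performs the termwise substitution without comment, so your explicit attention to it is a gain in rigor; note, though, that the monotone majorant from Theorem \ref{thm:gnw_is_monotone} is available only for $|\nu|>1$ (where increasing terms give domination as $\zeta\to 0$ and monotone convergence as $\zeta\to\infty$), and, as you correctly anticipated, the crude bound $|\tilde g_{k,\ell}|\le|C_{k,\ell}(\nu)|$ is summable only under constraints of the form $\alpha_1>2\nu+2$, strictly stronger than those of Theorem \ref{thm:T_est_variance_series}. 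So for $0<|\nu|\le 1$ the interchange still wants an argument (e.g.\ a sharpened bound interpolating the two regimes) --- a gap you share with, and flag more honestly than, the paper itself.
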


\begin{proof}
We will only prove $(\mathrm i)$ since the proof for $(\mathrm{ii})$ being nearly identical. Working with the series expansion for $\cv^2\mathscr T_\nu$ we call on Lemma \ref{lem:tilde_g_boundary_values} and Relation \ref{rel:rising_falling_factorial} to deduce for $\nu<0$: $\tilde g_{k,\ell}(0,\nu)=(-1)^k(-\ell-\nu)_k$; hence,
\begin{equation}
\label{eq:cvTv_zto0_form1}
\lim_{\zeta\to 0}\cv^2\mathscr T_\nu=%
\sum_{k=1}^\infty\sum_{\ell=0}^k\frac{(-\ell-\nu)_k^2}{(\alpha_1)_\ell(\alpha_2)_{k-\ell}\,\ell!\,(k-\ell)!}.
\end{equation}
Working with the properties of the Pochhammer symbol, several algebraic manipulations reveal
\[
\frac{(-\ell-\nu)_k^2}{(\alpha_2)_{k-\ell}(k-\ell)!} =\frac{(-\nu)_k^2}{(\alpha_2)_k\,k!}\frac{(-k)_\ell(1-k-\alpha_2)_\ell(\nu+1)_\ell(\nu+1)_\ell}{(\nu+1-k)_\ell(\nu+1-k)_\ell},
\]
which upon substitution into $(\ref{eq:cvTv_zto0_form1})$ and identifying the resulting sum over $\ell$ as a degenerate hypergeometric function yields
\[
\lim_{\zeta\to 0}\cv^2\mathscr T_\nu=%
\sum_{k=1}^\infty\frac{(-\nu)_k^2}{(\alpha_2)_k\, k!}\pFq{4}{3}{-k,1-k-\alpha_2,\nu+1,\nu+1}{\nu+1-k,\nu+1-k,\alpha_1}{1}.
\]
Now calling on Lemma \ref{lem:2F1_Cauchy_product} the remaining series in $k$ is readily identified as a product of hypergeometric series such that
\begin{equation}
\label{eq:cvTv_z_zero_limit}
\lim_{\zeta\to 0}\cv^2\mathscr T_\nu=%
\pFq{}{}{\nu+1,\nu+1}{\alpha_1}{1}\pFq{}{}{-\nu,-\nu}{\alpha_2}{1}-1.
\end{equation}
By Relation \ref{rel:2F1_zeq1} this result can be recast in terms of the ratio of gamma functions.  Simplifying in terms of Pochhammer symbols, taking the square root, and reintroducing the appropriate sign then yields the desired expression for the $\nu<0$ case. Now looking back at Lemma \ref{lem:tilde_g_boundary_values} we see for $\nu>0$: $\tilde g_{k,\ell}(0,\nu)=(-1)^k(-\ell)_k$, which is equivalent to the $\nu<0$ case evaluated at $\nu=0$. Therefore, to find $\lim_{\zeta\to 0}\cv\mathscr T_\nu$ for $\nu>0$ we simply substitute $\nu=0$ into $(\ref{eq:cvTv_z_zero_limit})$ and evaluate the square root yielding
\[
\lim_{\zeta\to 0}\cv\mathscr T_\nu=%
\left(\pFq{}{}{1,1}{\alpha_1}{1}-1\right)^{1/2}=\frac{1}{\sqrt{\alpha_1-2}}.
\]
The proof is now complete.
\end{proof}

\begin{theorem}[Monotony of $\cv\mathscr T_\nu$ in $\zeta$]
\label{thm:CVTv_is_monotone}
Under the conditions of Theorem \ref{thm:T_est_variance_series} if $|\nu|>1$ the coefficient of variation for $\mathscr T_\nu$ is a strictly increasing function in $\zeta$, that is, $\partial_\zeta\cv\mathscr T_\nu>0$ for all $\zeta\in\Bbb R^+$.
\end{theorem}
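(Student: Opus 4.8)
The plan is to reduce the statement to a claim about the sign of $\partial_\zeta\cv^2\mathscr T_\nu$ and then differentiate the series of Corollary \ref{cor:cvT_nu} term by term. Writing $\cv\mathscr T_\nu=\operatorname{sign}(\nu)\sqrt{\cv^2\mathscr T_\nu}$ with $\cv^2\mathscr T_\nu=\sum_{k=1}^\infty v_k(\zeta)$ and $v_k(\zeta)=\sum_{\ell=0}^k \tilde g_{k,\ell}^2(\zeta,\nu)/\big((\alpha_1)_\ell(\alpha_2)_{k-\ell}\,\ell!\,(k-\ell)!\big)\ge 0$, the chain rule gives $\partial_\zeta\cv\mathscr T_\nu=\operatorname{sign}(\nu)\,\partial_\zeta\cv^2\mathscr T_\nu/\big(2\sqrt{\cv^2\mathscr T_\nu}\big)$ wherever $\cv^2\mathscr T_\nu>0$, so it suffices to show $\operatorname{sign}(\partial_\zeta\cv^2\mathscr T_\nu)=\operatorname{sign}(\nu)$.

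First I would treat the case $\nu>1$. By Theorem \ref{thm:gnw_is_monotone} each $|\tilde g_{k,\ell}(\cdot,\nu)|$ is nondecreasing on $\Bbb R^+$, hence each $\tilde g_{k,\ell}^2$ and each $v_k$ is nondecreasing, so the partial sums—and therefore $\cv^2\mathscr T_\nu$—are nondecreasing. Lemma \ref{lem:cvTv_boundary_values} supplies the finite limits of the series as $\zeta\to 0^+$ and $\zeta\to\infty$, so together with the interior convergence of Theorem \ref{thm:T_est_variance_series} the nonnegative monotone series $\sum_k v_k$ converges throughout the compactified parameter range. This is precisely the hypothesis of Fubini's theorem on the differentiation of series of monotone functions, which I would invoke to justify the term-by-term identity $\partial_\zeta\cv^2\mathscr T_\nu=\sum_{k=1}^\infty v_k'(\zeta)$.

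Next I would argue strict positivity. Every $v_k'\ge 0$, so it is enough to exhibit one strictly positive term. From the proof of part $(\mathrm{ii})$ of Theorem \ref{thm:gnw_is_monotone} there exists $(k_0,\ell_0)\in\mathcal N\setminus\{(0,0)\}$ with $C_{k_0,\ell_0}(\nu)\ne 0$; for this pair Lemma \ref{lem:tilde_gnw_zeros} gives $\tilde g_{k_0,\ell_0}(\zeta,\nu)\ne 0$ on $\Bbb R^+$, while Lemma \ref{lem:hyper_ratio_monotonicity} gives $\partial_\zeta\ev h_{k_0,\ell_0}(X,\zeta)>0$ there, so that $\partial_\zeta\tilde g_{k_0,\ell_0}^2=2|\tilde g_{k_0,\ell_0}|\,\partial_\zeta|\tilde g_{k_0,\ell_0}|>0$ for \emph{every} $\zeta\in\Bbb R^+$. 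Hence $\partial_\zeta\cv^2\mathscr T_\nu\ge v_{k_0}'(\zeta)>0$, and since $\cv^2\mathscr T_\nu\ge v_{k_0}>0$ the chain-rule reduction yields $\partial_\zeta\cv\mathscr T_\nu>0$ for $\nu>1$. The case $\nu<-1$ is symmetric: Theorem \ref{thm:gnw_is_monotone} now makes each $|\tilde g_{k,\ell}|$ decreasing, strictly so for the same $(k_0,\ell_0)$, whence $\partial_\zeta\cv^2\mathscr T_\nu<0$, and the prefactor $\operatorname{sign}(\nu)=-1$ flips this back to $\partial_\zeta\cv\mathscr T_\nu>0$.

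The step I expect to be delicate is the term-by-term differentiation, since Fubini's differentiation theorem a priori returns $\sum_k v_k'$ only almost everywhere, whereas the statement asserts strict positivity for \emph{all} $\zeta\in\Bbb R^+$. The way around this is that the distinguished term $v_{k_0}$ is itself smooth with $v_{k_0}'>0$ pointwise, so writing $\cv^2\mathscr T_\nu=v_{k_0}+\sum_{k\ne k_0}v_k$ exhibits it as a smooth function plus a genuinely nondecreasing remainder; as $\cv^2\mathscr T_\nu$ is real-analytic in $\zeta$ on $\Bbb R^+$ (being a ratio of the analytic integral of Theorem \ref{thm:T_est_variance_series} and the explicit $(\ev\mathscr T_\nu)^2$), the remainder is differentiable everywhere with nonnegative derivative, and its derivative therefore dominates $v_{k_0}'>0$ at every point. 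Verifying that the remainder is nondecreasing and that the boundary limits of Lemma \ref{lem:cvTv_boundary_values} legitimately license the monotone differentiation theorem on the open ray is the main technical work; the monotonicity inputs themselves are already packaged in Theorem \ref{thm:gnw_is_monotone} and Lemma \ref{lem:hyper_ratio_monotonicity}.
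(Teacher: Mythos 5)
Your proposal is correct and follows the same backbone as the paper's proof: decompose $\cv^2\mathscr T_\nu=\sum_k f_k$, use Theorem \ref{thm:gnw_is_monotone} to make each term monotone (strictly so for at least one index), dominate the series by its boundary values from Lemma \ref{lem:cvTv_boundary_values} to get uniform convergence, invoke Fubini's differentiation theorem, and convert the sign of $\partial_\zeta\cv^2\mathscr T_\nu$ into $\partial_\zeta\cv\mathscr T_\nu>0$ via $\operatorname{sign}(\cv\mathscr T_\nu)=\operatorname{sign}(\nu)$, with the $\nu<-1$ case handled symmetrically. Where you genuinely depart from the paper is the final, delicate step you flag yourself: the paper's proof stops at ``$\partial_\zeta\cv^2\mathscr T_\nu=\sum_k\partial_\zeta f_k>0$ almost everywhere'' and passes directly to the conclusion, even though the theorem asserts strict positivity at \emph{every} $\zeta\in\Bbb R^+$ (an a.e.\ positive derivative together with continuity yields strict monotonicity, but strict monotonicity of a differentiable function does not by itself force a positive derivative pointwise, as $x\mapsto x^3$ shows). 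Your patch --- isolating the distinguished smooth term $v_{k_0}$ with $v_{k_0}'>0$ pointwise (justified correctly via Lemma \ref{lem:tilde_gnw_zeros} to rule out zeros of $\tilde g_{k_0,\ell_0}$ on $\Bbb R^+$ and Lemma \ref{lem:hyper_ratio_monotonicity} for the strict derivative), and writing $\cv^2\mathscr T_\nu=v_{k_0}+R$ with $R$ nondecreasing so that $R'\geq 0$ wherever it exists --- closes this gap cleanly. The one ingredient you use that the paper does not supply is everywhere-differentiability of $\cv^2\mathscr T_\nu$ in $\zeta$: Theorem \ref{thm:T_est_variance_series} gives the integral representation and Theorem \ref{thm:convergence_of_ETv2_alpha_derivatives} treats only $\alpha$-derivatives, so you would need to justify differentiation under the integral sign in $\zeta$ (a dominated-convergence argument with an $\Upsilon$-type bound, entirely analogous to what the paper already does, suffices). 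With that verification written out, your argument is not merely equivalent to the paper's but strictly sharper, since it delivers the pointwise conclusion the theorem actually states.
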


\begin{proof}
For brevity we write
\[
\cv^2\mathscr T_\nu=\sum_{k=1}^\infty f_k(\zeta,\nu),
\]
where $f_k(\zeta,\nu)=\sum_{\ell=0}^k\tilde g_{k,\ell}^2(\zeta,\nu)/c_{k,\ell}$ and $c_{k\ell}=(\alpha_1)_\ell(\alpha_2)_{k-\ell}\,\ell!\,(k-\ell)!$. Since $c_{k\ell}>0$, if $\nu>1$ it follows from Theorem \ref{thm:gnw_is_monotone} that: $(\mathrm i)$ $f_k(\zeta,\nu)$ is nonnegative and increasing in $\zeta$ for all $k\in\Bbb N$ and $(\mathrm{ii})$ $\exists k\in\Bbb N$ where $f_k(\zeta,\nu)$ is strictly increasing in $\zeta$. Consequently,
\[
\cv^2\mathscr T_\nu<\sum_{k=1}^\infty \lim_{\zeta\to\infty}f_k(\zeta,\nu),
\]
which was shown to converge in Lemma \ref{lem:cvTv_boundary_values}; thus, the series expansion for $\cv^2\mathscr T_\nu$ must converge uniformly for all $\zeta\in\Bbb R^+_0$ \cite[Thm.~$7.10$]{rudin1976}. Since each $f_k$ is increasing and the series expansion for $\cv^2\mathscr T_\nu$ converges uniformly, Fubini's theorem on differentiation asserts
\[
\partial_\zeta\cv^2\mathscr T_\nu=\sum_{k=1}^\infty \partial_\zeta f_k(\zeta,\nu)>0,
\]
almost everywhere on $\zeta\in\Bbb R^+$. But now observe that $\nu>1\implies\cv\mathscr T_\nu>0$, so it follows
\[
\partial_\zeta\cv^2\mathscr T_\nu>0\implies \partial_\zeta\cv\mathscr T_\nu>0,
\]
which establishes $\cv\mathscr T_\nu$ being strictly increasing in $\zeta$ when $\nu>1$.  For the case $\nu<-1$, we can follow the same process to show $\cv^2\mathscr T_\nu$ is positive and strictly decreasing in $\zeta$. However, $\nu<-1\implies\cv\mathscr T_\nu<0$ so again we find $\cv\mathscr T_\nu$ to be strictly increasing in $\zeta$ which completes the proof.
\end{proof}


\section{Confidence intervals}
\label{sec:confidence_intervals}

An important aspect of any estimation procedure is quantifying the error and precision of the estimate itself. In the context of estimating $\tau=(\kappa_1-\kappa_2)^{-1}$ with the estimator $\mathscr T_\nu$, two relevant quantities for providing relative error and precision come to mind, namely, the absolute relative bias $\arb\mathscr T_\nu$ and absolute coefficient of variation $\acv\mathscr T_\nu$. Due to their dependence on the value of $\zeta$, neither of these quantities can ever be known with exact certainty; however, one can resort to confidence sets as a useful alternative. For clarity the following two lemmas state the conditions for which $\arb\mathscr T_\nu$ and $\acv\mathscr T_\nu$ are monotone functions of $\zeta$ which is needed for deriving the corresponding confidence sets in Theorems \ref{thm:ARB_T_v_CI} and \ref{thm:ACV_T_v_CI}.

\begin{lemma}
\label{lem:monotonicity_of_ARB_T_v}
If $-\alpha_2<\nu<\alpha_1$ then $\arb\mathscr T_\nu$ is strictly decreasing in $\zeta$ for $\nu<0$ and strictly increasing in $\zeta$ for $\nu>0$.
\end{lemma}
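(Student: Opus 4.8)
The plan is to reduce the statement to a one-line differentiation by invoking the closed form for the absolute relative bias already in hand. First I would recall Proposition \ref{prop:T_est_bias}, which under the hypothesis $-\alpha_2<\nu<\alpha_1$ (precisely the range guaranteeing that $\ev\mathscr T_\nu$ exists, by Theorem \ref{thm:T_estimator}) yields the remarkably simple identity $\arb\mathscr T_\nu=\zeta^\nu$. Since $\zeta=\kappa_2/\kappa_1$ ranges over $\Bbb R^+$, the monotonicity question in $\zeta$ collapses to the elementary question of the sign of the derivative of a real power function.

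Next I would differentiate directly, obtaining $\partial_\zeta\zeta^\nu=\nu\,\zeta^{\nu-1}$. Because $\zeta>0$ forces $\zeta^{\nu-1}>0$ for every real $\nu$, the sign of $\partial_\zeta\arb\mathscr T_\nu$ is governed entirely by the sign of $\nu$. Hence $\partial_\zeta\arb\mathscr T_\nu<0$ when $\nu<0$ and $\partial_\zeta\arb\mathscr T_\nu>0$ when $\nu>0$, which is exactly the claimed strict monotonicity—strictly decreasing and strictly increasing, respectively, on all of $\zeta\in\Bbb R^+$.

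There is essentially no obstacle here: the entire content is carried by Proposition \ref{prop:T_est_bias}, whose derivation of $\arb\mathscr T_\nu=\zeta^\nu$ does the real work. The only point worth flagging is that the hypothesis $-\alpha_2<\nu<\alpha_1$ is what licenses the use of that closed form in the first place, since outside this strip the first moment of $\mathscr T_\nu$—and therefore its bias—fails to be finite. With that caveat recorded, the argument reduces to the sign analysis of $\nu\,\zeta^{\nu-1}$, and the lemma follows immediately.
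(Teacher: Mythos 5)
Your proposal is correct and matches the paper's own proof, which likewise reduces the claim to the closed form $\arb\mathscr T_\nu=\zeta^\nu$ (valid since $\ev\mathscr T_\nu$ is finite on $-\alpha_2<\nu<\alpha_1$) and then reads off the monotonicity from the sign of $\nu$. Your explicit differentiation $\partial_\zeta\zeta^\nu=\nu\,\zeta^{\nu-1}$ merely spells out the step the paper leaves implicit.
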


\begin{proof}
We have $\arb\mathscr T_\nu=|\ev\mathscr T_\nu/\tau-1|=\zeta^\nu$, where $\ev\mathscr T_\nu$ is finite if $-\alpha_2<\nu<\alpha_1$. Since $\zeta>0$, the monotonicity of $\arb\mathscr T_\nu$ immediately follows.
\end{proof}

\begin{lemma}
\label{lem:monotonicity_of_ACV_T_v}
If $\alpha_1+\alpha_2>2$ and $-\alpha_2<2\nu<\alpha_1$ then $\acv\mathscr T_\nu$ is strictly decreasing in $\zeta$ for $\nu<-1$ and strictly increasing in $\zeta$ for $\nu>1$.
\end{lemma}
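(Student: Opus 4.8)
The plan is to deduce this lemma directly from the monotonicity result already established for the signed coefficient of variation in Theorem \ref{thm:CVTv_is_monotone}, together with the sign identity for $\cv\mathscr T_\nu$ recorded in Corollary \ref{cor:cvT_nu}. I observe first that the hypotheses $\alpha_1+\alpha_2>2$ and $-\alpha_2<2\nu<\alpha_1$ are precisely the parameter constraints of Theorem \ref{thm:T_est_variance_series}, so both $\cv\mathscr T_\nu$ and $\acv\mathscr T_\nu=|\cv\mathscr T_\nu|$ are finite and well-defined throughout, and the machinery of the previous section applies without modification.

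The key observation is that $\acv\mathscr T_\nu$ differs from $\cv\mathscr T_\nu$ only by a sign that is constant in $\zeta$. Indeed, from Corollary \ref{cor:cvT_nu} we have $\operatorname{sign}(\cv\mathscr T_\nu)=\operatorname{sign}(\nu)$ for every admissible $\zeta$, so $\acv\mathscr T_\nu=\cv\mathscr T_\nu$ when $\nu>0$ and $\acv\mathscr T_\nu=-\cv\mathscr T_\nu$ when $\nu<0$. Since the additional hypothesis $|\nu|>1$ of this lemma is exactly the hypothesis of Theorem \ref{thm:CVTv_is_monotone}, I may invoke that theorem to conclude $\partial_\zeta\cv\mathscr T_\nu>0$ for all $\zeta\in\Bbb R^+$.

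Combining these two facts completes the argument. For $\nu>1$ the sign factor is $+1$, so $\acv\mathscr T_\nu=\cv\mathscr T_\nu$ inherits the strict increase, giving $\partial_\zeta\acv\mathscr T_\nu>0$. For $\nu<-1$ the sign factor is $-1$, so $\partial_\zeta\acv\mathscr T_\nu=-\partial_\zeta\cv\mathscr T_\nu<0$, establishing that $\acv\mathscr T_\nu$ is strictly decreasing in $\zeta$. There is no genuine obstacle to overcome here: all of the analytic difficulty --- the uniform convergence of the series for $\cv^2\mathscr T_\nu$, the termwise differentiation via Fubini's theorem, and the monotonicity of the individual terms $\tilde g_{k,\ell}^2$ --- was already discharged in establishing Theorem \ref{thm:CVTv_is_monotone} (ultimately resting on Theorem \ref{thm:gnw_is_monotone}). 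The present lemma is simply the translation of that signed statement into one about absolute values, and the only point requiring care is tracking the constant sign $\operatorname{sign}(\nu)$ correctly across the two regimes $\nu<-1$ and $\nu>1$.
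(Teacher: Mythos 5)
Your proposal is correct and follows essentially the same route as the paper's own proof: both deduce the result immediately from Theorem \ref{thm:CVTv_is_monotone} (that $\cv\mathscr T_\nu$ is strictly increasing in $\zeta$, positive for $\nu>1$ and negative for $\nu<-1$) together with the finiteness conditions and sign identity of Corollary \ref{cor:cvT_nu}, and the relation $\acv\mathscr T_\nu=|\cv\mathscr T_\nu|$. Your version merely spells out the sign-tracking slightly more explicitly than the paper does.
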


\begin{proof}
According to Corollary \ref{cor:cvT_nu}, $\acv\mathscr T_\nu$ is finite if $\alpha_1+\alpha_2>2$ and $-\alpha_2<2\nu<\alpha_1$. Furthermore, Theorem \ref{thm:CVTv_is_monotone} states that $\cv\mathscr T_\nu$ is: $(1)$ positive and strictly increasing in $\zeta$ for $\nu>1$ and $(2)$ negative and strictly increasing in $\zeta$ for $\nu<-1$. Given $\acv\mathscr T_\nu=|\cv\mathscr T_\nu|$ the result immediately follows.
\end{proof}

\begin{theorem}
\label{thm:ARB_T_v_CI}
Let $F_{\alpha,d_1,d_2}$ denote the $(1-\alpha)$th quantile for the $F_{d_1,d_2}$ distribution, $V=Y_2/Y_1$, and $Z_\alpha=V F_{\alpha,2\alpha_1,2\alpha_2}$. If $-\alpha_2<\nu<\alpha_1$ then
\[
\operatorname{CI}_\alpha(\arb\mathscr T_\nu)=%
\begin{cases}
(0,Z_{1-\alpha}^\nu], &\nu<0\\
(0,Z_\alpha^\nu], &\nu>0\\
\end{cases}
\]
is a upper bound confidence set for $\arb\mathscr T_\nu$ with coverage probability $(1-\alpha)$.
\end{theorem}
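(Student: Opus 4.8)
The plan is to treat this as a standard confidence-set-by-pivoting argument resting on two ingredients already established: the closed form $\arb\mathscr T_\nu=\zeta^\nu$ from Proposition \ref{prop:T_est_bias} (valid precisely on the stated range $-\alpha_2<\nu<\alpha_1$), and an exact pivotal quantity built from the data $V=Y_2/Y_1$ whose distribution is free of the unknown $(\kappa_1,\kappa_2)$. Since $\arb\mathscr T_\nu$ is a monotone power of $\zeta$, inverting a probability statement about the pivot will directly produce the one-sided interval, and the monotonicity needed to perform the inversion is exactly that recorded in Lemma \ref{lem:monotonicity_of_ARB_T_v}.

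First I would establish the pivot. Using the scaling $Y_i\sim(\kappa_i/\alpha_i)Y_i^\ast$ with $Y_i^\ast\sim\mathcal G(\alpha_i,1)$ (as in the proof of Theorem \ref{thm:T_estimator}), one has
\[
\frac{\zeta}{V}=\frac{\kappa_2}{\kappa_1}\frac{Y_1}{Y_2}=\frac{Y_1^\ast/\alpha_1}{Y_2^\ast/\alpha_2}.
\]
Because $2Y_i^\ast\sim\chi^2_{2\alpha_i}$, the right-hand side is a ratio of independent chi-squares each divided by its degrees of freedom, so $\zeta/V\sim F_{2\alpha_1,2\alpha_2}$. This is the crux of the proof: the entire dependence on $\kappa_1,\kappa_2$ collapses into the single quantity $\zeta$, and $\zeta/V$ is an exact pivot with a tabulated law.

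With the pivot in place, I would invert the containment event, treating the two cases separately. For $\nu>0$, since all quantities are positive, $\zeta^\nu\leq Z_\alpha^\nu\iff\zeta\leq VF_{\alpha,2\alpha_1,2\alpha_2}\iff\zeta/V\leq F_{\alpha,2\alpha_1,2\alpha_2}$, and the last event has probability $1-\alpha$ by the definition of the $(1-\alpha)$th quantile. For $\nu<0$ the negative exponent reverses the inequality: $\zeta^\nu\leq Z_{1-\alpha}^\nu\iff\zeta\geq VF_{1-\alpha,2\alpha_1,2\alpha_2}\iff\zeta/V\geq F_{1-\alpha,2\alpha_1,2\alpha_2}$; since $F_{1-\alpha,2\alpha_1,2\alpha_2}$ is, under the stated convention, the $\alpha$th quantile, continuity of the $F$ law gives $\pr(\zeta/V\geq F_{1-\alpha,2\alpha_1,2\alpha_2})=1-\alpha$. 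In both cases $\arb\mathscr T_\nu=\zeta^\nu>0$ lies in the proposed interval with probability exactly $1-\alpha$, which is the assertion.

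The argument is essentially routine once the pivot is identified, so I do not expect a genuine conceptual obstacle; the only real care is bookkeeping. The main pitfall will be the $\nu<0$ case, where one must simultaneously reverse the inequality induced by the negative exponent and correctly read off that $F_{1-\alpha,2\alpha_1,2\alpha_2}$ is the lower ($\alpha$th) quantile, so that the two reversals compose to restore coverage $1-\alpha$ rather than $\alpha$.
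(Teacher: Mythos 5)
Your proposal is correct and follows essentially the same route as the paper: both hinge on the exact pivot $\zeta/V\sim F_{2\alpha_1,2\alpha_2}$ and the monotonicity of $\arb\mathscr T_\nu=\zeta^\nu$ in $\zeta$ (Lemma \ref{lem:monotonicity_of_ARB_T_v}), the only cosmetic difference being that the paper first forms a one-sided confidence set for $\zeta$ and maps it through $\arb\mathscr T_\nu(\cdot)$, whereas you invert the containment inequalities directly. Your case bookkeeping for $\nu<0$ (the exponent reversing the inequality, with $F_{1-\alpha,2\alpha_1,2\alpha_2}$ read as the $\alpha$th quantile) is handled correctly.
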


\begin{proof}
Recall the distributional forms for $Y_1$ and $Y_2$ are $Y_i\sim\mathcal G(\alpha_i,\alpha_i/\kappa_i)$ where $\alpha_i$ is known.  Since $Y_1\perp Y_2$ one can define the pivotal quantity
\[
\frac{\zeta}{V}\sim F_{2\alpha_1,2\alpha_2}.
\]
It follows that $\operatorname{CI}_\alpha(\zeta)=[Z_{1-\alpha},\infty)$ is a lower bound confidence set for $\zeta$ with coverage probability $(1-\alpha)$, that is, $\pr(\operatorname{CI}_\alpha(\zeta)\ni\zeta)=1-\alpha$. Furthermore, if $\nu<0$ then $\arb\mathscr T_\nu$ is monotone decreasing in $\zeta$ with $\lim_{\zeta\to\infty}\arb\mathscr T_\nu=0$; thus,
\[
\operatorname{CI}_\alpha(\arb\mathscr T_\nu)=\arb\mathscr T_\nu(\operatorname{CI}_\alpha(\zeta))=(0,Z_{1-\alpha}^\nu]
\]
is a upper bound confidence set for $\arb\mathscr T_\nu$ with coverage probability $(1-\alpha)$. If instead $\nu>0$, $\arb\mathscr T_\nu$ is monotone increasing in $\zeta$ so we use $\operatorname{CI}_\alpha(\zeta)=(0,Z_\alpha]$ as an upper bound confidence set for $\zeta$ to obtain the corresponding set estimator of $\arb\mathscr T_\nu$ for the $\nu>0$ case.
\end{proof}

\begin{theorem}
\label{thm:ACV_T_v_CI}
Let $Z_\alpha$ be as defined in Theorem \ref{thm:ARB_T_v_CI} and $\acv\mathscr T_\nu(\zeta)$ represent the absolute coefficient as a function of $\zeta$. If $\alpha_1+\alpha_2>2$ and $-\alpha_2<2\nu<\alpha_1$ then
\[
\operatorname{CI}_\alpha(\acv\mathscr T_\nu)=%
\begin{cases}
((\alpha_2-2)^{-1/2},\acv\mathscr T_\nu(Z_{1-\alpha})] &\nu<-1\\
((\alpha_1-2)^{-1/2},\acv\mathscr T_\nu(Z_\alpha)] &\nu>1\\
\end{cases}
\]
is a upper bound confidence set for $\acv\mathscr T_\nu$ with coverage probability $(1-\alpha)$.
\end{theorem}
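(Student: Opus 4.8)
The plan is to mirror the proof of Theorem \ref{thm:ARB_T_v_CI} verbatim, replacing the monotone map $\zeta\mapsto\arb\mathscr T_\nu=\zeta^\nu$ with the monotone map $\zeta\mapsto\acv\mathscr T_\nu(\zeta)$ and reading off the limiting endpoint values from Lemma \ref{lem:cvTv_boundary_values}. First I would reuse the pivotal quantity $\zeta/V\sim F_{2\alpha_1,2\alpha_2}$ established in Theorem \ref{thm:ARB_T_v_CI}, which is valid since $Y_1\perp Y_2$ and $Y_i\sim\mathcal G(\alpha_i,\alpha_i/\kappa_i)$ with $\alpha_i$ known. From this pivot the same quantile manipulations produce the two one-sided confidence sets for $\zeta$ of coverage $(1-\alpha)$: the upper bound $\operatorname{CI}_\alpha(\zeta)=(0,Z_\alpha]$ and the lower bound $\operatorname{CI}_\alpha(\zeta)=[Z_{1-\alpha},\infty)$.

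Next I would invoke Lemma \ref{lem:monotonicity_of_ACV_T_v}, which guarantees under the stated hypotheses $\alpha_1+\alpha_2>2$ and $-\alpha_2<2\nu<\alpha_1$ that $\acv\mathscr T_\nu(\zeta)$ is strictly increasing in $\zeta$ when $\nu>1$ and strictly decreasing when $\nu<-1$. For $\nu>1$ I would push the upper confidence set $(0,Z_\alpha]$ through the strictly increasing continuous map $\acv\mathscr T_\nu(\cdot)$; since a strictly monotone deterministic map carries the event $\{\zeta\in(0,Z_\alpha]\}$ to an equivalent event of identical probability, the image $\acv\mathscr T_\nu((0,Z_\alpha])=(\lim_{\zeta\to 0^+}\acv\mathscr T_\nu(\zeta),\acv\mathscr T_\nu(Z_\alpha)]$ is a confidence set of coverage $(1-\alpha)$. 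For $\nu<-1$ I would instead push the lower confidence set $[Z_{1-\alpha},\infty)$ through the strictly decreasing map, obtaining $(\lim_{\zeta\to\infty}\acv\mathscr T_\nu(\zeta),\acv\mathscr T_\nu(Z_{1-\alpha})]$.

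It then remains to identify the open lower endpoints with the claimed constants. Lemma \ref{lem:cvTv_boundary_values} supplies $\lim_{\zeta\to 0}\cv\mathscr T_\nu=(\alpha_1-2)^{-1/2}$ for $\nu>0$ and $\lim_{\zeta\to\infty}\cv\mathscr T_\nu=-(\alpha_2-2)^{-1/2}$ for $\nu<0$; taking absolute values yields the infima $(\alpha_1-2)^{-1/2}$ and $(\alpha_2-2)^{-1/2}$, respectively. By strict monotonicity these infima are approached but never attained for finite $\zeta\in\Bbb R^+$, which is precisely why each lower endpoint is open while each upper endpoint, attained at $\zeta=Z_\alpha$ or $\zeta=Z_{1-\alpha}$, is closed. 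Substituting these values into the two transformed intervals gives the stated form of $\operatorname{CI}_\alpha(\acv\mathscr T_\nu)$.

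The argument is essentially routine given the machinery already in place; the one point demanding care is the bookkeeping that pairs the direction of monotonicity with the correct one-sided bound for $\zeta$ (increasing $\acv$ with the upper bound $(0,Z_\alpha]$, decreasing $\acv$ with the lower bound $[Z_{1-\alpha},\infty)$) and that selects which of the two boundary limits in Lemma \ref{lem:cvTv_boundary_values} — the $\zeta\to 0$ value or the $\zeta\to\infty$ value — furnishes the infimum in each regime. No genuine analytic obstacle arises, since continuity and strict monotonicity of $\acv\mathscr T_\nu$ in $\zeta$ have already been secured by the earlier results.
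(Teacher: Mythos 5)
Your proof is correct and follows exactly the route the paper intends: the paper omits this proof, stating only that it mirrors Theorem \ref{thm:ARB_T_v_CI} with Lemma \ref{lem:cvTv_boundary_values} supplying the lower endpoints, which is precisely what you carried out. Your bookkeeping is right on the one point that matters --- pairing the strictly increasing case ($\nu>1$) with the upper set $(0,Z_\alpha]$ and limit $\lim_{\zeta\to 0}\acv\mathscr T_\nu=(\alpha_1-2)^{-1/2}$, and the strictly decreasing case ($\nu<-1$) with the lower set $[Z_{1-\alpha},\infty)$ and limit $(\alpha_2-2)^{-1/2}$ --- so nothing further is needed.
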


\begin{proof}
The proof follows much along the same line of reasoning given in Theorem \ref{thm:ARB_T_v_CI} and therefore is omitted. Lemma \ref{lem:cvTv_boundary_values} provides the the expressions for the lower boundary of each set estimator. 
\end{proof}


\chapter{Photon Transfer Conversion Gain Estimation}
\label{chap:new_g_estimator}

At this point we have completed the first part of the goal set forth which was to derive an estimator for the reciprocal difference of independent normal variances. Several fundamental results were found for this estimator including: $(1)$ an asymptotic expansion for large shape parameters $\alpha_1$ and $\alpha_2$, $(2)$ the first two moments $\ev\mathscr T_\nu$ and $\ev\mathscr T_\nu^2$ along with the associated quantities $\var\mathscr T_\nu$ and $\cv\mathscr T_\nu$, and $(3)$ exact confidence intervals for the absolute relative bias $\arb\mathscr T_\nu$ and absolute coefficient of variation $\acv\mathscr T_\nu$. With these results at hand, we are finally ready to turn to the tackling the second part of our goal: the problem of conversion gain estimation.

To accomplish this we will first briefly recap and expound upon the discussion and notation surrounding photon transfer theory found in the introduction of Chapter \ref{chap:introduction}. Then we will introduce our newly developed estimator $\mathscr G_\nu$ for the conversion gain $g$ and use the results for $\mathscr T_\nu$ in Chapter \ref{chap:estimation_of_recip_diff_norm_variances} to derive analogous results for $\mathscr G_\nu$. Next, we will rigorously establish a long observed phenomenon about estimators for $g$, that is, their dispersion is dominated at \emph{high illumination} by the dispersion contributed by estimates of variances. Indeed, Section \ref{subsec:characteristics_at_low_illumination} we will take this claim a step further by using the theoretical foundation laid in the previous chapter to show that this dominance can be achieved even at illumination levels near zero if we adopt a notation of \emph{optimal sample sizes}. This observation opens the door to not only justifying the estimation of $g$ and its confidence intervals under low illumination but also provides a pathway for design and control of experiment of $g$-estimation at low illumination; thus, addressing the \emph{low-illumination problem} of conversion gain estimation introduced in \cite{hendrickson_2019}. Finally, this chapter will end by demonstrating the effectiveness of the theoretical results derived herein with an experiment measuring $g$ under low illumination conditions.


\section{A brief review of photon transfer theory}
\label{sec:pt_theory}

\subsection{Bose-Einstein statistics and the uncertain nature of light}

The theory of photon transfer is fundamentally premised on the uncertainty of electro-magnetic radiation emitted by a source. Such uncertainty means that even if the mean photon production of a source per unit time--which we shall simply call the \emph{illumination level}--remains fixed, the observed number of photons emitted over any specific, fixed, time interval will vary; giving rise to the presence of \emph{noise} in the photon stream. From Bose-Einstein statistics the noise in the photon stream of a blackbody source, described by the variance in the number of photons emitted per unit time $\sigma_\gamma^2$ can be related to the mean photon production per unit time $\mu_\gamma$, photon energy $E_\gamma=h\nu$ and thermal energy of the source $E_T=kT$ by \cite{mansuripur_2017}
\[
\sigma_\gamma^2=\mu_\gamma\left[\frac{e^{h\nu/kT}}{e^{h\nu/kT}-1}\right],
\]
where the quantity in square brackets is the so-called \emph{boson factor}. In the regime where the photon energy is significantly larger than the thermal energy $h\nu\gg kT$, which corresponds to photon wavelengths of $0.3-30\,\mu\mathrm m$ and temperatures $T<500\mathrm K$, the boson factor is near unity such that $\sigma_\gamma^2\approx\mu_\gamma$ and the number of photons $k$ emitted from the source per unit time is accurately modeled by the Poisson mass function
\[
P(k)=\frac{\mu_\gamma^k e^{-\mu_\gamma}}{k!},\quad k\in\Bbb N_0.
\]
Light sources with these characteristics are referred to as \emph{Poissonian} and are assumed in the photon transfer method.


\subsection{Photon transfer $\gamma\to\mathrm{DN}$ and conversion gain}

Detection of photons is done via electro-optical image sensors where the pixels comprising the sensor convert, that is, \emph{transfer} photons $(\gamma)$ to electrons $(\electron)$. After exposing the sensor to a light source for some \emph{integration time}, the packets of electrons collected by each pixel are passed through the sensor's circuitry whereby each charge packet is first converted to a voltage $(V)$ and then digitized to produce a \emph{digital number} $(\mathrm{DN})$ representing the intensity of the source at each pixel. As one can see, several conversions, e.g. $\gamma\to\electron$, $\electron\to V$, and $V\to\mathrm{DN}$, take place in the process of image formation. Treating each conversion process as a mapping from one unit to another gives rise to the concept of the system \emph{transfer function} $\mathcal T:\gamma\to\mathrm{DN}$ which represents the aggregate mapping of photons to digital numbers. It is important to note that in general each pixel will have its own unique transfer function and must be characterized individually. For sensors where each pixel's transfer function can be assumed identical we say the sensor is \emph{uniform}. Unless otherwise specified, whenever we discuss the transfer function and measurement thereof, we will assume it is for an individual pixel.

In most cases it is desirable to characterize a pixel in terms of electrons; thus removing the dependence of photon wavelength. Luckily, for Silicon based electro-optical sensors and photons in the visible band $0.4-0.7\,\mu\mathrm m$ the conversion of photons to electrons, described by the quantum yield $\eta$ is unity; therefore rendering the photon-to-digital number transfer function $\mathcal T:\gamma\to\mathrm{DN}$ equivalent to the electron-to-digital number transfer function $\mathcal T:\electron\to\mathrm{DN}$. The rather lofty goal of the photon transfer method is to measure the transfer function solely by observing the pixel's digital output and then use the estimated transfer function to convert the digital numbers back to a physical quantity of electrons produced in the pixel over the integration time. The ability to reverse engineer the transfer function in this way ultimately allows one to characterize the pixel in terms of key imaging performance metrics like read noise and dynamic range all while treating the sensor system as a black box.

Perhaps the simplest transfer function one could have is that represented by the linear equation $\mathcal T(\electron)=\electron/g$ where $g$ is a constant with units $(\electron/\mathrm{DN})$. Pixels that admit such a transfer function are naturally called \emph{linear} and the constant $g$ is referred to as the \emph{conversion gain}. In the special case of linearity, an expression for $g$ is simple to derive. Assuming Poisson photon statistics and unity quantum yield $\eta=1\,(\gamma/\electron)$, the number of electrons collected by a pixel over the integration time is modeled by $N\sim\operatorname{Poisson}(\mu_{\electron})$, where $\mu_{\electron}=\mu_\gamma \operatorname{QE}_\lambda$ is the mean number of electrons collected per integration time, $\mu_\gamma$ is the corresponding mean number of incident photons per integration time, i.e.~the illumination level, and $\operatorname{QE}_\lambda$ is the wavelength dependent quantum efficiency describing the probability that an incident photon with wavelength $\lambda$ produces a free electron. In this context, the transfer function can be viewed as a transformation of the random variable $N$ so that if we let $P=\mathcal T(N)$ denote the random variable representing the \emph{photon induced} digital output then $\mu_{\mathrm p}\,(\mathrm{DN})=\ev P=\mu_{\electron}/g$ and $\sigma_{\mathrm p}^2\,(\mathrm{DN}^2)=\var P=\mu_{\electron}/g^2$; giving rise to the fundamental photon transfer relationship
\begin{equation}
\label{eq:fundamental_g_relationship}
g=\frac{\mu_{\mathrm p}}{\sigma_{\mathrm p}^2}.
\end{equation}

The remarkable aspect of the model that led up to the relationship (\ref{eq:fundamental_g_relationship}) is that it provides a natural and simple way to estimate $g$ based solely on observing the random output of a pixel exposed to incident illumination. If we denote $\mathbf P=\{P_i\}_{i=1}^n$ as a sample of $n$ digital observations generated by a pixel under illumination and $T(\mathbf P)=(\bar P,\hat P)$, where $\bar P=\frac{1}{n}\sum_{i=1}^nP_i$ is the sample mean and $\hat P=\frac{1}{n-1}\sum_{i=1}^n(P_i-\bar P)^2$ as the sample variance, then $T$ is an unbiased estimator of the parameter $\theta=(\mu_{\mathrm p},\sigma_{\mathrm p}^2)$ so that $g$ can be estimated by
\begin{equation}
\label{eq:shot_noise_limited_estimator}
G=\frac{\bar P}{\hat P}.
\end{equation}

While simple, the estimator (\ref{eq:shot_noise_limited_estimator}) has an inherent weakness due to the fact that the relationship $(\ref{eq:fundamental_g_relationship})$ does not account for the background signal and noise produced by real pixels in the absence of photon interaction. In some cases, given a sensor of sufficiently high quality, it can be possible to expose the pixels to high levels of illumination so that this background noise is dominated by photon noise and $g\approx\mu_{\mathrm{p}}/\sigma_{\mathrm{p}}^2$ approximately holds. Such sensors are said to achieve a \emph{shot noise limited} response and the measurement of $g$ is said to be performed in the shot noise limited region of the pixel's dynamic range.

It is of no surprise that many sensors cannot achieve a shot noise-limited response such that the background signal and noise produced by the pixels cannot be ignored. To extend our model to include such cases we first let $D\sim F_D$ represent the digital output of the pixels in the absence of illumination (dark), which is distributed according to some distribution $F_D$ with mean $\mu_{\mathrm d}\,(\mathrm{DN})=\ev D$ and variance $\sigma_{\mathrm d}^2\,(\mathrm{DN}^2)=\var D$. Assuming $D$ is independent of $P$, the population mean and variance of the pixels digital output under illumination becomes
\[
\begin{aligned}
\ev(P+D)\coloneqq\mu_{\mathrm p+\mathrm d} &=\mu_{\mathrm p}+\mu_{\mathrm d}=\mu_{\electron}/g+\mu_{\mathrm d}\\
\var(P+D)\coloneqq\sigma_{\mathrm p+\mathrm d}^2 &=\sigma_{\mathrm p}^2+\sigma_{\mathrm d}^2=\mu_{\electron}/g^2+\sigma_{\mathrm d}^2,
\end{aligned}
\]
respectively, which leads us to the modify the gain relationship $(\ref{eq:fundamental_g_relationship})$ as
\begin{equation}
\label{eq:general_g_relationship}
g=\frac{\mu_{\mathrm p+\mathrm d}-\mu_{\mathrm d}}{\sigma_{\mathrm p+\mathrm d}^2-\sigma_{\mathrm d}^2}.
\end{equation}

Unlike the one-sample, shot noise-limited estimator for $g$ given by (\ref{eq:shot_noise_limited_estimator}), one can see from (\ref{eq:general_g_relationship}) that the measurement of $g$ in a sub shot noise-limited regime will require two separate samples captured under dark and illuminated conditions. Denoting the sample under illumination by $\mathbf X=\{X_i\}_{i=1}^{n_1}$ and the sample in the dark by $\mathbf Y=\{Y_i\}_{i=1}^{n_2}$, the unknown parameter vector $\theta=(\mu_{\mathrm p+\mathrm d},\sigma_{\mathrm p+\mathrm d}^2,\mu_{\mathrm d},\sigma_{\mathrm d}^2)$ can be estimated with the unbiased two-sample statistic $T(\mathbf X,\mathbf Y)=(\bar X,\hat X,\bar Y,\hat Y)$ with $\bar X$ and $\hat X$ representing the sample mean and sample variance of $\mathbf X$ and likewise for the sample $\mathbf Y$. Substituting the components of $T$ directly into $(\ref{eq:general_g_relationship})$ gives an estimator for $g$ of the form
\begin{equation}
\label{eq:sub_shot_noise_g_estimator}
G=\frac{\bar X-\bar Y}{\hat X-\hat Y}=\frac{\bar P}{\hat P},
\end{equation}
where $\bar P$ and $\hat P$ are the new estimators of the photon induced mean $\mu_{\mathrm p}$ and variance $\sigma_{\mathrm p}^2$, respectively.

By accounting for the effects of dark signal and noise with $(\bar Y,\hat Y)$, the estimator (\ref{eq:sub_shot_noise_g_estimator}) does relieve the need for a pixel to achieve a shot noise limited response. However, as was discussed in the introduction of Chapter \ref{chap:introduction}, the introduction of these dark corrections, in particular that of $\hat Y$, lead to (\ref{eq:sub_shot_noise_g_estimator}) exhibiting ill-behaved characteristics in low illumination conditions. To combat this one may measure $g$ with $(\ref{eq:sub_shot_noise_g_estimator})$ under high illumination conditions where the estimator will be most well-behaved. That said, such a procedure becomes invalid if the pixel admits a nonlinear transfer function.

Characterizing pixels with nonlinear transfer functions using only sample statistics of the output signal is a much more challenging task. The inherent complexity of characterizing nonlinear pixels comes from the fact that we can no longer impose the very restrictive assumption that $g$ is a constant and instead must treat it as some unknown function $g(\cdot)$ that varies with illumination level. In years past, two methods for nonlinear characterization have been proposed, namely, the nonlinear compensation ({\sc nlc}) and nonlinear estimation ({\sc nle}) techniques \cite{janesick_2007,pain_2003,bohndiek:2008}. In the {\sc nlc} technique, the approach taken is to not directly measure $g$ but rather measure analogous gains for the first two moments of the photon induced signal $P$. What makes the {\sc nlc} method work is the observation that pixels with nonlinear transfer functions typically exhibit a linear response at low illumination \cite{janesick_2006,janesick_2007}. As such, if one assumes Poisson photon statistics and exposes the pixel to a sufficiently low level of illumination $\mu_{\gamma}^\ast$, the mean number of electrons collected in the pixel can be described by
\[
\mu_{\electron}^\ast=(\mu_{\mathrm p+\mathrm d}^\ast-\mu_{\mathrm d})\times g^\ast,
\]
where $\mu_{\mathrm p+\mathrm d}^\ast$ and $g^\ast$ represent the the corresponding quantities at the illumination level $\mu_\gamma^\ast$. If we then define the relative illumination level $r=\mu_\gamma/\mu_\gamma^\ast$ one can subsequently define the illuminated population parameters and mean electron signal as functions of $r$ whereby $\mu_{\electron}^\ast=\mu_{\electron}(1)$, $\mu_{\mathrm p+\mathrm d}^\ast=\mu_{\mathrm p+\mathrm d}(1)$, $\sigma_{\mathrm p+\mathrm d}^{2\ast}=\sigma_{\mathrm p+\mathrm d}^2(1)$. Using this notation the expected mean number of electrons at relative illumination level $r$ becomes
\[
\mu_{\electron}(r)=\mu_{\electron}^\ast\times r.
\]
Having an explicit expression for this quantity then allows us to define a \emph{signal gain} and \emph{noise gain} in terms of relative illumination level as
\begin{equation}
\label{eq:signal_noise_transfer_functions}
\begin{aligned}
s(r) &=\frac{\mu_{\electron}(r)}{\mu_{\mathrm p+\mathrm d}(r)-\mu_{\mathrm d}}\\
n(r) &=\sqrt{\frac{\mu_{\electron}(r)}{\sigma_{\mathrm p+\mathrm d}^2(r)-\sigma_{\mathrm d}^2}},
\end{aligned}
\end{equation}
where both functions have units of $(\electron/\mathrm{DN})$.

In practice, {\sc nlc} photon transfer characterization is performed by first identifying the linear region of the pixel and measuring $g^\ast$. Then, $s(r)$ and $n(r)$ are measured at several illumination levels. At each illumination level, $r$ is recorded and the transfer functions are estimated by according to the formulas in $(\ref{eq:signal_noise_transfer_functions})$ above. The estimated transfer functions are then fit by some curve to produce signal and noise gains, which allow one to convert sample means and standard deviations in units of digital numbers back into meaningful quantities of electrons.\\



\section{Noise model}
\label{sec:assumed_noise_model}

Dark noise $D\sim F_D$ represents the sum of many different noise sources present in the pixel and downstream circuitry and therefore is justifiably modeled as normal. We further assume that individual observations $D_i$ of dark noise are mutually independent such that the sample $\mathbf D=(D_1,\dots,D_n)^\mathsf T$ is modeled by $\mathbf D\sim\mathcal N(\mu_{\mathrm d}\mathbf 1_n,\sigma_{\mathrm d}^2\mathbf I_n)$ where $\mathbf 1_n\in\Bbb R^{n\times 1}$ denotes a column vector of ones and $\mathbf I_n\in\Bbb R^{n\times n}$ is the $n\times n$ identity matrix. Furthermore, photon induced noise $P$, although Poissonian by assumption, quickly approaches a normal approximation for even small values of $\mu_\gamma$\footnote{A typical rule of thumb is the normal approximation is useful for $\mu_{\gamma}>30$.}. Since the arrival of photons is independent of the dark noise $D$ and individual observations $P_i$ are mutually independent, the observed vector $\mathbf P+\mathbf D=((P+D)_1,\dots,(P+D)_n)^\mathsf T$  is modeled as $(\mathbf P+\mathbf D)\sim\mathcal N(\mu_{\mathrm p+\mathrm d}\mathbf 1_n,\sigma_{\mathrm p+\mathrm d}^2\mathbf I_n)$. With these underlying assumptions we will let $\mathbf X=(X_1,\dots,X_{n_1})^\mathsf T$ be a sequences of $n_1$ digital observations of a pixel under some level of illumination and $\mathbf Y=(Y_1,\dots,Y_{n_2})^\mathsf T$ be a separate sample of $n_2$ observations of the same pixel in the dark so that the joint vector is modeled by
\begin{equation}
\label{eq:pixel_noise_model}
\left(\mathbf X \atop \mathbf Y\right)\sim\mathcal N\left(%
\begin{pmatrix}\mu_{\mathrm p+\mathrm d}\mathbf 1_{n_1}\\ \mu_{\mathrm d}\mathbf 1_{n_2}\end{pmatrix},%
\begin{pmatrix}\sigma_{\mathrm p+\mathrm d}^2\mathbf I_{n_1} &\mathbf 0\\ \mathbf 0 &\sigma_{\mathrm d}^2\mathbf I_{n_2}\end{pmatrix}\right).
\end{equation}

Under the proposed model, the two-sample statistic $T(\mathbf X,\mathbf Y)=(\bar X,\hat X,\bar Y,\hat Y)^\mathsf T$, where
\[
\begin{aligned}
\bar X &=\frac{1}{n_1}\sum_{k=1}^{n_1}X_k, & \hat X &=\frac{1}{n_1-1}\sum_{k=1}^{n_1}(X_k-\bar X)^2,\\
\bar Y &=\frac{1}{n_2}\sum_{k=1}^{n_2}Y_k, & \hat Y &=\frac{1}{n_2-1}\sum_{k=1}^{n_2}(Y_k-\bar Y)^2,
\end{aligned}
\]
forms a vector of mutually independent components and constitutes a complete-sufficient statistic for the unknown parameter vector $\theta=(\mu_{\mathrm p+\mathrm d},\sigma_{\mathrm p+\mathrm d}^2,\mu_{\mathrm d},\sigma_{\mathrm d}^2)$. Normal sampling theory gives the distributional results
\[
\begin{aligned}
\bar X &\sim\mathcal N(\mu_{\mathrm p+\mathrm d},\sigma_{\mathrm p+\mathrm d}^2/n_1), & \hat X &\sim\mathcal G(\alpha_1,\alpha_1/\sigma_{\mathrm p+\mathrm d}^2),\\
\bar Y &\sim\mathcal N(\mu_{\mathrm d},\sigma_{\mathrm d}^2/n_2), & \hat Y &\sim\mathcal G(\alpha_2,\alpha_2/\sigma_{\mathrm d}^2),\\
\end{aligned}
\]
where $\alpha_i=(n_i-1)/2$. Since the shapes parameters $\alpha_i$ are directly related to the sample sizes $n_i$ in this manner, we will use the term \emph{sample size} to refer to both quantities when it is expedient to do so. Further denoting the estimator for the photon induced mean as $\bar P=\bar X-\bar Y$ also gives the sometimes useful result
\[
\bar P\sim\mathcal N(\mu_{\mathrm p},\sigma_{\mathrm p+\mathrm d}^2/n_1+\sigma_{\mathrm d}^2/n_2).
\]

We will further assume that the observed data $(\mathbf X^\mathsf T\ \mathbf Y^\mathsf T)^\mathsf T$ is produced by a pixel with a linear transfer function or equivalently is produced in the linear region of a pixel with nonlinear transfer function. This assumption implies $\mu_{\mathrm p+\mathrm d}=\mu_{\mathrm d}+\mu_{\electron}/g$ and $\sigma_{\mathrm p+\mathrm d}^2=\sigma_{\mathrm d}^2+\mu_{\electron}/g^2$ where $g$ is the conversion gain given by $(\ref{eq:general_g_relationship})$. Since $\mu_{\electron}\geq 0$ it follows that $\mu_{\mathrm p+\mathrm d}\geq \mu_{\mathrm d}>0$ and $\sigma_{\mathrm p+\mathrm d}^2\geq\sigma_{\mathrm d}^2>0$ with equality reached at zero illumination. Throughout the remaining sections, we will also use the notation of Chapter \ref{chap:estimation_of_recip_diff_norm_variances} to denote the ratio of dark and illuminated variances by $\zeta=\sigma_{\mathrm d}^2/\sigma_{\mathrm p+\mathrm d}^2$ where $\zeta\in[0,1]$, $\zeta=1$ is achieved at zero illumination, and $\zeta=0$ is achieved at infinite illumination which is the shot noise limit\footnote{$\zeta=0$ can be treated as the mathematical definition of the shot noise limit. In practice, the term  \emph{shot noise limited} is a subjective term meant to describe a pixel that can achieve a $\zeta$-value near zero before saturating.}.


\section{The estimator $\mathscr G_\nu$}
\label{sec:the_est_Gv}

We are finally ready to present our new estimator for the photon transfer conversion gain. Our first result shows that under the proposed noise model, if an unbiased estimator for $g$ exists, it must have infinite variance on at least a portion of the $\zeta$-domain.

\begin{theorem}[Corollary of Theorem \ref{thm:no_finite_variance_estimator}]
Under the normal model of pixel noise in $(\ref{eq:pixel_noise_model})$, if an unbiased estimator of $g$ exists it must have infinite variance for at least $\zeta\in(1/2,1]$.
\end{theorem}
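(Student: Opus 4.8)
The plan is to reduce the claim to Theorem~\ref{thm:no_finite_variance_estimator} by exploiting the product structure $g=\mu_{\mathrm p}\,\tau$, where $\mu_{\mathrm p}=\mu_{\mathrm p+\mathrm d}-\mu_{\mathrm d}$ and $\tau=(\sigma_{\mathrm p+\mathrm d}^2-\sigma_{\mathrm d}^2)^{-1}=(\kappa_1-\kappa_2)^{-1}$ under the identifications $\kappa_1=\sigma_{\mathrm p+\mathrm d}^2$, $\kappa_2=\sigma_{\mathrm d}^2$, $\zeta=\kappa_2/\kappa_1$. Under $(\ref{eq:pixel_noise_model})$ the statistics $\bar X,\bar Y,\hat X,\hat Y$ are mutually independent, $T=(\bar X,\hat X,\bar Y,\hat Y)$ is complete-sufficient for $\theta$, and $(\hat X,\hat Y)$ plays the role of $(Y_1,Y_2)$ from Lemma~\ref{lem:Y1_Y2_complete_statistic}. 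Since the photon-transfer constraint forces $\sigma_{\mathrm d}^2\le\sigma_{\mathrm p+\mathrm d}^2$, i.e.\ $\zeta\in[0,1]$, the interval $1/2<\kappa_2/\kappa_1<2$ of Theorem~\ref{thm:no_finite_variance_estimator} meets the admissible range precisely in $\zeta\in(1/2,1]$.

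Assuming an unbiased $G$ exists, I would first Rao--Blackwellize: $G^\ast=\ev[G\mid T]$ is unbiased, has variance no larger than $G$, and is the unique UMVUE by Lehmann--Scheff\'e. The heart of the argument is that the existence of $G^\ast$ forces the existence of an unbiased estimator of $\tau$ built from $\hat X,\hat Y$ alone. To see this, fix the variances and consider the normal location family indexed by $(\mu_{\mathrm p+\mathrm d},\mu_{\mathrm d})$: here $(\bar X,\bar Y)$ is complete-sufficient, while $A(\bar x,\bar y)\coloneqq\ev[G^\ast\mid\bar X=\bar x,\bar Y=\bar y]$ integrates out $(\hat X,\hat Y)$ and hence does not depend on the means. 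Because $\ev A=\mu_{\mathrm p}\tau=\ev[(\bar X-\bar Y)\tau]$ for every mean configuration, completeness yields $A(\bar X,\bar Y)=(\bar X-\bar Y)\tau$; written out,
\[
\int G^\ast(\bar x,\hat x,\bar y,\hat y)\,f_{\hat X}(\hat x)\,f_{\hat Y}(\hat y)\,\mathrm d\hat x\,\mathrm d\hat y=(\bar x-\bar y)\,\tau
\]
for (a.e.)\ $(\bar x,\bar y)$ and every $(\kappa_1,\kappa_2)$. Fixing any admissible constants $a\neq b$ in the mean slots, this shows $\mathscr T(\hat X,\hat Y)\coloneqq G^\ast(a,\hat X,b,\hat Y)/(a-b)$ is a bona fide statistic in $(\hat X,\hat Y)$ satisfying $(\ref{eq:double_int_eq})$, that is, an unbiased estimator of $\tau$.

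Theorem~\ref{thm:no_finite_variance_estimator} then applies to $\mathscr T$, giving $\var\mathscr T=\infty$ and hence $\ev\mathscr T^2=\infty$ for all $\zeta\in(1/2,1]$. I would transfer this back to $G^\ast$ by conditioning: since $\ev[G^{\ast2}\mid\bar X=a,\bar Y=b]=(a-b)^2\,\ev\mathscr T^2=\infty$ on almost every slice $a\neq b$, the tower property forces $\ev G^{\ast2}=\infty$, so $\var G^\ast=\infty$ on $\zeta\in(1/2,1]$. Finally, because $G^\ast$ is the UMVUE, any unbiased $G'$ satisfies $\ev[G'\mid T]=G^\ast$ and therefore $\var G'=\var G^\ast+\ev[\var(G'\mid T)]\ge\var G^\ast=\infty$, which is the claim.

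The main obstacle is the completeness-driven factorization of the second paragraph, and in particular upgrading $A(\bar x,\bar y)=(\bar x-\bar y)\tau$ from an almost-everywhere identity to one valid on the single fiber $(\bar x,\bar y)=(a,b)$ \emph{simultaneously} for every $(\kappa_1,\kappa_2)$ with $\zeta\in(1/2,1]$. This needs only a mild regularity remark—namely that $A$ is continuous (indeed real-analytic) in $(\bar x,\bar y)$, being a Gaussian-smoothed integral of $G^\ast$—so that the a.e.\ identity holds pointwise and one fixed pair $a\neq b$ serves all admissible variance ratios at once.
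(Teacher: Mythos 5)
Your overall architecture matches the paper's: factor $g=\mu_{\mathrm p}\tau$, argue that an unbiased estimator of $g$ forces an unbiased estimator of $\tau$ built from $(\hat X,\hat Y)$ alone, invoke Theorem~\ref{thm:no_finite_variance_estimator} intersected with the physical range $\zeta\in[0,1]$ to obtain $(1/2,1]$, and transfer the infinite variance back. In fact you are more careful than the paper at the crucial step: the paper simply \emph{asserts} that any unbiased estimator must take the form $\mathscr G(T)=\bar P\times\mathscr T(\hat X,\hat Y)$ and then uses the independence identity $\var\mathscr G=(\ev\bar P^2)\var\mathscr T+(\ev\mathscr T)^2\var\bar P$, whereas your completeness argument in the normal location family, yielding $A(\bar x,\bar y)=(\bar x-\bar y)\tau$ almost everywhere, is a legitimate rigorization of that assertion, and your tower-property transfer of $\ev G^{\ast 2}=\infty$ is a sound substitute for the product-variance formula.

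The genuine gap is exactly the one you flagged, and your proposed repair fails as stated. You claim $A(\bar x,\bar y)=\ev[G^\ast\mid\bar X=\bar x,\bar Y=\bar y]$ is continuous in $(\bar x,\bar y)$ ``being a Gaussian-smoothed integral of $G^\ast$.'' It is not: conditioning on $\bar X=\bar x$, $\bar Y=\bar y$ evaluates $G^\ast$ on the fiber, so $A(\bar x,\bar y)=\int G^\ast(\bar x,\hat x,\bar y,\hat y)f_{\hat X}(\hat x)f_{\hat Y}(\hat y)\,\mathrm d\hat x\,\mathrm d\hat y$ integrates only over the gamma coordinates; no smoothing in $(\bar x,\bar y)$ occurs, and since $G^\ast$ is merely measurable, $A$ need not be continuous. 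Hence the a.e.\ identity cannot be upgraded to a pointwise one this way, and because the exceptional null set depends on $(\kappa_1,\kappa_2)$, a single pair $(a,b)$ serving the uncountable family of variance pairs is not immediate — which matters, since Theorem~\ref{thm:no_finite_variance_estimator} requires its hypothesis $(\ref{eq:double_int_eq})$ to hold across the parameter space, not at one point. A correct repair: choose a countable dense set $D$ of parameter pairs; the union of the corresponding Lebesgue-null sets is null, so almost every slice $(a,b)$ with $a\neq b$ satisfies $\ev_\theta\bigl[G^\ast(a,\hat X,b,\hat Y)/(a-b)\bigr]=\tau$ for all $\theta\in D$. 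The map $(\beta_1,\beta_2)\mapsto\iint \mathscr T_{(a,b)}(y_1,y_2)\,y_1^{\alpha_1-1}y_2^{\alpha_2-1}e^{-\beta_1y_1-\beta_2y_2}\,\mathrm dy_1\mathrm dy_2$ is a Laplace-type integral whose absolute convergence at any rate pair implies convergence and continuity at all componentwise larger rates, so density of $D$ extends the unbiasedness identity to every $(\kappa_1,\kappa_2)$ off the diagonal. With that, Theorem~\ref{thm:no_finite_variance_estimator} applies on almost every slice and your tower-property conclusion goes through; the proposal is the right proof modulo this one incorrect regularity claim.
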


\begin{proof}
Assuming the model $(\ref{eq:pixel_noise_model})$, $T(\mathbf X,\mathbf Y)=(\bar P,\hat X,\hat Y)^\mathsf T$ is a complete-sufficient statistic for the parameter $\theta=(\mu_{\mathrm p},\sigma_{\mathrm p+\mathrm d}^2,\sigma_{\mathrm d}^2)^\mathsf T$. Since the components of $T$ are mutually independent and $\ev\bar P=\mu_{\mathrm p}$, it follows that if unbiased estimator of $g$ exists it will be of the form $\mathscr G(T)=\bar P\times\mathscr T(\hat X,\hat Y)$ with $\mathscr T(\hat X,\hat Y)$ denoting an unbiased estimator of $(\sigma_{\mathrm p+\mathrm d}^2-\sigma_{\mathrm d}^2)^{-1}$. Again making use of the independence of $\bar P$ and $\mathscr T$ we then write
\[
\var\mathscr G=(\ev\bar P^2)\var\mathscr T+(\ev\mathscr T)^2\var\bar P.
\]
But according to Theorem \ref{thm:no_finite_variance_estimator}, if the estimator $\mathscr T$ exists it must have infinite variance on at least $\zeta\in(1/2,1]$. Since $\mathscr G$ is an {\sc umvue} for $g$ and $\var\mathscr T<\var\mathscr G$ the desired result immediately follows.
\end{proof}

Given that there is no unbiased estimator for $g$ that can achieve finite variance on the entire $\zeta$-domain we again turn to biased estimation. The following theorem presents a biased estimator based on the estimator $\mathscr T_\nu$ derived in Chapter \ref{chap:estimation_of_recip_diff_norm_variances}.

\begin{proposition}
\label{prop:parameter_relations}
Under the assumed model we have $\sigma_{\mathrm p+\mathrm d}^2=\sigma_{\mathrm d}^2/\zeta$, $\sigma_{\mathrm p}^2=\sigma_{\mathrm d}^2\zeta^{-1}(1-\zeta)$, and $\mu_{\mathrm p}=\sigma_{\mathrm d}^2\zeta^{-1}(1-\zeta)g$.
\end{proposition}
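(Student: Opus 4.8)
The plan is to obtain all three identities by direct substitution into the definitions and relationships already fixed by the noise model of Section \ref{sec:assumed_noise_model}, so that the argument is purely algebraic and requires no new machinery. First I would establish $\sigma_{\mathrm p+\mathrm d}^2=\sigma_{\mathrm d}^2/\zeta$ by rearranging the defining relation $\zeta=\sigma_{\mathrm d}^2/\sigma_{\mathrm p+\mathrm d}^2$ introduced at the close of Section \ref{sec:assumed_noise_model}. This rearrangement is legitimate because under the assumed model $\sigma_{\mathrm p+\mathrm d}^2\geq\sigma_{\mathrm d}^2>0$, and at finite, nonzero illumination $\zeta\in(0,1]$ is strictly positive; the shot-noise endpoint $\zeta=0$ is excluded as an attained value and treated only as a limit, so the reciprocal is well-defined.

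Next I would recover $\sigma_{\mathrm p}^2$ from the additive variance decomposition $\sigma_{\mathrm p+\mathrm d}^2=\sigma_{\mathrm p}^2+\sigma_{\mathrm d}^2$ recorded in Section \ref{sec:assumed_noise_model}. Writing $\sigma_{\mathrm p}^2=\sigma_{\mathrm p+\mathrm d}^2-\sigma_{\mathrm d}^2$, substituting the identity just derived, and factoring out $\sigma_{\mathrm d}^2\zeta^{-1}$ yields $\sigma_{\mathrm p}^2=\sigma_{\mathrm d}^2\zeta^{-1}(1-\zeta)$, which is the second claim.

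Finally, $\mu_{\mathrm p}$ follows from the fundamental photon transfer relationship $(\ref{eq:fundamental_g_relationship})$, namely $g=\mu_{\mathrm p}/\sigma_{\mathrm p}^2$, by solving for $\mu_{\mathrm p}=g\,\sigma_{\mathrm p}^2$ and inserting the expression for $\sigma_{\mathrm p}^2$ obtained in the previous step, giving $\mu_{\mathrm p}=\sigma_{\mathrm d}^2\zeta^{-1}(1-\zeta)g$. I do not anticipate a substantive obstacle, since each identity is a one-line consequence of a previously stated relation; the only point deserving an explicit word is confirming $\zeta>0$ so that $\zeta^{-1}$ makes sense, which is guaranteed by $\sigma_{\mathrm d}^2>0$ together with the finite-illumination assumption that keeps $\sigma_{\mathrm p+\mathrm d}^2$ finite.
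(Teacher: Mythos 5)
Your proposal is correct and follows essentially the same route as the paper: the first two identities from rearranging $\zeta=\sigma_{\mathrm d}^2/\sigma_{\mathrm p+\mathrm d}^2$ together with $\sigma_{\mathrm p+\mathrm d}^2=\sigma_{\mathrm p}^2+\sigma_{\mathrm d}^2$, and the third via $\mu_{\mathrm p}=\sigma_{\mathrm p}^2 g$, which the paper obtains from $\mu_{\mathrm p}=\mu_{\electron}/g$ and $\sigma_{\mathrm p}^2=\mu_{\electron}/g^2$ while you cite the equivalent relationship $(\ref{eq:fundamental_g_relationship})$ directly. Your added remark on $\zeta>0$ guaranteeing that $\zeta^{-1}$ is well-defined is a harmless refinement the paper leaves implicit.
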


\begin{proof}
The first two relationships are derived from $\sigma_{\mathrm p+\mathrm d}^2=\sigma_{\mathrm p}^2+\sigma_{\mathrm d}^2$ and $\zeta=\sigma_{\mathrm d}^2/\sigma_{\mathrm p+\mathrm d}^2$. To derive the relationship for $\mu_{\mathrm p}$ we use $\mu_{\mathrm p}=\mu_{\electron}/g$ and $\sigma_{\mathrm p}^2=\mu_{\electron}/g^2$ to write $\mu_{\mathrm p}=\sigma_{\mathrm p}^2g$.
\end{proof}

\begin{theorem}
\label{thm:new_g_estimator}
Let $T=(\bar P,\hat X,\hat Y)^\mathsf T$ and
\[
\mathscr G_\nu(T)=\bar P\times\mathscr T_\nu(\hat X,\hat Y),\quad\nu>0.
\]
Then, $\mathscr G_\nu$ is the unique uniformly minimum variance unbiased estimator of
\[
\ev \mathscr G_\nu=(1-\zeta^\nu)\, g,
\]
with
\[
\var\mathscr G_\nu=\frac{\sigma_{\mathrm d}^2}{\zeta}\left(\frac{1}{n_1}+\frac{\zeta}{n_2}+\frac{(\sigma_{\mathrm d}g)^2}{\zeta}(1-\zeta)^2\right)\ev\mathscr T_\nu^2-(1-\zeta^\nu)^2g^2
\]
and $\ev\mathscr T_\nu^2$ is given by Theorem \ref{thm:T_est_variance_series} for $\kappa_1=\sigma_{\mathrm d}^2/\zeta$ and $\alpha_i=(n_i-1)/2$.
\end{theorem}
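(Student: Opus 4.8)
The plan is to exploit the independence of $\bar P$ from $(\hat X,\hat Y)$ together with the moments of $\mathscr T_\nu$ already established in Chapter~\ref{chap:estimation_of_recip_diff_norm_variances}. Under the identification $Y_1=\hat X$, $Y_2=\hat Y$, $\kappa_1=\sigma_{\mathrm p+\mathrm d}^2$, and $\kappa_2=\sigma_{\mathrm d}^2$ we have $\kappa_1-\kappa_2=\sigma_{\mathrm p}^2$ and $\zeta=\kappa_2/\kappa_1=\sigma_{\mathrm d}^2/\sigma_{\mathrm p+\mathrm d}^2$, so that $\mathscr T_\nu(\hat X,\hat Y)$ is precisely the estimator of Theorem~\ref{thm:T_estimator} evaluated at $\kappa_1=\sigma_{\mathrm d}^2/\zeta$.

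First I would settle the UMVUE claim. The development of Section~\ref{sec:assumed_noise_model} establishes that $T=(\bar P,\hat X,\hat Y)^\mathsf T$ is a complete-sufficient statistic for $\theta=(\mu_{\mathrm p},\sigma_{\mathrm p+\mathrm d}^2,\sigma_{\mathrm d}^2)^\mathsf T$. Since $\mathscr G_\nu(T)=\bar P\times\mathscr T_\nu(\hat X,\hat Y)$ is a measurable function of $T$, the Lehmann--Scheff\'e theorem immediately identifies $\mathscr G_\nu$ as the unique uniformly minimum variance unbiased estimator of its own expectation, exactly as in the proof of Corollary~\ref{cor:Tv_is_an_UMVUE}.

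Next I would compute the first moment. Because $\bar P$ is independent of $(\hat X,\hat Y)$ under the normal model~(\ref{eq:pixel_noise_model}), the expectation factors as $\ev\mathscr G_\nu=\ev\bar P\,\ev\mathscr T_\nu$. Theorem~\ref{thm:T_estimator} gives $\ev\mathscr T_\nu=\tau_\nu=(1-\zeta^\nu)/\sigma_{\mathrm p}^2$, while $\ev\bar P=\mu_{\mathrm p}$. Invoking the fundamental photon transfer relationship $g=\mu_{\mathrm p}/\sigma_{\mathrm p}^2$ of~(\ref{eq:fundamental_g_relationship}) then collapses the product to $(1-\zeta^\nu)\,g$, which is the claimed expectation.

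The variance follows from the same independence via the product identity $\var(AB)=\ev A^2\,\ev B^2-(\ev A)^2(\ev B)^2$ for independent $A,B$. Here $\ev\mathscr G_\nu^2=\ev\bar P^2\,\ev\mathscr T_\nu^2$ and $(\ev\mathscr G_\nu)^2=\mu_{\mathrm p}^2\,\tau_\nu^2=(1-\zeta^\nu)^2g^2$, the latter again using $g=\mu_{\mathrm p}/\sigma_{\mathrm p}^2$. It then remains to reduce $\ev\bar P^2=\var\bar P+\mu_{\mathrm p}^2=\sigma_{\mathrm p+\mathrm d}^2/n_1+\sigma_{\mathrm d}^2/n_2+\mu_{\mathrm p}^2$ to the stated form; substituting $\sigma_{\mathrm p+\mathrm d}^2=\sigma_{\mathrm d}^2/\zeta$ and $\mu_{\mathrm p}=\sigma_{\mathrm d}^2\zeta^{-1}(1-\zeta)g$ from Proposition~\ref{prop:parameter_relations} and factoring out $\sigma_{\mathrm d}^2/\zeta$ yields the coefficient multiplying $\ev\mathscr T_\nu^2$. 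The main obstacle is nothing conceptual but the careful algebraic bookkeeping of the $\zeta$-powers in this last reduction; one must also keep in mind that the statement is only meaningful under the parameter constraints $\alpha_1+\alpha_2>2$ and $-\alpha_2<2\nu<\alpha_1$ of Theorem~\ref{thm:T_est_variance_series} that guarantee $\ev\mathscr T_\nu^2<\infty$, and hence $\var\mathscr G_\nu<\infty$.
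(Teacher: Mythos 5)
Your proposal is correct and takes essentially the same route as the paper's proof: Lehmann--Scheff\'e applied to a complete-sufficient statistic for the UMVUE claim, the independence factorizations $\ev\mathscr G_\nu=(\ev\bar P)(\ev\mathscr T_\nu)$ and $\var\mathscr G_\nu=(\var\bar P+(\ev\bar P)^2)\ev\mathscr T_\nu^2-(1-\zeta^\nu)^2g^2$, and the substitutions of Proposition~\ref{prop:parameter_relations} to reduce $\ev\bar P^2$ to the stated coefficient. The only cosmetic difference is that the paper's proof invokes completeness of the four-component statistic $(\bar X,\bar Y,\hat X,\hat Y)$ for $(\mu_{\mathrm p+\mathrm d},\mu_{\mathrm d},\sigma_{\mathrm p+\mathrm d}^2,\sigma_{\mathrm d}^2)$ --- the safer choice, since $\mathscr G_\nu$ is a function of it while $(\bar P,\hat X,\hat Y)$ alone is not sufficient for the full four-parameter data model --- whereas you cite the three-component version, matching the paper's own earlier corollary to Theorem~\ref{thm:no_finite_variance_estimator}.
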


\begin{proof}
First, note that $\sigma_{\mathrm p+\mathrm d}^2>\sigma_{\mathrm d}^2$ for any nonzero illumination level; thus, by Remark \ref{rmk:Tv_as_estimator_for_tauv} we must require $\nu>0$ to guarantee $\arb\mathscr T_\nu<1$. Next, for the proposed normal model $T(\mathbf X,\mathbf Y)=(\bar X,\bar Y,\hat X,\hat Y)$ is a complete-sufficient statistic for the parameter $\theta=(\mu_{\mathrm p+\mathrm d},\mu_{\mathrm d},\sigma_{\mathrm p+\mathrm d}^2,\sigma_{\mathrm d}^2)$; thus by the Lehmann-Scheff\' e theorem $\mathscr G_\nu$ is the unique {\sc umvue} of its expected value. To evaluate the expected value, we use the independence of $\bar P$ and $\mathscr T_\nu(\hat X,\hat Y)$ to write $\ev \mathscr G_\nu=(\ev \bar P)(\ev\mathscr T_\nu)$. We know $\ev\bar P=\mu_{\mathrm p}$ and $\ev\mathscr T_\nu$ is given in Theorem \ref{thm:T_estimator}. Using the formula for $g$ in $(\ref{eq:general_g_relationship})$ then gives the desired result for $\ev \mathscr G_\nu$. To obtain the variance we again use independence of $\bar P$ and $\mathscr T_\nu$ to write
\[
\var\mathscr  G_\nu=(\var\bar P+(\ev\bar P)^2)(\ev\mathscr T_\nu^2)-(1-\zeta^\nu)^2g^2.
\]
Recalling $\bar P\sim\mathcal N(\mu_{\mathrm p},\sigma_{\mathrm p+\mathrm d}^2/n_1+\sigma_{\mathrm d}^2/n_2)$ and the relations of Proposition \ref{prop:parameter_relations} then leads to the desired result.
\end{proof}

In the following analysis we will derive many results involving the coefficient of variation of $\mathscr G_\nu$. Given the restrictions $\nu>0$ and $\zeta\in(0,1)$ we know that $\ev\mathscr G_\nu>0$ so that $\cv\mathscr G_\nu=\acv\mathscr G_\nu$.  As such, to maintain some level of congruence between the following results and those of Chapter \ref{chap:estimation_of_recip_diff_norm_variances} we will use these quantities interchangabley when its convenient to do so. We now show that $\arb\mathscr G_\nu$ and $\acv\mathscr G_\nu$ are strictly increasing functions of $\zeta$.

\begin{proposition}
\label{prop:ARBGv_ACVGv_monotonicity}
$\arb\mathscr G_\nu$ is a strictly increasing function of $\zeta$. Furthermore, if $\nu>1$ then $\acv\mathscr G_\nu$ is also a strictly increasing function of $\zeta$.
\end{proposition}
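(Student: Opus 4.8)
The statement splits into two claims, and my plan is to dispatch the bias claim by direct substitution and then reduce the dispersion claim, through the independence of $\bar P$ and $\mathscr T_\nu$, to the monotonicity results already proved for $\mathscr T_\nu$ in Chapter \ref{chap:estimation_of_recip_diff_norm_variances}. For the bias, Theorem \ref{thm:new_g_estimator} gives $\ev\mathscr G_\nu=(1-\zeta^\nu)g$, so that $\arb\mathscr G_\nu=|\ev\mathscr G_\nu/g-1|=\zeta^\nu$. Since $\nu>0$ and $\zeta\in(0,1)$, the derivative $\partial_\zeta\zeta^\nu=\nu\zeta^{\nu-1}$ is strictly positive, so $\arb\mathscr G_\nu$ is strictly increasing; this is the exact analogue of Lemma \ref{lem:monotonicity_of_ARB_T_v} and uses only $\nu>0$.

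For the absolute coefficient of variation the key idea is to exploit the independence of $\bar P$ and $\mathscr T_\nu$ afforded by the normal model so as to factor the squared coefficient of variation. Writing $\mathscr G_\nu=\bar P\,\mathscr T_\nu$ with $\bar P\perp\mathscr T_\nu$ gives $\ev\mathscr G_\nu^2=\ev\bar P^2\,\ev\mathscr T_\nu^2$ and $(\ev\mathscr G_\nu)^2=(\ev\bar P)^2(\ev\mathscr T_\nu)^2$, whence
\[
\acv^2\mathscr G_\nu=\frac{\ev\mathscr G_\nu^2}{(\ev\mathscr G_\nu)^2}-1=\bigl(1+\acv^2\bar P\bigr)\bigl(1+\acv^2\mathscr T_\nu\bigr)-1.
\]
Since $\nu>0$ and $\zeta\in(0,1)$ force $\ev\mathscr G_\nu>0$, we have $\acv\mathscr G_\nu=\sqrt{\acv^2\mathscr G_\nu}$, so it suffices to show $\acv^2\mathscr G_\nu$ is strictly increasing; by the display this reduces to showing that each of the two positive factors $1+\acv^2\bar P$ and $1+\acv^2\mathscr T_\nu$ is strictly increasing in $\zeta$, because a product of positive strictly increasing functions is again strictly increasing.

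The second factor is supplied directly by Chapter \ref{chap:estimation_of_recip_diff_norm_variances}: for $\nu>1$ (and under the finiteness constraints of Theorem \ref{thm:T_est_variance_series} inherited via Lemma \ref{lem:monotonicity_of_ACV_T_v}), Theorem \ref{thm:CVTv_is_monotone} shows $\cv\mathscr T_\nu>0$ is strictly increasing in $\zeta$ on $\Bbb R^+\supset(0,1)$, hence so is $\acv^2\mathscr T_\nu$. The first factor I would compute explicitly: from $\bar P\sim\mathcal N(\mu_{\mathrm p},\sigma_{\mathrm p+\mathrm d}^2/n_1+\sigma_{\mathrm d}^2/n_2)$ together with the relations $\sigma_{\mathrm p+\mathrm d}^2=\sigma_{\mathrm d}^2/\zeta$ and $\mu_{\mathrm p}=\sigma_{\mathrm d}^2\zeta^{-1}(1-\zeta)g$ of Proposition \ref{prop:parameter_relations},
\[
\acv^2\bar P=\frac{\var\bar P}{(\ev\bar P)^2}=\frac{1}{\sigma_{\mathrm d}^2 g^2}\,\frac{\zeta/n_1+\zeta^2/n_2}{(1-\zeta)^2}.
\]
On $(0,1)$ the numerator $\zeta/n_1+\zeta^2/n_2$ is positive and strictly increasing while the denominator $(1-\zeta)^2$ is positive and strictly decreasing, so the quotient is strictly increasing; a one-line quotient-rule check reduces the sign of $\partial_\zeta$ to the manifestly positive numerator $(1+\zeta)/n_1+2\zeta/n_2$. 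Multiplying the two monotone factors then yields the claim.

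The main obstacle is essentially external to this proposition: the genuinely hard work — that the relative dispersion of the reciprocal-difference estimator $\mathscr T_\nu$ increases with $\zeta$ — was already absorbed into Theorem \ref{thm:CVTv_is_monotone} and the lengthy development of Section \ref{subsec:gnw_tilde_gnw}. What remains here is only the multiplicative coefficient-of-variation identity and the elementary monotonicity of the Gaussian-mean term $\acv^2\bar P$, both of which are routine. The one subtlety worth flagging is that the bias claim holds for all $\nu>0$ whereas the dispersion claim genuinely needs $\nu>1$, precisely because that is the hypothesis under which Theorem \ref{thm:CVTv_is_monotone} applies.
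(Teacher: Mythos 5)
Your proof is correct and essentially the same as the paper's: both use independence of $\bar P$ and $\mathscr T_\nu$ to factor $\acv^2\mathscr G_\nu$ into the $\mathscr T_\nu$ and $\bar P$ contributions (your product form $(1+\acv^2\bar P)(1+\acv^2\mathscr T_\nu)-1$ is just the paper's expanded sum), invoke Lemma \ref{lem:monotonicity_of_ACV_T_v} for the $\nu>1$ dispersion claim, and verify elementary monotonicity of $\cv^2\bar P$ from Proposition \ref{prop:parameter_relations}. Incidentally, your quotient-rule check (sign governed by the numerator $(1+\zeta)/n_1+2\zeta/n_2>0$) is right and quietly corrects a slip in the paper's text, which describes $\cv^2\bar P$ as \emph{decreasing} in $\zeta$ on $(0,1)$ even though the function is increasing --- as the paper's own next sentence requires when it asserts all terms of $\partial_\zeta\acv^2\mathscr G_\nu$ are positive.
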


\begin{proof}
Given the definition of the absolute relative bias we have
\[
\arb\mathscr G_\nu=\left\lvert\frac{(1-\zeta^\nu)g-g}{g}\right\rvert=\zeta^\nu.
\]
Since $\nu>0$ it immediately follows that $\partial_\zeta\arb\mathscr G_\nu>0$ on $\zeta\in(0,1)$. Next, we use the independence of $\bar P$ and $\mathscr T_\nu$ to write
\begin{equation}
\label{eq:CV2Gv_expanded}
\acv^2\mathscr G_\nu=\cv^2\mathscr T_\nu+(\cv^2\mathscr T_\nu)(\cv^2\bar P)+\cv^2\bar P
\end{equation}
so that
\begin{equation}
\label{eq:CV2Gv_zeta_derivative_expanded}
\partial_\zeta\acv^2\mathscr G_\nu=(\cv^2\bar P+1)(\partial_\zeta\cv^2\mathscr T_\nu)+(\cv^2\mathscr T_\nu+1)(\partial_\zeta\cv^2\bar P).
\end{equation}
Using the relations in Proposition \ref{prop:parameter_relations} we have
\begin{equation}
\label{eq:CV2P_zeta_form}
\cv^2\bar P=\frac{1}{(\sigma_{\mathrm d}g)^2}\frac{\zeta}{(1-\zeta)^2}\left(\frac 1{n_1}+\frac{\zeta}{n_2}\right),
\end{equation}
which is easily shown to be decreasing in $\zeta$ on $\zeta\in(0,1)$\footnote{In $(\ref{eq:CV2P_zeta_form})$ we see the quantity $\sigma_{\mathrm d}g$ which is the sensor dark noise in units of electrons. This quantity represents the sum of read noise and dark current noise present in the pixel and is a function of integration time.}. Likewise, for $\nu>1$ we know from Lemma \ref{lem:monotonicity_of_ACV_T_v} that $\partial_\zeta\acv^2\mathscr T_\nu>0$ which implies all terms in $(\ref{eq:CV2Gv_zeta_derivative_expanded})$ are positive. Noting that $\acv\mathscr G_\nu$ is positive completes the proof.
\end{proof}

In the following Lemma we establish a property of $\mathscr G_\nu$ that agrees with a long standing observation about estimators of $g$ in the literature, that is, the dispersion of $\mathscr G_\nu$ is dominated by the dispersion of the estimator for $(\sigma_{\mathrm p+\mathrm d}^2-\sigma_{\mathrm d}^2)^{-1}$ as the illumination increases. To accomplish this we will consider the quantity
\[
\mathscr E=\frac{\cv^2\mathscr T_\nu}{\cv^2\mathscr G_\nu}.
\]
This ratio is useful for studying the dominance of $\var\mathscr G_\nu$ by the dispersion of $\mathscr T_\nu$ for a couple reasons. First, using the definition of the coefficient of variation and independence of $\bar P$ and $\mathscr T_\nu(\hat X,\hat Y)$ we are able to write
\[
\cv^2\mathscr G_\nu=\cv^2\mathscr T_\nu\left(1+(1+\cv^{-2}\mathscr T_\nu)\cv^2\bar P\right),
\]
and thus
\begin{equation}
\label{eq:E_ratio_expression}
\mathscr E=\left[1+(1+\cv^{-2}\mathscr T_\nu)\cv^2\bar P\right]^{-1}.
\end{equation}
Since the quantity inside the brackets is bounded below by one it follows that $0<\mathscr E<1$ and so $\mathscr E$ gives an intuitive measure of dominance. Second, if we define the random variable $\mathscr G_\nu^\ast=\mu_{\mathrm p}\mathscr T_\nu(\hat X,\hat Y)$ which estimates $g$ when $\mu_{\mathrm p}$ is known, i.e.~has zero variance, then $\mathscr E$ is equivalent to
\[
\mathscr E=\frac{\var\mathscr G_\nu^\ast}{\var\mathscr G_\nu}.
\]
So in this context we see that $\mathscr E\approx 1$ corresponds to $\bar P$ behaving like a fixed constant in comparison to $\mathscr T_\nu$; hence, $\var\mathscr G_\nu$ is completely dominated by the dispersion of $\mathscr T_\nu$.

\begin{lemma}
\label{lem:convergence_of_coeff_of_var}
As the illumination level increases $\mathscr E\searrow 1$.
\end{lemma}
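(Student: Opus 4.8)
The plan is to work from the reciprocal of the identity $(\ref{eq:E_ratio_expression})$, namely
\[
\frac{1}{\mathscr E}=1+\left(1+\cv^{-2}\mathscr T_\nu\right)\cv^2\bar P,
\]
and to regard $\mathscr E$ as a function of the illumination level through $\mu_{\electron}$, equivalently through $\zeta$, since increasing illumination corresponds to $\zeta\searrow0$ by Proposition \ref{prop:parameter_relations}. Because the product $(1+\cv^{-2}\mathscr T_\nu)\cv^2\bar P$ is strictly positive we already have $0<\mathscr E<1$ (as also follows from $(\ref{eq:CV2Gv_expanded})$), so it suffices to establish two facts: that this product vanishes in the limit of large illumination, and that it is monotone in the illumination level. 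The bound $\mathscr E<1$ then forces the monotone approach to $1$ to be from below.

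For the limit I would first rewrite $\cv^2\bar P$ directly in terms of $\mu_{\electron}$. Using $\bar P\sim\mathcal N(\mu_{\mathrm p},\sigma_{\mathrm p+\mathrm d}^2/n_1+\sigma_{\mathrm d}^2/n_2)$ together with $\mu_{\mathrm p}=\mu_{\electron}/g$ and $\sigma_{\mathrm p+\mathrm d}^2=\sigma_{\mathrm d}^2+\mu_{\electron}/g^2$ gives
\[
\cv^2\bar P=\frac{g^2\sigma_{\mathrm d}^2}{\mu_{\electron}^2}\left(\frac1{n_1}+\frac1{n_2}\right)+\frac{1}{n_1\mu_{\electron}},
\]
which manifestly decreases to $0$ as $\mu_{\electron}\to\infty$. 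At the same time Lemma \ref{lem:cvTv_boundary_values}$(\mathrm i)$ gives $\cv^2\mathscr T_\nu\to(\alpha_1-2)^{-1}>0$ as $\zeta\to0$ for $\nu>0$, so by continuity $\cv^{-2}\mathscr T_\nu$ stays bounded in a neighbourhood of $\zeta=0$. Hence $(1+\cv^{-2}\mathscr T_\nu)\cv^2\bar P\to0$, and a squeeze yields $1/\mathscr E\to1$, i.e.\ $\mathscr E\to1$ with $\mathscr E<1$ throughout.

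For the monotonicity I would parametrize by $\zeta$ and, assuming $\nu>1$, write $\mathscr E=u/\bigl(u+(u+1)A\bigr)$ with $u=\cv^2\mathscr T_\nu$ and $A=\cv^2\bar P=\phi(\zeta)/(\sigma_{\mathrm d}g)^2$, where $(\ref{eq:CV2P_zeta_form})$ gives $\phi(\zeta)=\zeta(1/n_1+\zeta/n_2)(1-\zeta)^{-2}$. Showing $\mathscr E$ is decreasing in $\zeta$ (hence increasing in illumination) reduces to showing $\psi=(u+1)\phi/u$ is increasing, and a direct differentiation yields $\operatorname{sign}\psi'=\operatorname{sign}\bigl[(u+1)u\,\phi'-u'\phi\bigr]$, i.e.\ the condition
\[
\frac{\phi'(\zeta)}{\phi(\zeta)}>\frac{u'(\zeta)}{u(\zeta)\bigl(u(\zeta)+1\bigr)}.
\]
The left-hand side is explicit, $\phi'/\phi=1/\zeta+(1/n_2)/(1/n_1+\zeta/n_2)+2/(1-\zeta)$, which is positive and blows up as $\zeta\to0^+$ and $\zeta\to1^-$; and Theorem \ref{thm:CVTv_is_monotone} gives $u'>0$ while Lemma \ref{lem:cvTv_boundary_values} bounds $u$ away from $0$ and $\infty$, so that $u/(u+1)$ sweeps a bounded range.

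The \textbf{main obstacle} is exactly this pointwise inequality in the interior of $(0,1)$: the product $(1+\cv^{-2}\mathscr T_\nu)\cv^2\bar P$ pairs a strictly \emph{increasing} factor with a strictly \emph{decreasing} one, so no monotonicity is automatic, and the substance of the claim is that the logarithmic derivative of $\cv^2\bar P$ dominates that of $u/(u+1)$. Near both endpoints this is immediate because $\phi'/\phi\to\infty$ while the right-hand side remains bounded, so the delicate region is the interior, where one must bound $u'=\partial_\zeta\cv^2\mathscr T_\nu$ from above. I would obtain such a bound by differentiating the series representation of $\cv^2\mathscr T_\nu$ in Corollary \ref{cor:cvT_nu} term by term and invoking the monotonicity estimates for $\tilde g_{n,\omega}$ established in Theorem \ref{thm:gnw_is_monotone}, which control the growth in $\zeta$ of each summand; combining these with the explicit lower bound on $\phi'/\phi$ closes the inequality and delivers the monotone convergence $\mathscr E\to1$ asserted by the lemma, consistent with the increasing monotonicity of $\acv\mathscr G_\nu$ already recorded in Proposition \ref{prop:ARBGv_ACVGv_monotonicity}.
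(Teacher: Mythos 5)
Your first two paragraphs are essentially the paper's own proof: the paper likewise starts from $(\ref{eq:E_ratio_expression})$, notes $\mathscr E<1$ so that any approach to one must be from below, observes from the explicit form of $\cv^2\bar P$ (equation $(\ref{eq:CV2P_zeta_form})$, equivalently your $\mu_{\electron}$-parametrization) that $\cv^2\bar P\to 0$ as $\zeta\to 0$, and invokes Lemma \ref{lem:cvTv_boundary_values} to conclude that $1+\cv^{-2}\mathscr T_\nu$ tends to a positive finite constant. The paper also records the case $n_1\leq 5$, in which $\cv^2\mathscr T_\nu\to\infty$ and hence $\cv^{-2}\mathscr T_\nu\to 0$; your phrase ``$\cv^2\mathscr T_\nu\to(\alpha_1-2)^{-1}>0$'' tacitly assumes $\alpha_1>2$, though under the standing constraints ($\nu>1$, $\alpha_1>2\nu$) this is harmless. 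Up to this point your argument is complete and correct, and it is all the lemma requires: in the paper the symbol $\searrow$ is justified solely by $\mathscr E<1$ together with the limit, not by monotonicity in $\zeta$.

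Your last two paragraphs attempt something strictly stronger, namely monotone decrease of $\mathscr E$ in $\zeta$ on the whole interval, and this part is not a proof. The reduction is done correctly: with $u=\cv^2\mathscr T_\nu$ and $\phi(\zeta)=\zeta(1/n_1+\zeta/n_2)(1-\zeta)^{-2}$ the claim is equivalent to the pointwise inequality $\phi'/\phi>u'/\bigl(u(u+1)\bigr)$, and you rightly identify the interior of $(0,1)$ as the delicate region. But the tools you cite cannot close it: Theorem \ref{thm:gnw_is_monotone} and Theorem \ref{thm:CVTv_is_monotone} are purely qualitative (they give $u'>0$ and nothing more), and nowhere in the paper is there a quantitative upper bound on $u'=\partial_\zeta\cv^2\mathscr T_\nu$ against which the explicit lower bound on $\phi'/\phi$ could be compared; ``combining these \dots closes the inequality'' is an assertion, not an argument, and even your endpoint claims implicitly assume $u'$ stays bounded there, which is not established. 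Since the lemma as stated and proved needs no interior monotonicity, you should either delete this portion or present it as a separate remark or conjecture; as written it is an unproven claim appended to an otherwise correct and paper-faithful proof.
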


\begin{proof}
We have already shown that $\mathscr E<1$; thus, if it approaches one then it must do so from below. All that is left is to show that $\mathscr E=1$ in the limit of infinite illumination. Our starting point is the expression for $\mathscr E$ in $(\ref{eq:E_ratio_expression})$, namely,
\[
\mathscr E=\left(1+(1+\cv^{-2}\mathscr T_\nu)\cv^2\bar P\right)^{-1}.
\]
As the illumination level increases without bound, $\zeta\to 0$ and from Lemma \ref{lem:cvTv_boundary_values}
\[
\cv^2\mathscr T_\nu\to%
\begin{cases}
\frac{2}{n_1-5}, &n_1>5\\
\infty, &n_1\leq 5.
\end{cases}
\]
Hence, $1+\cv^{-2}\mathscr T_\nu$ approaches a positive and finite constant in the limit. Next we turn our attention to the $\cv^2\bar P$ term. Using $(\ref{eq:CV2P_zeta_form})$ it is immediately obvious that $\cv^2\bar P\to 0$ as $\zeta\to 0$. Thus, as the illumination increases without bound $(1+\cv^{-2}\mathscr T_\nu)\cv^2\bar P\to 0$ which completes the proof.
\end{proof}

\begin{theorem}
\label{thm:acvTv_converges_to_acvGv}
For the estimator $\mathscr G_\nu$ in Theorem \ref{thm:new_g_estimator}
\[
\arb\mathscr G_\nu=\arb\mathscr T_\nu
\]
and as illumination increases
\[
\acv\mathscr G_\nu\sim\acv \mathscr T_\nu.
\]
In particular, if $\alpha_1+\alpha_2>2$, $\alpha_1>2\nu$, and $\nu>1$ then as illumination increases, $\zeta\nearrow 0$ and
\[
\acv\mathscr G_\nu\sim\acv \mathscr T_\nu\left(1+\frac{n_1-3}{4n_1}\frac{\zeta}{(\sigma_{\mathrm d}g)^2}+\mathcal O(\zeta^2)\right).
\]
\end{theorem}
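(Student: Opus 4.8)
The first two claims require almost no work. For the absolute relative bias, Proposition \ref{prop:ARBGv_ACVGv_monotonicity} already records $\arb\mathscr G_\nu=\zeta^\nu$, while Proposition \ref{prop:T_est_bias} gives $\arb\mathscr T_\nu=\zeta^\nu$; hence $\arb\mathscr G_\nu=\arb\mathscr T_\nu$ identically. For the leading equivalence of the absolute coefficient of variation, I would recall from $(\ref{eq:E_ratio_expression})$ that $\mathscr E=\cv^2\mathscr T_\nu/\cv^2\mathscr G_\nu$, and invoke Lemma \ref{lem:convergence_of_coeff_of_var}, which gives $\mathscr E\searrow 1$ as illumination increases; thus $\cv^2\mathscr G_\nu/\cv^2\mathscr T_\nu\to 1$ and therefore $\acv\mathscr G_\nu\sim\acv\mathscr T_\nu$.

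The substance lies in the sharper expansion. The plan is to start from the exact identity, valid by independence of $\bar P$ and $\mathscr T_\nu$,
\[
\acv\mathscr G_\nu=\acv\mathscr T_\nu\,(1+A)^{1/2},\qquad A\coloneqq\bigl(1+\cv^{-2}\mathscr T_\nu\bigr)\cv^2\bar P,
\]
which is just the square root of the bracketed factor in $\cv^2\mathscr G_\nu=\cv^2\mathscr T_\nu(1+A)$. I would then expand $A$ to first order in $\zeta$ as $\zeta\searrow 0$. The factor $\cv^2\bar P$ is explicit through $(\ref{eq:CV2P_zeta_form})$: Taylor expanding $\zeta/(1-\zeta)^2$ about the origin and retaining the leading term gives $\cv^2\bar P=\tfrac{1}{n_1(\sigma_{\mathrm d}g)^2}\zeta+\mathcal O(\zeta^2)$. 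For the remaining factor, the hypotheses $\alpha_1>2\nu$ and $\nu>1$ force $\alpha_1>2$, i.e.~$n_1>5$, so Lemma \ref{lem:cvTv_boundary_values}$(\mathrm i)$ (the $\nu>0$ branch) yields $\lim_{\zeta\to 0}\cv^2\mathscr T_\nu=1/(\alpha_1-2)=2/(n_1-5)$ and hence $1+\cv^{-2}\mathscr T_\nu\to (n_1-3)/2$. Multiplying the two factors gives $A=\tfrac{n_1-3}{2n_1}\,\tfrac{\zeta}{(\sigma_{\mathrm d}g)^2}+\mathcal O(\zeta^2)$, and the binomial expansion $(1+A)^{1/2}=1+\tfrac12 A+\mathcal O(A^2)$ then delivers the stated result, since $A=\mathcal O(\zeta)$ makes $A^2=\mathcal O(\zeta^2)$.

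The hard part will be the error bookkeeping in $A$: I must justify that the $\zeta$-dependent correction to $1+\cv^{-2}\mathscr T_\nu$ enters only at order $\zeta^2$, so that replacing this factor by its limit $(n_1-3)/2$ is legitimate to first order. The cleanest route is to observe that near $\zeta=0$ the series for $\cv^2\mathscr T_\nu$ in Corollary \ref{cor:cvT_nu} admits an expansion of the form $\tfrac{2}{n_1-5}+c_1\zeta+c_\nu\zeta^\nu+\cdots$; because $\cv^2\bar P=\mathcal O(\zeta)$, each such correction---be it the analytic $\zeta$-term or the fractional $\zeta^\nu$-term, which satisfies $\zeta^{1+\nu}=o(\zeta^2)$ precisely because $\nu>1$---contributes to $A$ only at order $\zeta^2$ or smaller, and likewise the $\mathcal O(\zeta^2)$ tail of $\cv^2\bar P$ is harmless. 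This is exactly where the assumption $\nu>1$ is indispensable: it both secures the positivity and monotonicity inherited from Theorem \ref{thm:CVTv_is_monotone} and pushes the fractional-power correction past second order. The surrounding manipulations are routine Taylor expansions.
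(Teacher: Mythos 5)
Your proposal is correct and follows essentially the same route as the paper's proof: the first two claims are read off from Proposition \ref{prop:ARBGv_ACVGv_monotonicity} and Lemma \ref{lem:convergence_of_coeff_of_var}, and the expansion is obtained by writing $\acv^2\mathscr G_\nu=\acv^2\mathscr T_\nu\left(1+(1+\cv^{-2}\mathscr T_\nu)\cv^2\bar P\right)$, using Lemma \ref{lem:cvTv_boundary_values} and $(\ref{eq:CV2P_zeta_form})$, and applying $\sqrt{1+f}\sim 1+\tfrac12 f+\mathcal O(f^2)$. The one step you flag as the ``hard part''---that the correction to $\cv^2\mathscr T_\nu$ near $\zeta=0$ is of the form $\mathcal O(\zeta)+\mathcal O(\zeta^\nu)$ with $\nu>1$ pushing the fractional term past second order in $A$---is exactly what the paper supplies via the asymptotics $(\ref{eq:gTilde_asym_form})$, which give $\tilde g_{k,k}^2=(k!)^2+\mathcal O(\zeta)$ and $\tilde g_{k,\ell}^2=\mathcal O(\zeta^2)$ for $k>\ell$, summed via $\sum_{k\geq 1}k!/(\alpha_1)_k=1/(\alpha_1-2)$.
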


\begin{proof}
The first two claims follow directly from Proposition \ref{prop:ARBGv_ACVGv_monotonicity} and Lemma \ref{lem:convergence_of_coeff_of_var}, respectively. For the last claim we assume $\nu>1$ and use $(\ref{eq:gTilde_asym_form})$ to deduce
\[
\tilde g^2_{k,\ell}(\zeta,\nu)\sim%
\begin{cases}
(k!)^2+\mathcal O(\zeta), &k=\ell\\
\mathcal O(\zeta^2), &k>\ell .
\end{cases}
\]
If in addition $\alpha_1+\alpha_2>2$ and $\alpha_1>2\nu$ then $\cv\mathscr T_\nu$ is finite and it follows that as $\zeta\nearrow 0$
\[
\cv^2\mathscr T_\nu\sim\sum_{k=1}^\infty \frac{k!}{(\alpha_1)_k}+\mathcal O(\zeta)=\frac{1}{\alpha_1-2}+\mathcal O(\zeta)
\]
and
\[
1+\cv^{-2}\mathscr T_\nu\sim \alpha_1-1+\mathcal O(\zeta)=\frac{n_1-3}{2}+\mathcal O(\zeta).
\]
Multiplying this result by
\[
\cv^2\bar P\sim\frac{1}{(\sigma_{\mathrm d}g)^2}\frac{\zeta}{n_1}+\mathcal O(\zeta^2)
\]
and using $\sqrt{1+f(x)}\sim 1+\frac{1}{2}f(x)+\mathcal O(f^2(x))$ as $f(x)\to 0$ then gives
\[
\mathscr E^{-1/2}=\frac{\acv\mathscr G_\nu}{\acv\mathscr T_\nu}\sim 1+\frac{n_1-3}{4n_1}\frac{\zeta}{(\sigma_{\mathrm d}g)^2}+\mathcal O(\zeta^2),
\]
which is the desired result.
\end{proof}

The main conclusion of Theorem \ref{thm:acvTv_converges_to_acvGv} is that confidence intervals for $\arb\mathscr T_\nu$ are equal to those of $\arb\mathscr G_\nu$ and that for sufficiently high illumination confidence intervals for $\acv\mathscr T_\nu$ may be used to approximate those of $\acv\mathscr G_\nu$. That said, these results do not give any indication as to how high the illumination level must be to achieve a good approximation or how $\acv\mathscr G_\nu$ compares to $\acv\mathscr T_\nu$ under low illumination when taking into account other parameters like the dark noise $\sigma_{\mathrm d}$, conversion gain $g$, and sample sizes. We will return to this problem later and for now will be content with knowing that given a sufficiently high illumination level $\acv\mathscr T_\nu\approx\acv\mathscr G_\nu$.  


\section{A demonstration of $g$-estimation with confidence intervals}
\label{sec:g_estimation_demo}

With Theorem \ref{thm:acvTv_converges_to_acvGv} at hand, we now bring together several of the results derived thus far and demonstrate the process of estimating $g$ in the context of the photon transfer method. Using the parameter values in Table \ref{tbl:simulation_parameters}, $N=10^6$ pseudo-random observations of $T=(\bar P,\hat X,\hat Y)$ were generated. The parameter values chosen represent what one might typically see in the photon transfer method. Since the sample sizes used are large we use $(\ref{eq:Tv_asym_hyper_form})$ to derive the \emph{first order} approximation
\[
\mathscr G_{\nu,1}(T)=\bar P\times\mathscr T_{\nu,1}(\hat X,\hat Y),
\]
where
\begin{multline*}
\mathscr T_{\nu,1}(\hat X,\hat Y)=\frac{1}{\hat Y}\Biggl(
\left(1-\frac{1}{\alpha_1}\right)\nu\pFq{}{}{1,1+\nu}{2}{1-\frac{\hat X}{\hat Y}}\\
-\frac{1}{\alpha_1}(\nu)^{(2)}\pFq{}{}{2,1+\nu}{3}{1-\frac{\hat X}{\hat Y}}%
-\frac{\alpha_1+\alpha_2}{6\alpha_1\alpha_2}(\nu)^{(3)}\pFq{}{}{3,1+\nu}{4}{1-\frac{\hat X}{\hat Y}}%
\Biggr)
\end{multline*}
and $\alpha_i=(n_i-1)/2$. Note that the presence of integers in the top and bottom parameters of the hypergeometric terms mean they reduce to elementary functions of the argument $1-\hat X/\hat Y$.

Computation of the confidence intervals was done by first computing $Z_\alpha=\hat Y/\hat X\, F_{\alpha,2\alpha_1,2\alpha_2}$ for each pair of $(\hat X,\hat Y)$ and confidence level $\alpha=0.05$. The $95\%$ confidence intervals for $\arb\mathscr G_\nu$ were then computed by substituting the values for $Z_\alpha$ into Theorem \ref{thm:ARB_T_v_CI}. As for the confidence intervals of $\acv\mathscr G_\nu$, a quick computation of $\mathscr E$ using the parameters in Table \ref{tbl:simulation_parameters} shows that despite the relatively low level of illumination used in the simulation $\mathscr E=0.998685\dots$ so that we may approximate these intervals with those for $\mathscr T_\nu$ in Theorem \ref{thm:ACV_T_v_CI}. The relatively large values used for $\alpha_1$ and $\alpha_2$ means computation of these interval estimates will have  to be done with the series expansion for $\acv\mathscr T_\nu$ which introduces the practical problem of truncation error. If we define
\[
\acv_n\mathscr T_\nu=\left(\sum_{k=1}^n\sum_{\ell=0}^k\frac{\tilde g_{k,\ell}^2(z,\nu)}{(\alpha_1)_\ell(\alpha_2)_{k-\ell}\,\ell!\,(k-\ell)!}\right)^{1/2},
\]
then the absolute relative error incurred in using this truncated expansion is given by
\[
R_{n,1}=\left\lvert\frac{\acv_n\mathscr T_\nu}{\acv\mathscr T_\nu}-1\right\rvert.
\]
Corollary \ref{cor:rel_truncation_error_bound} (see Appendix \ref{sec:trunc_error_estimate}) provides an upper bound, $R_{n,m,1}^\star$, for $R_{n,1}$ so that one may compute the number of terms $n=K^\star$ that guarantees $R_n<\epsilon$ according to
\[
K^\star=\min\{K|R_{K,m,1}^\star\leq\epsilon\land K\in\Bbb N\}.
\]
Note that the quantity $R_{n,m,1}^\star$ in Corollary \ref{cor:rel_truncation_error_bound} provides a single upper bound for the entire set of $Z_\alpha$ values so that $K^\star$ is the appropriate number of terms needed in computing all $10^6$ interval estimates within the specified error tolerance. The approximate interval estimates for $\acv\mathscr G_\nu$ may then be computed by substituting $\acv_{K^\star}\mathscr T_\nu$ and $Z_\alpha$ into Theorem \ref{thm:ACV_T_v_CI}.

Table \ref{tbl:simulation_results} presents the results of the simulation. Using the simulated values for $Z_\alpha$ and choosing $m=0$ and an absolute relative truncation error tolerance $\epsilon=5\times 10^{-4}$ led to only needing the first $K^\star=3$ terms in the series expansion for $\acv\mathscr T_\nu$. Substituting $K^\star$ into $R_{n,0,1}^\star$ subsequently showed that using this number of terms guaranteed the relative truncation error was less than $2.7\times 10^{-4}$ for all $10^6$ values of $Z_\alpha$. Additionally, note the relative difference between the exact and estimated values for $\ev\mathscr G_\nu$ and $\var\mathscr G_\nu$ are
\[
R_{\ev\mathscr G_\nu}=\left\lvert\frac{\widehat{\ev}\mathscr G_{\nu,1}}{\ev\mathscr G_\nu}-1\right\rvert\times 100\%=0.001\%
\]
and
\[
R_{\var\mathscr G_\nu}=\left\lvert\frac{\widehat{\var}\mathscr G_{\nu,1}}{\var\mathscr G_\nu}-1\right\rvert\times 100\%=0.24\%,
\]
which indicates no significant error was incurred from using the asymptotic approximation $\mathscr G_{\nu,1}$ in place of $\mathscr G_\nu$. Lastly, we see that the estimated coverage probability for the $95\%$ confidence intervals for $\arb\mathscr G_\nu$ and $\acv\mathscr G_\nu$ agree with the target values, the latter of which is due to the fact that $\mathscr E\approx 1$ for the chosen parameters.

This exercise raises additional questions as no instruction is given on the appropriate values for $\nu$, $n_1$ and $n_2$ to achieve desired values of $\arb\mathscr G_\nu$ and $\acv\mathscr G_\nu$. In particular, for real experiments, gathering large numbers of observations takes time and so knowledge of \emph{optimal} sample sizes would be useful in reducing the time required to gather data.  We will take a closer look at this problem in the following section.

\newcolumntype{Z}{>{$}c<{$}}
\begin{table}[p]
\centering
\renewcommand{\arraystretch}{1.5}
\begin{tabular}{ZZZZ}
\hline
\text{\bf Parameter} &\text{\bf Equation} &\text{\bf Value} &\text{\bf Unit}\\ \hline
\alpha &- &0.05 &-\\
n_1 &- &3001 &-\\
n_2 &- &1501 &-\\
g &- &5 &\electron/\mathrm{DN}\\
\nu &- &\frac{1}{2}e^\pi &-\\
\mu_{\electron} &- &150 &\electron\\
\mu_{\mathrm d} &- &10 &\mathrm{DN}\\
\sigma_{\mathrm d}^2 &- &16 &\mathrm{DN}^2\\
\mu_{\mathrm p+\mathrm d} &\mu_{\mathrm d}+\mu_{\electron}/g &40 &\mathrm{DN}\\
\sigma_{\mathrm p+\mathrm d}^2 &\sigma_{\mathrm d}^2+\mu_{\electron}/g^2 &22 &\mathrm{DN}^2\\
\zeta &\sigma_{\mathrm d}^2/\sigma_{\mathrm p+\mathrm d}^2 &\frac{8}{11} &-\\
\mu_{\bar P} &\mu_{\mathrm p+\mathrm d}-\mu_{\mathrm d} &30 &\mathrm{DN}\\
\sigma_{\bar P}^2 &\sigma_{\mathrm p+\mathrm d}^2/n_1+\sigma_{\mathrm d}^2/n_2 &\frac{81038}{4504501} &\mathrm{DN}^2\\
\alpha_1 &(n_1-1)/2 &1500 &-\\
\alpha_2 &(n_2-1)/2 &750 &-\\
\beta_1 &\alpha_1/\sigma_{\mathrm p+\mathrm d}^2 &\frac{750}{11} &\mathrm{DN}^{-2}\\
\beta_2 &\alpha_2/\sigma_{\mathrm d}^2 &\frac{375}{8} &\mathrm{DN}^{-2}\\[0.5em]
\hline\hline
\end{tabular}
\renewcommand{\arraystretch}{1}
\caption{Simulation parameters.}
\label{tbl:simulation_parameters}
\end{table}

\newcolumntype{Z}{>{$}c<{$}}
\begin{table}[p]
\centering
\renewcommand{\arraystretch}{1.5}
\begin{tabular}{ZZZZ}
\hline
\text{\bf Quantity} &\text{\bf Exact Value} &\text{\bf Estimated Value} &\text{\bf Unit}\\ \hline
K^\star &3 &- &-\\
R_{K^\star,0,1}^\star &0.02671 &- &\%\\
\ev\mathscr G_\nu &4.87447 &4.87440 &\electron/\mathrm{DN}\\
\var\mathscr G_\nu &0.36671 &0.36759 &(\electron/\mathrm{DN})^2\\
\pr(\operatorname{CI}_\alpha(\arb\mathscr T_\nu)\ni\arb\mathscr G_\nu) &0.95 &0.95008 &-\\
\pr(\operatorname{CI}_\alpha(\acv\mathscr T_\nu)\ni\acv\mathscr G_\nu) &\sim 0.95 &0.94945 &-\\[0.5em]
\hline\hline
\end{tabular}
\renewcommand{\arraystretch}{1}
\caption{Simulation results based on $N=10^6$ pseudo-random observations.}
\label{tbl:simulation_results}
\end{table}


\section{Optimal sample sizes}
\label{sec:cv_analysis}

When discussing the notion of optimal sample sizes we must first begin with a definition of what is optimal. Typically, experimenters wish to achieve the smallest possible uncertainty with the fewest number of total observations. This goal is manifested in the following definition.
\begin{definition}[Optimal sample sizes]
\label{def:optimal_sample_sizes}
Let $\mathbf X_j=(X_{i,j},\dots,X_{n_j,j})$ with $X_{i,j}\overset{\mathrm{iid}}{\sim}F_j(\theta_j)$ denote a random sample of size $n_j$ drawn from the distribution $F_j$ with parameters $\theta_j$. Furthermore, let $T(\mathbf X)$ denote an estimator based on $M$ random samples $\mathbf X=(\mathbf X_1,\dots,\mathbf X_M)$ and $\acv T(n|\theta_T)$ denote its absolute coefficient of variation as a function of the sample sizes $n=(n_1,\dots, n_M)$ with parameters $\theta_T=\cup_j\theta_j$. Then the optimal sample sizes are defined as the vector-valued function $n^{\normalfont\text{opt}}(\mathrm N,\theta_T)=(n_1^{\normalfont \text{opt}},\dots,n_M^{\normalfont \text{opt}})(\mathrm N,\theta_T)$ that minimizes $\acv T(n|\theta_T)$ subject to $\mathrm N=n_1+\cdots+n_M$ for some fixed $\mathrm N>0$.
\end{definition}

In the case where there are two samples, so that $n=(n_1,n_2)$, this definition of optimality requires solving the single-variable optimization problem
\begin{subequations}
\label{eq:optimal_SOEs_parameterized_in_N}
\begin{align}
n_2^{\text{opt}}(\mathrm N,\theta_T) &=\arginf_{n_2\in[0,\mathrm N]}\acv^2 T(\mathrm N-n_2,n_2|\theta_T) \label{eq:optim_eq1_Nparam}\\
n_1^{\text{opt}}(\mathrm N,\theta_T) &=\mathrm N-n_2^{\text{opt}}(\mathrm N,\theta_T) \label{eq:optim_eq2_Nparam},
\end{align}
\end{subequations}
for some total number of samples $\mathrm N$ and parameters $\theta_T$. Note that while this formulation does give the optimal sample sizes according to definition \ref{def:optimal_sample_sizes}, it does not allow one to specify a predetermined value (desired outcome) for $\acv T$. Since it may be desirable to specify the outcome of $\acv T$ instead of $\mathrm N$ we can introduce the constraint $\acv T(n_1^\text{opt},n_2^\text{opt}|\theta_T)=\sacv_0$ and reformulate (\ref{eq:optimal_SOEs_parameterized_in_N}) into the equivalent two-variable problem
\begin{subequations}
\label{eq:optimal_SOEs}
\begin{align}
\inf_{n_2\in[0,\mathrm N]} &\acv^2 T(\mathrm N-n_2,n_2|\theta_T)\big|_{(\mathrm N,n_2)=(n_1^\text{opt}+n_2^\text{opt},n_2^\text{opt})} \label{eq:optim_eq1}\\
&\acv^2 T(n_1^\text{opt},n_2^\text{opt}|\theta_T)-\sacv_0^2=0 \label{eq:optim_eq2},
\end{align}
\end{subequations}
which can then be solved for $(n_1^\text{opt},n_2^\text{opt})$ in terms of $\sacv_0$ and $\theta_T$.

Before proceeding, let's consider what this system of equations says and how we solve it. Three cases must be considered which we shall denote as: \emph{nondegenerate} ($n_1^\text{opt},n_2^\text{opt}\neq 0$), \emph{weakly degenerate} ($n_1^\text{opt}=0$ or $n_2^\text{opt}=0$ but not both), and \emph{degenerate} ($n_1^\text{opt},n_2^\text{opt}=0$). For the nondegenerate case we begin with equation (\ref{eq:optim_eq1}) by fixing the total number of samples to $\mathrm N$ and then minimizing $\acv^2T$ in $n_2$ subject to this constraint. This process gives the optimal sample size $n_2^\text{opt}$ in terms of $\mathrm N$ and $\theta_T$\footnote{In fact, if we were to stop here equation (\ref{eq:optim_eq1}) would be identical to (\ref{eq:optim_eq1_Nparam})}, which upon substituting $\mathrm N=n_1^\text{opt}+n_2^\text{opt}$ yields an equation representing the optimal relationship, a.k.a.~the \emph{optimality relation}, between the two sample sizes. Solving the optimality relation for either $n_1^\text{opt}$ or $n_2^\text{opt}$ and then substituting into (\ref{eq:optim_eq2}) we then obtain one of the optimal sample sizes parameterized in terms of $\sacv_0$ and $\theta_T$. This solution may then be used in conjunction with the optimality relation to obtain the solution for the remaining optimal sample size. In the case of weak degeneracy, the infimum of $\acv^2T(\mathrm N-n_2,n_2|\theta_T)$ occurs at the boundary $n_2=0,\mathrm N$ and so our two-sample problem reduces to that of finding a single optimal sample size via (\ref{eq:optim_eq2}). Lastly, the case of degeneracy results when $\acv T\to 0$ and so zero samples are needed to achieve the desired outcome $\acv T=\sacv_0$. As will be seen in the latter analysis, this situation can occur at boundary points of the parameter space and thus be treated as a limit of the nondegenerate case.


\subsection{Asymptotic properties at low illumination}
\label{subsec:characteristics_at_low_illumination}

One of the unanswered curiosities of Section \ref{sec:g_estimation_demo} was how the quantity $\mathscr E$ behaves at low illumination w.r.t.~the parameters $(\sigma_{\mathrm d},g,n_1,n_2)$. If we fix these parameters and consider what only happens as the illumination decreases, i.e.~as $\zeta\searrow 1$, we see that $\cv\bar P\to\infty$ while $\cv\mathscr T_\nu$ approaches a constant and thus $\mathscr E\to 0$. From this observation one might conclude that the only way to achieve $\mathscr E$-values near unity is to perform measurements at high illumination; thus, extinguishing the possibility of approximating confidence intervals and optimal sample sizes for $\mathscr G_\nu$ with those of $\mathscr T_\nu$ under low illumination conditions. However, if we instead consider $\mathscr E$ as a function of the optimal samples sizes for $\mathscr T_\nu$ or $\mathscr G_\nu$, which vary with the illumination level, the limiting behavior at low illumination changes drastically. The following analysis takes a look at the behavior of the optimal sample sizes for $\bar P$, $\mathscr T_\nu$, and $\mathscr G_\nu$ under low illumination conditions. The main results are found in Lemma \ref{lem:ACVbarP_opt_sample_sizes}, Theorem \ref{thm:Tv_opt_n_limit}, and Corollary \ref{cor:Gv_opt_n_limit}, which show for each estimator that the optimal samples sizes are asymptotically equal and proportional to $(1-\zeta)^{-2}$ as $\zeta\searrow 1$. We will then conclude this section by using these asymptotic results in Corollary \ref{cor:E_with_opt_samp_sizes} to study how $\mathscr E$ behaves at low illumination when subject to optimal sample sizes.

\begin{lemma}
\label{lem:ACVbarP_opt_sample_sizes}
Let $(n_1^{{\normalfont\text{opt}}},n_2^{{\normalfont\text{opt}}})$ denote the optimal sample sizes for $\bar P$ that also satisfy $\acv\bar P(n_1^{{\normalfont\text{opt}}},n_2^{{\normalfont\text{opt}}}|\zeta,\sigma_{\mathrm d},g)=\sacv_0$ for some fixed $\sacv_0\in\Bbb R^+$. Then as illumination decreases, $\zeta\searrow 1$, $n_2^{{\normalfont\text{opt}}}/n_1^{{\normalfont\text{opt}}}\to 1$, and $n_i^{{\normalfont\text{opt}}}\sim C_{\bar P}(1-\zeta)^{-2}$ where $C_{\bar P}=2((\sigma_{\mathrm d}g)^2\sacv_0^2)^{-1}$.
\end{lemma}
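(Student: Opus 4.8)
The plan is to treat this as a concrete instance of the nondegenerate optimization problem (\ref{eq:optimal_SOEs}), in which $\acv\bar P$ is available in closed form. First I would record the objective. Since $\bar P\sim\mathcal N(\mu_{\mathrm p},\sigma_{\mathrm p+\mathrm d}^2/n_1+\sigma_{\mathrm d}^2/n_2)$, Proposition \ref{prop:parameter_relations} converts the raw moments into exactly expression (\ref{eq:CV2P_zeta_form}), namely
\[
\acv^2\bar P=\frac{1}{(\sigma_{\mathrm d}g)^2}\frac{\zeta}{(1-\zeta)^2}\left(\frac{1}{n_1}+\frac{\zeta}{n_2}\right).
\]
The prefactor $K(\zeta)=\zeta\bigl((\sigma_{\mathrm d}g)^2(1-\zeta)^2\bigr)^{-1}$ is independent of $(n_1,n_2)$, so after substituting $n_1=\mathrm N-n_2$ the minimization only sees the bracketed term $f(n_2)=(\mathrm N-n_2)^{-1}+\zeta n_2^{-1}$.

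Next I would solve the inner problem (\ref{eq:optim_eq1}). Differentiating gives $f'(n_2)=(\mathrm N-n_2)^{-2}-\zeta n_2^{-2}$, whose only interior root satisfies $n_2/(\mathrm N-n_2)=\sqrt\zeta$, i.e.\ the optimality relation $n_2^{\text{opt}}=\sqrt\zeta\,n_1^{\text{opt}}$. Because $f(n_2)\to\infty$ at both endpoints $n_2\to0^+$ and $n_2\to\mathrm N^-$, this single critical point is the global minimizer on $(0,\mathrm N)$, confirming we sit in the nondegenerate regime; a one-line second-derivative (convexity) check makes this rigorous. The optimality relation already delivers the first claim, since $n_2^{\text{opt}}/n_1^{\text{opt}}=\sqrt\zeta\to1$ as $\zeta\searrow1$.

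Finally I would impose the outcome constraint (\ref{eq:optim_eq2}), $\acv\bar P=\sacv_0$. Evaluating $f$ at the optimum gives $f(n_2^{\text{opt}})=(1+\sqrt\zeta)/n_1^{\text{opt}}$, so the constraint reads $\sacv_0^2=K(\zeta)(1+\sqrt\zeta)/n_1^{\text{opt}}$, which I solve in closed form:
\[
n_1^{\text{opt}}=\frac{\zeta(1+\sqrt\zeta)}{(\sigma_{\mathrm d}g)^2(1-\zeta)^2\sacv_0^2}.
\]
Letting $\zeta\searrow1$, the numerator $\zeta(1+\sqrt\zeta)\to2$, so $n_1^{\text{opt}}\sim 2\bigl((\sigma_{\mathrm d}g)^2\sacv_0^2\bigr)^{-1}(1-\zeta)^{-2}=C_{\bar P}(1-\zeta)^{-2}$; and since $n_2^{\text{opt}}=\sqrt\zeta\,n_1^{\text{opt}}\sim n_1^{\text{opt}}$, both optimal sizes inherit this asymptotic. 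The algebra is elementary throughout, so the only points genuinely demanding care are (i) justifying that the interior stationary point is a true global minimum rather than a boundary/degenerate solution, and (ii) extracting the asymptotic equivalence $\sim$ cleanly from the exact formula for $n_1^{\text{opt}}$ by isolating the $(1-\zeta)^{-2}$ factor and sending the remaining bounded factor to its limit.
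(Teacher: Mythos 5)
Your proposal is correct and follows essentially the same route as the paper's own proof: the same closed form $(\ref{eq:CV2P_zeta_form})$, the same reduction to the constrained system $(\ref{eq:optimal_SOEs})$, the same optimality relation $n_2^{\text{opt}}/n_1^{\text{opt}}=\sqrt\zeta$, and the same exact expressions for $n_i^{\text{opt}}$ whose numerators tend to $2$ as $\zeta\searrow 1$. Your explicit convexity-plus-boundary-blowup check establishing nondegeneracy is exactly the argument the paper uses, so nothing is missing.
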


\begin{proof}
For the sake of brevity, we begin by letting $a=\frac{1}{(\sigma_{\mathrm d}g)^2}\frac{\zeta}{(1-\zeta)^2}$ and writing
\[
\acv^2\bar P(n_1,n_2|\zeta,\sigma_{\mathrm d},g)=a\left(\frac{1}{n_1}+\frac{\zeta}{n_2}\right).
\]
If $\zeta\in(0,1)$, then $a\neq 0$ and $\acv^2\bar P(\mathrm N-n_2,n_2|\theta_{\bar P})$ is smooth and strictly convex in $n_2$ on $(0,\mathrm N)$. Additionally, the limiting value of $\acv^2\bar P(\mathrm N-n_2,n_2|\theta_{\bar P})$ at the endpoints $n_2=0,\mathrm N$ is infinite and so we know our optimization problem is nondegenerate. These observations lead us to conclude that $(\ref{eq:optimal_SOEs})$ can be uniquely solved by
\[
\begin{aligned}
\partial_{n_2}\acv^2 \bar P(\mathrm N-n_2,n_2|\theta_{\bar P})\Big|_{(\mathrm N,n_2)=(n_1^\text{opt}+n_2^\text{opt},n_2^\text{opt})} &=0\\
\acv^2 \bar P(n_1^\text{opt},n_2^\text{opt}|\theta_{\bar P})-\sacv_0^2 &=0,
\end{aligned}
\]
which upon substituting appropriate values produces the system of equations
\[
\begin{aligned}
&(1) &a\left(\frac{1}{(n_1^\text{opt})^2}-\frac{\zeta}{(n_2^\text{opt})^2}\right) &=0\\
&(2) &a\left(\frac{1}{n_1^\text{opt}}+\frac{\zeta}{n_2^\text{opt}}\right)-\sacv_0^2 &=0.
\end{aligned}
\]
Upon inspection, $(1)$ gives us the optimality relation $n_2^\text{opt}/n_1^\text{opt}=\sqrt\zeta$ and so as $\zeta\searrow 1$ we have $n_2^\text{opt}/n_1^\text{opt}\to 1$ which is the first claim. Substituting the optimality relation into $(2)$ and solving for $n_2^\text{opt}$ further yields $n_2^\text{opt}=a/\sacv_0^2(\sqrt\zeta+\zeta)$. Making use of this result and again calling on the optimality relation we have after reintroducing $a$:
\[
\begin{aligned}
n_1^{\text{opt}} &=\frac{1}{(\sigma_{\mathrm d}g)^2\sacv_0^2}\frac{\zeta(1+\sqrt\zeta)}{(1-\zeta)^2}\\
n_2^{\text{opt}} &=\frac{1}{(\sigma_{\mathrm d}g)^2\sacv_0^2}\frac{\zeta(\sqrt\zeta+\zeta)}{(1-\zeta)^2}.
\end{aligned}
\]
Now as $\zeta\searrow 1$, $\zeta(1+\sqrt\zeta)\sim\zeta(\sqrt\zeta+\zeta)\sim 2+\mathcal O(1-\zeta)$; hence,
\[
n_i^{\text{opt}}\sim\frac{2}{(\sigma_{\mathrm d}g)^2\sacv_0^2}(1-\zeta)^{-2}+\mathcal O((1-\zeta)^{-1}),\quad i=1,2
\]
which proves the second claim. Recall that we assumed $\zeta\in(0,1)$ so as to render our optimization problem nondegenerate. However, one can obtain the optimal sample sample sizes at the endpoints $\zeta=1$ and $\zeta=0$ by considering the appropriate limits, the latter of which gives us the degenerate case $(n_1^\text{opt},n_2^\text{opt})=(0,0)$. The proof is now complete.
\end{proof}

In Lemma \ref{lem:ACVbarP_opt_sample_sizes} we studied the optimal sample sizes for $\bar P$ that satisfied Definition \ref{def:optimal_sample_sizes} and the constraint $\acv\bar P=\sacv_0$. Unlike $\bar P$, the estimator $\mathscr T_\nu$ is biased so we will investigate its optimal sample sizes w.r.t.~a desired outcome for both $\acv\mathscr T_\nu$ and $\arb\mathscr T_\nu$. We could also choose $\arb\mathscr T_\nu=\sarb_0$ to be constant but this may not always be desirable as will be seen in later sections. In particular, notice that for any $\nu>0$, $\arb\mathscr T_\nu=\zeta^\nu\to 0$ in the shot noise limit $\zeta\nearrow 0$. As such, forcing a bias on $\mathscr T_\nu$ in the shot noise limit  by requiring $\arb\mathscr T_\nu=\sarb_0$ could be viewed as counterproductive in certain contexts. These observations steer us in the direction of specifying a constraint on $\arb\mathscr T_\nu$ that varies with illumination level and vanishes in the shot noise limit. To help aid in our discussion we introduce the notion of the \emph{bias profile}.

\begin{definition}[Bias profile]
\label{def:bias_profile}
Let $T\sim F_T(\theta_T)$ be an estimator of $g(\theta_T)\neq 0$ for which $\arb T=\rho(\theta_T)$. Then we say $\rho(\theta_T)$ is the bias profile for $T$.
\end{definition}

Given the simplicity of the expression for $\arb\mathscr T_\nu$ we can choose just about any bias profile we desire by setting $\nu=\log_\zeta \rho$. With so many available options we will aim for simplicity and proceed with the following.

\begin{definition}
\label{def:nudag}
For $\zeta\in[0,1]$, $\sarb_0\in(0,1)$, and $b\in\Bbb R_0^+$, $\nudag\coloneqq\log_\zeta\sarb_0+b$.
\end{definition}

\begin{proposition}
\label{prop:nu_as_func_of_zeta}
Let $\nudag$ be as defined in Definition \ref{def:nudag}. Then, the bias profile of $\mathscr T_\nudag$ is $\rho=\sarb_0\zeta^b$.
\end{proposition}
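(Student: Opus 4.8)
The plan is to reduce the claim to a direct substitution into the absolute-relative-bias formula already established for $\mathscr T_\nu$. First I would recall Proposition \ref{prop:T_est_bias}, which states that as an estimator of $\tau$ the estimator $\mathscr T_\nu$ has absolute relative bias $\arb\mathscr T_\nu=\zeta^\nu$. By Definition \ref{def:bias_profile}, the bias profile is precisely the function of the underlying parameters that coincides with this absolute relative bias, so determining the bias profile of $\mathscr T_\nudag$ amounts to evaluating $\zeta^\nu$ at $\nu=\nudag$.

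Next I would substitute the explicit form $\nudag=\log_\zeta\sarb_0+b$ from Definition \ref{def:nudag}, giving
\[
\arb\mathscr T_\nudag=\zeta^{\nudag}=\zeta^{\log_\zeta\sarb_0+b}=\zeta^{\log_\zeta\sarb_0}\,\zeta^b.
\]
The elementary identity $\zeta^{\log_\zeta\sarb_0}=\sarb_0$, valid for $\zeta\in(0,1)$ by the definition of the base-$\zeta$ logarithm, then collapses the first factor to $\sarb_0$, yielding $\rho=\sarb_0\zeta^b$ as required.

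There is no genuine obstacle here; the only points requiring care are boundary and validity considerations. I would note that $\sarb_0\in(0,1)$ together with $\zeta\in(0,1)$ forces $\log_\zeta\sarb_0>0$, and since $b\geq 0$ we have $\nudag>0$, which places $\mathscr T_\nudag$ in the admissible regime $\nu>0$ of Remark \ref{rmk:Tv_as_estimator_for_tauv} so that the bias-profile formula applies and $\arb\mathscr T_\nudag<1$. The degenerate endpoints $\zeta\to 0$ and $\zeta\to 1$ are then handled by continuity, recovering $\rho\to 0$ and $\rho\to\sarb_0$ respectively, in agreement with the stated profile.
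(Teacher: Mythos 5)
Your proof is correct and matches the paper's (implicit) argument exactly: the paper states this proposition without proof precisely because, as the surrounding text notes, one simply substitutes $\nu=\nudag=\log_\zeta\sarb_0+b$ into $\arb\mathscr T_\nu=\zeta^\nu$ from Proposition \ref{prop:T_est_bias} and applies $\zeta^{\log_\zeta\sarb_0}=\sarb_0$. Your additional remarks on $\nudag>0$ and the boundary behavior are sound, if not strictly necessary.
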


This choice of bias profile is useful since it is simple and guarantees $\arb\mathscr T_\nudag\leq\sarb_0$ everywhere on $\zeta\in[0,1]$. Furthermore, the parameter $b$ provides the flexibility to choose a constant bias profile $(b=0)$ or a profile that vanishes to zero in the shot noise limit $(b>0)$. With this matter out of the way we present Lemmas \ref{lem:gnw_constant_bias_limit}-\ref{lem:CV2Tv_limit_double_integral}, which provide us with the necessary results for studying the behavior of the optimal sample sizes of $\mathscr T_\nudag$ under low-illumination conditions in Theorem \ref{thm:Tv_opt_n_limit}.

\begin{definition}[Lower-incomplete gamma function]
\label{def:gaminc_lower}
For $\Re s>0$
\[
\gamma(s,z) \coloneqq \int_0^z t^{s-1}e^{-t}\,\mathrm dt.
\]
\end{definition}

\begin{lemma}
\label{lem:gnw_constant_bias_limit}
Let $\nudag$ be as define in Definition \ref{def:nudag}. Then as $\zeta\searrow 1$
\[
\tilde g_{n,\omega}(\zeta,\nudag)\sim\frac{\gamma(n+1,-\log\sarb_0)}{1-\sarb_0}(1-\zeta)^{-n}+\mathcal O((1-\zeta)^{1-n}),
\]
where $\gamma(s,z)$ is the lower-incomplete gamma function of Definition \ref{def:gaminc_lower}.
\end{lemma}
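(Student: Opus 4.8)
The plan is to work from form $(\mathrm{iii})$ of Corollary \ref{cor:gTilde_hyper_form}, which expresses $\tilde g_{n,\omega}$ as a ratio whose only singular factor as $\zeta\to 1$ is the $(1-\zeta)^n$ in the denominator:
\[
\tilde g_{n,\omega}(\zeta,\nudag)=n!\,\frac{\zeta^{n-\omega}-\zeta^{\nudag}\sum_{k=0}^n\frac{(\omega+\nudag-n)_k}{k!}(1-\zeta)^k}{(1-\zeta)^n\bigl(1-\zeta^{\nudag}\bigr)}.
\]
Setting $\epsilon=1-\zeta$, I would expand numerator and denominator as $\epsilon\searrow 0$, the essential complication being that $\nudag\to\infty$ in this limit, so the Pochhammer symbols $(\omega+\nudag-n)_k$ and the power $\zeta^{\nudag}$ must be tracked simultaneously with the vanishing factors $(1-\zeta)^k$.

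First I would pin down the growth of $\nudag$. Since $\nudag=\log_\zeta\sarb_0+b=\tfrac{\log\sarb_0}{\log\zeta}+b$ and $\tfrac{1-\zeta}{\log\zeta}\to-1$ as $\zeta\to1$, writing $\lambda\coloneqq-\log\sarb_0>0$ gives $\nudag\,\epsilon\to\lambda$, and more precisely $\nudag=\lambda/\epsilon+\mathcal{O}(1)$. Two consequences drive the argument: by Proposition \ref{prop:nu_as_func_of_zeta} the power $\zeta^{\nudag}=\sarb_0\zeta^b$ tends to $\sarb_0$ (so the denominator factor $1-\zeta^{\nudag}\to1-\sarb_0\neq0$), and each rising factorial satisfies $(\omega+\nudag-n)_k=(\lambda/\epsilon)^k(1+\mathcal{O}(\epsilon))$, whence $\tfrac{(\omega+\nudag-n)_k}{k!}(1-\zeta)^k\to\tfrac{\lambda^k}{k!}$.

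The heart of the computation is the numerator. Passing to the limit term by term in the finite sum gives
\[
\zeta^{n-\omega}-\zeta^{\nudag}\sum_{k=0}^n\frac{(\omega+\nudag-n)_k}{k!}(1-\zeta)^k\longrightarrow 1-\sarb_0\sum_{k=0}^n\frac{\lambda^k}{k!}.
\]
I would then invoke the classical closed form of the incomplete gamma function at integer order, $\gamma(n+1,\lambda)=n!\bigl(1-e^{-\lambda}\sum_{k=0}^n\lambda^k/k!\bigr)$, together with $e^{-\lambda}=\sarb_0$, to recognize the right-hand limit as $\gamma(n+1,\lambda)/n!$. Dividing by the denominator $\epsilon^n(1-\zeta^{\nudag})\sim(1-\sarb_0)\epsilon^n$ and restoring $\lambda=-\log\sarb_0$ produces the claimed leading coefficient $\gamma(n+1,-\log\sarb_0)/(1-\sarb_0)$; note it is independent of both $\omega$ and $b$, which enter only at the next order.

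The main obstacle is the remainder $\mathcal{O}((1-\zeta)^{1-n})$: establishing it requires carrying each expansion one order further rather than merely passing to the limit. Concretely, I would expand $\zeta^{n-\omega}=1-(n-\omega)\epsilon+\mathcal{O}(\epsilon^2)$, $\zeta^{\nudag}=\sarb_0(1-b\epsilon+\mathcal{O}(\epsilon^2))$, and $\tfrac{(\omega+\nudag-n)_k}{k!}\epsilon^k=\tfrac{\lambda^k}{k!}+\mathcal{O}(\epsilon)$ — the last obtained from the product expansion of the Pochhammer factors using $\nudag=\lambda/\epsilon+\mathcal{O}(1)$ — so that the numerator equals $\gamma(n+1,\lambda)/n!+\mathcal{O}(\epsilon)$ and the factor $1-\zeta^{\nudag}=(1-\sarb_0)(1+\mathcal{O}(\epsilon))$. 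Combining, the relative correction to the leading term is $\mathcal{O}(\epsilon)$, i.e.\ an absolute correction of order $\epsilon^{1-n}$, which is exactly the stated remainder. The one point demanding care is the uniformity of the $\mathcal{O}(\epsilon)$ estimates across the finitely many indices $k=0,\dots,n$, which is harmless since $n$ is fixed.
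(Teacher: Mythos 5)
Your proposal is correct and follows essentially the same route as the paper's proof: both start from form $(\mathrm{iii})$ of Corollary \ref{cor:gTilde_hyper_form}, use $\zeta^{\nudag}=\sarb_0\zeta^b\to\sarb_0$ and $(\omega+\nudag-n)_k(1-\zeta)^k/k!\to(-\log\sarb_0)^k/k!$ from $\nudag\sim-\log\sarb_0/(1-\zeta)$, and recognize $1-\sarb_0\sum_{k=0}^n(-\log\sarb_0)^k/k!$ as $\gamma(n+1,-\log\sarb_0)/n!$ via the standard integer-order closed form. Your slightly more explicit bookkeeping of the $\mathcal O(1-\zeta)$ corrections (and their uniformity over the finitely many $k$) is a welcome sharpening of the paper's ``collecting terms'' step, but it is the same argument.
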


\begin{proof}
From Corollary \ref{cor:gTilde_hyper_form} form $(\mathrm{iii})$ we have
\[
(1-\zeta)^n\tilde g_{n,\omega}(\zeta,\nudag)=n!\frac{\zeta^{n-\omega}-\sarb_0\zeta^b\sum_{k=0}^n\frac{(\log_\zeta\sarb_0+b-n+\omega)_k}{k!}(1-\zeta)^k}{1-\sarb_0\zeta^b}.
\]
As $\zeta\searrow 1$, one may use the generalized binomial theorem to deduce $\zeta^x\sim 1+\mathcal O(1-\zeta)$. Furthermore, we also determine as $\zeta\searrow 1$: $\log_\zeta\sarb_0\to\infty$ so that $(\log_\zeta\sarb_0+b-n+\omega)_k\sim\log_\zeta^k\sarb_0$ and $[(1-\zeta)\log_\zeta\sarb_0]^k\sim(-\log\sarb_0)^k(1+\mathcal O(1-\zeta))$. After making the appropriate substitutions and collecting terms we find
\[
(1-\zeta)^n\tilde g_{n,\omega}(\zeta,\nudag)\sim\Gamma(n+1)\frac{1-\sarb_0\sum_{k=0}^n\frac{(-\log\sarb_0)^k}{k!}}{1-\sarb_0}+\mathcal O(1-\zeta),
\]
which according to \cite[Eq.~$06.06.03.0009.01$]{wolfram_functions} can be written in terms of the lower incomplete gamma function. Solving for $\tilde g_{n,\omega}$ completes the proof.
\end{proof}

\begin{definition}[Polygamma function]
\label{def:polygamma_func}
For $n\in\Bbb N_0$,
\[
\psi^{(n)}(z)\coloneqq\partial_z^{n+1}\log\Gamma(z).
\]
\end{definition}

\begin{lemma}
\label{lem:CV2Tv_ell_sum_optimization}
Let $(\alpha_1,\alpha_2)\in\Bbb R^+\times\Bbb R^+$ and
\[
f_k(\alpha_1,\alpha_2)=\sum_{\ell=0}^k\frac{1}{(\alpha_1)_\ell(\alpha_2)_{k-\ell} \ell!(k-\ell)!},\quad k\in\Bbb N.
\]
Then for all $k$ and some constant $\mathrm A>0$, the optimal pair $(\alpha_1^{\normalfont\text{opt}},\alpha_2^{\normalfont\text{opt}})$ that minimize $f_k$ subject to $\mathrm A=\alpha_1+\alpha_2$ is
\[
(\alpha_1^{\normalfont\text{opt}},\alpha_2^{\normalfont\text{opt}})=(\tfrac{1}{2}\mathrm A,\tfrac{1}{2}\mathrm A).
\]
\end{lemma}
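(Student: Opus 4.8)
The plan is to combine the manifest symmetry of $f_k$ under interchange of $\alpha_1$ and $\alpha_2$ with joint convexity, so that the minimizer on the constraint line is forced to the symmetric midpoint. First I would record the symmetry: reindexing the sum by $\ell\mapsto k-\ell$ gives $f_k(\alpha_1,\alpha_2)=f_k(\alpha_2,\alpha_1)$, since the transformation simply swaps the two Pochhammer factors and the two factorials in each summand. This is the structural fact that will pin the optimum to the diagonal $\alpha_1=\alpha_2$.

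Next I would establish that $f_k$ is jointly convex on $\Bbb R^+\times\Bbb R^+$. Writing $1/(\alpha)_\ell=\prod_{j=0}^{\ell-1}(\alpha+j)^{-1}$, we have $\log\bigl[1/(\alpha)_\ell\bigr]=-\sum_{j=0}^{\ell-1}\log(\alpha+j)$, which is convex in $\alpha$ because each $-\log(\alpha+j)$ is convex on $\Bbb R^+$. Hence the logarithm of a single summand $\bigl[(\alpha_1)_\ell(\alpha_2)_{k-\ell}\,\ell!\,(k-\ell)!\bigr]^{-1}$ is a sum of a convex function of $\alpha_1$ and a convex function of $\alpha_2$, so its Hessian is diagonal with nonnegative entries and the summand is jointly log-convex, hence a fortiori jointly convex. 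A finite sum of convex functions being convex, $f_k$ is jointly convex in $(\alpha_1,\alpha_2)$.

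Finally I would parameterize the constraint by $\alpha_1=\tfrac12\mathrm A+t$, $\alpha_2=\tfrac12\mathrm A-t$ with $t\in(-\tfrac12\mathrm A,\tfrac12\mathrm A)$, and set $g(t)=f_k(\tfrac12\mathrm A+t,\tfrac12\mathrm A-t)$. Then $g$ is convex, being the restriction of a convex function to an affine line, and even, by the symmetry from the first step. For a convex even function one has $g(0)=g\bigl(\tfrac12 t+\tfrac12(-t)\bigr)\le\tfrac12 g(t)+\tfrac12 g(-t)=g(t)$ for every admissible $t$, so the minimum is attained at $t=0$, that is, at $(\alpha_1^{\mathrm{opt}},\alpha_2^{\mathrm{opt}})=(\tfrac12\mathrm A,\tfrac12\mathrm A)$. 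The only delicate point in the whole argument is the convexity claim; the clean way through it is the log-convexity observation above, which sidesteps any direct computation of the Hessian of $f_k$ itself and handles all $k$ uniformly.
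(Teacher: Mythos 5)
Your proof is correct, but it takes a genuinely different route from the paper's. The paper first collapses $f_k$ into the closed form $f_k(\alpha_1,\alpha_2)=\frac{1}{k!}\frac{(\alpha_1+\alpha_2-1+k)_k}{(\alpha_1)_k(\alpha_2)_k}$ (a Vandermonde/Gauss summation, via Relation \ref{rel:2F1_zeq1}), substitutes $\alpha_1=\mathrm A-\alpha_2$, and computes the logarithmic derivative explicitly: $\partial_{\alpha_2}f_k(\mathrm A-\alpha_2,\alpha_2)=f_k\,\Psi^{(0)}_k$, where the digamma recurrence turns $\Psi^{(0)}_k$ into the elementary sum $\sum_{m=0}^{k-1}\bigl(\frac{1}{\mathrm A-\alpha_2+m}-\frac{1}{\alpha_2+m}\bigr)$, which is negative below $\tfrac12\mathrm A$, zero there, and positive above — hence a unique global minimum at the midpoint. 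You avoid both the hypergeometric identity and the polygamma calculus entirely: termwise log-convexity of $1/[(\alpha_1)_\ell(\alpha_2)_{k-\ell}]$ (each exponent being a sum of convex functions $-\log(\alpha_i+j)$) gives joint convexity of $f_k$, and the $\ell\mapsto k-\ell$ symmetry then forces the minimum of the restriction $g(t)=f_k(\tfrac12\mathrm A+t,\tfrac12\mathrm A-t)$ to $t=0$ via the even-convex inequality $g(0)\le\tfrac12 g(t)+\tfrac12 g(-t)$. Your approach is more elementary and handles all $k$ uniformly without any special-function machinery; what the paper's computation buys in exchange is the closed form for $f_k$ itself, which is reused downstream (the asymptotics $f_k\sim\frac{1}{k!}(4/n^{\text{opt}})^k$ in Theorem \ref{thm:Tv_opt_n_limit} flow directly from it), plus an explicit sign analysis that delivers strict monotonicity on either side of the midpoint. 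One small refinement you should add: your argument as written yields $g(0)\le g(t)$, i.e., the midpoint is \emph{a} minimizer, whereas the lemma (and the paper's proof) asserts it is \emph{the} minimizer. This is easily repaired: the $\ell=k$ summand equals $\frac{1}{k!}(\tfrac12\mathrm A+t)_k^{-1}$ on the constraint line, and $-\sum_{j=0}^{k-1}\log(\tfrac12\mathrm A+t+j)$ is strictly convex in $t$, so $g$ is a sum of convex terms at least one of which is strictly convex (for $k\ge 1$), making $g$ strictly convex and the minimizer unique.
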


\begin{proof}
Working with the properties of the Pochhammer symbol and Relation \ref{rel:2F1_zeq1} we may write $f_k$ in the equivalent form
\[
f_k(\alpha_1,\alpha_2)=\frac{1}{k!}\frac{(\alpha_1+\alpha_2-1+k)_k}{(\alpha_1)_k(\alpha_2)_k}.
\]
Substituting $\mathrm A=\alpha_1+\alpha_2$ and differentiating w.r.t.~$\alpha_2$ yields
\[
\partial_{\alpha_2}f_k(\mathrm A-\alpha_2,\alpha_2)=f_k(\mathrm A-\alpha_2,\alpha_2)\Psi_k^{(0)}(\mathrm A-\alpha_2,\alpha_2),
\]
where
\[
\Psi_k^{(0)}(\alpha_1,\alpha_2)=\psi^{(0)}(\alpha_1+k)-\psi^{(0)}(\alpha_1)-\psi^{(0)}(\alpha_2+k)+\psi^{(0)}(\alpha_2)
\]
and $\psi^{(n)}(z)$ is the polygamma function of Definition \ref{def:polygamma_func}. By repeated application of the recurrence relation $\psi^{(n)}(z+1)=\psi^{(n)}(z)+(-1)^nn!z^{-n-1}$ we deduce
\[
\Psi_k^{(0)}(\alpha_2)=\sum_{m=0}^{k-1}\left(\frac{1}{\mathrm A-\alpha_2+m}-\frac{1}{\alpha_2+m}\right)%
\begin{cases}
<0, &0<\alpha_2<\frac{1}{2}\mathrm A\\
=0, &\alpha_2=\frac{1}{2}\mathrm A\\
>0, &\frac{1}{2}\mathrm A<\alpha_2<\mathrm A.
\end{cases}
\]
Further noting that $f_k(\mathrm A-\alpha_2,\alpha_2)$ is positive proves it has a unique global minimum at $\alpha_2=\frac{1}{2}\mathrm A$, which completes the proof.
\end{proof}

\begin{definition}[Exponential integral function]
\label{def:Ei_function}
\[
\operatorname{Ei}(z)\coloneqq\int_0^z\frac{e^t-1}{t}\,\mathrm dt+\frac{1}{2}\left(\log z-\log\left(\tfrac{1}{z}\right)\right)+\gamma,
\]
where $\gamma=0.577216\dots$ is the Euler-Mascheroni constant.
\end{definition}

\begin{lemma}
\label{lem:CV2Tv_limit_double_integral}
For $c,x\in\Bbb R$
\[
\int_{[0,x]^2}e^{-u+cuv-v}\,\mathrm du\mathrm dv=\tfrac{1}{c}e^{-1/c}\left(\operatorname{Ei}\left(\tfrac{1}{c}\right)-2\operatorname{Ei}\left(\tfrac{1}{c}(1-cx)\right)+\operatorname{Ei}\left(\tfrac{1}{c}(1-cx)^2\right)\right),
\]
where $\operatorname{Ei}(z)$ is the exponential integral function of Definition \ref{def:Ei_function}.
\end{lemma}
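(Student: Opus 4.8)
The plan is to collapse the double integral to a single one by integrating out $u$, and then to identify the surviving one-dimensional integral with a combination of exponential-integral values obtained through linear substitutions. First I would hold $v$ fixed and integrate over $u$: since the $u$-dependence is $e^{u(cv-1)}$ apart from the $v$-only factor $e^{-v}$, we have $\int_0^x e^{u(cv-1)}\,\mathrm du = (e^{x(cv-1)}-1)/(cv-1)$, so that
\[
I = \int_0^x \frac{e^{v(cx-1)-x} - e^{-v}}{cv-1}\,\mathrm dv,
\]
after using $x(cv-1)-v = v(cx-1)-x$. A useful observation is that the numerator vanishes at $v=1/c$ exactly where the denominator does, so the integrand has only a removable singularity and $I$ is an honest convergent integral, matching the finiteness of the left-hand side.

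Next I would write $I = I_1 - I_2$ with $I_1 = \int_0^x e^{v(cx-1)-x}/(cv-1)\,\mathrm dv$ and $I_2 = \int_0^x e^{-v}/(cv-1)\,\mathrm dv$, each read as a principal value at $v=1/c$. In $I_2$ the substitution $t=(1-cv)/c$ sends the integrand to $\tfrac{1}{c}e^{-1/c}\,e^t/t$ with limits $t: 1/c \to \tfrac1c(1-cx)$, yielding $I_2 = \tfrac{1}{c}e^{-1/c}\bigl(\operatorname{Ei}(\tfrac1c(1-cx)) - \operatorname{Ei}(\tfrac1c)\bigr)$. In $I_1$, factoring out $e^{-x}$ and applying the substitution $t = \tfrac{cx-1}{c}(cv-1)$ sends the integrand to $\tfrac{1}{c}e^{a}\,e^t/t$ with $a=x-1/c$ and limits $t:\tfrac1c(1-cx)\to\tfrac1c(1-cx)^2$; since $e^{-x}e^{a}=e^{-1/c}$, this gives $I_1 = \tfrac{1}{c}e^{-1/c}\bigl(\operatorname{Ei}(\tfrac1c(1-cx)^2) - \operatorname{Ei}(\tfrac1c(1-cx))\bigr)$. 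Subtracting and combining the two equal $\operatorname{Ei}(\tfrac1c(1-cx))$ contributions produces the stated closed form.

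The main obstacle is the careful handling of $\operatorname{Ei}$ near its singularity at the origin. Each linear substitution carries the real $v$-segment onto a $t$-segment that may straddle $t=0$, so the identity $\int_a^b e^t/t\,\mathrm dt = \operatorname{Ei}(b)-\operatorname{Ei}(a)$ must be invoked in the principal-value sense built into Definition \ref{def:Ei_function}; this is precisely why $I_1$ and $I_2$ are individually principal values while their difference is an ordinary integral. I would therefore confirm that the orientation of each substituted path agrees with $\operatorname{Ei}(b)-\operatorname{Ei}(a)$ and that the removable singularity at $v=1/c$ legitimizes the cancellation, so that no spurious residue from the crossing of $t=0$ survives in $I=I_1-I_2$.
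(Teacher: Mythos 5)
Your proof is correct, and after the common first step (integrating out $u$ to obtain $I=\int_0^x \frac{e^{v(cx-1)-x}-e^{-v}}{cv-1}\,\mathrm dv$) it takes a genuinely different route from the paper's. The paper never lets a singular integrand appear: substituting $t=(cv-1)x$ it writes the integrand as $e^{-t/(cx)}\frac{e^t-1}{t}$ and splits it as $\frac{e^{(1-1/(cx))t}-1}{t}-\frac{e^{-t/(cx)}-1}{t}$, so every intermediate integral carries the entire kernel $\frac{e^s-1}{s}$; the price is bookkeeping, since $\int_0^z\frac{e^s-1}{s}\,\mathrm ds=\operatorname{Ei}(z)-\log|z|-\gamma$, and one must verify that all logarithmic and constant terms cancel at the end (they do). You instead split the numerator into $I_1-I_2$, accepting two individually singular integrals at $v=1/c$ (present exactly when $cx>1$) as Cauchy principal values. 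That is legitimate for the reasons you flag: the combined integrand's singularity is removable, so the symmetric-excision limits of $I_1$ and $I_2$ exist separately and sum to the ordinary integral; your substitutions $t=1/c-v$ and $t=(cx-1)(v-1/c)$ are affine about the singular point, so they carry symmetric excisions to symmetric excisions and preserve the principal value; and the identity $\operatorname{p.v.}\!\int_a^b e^t/t\,\mathrm dt=\operatorname{Ei}(b)-\operatorname{Ei}(a)$ is indeed compatible with the paper's Definition \ref{def:Ei_function}, since for real $z\neq 0$ the symmetrized logarithm $\tfrac12\left(\log z-\log\left(\tfrac1z\right)\right)$ reduces to $\log|z|$, giving $\operatorname{Ei}(z)=\int_0^z\frac{e^t-1}{t}\,\mathrm dt+\log|z|+\gamma$. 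Your route buys freedom from the $\log$/$\gamma$ bookkeeping and lands directly on the three $\operatorname{Ei}$ values; the paper's buys freedom from any principal-value analysis. Both arguments tacitly exclude the degenerate points $c=0$ and $cx=1$, where the stated formula is to be read as a limit.
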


\begin{proof}
Let $I$ denote the integral in question. Integrating w.r.t.~$u$ and then substituting $t=(cv-1)x$ yields
\[
I=\tfrac{1}{c}e^{-1/c}\int_{-x}^{x(cx-1)}e^{-t/(cx)}\frac{e^t-1}{t}\,\mathrm dt.
\]
With a bit of algebra we may further write
\[
I=\tfrac{1}{c}e^{-1/c}\left(\int_{-x}^{x(cx-1)}\frac{e^{(1-1/(cx))t}-1}{t}\,\mathrm dt-\int_{-x}^{x(cx-1)}\frac{e^{-1/(cx)t}-1}{t}\,\mathrm dt\right).
\]
Now substituting $s=(1-1/(cx))t$ and $s=-1/(cx) t$ into the first and second integral, respectively, we have after some simplification
\[
I=\tfrac{1}{c}e^{-1/c}\left(\int_0^{1/c}-2\int_0^{(1-cx)/c}+\int_0^{(1-cx)^2/c}\right)\frac{e^s-1}{s}\,\mathrm ds.
\]
From Definition \ref{def:Ei_function}, if $z\in\Bbb R$ then $\int_0^z\frac{1}{t}(e^t-1)\,\mathrm dt=\operatorname{Ei}(z)-\log|z|-\gamma$. Substituting this result into $I$, all of the logarithmic and constant terms cancel leaving us with the desired result.
\end{proof}

\begin{theorem}
\label{thm:Tv_opt_n_limit}
Let $(n_1^{{\normalfont\text{opt}}},n_2^{{\normalfont\text{opt}}})$ denote the optimal sample sizes for $\mathscr T_\nudag$ that also satisfy $\acv\mathscr T_\nudag(n_1^{{\normalfont\text{opt}}},n_2^{{\normalfont\text{opt}}}|\zeta,\nudag)=\sacv_0$ and $\arb\mathscr T_\nudag=\sarb_0\zeta^b$ for $\sacv_0\in\Bbb R^+$, $\sarb_0\in(0,1)$, and $b\in\Bbb R_0^+$ fixed. Then as illumination decreases, $\zeta\searrow 1$, $n_2^{{\normalfont\text{opt}}}/n_1^{{\normalfont\text{opt}}}\to 1$, and $n_i^{{\normalfont\text{opt}}}\sim C_{\mathscr T_\nudag}(1-\zeta)^{-2}$ where $C_{\mathscr T_\nudag}$ is the solution to
\[
\overline{\acv^2\mathscr T_\nudag}(C_{\mathscr T_\nudag},\sarb_0)-\sacv_0^2=0
\]
with
\begin{multline*}
\overline{\acv^2\mathscr T_\nudag}=\frac{(C_{\mathscr T_\nudag}/4)e^{-C_{\mathscr T_\nudag}/4}}{(1-\sarb_0)^2}\biggl(\operatorname{Ei}\left(\tfrac{C_{\mathscr T_\nudag}}{4}\right)-2\operatorname{Ei}\left(\log\sarb_0+\tfrac{C_{\mathscr T_\nudag}}{4}\right)\\%
+\operatorname{Ei}\left(\tfrac{(C_{\mathscr T_\nudag}/4+\log\sarb_0)^2}{C_{\mathscr T_\nudag}/4}\right)\biggr)-1
\end{multline*}
and $\operatorname{Ei}(z)$ is the exponential integral function of Definition \ref{def:Ei_function}.
\end{theorem}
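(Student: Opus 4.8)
The plan is to mirror the proof of Lemma \ref{lem:ACVbarP_opt_sample_sizes}: first dispatch nondegeneracy and the optimality relation, then extract the $(1-\zeta)^{-2}$ scaling, and finally identify the limiting constant through an integral representation. Throughout I write $\epsilon = 1-\zeta$ and work from the series $\acv^2\mathscr T_\nudag = \sum_{k=1}^\infty\sum_{\ell=0}^k \tilde g_{k,\ell}^2(\zeta,\nudag)/[(\alpha_1)_\ell(\alpha_2)_{k-\ell}\,\ell!\,(k-\ell)!]$ of Corollary \ref{cor:cvT_nu}, with the constraint $\arb\mathscr T_\nudag = \sarb_0\zeta^b$ supplied by Proposition \ref{prop:nu_as_func_of_zeta}. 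As in Lemma \ref{lem:ACVbarP_opt_sample_sizes}, I would argue that for $\zeta\in(0,1)$ the map $n_2\mapsto\acv^2\mathscr T_\nudag(\mathrm N-n_2,n_2)$ is smooth and diverges at the endpoints $n_2=0,\mathrm N$, so the system (\ref{eq:optimal_SOEs}) is nondegenerate and the optimum is an interior critical point.

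The first substantive step is to compute the limiting objective under the conjectured scaling $\alpha_i = a_i\epsilon^{-2}$. By Lemma \ref{lem:gnw_constant_bias_limit}, $\tilde g_{k,\ell}^2(\zeta,\nudag)\sim \gamma_k^2\,\epsilon^{-2k}$ as $\epsilon\to 0$, where $\gamma_k = \gamma(k+1,-\log\sarb_0)/(1-\sarb_0)$, and crucially the leading term is independent of $\ell$. Since $(\alpha_1)_\ell\sim a_1^\ell\epsilon^{-2\ell}$ and $(\alpha_2)_{k-\ell}\sim a_2^{k-\ell}\epsilon^{-2(k-\ell)}$, the factor $\epsilon^{-2k}$ cancels exactly against the denominator and each summand tends to $\gamma_k^2/[a_1^\ell a_2^{k-\ell}\,\ell!\,(k-\ell)!]$. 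After justifying the interchange of limit and sum, this produces a finite limiting objective $\Psi(a_1,a_2) = \sum_{k=1}^\infty \gamma_k^2\,\sum_{\ell=0}^k 1/[a_1^\ell a_2^{k-\ell}\,\ell!\,(k-\ell)!]$, whose inner sum equals $(a_1^{-1}+a_2^{-1})^k/k!$.

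With $\Psi$ in hand, the optimality relation follows by the balanced-allocation argument of Lemma \ref{lem:CV2Tv_ell_sum_optimization}: since $a_1^{-1}+a_2^{-1}$ is minimized at $a_1=a_2$ under $a_1+a_2$ fixed and the weights $\gamma_k^2$ are positive, $\Psi$ is minimized termwise at the balanced allocation, giving $n_2^{\normalfont\text{opt}}/n_1^{\normalfont\text{opt}}\to 1$ and $a_1=a_2=:a$. Positing $n_i^{\normalfont\text{opt}}\sim C_{\mathscr T_\nudag}\epsilon^{-2}$ forces $a = C_{\mathscr T_\nudag}/2$ (as $\alpha_i\sim n_i/2$), and $\sum_\ell 1/[\ell!(k-\ell)!] = 2^k/k!$ collapses $\Psi(a,a)$ to $\sum_{k=1}^\infty \gamma_k^2\,(4/C_{\mathscr T_\nudag})^k/k!$. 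Writing $\gamma_k^2$ as a product of two copies of $\gamma(k+1,-\log\sarb_0)=\int_0^{-\log\sarb_0} t^k e^{-t}\,dt$, moving the sum inside and summing the exponential series yields $\sum_{k=0}^\infty\gamma_k^2(4/C)^k/k! = (1-\sarb_0)^{-2}\iint_{[0,-\log\sarb_0]^2} e^{-u-v+(4/C)uv}\,du\,dv$; Lemma \ref{lem:CV2Tv_limit_double_integral} with $c=4/C_{\mathscr T_\nudag}$ and $x=-\log\sarb_0$ evaluates this to the stated $\operatorname{Ei}$ combination, while the $k=0$ term $\gamma_0^2=1$ accounts for the trailing $-1$. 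Imposing $\acv\mathscr T_\nudag=\sacv_0$ then reads off exactly $\overline{\acv^2\mathscr T_\nudag}(C_{\mathscr T_\nudag},\sarb_0)-\sacv_0^2=0$.

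The hard part will be the uniform control justifying the termwise limit and the interchange of limit with summation: the asymptotics of Lemma \ref{lem:gnw_constant_bias_limit} and the ratios $(\alpha_i)_m/\alpha_i^m$ must be bundled into a single bound that is summable in $k$ and uniform for $\epsilon$ near $0$, so that dominated convergence genuinely identifies $\Psi$ as $\lim_{\zeta\searrow 1}\acv^2\mathscr T_\nudag$ rather than a merely formal term-by-term limit. A secondary subtlety is verifying that the $\epsilon^{-2}$ scaling is the correct one — that no faster- or slower-growing allocation meets the target $\sacv_0$ — which I would settle by showing any other power of $\epsilon$ sends $\acv^2\mathscr T_\nudag$ to $0$ or $\infty$, leaving $\epsilon^{-2}$ as the unique balance and confirming consistency of the posited ansatz.
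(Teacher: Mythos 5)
Your proposal is correct and follows essentially the same route as the paper: the same use of Lemma \ref{lem:gnw_constant_bias_limit} to get $\ell$-independent leading coefficients $a_k^2(\sarb_0)(1-\zeta)^{-2k}$, the balanced-allocation optimality underlying Lemma \ref{lem:CV2Tv_ell_sum_optimization}, the collapse of the inner sum to $(4/C_{\mathscr T_\nudag})^k/k!$, and the identification of the limiting series with the double integral evaluated by Lemma \ref{lem:CV2Tv_limit_double_integral} (including the $k=0$ term supplying the trailing $-1$). The only cosmetic difference is ordering --- you posit the $\epsilon^{-2}$ ansatz and optimize the limiting functional $\Psi(a_1,a_2)$, whereas the paper optimizes $f_k$ at finite sample sizes and then forces the scaling from finiteness of $\sacv_0$ --- and the interchange/scaling subtleties you flag are exactly the points the paper handles via its $\mathcal O(1-\zeta)$ bookkeeping and dominated convergence.
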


\begin{proof}
First note that $\mathscr T_\nudag$ satisfies the constraint $\arb\mathscr T_\nudag=\sarb_0\zeta^b$ independent of $n_1$ and $n_2$. Using the result of Lemma \ref{lem:gnw_constant_bias_limit} and the relationship $\alpha_i=(n_i-1)/2$ we deduce as $\zeta\searrow 1$
\[
\acv^2\mathscr T_\nudag(n_1,n_2|\zeta,\sarb_0,b)\sim\sum_{k=1}^\infty \frac{f_k\left(\tfrac{n_1-1}{2},\tfrac{n_2-1}{2}\right)}{(1-\zeta)^{2k}}(a_k^2(\sarb_0)+\mathcal O(1-\zeta)),
\]
where $f_k(\alpha_1,\alpha_2)$ is as defined in Lemma \ref{lem:CV2Tv_ell_sum_optimization} and $a_k(\sarb_0)=\gamma(k+1,-\log\sarb_0)/(1-\sarb_0)$. By Lemma \ref{lem:CV2Tv_ell_sum_optimization} we know that the optimal sample sizes $(n_1^{\text{opt}},n_2^{\text{opt}})$ that minimizes $f_k$ with $\mathrm N=n_1+n_2$ fixed occurs when $n_1=n_2$; thus, $n_2^{{\normalfont\text{opt}}}/n_1^{{\normalfont\text{opt}}}\to 1$ as $\zeta\searrow 1$ which is our first claim. Letting $n_1^{{\normalfont\text{opt}}}=n_2^{{\normalfont\text{opt}}}=n^{{\normalfont\text{opt}}}$ we have
\[
\acv^2\mathscr T_\nudag(n_1^{\text{opt}},n_2^{\text{opt}}|\theta_{\mathscr T_\nudag})\sim\sum_{k=1}^\infty \frac{f_k\left(\tfrac{n^{\text{opt}}-1}{2},\tfrac{n^{\text{opt}}-1}{2}\right)}{(1-\zeta)^{2k}}(a_k^2(\sarb_0)+\mathcal O(1-\zeta)),
\]
Since $(1-\zeta)^{-2k}$ becomes arbitrarily large in the neighborhood of $\zeta=1$ the function $f_k$ must simultaneously become arbitrarily small as to maintain a finite value of $\acv^2\mathscr T_\nudag$ in the limit. But $f_k((n^{\text{opt}}-1)/2,(n^{\text{opt}}-1)/2)$ is a decreasing function of $n^{\text{opt}}$ so it must be that $n^{\text{opt}}\to\infty$ as $\zeta\searrow 1$. Making use of the asymptotic relation $(s)_k\sim s^k(1+\mathcal O(s^{-1}))$ as $s\to\infty$ and binomial theorem we deduce for $n^{\text{opt}}\to\infty$:
\[
f_k\left(\tfrac{n^{\text{opt}}-1}{2},\tfrac{n^{\text{opt}}-1}{2}\right)\sim\frac{1}{k!}\left(\frac{4}{n^{\text{opt}}}\right)^k+\mathcal O((n^{\text{opt}})^{-k-1}),
\]
which upon substitution into our expression for $\acv^2\mathscr T_\nudag$ gives
\begin{multline*}
\acv^2\mathscr T_\nudag(n_1^{\text{opt}},n_2^{\text{opt}}|\theta_{\mathscr T_\nudag})\sim\sum_{k=1}^\infty \left(\frac{1}{k!}\left(\frac{4}{(1-\zeta)^2n^{\text{opt}}}\right)^k+\frac{\mathcal O((n^{\text{opt}})^{-k-1})}{(1-\zeta)^{2k}}\right)\\
\cdots\times(a_k^2(\sarb_0)+\mathcal O(1-\zeta)).
\end{multline*}
Given that we require $\acv\mathscr T_\nudag=\sacv_0<\infty$ in the limit it follows that we must have $n^{\text{opt}}\sim C_{\mathscr T_\nudag} (1-\zeta)^{-2}+\mathcal O((1-\zeta)^{-1})$ for some positive constant $C_{\mathscr T_\nudag}$ which proves our second claim. Substituting this asymptotic form for $n^{\text{opt}}$ into our expression for $\acv^2\mathscr T_\nudag$ we see that all error terms are $\mathcal O(1-\zeta)$ so that upon passing to the limit we obtain
\[
\lim_{\substack{\zeta\searrow 1\\ \acv\mathscr T_\nudag=\sacv_0\\ \arb\mathscr T_\nudag=\sarb_0\zeta^b}}\acv^2\mathscr T_\nudag(n_1^{\text{opt}},n_2^{\text{opt}}|\theta_{\mathscr T_\nudag})=\sum_{k=1}^\infty \frac{\gamma^2(k+1,-\log\sarb_0)}{(1-\sarb_0)^2}\frac{\left(4/C_{\mathscr T_\nudag}\right)^k}{k!},
\]
where $C_{\mathscr T_\nudag}$ is the positive constant that solves
\begin{equation}
\label{eq:CT_solution_series_form}
\sum_{k=1}^\infty \frac{\gamma^2(k+1,-\log\sarb_0)}{(1-\sarb_0)^2}\frac{\left(4/C_{\mathscr T_\nudag}\right)^k}{k!}=\sacv_0^2.
\end{equation}
Letting $\overline{\acv^2\mathscr T_\nudag}$ denote the series on the l.h.s.~of (\ref{eq:CT_solution_series_form}) we use the integral representation of the lower incomplete gamma function in Definition \ref{def:gaminc_lower} to write
\[
\overline{\acv^2\mathscr T_\nudag}=\lim_{n\to\infty}\int_{[0,-\log\sarb_0]^2}\frac{e^{-(u+v)}}{(1-\sarb_0)^2}\sum_{k=1}^n\frac{\left (4uv/C_{\mathscr T_\nudag}\right)^k}{k!}\,\mathrm du\mathrm dv.
\]
The integrand is the sum of nonnegative terms and is bounded above by its limiting form as $n\to\infty$ which in turn is integrable on the domain of integration; thus, by argument of dominated convergence we have
\[
\overline{\acv^2\mathscr T_\nudag}=\int_{[0,-\log\sarb_0]^2}\frac{e^{-u+4uv/C_{\mathscr T_\nudag}-v}}{(1-\sarb_0)^2}\,\mathrm du\mathrm dv-1,
\]
which is evaluated in terms of the exponential integral function with Lemma \ref{lem:CV2Tv_limit_double_integral}. The proof is now complete.
\end{proof}

Figure \ref{fig:CT_surf_plot} plots $C_{\mathscr T_\nudag}$ on the unit square. These values for $C_{\mathscr T_\nudag}$ were computed in {\sc mathematica} using a Newton-Raphson iteration with starting point
\[
C_{\mathscr T_\nudag}^\ast=\frac{2\gamma^2(2,-\log\sarb_0)}{\sacv_0^2(1-\sarb_0)^2}\left(1+\left(1+2\sacv_0^2(1-\sarb_0)^2\frac{\gamma^2(3,-\log\sarb_0)}{\gamma^4(2,-\log\sarb_0)}\right)^{1/2}\right),
\]
which is the solution to $(\ref{eq:CT_solution_series_form})$ using only the $k=1,2$ terms.
\begin{figure}[htb]
\centering
\includegraphics[scale=1]{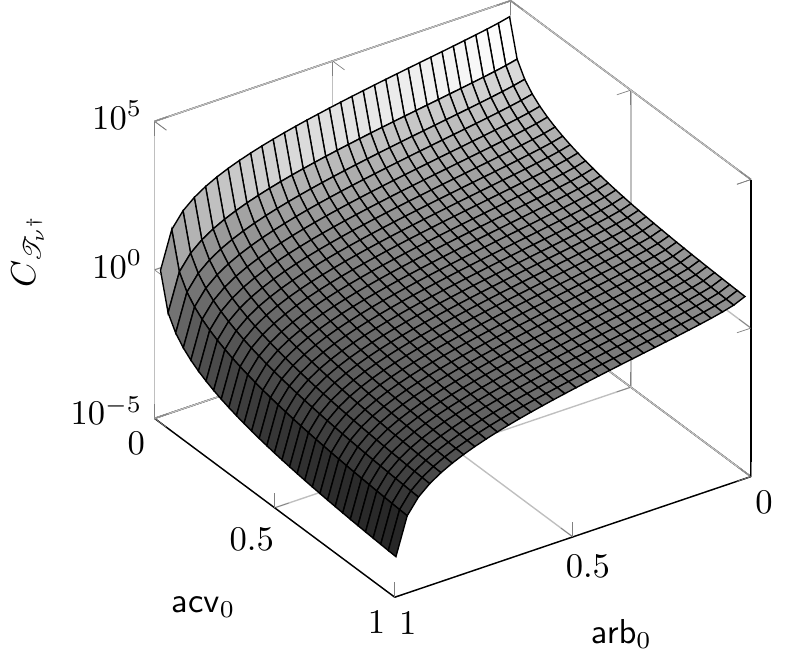}
\caption{Plot of $C_{\mathscr T_\nudag} (\sarb_0,\sacv_0)$ on the unit square.}
\label{fig:CT_surf_plot}
\end{figure}

Since $\arb\mathscr G_\nu=\arb\mathscr T_\nu$ we may combine the results of Lemma \ref{lem:ACVbarP_opt_sample_sizes} and Theorem \ref{thm:Tv_opt_n_limit} to make the following claims about the optimal sample sizes for $\mathscr G_\nudag$ at low illumination.

\begin{corollary}
\label{cor:Gv_opt_n_limit}
Let $(n_1^{{\normalfont\text{opt}}},n_2^{{\normalfont\text{opt}}})$ denote the optimal sample sizes for $\mathscr G_\nudag$ that also satisfy $\acv\mathscr G_\nudag(n_1^{{\normalfont\text{opt}}},n_2^{{\normalfont\text{opt}}}|\zeta,\nudag,\sigma_{\mathrm d},g)=\sacv_0$ and $\arb\mathscr G_\nudag=\sarb_0\zeta^b$ for $\sacv_0\in\Bbb R^+$, $\sarb_0\in(0,1)$, and $b\in\Bbb R_0^+$ fixed. Then as illumination decreases, $\zeta\searrow 1$, $n_2^{{\normalfont\text{opt}}}/n_1^{{\normalfont\text{opt}}}\to 1$, and $n_i^{{\normalfont\text{opt}}}\sim C_{\mathscr G_\nudag}(1-\zeta)^{-2}$ where $C_{\mathscr G_\nudag}$ is the solution to
\[
\overline{\acv^2\mathscr G_\nudag}(C_{\mathscr G_\nudag},\sarb_0,\sigma_{\mathrm d},g)-\sacv_0^2=0
\]
with
\[
\overline{\acv^2\mathscr G_\nudag}=\overline{\acv^2\mathscr T_\nudag}(C_{\mathscr G_\nudag},\sarb_0)+2\frac{\overline{\acv^2\mathscr T_\nudag}(C_{\mathscr G_\nudag},\sarb_0)}{(\sigma_{\mathrm d}g)^2C_{\mathscr G_\nudag}}+\frac{2}{(\sigma_{\mathrm d}g)^2C_{\mathscr G_\nudag}}
\]
and $\overline{\acv^2\mathscr T_\nudag}$ given in Theorem \ref{thm:Tv_opt_n_limit}.
\end{corollary}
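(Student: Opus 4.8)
The plan is to reduce the optimization for $\mathscr G_\nudag$ to the two single-estimator problems already solved in Lemma~\ref{lem:ACVbarP_opt_sample_sizes} and Theorem~\ref{thm:Tv_opt_n_limit}. First I would invoke Theorem~\ref{thm:acvTv_converges_to_acvGv} to get $\arb\mathscr G_\nudag=\arb\mathscr T_\nudag$, so the bias constraint $\arb\mathscr G_\nudag=\sarb_0\zeta^b$ is inherited verbatim from Proposition~\ref{prop:nu_as_func_of_zeta} and is independent of $n_1,n_2$; the only sample-size–dependent quantity left to optimize is $\acv^2\mathscr G_\nudag$. Using $(\ref{eq:CV2Gv_expanded})$ I would write $\acv^2\mathscr G_\nudag=\cv^2\mathscr T_\nudag(1+\cv^2\bar P)+\cv^2\bar P$, with $\cv^2\bar P$ given explicitly by $(\ref{eq:CV2P_zeta_form})$. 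This exhibits the objective as a smooth, monotone combination of the two pieces whose separate optimizations are already understood.

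Next I would establish the first claim, $n_2^{\text{opt}}/n_1^{\text{opt}}\to 1$. Holding $\mathrm N=n_1+n_2$ fixed, differentiation gives $\partial_{n_2}\acv^2\mathscr G_\nudag=(1+\cv^2\bar P)\,\partial_{n_2}\cv^2\mathscr T_\nudag+(1+\cv^2\mathscr T_\nudag)\,\partial_{n_2}\cv^2\bar P$. From the leading-order form of $\cv^2\mathscr T_\nudag$ in Theorem~\ref{thm:Tv_opt_n_limit} together with Lemma~\ref{lem:CV2Tv_ell_sum_optimization}, every coefficient $f_k$—and hence $\cv^2\mathscr T_\nudag$ to leading order—is minimized in the split at $n_1=n_2$, so $\partial_{n_2}\cv^2\mathscr T_\nudag$ vanishes there in the limit. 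For the second summand, $(\ref{eq:CV2P_zeta_form})$ gives $\partial_{n_2}\big(\tfrac1{n_1}+\tfrac{\zeta}{n_2}\big)\big|_{n_1=n_2}=(1-\zeta)/n^2$, carrying an extra factor $(1-\zeta)$ relative to $\cv^2\bar P$ itself, so $\partial_{n_2}\cv^2\bar P$ is likewise negligible at equal split as $\zeta\searrow 1$. With both component derivatives suppressed at $n_1=n_2$ and the objective convex in $n_2$, the combined minimizer is squeezed to $n_2^{\text{opt}}/n_1^{\text{opt}}\to 1$, exactly as in the two reference results.

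With the equal-split ratio in hand I would set $n_1^{\text{opt}}=n_2^{\text{opt}}=n^{\text{opt}}$ and fix the scaling. A finite, nonzero limit $\acv^2\mathscr G_\nudag=\sacv_0^2$ forces $n^{\text{opt}}\sim C_{\mathscr G_\nudag}(1-\zeta)^{-2}$: the $\mathscr T_\nudag$ piece already demands this by Theorem~\ref{thm:Tv_opt_n_limit} (via Lemma~\ref{lem:gnw_constant_bias_limit}), and under this scaling $(\ref{eq:CV2P_zeta_form})$ yields $\cv^2\bar P\to 2/((\sigma_{\mathrm d}g)^2 C_{\mathscr G_\nudag})$ since $\zeta(1+\zeta)\to 2$. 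Passing to the limit in the decomposition and substituting $\lim\cv^2\mathscr T_\nudag=\overline{\acv^2\mathscr T_\nudag}(C_{\mathscr G_\nudag},\sarb_0)$ from Theorem~\ref{thm:Tv_opt_n_limit} produces
\[
\overline{\acv^2\mathscr G_\nudag}=\overline{\acv^2\mathscr T_\nudag}(C_{\mathscr G_\nudag},\sarb_0)+2\frac{\overline{\acv^2\mathscr T_\nudag}(C_{\mathscr G_\nudag},\sarb_0)}{(\sigma_{\mathrm d}g)^2C_{\mathscr G_\nudag}}+\frac{2}{(\sigma_{\mathrm d}g)^2C_{\mathscr G_\nudag}},
\]
which is precisely the stated expression; setting it equal to $\sacv_0^2$ defines $C_{\mathscr G_\nudag}$.

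The main obstacle is the squeeze for the split in the coupled objective. The two pieces are individually optimized at ratios $1$ (for $\mathscr T_\nudag$) and $\sqrt{\zeta}$ (for $\bar P$), which differ by $O(1-\zeta)$, and they interact through the product $\cv^2\mathscr T_\nudag\,\cv^2\bar P$, so I cannot merely assert that the combined optimum lies between them. I would instead argue directly that at equal split the combined first derivative is $o(1)$ relative to a uniformly bounded-below second derivative—both controlled from the explicit form $(\ref{eq:CV2P_zeta_form})$ and the leading-order expansion of $\cv^2\mathscr T_\nudag$ in Theorem~\ref{thm:Tv_opt_n_limit}—so the true optimizer is displaced from equal split by a vanishing amount. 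Convexity of $\acv^2\mathscr G_\nudag$ in $n_2$, inherited from the strict convexity of the two constituent terms used in Lemma~\ref{lem:ACVbarP_opt_sample_sizes} and Theorem~\ref{thm:Tv_opt_n_limit}, then makes the limit rigorous.
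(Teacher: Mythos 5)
Your proposal is correct and takes essentially the same route as the paper's proof: the identical decomposition $\acv^2\mathscr G_\nudag=\acv^2\mathscr T_\nudag+(\acv^2\mathscr T_\nudag)(\acv^2\bar P)+\acv^2\bar P$, the observation that the bias constraint holds independently of $n_1,n_2$, the equal-split and $(1-\zeta)^{-2}$ asymptotics inherited from Lemma \ref{lem:ACVbarP_opt_sample_sizes} and Theorem \ref{thm:Tv_opt_n_limit}, and the limit $\acv^2\bar P\to 2/((\sigma_{\mathrm d}g)^2C_{\mathscr G_\nudag})$ via $\zeta(1+\zeta)\to 2$, yielding exactly the stated equation for $C_{\mathscr G_\nudag}$. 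If anything, your handling of the coupled split—showing the first derivative of the combined objective at equal split is suppressed relative to its convex curvature, so the true optimizer deviates from equal split by a vanishing amount—is more careful than the paper, which simply asserts that the combined optimum inherits the common asymptotics of the two constituent problems.
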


\begin{proof}
Begin by writing
\[
\acv^2\mathscr G_\nudag=\acv^2\mathscr T_\nudag+(\acv^2\mathscr T_\nudag)(\acv^2\bar P)+\acv^2\bar P.
\]
As was the case in Theorem \ref{thm:Tv_opt_n_limit}, $\mathscr G_\nudag$ satisfies the constraint on $\arb\mathscr G_\nu$ independent of $n_1$ and $n_2$. Furthermore, from Lemma \ref{lem:ACVbarP_opt_sample_sizes} and Theorem \ref{thm:Tv_opt_n_limit} we know in the limit $\zeta\searrow 1$ that the optimal sample sizes for $\bar P$ and $\mathscr T_\nudag$ are asymptotically equal and proportional to $(1-\zeta)^{-2}$. Substituting $n_1^{\text{opt}}=n_2^{\text{opt}}\sim C_{\mathscr G_\nudag}(1-\zeta)^{-2}$ and passing to the limit gives the desired expression for $\overline{\acv^2\mathscr G_\nudag}$.
\end{proof}

We are finally able to return to the questions surrounding the behavior of $\mathscr E$ as a function of the optimal sample sizes at low illumination.

\begin{corollary}
\label{cor:E_with_opt_samp_sizes}
Let $T$ stand in place for $\mathscr T_\nudag$ or $\mathscr G_\nudag$ and $(n_1^\text{\normalfont opt},n_2^\text{\normalfont opt})$ denote the optimal sample sizes for $T$ such that $\acv T=\sacv_0$ and $\arb T=\sarb_0\zeta^b$ with $\sacv_0\in\Bbb R^+$, $\sarb_0\in(0,1)$, and $b\in\Bbb R_0^+$ fixed. Furthermore, define $\mathscr E_T$ to be the quantity $\mathscr E$ as a function of $(n_1^\text{\normalfont opt},n_2^\text{\normalfont opt})$ and $\overline{\mathscr E_T}=\lim_{\zeta\searrow 1}\mathscr E_T$. Then,
\[
\overline{\mathscr E_T}=\left(1+(1+\overline{\acv^{-2}\mathscr T_\nudag}(C_T,\sarb_0))\frac{2}{(\sigma_{\mathrm d}g)^2 C_T}\right)^{-1},
\]
where $\overline{\acv^2\mathscr T_\nudag}$ is given in Theorem \ref{thm:Tv_opt_n_limit}. In particular, if $T=\mathscr T_\nudag$:
\[
\mathscr E_{\mathscr T_\nudag}=\left(1+\frac{1+\sacv_0^{-2}}{(\sigma_{\mathrm d}g)^2}\frac{\zeta}{(1-\zeta)^2}\left(\frac{1}{n_1^{\text{\normalfont opt}}}+\frac{\zeta}{n_2^{\text{\normalfont opt}}}\right)\right)^{-1}
\]
and
\[
\overline{\mathscr E_{\mathscr T_\nudag}}=\left(1+\frac{2(1+\sacv_0^{-2})}{(\sigma_{\mathrm d}g)^2C_{\mathscr T_\nudag}}\right)^{-1}.
\]
\end{corollary}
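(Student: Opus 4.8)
The plan is to start from the factored expression for $\mathscr E$ in (\ref{eq:E_ratio_expression}), namely $\mathscr E = [1 + (1 + \cv^{-2}\mathscr T_\nudag)\cv^2\bar P]^{-1}$, and to evaluate its two constituent pieces, $\cv^2\bar P$ and $1 + \cv^{-2}\mathscr T_\nudag$, at the optimal sample sizes for $T$ before passing to the low-illumination limit $\zeta \searrow 1$. Since $\mathscr E_T$ is by definition $\mathscr E$ evaluated at $(n_1^{\text{opt}}, n_2^{\text{opt}})$, every quantity in sight is already in closed form through (\ref{eq:CV2P_zeta_form}) for $\cv^2\bar P$ and through Corollary \ref{cor:cvT_nu} for $\cv^2\mathscr T_\nudag$; the whole argument is therefore substitution plus careful bookkeeping of leading-order asymptotics.

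I would dispatch the explicit case $T = \mathscr T_\nudag$ first, since it both illustrates the method and is needed to reduce the general formula. By hypothesis the optimal sample sizes satisfy $\acv\mathscr T_\nudag = \sacv_0$, whence $\cv^{-2}\mathscr T_\nudag = \sacv_0^{-2}$ identically (the sign of $\cv$ is irrelevant for the squared reciprocal). Substituting this together with the closed form (\ref{eq:CV2P_zeta_form}) for $\cv^2\bar P$ into (\ref{eq:E_ratio_expression}) yields the stated finite-$\zeta$ expression for $\mathscr E_{\mathscr T_\nudag}$ at once. For the limit I would invoke Theorem \ref{thm:Tv_opt_n_limit}, which supplies $n_2^{\text{opt}}/n_1^{\text{opt}} \to 1$ and $n_i^{\text{opt}} \sim C_{\mathscr T_\nudag}(1-\zeta)^{-2}$; feeding these into $\frac{\zeta}{(1-\zeta)^2}(\frac{1}{n_1^{\text{opt}}} + \frac{\zeta}{n_2^{\text{opt}}})$ and using $\zeta(1+\zeta)\to 2$ collapses $\cv^2\bar P$ to $2/[(\sigma_{\mathrm d}g)^2 C_{\mathscr T_\nudag}]$, producing the claimed $\overline{\mathscr E_{\mathscr T_\nudag}}$.

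For the general formula I would evaluate the two pieces in the limit for $T \in \{\mathscr T_\nudag, \mathscr G_\nudag\}$. Both Theorem \ref{thm:Tv_opt_n_limit} and Corollary \ref{cor:Gv_opt_n_limit} give $n_2^{\text{opt}}/n_1^{\text{opt}} \to 1$ and $n_i^{\text{opt}} \sim C_T(1-\zeta)^{-2}$, so exactly as above $\cv^2\bar P \to 2/[(\sigma_{\mathrm d}g)^2 C_T]$. For the second piece I would use that, along sample sizes with common leading coefficient $C_T$, the quantity $\acv^2\mathscr T_\nudag$ tends to $\overline{\acv^2\mathscr T_\nudag}(C_T, \sarb_0)$, which is precisely the limit computed in the body of the proof of Theorem \ref{thm:Tv_opt_n_limit}, read as a function of the generic leading constant $C_T$. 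Hence $1 + \cv^{-2}\mathscr T_\nudag \to 1 + \overline{\acv^{-2}\mathscr T_\nudag}(C_T, \sarb_0)$, and combining the two limits gives $\overline{\mathscr E_T}$. Specializing to $T = \mathscr T_\nudag$ and recalling that $C_{\mathscr T_\nudag}$ is defined in Theorem \ref{thm:Tv_opt_n_limit} as the root of $\overline{\acv^2\mathscr T_\nudag}(C_{\mathscr T_\nudag}, \sarb_0) = \sacv_0^2$ forces $\overline{\acv^{-2}\mathscr T_\nudag}(C_{\mathscr T_\nudag}, \sarb_0) = \sacv_0^{-2}$, recovering the explicit $\overline{\mathscr E_{\mathscr T_\nudag}}$ as a consistency check.

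The computations are routine; the only point demanding care is the second piece in the $T = \mathscr G_\nudag$ case, where the optimal sample sizes carry the constant $C_{\mathscr G_\nudag}$ rather than $C_{\mathscr T_\nudag}$. One must confirm that the asymptotic evaluation of $\acv^2\mathscr T_\nudag$ in Theorem \ref{thm:Tv_opt_n_limit} depends on the sample sizes only through their common leading coefficient, so that it legitimately returns $\overline{\acv^2\mathscr T_\nudag}(C_{\mathscr G_\nudag}, \sarb_0)$ when that coefficient is $C_{\mathscr G_\nudag}$. This is immediate, since the derivation there used only $n^{\text{opt}} \sim C(1-\zeta)^{-2}$ for a generic positive $C$, so no genuine obstacle arises.
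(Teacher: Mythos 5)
Your proposal is correct and is exactly the derivation the paper intends: the corollary is stated without an explicit proof, being treated as an immediate consequence of substituting $(\ref{eq:CV2P_zeta_form})$ and the constraint $\acv\mathscr T_\nudag=\sacv_0$ into $(\ref{eq:E_ratio_expression})$, then passing to the limit via the asymptotics $n_2^{\text{opt}}/n_1^{\text{opt}}\to 1$ and $n_i^{\text{opt}}\sim C_T(1-\zeta)^{-2}$ from Theorem \ref{thm:Tv_opt_n_limit} and Corollary \ref{cor:Gv_opt_n_limit}. You also correctly identify and dispose of the one genuine subtlety — that the limiting value $\overline{\acv^2\mathscr T_\nudag}$ in the $T=\mathscr G_\nudag$ case depends on the sample sizes only through the generic leading coefficient $C$, so it may legitimately be evaluated at $C_{\mathscr G_\nudag}$ — and your consistency check via $\overline{\acv^2\mathscr T_\nudag}(C_{\mathscr T_\nudag},\sarb_0)=\sacv_0^2$ matches the stated special case.
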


Corollary \ref{cor:E_with_opt_samp_sizes} shows us that when one substitutes the optimal sample sizes for $\mathscr T_\nudag$ or $\mathscr G_\nudag$ into $\mathscr E$, the limit $\lim_{\zeta\searrow 1}\mathscr E_{\mathscr T_\nudag}$ is nonzero. Additionally, for the special case where the optimal sample sizes for $\mathscr T_\nudag$ are chosen, both $\mathscr E_{\mathscr T_\nudag}$ and $\overline{\mathscr E_{\mathscr T_\nudag}}$ reduce to very simple and easy to compute expressions. To get a sense for how $\overline{\mathscr E_{\mathscr T_\nudag}}$ behaves w.r.t.~its parameters, Table \ref{tbl:bar_Etv_valuesTab2} presents numerical values of this quantity for select values of $\sigma_{\mathrm d}g$, $\sarb_0$, and $\sacv_0$. One can see for the chosen parameter values that the dark noise, denoted $\sigma_{\mathrm d}g$, plays a significant role in the proximity of $\overline{\mathscr E_{\mathscr T_\nudag}}$ to one. Indeed, for $\sigma_{\mathrm d}g$ greater than about $5\,\electron$ we observe $\overline{\mathscr E_{\mathscr T_\nudag}}\approx 1$, which implies that the optimal sample sizes for $\mathscr T_\nudag$ and $\mathscr G_\nudag$ are nearly identical even at near zero illumination.

Using the optimal sample sizes of $\mathscr T_\nudag$ in place of those for $\mathscr G_\nudag$ is not just convenient for producing a compact expression for $\mathscr E$.  When measuring $g$ in an actual experiment, one cannot know the optimal sample sizes with certainty because they are based on unknown parameters. For $\mathscr T_\nudag$, the optimal sample sizes are a function of $\zeta=\sigma_{\mathrm d}^2/\sigma_{\mathrm p+\mathrm d}^2$, which is itself a function of two unknown parameters that must be estimated. This problem is compounded when considering optimal sample sizes for $\mathscr G_\nudag$ which are a function of $(\sigma_{\mathrm d}^2,g,\zeta)$ and thus require estimating four parameters, i.e.~$(\mu_{\mathrm d},\mu_{\mathrm p+\mathrm d},\sigma_{\mathrm d},\sigma_{\mathrm p+\mathrm d})$. The two additional degrees of freedom render estimating the optimal sample sizes for $\mathscr G_\nudag$ in a real experiment impractical due to the amount of uncertainty introduced in the process of estimation. Instead, it is far better to measure $g$ in a region where $\mathscr E\approx1$ so that the optimal sample sizes for $\mathscr T_\nudag$ can be substituted. As has been demonstrated here, this situation is possible even when the illumination level is near zero small provided a sufficiently large dark noise.
  
\begin{table}[htb]
\centering
\begin{tabular}{LCCCCCC}\toprule
	& \multicolumn{3}{C}{\sigma_{\mathrm d}g=1\,\electron} & \multicolumn{3}{C}{\sigma_{\mathrm d}g=2\,\electron}
	\\\cmidrule(r){2-4}\cmidrule(r){5-7}   
	\sarb_0/\sacv_0 & 0.01 & 0.02 & 0.05 & 0.01 & 0.02 & 0.05\\\midrule
	0.01  & 0.645 & 0.645 & 0.646 & 0.879 & 0.879 & 0.879\\
	0.02  & 0.629 & 0.629 & 0.629 & 0.871 & 0.871 & 0.872\\
	0.05  & 0.587 & 0.587 & 0.587 & 0.850 & 0.850 & 0.850\\\midrule
	\multicolumn{7}{C}{}\\
	& \multicolumn{3}{C}{\sigma_{\mathrm d}g=5\,\electron} & \multicolumn{3}{C}{\sigma_{\mathrm d}g=10\,\electron}
	\\\cmidrule(r){2-4}\cmidrule(r){5-7}   
	\sarb_0/\sacv_0 & 0.01 & 0.02 & 0.05 & 0.01 & 0.02 & 0.05\\\midrule
	0.01  & 0.978 & 0.978 & 0.979 & 0.995 & 0.995 & 0.995\\
	0.02  & 0.977 & 0.977 & 0.977 & 0.994 & 0.994 & 0.994\\
	0.05  & 0.973 & 0.973 & 0.973 & 0.993 & 0.993 & 0.993\\\bottomrule
\end{tabular}
\caption{$\overline{\mathscr E_{\mathscr T_\nudag}}$ for select values of $\sigma_{\mathrm d}g$, $\sarb_0$, and $\sacv_0$.}
\label{tbl:bar_Etv_valuesTab2}
\end{table}


\subsection{Computation of optimal sample sizes for $\mathscr T_\nudag$}
\label{subsec:comp_of_opt_samp_sizes_for_Tv}

Given that the optimal samples sizes for $\mathscr T_\nudag$ are most desirable for experimentation, this section is dedicated to further studying their characteristics as well as computing them. Our approach will be organized as follows. We will begin by using the analysis in Section \ref{subsec:gnw_tilde_gnw} to derive a few results pertaining to zeros and special values of $\tilde g_{n,\omega}(z,\nu)$ evaluated at $(z,\nu)=(\zeta,\nudag)$. These results will then allow us to write the system of equations needed to solve for the optimal sample sizes $(n_1^\text{opt},n_2^\text{opt})$ as well as provide exact solutions at two points of interest. From here we will derive explicit approximations for the optimal sample sizes and use our previous findings to determine the usefulness of these approximations. Finally, the derived approximations will be used as the starting point in numerical methods to compute $(n_1^\text{opt},n_2^\text{opt})$, which will be demonstrated for some sample values.

The following proposition defines some notation needed to discuss the zeros of $\tilde g_{n,\omega}(\zeta,\nudag)$ in Lemma \ref{lem:zeros_of_gTilde_z_nudag}.
\begin{proposition}
\label{prop:facts_about_nudag}
Let $\nudag(\zeta)=\log_\zeta\sarb_0+b$ be a function of $\zeta$ as defined in Definition \ref{def:nudag}, $\Bbb N_b\coloneqq\{n\in\Bbb N_0:n\geq b\}$, $\zeta_{b,n}\coloneqq\sqrt[n-b]{\sarb_0}\mathds 1_{b<n}$, and $Z_b\coloneqq\{\zeta_{b,n}:n\in\Bbb N_b\}$. Then,
\begin{itemize}
\item[$(1)$] $\nudag(\zeta)$ is strictly increasing from $[0,1]$ onto $[b,\infty)$,
\item[$(2)$] $\nudag[Z_b]=\Bbb N_b$.
\end{itemize}
\end{proposition}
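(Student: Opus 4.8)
The plan is to treat both claims by direct computation, writing $\nudag(\zeta)=\log_\zeta\sarb_0+b=\dfrac{\log\sarb_0}{\log\zeta}+b$ and exploiting throughout that $\sarb_0\in(0,1)$ forces $\log\sarb_0<0$.

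For claim $(1)$ I would first establish strict monotonicity on the open interval $(0,1)$ by differentiating: a routine calculation gives $\nudag'(\zeta)=-\dfrac{\log\sarb_0}{\zeta(\log\zeta)^2}$, whose numerator $-\log\sarb_0$ is positive and whose denominator $\zeta(\log\zeta)^2$ is positive for every $\zeta\in(0,1)$; hence $\nudag'>0$ and $\nudag$ is strictly increasing there. Next I would read off the endpoint behaviour from the limits $\lim_{\zeta\to0^+}\frac{\log\sarb_0}{\log\zeta}=0$ and $\lim_{\zeta\to1^-}\frac{\log\sarb_0}{\log\zeta}=+\infty$, so that $\nudag(0^+)=b$ and $\nudag(1^-)=+\infty$. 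Extending $\nudag$ to $\zeta=0$ by continuity with value $b$, and interpreting the right endpoint as $\nudag(1)=+\infty$ in the extended reals, continuity together with strict monotonicity makes $\nudag$ a strictly increasing bijection of $[0,1]$ onto $[b,\infty)$ (equivalently, of $[0,1)$ onto $[b,\infty)$), which is claim $(1)$.

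For claim $(2)$ I would simply evaluate $\nudag$ at each point of $Z_b$. Fix $n\in\Bbb N_b$. If $n=b$ (which can occur only when $b\in\Bbb N_0$), the indicator $\mathds 1_{b<n}$ vanishes, so $\zeta_{b,n}=0$ and the continuous extension gives $\nudag(0)=b=n$. If instead $n>b$, then $\zeta_{b,n}=\sarb_0^{1/(n-b)}\in(0,1)$ and $\log\zeta_{b,n}=\tfrac{1}{n-b}\log\sarb_0$, whence $\log_{\zeta_{b,n}}\sarb_0=\dfrac{\log\sarb_0}{\tfrac{1}{n-b}\log\sarb_0}=n-b$ and therefore $\nudag(\zeta_{b,n})=(n-b)+b=n$. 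Thus $\nudag(\zeta_{b,n})=n$ for every $n\in\Bbb N_b$, so the image $\nudag[Z_b]$ is exactly $\{n:n\in\Bbb N_b\}=\Bbb N_b$.

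The computations themselves are elementary, so the only points requiring care are bookkeeping rather than genuine difficulty. The main subtlety is the treatment of the endpoints in claim $(1)$: since $\nudag$ diverges as $\zeta\to1^-$, the assertion ``onto $[b,\infty)$'' must be understood via the continuous extension $\nudag(0)=b$ together with the limiting value $\nudag(1)=+\infty$, and strict monotonicity on $(0,1)$ is what guarantees bijectivity and lets me transfer these limits to a clean statement about the image. The secondary point to handle carefully is the $n=b$ edge case in claim $(2)$, where the indicator $\mathds 1_{b<n}$ is precisely what reconciles $\zeta_{b,b}=0$ with the extended value $\nudag(0)=b$; checking that $n\mapsto\zeta_{b,n}$ is injective (it increases in $n$, from $0$ at $n=b$ up toward $1$) confirms that no collisions occur and that the image is all of $\Bbb N_b$.
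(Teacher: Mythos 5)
Your proof is correct, and since the paper states Proposition \ref{prop:facts_about_nudag} without any proof, your direct computation (the sign analysis of $\nudag'(\zeta)=-\log\sarb_0/(\zeta\log^2\zeta)$, the endpoint limits, and the evaluation $\nudag(\zeta_{b,n})=n$) is precisely the elementary argument the paper implicitly relies on. Your care with the two edge cases—reading ``onto $[b,\infty)$'' via the continuous extension $\nudag(0)=b$ with $\nudag(1^-)=+\infty$, and using the indicator $\mathds 1_{b<n}$ to reconcile $\zeta_{b,b}=0$ with $\nudag(0)=b$—is exactly the right bookkeeping and matches how the paper uses these facts later (e.g.\ in Lemma \ref{lem:zeros_of_gTilde_z_nudag}).
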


\begin{lemma}[Zeros of $\tilde g_{n,\omega}(\zeta,\nudag)$]
\label{lem:zeros_of_gTilde_z_nudag}
\[
\tilde g_{n,\omega}(\zeta,\nudag)=0\iff(n,n-\omega)\in\Bbb N^2\land(\zeta=0\lor(\zeta\in Z_b\setminus\{0\}\land n-\omega\geq\nudag)).
\]
\end{lemma}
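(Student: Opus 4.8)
The plan is to specialize the general zero-set characterization of Lemma~\ref{lem:tilde_gnw_zeros} to the point $(z,\nu)=(\zeta,\nudag)$ and then translate the resulting conditions on $\nudag$ into conditions on $\zeta$ using the two facts recorded in Proposition~\ref{prop:facts_about_nudag}. Setting $z=\zeta$ and $\nu=\nudag$ in Lemma~\ref{lem:tilde_gnw_zeros} gives
\[
\tilde g_{n,\omega}(\zeta,\nudag)=0\iff
\underbrace{(n-\omega-\nudag\in\Bbb N_0\land\nudag\neq 0)}_{(A)}
\ \lor\
\underbrace{(n\in\Bbb N\land n>\omega\land\nudag\in\Bbb R_0^+\land\zeta=0)}_{(B)}.
\]
Since we are working on the set $\mathcal N$ of Definition~\ref{def:g_nw_function}, throughout $n,\omega\in\Bbb N_0$ with $\omega\le n$, so that $n-\omega\in\Bbb N_0$ and the condition $(n,n-\omega)\in\Bbb N^2$ is equivalent to $n>\omega$. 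Part~(1) of Proposition~\ref{prop:facts_about_nudag} tells us $\nudag\ge b\ge 0$ always, so the requirement $\nudag\in\Bbb R_0^+$ in $(B)$ is automatic; hence $(B)$ collapses to $(n,n-\omega)\in\Bbb N^2\land\zeta=0$, which is exactly the $\zeta=0$ branch of the claimed right-hand side.

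First I would simplify disjunct $(A)$. Because $n,\omega$ are integers, $n-\omega-\nudag\in\Bbb N_0$ forces $\nudag\in\Bbb Z$; together with $\nudag\ge b\ge 0$ this means $\nudag\in\Bbb N_b$, and then $n-\omega-\nudag\in\Bbb N_0$ is equivalent to $n-\omega\ge\nudag$. Thus $(A)\iff\nudag\in\Bbb N_b\land n-\omega\ge\nudag\land\nudag\ne 0$. Now I would invoke Proposition~\ref{prop:facts_about_nudag}: part~(1) says $\nudag$ is a strictly increasing bijection $[0,1]\to[b,\infty)$, so it is injective, and part~(2) says $\nudag[Z_b]=\Bbb N_b$; combining these, $\nudag(\zeta)\in\Bbb N_b\iff\zeta\in Z_b$, with $\nudag(\zeta_{b,k})=k$. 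Moreover $\nudag=0$ can occur only if $0\in\Bbb N_b$, i.e.\ $b=0$, in which case it occurs precisely at $\zeta=\zeta_{b,0}=0$; so among $\zeta\in Z_b$ the clause $\nudag\ne 0$ removes exactly the point $\zeta=0$ (and is vacuous when $b>0$).

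Next I would recombine. Splitting $(A)$ according to whether $\zeta=0$: the subcase $\zeta=0$ of $(A)$ forces $\nudag=b$ and $n-\omega\ge b$, which is already subsumed by $(B)$ (both merely require $(n,n-\omega)\in\Bbb N^2\land\zeta=0$); the subcase $\zeta\ne 0$ of $(A)$ reads $(n,n-\omega)\in\Bbb N^2\land\zeta\in Z_b\setminus\{0\}\land n-\omega\ge\nudag$, where I also use that $\nudag\ne 0$ is automatic once $\zeta\ne 0$ and that $\nudag\ge 1$ forces $n-\omega\ge 1$, giving $(n,n-\omega)\in\Bbb N^2$. Therefore
\[
(A)\lor(B)=(n,n-\omega)\in\Bbb N^2\land\bigl(\zeta=0\lor(\zeta\in Z_b\setminus\{0\}\land n-\omega\ge\nudag)\bigr),
\]
which is the asserted equivalence.

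The main obstacle will be the careful bookkeeping around the boundary point $\zeta=0$: I must justify that the $\zeta=0$ portion of $(A)$ is genuinely redundant (absorbed into $(B)$), so that the final statement uses $Z_b\setminus\{0\}$ rather than $Z_b$, and that the side condition $\nudag\ne0$ translates precisely into excluding this single point. Everything else is a routine logical reduction once Proposition~\ref{prop:facts_about_nudag} is applied to convert membership $\nudag\in\Bbb N_b$ into $\zeta\in Z_b$.
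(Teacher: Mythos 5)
Your proposal is correct and follows essentially the same route as the paper's proof: specialize Lemma \ref{lem:tilde_gnw_zeros} to $(\zeta,\nudag)$, use $\nudag\in\Bbb R_0^+$ (Proposition \ref{prop:facts_about_nudag}, part 1) to collapse the $\zeta=0$ disjunct, convert the integrality condition $n-\omega-\nudag\in\Bbb N_0$ into $\zeta\in Z_b$ via part 2, absorb the $\zeta=0$ portion of the other disjunct, and factor out $(n,n-\omega)\in\Bbb N^2$. Your bookkeeping at the boundary point $\zeta=0$ is in fact slightly more explicit than the paper's, but the argument is the same.
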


\begin{proof}
The proof follows from Lemma \ref{lem:tilde_gnw_zeros} which states
\[
\tilde g_{n,\omega}(\zeta,\nudag)=0\iff A\lor B,
\]
with $A=(n\in\Bbb N\land n>\omega\land\nudag\in\Bbb R_0^+\land z=0)$ and $B=(n-\omega-\nudag\in\Bbb N_0\land \nudag\neq 0)$. Starting with $A$ notice that $\nudag\in\Bbb R_0^+$ always holds so that we may equivalently write $A=((n,n-\omega)\in\Bbb N^2\land\zeta=0)$. Next, consider $B$ and observe that a necessary condition for $\nudag=0$ is $\zeta=0$. Since the condition $\zeta=0$ is already covered by $A$ we may write $B=(n-\omega-\nudag\in\Bbb N_0\land \zeta\neq 0)$. Furthermore, $n-\omega-\nudag\in\Bbb N_0$ requires $\nudag$ to be an integer and $n-\omega\geq\nudag$. But $\nudag$ can only be an integer if $\zeta\in Z_b$ so that $B=(n-\omega\geq\nudag\land \zeta\in Z_b\setminus\{0\})$. But the conditions specified by $B$ implicitly require $n-\omega\geq 1\implies(n,n-\omega)\in\Bbb N^2$ so that this requirement can be factored out of $A$ and $B$ leaving us with the desired result.
\end{proof}

\begin{corollary}
\label{cor:special_zeros_of_gTilde_z_nudag}
For all $(\zeta,\nudag)\in[0,1]\times[b,\infty)$
\[
n=\omega\implies \tilde g_{n,\omega}(\zeta,\nudag)\neq 0.
\]
Furthermore,
\[
\tilde g_{n,\omega}(\zeta,\nudag)=0,\ \forall (n,n-\omega)\in\Bbb N^2\iff\zeta\in\{0,\zeta_{b,1}\}.
\]
\end{corollary}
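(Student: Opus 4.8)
The plan is to read off everything from the zero-set characterization in Lemma \ref{lem:zeros_of_gTilde_z_nudag}, specialized to $\nu=\nudag(\zeta)$, using the structural facts about $\nudag$ and $Z_b$ collected in Proposition \ref{prop:facts_about_nudag}. Recall that the lemma states
\[
\tilde g_{n,\omega}(\zeta,\nudag)=0\iff(n,n-\omega)\in\Bbb N^2\land\bigl(\zeta=0\lor(\zeta\in Z_b\setminus\{0\}\land n-\omega\geq\nudag)\bigr),
\]
so the entire proof reduces to manipulating this logical condition; no new analysis of the hypergeometric functions is required.

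For the first claim I would simply note that $n=\omega$ forces $n-\omega=0$, and since $\Bbb N=\{1,2,\dots\}$ excludes $0$, the hypothesis $(n,n-\omega)\in\Bbb N^2$ of the lemma fails outright. As this condition is necessary for $\tilde g_{n,\omega}(\zeta,\nudag)$ to vanish, we conclude $\tilde g_{n,\omega}(\zeta,\nudag)\neq 0$ for every $(\zeta,\nudag)$ in the stated domain, with no further computation needed.

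For the second claim I would prove both implications from the displayed equivalence, now holding $(n,n-\omega)\in\Bbb N^2$ and quantifying over all such pairs. For sufficiency, $\zeta=0$ makes the right-hand disjunct true for every admissible $(n,\omega)$, and if $\zeta=\zeta_{b,1}\neq 0$ then $\nudag(\zeta_{b,1})=1$ (since $\log_{\zeta_{b,1}}\sarb_0=1-b$ gives $\nudag=1-b+b=1$), so $\zeta_{b,1}\in Z_b\setminus\{0\}$ and $n-\omega\geq 1=\nudag$ hold for all $(n,n-\omega)\in\Bbb N^2$; both conjuncts are satisfied and $\tilde g_{n,\omega}$ vanishes. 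For necessity, assume $\tilde g_{n,\omega}(\zeta,\nudag)=0$ for all $(n,n-\omega)\in\Bbb N^2$ with $\zeta\neq 0$; testing the pair $(n,\omega)=(1,0)$, which minimizes $n-\omega$, forces $\zeta\in Z_b\setminus\{0\}$ and $\nudag\leq 1$. By Proposition \ref{prop:facts_about_nudag} the map $\nudag$ is strictly increasing with $\nudag[Z_b]=\Bbb N_b$, so on $Z_b\setminus\{0\}$ it takes integer values that are positive (as $\nudag>\nudag(0)=b\geq 0$ off $\zeta=0$); combined with $\nudag\leq 1$ this pins $\nudag=1$, and injectivity of $\nudag$ identifies $\zeta=\zeta_{b,1}$.

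The hard part is bookkeeping at the degenerate boundary rather than any deep estimate: when $b\geq 1$ one has $\zeta_{b,1}=0$, so the set $\{0,\zeta_{b,1}\}$ collapses to $\{0\}$, and the ``$\nudag=1$'' branch must be shown to be vacuous (no nonzero $\zeta\in Z_b$ attains $\nudag=1$, since $1\notin\Bbb N_b$), whereas for $b<1$ it genuinely supplies the second point. I would handle this by phrasing both the sufficiency and necessity arguments uniformly through the value $\nudag=1$ and letting Proposition \ref{prop:facts_about_nudag} decide whether its preimage $\zeta_{b,1}$ is zero or not, so that the stated answer $\zeta\in\{0,\zeta_{b,1}\}$ is correct in every regime of $b$.
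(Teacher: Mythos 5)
Your proof is correct and follows essentially the same route as the paper's: both read everything off the zero-set characterization in Lemma \ref{lem:zeros_of_gTilde_z_nudag}, dispatching the first claim via $n=\omega\implies n-\omega=0\notin\Bbb N$ and reducing the second to the equivalence $(\zeta=0\lor\nudag=1)\iff\zeta\in\{0,\zeta_{b,1}\}$ supplied by Proposition \ref{prop:facts_about_nudag}. Your write-up is merely more explicit---verifying both directions of the biconditional by testing the minimal pair $(n,\omega)=(1,0)$ and handling the degenerate regime $b\geq 1$ where $\zeta_{b,1}=0$---whereas the paper compresses these steps into a single line.
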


\begin{proof}
The first claim immediately follows from Lemma \ref{lem:zeros_of_gTilde_z_nudag} since $n=\omega\implies n-\omega=0\notin\Bbb N$. Furthermore, the conditions of Lemma \ref{lem:zeros_of_gTilde_z_nudag} are always satisfied if $(n,n-\omega)\in\Bbb N^2$ and $\zeta=0\lor\nudag=1$, where the latter holds if and only if $\zeta\in\{0,\zeta_{b,1}\}$.
\end{proof}

Corollary \ref{cor:special_zeros_of_gTilde_z_nudag} will turn out to be very important in determining the behavior of optimal sample sizes of $\mathscr T_\nudag$. In particular, the results of this corollary indicate that $\zeta=0$ and $\zeta=\zeta_{b,1}$ are the only values of $\zeta$ for which $\tilde g_{n,\omega}(\zeta,\nudag)=0$ for all $n>\omega$. We shall see that these two points of interest corresponds to the only $\zeta$-values that render the optimization problem for the optimal sample sizes of $\mathscr T_\nudag$ weakly degenerate. Before proceeding, we need one more result, which gives us the expression for $\tilde g_{n,n}(\zeta,\nudag)$ at $\zeta=0,\zeta_{b,1}$.

\begin{lemma}
\label{lem:gTilde_vzb_limit}
Let $L\in\{0,\zeta_{b,1}\}$. Then $\lim_{\zeta\nearrow L}\tilde g_{n,n}(\zeta,\nudag)=n!$ for all $n\in\Bbb N_0$.
\end{lemma}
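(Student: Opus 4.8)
The plan is to start from the explicit rational form of $\tilde g_{n,\omega}$ and specialize it to the diagonal. Setting $\omega=n$ in Corollary \ref{cor:gTilde_hyper_form} form $(\mathrm{iii})$ (so that $z^{n-\omega}=1$ and $\omega+\nu-n=\nu$) gives
\[
\tilde g_{n,n}(\zeta,\nudag)=n!\,\frac{1-\zeta^{\nudag}\sum_{k=0}^n\frac{(\nudag)_k}{k!}(1-\zeta)^k}{(1-\zeta)^n\bigl(1-\zeta^{\nudag}\bigr)}.
\]
The crucial simplification is the exact identity $\zeta^{\nudag}=\sarb_0\zeta^b$, which is just the bias profile of Proposition \ref{prop:nu_as_func_of_zeta} (equivalently $\zeta^{\log_\zeta\sarb_0}=\sarb_0$). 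I would substitute this throughout, so that the only remaining $\zeta$-dependence sits in $\nudag(\zeta)$, in the powers $(1-\zeta)^k$, and in the factor $\zeta^b$. By Proposition \ref{prop:facts_about_nudag}$(1)$ the profile $\nudag$ is continuous and increasing with $\nudag(0)=b$ and $\nudag(\zeta_{b,1})=1$, so the two endpoints $L\in\{0,\zeta_{b,1}\}$ correspond to $\nudag\to b$ and $\nudag\to 1$, respectively.

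For $L=\zeta_{b,1}$ (which is interior to $(0,1)$ precisely when $b<1$; otherwise it coincides with $0$), the cleanest route is to appeal to joint continuity: near $(\zeta,\nu)=(\zeta_{b,1},1)$ the denominator $(1-\zeta)^n(1-\zeta^\nu)$ is bounded away from zero, so the whole expression is continuous in $(\zeta,\nu)$ and the limit equals its value at $\nu=1$. At $\nu=1$ one has $(\nu)_k/k!=1$, the inner sum telescopes as a geometric series to $\bigl(1-(1-\zeta)^{n+1}\bigr)/\zeta$, and $\zeta^\nu=\zeta$; the numerator then collapses to $(1-\zeta)^{n+1}$ while the denominator is $(1-\zeta)^{n}(1-\zeta)=(1-\zeta)^{n+1}$, leaving exactly $n!$. (This incidentally shows $\tilde g_{n,n}(\zeta,1)=n!$ identically.)

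For $L=0$ I would take the limit $\zeta\to 0^{+}$ directly and split on $b$. When $b>0$ the factor $\zeta^{\nudag}=\sarb_0\zeta^b\to 0$, each $(1-\zeta)^k\to 1$, and both numerator and denominator tend to $1$, giving $n!$. The delicate subcase — and the main obstacle — is $b=0$, where $\zeta^{\nudag}=\sarb_0$ does \emph{not} vanish; here I would use that $\nudag\to 0$ forces $(\nudag)_k\to(0)_k$, which equals $0$ for $k\ge 1$ and $1$ for $k=0$, so the inner sum collapses to its $k=0$ term and the numerator tends to $1-\sarb_0$, matching the denominator limit $1-\sarb_0\neq0$ and again yielding $n!$. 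The only point requiring care throughout is that the removable singularity of the factor $1-\zeta^{\nudag}$ (present only where $\zeta^{\nudag}=1$) is never encountered at these endpoints, which holds because $\zeta^{\nudag}$ limits to $0$, $\sarb_0$, or $\zeta_{b,1}$, each strictly below $1$.
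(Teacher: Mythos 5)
Your proof is correct and follows essentially the same route as the paper's: both start from Corollary \ref{cor:gTilde_hyper_form} form $(\mathrm{iii})$ at $\omega=n$, split into the cases $b>0$ versus $b=0$ at $L=0$ (using $(\nudag)_k\to\mathds 1_{k=0}$ in the latter), and at $L=\zeta_{b,1}$ substitute $\nudag\to1$ so the sum telescopes geometrically to give $n!$. Your explicit checks that the denominator factor $1-\zeta^{\nudag}$ stays bounded away from zero are a welcome refinement the paper leaves implicit, but they do not change the argument.
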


\begin{proof}
From Corollary \ref{cor:gTilde_hyper_form} form $(\mathrm{iii})$ we have for all $n\in\Bbb N_0$:
\[
\tilde g_{n,n}(\zeta,\nudag)=n!\frac{1-\sarb_0\zeta^b\sum_{k=0}^n\frac{(\nudag)_k}{k!} (1-\zeta)^k}{(1-\zeta)^n(1-\sarb_0\zeta^b)}.
\]
Each case is now examined separately.


\begin{enumerate}
\item[$(1)$] $\zeta\nearrow 0$.

\begin{subproof}
If $b>0$ then $\zeta\nearrow 0\implies\zeta^b\nearrow 0$ and so $\tilde g_{n,n}(\zeta,\nudag_{b>0})\to n!$. Likewise, if $b=0$ then $\zeta^b=1$ while $(\nudag)_k\to\mathds 1_{k=0}$ and so again we find $\tilde g_{n,n}(\zeta,\nudag_{b=0})\to n!$ as expected.
\end{subproof}


\item[$(2)$] $\zeta\nearrow \zeta_{b,1}$.

\begin{subproof}
If $b\geq 1$ then $\zeta_{b,1}=0$, which was already covered in part $(1)$. Now suppose $0\leq b\leq 1$. As $\zeta\nearrow\zeta_{b,1}$: $\nudag\to 1$ and $\sarb_0\zeta^b\to\zeta_{b,1}$; hence
\[
\lim_{\zeta\nearrow\zeta_{b,1}}\tilde g_{n,n}(\zeta,\nudag_{b\in[0,1]})=n!\frac{1-\zeta_{b,1}\sum_{k=0}^n(1-\zeta_{b,1})^k}{(1-\zeta_{b,1})^{n+1}}=n!,
\]
which completes the proof.
\end{subproof}
\end{enumerate}
\end{proof}

Our next task is to derive the system of equations needed for computing the optimal sample sizes. Since $\acv\mathscr T_\nudag$ is naturally parameterized in terms of the shape parameters $\alpha_1$ and $\alpha_2$ and not the sample sizes $n_1$ and $n_2$ our discussion will focus on finding the optimal shape parameters according to Definition \ref{def:optimal_sample_sizes}, knowing that we may convert these optimal shape parameters to their corresponding sample sizes through the relation $n_i^\text{opt}=2\alpha_i^\text{opt}+1$. As was done in Lemma \ref{lem:ACVbarP_opt_sample_sizes} we will accomplish this task by letting $\mathrm A=\alpha_1+\alpha_2$ be constant and then showing that $\acv^2\mathscr T_\nu(\mathrm A-\alpha_2,\alpha_2)$ is: $(1)$ twice continuously differentiable and strictly convex on $\alpha_2\in(0,\mathrm A)$ and $(2)$ is infinite at the endpoints $\alpha_2=0,\mathrm A$ with the exception of two points of interest. These findings will then be used to show that the minimization in $(\ref{eq:optimal_SOEs})$ can be uniquely solved by equating a derivative with zero. The following result presents and intermediate step in demonstrating convexity of $\acv^2\mathscr T_\nu(\mathrm A-\alpha_2,\alpha_2)$.

\begin{lemma}
\label{lem:ACVTnudag_interior_sum_is_convex}
Let $k\in\Bbb N$, $\mathrm A>0$, $\zeta\in[0,1]$, and $\nudag$ be as defined in Definition \ref{def:nudag}. Then,
\[
f_k(\alpha_2|\mathrm A,\zeta,\nudag):=\sum_{\ell=0}^k\frac{\tilde g_{k,\ell}^2(\zeta,\nudag)}{(\mathrm A-\alpha_2)_\ell(\alpha_2)_{k-\ell}\ell !(k-\ell)!}
\]
is positive and strictly convex on $\alpha_2\in(0,\mathrm A)$. Furthermore,
\[
\lim_{\alpha_2\nearrow 0}f_k(\alpha_2|\cdot)=%
\begin{cases}
k!/(\mathrm A)_k, &\zeta\in\{0,\zeta_{b,1}\}\\
\infty, &\text{otherwise}
\end{cases}
\]
and
\[
\lim_{\alpha_2\searrow \mathrm A}f_k(\alpha_2|\cdot)=\infty.
\]
\end{lemma}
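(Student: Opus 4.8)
The plan is to exploit the fact that the weights $\tilde g_{k,\ell}^2(\zeta,\nudag)$ do not depend on $\alpha_2$, so the entire $\alpha_2$-dependence of $f_k$ lives in the reciprocal Pochhammer factors $[(\mathrm A-\alpha_2)_\ell(\alpha_2)_{k-\ell}]^{-1}$. First I would note that on $\alpha_2\in(0,\mathrm A)$ both $\alpha_2>0$ and $\mathrm A-\alpha_2>0$, so each rising factorial $(\mathrm A-\alpha_2)_\ell$ and $(\alpha_2)_{k-\ell}$ is a product of strictly positive factors; hence every summand of $f_k$ is nonnegative. Positivity then follows by isolating the $\ell=k$ term: by Corollary \ref{cor:special_zeros_of_gTilde_z_nudag} we have $\tilde g_{k,k}(\zeta,\nudag)\neq 0$ (the case $n=\omega$), so $\tilde g_{k,k}^2>0$ and the summand $\tilde g_{k,k}^2/[(\mathrm A-\alpha_2)_k\,k!]$ is strictly positive.

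For strict convexity I would write $f_k(\alpha_2)=\sum_{\ell=0}^k c_\ell\,\phi_\ell(\alpha_2)$ with $c_\ell=\tilde g_{k,\ell}^2/(\ell!\,(k-\ell)!)\geq 0$ and $\phi_\ell(\alpha_2)=[(\mathrm A-\alpha_2)_\ell(\alpha_2)_{k-\ell}]^{-1}$, and establish convexity of each $\phi_\ell$ through log-convexity. Each $\phi_\ell$ is a product of reciprocal linear factors $1/(\mathrm A-\alpha_2+j)$ for $0\leq j\leq \ell-1$ and $1/(\alpha_2+j)$ for $0\leq j\leq k-\ell-1$, and for every such factor the second derivative of its logarithm equals $(\mathrm A-\alpha_2+j)^{-2}$ or $(\alpha_2+j)^{-2}$, strictly positive on $(0,\mathrm A)$. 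Thus each factor is strictly log-convex; a product of log-convex functions is log-convex, and log-convexity implies convexity, so every $\phi_\ell$ is convex. Moreover $\phi_k=1/(\mathrm A-\alpha_2)_k$ is a product of $k\geq 1$ strictly log-convex factors, hence strictly convex, and it carries the positive weight $c_k=\tilde g_{k,k}^2/k!>0$. Since a nonnegative combination of convex functions is convex, and is strictly convex once one positively-weighted summand is strictly convex, $f_k$ is strictly convex on $(0,\mathrm A)$.

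The endpoint limit as $\alpha_2\to 0^+$ comes from deciding which summands survive, and this is the step I expect to require the most care. As $\alpha_2\to 0^+$ one has $(\mathrm A-\alpha_2)_\ell\to(\mathrm A)_\ell$, while for $\ell<k$ the factor $(\alpha_2)_{k-\ell}\to 0^+$ (its leading factor being $\alpha_2$), so $[(\alpha_2)_{k-\ell}]^{-1}\sim[\alpha_2\,(k-\ell-1)!]^{-1}$ diverges; the $\ell=k$ summand instead converges to $\tilde g_{k,k}^2/[(\mathrm A)_k\,k!]$. Hence a summand with $\ell<k$ and $\tilde g_{k,\ell}\neq 0$ blows up like $\alpha_2^{-1}$, whereas one with $\tilde g_{k,\ell}=0$ is identically zero, and this is exactly where the precise zero structure of Lemma \ref{lem:zeros_of_gTilde_z_nudag} enters. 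When $\zeta=0$ the clause $\zeta=0$ there forces $\tilde g_{k,\ell}(\zeta,\nudag)=0$ for every $\ell<k$; when $\zeta=\zeta_{b,1}$ one has $\nudag=1$, whence $\tilde g_{k,\ell}=0\iff k-\ell\geq 1$, again vanishing for every $\ell<k$. In both cases only the $\ell=k$ term survives, and using the value $\tilde g_{k,k}(\zeta,\nudag)=k!$ supplied by Lemma \ref{lem:gTilde_vzb_limit} the limit equals $(k!)^2/[(\mathrm A)_k\,k!]=k!/(\mathrm A)_k$. When $\zeta\notin\{0,\zeta_{b,1}\}$ I would exhibit a nonvanishing low-$\ell$ summand: if $\zeta\notin Z_b$ then every $\ell<k$ gives $\tilde g_{k,\ell}\neq 0$, while if $\zeta=\zeta_{b,m}$ with $m\geq 2$ then $\nudag=m$ and $\tilde g_{k,\ell}\neq 0$ whenever $k-\ell<m$, so $\ell=k-1$ works; nonnegativity of all terms then forces $f_k\to\infty$.

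Finally, for $\alpha_2\to\mathrm A^-$ the argument is uniform in $\zeta$. Here $\mathrm A-\alpha_2\to 0^+$, so $(\mathrm A-\alpha_2)_\ell\to 0^+$ for every $\ell\geq 1$. Taking $\ell=k$, which is legitimate since $k\geq 1$, and again invoking $\tilde g_{k,k}\neq 0$ from Corollary \ref{cor:special_zeros_of_gTilde_z_nudag}, the summand $\tilde g_{k,k}^2/[(\mathrm A-\alpha_2)_k\,k!]$ diverges; since the remaining terms are nonnegative, $f_k\to\infty$ with no exceptional values of $\zeta$. Assembling these four pieces gives the statement.
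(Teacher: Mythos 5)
Your proof is correct, and it follows the same overall strategy as the paper: positivity and strict convexity term-by-term in the reciprocal Pochhammer factors, then endpoint limits read off from the zero structure of $\tilde g_{k,\ell}(\zeta,\nudag)$ via Lemma \ref{lem:zeros_of_gTilde_z_nudag} and the value $\tilde g_{k,k}\to k!$ from Lemma \ref{lem:gTilde_vzb_limit}. The one packaging difference is in the convexity step: the paper differentiates $[(\mathrm A-\alpha_2)_\ell(\alpha_2)_{k-\ell}]^{-1}$ twice and verifies $[\Psi^{(0)}_{k,\ell}]^2+\Psi^{(1)}_{k,\ell}>0$ using polygamma recurrences, whereas you invoke strict log-convexity of each reciprocal linear factor and closure under products; these are literally the same computation, since the paper's expression is $\phi\bigl(((\log\phi)')^2+(\log\phi)''\bigr)$ for $\phi=[(\mathrm A-\alpha_2)_\ell(\alpha_2)_{k-\ell}]^{-1}$, but your version is cleaner and makes transparent why strictness survives. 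Two places where you are actually more careful than the paper are worth noting. First, at $\alpha_2\to\mathrm A$ the paper attributes the divergence to the ``$\ell=0$ term,'' which is a slip: with the indexing of $f_k$, the $\ell=0$ term has $(\mathrm A-\alpha_2)_0=1$ and stays bounded; the term that is simultaneously guaranteed positive by Corollary \ref{cor:special_zeros_of_gTilde_z_nudag} and divergent like $(\mathrm A-\alpha_2)^{-1}$ is the $\ell=k$ term, exactly as you argue. Second, for $\zeta\in[0,1]\setminus\{0,\zeta_{b,1}\}$ the paper's $\mathcal O(\alpha_2^{-1})$ asymptotic tacitly needs a nonzero coefficient, i.e.\ some $\ell<k$ with $\tilde g_{k,\ell}\neq 0$; you supply the explicit witness, including the case $\zeta=\zeta_{b,m}$ with $m\geq 2$, where $\nudag=m$ forces $\tilde g_{k,k-1}\neq 0$. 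These refinements make your write-up a slightly sharper version of the paper's own argument rather than a different proof.
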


\begin{proof}
The claim of positivity comes from the fact that the summand of $f_k$ is nonnegative and from Corollary \ref{cor:special_zeros_of_gTilde_z_nudag} which implies $\tilde g_{n,\omega}(\zeta,\nudag)\neq 0$ for at least one $\ell\in\{0,\dots,k\}$. For the claim of convexity we first write
\[
\partial_{\alpha_2}^2\frac{1}{(\mathrm A-\alpha_2)_\ell(\alpha_2)_{k-\ell}}=\frac{[\Psi^{(0)}_{k,\ell}(\mathrm A-\alpha_2,\alpha_2)]^2+\Psi^{(1)}_{k,\ell}(\mathrm A-\alpha_2,\alpha_2)}{(\mathrm A-\alpha_2)_\ell(\alpha_2)_{k-\ell}},
\]
where
\[
\begin{aligned}
\Psi^{(0)}_{k,\ell}(\alpha_1,\alpha_2) &=\psi^{(0)}(\alpha_1)-\psi^{(0)}(\alpha_1+\ell)-\psi^{(0)}(\alpha_2)+\psi^{(0)}(\alpha_2+k-\ell)\\
\Psi^{(1)}_{k,\ell}(\alpha_1,\alpha_2) &=\psi^{(1)}(\alpha_1)-\psi^{(1)}(\alpha_1+\ell)+\psi^{(1)}(\alpha_2)-\psi^{(1)}(\alpha_2+k-\ell).
\end{aligned}
\]
Certainly, $[\Psi^{(0)}_{k,\ell}(\mathrm A-\alpha_2,\alpha_2)]^2\geq 0$ for all $k$, $\ell$, and $\alpha_2\in(0,\mathrm A)$. Furthermore,
\[
\Psi^{(1)}_{k,\ell}(\mathrm A-\alpha_2,\alpha_2)=\sum_{j=0}^{\ell-1}\frac{1}{(\mathrm A-\alpha_2+j)^2}+\sum_{j=0}^{k-\ell-1}\frac{1}{(\alpha_2+j)^2}.
\]
Since $(k,\ell)\in(\Bbb N,\Bbb N_0)$ with $\ell\leq k$, it follows that at least one of these sums will be nonzero such that $\Psi^{(1)}_{k,\ell}(\mathrm A-\alpha_2,\alpha_2)>0$ and $[(\mathrm A-\alpha_2)_\ell(\alpha_2)_{k-\ell}]^{-1}$ is a strictly convex function of $\alpha_2$ on $(0,\mathrm A)$. Combining this observation with the fact that there is always at least one $\ell\in\{0,\dots,k\}$ for which $\tilde g_{k,\ell}(\zeta,\nudag)\neq 0$ further implies that $f_k$ is a finite sum of at least one positive strictly convex function with the rest of the terms being either being zero or also positive and strictly convex; hence, $f_k$ is positive and strictly convex on $\alpha_2\in(0,\mathrm A)$.

Now turning to the limits notice that if $\zeta\in\{0,\zeta_{b,1}\}$ then by Corollary \ref{cor:special_zeros_of_gTilde_z_nudag} and Lemma \ref{lem:gTilde_vzb_limit} we have $f_k(\alpha_2|\cdot)=\tilde g_{k,k}^2(\zeta,\nudag)/((\mathrm A-\alpha_2)_k k!)\to k!/(\mathrm A)_k$ as $\alpha_2\nearrow 0$. For the complementary case $\zeta\in[0,1]\setminus\{0,\zeta_{b,1}\}$ we may use the the asymptotic expansion $\frac{1}{(x)_m}\sim\frac{1}{\Gamma(m)}(\frac{1}{x}-H_{m-1}+\mathcal O(x))$ for $x\to 0$ and Lemma \ref{lem:gTilde_vzb_limit} to further deduce
\[
f_k(\alpha_2|\cdot)\sim\frac{k!}{(\mathrm A)_k}+\mathcal O(\alpha_2^{-1})\to\infty
\]
as $\alpha_2\nearrow 0$. For the other limit $\alpha_2\searrow\mathrm A$, Corollary \ref{cor:special_zeros_of_gTilde_z_nudag} tells us that the $\ell=0$ term in $f_k$ is always positive; hence, as $\alpha_2\searrow\mathrm A$
\[
f_k(\alpha_2|\cdot)\sim\mathcal O((\mathrm A-\alpha_2)^{-1})\to\infty,
\]
which completes the proof.
\end{proof}

\begin{theorem}[Optimal sample sizes of $\mathscr T_\nudag$]
\label{thm:TvNudag_optimal_SOE}
Let $(n_1^{{\normalfont\text{opt}}},n_2^{{\normalfont\text{opt}}})(\theta_{\mathscr T_\nudag})$ with $\theta_{\mathscr T_\nudag}=(\zeta,\sacv_0,\sarb_0,b)$ denote the optimal sample sizes for $\mathscr T_\nudag$ as a function of $\theta_{\mathscr T_\nudag}$ that also satisfy $\acv\mathscr T_\nudag(n_1^{{\normalfont\text{opt}}},n_2^{{\normalfont\text{opt}}}|\theta_{\mathscr T_\nudag})=\sacv_0$ and $\arb\mathscr T_\nudag=\sarb_0\zeta^b$ for $\sacv_0\in\Bbb R^+$, $\sarb_0\in(0,1)$, and $b\in\Bbb R_0^+$ fixed. Then, for $\zeta\in\{0,\zeta_{b,1}\}$:
\[
(n_1^{{\normalfont\text{opt}}},n_2^{{\normalfont\text{opt}}})(\theta_{\mathscr T_\nudag})=(2\sacv_0^{-2}+5,1).
\]
Likewise, for all remaining $\zeta\in[0,1]\setminus\{0,\zeta_{b,1}\}$:
\[
(n_1^{{\normalfont\text{opt}}},n_2^{{\normalfont\text{opt}}})(\theta_{\mathscr T_\nudag})=(2\alpha_1^{{\normalfont\text{opt}}}+1,2\alpha_2^{{\normalfont\text{opt}}}+1)(\theta_{\mathscr T_\nudag}),
\]
where $\alpha_1^{{\normalfont\text{opt}}}$ and $\alpha_2^{{\normalfont\text{opt}}}$ satisfy
\begin{equation}
\label{eq:optimal_alpha_SOE}
\begin{aligned}
\partial_{\alpha_2}\acv^2 \mathscr T_\nudag(\mathrm A-\alpha_2,\alpha_2|\theta_{\mathscr T_\nudag})\Big|_{(\mathrm A,\alpha_2)=(\alpha_1^{\normalfont\text{opt}}+\alpha_2^{\normalfont\text{opt}},\alpha_2^{\normalfont\text{opt}})} &=0\\
\acv^2 \mathscr T_\nudag(\alpha_1^{\normalfont\text{opt}},\alpha_2^{\normalfont\text{opt}}|\theta_{\mathscr T_\nudag})-\sacv_0^2 &=0.
\end{aligned}
\end{equation}
\end{theorem}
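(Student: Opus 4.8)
The plan is to reduce the constrained optimization in $(\ref{eq:optimal_SOEs})$ to the explicit system $(\ref{eq:optimal_alpha_SOE})$ by exploiting the convexity structure already assembled in Lemma \ref{lem:ACVTnudag_interior_sum_is_convex}, treating the weakly degenerate points $\zeta\in\{0,\zeta_{b,1}\}$ separately. First I would observe that, by Proposition \ref{prop:nu_as_func_of_zeta}, the constraint $\arb\mathscr T_\nudag=\sarb_0\zeta^b$ holds identically in $(n_1,n_2)$ once $\nu=\nudag$ is fixed; hence the optimization genuinely decouples into a sample-size problem for $\acv\mathscr T_\nudag$ alone. Passing to shape coordinates $\alpha_i=(n_i-1)/2$ and writing $\mathrm A=\alpha_1+\alpha_2$, Corollary \ref{cor:cvT_nu} gives $\acv^2\mathscr T_\nudag(\mathrm A-\alpha_2,\alpha_2)=\sum_{k=1}^\infty f_k(\alpha_2\mid\mathrm A,\zeta,\nudag)$, so the two-variable problem $(\ref{eq:optimal_SOEs})$ is exactly the minimization of this series in $\alpha_2$ for fixed $\mathrm A$, together with the level constraint $(\ref{eq:optim_eq2})$.

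For the nondegenerate case $\zeta\in[0,1]\setminus\{0,\zeta_{b,1}\}$, the key is to lift the per-term conclusions of Lemma \ref{lem:ACVTnudag_interior_sum_is_convex} to the full series. Each $f_k$ is positive, strictly convex on $(0,\mathrm A)$, and blows up at both endpoints; summing the strict convexity inequalities term by term (legitimate wherever the relevant series converge, i.e.\ on the feasible region determined by Theorem \ref{thm:T_est_variance_series}) shows $\acv^2\mathscr T_\nudag$ is strictly convex in $\alpha_2$, while positivity together with the divergence of the single term $f_1$ at each endpoint forces $\acv^2\mathscr T_\nudag\to\infty$ as $\alpha_2\searrow 0$ and $\alpha_2\nearrow\mathrm A$. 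Smoothness, and the validity of differentiating the series term by term, is supplied by Corollary \ref{cor:convergence_of_ACVTv2_alpha_derivatives}. A strictly convex $C^2$ function on an open interval that diverges to $+\infty$ at both endpoints has a unique interior minimizer characterized by a vanishing first derivative, so $(\ref{eq:optim_eq1})$ reduces to $\partial_{\alpha_2}\acv^2\mathscr T_\nudag=0$; combined with $(\ref{eq:optim_eq2})$ this is precisely $(\ref{eq:optimal_alpha_SOE})$, and $n_i^{\text{opt}}=2\alpha_i^{\text{opt}}+1$ yields the stated form.

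For $\zeta\in\{0,\zeta_{b,1}\}$ the series collapses. By Corollary \ref{cor:special_zeros_of_gTilde_z_nudag} every summand with $\ell<k$ vanishes, leaving only the $\ell=k$ term, and Lemma \ref{lem:gTilde_vzb_limit} gives $\tilde g_{k,k}(\zeta,\nudag)=k!$; hence
\[
\acv^2\mathscr T_\nudag(\mathrm A-\alpha_2,\alpha_2)=\sum_{k=1}^\infty\frac{k!}{(\mathrm A-\alpha_2)_k}=\frac{1}{\alpha_1-2},
\]
where the closed form follows from evaluating $F(1,1;\alpha_1;1)$ via Relation \ref{rel:2F1_zeq1} and discarding the $k=0$ term. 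This depends only on $\alpha_1=\mathrm A-\alpha_2$ and is strictly increasing in $\alpha_2$ for fixed $\mathrm A$, so the infimum in $(\ref{eq:optim_eq1})$ sits at the boundary $\alpha_2=0$ (i.e.\ $n_2^{\text{opt}}=1$), a weakly degenerate solution. Imposing $(\ref{eq:optim_eq2})$ then gives $(\alpha_1-2)^{-1}=\sacv_0^2$, whence $\alpha_1^{\text{opt}}=2+\sacv_0^{-2}$ and $n_1^{\text{opt}}=2\sacv_0^{-2}+5$.

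The main obstacle I anticipate is the interchange of the infinite sum with the convexity, coercivity, and differentiation operations in the nondegenerate case: although each term is well behaved by Lemma \ref{lem:ACVTnudag_interior_sum_is_convex}, one must confirm on the full feasible region (from Theorem \ref{thm:T_est_variance_series}) that the series and its first two $\alpha_2$-derivatives converge, so that strict convexity is genuinely inherited and the stationarity reduction is rigorous; Corollary \ref{cor:convergence_of_ACVTv2_alpha_derivatives} is the crucial input here. The degenerate branch, by contrast, is elementary once the collapse to the $\ell=k$ terms is recognized.
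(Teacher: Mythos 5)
Your proposal is correct and follows essentially the same route as the paper's proof: the same case split at $\zeta\in\{0,\zeta_{b,1}\}$, the same collapse of the series to $\sum_{k\geq 1}k!/(\alpha_1)_k=(\alpha_1-2)^{-1}$ via Corollary \ref{cor:special_zeros_of_gTilde_z_nudag} and Lemma \ref{lem:gTilde_vzb_limit} yielding the weakly degenerate solution $\alpha_2^{\text{opt}}=0$, and the same appeal to Lemma \ref{lem:ACVTnudag_interior_sum_is_convex} for positivity, strict convexity, and endpoint divergence to reduce the nondegenerate case to the stationarity system $(\ref{eq:optimal_alpha_SOE})$. The only cosmetic deviation is that you justify smoothness via Corollary \ref{cor:convergence_of_ACVTv2_alpha_derivatives} while the paper cites the double integral representation of $\ev\mathscr T_\nu^2$ from Theorem \ref{thm:T_est_variance_series} directly; your phrasing of the degenerate branch (monotone increase in $\alpha_2$ through $\alpha_1=\mathrm A-\alpha_2$) is, if anything, slightly more precise than the paper's.
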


\begin{proof}
We begin with the special case $\zeta\in\{0,\zeta_{b,1}\}$ and call upon Corollary \ref{cor:special_zeros_of_gTilde_z_nudag} and Lemma \ref{lem:gTilde_vzb_limit} to find
\[
\acv^2 \mathscr T_\nudag(\alpha_1,\alpha_2|\theta_{\mathscr T_\nudag})=\sum_{k=1}^\infty\frac{(1)_k(1)_k}{(\alpha_1)_k k!}=\frac{1}{\alpha_1-2}.
\]
Since $\acv^2 \mathscr T_\nudag$ is independent of $\alpha_2$ it follows that the optimization problem (\ref{eq:optimal_alpha_SOE}) is weakly degenerate so that $\alpha_2^\text{opt}=0$ and $\alpha_1^\text{opt}$ satisfies
\[
\frac{1}{\alpha_1^\text{opt}-2}=\sacv_0^2.
\]
Solving for $\alpha_1^\text{opt}$ and then using the relation $n_i^\text{opt}=2n_i^\text{opt}+1$ then gives the desired result for the special case $\zeta\in\{0,\zeta_{b,1}\}$.

Now consider the complementary case $\zeta\in[0,1]\setminus\{0,\zeta_{b,1}\}$. Upon inspection of the double integral representation of $\ev\mathscr T_\nu^2$ in Theorem \ref{thm:T_est_variance_series} and assuming appropriate values of $(\zeta,\nudag)$ to guarantee convergence, we see that $\ev\mathscr T_\nudag^2(\mathrm A-\alpha_2,\alpha_2|\theta_{\mathscr T_\nudag})$ is a smooth function of $\alpha_2$ and hence so is $\acv^2\mathscr T_\nudag(\mathrm A-\alpha_2,\alpha_2|\theta_{\mathscr T_\nudag})$. Additionally, upon writing
\[
\acv^2\mathscr T_\nu(\mathrm A-\alpha_2,\alpha_2|\theta_{\mathscr T_\nudag})=\sum_{k=1}^\infty f_k(\alpha_2|\mathrm A,\theta_{\mathscr T_\nudag}),
\]
we conclude from Lemma \ref{lem:ACVTnudag_interior_sum_is_convex} that $\acv^2\mathscr T_\nu(\mathrm A-\alpha_2,\alpha_2|\theta_{\mathscr T_\nudag})$ is: $1)$ positive and strictly convex on $\alpha_2\in(0,\mathrm A)$ and $2)$ infinite at the endpoints $\alpha_2=0,\mathrm A$. As such, it follows that for every set of parameters $\theta_{\mathscr T_\nudag}$ with $\zeta\notin\{0,\zeta_{b,1}\}$ that $\alpha_2^\text{opt}(\mathrm A,\theta_{\mathscr T_\nudag})=\arginf_{\alpha_2\in(0,\mathrm A)}\acv^2\mathscr T_\nu(\mathrm A-\alpha_2,\alpha_2|\theta_{\mathscr T_\nudag})$ is unique and corresponds to
\[
\partial_{\alpha_2}\acv^2\mathscr T_\nu(\mathrm A-\alpha_2,\alpha_2|\theta_{\mathscr T_\nudag})_{\alpha_2=\alpha_2^\text{opt}}=0.
\]
This is turn implies the optimization problem for finding $(\alpha_1^\text{opt},\alpha_2^\text{opt})$ is nondegenerate and is given by (\ref{eq:optimal_alpha_SOE}). The proof is now complete.
\end{proof}

Taking a small detour, we now relate the optimal sample sizes for $\mathscr T_\nudag$ in Theorem \ref{thm:TvNudag_optimal_SOE} back to a classical result given by James Janesick in his book \emph{Photon Transfer}.

\begin{remark}[Optimal sample sizes in the shot noise limit]
\label{rem:minimum_sample_sizes}
Using the expression for $\mathscr E_{\mathscr T_\nudag}$ in Corollary \ref{cor:E_with_opt_samp_sizes} as well as the results from Theorem \ref{thm:TvNudag_optimal_SOE} it is straightforward to deduce
\[
\lim_{\zeta\nearrow 0}\mathscr E_{\mathscr T_\nudag}=1,
\]
which shows that the optimal sample sizes for $\mathscr T_\nudag$ equal those for $\mathscr G_\nudag$ in the shot noise limit. If one wants to obtain a $1\%$ relative uncertainty for the measurement of $g$ in the shot noise limit, i.e.~$\lim_{\zeta\nearrow 0}\acv\mathscr G_\nudag=0.01$, then according to Theorem \ref{thm:TvNudag_optimal_SOE} the optimal total number of samples needed is $\mathrm N^{\normalfont\text{opt}}=20,006$. This exact result agrees with Janesick's estimate $\mathrm N^{\normalfont\text{opt}}\approx 20,000$ of {\normalfont \cite[pgs.~79--81]{janesick_2007}}.
\end{remark}

Up to this point we have made a significant amount of progress in studying the optimal sample sizes of $\mathscr T_\nudag$. In particular, we have established uniqueness of these optimal sample sizes, derived their exact expressions at the two points of interest $\zeta\in\{0,\zeta_{b,1}\}$, determined a system of equations for evaluating them at all remaining $\zeta\in[0,1]\setminus\{0,\zeta_{b,1}\}$, studied their asymptotic behavior in the low illumination limit $\zeta\searrow 1$, and derived the expression $\mathscr E_{\mathscr T_\nudag}$, which allow us to determine when they give a good approximation to the optimal sample sizes of $\mathscr G_\nudag$. With so much theoretical understanding of these optimal sample sizes we're in a good place to begin discussing how we actually compute them.

It is of no surprise that the system of equations for evaluating the optimal sample sizes given in (\ref{eq:optimal_alpha_SOE}) lacks a closed-from solution and so we must turn to numerical methods to solve it. This of course introduces the practical problem of implementing root finding algorithms. We already know the bounds of the parameters $\theta_{\mathscr T_\nudag}$ and so the only thing left to implement (\ref{eq:optimal_alpha_SOE}) is a good approximation for $(n_1^\text{opt},n_2^\text{opt})$ to use as a starting point. Because $\acv\mathscr T_\nudag$ is naturally parameterized in terms of $(\alpha_1,\alpha_2)$ we will seek an approximate solution for the optimal pair $(\alpha_1^\text{opt},\alpha_2^\text{opt})$ and then transform it into $(n_1^\text{opt},n_2^\text{opt})$ via the relation $n_i^\text{opt}=2\alpha_i^\text{opt}+1$.

To begin deriving our approximation we call on the series expansion given in Corollary \ref{cor:cvT_nu} to write $\acv^2\mathscr T_\nudag(\alpha_1,\alpha_2|\theta_{\mathscr T_\nudag})\approx \acv^2\mathscr T_\nudag^\ast(\alpha_1,\alpha_2|\theta_{\mathscr T_\nudag})$, where
\begin{equation}
\label{eq:acvTv_approximation_for_approx_sol}
\acv^2\mathscr T_\nudag^\ast(\alpha_1,\alpha_2|\theta_{\mathscr T_\nudag})=\frac{1}{\alpha_1}\tilde g_{1,1}^2(\zeta,\nudag)+\frac{1}{\alpha_1\alpha_2}\tilde g_{2,1}^2(\zeta,\nudag)+\frac{1}{\alpha_2}\tilde g_{1,0}^2(\zeta,\nudag)
\end{equation}
and $\tilde g_{k,\ell}(z,\nu)$ given by Corollary \ref{cor:gTilde_hyper_form} $(\mathrm i)$. Upon substituting $\alpha_1=\mathrm A-\alpha_2$ into this approximation we see that it possesses the same useful properties of the exact form of $\acv^2\mathscr T_\nudag(\mathrm A-\alpha_2,\alpha_2|\theta_{\mathscr T_\nudag})$, namely, positivity, smoothness, strictly convexity on $\alpha_2\in(0,\mathrm A)$, as well as the same limiting properties at the endpoints $\alpha_2=0,\mathrm A$. For the sake of brevity we let $a=\tilde g_{1,1}(\zeta,\nudag)$, $b=\tilde g_{2,1}(\zeta,\nudag)$, $c=\tilde g_{1,0}(\zeta,\nudag)$, and $d=\sacv_0$ and then substitute $\acv^2\mathscr T_\nudag^\ast$ into (\ref{eq:optimal_alpha_SOE}) yielding the system of equations
\[
\begin{aligned}
&(1) &\frac{a^2}{\alpha_1^2}+\frac{b^2}{\alpha_1^2\alpha_2}-\frac{b^2}{\alpha_1\alpha_2^2}-\frac{c^2}{\alpha_2^2} &=0\\
&(2) &\frac{a^2}{\alpha_1}+\frac{b^2}{\alpha_1\alpha_2}+\frac{c^2}{\alpha_2}-d^2 &=0.
\end{aligned}
\]
To put this system into a more useful form we perform the transformations $(1)^\prime=(2)/\alpha_1-(1)$ and $(2)^\prime=(2)/\alpha_2+(1)$ to get the equivalent set of equations
\[
\begin{aligned}
&(1)^\prime &\alpha_1 &=\frac{1}{c^2}\left(d^2\alpha_2^2-c^2\alpha_2-b^2\right)\\
&(2)^\prime &\alpha_2 &=\frac{1}{a^2}\left(d^2\alpha_1^2-a^2\alpha_1-b^2\right).
\end{aligned}
\]
Now substituting the r.h.s.~of $(1)^\prime$ into $(2)^\prime$ and the r.h.s.~of $(2)^\prime$ into $(1)^\prime$ separates the variables $\alpha_1$ and $\alpha_2$ into two quartic equations, each of which yields four roots for a total of sixteen possible solutions to our problem. Substituting all sixteen potential solutions back into our original system we find only two work, namely,
\begin{equation}
\label{eq:aprx_system_potential_solutions}
(\alpha_1,\alpha_2)=\left(\frac{a^2\pm\sqrt{a^2c^2+b^2d^2}}{d^2},\frac{c^2\pm\sqrt{a^2c^2+b^2d^2}}{d^2}\right).
\end{equation}
To obtain the final solution we must determine which sign in front of the root to take. Taking the positive sign clearly yields a nonnegative solution for $\alpha_1$ and $\alpha_2$ so to verify this is the correct choice we must show that choosing the negative sign yields a nonsensical solution. The following two lemmas show that if we choose the negative sign then the solution for $\alpha_2$ is nonpositive and thus is not the desired choice. Since the choice of sign for both $\alpha_1$ and $\alpha_2$ must be the same it follows that the positive sign is indeed the correct choice.

\begin{lemma}
\label{lem:gTilde_10_lower_bound}
Let $\nudag$ be as defined in Definition \ref{def:nudag}. Then for all $\zeta\in(0,1)$: $\tilde g_{1,0}(\zeta,\nudag)\geq-\frac{1}{2}$.
\end{lemma}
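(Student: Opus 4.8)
The plan is to reduce the whole statement to one classical logarithmic inequality. First I would obtain an explicit elementary expression for $\tilde g_{1,0}(\zeta,\nudag)$ by specializing form $(\mathrm{iii})$ of Corollary \ref{cor:gTilde_hyper_form} to $n=1$, $\omega=0$. There the finite sum over $k$ collapses to $1+(\nudag-1)(1-\zeta)=\zeta+\nudag(1-\zeta)$, and writing $\zeta^\nudag=\sarb_0\zeta^b=:\rho$ (the bias profile of Proposition \ref{prop:nu_as_func_of_zeta}) yields
\[
\tilde g_{1,0}(\zeta,\nudag)=\frac{\zeta-\rho\bigl(\zeta+\nudag(1-\zeta)\bigr)}{(1-\zeta)(1-\rho)}.
\]
For $\zeta\in(0,1)$ one has $\rho=\sarb_0\zeta^b\in(0,1)$ and, by Proposition \ref{prop:facts_about_nudag}, $\nudag>0$, so the denominator $(1-\zeta)(1-\rho)$ is strictly positive.

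Next I would clear this positive denominator. A short computation gives $\tilde g_{1,0}(\zeta,\nudag)+\tfrac12=\bigl[(1-\rho)(1+\zeta)-2\rho\,\nudag\,(1-\zeta)\bigr]\big/\bigl[2(1-\zeta)(1-\rho)\bigr]$, so the claim is equivalent to the numerator inequality $(1-\rho)(1+\zeta)\geq 2\rho\,\nudag\,(1-\zeta)$. Dividing through by $\rho=\zeta^\nudag>0$ and setting $w=-\ln\zeta>0$, this becomes
\[
\bigl(\zeta^{-\nudag}-1\bigr)(1+\zeta)=\bigl(e^{\nudag w}-1\bigr)(1+\zeta)\geq 2\nudag(1-\zeta).
\]

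The engine of the proof is then the elementary bound $e^x\geq 1+x$ applied with $x=\nudag w\geq 0$, which gives $\bigl(e^{\nudag w}-1\bigr)(1+\zeta)\geq \nudag w(1+\zeta)$. Since $\nudag>0$, it now suffices to establish the $\nudag$-free inequality $w(1+\zeta)\geq 2(1-\zeta)$, i.e. $-\ln\zeta\geq \tfrac{2(1-\zeta)}{1+\zeta}$. This is the standard Pad\'e-type logarithmic bound: putting $\phi(\zeta)=\ln\zeta-\tfrac{2(\zeta-1)}{\zeta+1}$ one finds $\phi'(\zeta)=\tfrac{(\zeta-1)^2}{\zeta(\zeta+1)^2}\geq 0$, so $\phi$ is nondecreasing on $(0,1]$, and $\phi(1)=0$ forces $\phi(\zeta)\leq 0$ there, which is exactly what is needed. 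Chaining the two steps completes the argument.

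The main obstacle is spotting the right reduction rather than any hard estimate: the two-parameter inequality in $(\zeta,\nudag)$ is awkward because $\rho$, $\nudag$, and $\zeta$ are coupled through $\rho=\zeta^\nudag$, and the key idea is that dividing by $\rho$ and applying the crude-but-exact bound $e^x\ge 1+x$ decouples $\nudag$ completely, collapsing everything onto a one-variable logarithmic inequality. As a consistency check one sees the constant $-\tfrac12$ is sharp but not attained on the interior: both estimates become tight only in the joint boundary limit $\nudag\to 0^+$ and $\zeta\to 1^-$ (that is, $b=0$ with $\sarb_0\to 1^-$), in which $x=\nudag w\to 0$ and $\phi(\zeta)\to 0$ simultaneously.
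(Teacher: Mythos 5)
Your proof is correct, and it takes a genuinely different route from the paper's. The paper starts from the same explicit elementary form of $\tilde g_{1,0}(\zeta,\nudag)$ but proceeds by successive minimization in the parameters: it shows $\partial_b\tilde g_{1,0}\geq 0$ (reducing to $b=0$), then $\partial_{\sarb_0}\tilde g_{1,0}<0$ (letting $\sarb_0\to 1$, which collapses the expression to $\frac{\zeta}{1-\zeta}+\frac{1}{\log\zeta}$), and finally uses monotonicity in $\zeta$ together with the limit $\zeta\searrow 1$ to extract the value $-\frac12$; each derivative step rests on the positivity of $x-\log x-1$ on $(0,1)$. You instead clear the (positive) denominator, divide by $\rho=\zeta^{\nudag}$, and decouple $\nudag$ via $e^{x}\geq 1+x$, leaving only the one-variable Pad\'e bound $-\log\zeta\geq 2(1-\zeta)/(1+\zeta)$, which you verify with a single derivative. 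I checked the algebra: your specialization of Corollary \ref{cor:gTilde_hyper_form} form $(\mathrm{iii})$ agrees with the paper's explicit form, the identity for $\tilde g_{1,0}+\tfrac12$ is right, and the chain $(e^{\nudag w}-1)(1+\zeta)\geq \nudag w(1+\zeta)\geq 2\nudag(1-\zeta)$ is valid since $\nudag>0$ and $\rho\in(0,1)$ on $\zeta\in(0,1)$. Your version is shorter, avoids differentiation in $b$ and $\sarb_0$ altogether, and in fact proves the strictly stronger statement that $\tilde g_{1,0}(\zeta,\nu)\geq-\frac12$ for \emph{every} $\nu>0$: the specific structure $\nudag=\log_\zeta\sarb_0+b$ enters only through $\nudag>0$ and $\zeta^{\nudag}\in(0,1)$. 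What the paper's route buys in exchange is an explicit description of where the infimum lives ($b=0$, $\sarb_0\to 1$, $\zeta\to 1$), which your closing sharpness remark recovers independently, since both of your estimates degenerate to equalities only in the joint limit $\nudag w\to 0$, $\zeta\to 1^{-}$ --- consistent with the paper.
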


\begin{proof}
We begin with the explicit form
\[
\tilde g_{1,0}(\zeta,\nudag)=\frac{\zeta}{1-\zeta}-\frac{(\log_\zeta\sarb_0+b)\sarb_0 \zeta^b}{1-\sarb_0 \zeta^b}.
\]
Our approach will be to put a lower bound on $\tilde g_{1,0}(\zeta,\nudag)$ by successively minimizing it w.r.t.~each of its variables. Differentiating w.r.t.~$b$ we find
\[
\partial_b \tilde g_{1,0}(\zeta,\nudag)=\frac{x(x-\log x-1)}{(1-x)^2}\Big|_{x=\sarb_0 \zeta^b}.
\]
For the specified parameter restriction we have $0<\sarb_0 \zeta^b<\sarb_0<1\implies x\in(0,1)$; thus, it is straightforward to show that the above expression in $x$ is positive. It follows that
\[
\tilde g_{1,0}(\zeta,\nudag)\geq\tilde g_{1,0}(\zeta,\nudag_{b=0})=\frac{\zeta}{1-\zeta}-\frac{(\log_\zeta\sarb_0)\sarb_0}{1-\sarb_0}.
\]
Now differentiating w.r.t.~$\sarb_0$ we have
\[
\partial_{\sarb_0}\tilde g_{1,0}(\zeta,\nudag_{b=0})=\frac{1}{\log\zeta}\frac{\sarb_0-\log\sarb_0-1}{(1-\sarb_0)^2}.
\]
The quantity involving $\sarb_0$ is positive on $\sarb_0\in(0,1)$ while $\log\zeta<0$ on $\zeta\in(0,1)$. Therefore, $\partial_{\sarb_0}\tilde g_{1,0}(\zeta,\nudag_{b=0})<0$ and
\[
\tilde g_{1,0}(\zeta,\nudag_{b=0})\geq\lim_{\sarb_0\searrow 1}\tilde g_{1,0}(\zeta,\nudag_{b=0})=\frac{\zeta}{1-\zeta}+\frac{1}{\log\zeta}.
\]
The resulting function of $\zeta$ is strictly decreasing in $\zeta$. Taking the limit $\zeta\searrow 1$ subsequently gives
\[
\tilde g_{1,0}(\zeta,\nudag_{b=0})\geq\lim_{\zeta\searrow 1}\frac{\zeta}{1-\zeta}+\frac{1}{\log\zeta}=-\frac{1}{2}.
\]
The proof is now complete.
\end{proof}

\begin{lemma}
\label{lem:}
Choosing the negative sign in $(\ref{eq:aprx_system_potential_solutions})$ leads to $\alpha_2\leq 0$.
\end{lemma}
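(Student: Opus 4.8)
The plan is to translate the claim $\alpha_2\le 0$ into a purely algebraic inequality among $a=\tilde g_{1,1}(\zeta,\nudag)$, $b=\tilde g_{2,1}(\zeta,\nudag)$, $c=\tilde g_{1,0}(\zeta,\nudag)$, and $d=\sacv_0$, and then dispatch it using the recurrence for $\tilde g_{n,\omega}$ together with the lower bound from Lemma \ref{lem:gTilde_10_lower_bound}. First I would take the negative sign in (\ref{eq:aprx_system_potential_solutions}), which gives $\alpha_2=(c^2-\sqrt{a^2c^2+b^2d^2})/d^2$. Since $d^2=\sacv_0^2>0$ and both $c^2$ and $\sqrt{a^2c^2+b^2d^2}$ are nonnegative, I would square to obtain the equivalence $\alpha_2\le 0\iff c^4\le a^2c^2+b^2d^2$.

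Next I would discard the manifestly nonnegative term $b^2d^2$: because $b^2d^2\ge 0$, it suffices to establish the stronger inequality $c^4\le a^2c^2$. If $c=0$ this is trivial, and otherwise dividing by $c^2>0$ reduces the entire task to showing $c^2\le a^2$, i.e.\ $|\tilde g_{1,0}(\zeta,\nudag)|\le|\tilde g_{1,1}(\zeta,\nudag)|$.

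The key observation is that $a$ and $c$ are not independent. Applying the recurrence of Lemma \ref{lem:tilde_gnw_recurrence_relation} with $n=0$, $\omega=0$ and using $\tilde g_{0,0}\equiv 1$ yields $\tilde g_{1,1}=\tilde g_{1,0}+1$, that is, $a=c+1$. Consequently $a^2-c^2=(c+1)^2-c^2=2c+1$, so the desired inequality $c^2\le a^2$ holds precisely when $c\ge-\tfrac12$. This is exactly the content of Lemma \ref{lem:gTilde_10_lower_bound}, which asserts $\tilde g_{1,0}(\zeta,\nudag)\ge-\tfrac12$ for all $\zeta\in(0,1)$; chaining the reductions backward then delivers $\alpha_2\le 0$.

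I do not expect a serious obstacle here once the reductions are arranged in this order. The only nonroutine moves are recognizing that the recurrence collapses the relation between $a$ and $c$ to the affine identity $a=c+1$, and noticing that the resulting condition $c\ge-\tfrac12$ is precisely the previously established bound. The sole thing requiring care is the sign bookkeeping in the squaring step, but this is benign because every squared quantity is real and the radicand $a^2c^2+b^2d^2$ is nonnegative, so the equivalence with $c^4\le a^2c^2+b^2d^2$ is valid without case distinctions.
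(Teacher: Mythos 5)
Your proposal is correct and takes essentially the same route as the paper: both arguments discard the nonnegative $b^2\sacv_0^2$ term under the radical, use the recurrence of Lemma \ref{lem:tilde_gnw_recurrence_relation} to collapse $\tilde g_{1,1}=\tilde g_{1,0}+1$, and finish with the bound $\tilde g_{1,0}\geq-\tfrac{1}{2}$ of Lemma \ref{lem:gTilde_10_lower_bound}. Your reduction via squaring to $c^4\leq a^2c^2+b^2d^2$ is merely a rephrasing of the paper's direct estimate $\sqrt{\tilde g_{1,0}^4+\tilde g_{1,0}^2(2\tilde g_{1,0}+1)}\geq\tilde g_{1,0}^2$, so the two proofs are interchangeable.
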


\begin{proof}
For brevity we will use $\tilde g_{n,\omega}$ to denote $\tilde g_{n,\omega}(\zeta,\nudag)$. We need to show
\[
\tilde g_{1,0}^2-\sqrt{\tilde g_{1,0}^2\tilde g_{1,1}^2+\tilde g_{2,0}^2\sacv_0^2}\leq 0.
\]
Since all quantities under the radical are positive it follows that
\[
\tilde g_{1,0}^2-\sqrt{\tilde g_{1,0}^2\tilde g_{1,1}^2+\tilde g_{2,0}^2\sacv_0^2}\leq \tilde g_{1,0}^2-\sqrt{\tilde g_{1,0}^2\tilde g_{1,1}^2}.
\]
From Lemma \ref{lem:tilde_gnw_recurrence_relation} we have $\tilde g_{1,1}=\tilde g_{1,0}+1$; hence,
\[
\tilde g_{1,0}^2-\sqrt{\tilde g_{1,0}^2\tilde g_{1,1}^2+\tilde g_{2,0}^2\sacv_0^2}\leq \tilde g_{1,0}^2-\sqrt{\tilde g_{1,0}^4+\tilde g_{1,0}^2(2\tilde g_{1,0}+1)}.
\]
But now recall from Lemma \ref{lem:gTilde_10_lower_bound} that $\tilde g_{1,0}\geq-\frac{1}{2}\implies \tilde g_{1,0}^2(2\tilde g_{1,0}+1)\geq 0$. So it follows that
\[
\tilde g_{1,0}^2-\sqrt{\tilde g_{1,0}^2\tilde g_{1,1}^2+\tilde g_{2,0}^2\sacv_0^2}\leq \tilde g_{1,0}^2-\sqrt{\tilde g_{1,0}^4}=0,
\]
which completes the proof.
\end{proof}

Now knowing that we must choose the positive sign in $(\ref{eq:aprx_system_potential_solutions})$ we have after reintroducing $a$, $b$, $c$, $d$, and $n_i=2\alpha_i+1$:
\[
\begin{aligned}
n_1 &=\frac{2}{\sacv_0^2}\left(\tilde g_{1,1}^2+\sqrt{\tilde g_{1,1}^2\tilde g_{1,0}^2+\tilde g_{2,1}^2\sacv_0^2}\right)+1\\
n_2 &=\frac{2}{\sacv_0^2}\left(\tilde g_{1,0}^2+\sqrt{\tilde g_{1,1}^2\tilde g_{1,0}^2+\tilde g_{2,1}^2\sacv_0^2}\right)+1,
\end{aligned}
\]
which is the desired solution for the approximate optimal sample sizes of $\mathscr T_\nudag$. So how good are these approximations? First note that with the results of Corollary \ref{cor:special_zeros_of_gTilde_z_nudag} and Lemma \ref{lem:gTilde_vzb_limit} we have
\[
\lim_{\zeta\nearrow L}(n_1,n_2)(\theta_{\mathscr T_\nudag})=(2\sacv_0^{-2}+1,1),\quad L\in\{0,\zeta_{b,1}\}.
\]
Comparing this with Theorem \ref{thm:TvNudag_optimal_SOE} we see that our approximate solutions can be made exact at $\zeta=\{0,\zeta_{b,1}\}$ by instead using the slightly modified solution
\begin{equation}
\label{eq:approx_opt_n1_n2}
\begin{aligned}
n_1^\ast &=\frac{2}{\sacv_0^2}\left(\tilde g_{1,1}^2+\sqrt{\tilde g_{1,1}^2\tilde g_{1,0}^2+\tilde g_{2,1}^2\sacv_0^2}\right)+5\\
n_2^\ast &=\frac{2}{\sacv_0^2}\left(\tilde g_{1,0}^2+\sqrt{\tilde g_{1,1}^2\tilde g_{1,0}^2+\tilde g_{2,1}^2\sacv_0^2}\right)+1.
\end{aligned}
\end{equation}
At the opposite end of the $\zeta$-domain, we may use Lemma \ref{lem:gnw_constant_bias_limit} to deduce as $\zeta\searrow 1$: $n_2^\ast/n_1^\ast\to 1$ and $n_i^\ast\sim C_{\mathscr T_\nudag}^\ast (1-\zeta)^{-2}$, where
\[
C_{\mathscr T_\nudag}^\ast=\frac{2\gamma^2(2,-\log\sarb_0)}{\sacv_0^2(1-\sarb_0)^2}\left(1+\left(1+\sacv_0^2(1-\sarb_0)^2\frac{\gamma^2(3,-\log\sarb_0)}{\gamma^4(2,-\log\sarb_0)}\right)^{1/2}\right).
\]
Table \ref{tbl:CT_ratio} tabulates the ratio $C_{\mathscr T_\nudag}^\ast/C_{\mathscr T_\nudag}$ for a variety of values for $\sarb_0$ and $\sacv_0$. One can see that the ratio is near unity for these values which indicates our approximate solutions for $(n_1^\text{opt},n_2^\text{opt})$ given by (\ref{eq:approx_opt_n1_n2}) are not only exact at $\zeta=\{0,\zeta_{b,1}\}$ but are also exceptionally accurate for small $\sarb_0$ and $\sacv_0$ in the neighborhood of $\zeta=1$.
\begin{table}[htb]
\centering
\begin{tabular}{LCCCCCC}\toprule
	\sarb_0/\sacv_0 & 0.01 & 0.02 & 0.05 & 0.10 & 0.20\\\midrule
	0.01  &0.9999&0.9999&0.9999&0.9999&0.9999\\
	0.02  &0.9997&0.9997&0.9997&0.9997&0.9998\\
	0.05  &0.9978&0.9980&0.9982&0.9984&0.9985\\
	0.10  &0.9913&0.9919&0.9927&0.9934&0.9942\\
	0.20  &0.9655&0.9679&0.9714&0.9743&0.9771\\\bottomrule
\end{tabular}
\caption{$C_{\mathscr T_\nudag}^\ast/C_{\mathscr T_\nudag}$ for various values of $\sarb_0$ and $\sacv_0$.}
\label{tbl:CT_ratio}
\end{table}

With a suitable approximation to the optimal sample sizes we may finally turn to implementing a numerical routine for computing them. Numerically solving the system (\ref{eq:optimal_alpha_SOE}) ultimately requires a method for computing $\acv^2\mathscr T_\nudag$ and its partial derivative w.r.t.~$\alpha_2$. The fact that the approximation for $\acv^2\mathscr T_\nudag$ in (\ref{eq:acvTv_approximation_for_approx_sol}) lead to excellent approximations for the optimal sample sizes across the entire $\zeta$-domain tells us that we may compute the optimal sample sizes by substituting
\begin{equation}
\label{eq:ACV2Tv_n_truncated}
\acv_n^2\mathscr T_\nudag=\sum_{k=1}^n\sum_{\ell=0}^k\frac{\tilde g_{k,\ell}^2(\zeta,\nudag)}{(\alpha_1)_\ell(\alpha_2)_{k-\ell} \ell!(k-\ell)!}.
\end{equation}
into (\ref{eq:optimal_alpha_SOE}) for some appropriate value of $n$. To determine an appropriate value for $n$ we consider the absolute relative error
\[
R_{n,2}=\left\lvert\frac{\acv_n^2\mathscr T_\nudag}{\acv^2\mathscr T_\nudag}-1\right\rvert,
\]
which according to Corollary \ref{cor:rel_truncation_error_bound} is bounded above by $R_{n,2}\leq R_{n,m,2}^\ast$ with
\[
R_{n,m,2}^\ast=\left\lvert\left(1+\frac{E_{n,m}^\ast}{\acv_n^2\mathscr T_\nudag}\right)^{-1}-1\right\rvert
\]
and $E_{n,m}^\ast$ given in Theorem \ref{thm:ACVTv_tail_error_bound}. Before proceeding, the following result gives us an exact value for $R_{n,2}$ at $\zeta\in\{0,\zeta_{b,1}\}$.
\begin{lemma}
\label{lem:Rn2_exact_value}
For $\zeta\in\{0,\zeta_{b,1}\}$
\[
R_{n,2}|_{(n_1,n_2)=(n_1^{\normalfont\text{opt}},n_2^{\normalfont\text{opt}})}=\frac{(n+1)!}{(\sacv_0^{-2}+2)_n}.
\]
\end{lemma}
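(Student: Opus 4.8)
The plan is to exploit the drastic simplification that occurs at the two special points $\zeta\in\{0,\zeta_{b,1}\}$, where the double series defining $\acv^2\mathscr T_\nudag$ collapses to a single hypergeometric sum. First I would invoke Corollary \ref{cor:special_zeros_of_gTilde_z_nudag}, which guarantees $\tilde g_{k,\ell}(\zeta,\nudag)=0$ for every $0\le\ell<k$ when $\zeta\in\{0,\zeta_{b,1}\}$, together with Lemma \ref{lem:gTilde_vzb_limit}, which gives $\tilde g_{k,k}(\zeta,\nudag)=k!$ in the same limit. Applying these to the truncated sum (\ref{eq:ACV2Tv_n_truncated}) and to the full series of Corollary \ref{cor:cvT_nu} annihilates every off-diagonal term, so only the $\ell=k$ contributions survive. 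Those carry the factor $(\alpha_2)_{k-\ell}=(\alpha_2)_0=1$, hence are independent of $\alpha_2$, and both quantities reduce to
\[
\acv_n^2\mathscr T_\nudag=\sum_{k=1}^n\frac{k!}{(\alpha_1)_k},\qquad \acv^2\mathscr T_\nudag=\sum_{k=1}^\infty\frac{k!}{(\alpha_1)_k},
\]
with $\alpha_1=\alpha_1^{\text{opt}}=\sacv_0^{-2}+2$ from the special-case solution of Theorem \ref{thm:TvNudag_optimal_SOE}.

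Next, since every summand is nonnegative, truncation can only decrease the sum, so the absolute value is superfluous and
\[
R_{n,2}=1-\frac{\acv_n^2\mathscr T_\nudag}{\acv^2\mathscr T_\nudag}=\frac{1}{\acv^2\mathscr T_\nudag}\sum_{k=n+1}^\infty\frac{k!}{(\alpha_1)_k}.
\]
The denominator is already known to equal $(\alpha_1-2)^{-1}=\sacv_0^2$, exactly the Gauss summation $F(1,1;\alpha_1;1)=(\alpha_1-1)/(\alpha_1-2)$ minus the $k=0$ term computed in the proof of Theorem \ref{thm:TvNudag_optimal_SOE}. The remaining task is to evaluate the tail in closed form.

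To handle the tail I would produce a telescoping representation of the summand. A short calculation, using $(\alpha_1)_k=(\alpha_1)_{k-1}(\alpha_1+k-1)$ to reduce the right-hand side, verifies
\[
\frac{k!}{(\alpha_1)_k}=c_k-c_{k+1},\qquad c_m\coloneqq\frac{1}{\alpha_1-2}\frac{m!}{(\alpha_1)_{m-1}}.
\]
Because $c_m\to 0$ as $m\to\infty$ for $\alpha_1>2$, the tail telescopes to $\sum_{k=n+1}^\infty(c_k-c_{k+1})=c_{n+1}=\frac{1}{\alpha_1-2}\frac{(n+1)!}{(\alpha_1)_n}$. Dividing by $\acv^2\mathscr T_\nudag=(\alpha_1-2)^{-1}$ cancels the prefactor and leaves $R_{n,2}=(n+1)!/(\alpha_1)_n$; substituting $\alpha_1=\sacv_0^{-2}+2$ yields the claimed $R_{n,2}=(n+1)!/(\sacv_0^{-2}+2)_n$.

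The only mildly delicate point is bookkeeping at the degenerate optimum $\alpha_2^{\text{opt}}=0$, where the off-diagonal terms formally read $0/(\alpha_2)_{k-\ell}$; I would read the value there as the trivial limit $\alpha_2\searrow 0$, noting that for every $\alpha_2>0$ the off-diagonal numerators are identically zero by Corollary \ref{cor:special_zeros_of_gTilde_z_nudag}, so the surviving diagonal sum is genuinely constant in $\alpha_2$ and no indeterminate form arises. Everything else is routine algebra, so the crux is simply spotting the telescoping identity (equivalently, a one-line induction on $n$ verifying $c_{n+1}=c_n-k!/(\alpha_1)_n$ at $k=n$).
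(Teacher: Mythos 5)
Your proof is correct and takes essentially the same route as the paper: at $\zeta\in\{0,\zeta_{b,1}\}$ the off-diagonal terms are annihilated by Corollary \ref{cor:special_zeros_of_gTilde_z_nudag}, the diagonal reduces to $\sum_k k!/(\alpha_1)_k$ via Lemma \ref{lem:gTilde_vzb_limit}, and $\alpha_1^{\text{opt}}=\sacv_0^{-2}+2$ comes from Theorem \ref{thm:TvNudag_optimal_SOE}. The only difference is that the paper simply asserts the closed form $\acv_n^2\mathscr T_\nudag=\frac{1}{\alpha_1-2}\bigl(1-\frac{(n+1)!}{(\alpha_1)_n}\bigr)$, whereas you verify it explicitly with the telescoping identity $k!/(\alpha_1)_k=c_k-c_{k+1}$ -- a detail the paper leaves implicit, correctly supplied.
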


\begin{proof}
The proof follows from noting that at $\zeta=0,\zeta_{b,1}$:
\[
\acv^2\mathscr T_\nudag=\frac{1}{\alpha_1-2},
\]
\[
\acv_n^2\mathscr T_\nudag=\frac{1}{\alpha_1-2}\left(1-\frac{(n+1)!}{(\alpha_1)_n}\right),
\]
and $\alpha_1^\text{opt}=\sacv_0^{-2}+2$.
\end{proof}

The sharpness of the upper bound $R_{n,m,2}^\ast$ improves with increasing $m$ but the rate at which this happens significantly decreases for $m>n+1$; thus, it seems reasonable to use $R_{n,n+1}^\ast$ as the upper bound in our numerical routine. To get a sense of how $R_{n,n+1,2}^\ast$ behaves, we evaluated it at $(n_1,n_2)=(n_1^\ast,n_2^\ast)$ for $\sarb_0=0.5$, $\sacv_0=0.05$, and various values of $b$ and $n$ as plotted in Figure \ref{fig:Rmn_plots}. We observe that as $n$ increases, the upper bound decreases rapidly except for values of $\zeta$ in the neighborhood of $\zeta=0,\zeta_{b,1},1$. However, notice that for the exact value of $R_{n,2}$ given by Lemma \ref{lem:Rn2_exact_value}: $R_{1,2}=4.98\times 10^{-3}$, $R_{3,2}=3.67\times 10^{-7}$, $R_{5,2}=6.69\times 10^{-11}$, $R_{7,2}=2.26\times 10^{-14}$. Since the approximations $n_1^\ast$ and $n_2^\ast$ are exact as $\zeta\to L$, $L\in\{0,\zeta_{b,1}\}$, we can conclude that the high upper bound for $R_{n,2}$ near these values of $\zeta$ is the result of a deficiency of $R_{n,n+1,2}^\ast$ and not that the relative error isn't decrease rapidly with $n$. As such, estimates for the appropriate value of $n$ in (\ref{eq:ACV2Tv_n_truncated}) determined using $R_{n,n+1,2}^\ast$ will be, in some cases, significantly overestimated near these points.
\begin{figure}[htb]
\centering
\includegraphics[scale=1]{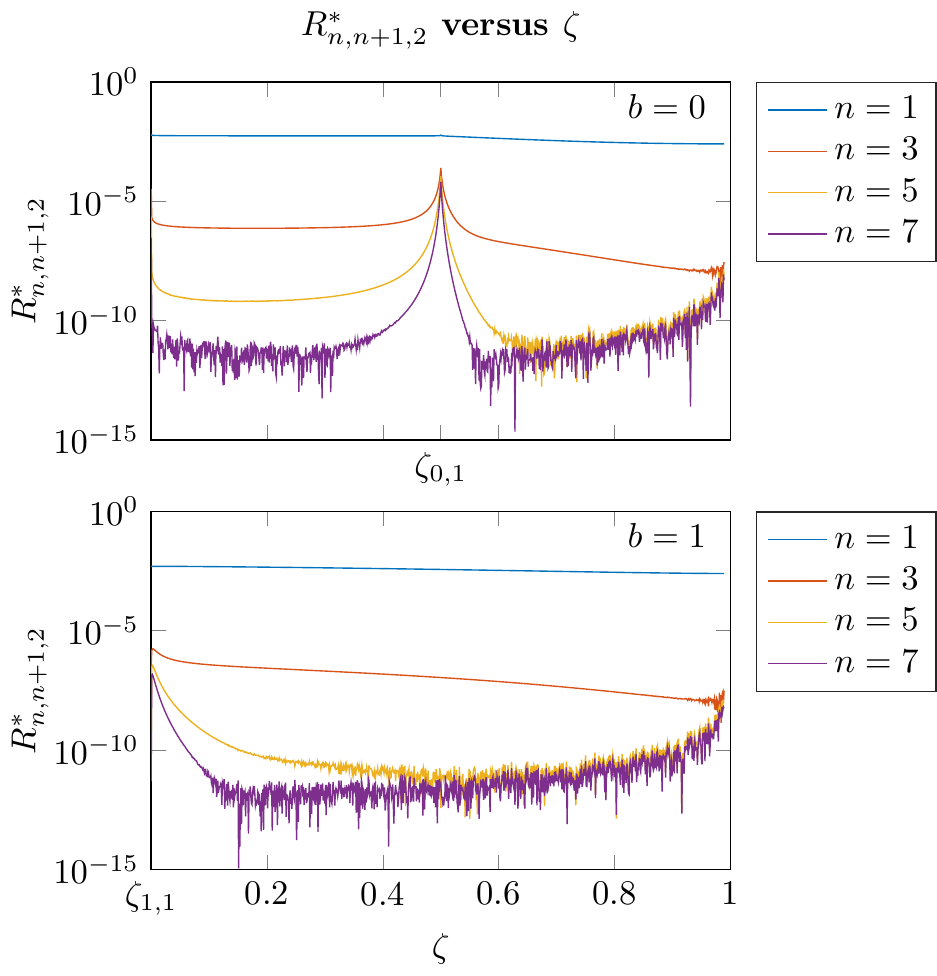}
\caption{Absolute relative error bound $R_{n,n+1,2}^\ast$ versus $\zeta$ for $\sarb_0=0.5$, $\sacv_0=0.05$, $b=0,1$, and $n=1,3,5,7$.}
\label{fig:Rmn_plots}
\end{figure}

Given these observations, Algorithm \ref{alg:optimal_sample_sizes} presents a simple procedure for computing the optimal sample sizes for $\mathscr T_\nudag$.
\begin{algorithm}[htb]
\caption{Optimal sample size pseudo-code.}
\label{alg:optimal_sample_sizes}
\begin{algorithmic}[1]
\Procedure{OptimalSamples}{$\zeta$, $\sarb_0$, $b$, $\sacv_0$, $\epsilon$, $n_{\max}$}
\If{$\zeta\in\{0,\zeta_{b,1}\}$}
\State $(n_1^\text{opt},n_2^\text{opt})=(2\sacv_0^{-2}+5,1)$;
\State \Return $(n_1^\text{opt},n_2^\text{opt})$;
\Else
\State $n=1$;
\While{$R_{n,n+1,2}^\ast(\zeta,n_1^\ast,n_2^\ast)>\epsilon$ \textbf{and} $n<n_{\max}$}
\State $n=n+1$;
\EndWhile
\State Substitute $\acv_n^2\mathscr T_\nudag$ in (\ref{eq:optimal_alpha_SOE});\label{line:make_SOE}
\State Solve for $(n_1^\text{opt},n_2^\text{opt})$ using starting point $(n_1^\ast,n_2^\ast)$;\label{line:solve_n1n2}
\If{$R_{n,n+1,2}^\ast(\zeta,n_1^\text{opt},n_2^\text{opt})\leq\epsilon$ \textbf{or} $n=n_{\max}$}
\State \Return $(n_1^\text{opt},n_2^\text{opt})$;
\Else
\State $n=n+1$;
\State \textbf{goto} line \ref{line:make_SOE};
\EndIf
\EndIf
\EndProcedure
\end{algorithmic}
\end{algorithm}

To demonstrate its effectiveness, Algorithm \ref{alg:optimal_sample_sizes} was implemented in {\sc mathematica} using the {\ttfamily{FindRoot[]}} function for Line \ref{line:solve_n1n2} to numerically solve the system (\ref{eq:optimal_alpha_SOE}). The procedure was executed for $\zeta\in[0,1)$, $\sarb_0=0.5$, $\sacv_0=0.05$, and four different values of $b$ ranging from zero to one. Figure \ref{fig:optimal_n1_n2_vs_b} plots the computed sample sizes along with $\mathscr E_{\mathscr T_\nudag}$ evaluated at a dark noise of $\sigma_{\mathrm d}g=1\electron$. The values chosen for $\sarb_0$ and $\sigma_{\mathrm d}g$, while not typical, were chosen to exaggerate the features of the optimal sample size curves discussed in the preceding analysis.

Focusing first on the curve for $n_2^\text{opt}$ we observe for values of $b<1$ that the curve has a cusp at $\zeta=0$ and $\zeta=\zeta_{b,1}$ where it takes on a value of one. As, $\zeta\searrow 1$ this curve approaches the $n_1^\text{opt}$ curve; both of which approach $C_{\mathscr T_\nudag}(1-\zeta)^{-2}$ in the limit. As for the $\mathscr E_{\mathscr T_\nudag}$ curve, we see that it equals one at $\zeta=0$ and $\overline{\mathscr E_{\mathscr T_\nudag}}=0.15847\dots$ at $\zeta=1$. Furthermore, the cusp present in the $n_2^\text{opt}$ curve is also present in the curve for $\mathscr E_{\mathscr T_\nudag}$, which highlights the reason why choosing values of $b$ less than one may yield undesirable results, in particular, unacceptably low values of $\mathscr E_{\mathscr T_\nudag}$.

As for the computational details of the routine, it was decided to use $\epsilon=5\times 10^{-7}$ so that $\acv^2\mathscr T_\nudag$ was approximated to at least seven significant digits and $n_{\max}=20$ to avoid issues of numerical stability. Figure \ref{fig:terms_and_error_plots} plots the number of terms, $n$, and upper bound on the absolute relative error, $R_{n,n+1,2}^\ast$, as a function of $\zeta$ obtained from the routine for the case $b=0$. From the figure we observe that $R_{n,n+1,2}^\ast$ exhibits a sharp increase near the points $\zeta\in\{0,\zeta_{b,1}\}$; resulting in larger values for $n$. Only a few $\zeta$-values near $\zeta_{b,1}$ hit the bound $n=20$, which is the same behavior observed for all other values of $b$ in Figure \ref{fig:optimal_n1_n2_vs_b}. To check the quality of the solution at these problematic points we note that in the neighborhood of $\zeta=\zeta_{b,1}$ that $\nudag$ and $n_2^\text{opt}$ are quite small while $n_1^\text{opt}$ is large. Recalling the double integral representation of $\ev\mathscr T_\nudag^2$ we may then approximate $\acv^2\mathscr T_\nudag$ with
\begin{multline}
\label{eq:acv2_integral_approx}
\acv_n^2\mathscr T_\nudag=%
\int_0^1\int_0^1\sum_{k=0}^n\frac{(1+\nudag)_k^2}{(\alpha_1)_k k!}\frac{\left((1-x)(1-y)\right)^k}{\left((1-(1-\zeta)x)(1-(1-\zeta)y)\right)^k}\\
\times\pFq{}{}{1-\nudag,1-\nudag}{\alpha_2}{(1-x)(1-y)} f_{XY}(x,y)\,\mathrm dx\mathrm dy-1,
\end{multline}
where
\[
f_{XY}(x,y)=\frac{\left((1-(1-\zeta)x)(1-(1-\zeta)y)\right)^{-\nudag-1}}{F(1,1+\nudag;2;1-\zeta)^2}
\]
and the error $E_n$ such that $\acv^2\mathscr T_\nudag=\acv_n^2\mathscr T_\nudag+E_n$ is positive and bounded above by
\begin{multline*}
E_n^\ast=%
\pFq{}{}{1+\nudag,1+\nudag}{\alpha_1}{1}_{\!\! n+1}^{\! c}
\int_0^1\int_0^1\frac{\left((1-x)(1-y)\right)^{n+1}}{\left((1-(1-\zeta)x)(1-(1-\zeta)y)\right)^{n+1}}\\
\times\pFq{}{}{1-\nudag,1-\nudag}{\alpha_2}{(1-x)(1-y)}f_{XY}(x,y)%
\,\mathrm dx\mathrm dy
\end{multline*}
with $F(\alpha,\beta;\gamma;z)_n^c$ given in Proposition \ref{prop:comp_inc_2F1_explicit_forms}. Looking back as the numerical data we observe that one of the problematic points in Figure \ref{fig:terms_and_error_plots} occurs at $\zeta=0.499443$. Using this value for $\zeta$ and the optimal sample sizes computed from our routine we find
\[
\left\lvert\frac{\acv_{20}^2\mathscr T_\nudag}{\acv^2\mathscr T_\nudag}-1\right\rvert\leq6.21\times 10^{-15},
\]
which shows that the approximation $\acv_{20}^2\mathscr T_\nudag$ given by (\ref{eq:acv2_integral_approx}) approximates the exact value $\acv^2\mathscr T_\nudag$ to approximately fourteen significant digits at this point of interest. Substituting $\acv_{20}^2\mathscr T_\nudag$ into (\ref{eq:optimal_alpha_SOE}) we then find
\[
\begin{aligned}
\partial_{\alpha_2}\acv_{20}^2 \mathscr T_\nudag(\mathrm A-\alpha_2,\alpha_2|\theta_{\mathscr T_\nudag})\Big|_{(\mathrm A,\alpha_2)=(\alpha_1^{\normalfont\text{opt}}+\alpha_2^{\normalfont\text{opt}},\alpha_2^{\normalfont\text{opt}})} &=-6.84\times 10^{-9}\\
\acv_{20}^2 \mathscr T_\nudag(\alpha_1^{\normalfont\text{opt}},\alpha_2^{\normalfont\text{opt}}|\theta_{\mathscr T_\nudag})-\sacv_0^2 &=9.48\times 10^{-10},
\end{aligned}
\]
which shows no significant deviation from the desired solution $(0,0)$. This confirms that the solution to the optimal sample sizes obtained in our routine near $\zeta=\zeta_{b,1}$ are not in significant error as the upper bound $R_{n,n+1,2}^\ast$ might suggest.

\begin{figure}[htb]
\centering
\includegraphics[scale=1]{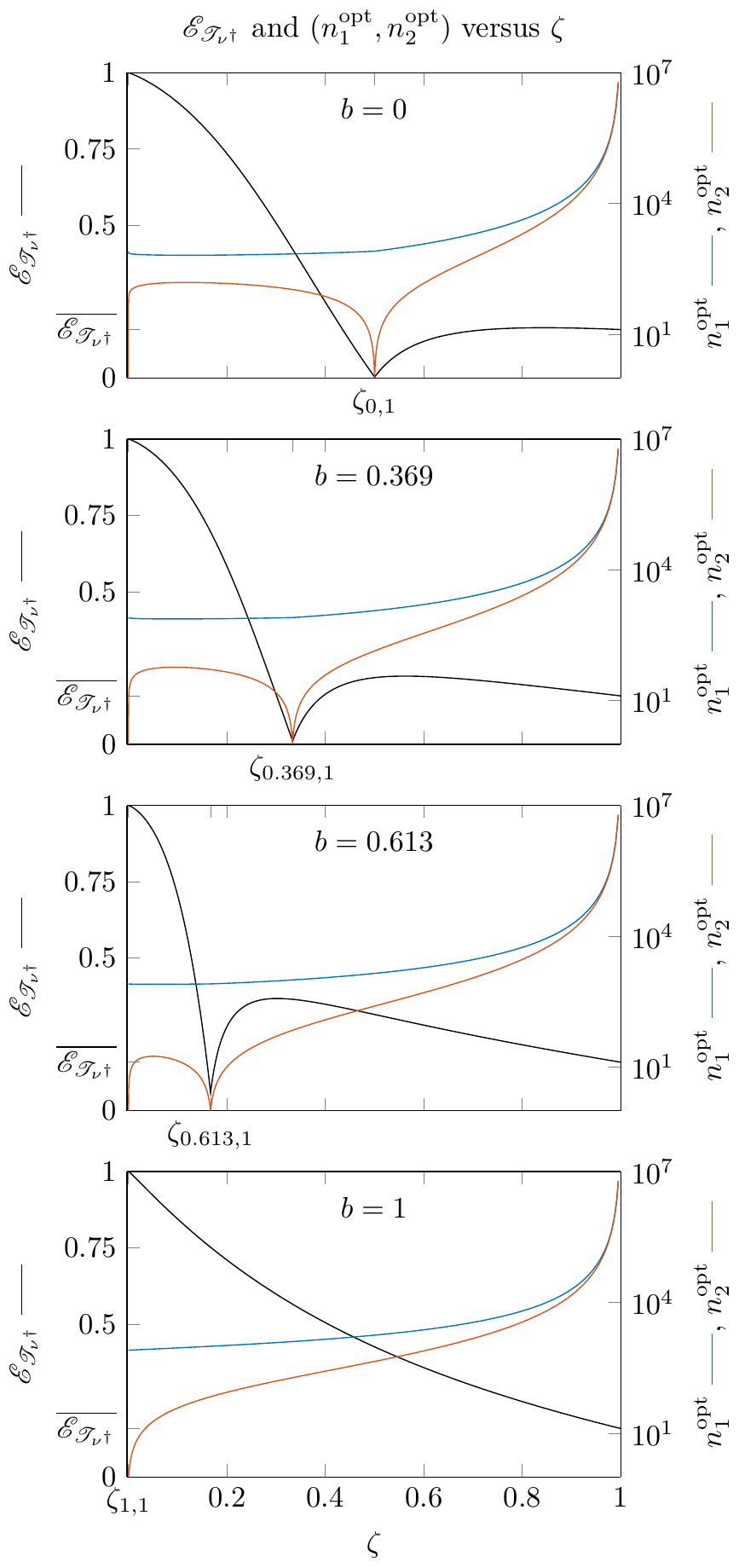}
\caption{$\mathscr E_{\mathscr T_\nudag}$ and $(n_1^\text{opt},n_2^\text{opt})$ versus $\zeta$ for $\sarb_0=0.5$, $\sacv_0=0.05$, $b=\{0,0.369,0.613,1\}$, and $\sigma_{\mathrm d}g=1\electron$. Values of $\zeta_{b,1}$ and $\overline{\mathscr E_{\mathscr T_\nudag}}$ are also indicated on each plot.}
\label{fig:optimal_n1_n2_vs_b}
\end{figure}

\begin{figure}[htb]
\centering
\includegraphics[scale=1]{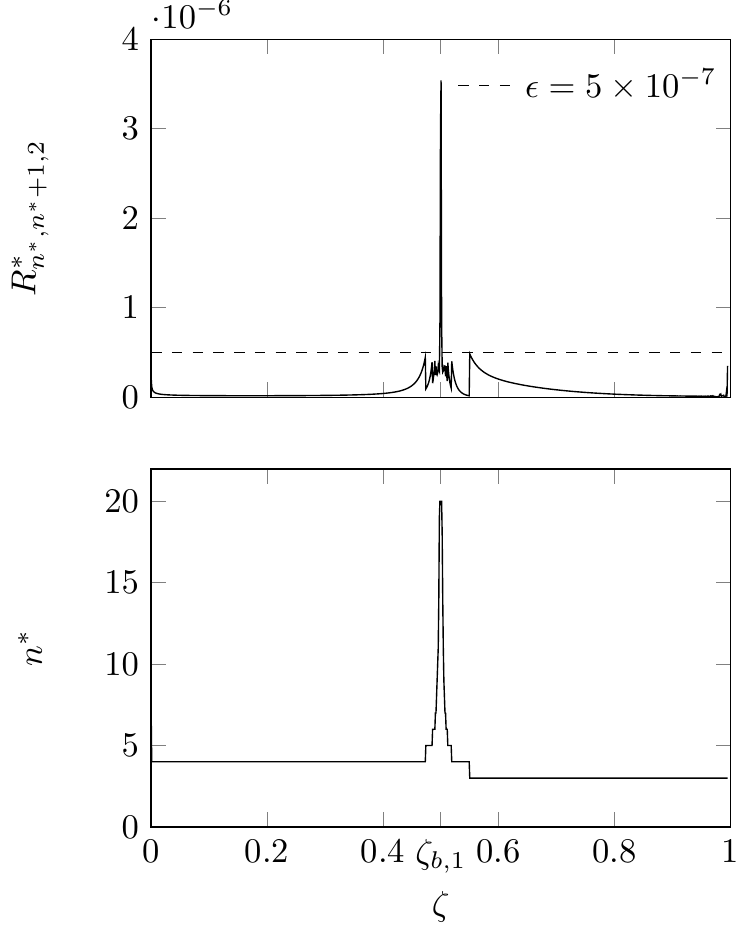}
\caption{$R_{n^\ast,n^\ast+1,2}^\ast$ (top) and $n^\ast$ (bottom) versus $\zeta$ for $\sarb_0=0.5$, $b=0$, and $\sacv_0=0.05$.}
\label{fig:terms_and_error_plots}
\end{figure}

%
%
%
%

\section{Pixel-level conversion gain estimation: An introduction}
\label{sec:CG_estimation_KAI0407M}

Estimating conversion gain with the {\sc pt} method is typically performed under the assumption that each pixel in the sensor array exhibits identical characteristics. When this assumption holds we say the sensor is \emph{uniform} and treat each pixel as a source of i.i.d.~random noise. Such an assumption is very convenient for performing {\sc pt} characterization as we can leverage the large number of pixels in a sensor array to obtain low-variance estimates of critical population values. Take for example the estimate of population dark variance $\sigma_{\mathrm d}^2$. When the assumption of uniformity holds, one can estimate this quantity from just two images captured under dark conditions. Denoting $(D_1,D_2)$ as these two dark images each with $R$ rows of pixels and $C$ columns of pixels we first compute the pixelwise difference $\Delta D=D_1-D_2$. Since this differencing operation effectively doubles the population variance we then estimate $\sigma_{\mathrm d}^2$ by computing half of the sample variance of the difference-frame
\[
\hat\sigma_{\mathrm d}^2=\frac{1}{2}\frac{1}{RC-1}\sum_{i=1}^R\sum_{j=1}^C(\Delta D_{ij}-\overline{\Delta D})^2,
\]
where $\Delta D_{ij}$ denotes the value of the difference-frame $\Delta D$ at row $i$ and column $j$ and $\overline{\Delta D}=\frac{1}{RC}\sum_{i=1}^R\sum_{j=1}^C\Delta D_{ij}$ denotes the sample mean of the difference-frame. For a sensor with a very modest one megapixel resolution $RC=10^6$ and so from just two images we can obtain an estimate of $\sigma_{\mathrm d}^2$ based on an effective sample size of one-million i.i.d.~observations.

In actual sensors, the assumption of uniformity is rarely a good model and is only useful for a small number of pixel architectures. Indeed, in Janesick's book \emph{Photon Transfer} he points out that nonuniformity is a key characteristic of the \emph{Complementary Metal-Oxide Semiconductor} ({\sc cmos}) sensor architecture, which is by far the most prevalent architecture in modern imaging systems \cite[pg.~82]{janesick_2007}. When the uniformity assumption is violated we no can longer provide global estimates of the necessary population parameters and must turn to estimates for each individual pixel. Going back to the example of estimating $\sigma_{\mathrm d}^2$, pixel-level estimation is performed by capturing a sequence of images, i.e.~an \emph{image stack}, and then computing the pixelwise temporal variance of the stack. Supposing we capture a stack of $N$ dark images $(D_1,\dots,D_N)$ to form a $R\times C\times N$ dark image stack, the estimate of $\sigma_{\mathrm d}^2$ for the pixel in row $i$ and column $j$ becomes
\begin{equation}
\label{eq:temporal_dark_variance}
(\hat\sigma_{\mathrm d}^2)_{ij}=\frac{1}{N-1}\sum_{k=1}^N(D_{ijk}-\overline{D}_{ij})^2,
\end{equation}
where $D_{ijk}$ is the value of pixel $(i,j)$ in the $k$th image and $\overline{D}_{ij}=\frac{1}{N}\sum_{k=1}^ND_{ijk}$ is the temporal sample mean of that same pixel. From this, we can immediately see the challenge introduced in pixelwise estimation, for in order to estimate $\sigma_{\mathrm d}^2$ for each pixel with the same level of uncertainty as in the uniform case a total of $N=10^6$ images must be captured! Setting aside the shear ammount of data this entails we also run into the practical problem of simply capturing the images in a short enough period of time so that drift does not corrupt the population parameters we are attempting to measure. So while the extension to pixelwise estimation is conceptually straightforward, implementation can be quite challenging.

Given that uniformity is generally the exception and not the rule, a generalized pixel-level approach to {\sc pt} characterization is in order.  Our goal here is to take the theoretical results presented in this work and apply them to develop a first attempt at the process of pixel-level conversion gain estimation. Before this is done we must discuss the topic of \emph{pixel grouping}.


\subsection{Pixel grouping}
\label{subsec:pixel_grouping}

When the assumption of uniformity is removed and pixel-level measurement is performed, the characterization of each individual pixel effectively becomes its own experiment with its own unique optimal sample sizes. However, since we can only simultaneously sample all pixels by capturing full-frame images, the conclusion of any pixel-level characterization procedure will be a single set of sample sizes that must work for (nearly) all pixels in the sensor array\footnote{We say nearly because typical sensors always contain some minority of defect pixels which are either non-functioning or exhibit statistical behavior too extreme to be properly characterized.}. Knowing that estimates for each pixel will ultimately be computed with the same number of samples, individually estimating optimal sample sizes is nonsensical and thus we should group pixels with similar characteristics to reduce the variance in estimates of the optimal sample sizes. This point is made evident by first recalling the asymptotic behavior of the optimal sample sizes in the low-illumination limit $\zeta\searrow 1$
\[
(n_1^\text{opt},n_2^\text{opt})(\zeta)\sim (C_{\mathscr T_\nudag}(1-\zeta)^{-2},C_{\mathscr T_\nudag}(1-\zeta)^{-2}).
\]
Letting $\hat Y=\hat\sigma_{\mathrm d}^2$ be an estimate of $\sigma_{\mathrm d}^2$ according to (\ref{eq:temporal_dark_variance}) and $\hat X=\hat\sigma_{\mathrm p+\mathrm d}^2$ be the analogous estimate of $\sigma_{\mathrm p+\mathrm d}^2$ for a single pixel based on dark and illuminated image stacks of $n_2$ and $n_1$ images, respectively, we can estimate $\zeta$ with
\[
Z=\frac{n_1-3}{n_1-1}\frac{\hat Y}{\hat X}.
\]
Under the assumed normal model, $Z$ is the {\sc umvue} for $\zeta$ with $\ev Z=\zeta$ and
\[
\var Z=\left(\frac{(n_1-3)(n_2+1)}{(n_1-5)(n_2-1)}-1\right)\zeta^2.
\]
While $Z$ possesses good characteristics as an estimator for $\zeta$ its density is nonzero in the neighborhood of $\zeta=1$ and so the moments of the optimal sample size estimates $(\hat n_1^\text{opt},\hat n_2^\text{opt})=(n_1^\text{opt},n_2^\text{opt})(Z)$ are strictly speaking undefined. However, if instead of measuring $Z$ for each pixel individually we identify a group of $m$ pixels with similar population values for $\sigma_{\mathrm d}^2$ and $\sigma_{\mathrm p+\mathrm d}^2$ we may estimate $\zeta$ for the entire group with
\[
Z=\frac{mn_1-3}{mn_1-1}\frac{\sum_{\text{group}}\hat Y_\ell}{\sum_{\text{group}}\hat X_\ell},
\]
where $\hat Y_\ell$ and $\hat X_\ell$ denotes the estimates for the $\ell$th pixel in the group. Grouping in this manner does not alter the unbiasedness of $Z$ and effectively increases the dark and illuminated sample sizes by a factor of $m$. Hence, given sufficiently large $n_1$, $n_2$, and group size $m$, one may drive down the asymptotic variance of the optimal sample size estimates
\[
\var\, \hat n_i^\text{opt}\sim(\partial_\zeta n_i^\text{opt}(\zeta))^2\var Z,\quad i=1,2
\]
to the point where they becomes useful.

Another important reason why grouping pixels in this manner is not only important but practically necessary is in estimating the quantity $\nudag(\zeta)=\log_\zeta(\sarb_0)+b$. In a similar manner to the optimal sample sizes we see as $\zeta\searrow 1$
\[
\nudag(\zeta)\sim-\log\sarb_0(1-\zeta)^{-1}.
\]
The pole of order one at $\zeta=1$ again implies that the estimate $\nudaghat=\nudag(Z)$ has undefined moments although we may assign it an asymptotic variance
\[
\var\,\nudaghat\sim\frac{\log^2_\zeta\sarb_0}{\zeta^2\log^2\zeta}\var Z.
\]
Since we ultimately want an estimate of $\nudag$ with nearly zero variance\footnote{Recall that the derivation of the mean and variance of the estimator $\mathscr G_\nu$ assumed $\nu$ is a known constant.}, we need to turn to pixel grouping to leverage the asymptotics of large sample sizes to produce low-variance estimates of $\nudag$ for groups of similar pixels.

Identifying appropriate pixel groups for pixel-level characterization is largely determined by the architecture of the sensor under test. In what follows, we will demonstrate how to identify pixels groups through example as we step through the process of pixel-level conversion gain estimation for a real image sensor.


\subsection{Experimental setup}
\label{subsec:experimental_setup}

For this experiment, the ON Semiconductor KAI-0407M monochrome interline transfer \emph{Charge-Coupled Deivce} ({\sc ccd}) sensor was chosen as an initial test case. This particular sensor was chosen for two main reasons. First, preliminary tests show that the noise it produces closely adheres to the normal model assumed in the development of the estimator $\mathscr G_\nu$. Secondly, interline transfer {\sc ccd}s typically exhibit only minor nonuniformities. Since we expect significant nonuniformities to complicate the experimental procedure, this particular sensor provides a gentle introduction into pixel-level characterization.

The experimental setup began with a $650\,\mathrm{nm}$ superluminescent light emitting diode ({\sc sled}), which was collimated and attached to a \emph{Variable Optical Attenuator} ({\sc voa}) to facilitate control over total output power of the light source. The intensity of the light exiting the {\sc voa} could be adjusted from $0-100\%$ power by rotating a half-wave plate inside the device. To illuminate the sensor with a uniform field, the {\sc sled} beam exiting the {\sc voa} was directed into a $12\,\mathrm{in}.$ diameter integrating sphere and the sensor was placed at the output port of the sphere where uniformity is highest. The sensor was operated at its full bit-depth of $14$-bits to minimize quantization error and image data was read off the sensor using a single readout register at its slowest setting ($40\,\mathrm{mhz}$) as not to introduce additional nonuniformities or smear. For the sake of reducing the total amount of data captured in the experiment, the source was turned on and a live-stream of the sensor output was examined to select the most uniformly illuminated $512\times 512\,\mathrm{px}$ subregion of the sensor array. All subsequent images captured in the experiment were cropped to this subregion prior to saving. Since image data needed to be captured under both dark and illuminated conditions, a motorized mirror was placed next to the path of the {\sc sled} beam. Moving the mirror into the beam path effectively redirected the light away from the integrating sphere and thus provided a dark environment for the sensor.


\subsection{Design of experiment}
\label{subsec:design_of_experiment}

\emph{Design of Experiment} ({\sc doe}) for pixel-level conversion gain measurement at its core involves five major steps: (1) choosing values for $\sarb_0$, $b$ (bias profile), and $\sacv_0$, (2) estimating a global lower bound on the dark noise $\sigma_{\mathrm d}g$, (3) computing $n_1^\text{opt}$, $n_2^\text{opt}$, and $\mathscr E_{\mathscr T_\nudag}$ on $\zeta\in[0,1)$, (4) choosing an illumination level, and (5) identifying pixel groups.

Beginning with the first step it was decided to use $\sarb_0=0.01$, $b=0$ (constant bias profile), and $\sacv_0=0.05$ as these choices reflect typical values one might use. Then, to obtain a global lower bound estimate of the dark noise, two $512\times 512\,\mathrm{px}$ dark images $(D_1,D_2)$ were captured. The pixelwise difference $\Delta D=D_1-D_2$ of these two images was computed and $\sigma_{\mathrm d}^2$ was estimated by half of the sample variance
\[
\hat\sigma_{\mathrm d}^2=\frac{1}{2(512^2-1)}\sum_{i=1}^{512}\sum_{j=1}^{512}(\Delta D_{ij}-\overline{\Delta D})^2=39.94\,\mathrm{DN}^2.
\]
Since the value of the conversion gain is unknown, we make the safe assumption $g\geq 1$ and then obtain a single lower bound estimate of the dark noise for each pixel by
\[
\widehat{\sigma_{\mathrm d}g}=\hat\sigma_{\mathrm d}\,(\mathrm{DN})\times 1 (\electron/\mathrm{DN})=\sqrt{39.94}\,\electron.
\]
For the next step we compute the optimal sample size curves according to Algorithm \ref{alg:optimal_sample_sizes}. Figure \ref{fig:optimal_n1_n2} plots the computed curves for $\mathrm N^\text{opt}=n_1^\text{opt}+n_2^\text{opt}$ and $\hat{\mathscr E}_{\mathscr T_\nudag}$ given by
\[
\hat{\mathscr E}_{\mathscr T_\nudag}=\left(1+\frac{1+\sacv_0^{-2}}{(\widehat{\sigma_{\mathrm d}g})^2}\frac{\zeta}{(1-\zeta)^2}\left(\frac{1}{n_1^{\text{\normalfont opt}}}+\frac{\zeta}{n_2^{\text{\normalfont opt}}}\right)\right)^{-1}
\]
along with the curves for $n_1^\text{opt}$ and $n_2^\text{opt}$ as functions of $\zeta$.
\begin{figure}[htb]
\centering
\includegraphics[scale=1]{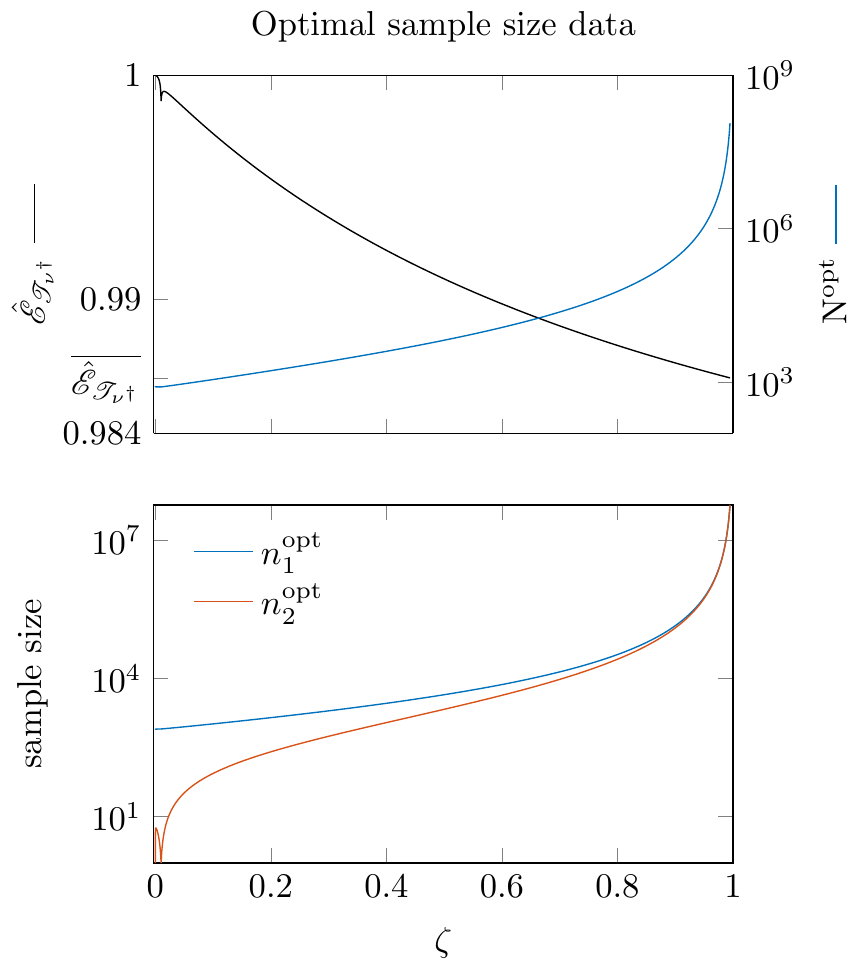}
\caption{$\mathscr E_{\mathscr T_\nudag}$ and $\mathrm N^\text{opt}$ versus $\zeta$ (top) with $n_1^\text{opt}$ and $n_2^\text{opt}$ versus $\zeta$ (bottom) for $\sarb_0=0.01$, $\sacv_0=0.05$, $b=0$, and $\sigma_{\mathrm d}g=\sqrt{39.94}\electron$.}
\label{fig:optimal_n1_n2}
\end{figure}
Due to the sufficiently large value of $\widehat{\sigma_{\mathrm d}g}$ we see that $\hat{\mathscr E}_{\mathscr T_\nudag}$ is near unity for virtually any illumination level so that the requirement $\mathscr E_{\mathscr T_\nudag}\approx 1$ will not restrict what illumination levels we can choose for the experiment. To select an appropriate illumination level we first note that this sensor can record images at $\approx 5\,\mathrm{fps}$ for the chosen readout rate of $40\,\mathrm{mhz}$. Looking back at Figure \ref{fig:optimal_n1_n2} we observe that the illumination level corresponding to $\zeta\approx 0.4$ is paired with an optimal total sample size of $\mathrm N^\text{opt}\approx 4000$. At a recording rate of $5\,\mathrm{fps}$ this number of images will take $\approx 13\,\mathrm{min.}$ to capture, which is short enough to avoid any significant drift in the sensor or source.

Now equipped with a desired value for $\zeta$, the variable optical attenuator was adjusted until the illumination level at the sensor plane resulted in a mean pixel output of about $1\%$ of the sensors dynamic range. Using the same process to estimate the dark noise, two illuminated frames $(I_1,I_2)$ were captured and their difference $\Delta I=I_1-I_2$ was used to obtain the estimate
\[
\hat\sigma_{\mathrm p+\mathrm d}^2=\frac{1}{2(512^2-1)}\sum_{i=1}^{512}\sum_{j=1}^{512}(\Delta I_{ij}-\overline{\Delta I})^2=112.89\,\mathrm{DN}^2,
\]
which then gave an estimate $\hat\zeta=Z$ equal to
\[
Z=\frac{512^2-3}{512^2-1}\frac{\hat\sigma_{\mathrm d}^2}{\hat\sigma_{\mathrm p+\mathrm d}^2}=0.354.
\]
This illumination was sufficiently close to the target $\zeta=0.4$ and corresponded to $\hat{\mathscr E}_{\mathscr T_\nudag}=0.9928$ and $\mathrm N^\text{opt}=3237$ images, which needs only $\approx 11\,\mathrm{min}.$ to capture.

The last step in the {\sc doe} process is to determine appropriate pixel groupings for the data collection algorithm. As discussed in Section \ref{subsec:pixel_grouping}, identifying these pixel groups is largely aided by knowledge of the sensor architecture under test. In this example, we know we are working with an interline transfer {\sc ccd} architecture \cite{KAI04070_datasheet}. These devices work by transferring the packets of charge collected by each pixel into columnwise vertical shift registers that subsequently facilitate the transfer of the charge packets off the sensor to downstream readout circuity. Since variations in the columnwise circuitry supporting these vertical shift registers is expected, nonuniformities in interline transfer {\sc ccd}s is typically observed between its columns. To see this in action, a stack of dark images were captured with the KAI-0407M {\sc ccd} and the pixelwise temporal average of the stack was computed as seen in Figure \ref{fig:barY_map}. From the figure we see clear columnwise variations in the estimated population dark mean $\mu_{\mathrm d}$, which supports our conclusions about the sensor nonuniformities. As such, we will group pixels according to their column number for the data collection procedure.
\begin{figure}[htb]
\centering
\includegraphics[scale=1]{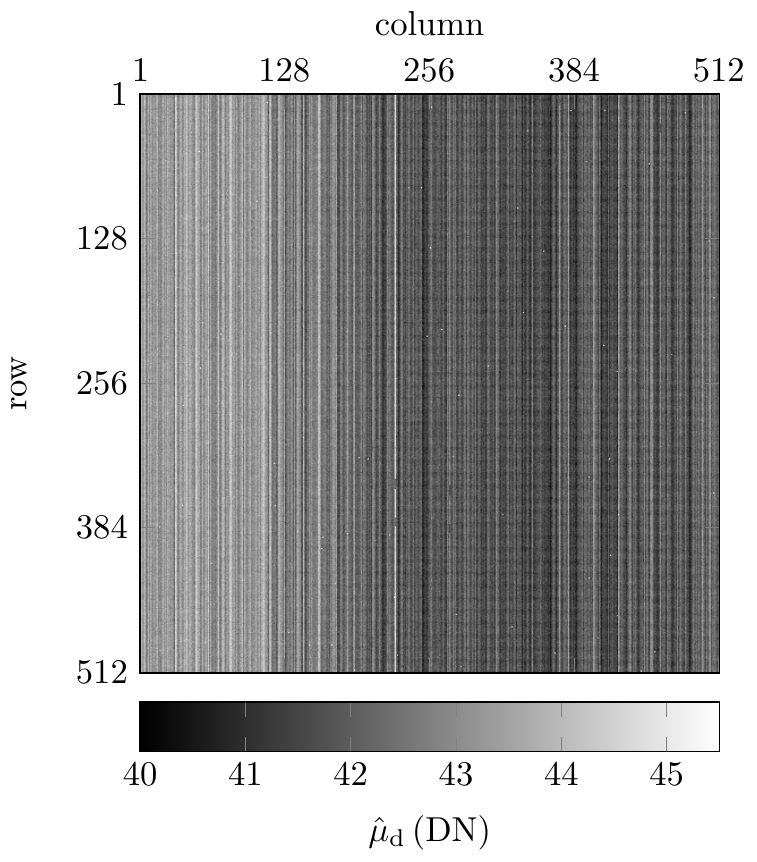}
\caption{Average of dark image stack generated from the KAI-0407M interline transfer {\sc ccd} showing columnwise nonuniformities in mean dark signal.}
\label{fig:barY_map}
\end{figure}


\subsection{Data collection}
\label{subsec:data_collection}

The procedure for the data collection portion of this experiment centers around successively capturing images under dark and illuminated conditions, using these images to update sample statistics for each pixel, and determining when the appropriate number of dark and illuminated images have been captured based on these sample statistics. We will initialize the procedure by capturing some predetermined number of dark images (i.e.~a $Y$-image stack) and illuminated images (i.e.~a $X$-image stack) and then compute four \emph{master frames}, denoted $\bar Y$, $\bar X$, $\hat Y$, and $\hat X$, from these stacks. Using this nomenclature, the $\bar Y$ and $\bar X$ master frames denote arrays computed by averaging the $Y$- and $X$-image stacks in the temporal dimension while the $\hat Y$ and $\hat X$ master frames denote arrays computed by evaluating the variance of the $Y$- and $X$-image stacks in the temporal dimension. As an example, Figure \ref{fig:variance_operation} depicts the process of computing a master $\hat X$-frame from a stack of $n$ $X$-images.
\begin{figure}[htb]
\centering
\includegraphics[scale=1]{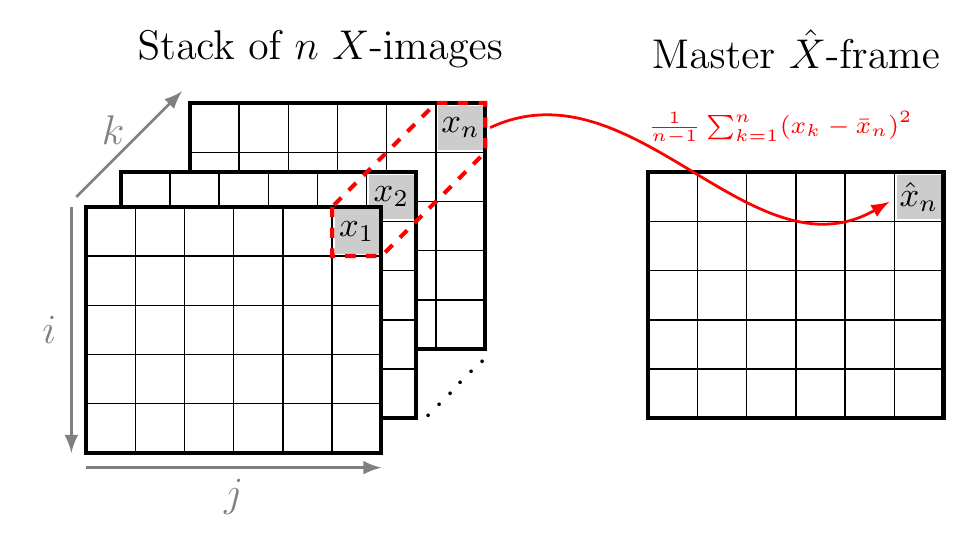}
\caption{Master $\hat X$-frame computed from a stack of $n$ $X$-images.}
\label{fig:variance_operation}
\end{figure}

With master frames computed from the two initial image stacks, we will then capture individual $Y$- and $X$-images and use these to continuously update the master frames until a specified criteria is met to halt data capture. Due to the large number of images that will be captured in this experiment, updating the master frames by recomputing the sample statistics from the full image stacks is too computationally expensive to work in practice. As such we will use Welford's algorithm to update the master frames when new image data becomes available. Algorithm \ref{alg:Welford_online} presents the procedure {\sc UpdateStats}(), which is an adaptation of Welford's algorithm to two-dimensional arrays \cite{welford_1962}. This procedure accepts as arguments a new two-dimensional array of data, {\ttfamily newdata}, the current master sum of squares array, $M_{2,n}$, the current master average frame $\bar x_n$, and the number of samples, $n$, the current master frames are computed from. If no master frames have been initialized we may use the syntax {\sc UpdateStats}({\ttfamily newdata}, $0$) to create them. For all subsequent $n\geq 1$, the syntax {\sc UpdateStats}({\ttfamily newdata}, $n$, $M_{2,n}$, $\bar x_n$) takes in the current master arrays and updates them with {\ttfamily newdata} according to Welford's algorithm. We note that all arithmetic operations in this procedure are performed elementwise, where, in particular, the operator $\odot$ denotes the Hadamard product (elementwise multiplication of matrices).
\begin{algorithm}[htb]
\caption{Welford's online algorithm for two-dimensional arrays.}
\label{alg:Welford_online}
\begin{algorithmic}[1]
\Procedure{UpdateStats}{{\ttfamily{newdata}}, $n$, $M_{2,n}$, $\bar x_n$}
\State $n=n+1$;
\If{$n=1$}
\State $\bar x_n=\text{\ttfamily{newdata}}$;
\State $M_{2,n}=\text{zeros(size(\ttfamily{newdata}))}$;
\State $s_n^2=\text{zeros(size(\ttfamily{newdata}))}$;
\Else
\State $\bar x_n=\bar x_{n-1}+\frac{1}{n}(\text{\ttfamily{newdata}}-\bar x_{n-1})$;
\State $M_{2,n}=M_{2,n-1}+(\text{\ttfamily{newdata}}-\bar x_{n-1})\odot(\text{\ttfamily{newdata}}-\bar x_n)$;
\State $s_n^2=\frac{1}{n-1}M_{2,n}$;
\EndIf
\State \Return $n$, $\bar x_n$, $M_{2,n}$, $s_n^2$
\EndProcedure
\end{algorithmic}
\end{algorithm}

The initial dark and illuminated sample sizes for the data collection algorithm are determined from looking back at the plots for $n_1^\text{opt}$ and $n_2^\text{opt}$ in Figure \ref{fig:optimal_n1_n2}. Regardless of the parameters chosen, we know that an optimal dark sample size of $n_2=1$ occurs for at least one $\zeta$-value and so we set the default initial number of $Y$-images to be one. This, however, cannot be said for the optimal illuminated sample size $n_1$, which always takes on values greater than one. While the optimal illuminated sample size curve is not always strictly increasing, we will choose its shot noise-limited value $2\sacv_0^{-2}+5$ as a good approximation to the minimum possible value and make this as our required initial number of initial $X$-images.

Algorithm \ref{alg:capture_data} presents the generic {\sc CollectImageData}({\ttfamily n1Min}, {\ttfamily Yrule}, {\ttfamily Xrule}, {\ttfamily HaltRule}) procedure used for collecting and processing real-time image data in the pixel-level conversion gain estimation experiment. The procedure accepts an initial illuminated sample size, {\ttfamily n1Min}, which will be $2\sacv_0^{-2}+5$ for this experiment, as well as three rules for determining when to halt $Y$-image collection ({\ttfamily Yrule}), $X$-image collection ({\ttfamily Xrule}), and the entire routine ({\ttfamily HaltRule}). From Algorithm \ref{alg:capture_data} we observe that the {\sc CollectImageData}() procedure begins by initializing the master frames and updating the master frames until the initial minimum sample sizes are captured. The procedure then enters a loop whereby the master frames are first updated if their corresponding rule is false. Once the master frames are updated, the estimates of $\zeta$, $\nudag$, $n_1^\text{opt}$, and $n_2^\text{opt}$ are updated for each pixel group. Lastly, the {\ttfamily Yrule}, {\ttfamily Xrule}, and {\ttfamily HaltRule} are reevaluated to determine if more images are needed and if the procedure can stop. Upon completion, the procedure then returns the final sample sizes $n_1$ and $n_2$, a vector $V$ containing the final group estimates of $\nudag$, another vector $Z$ containing the final group estimates of $\zeta$, and the four master frames $\bar Y$, $\bar X$, $\hat Y$, and $\hat X$.
\begin{algorithm}[htb]
\caption{Data collection procedure.}
\label{alg:capture_data}
\begin{algorithmic}[1]
\Procedure{CollectImageData}{{\ttfamily n1Min}, {\ttfamily Yrule}, {\ttfamily Xrule}, {\ttfamily HaltRule}}
\State $n_2=0$;
\State Capture one $Y$-image;
\State [$n_2$, $\bar Y$, $\hat Y$, $M_2^Y$]={\sc UpdateStats}($Y$, $n_2$);
\State $n_1=0$;
\State Capture one $X$-image;
\State [$n_1$, $\bar X$, $\hat X$, $M_2^X$]={\sc UpdateStats}($X$, $n_1$);
\While{$n_1<\text{{\ttfamily n1Min}}$}
\State Capture one $X$-image;
\State [$n_1$, $\bar X$, $\hat X$, $M_2^X$]={\sc UpdateStats}($X$, $n_1$, $M_2^X$, $\bar X$);
\EndWhile
\State
\State Yflag=false;
\State Xflag=false;
\State Stop=false;
\State $N=\text{\# of groups}$;
\While{Stop=false}
\If{Yflag=false}
\State Capture one $Y$-image;
\State [$n_2$, $\bar Y$, $\hat Y$, $M_2^Y$]={\sc UpdateStats}($Y$, $n_2$, $M_2^Y$, $\bar Y$);
\EndIf
\If{Xflag=false}
\State Capture one $X$-image;
\State [$n_1$, $\bar X$, $\hat X$, $M_2^X$]={\sc UpdateStats}($X$, $n_1$, $M_2^X$, $\bar X$);
\EndIf
\For{$i=1:N$}
\State $m_i=\text{\# of pixels in group }i$;
\State $Z_i=\frac{m_in_1-3}{m_in_1-1}(\sum_{\text{group }i}\hat Y)/(\sum_{\text{group }i}\hat X)$;
\State $V_i=\nudag(Z_i)$;
\State $(\hat n_1^\text{opt},\hat n_2^\text{opt})_i=(n_1^\text{opt},n_2^\text{opt})(Z_i)$;
\EndFor
\State $\text{Yflag}$={\ttfamily Yrule};
\State $\text{Xflag}$={\ttfamily Xrule};
\State Stop={\ttfamily HaltRule};
\EndWhile
\State\Return $n_2$, $n_1$, $V$, $\bar Y$, $\bar X$, $\hat Y$, $\hat X$
\EndProcedure
\end{algorithmic}
\end{algorithm}

For the KAI-0407M pixel-level conversion gain estimation experiment the following three simple rules were chosen:
\[
\text{\ttfamily Yrule}=%
\begin{cases}
\text{true}, &\text{if}\ \hat n_2^\text{opt}\geq n_2\ \text{for all groups}\\
\text{false}, &\text{otherwise},
\end{cases}
\]
\[
\text{\ttfamily Xrule}=%
\begin{cases}
\text{true}, &\text{if}\ \hat n_1^\text{opt}\geq n_1\ \text{for all groups}\\
\text{false}, &\text{otherwise},
\end{cases}
\]
\[
\text{\ttfamily HaltRule}=%
\begin{cases}
\text{true}, &\text{if}\ (\hat n_1^\text{opt}\geq n_1\land\hat n_2^\text{opt}\geq n_2)\ \text{for}\ \geq 95\%\ \text{of all groups}\\
\text{false}, &\text{otherwise}.
\end{cases}
\]
We see that {\ttfamily Yrule} and {\ttfamily Xrule} direct the algorithm to keep collecting images if the estimated optimal sample sizes are less than the current sample sizes for any of the column groups. Furthermore, {\ttfamily HaltRule} halted the procedure when the current sample sizes exceeded the estimates for at least $95\%$ of the column groups.


\subsection{Summary of results}
\label{subsec:summary_of_results}

The data collection algorithm was executed on the KAI-0407M {\sc ccd} sensor resulting in dark and illuminated sample sizes of $n_2=862$ and $n_1=2469$ images, respectively. Figure \ref{fig:nu_z_vs_iteration} plots the group estimates of $\zeta$ and $\nudag$ versus iteration number of the algorithm for fifteen randomly selected columns. We can see that these estimates converge to different values, which confirms our choice of pixel grouping is appropriate. Furthermore, we observe that the variance of these estimates in the final iterations of the algorithm is practically zero showing that the size (i.e.~number of pixels) of the chosen pixel groups was also sufficiently large.
\begin{figure}[htb]
\centering
\includegraphics[scale=1]{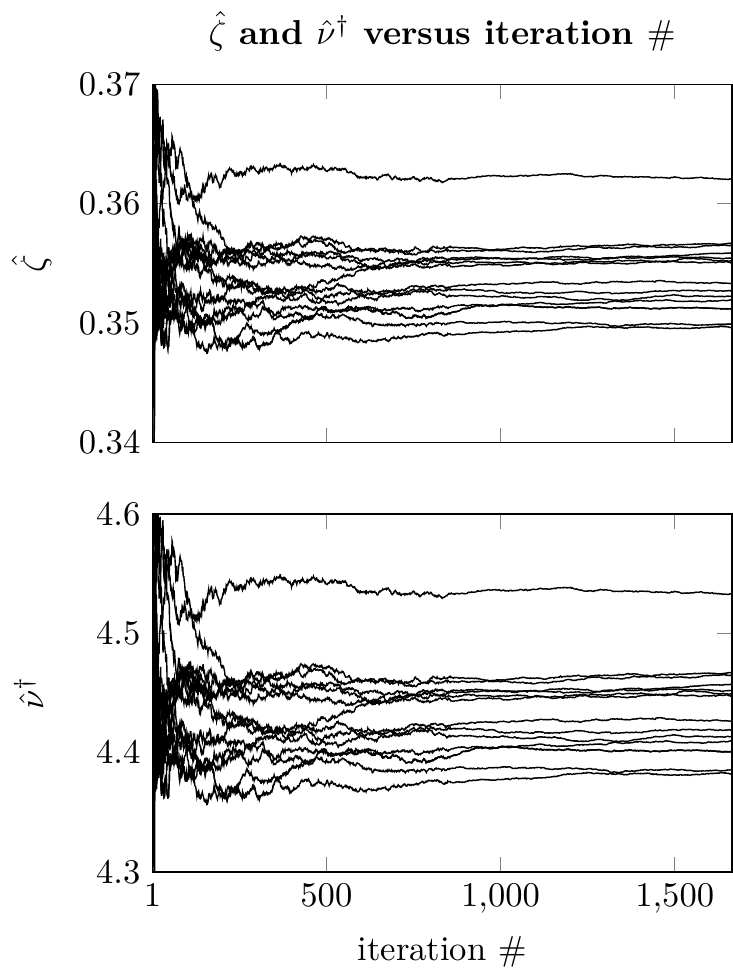}
\caption{Estimates of $\zeta$ (top) and $\nudag$ (bottom) versus iteration number for fifteen randomly selected columns.}
\label{fig:nu_z_vs_iteration}
\end{figure}

To compute the pixel-level conversion gain array, a.k.a.~the \emph{$g$-map}, we first created a pixel-level $\nudag$ array via
\[
V^\dagger=\mathbf 1_{512\times 512}\times\operatorname{diag}(V),
\]
where $\mathbf 1_{512\times 512}$ is a $512\times 512$ array of ones, $V$ is the $512\times 1$ vector of group estimates for $\nudag$, and $\operatorname{diag}(V)$ is a $512\times 512$ diagonal matrix with diagonal elements equal to elements of $V$. The master frames $\bar Y$, $\bar X$, $\hat Y$, $\hat X$, along with $V^\dagger$, and the final sample size values $n_1$ and $n_2$ were then imported into a {\sc Mathematica}. For each set of pixel coordinates $(i,j)$, $1\leq i,j\leq 512$, the conversion gain was estimated with
\[
(\mathscr G_{\nudag,K})_{ij}=(\bar X_{ij}-\bar Y_{ij})\times\mathscr T_{V_{ij}^\dagger,K}(\hat X_{ij},\hat Y_{ij},\alpha_1,\alpha_2),
\]
where $\mathscr T_{\nudag,K}$ is the $K$th order asymptotic approximation of $\mathscr T_\nudag$ given in (\ref{eq:Tv_asym_hyper_form}) and $\alpha_i=(n_i-1)/2$, $i=1,2$. Comparing the histograms and sample statistics of $\mathscr G_{\nudag,1}$ and $\mathscr G_{\nudag,2}$ showed negligible difference indicating that a $K=2$ order approximation was sufficient to accurately compute $\mathscr G_\nudag$ for each pixel.

Figure \ref{fig:G_map} displays the final $g$-map for the KAI-0407M {\sc ccd} along with its histogram. First comparing the histogram to the provided normal reference we see it exhibits a positive skewness, which was estimated to be
\[
\widehat{\skewness}\,\mathscr G_\nudag=0.2426.
\]
As for the $g$-map itself, we observe what appears to be purely random noise with no noteworthy features or patterns. This behavior is expected since we deliberately inhibited gain nonuniformity by transferring image data from the sensor pixels through a single readout register. Since the fluctuations in the $g$-map for this particular case should be almost entirely due to statistical noise, we can see how effective the data collection algorithm was by comparing the sample absolute coefficient of variation to the target value of $\sacv_0=0.05$. Computing the sample mean $\hat\ev\mathscr G_\nudag$ and sample variance $\widehat{\var}\mathscr G_\nudag$ of the $g$-map data we found for the sample absolute coefficient of variation
\[
\widehat{\acv}\mathscr G_\nudag=\frac{\sqrt{\widehat{\var}\mathscr G_\nudag}}{\hat\ev\mathscr G_\nudag}=0.0497,
\]
which differs from the target value by only $0.7\%$. This small discrepancy indicates the data collection algorithm was able to adequately control the experiment and halt data capture at the appropriate time.
\begin{figure}[htb]
\centering
\includegraphics[scale=1]{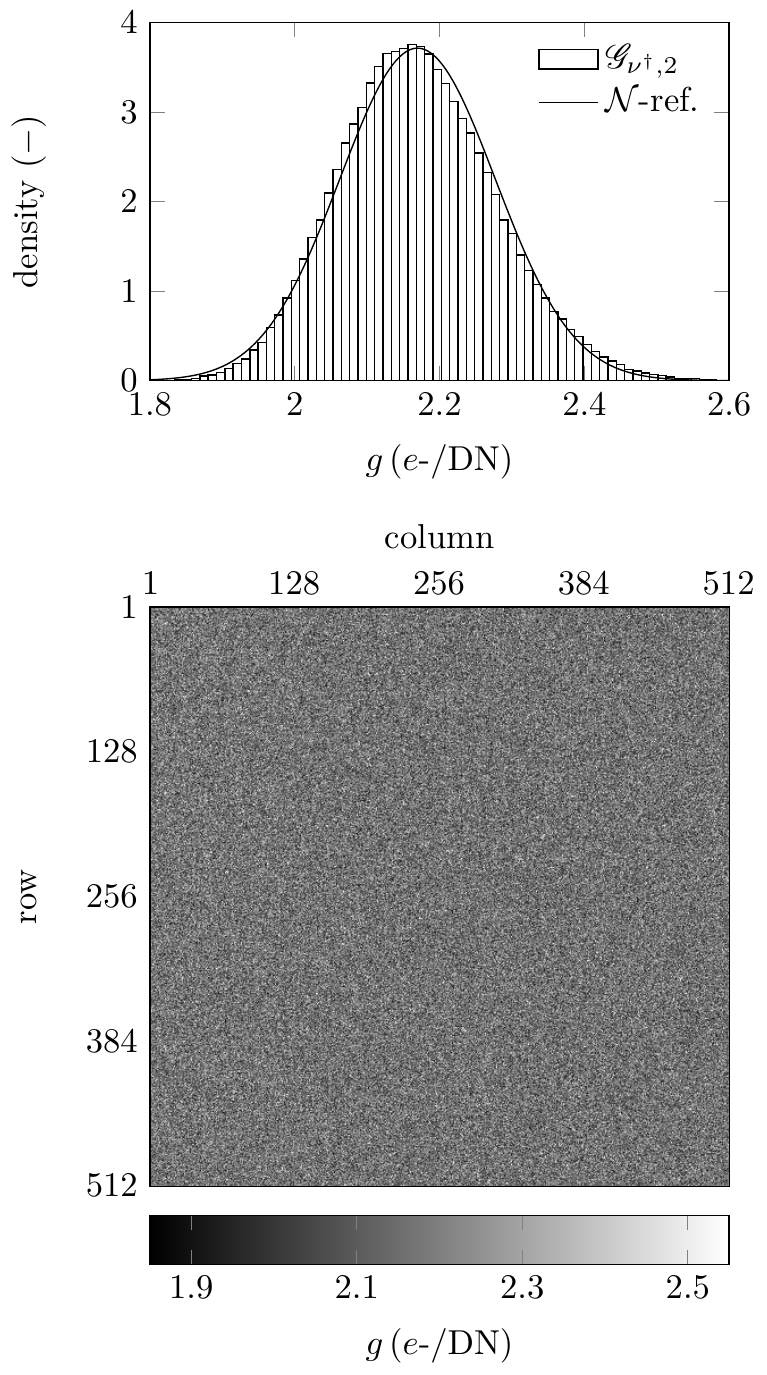}
\caption{Histogram of $g$-map data with best normal approximation (top) and image plot of the final $g$-map (bottom).}
\label{fig:G_map}
\end{figure}

To see how $\mathscr G_\nudag$ compares to the traditional conversion gain estimator we also computed the traditional $g$-map with elements
\[
G_{ij}=(\bar X_{ij}-\bar Y_{ij})(\hat X_{ij}-\hat Y_{ij})^{-1}.
\]
Table \ref{tbl:G_Gv_comparison} presents tabulated values for select sample statistics of both maps. While none of these sample statistics are able to compare the bias of each estimator we know that $\arb\mathscr G_\nudag\approx 0.01$. Furthermore, a quick comparison of the histogram for $\hat X-\hat Y$ against its normal fit shows excellent agreement so that $\arb G$ can be approximated by \cite[c.f.~Corollary 3.2]{hendrickson_2019}
\begin{equation}
\label{eq:arbG_normal_approx}
\arb G=\left\lvert\frac{2(1-\zeta)}{\sqrt{2(\frac{1}{\alpha_1}+\frac{1}{\alpha_2}\zeta^2)}}\mathcal D\left(\frac{1-\zeta}{\sqrt{2(\frac{1}{\alpha_1}+\frac{1}{\alpha_2}\zeta^2)}}\right)-1\right\rvert.
\end{equation}
Substituting the global estimate
\[
Z=\left(\frac{512^2 n_1-3}{512^2 n_1-1}\right)\frac{\sum_{i=1}^{512}\sum_{y=1}^{512} \hat Y_{ij}}{\sum_{i=1}^{512}\sum_{y=1}^{512} \hat X_{ij}}
\]
for $\zeta$ in (\ref{eq:arbG_normal_approx}) gives $\arb G\approx 0.0027$ and so we conclude $G$ incurs less relative bias than $\mathscr G_\nudag$\footnote{Technically speaking $\arb G$ is undefined; however, the expression in (\ref{eq:arbG_normal_approx}) does give a useful measure of relative bias for $G$ whenever $\pr(\hat X-\hat Y>0)\approx 1$. Inspection of the $\hat X$ and $\hat Y$ data for the KAI-0407M {\sc ccd} reveals all $512^2$ values of $\hat X-\hat Y$ are positive; thus, the comparison of $\arb\mathscr G_\nudag$ and $\arb G$ is informative.}. However, note that this small increase in relative bias affords the estimator $\mathscr G_\nudag$ a decrease in sample variance of about $11.4\%$ when compared to that of $G$.
\begin{table}[htb]
\centering
\begin{tabular}{LCCCC}\toprule
	 &\mathscr G_\nudag &G &\text{Unit}\\\midrule
	\hat\ev(\cdot)  &2.1697&2.1975 &\electron/\mathrm{DN}\\
	\widehat{\var}(\cdot)  &0.0116&0.0131&(\electron/\mathrm{DN})^2\\
	\widehat{\acv}(\cdot)  &0.0497&0.0521&-\\\bottomrule
\end{tabular}
\caption{Comparison of sample statistics for the $\mathscr G_\nudag$- and $G$-maps.}
\label{tbl:G_Gv_comparison}
\end{table}


\chapter{Conclusions}
\label{chap:conclusions}

In this work we covered a lot of ground in understanding the estimation of the reciprocal difference of normal variances, $\tau=(\sigma_1^2-\sigma_2^2)^{-1}$, and how this is applicable to the photon transfer method of image sensor characterization.

We began in Theorem \ref{thm:no_finite_variance_estimator} by showing that no unbiased, finite-variance estimator of $\tau$ existed under the normal model. Appealing to the principle of bias-variance tradeoff a biased yet finite-variance estimator, $\mathscr T_n$, was produced, which estimated the first $n$ terms of the Taylor series for $\tau$. Working with the methods of \emph{Summability Calculus} the domain of $\mathscr T_n$ was then extended to include complex-valued $n$ resulting in the generalized estimator $\mathscr T_\nu$. Many properties of this generalized estimator were discovered including a reflection formula as well as its first two moments. The absolute relative bias $\arb\mathscr T_\nu$ and absolute coefficient of variation $\acv\mathscr T_\nu$ were also derived along with their exact confidence intervals. An asymptotic expansion of $\mathscr T_\nu$ for large sample sizes was then given, which played a critical role in applications presented in latter sections.

Equipped with a substantial theoretical foundation, Chapter \ref{chap:new_g_estimator} tackled the problem of applying the results of the preceding analysis to construct a novel estimator, $\mathscr G_\nu$, of the photon transfer conversion gain measurement. As a corollary to Theorem \ref{thm:no_finite_variance_estimator} it was proven that no unbiased, finite-variance estimator of the conversion gain, $g$, existed under the normal model of pixel noise. Mirroring the analysis of $\mathscr T_\nu$, the first two moments of $\mathscr G_\nu$ were derived and used to construct expressions for $\arb\mathscr G_\nu$ and $\acv\mathscr G_\nu$. Then, using long standing observations from the literature as a clue, the function $\mathscr E$ was introduced as a sort of normalized metric for determining when the dispersion of $\mathscr G_\nu$ was dominated by the dispersion of $\mathscr T_\nu$. It was shown that $\mathscr E$ approaches unity in the shot noise limit $\zeta\searrow 1$, which supported the observations in the literature and showed that $\acv\mathscr G_\nu$ could be very well approximated by $\acv\mathscr T_\nu$ given the level of illumination was sufficiently large. These observations were subsequently utilized in a Monte Carlo simulation to demonstrate the process of conversion gain estimation with $\mathscr G_\nu$ as well as in the computation confidence intervals for $\arb\mathscr G_\nu$ and $\acv\mathscr G_\nu$. A short discussion followed the Monte Carlo experiment, which introduced the notion of manipulating the parameter $\nu$ and sample sizes as to achieved desired values for absolute relative bias and absolute coefficient of variation.

The questions following the Monte Carlo experiment in Section \ref{sec:g_estimation_demo} served as a springboard into the study of bias control and optimal sample sizes. To control bias, two new estimators $\mathscr T_\nudag$ and $\mathscr G_\nudag$ were defined, which varied $\nudag$ in such a manner as to force estimator bias to follow a prescribed profile. A definition of optimal sample sizes for was then introduced followed by a study of the optimal sample sizes for $\mathscr T_\nudag$ and $\mathscr G_\nudag$ in the low illumination limit $\zeta\nearrow 1$. It was demonstrated that as the illumination level decreased, the optimal sample sizes, $(n_1^\text{opt},n_2^\text{opt})$, for both estimators were asymptotically equal and proportional to $(1-\zeta)^{-2}$. Perhaps the most significant finding of this investigation at low illumination came in Corollary \ref{cor:E_with_opt_samp_sizes}, which showed that substituting the optimal sample sizes for $\mathscr T_\nudag$ into the metric $\mathscr E$ produced nonzero values in the limit $\zeta\nearrow 1$. Furthermore, for sufficiently large magnitudes of dark noise, $\sigma_{\mathrm d}g$, this limiting value of $\mathscr E$ could be near unity meaning that the optimal sample sizes of $\mathscr G_\nudag$ can be very closely approximated to those of $\mathscr T_\nudag$ even if the illumination level was near zero.

The fact that the optimal sample sizes of $\mathscr T_\nudag$ served as such good approximations to those of $\mathscr G_\nudag$ provided the justification needed to perform a detailed investigation of their properties. The investigation began by showing exact solutions for the optimal sample sizes of $\mathscr T_\nudag$ could be derived at at two special point of interest in the $\zeta$-domain. For all remaining values of $\zeta$ explicit approximations for the optimal sample sizes were found and these approximations were shown to perform very well for a wide range of parameters. To compute the optimal sample sizes, a numerical routine was implemented, which using the explicit approximations as a starting point. A brief analysis of this numerical routine was conducted as to highlight potential weaknesses that could be improved in future work.

With a means for computing optimal sample sizes, Section \ref{sec:CG_estimation_KAI0407M} concluded this work with an introduction to \emph{pixel-level} photon transfer conversion gain estimation. The concept of sensor nonuniformity was introduced as a motivation for a pixel-level approach to photon transfer characterization and the challenges of such an approach were discussed. Using a real image sensor a first attempt at pixel-level conversion gain estimation was presented. In particular, a focus was given on how to design a pixel-level conversion gain estimation experiment using the optimal sample size curves and the quantity $\mathscr E$ as tools for selecting an appropriate illumination level for collecting data. An algorithm for collecting and halting data capture was discussed and then executed on the chosen image sensor. A summary of the experimental result then ensued, which presented the pixel-level $g$-map as the primary data product of the experiment. Details of how the $g$-map was computed as well as analysis of how the data collection algorithm performed were conducted; revealing the algorithms were successful in controlling the experiment and halting data capture at the appropriate time.

The successfulness of the theoretical results presented herein and their application to pixel-level conversion gain estimation subsequently open the door to developing a much needed comprehensive approach to pixel-level photon transfer characterization.


\chapter{Appendices}
\label{chap:appendicies}
\renewcommand{\thesection}{\Alph{section}}


\section{Definitions and relations}
\label{sec:definitions}

\begin{definition}[Sign Function]
\label{def:sign_function}
\[
\operatorname{sign}(x)\coloneqq%
\begin{cases}
x/|x|, &x\neq 0\\
0, &x=0.
\end{cases}
\]
\end{definition}


\begin{definition}[Indicator Function]
\label{def:sign_function}
\[
\mathds 1_A\coloneqq%
\begin{cases}
1, &A\text{ is true}\\
0, &A\text{ is false}.
\end{cases}
\]
\end{definition}


\begin{definition}[Gamma Function]
\label{def:poch_sym}
\[
\Gamma(z) \coloneqq \int_0^\infty t^{z-1}e^{-t}\,\mathrm dt,\quad \Re z>0
\]
\end{definition}


\begin{definition}[Beta Function]
\label{def:beta_fun}
\[
\operatorname B(s,z) \coloneqq \frac{\Gamma(s)\Gamma(z)}{\Gamma(s+z)}
\]
\end{definition}


\begin{definition}[Pochhammer Symbol (rising factorial)]
\label{def:pochhammer_symbol}
\[
(s)_z \coloneqq \frac{\Gamma(s+z)}{\Gamma(s)}
\]
\end{definition}


\begin{definition}[Factorial Power (falling factorial)]
\label{def:FactorialPower}
\[
(s)^{(z)}\coloneqq \frac{\Gamma(s+1)}{\Gamma(s-z+1)}
\]
\end{definition}


\begin{relation}[Rising and falling factorical connection formula]
\label{rel:rising_falling_factorial}
\[
(s)^{(n)}=(-1)^n(-s)_n,\quad n\in\Bbb Z
\]
\end{relation}


\begin{relation}[Falling factorical product representation]
\label{rel:rising_factorial_prodRep}
\[
(s)_n=%
\begin{cases}
1, &n=0\\
\prod_{k=0}^{n-1}(s+k) &n\in\Bbb N
\end{cases}
\]
\end{relation}


\begin{relation}[Falling factorical product representation]
\label{rel:falling_factorial_prodRep}
\[
(s)^{(n)}=%
\begin{cases}
1, &n=0\\
\prod_{k=0}^{n-1}(s-k) &n\in\Bbb N
\end{cases}
\]
\end{relation}


\begin{definition}[Generating function of Stirling numbers of the $1$st-kind $\mathcal S_n^{(k)}$]
\label{def:StirlingS1}
\[
(s)_n\coloneqq\sum_{k=0}^n (-1)^{n-k}\mathcal S_n^{(k)}s^k
\]
\end{definition}


\begin{definition}[Generating function of generalized N{\o}rlund polynomial $B_k^{(\ell)}(z)$]
\label{def:GenNorlundB}
\[
\left(\frac{t}{e^t-1}\right)^\ell e^{zt}\coloneqq\sum_{k=0}^\infty B_k^{(\ell)}(z)\frac{t^k}{k!}
\]
\end{definition}


\begin{definition}[N{\o}rlund polynomial]
\label{def:NorlundB}
\[
B_k^{(\ell)}\coloneqq B_k^{(\ell)}(0)
\]
\end{definition}


\begin{definition}[Bernoulli polynomial]
\label{def:BernoulliPoly}
\[
B_k(z)\coloneqq B_k^{(1)}(z)
\]
\end{definition}

%
%

\begin{definition}[Lerch's Transcendent]
\label{def:LerchPhi}
\[
\Phi(z,s,\omega)\coloneqq\sum_{k=0}^\infty\frac{z^k}{(k+\omega)^s}
\]
\end{definition}






\begin{definition}[Generalized Hypergeometric Series]
\label{def:pFq_function}
\[
\pFq{p}{q}{a_1,\dots,a_p}{b_1,\dots,b_q}{z}\coloneqq\sum_{k=0}^\infty\frac{(a_1)_k\cdots (a_p)_k}{(b_1)_k\cdots (b_q)_k}\frac{z^k}{k!}
\]
\end{definition}


\begin{definition}[Regularized Generalized Hypergeometric Function]
\label{def:pFqReg_function}
\[
\pFqReg{p}{q}{a_1,\dots,a_p}{b_1,\dots,b_q}{z}\coloneqq \frac{1}{\prod_{k=1}^q\Gamma(b_k)}\pFq{p}{q}{a_1,\dots,a_p}{b_1,\dots,b_q}{z}
\]
\end{definition}




\begin{definition}[Appell $F_1$ Hypergeometric Series]
\label{def:appell_F1}
\[
F_1(a;b,b^\prime;c;s,z)\coloneqq\sum_{k,\ell=0}^\infty\frac{(a)_{k+\ell}(b)_k(b^\prime)_\ell}{(c)_{k+\ell}\, k!\,\ell!}s^k z^\ell,\quad\max\{|s|,|z|\}<1
\]
\end{definition}


\begin{definition}[Appell $F_2$ Hypergeometric Series]
\label{def:appell_F2}
\[
F_2(a;b,b^\prime;c,c^\prime;s,z)\coloneqq\sum_{k,\ell=0}^\infty\frac{(a)_{k+\ell}(b)_k(b^\prime)_\ell}{(c)_k(c^\prime)_\ell\, k!\,\ell!}s^k z^\ell,\quad |s|+|z|<1
\]
\end{definition}


\begin{definition}[Incomplete Beta Function]
\label{def:betaInc_fun}
\[
\operatorname B_z(\alpha,\beta) \coloneqq \Gamma(\alpha)z^\alpha\pFqReg{}{}{\alpha,1-\beta}{\alpha+1}{z},\quad -\alpha\notin\Bbb N_0
\]
\end{definition}


\begin{definition}[Regularized Incomplete Beta Function]
\label{def:betaIncReg_fun}
\[
\operatorname I_z(\alpha,\beta) \coloneqq \frac{\operatorname B_z(\alpha,\beta)}{\operatorname B(\alpha,\beta)}
\]
\end{definition}


\begin{relation}{\normalfont \cite[Eq.~$07.23.03.0122.01$]{wolfram_functions}}.
\label{rel:2F1_BetaInc}
\[
F(1,\beta;\gamma;s)=(\gamma-1)s^{1-\gamma}(1-s)^{-(\beta-\gamma+1)}\operatorname B_s(\gamma-1,\beta-\gamma+1).
\]
\end{relation}




\begin{relation}[Gamma reflection formula]
\label{rel:gamma_reflection}
\[
\Gamma(z)\Gamma(1-z)=\pi\csc\pi z,\quad z\notin\Bbb Z
\]
\end{relation}


\begin{relation}
\label{rel:pochhammer_inversion1}
\[
(1-z-n)_n=(-1)^n(z)_n,\quad n\in\Bbb Z
\]
\end{relation}

\begin{proof}
Using the gamma reflection formula in Relation \ref{rel:gamma_reflection} one writes
\[
(1-z-n)_n=\frac{\Gamma(1-z)}{\Gamma(1-z-n)}=\frac{\csc\pi z}{\csc\pi (z+n)}\frac{\Gamma(z+n)}{\Gamma(z)}=(-1)^n(z)_n
\]
\end{proof}


\begin{relation}
\label{rel:pochhammer_inversion2}
\[
\frac{1}{(1-z)_{-n}}=(-1)^n(z)_n,\quad n\in\Bbb Z
\]
\end{relation}

\begin{proof}
Using Relation \ref{rel:pochhammer_inversion1} one writes
\[
\frac{1}{(1-z)_{-n}}=\frac{\Gamma(1-z)}{\Gamma(1-z-n)}=\frac{\Gamma(1-z-n+n)}{\Gamma(1-z-n)}=(1-z-n)_n=(-1)^n(z)_n
\]
\end{proof}


\begin{relation}
\label{rel:pochhammer_product}
\[
(z)_n=(z)_m(z+m)_{n-m}
\]
\end{relation}

\begin{proof}
\[
(z)_n=\frac{\Gamma(z+n)}{\Gamma(z)}=\frac{\Gamma(z+m)}{\Gamma(z+m)}\frac{\Gamma(z+m+n-m)}{\Gamma(z)}=(z)_m(z+m)_{n-m}
\]
\end{proof}


\begin{relation}
\label{rel:binomial_pochhammer}
\[
\binom{n}{k}=\frac{(-1)^k(-n)_k}{k!}
\]
\end{relation}








\begin{relation}{\normalfont \cite[Eq.~$07.23.03.0002.01$]{wolfram_functions}}.
\label{rel:2F1_zeq1}
\[
\pFq{}{}{a,b}{c}{1}=\frac{\Gamma(c)\Gamma(c-a-b)}{\Gamma(c-a)\Gamma(c-b)},\quad\Re\{c-a-b\}>0
\]
\end{relation}




\begin{relation}
\label{rel:pochhammer_2k_inversion}
For $2k=0,2,4,\dots$
\[
(1-a-2k)_{2k}=2^{2k}\left(\tfrac{a}{2}\right)_k\left(\tfrac{a+1}{2}\right)_k.
\]
\end{relation}

\begin{proof}
Using the gamma reflection formula in Relation \ref{rel:gamma_reflection} one writes
\[
(1-a-2k)_{2k}=\frac{\Gamma(1-a)}{\Gamma(1-a-2k)}=\frac{\csc\pi a}{\csc\pi (a+2k)}\frac{\Gamma(a+2k)}{\Gamma(a)}=\frac{\Gamma(a+2k)}{\Gamma(a)}.
\]
Then applying the gamma duplication formula in Relation \ref{rel:gamma_duplication} yields
\[
\frac{\Gamma(a+2k)}{\Gamma(a)}=\frac{\Gamma(\frac{a}{2}+k)\Gamma(\frac{a}{2}+k+\frac{1}{2})2^{2k}}{\Gamma(\frac{a}{2})\Gamma(\frac{a}{2}+\frac{1}{2})}=2^{2k}\left(\tfrac{a}{2}\right)_k\left(\tfrac{a+1}{2}\right)_k.
\]
\end{proof}


\begin{relation}
\label{rel:pochhammer_product}
\[
(a)_{n+k}=(a)_n(a+n)_k.
\]
\end{relation}

\begin{proof}
\[
(a)_{n+k}=\frac{\Gamma(a+n+k)}{\Gamma(a)}=\frac{\Gamma(a+n)}{\Gamma(a)}\frac{\Gamma(a+n+k)}{\Gamma(a+n)}=(a)_n(a+n)_k.
\]
\end{proof}



\section{Limiting properties of $\mathscr T_\nu$ as $|\nu|\to\infty$}
\label{sec:limiting estimator_nu_to_inf}

One curiosity that remains is what happens to $\mathscr T_\nu$ and its moments as $|\nu|\to\infty$.  The following theorem presents these results.

\begin{theorem}
\label{thm:T_est_nlim}
Let $Y_1\sim\mathcal G(\alpha_1,\beta_1)$ and $Y_2\sim\mathcal G(\alpha_2,\beta_2)$ be gamma random variables parameterized in terms of a known shape $\alpha_i$ and unknown rate of the form $\beta_i=\alpha_i/\kappa_i$. Then,
\[
\begin{aligned}
\mathscr U &=\hphantom{-}\frac{1}{\kappa_1}\pFq{1}{1}{1}{\alpha_2}{\frac{\alpha_2 Y_2}{\kappa_1}}, &\text{with}\ \kappa_1>\kappa_2\ \text{and}\ \kappa_1\ \text{known}\\
\mathscr V &=-\frac{1}{\kappa_2}\pFq{1}{1}{1}{\alpha_1}{\frac{\alpha_1 Y_1}{\kappa_2}}, &\text{with}\ \kappa_1<\kappa_2\ \text{and}\ \kappa_2\ \text{known}
\end{aligned}
\]
are unbiased estimators of $\tau=(\kappa_1-\kappa_2)^{-1}$.
\end{theorem}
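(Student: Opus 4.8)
The plan is to verify directly that each proposed estimator yields the correct expected value $\tau=(\kappa_1-\kappa_2)^{-1}$ by computing the relevant expectation term-by-term, just as was done in the proof of Theorem \ref{thm:T_estimator}. Since the two cases are symmetric under the reflection $Y_1\leftrightarrow Y_2$, $\kappa_1\leftrightarrow\kappa_2$ (in the spirit of Theorem \ref{thm:reflection_formula}), I would prove the statement for $\mathscr U$ in full and then obtain the statement for $\mathscr V$ by an appeal to this symmetry. The heuristic motivating the form of $\mathscr U$ is that it should arise as the $\nu\to\infty$ limit of $\mathscr T_\nu$ when $\kappa_1>\kappa_2$: recall $\mathscr T_\nu$ is unbiased for $\tau_\nu=(1-\zeta^\nu)/(\kappa_1-\kappa_2)$, and since $\zeta=\kappa_2/\kappa_1<1$ we have $\zeta^\nu\to 0$, so $\tau_\nu\to\tau$. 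Thus I expect $\mathscr U$ to be exactly the unbiased estimator for $\tau$ obtained when $\kappa_1$ is known, consistent with the remark in Section \ref{sec:Tn_estimmator} that the single inversion w.r.t.~$\beta_2$ succeeds when $\kappa_1$ is known.

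First I would expand the confluent hypergeometric function in its defining series,
\[
\mathscr U=\frac{1}{\kappa_1}\sum_{k=0}^\infty\frac{(1)_k}{(\alpha_2)_k\,k!}\left(\frac{\alpha_2 Y_2}{\kappa_1}\right)^k
=\frac{1}{\kappa_1}\sum_{k=0}^\infty\frac{1}{(\alpha_2)_k}\left(\frac{\alpha_2 Y_2}{\kappa_1}\right)^k,
\]
using $(1)_k/k!=1$. Then I would take expectations term-by-term, justified by absolute convergence (the terms are nonnegative and the gamma moments grow only polynomially in $k$ relative to the factorial-type denominators). Applying Lemma \ref{lem:gamma_moments} with $Y_2\sim\mathcal G(\alpha_2,\beta_2)$ gives $\ev Y_2^k=\beta_2^{-k}(\alpha_2)_k=(\kappa_2/\alpha_2)^k(\alpha_2)_k$, so the factor $(\alpha_2)_k$ cancels cleanly and
\[
\ev\mathscr U=\frac{1}{\kappa_1}\sum_{k=0}^\infty\frac{(\alpha_2)_k}{(\alpha_2)_k}\left(\frac{\alpha_2}{\kappa_1}\frac{\kappa_2}{\alpha_2}\right)^k
=\frac{1}{\kappa_1}\sum_{k=0}^\infty\left(\frac{\kappa_2}{\kappa_1}\right)^k
=\frac{1}{\kappa_1}\cdot\frac{1}{1-\kappa_2/\kappa_1}=\frac{1}{\kappa_1-\kappa_2},
\]
where the geometric series converges precisely because $\kappa_1>\kappa_2$. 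This establishes unbiasedness of $\mathscr U$.

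For $\mathscr V$ I would invoke the reflection symmetry: interchanging the roles of the two samples sends $\kappa_1>\kappa_2$ to $\kappa_1<\kappa_2$, sends $\tau=(\kappa_1-\kappa_2)^{-1}$ to its negative, and converts $\mathscr U$ into $-\mathscr V$ with $\alpha_2,\kappa_1,Y_2$ replaced by $\alpha_1,\kappa_2,Y_1$; alternatively one simply repeats the identical term-by-term computation with the geometric ratio $\kappa_1/\kappa_2<1$, picking up the overall minus sign from $\tau=-(\kappa_2-\kappa_1)^{-1}$. The only real obstacle is the rigorous justification of interchanging expectation and summation, but since every term is nonnegative and the resulting series is a convergent geometric series, the monotone convergence theorem (or Tonelli) disposes of this immediately; the convergence condition $\kappa_1>\kappa_2$ (resp.~$\kappa_1<\kappa_2$) is exactly the hypothesis stated, so no further restriction is needed. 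I would close by remarking that these two estimators are the natural $|\nu|\to\infty$ limits of $\mathscr T_\nu$, tying the result back to Theorem \ref{thm:T_estimator}.
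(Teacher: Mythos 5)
Your proof is correct, but it takes a genuinely different route from the paper. The paper constructs $\mathscr U$ as an almost-sure limit of the discrete estimator $\mathscr T_n$: since $\ev\mathscr T_n<\infty$ requires $n<\alpha_1$, it sends $\alpha_1\to\infty$ (so $Y_1\overset{\mathrm{a.s.}}{\to}\kappa_1$ by the strong law of large numbers and $\alpha_1^{-k-1}/(\alpha_1)_{-k-1}\to 1$ by the Pochhammer asymptotics of \cite[Eq.~$5.11.13$]{nist_2010}), then lets $n\to\infty$ to recover the ${_1F_1}$ series, and finally asserts $\ev\mathscr U=\lim_{n\to\infty}\ev\mathscr T_n=(\kappa_1-\kappa_2)^{-1}$ with the remark that this is ``easily confirmed''; the case $\mathscr V$ is handled via the reflection formula of Theorem \ref{thm:reflection_formula} applied to $\mathscr T_n$. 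You instead bypass the limiting construction entirely and verify unbiasedness directly: expand the confluent hypergeometric series, note $(1)_k/k!=1$, apply Lemma \ref{lem:gamma_moments} termwise so that $(\alpha_2)_k$ cancels, and sum the resulting geometric series $\sum_k\zeta^k$, with the interchange of $\ev$ and $\sum$ justified by Tonelli since every term is nonnegative. Your route is more self-contained and arguably more rigorous on the one point the paper glosses over --- the termwise expectation is exactly the ``easily confirmed'' step, and your Tonelli argument supplies it cleanly while making the hypothesis $\kappa_1>\kappa_2$ visibly equivalent to convergence of the geometric series. What the paper's approach buys in exchange is the derivation itself: it exhibits $\mathscr U$ and $\mathscr V$ as the $|\nu|\to\infty$ limits of $\mathscr T_\nu$ (the point of Appendix \ref{sec:limiting estimator_nu_to_inf}, and the reason the theorem is stated where it is), which you capture only as motivating heuristic. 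One minor quibble: your parenthetical that the moments ``grow only polynomially in $k$ relative to the factorial-type denominators'' is loose --- after cancellation the terms are exactly geometric, neither polynomial nor factorial --- but this does no harm since the Tonelli justification you give immediately afterward is the operative one. Your symmetry argument for $\mathscr V$ is also sound as pure relabeling of $(Y_2,\alpha_2,\kappa_1,\kappa_2)\mapsto(Y_1,\alpha_1,\kappa_2,\kappa_1)$ with the sign flip from $\tau=-(\kappa_2-\kappa_1)^{-1}$, and does not actually need Theorem \ref{thm:reflection_formula}.
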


\begin{proof}
Without loss of generality we consider the discrete estimator of Lemma \ref{lem:T_estimator_discrete} expressed by
\[
\mathscr T_n=\frac{1}{Y_1}\sum_{k=0}^{n-1}\frac{\alpha_1^{-k-1}}{(\alpha_1)_{-k-1}(\alpha_2)_k}\left(\frac{\alpha_2Y_2}{Y_1}\right)^k,
\]
where we recall that $Y_1\sim\mathcal G(\alpha_1,\alpha_1/\kappa_1)$ and $Y_2\sim\mathcal G(\alpha_2,\alpha_2/\kappa_2)$ are independent. From Theorem \ref{thm:T_estimator} we know that $\ev\mathscr T_n<\infty$ if and only if $n<\alpha_1$; thus; in order for $\ev(\lim_{n\to\infty}\mathscr T_n)<\infty$ we require $\alpha_1\to\infty$. To evalute the limit in $\alpha_1$ note that if $\alpha_1\in\Bbb N$ then in terms of distribution
\[
Y_1=\frac{1}{\alpha_1}\sum_{k=1}^{\alpha_1}Y_k^\prime,\quad Y_k^\prime\sim\mathcal G(1,1/\kappa_1)
\]
such that by the strong law of large numbers $Y_1\overset{\mathrm{a.s.}}{\to}\kappa_1$ as $\alpha_1\to\infty$. Furthermore, by \cite[Eq.~$5.11.13$]{nist_2010} as $\alpha_1\to\infty$ we have $\alpha_1^{-k-1}/(\alpha_1)_{-k-1}\sim 1+\mathcal O\{1/\alpha_1\}$; thus, passing to the limit $\alpha_1\to\infty$:
\[
\mathscr T_n\overset{\mathrm{a.s.}}{\to}\mathscr U_n\ \text{where}\ \mathscr U_n=\frac{1}{\kappa_1}\sum_{k=0}^{n-1}\frac{1}{(\alpha_2)_k}\left(\frac{\alpha_2Y_2}{\kappa_1}\right)^k.
\]
As a consequence of taking $\alpha_1\to\infty$ there is no longer any restriction on how large $n$ can be.  Taking the limit $n\to\infty$ in the previous result subsequently yields
\[
\lim_{n\to\infty}\mathscr U_n=\frac{1}{\kappa_1}\sum_{k=0}^\infty\frac{(1)_k}{(\alpha_2)_k\,k!}\left(\frac{\alpha_2Y_2}{\kappa_1}\right)^k,
\]
which is the estimator $\mathscr U$. In addition to $\kappa_1$ being known if we assume $\kappa_1>\kappa_2$ then one can easily confirm
\[
\ev\mathscr U=\lim_{n\to\infty}\ev\mathscr T_n=\frac{1}{\kappa_1-\kappa_2},
\]
which completes the proof for the estimator $\mathscr U$. To obtain the corresponding proof for the estimator $\mathscr V$ one can use the reflection formula in Theorem \ref{thm:reflection_formula} to write
\[
\lim_{n\to-\infty}\mathscr T_n(Y_1,Y_2,\alpha_1,\alpha_2)=-\lim_{n\to\infty}\mathscr T_n(Y_2,Y_1,\alpha_2,\alpha_1),
\]
where we require $\alpha_2\to\infty$, i.e.~$\kappa_2$ known, and $\kappa_1<\kappa_2$ in order for the limiting expected value to converge to the desired quantity.
\end{proof}

\begin{remark}
The estimator $\mathscr U$ is the solution to the integral equation $(\ref{eq:double_int_eq})$ when $\kappa_1$ is known, that is, it satisfies
\[
\mathcal L\{y_2^{\alpha_2-1}\mathscr U(y_2)\}(\beta_2)=\frac{\Gamma(\alpha_2)\beta_2^{-\alpha_2}}{\kappa_1-\alpha_2/\beta_2}.
\]
Likewise,
\[
\mathcal L\{y_1^{\alpha_1-1}\mathscr V(y_1)\}(\beta_1)=\frac{\Gamma(\alpha_1)\beta_1^{-\alpha_1}}{\alpha_1/\beta_1-\kappa_2}.
\]
These results agree with \cite[Eq.~$5.4.9$]{california1954tables}.
\end{remark}

\begin{lemma}
\label{lem:limiting variance}
\[
\var\mathscr U =%
\frac{1}{(\kappa_1-\kappa_2)^2}\left(%
\pFq{}{}{1,1}{\alpha_2}{\frac{\kappa_2^2}{(\kappa_1-\kappa_2)^2}}-1
\right)%
\]
if $\kappa_1>2\kappa_2$ and infinite otherwise. Likewise,
\[
\var\mathscr V =%
\frac{1}{(\kappa_1-\kappa_2)^2}\left(%
\pFq{}{}{1,1}{\alpha_1}{\frac{\kappa_1^2}{(\kappa_1-\kappa_2)^2}}-1%
\right)
\]
if $\kappa_2>2\kappa_1$ and infinite otherwise.
\end{lemma}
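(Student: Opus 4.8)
The plan is to obtain $\var\mathscr U$ by computing the second moment $\ev\mathscr U^2$ and subtracting $(\ev\mathscr U)^2=(\kappa_1-\kappa_2)^{-2}$, which is already furnished by Theorem \ref{thm:T_est_nlim}. Writing $\mathscr U=\kappa_1^{-1}\pFq{1}{1}{1}{\alpha_2}{\alpha_2 Y_2/\kappa_1}$ and recalling $Y_2\sim\mathcal G(\alpha_2,\beta_2)$ with $\beta_2=\alpha_2/\kappa_2$, I would express the second moment as a single integral against the gamma density,
\[
\ev\mathscr U^2=\frac{1}{\kappa_1^2}\frac{\beta_2^{\alpha_2}}{\Gamma(\alpha_2)}\int_0^\infty\left(\pFq{1}{1}{1}{\alpha_2}{\tfrac{\alpha_2}{\kappa_1}y}\right)^2 y^{\alpha_2-1}e^{-\beta_2 y}\,\mathrm dy,
\]
so that the entire problem reduces to a Laplace transform of the product of two equal-denominator confluent hypergeometric functions.

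Second, I would invoke the classical evaluation (cf. \cite{gradshteyn_ryzhik_2014})
\[
\int_0^\infty e^{-st}t^{c-1}\pFq{1}{1}{a}{c}{kt}\pFq{1}{1}{a'}{c}{k't}\,\mathrm dt=\Gamma(c)\,s^{a+a'-c}(s-k)^{-a}(s-k')^{-a'}\pFq{}{}{a,a'}{c}{\tfrac{kk'}{(s-k)(s-k')}},
\]
specialized to $a=a'=1$, $c=\alpha_2$, $k=k'=\alpha_2/\kappa_1$, and $s=\beta_2=\alpha_2/\kappa_2$. The decisive arithmetic is that $s-k=\alpha_2(\kappa_1-\kappa_2)/(\kappa_1\kappa_2)$, whence $\beta_2/(s-k)=\kappa_1/(\kappa_1-\kappa_2)$ and the ${}_2F_1$-argument $kk'/((s-k)(s-k'))$ equals $\kappa_2^2/(\kappa_1-\kappa_2)^2$. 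After the $\beta_2^{\alpha_2}/\Gamma(\alpha_2)$ prefactor cancels the $\Gamma(\alpha_2)s^{2-\alpha_2}$ and the two $(s-k)^{-1}$ factors recombine, all stray powers of $\alpha_2$ and $\kappa_i$ disappear and one is left with
\[
\ev\mathscr U^2=\frac{1}{(\kappa_1-\kappa_2)^2}\pFq{}{}{1,1}{\alpha_2}{\frac{\kappa_2^2}{(\kappa_1-\kappa_2)^2}};
\]
subtracting $(\kappa_1-\kappa_2)^{-2}$ gives the stated formula. (As a consistency check, the same series could be produced by squaring $\mathscr U$, taking term-wise expectations via Lemma \ref{lem:gamma_moments}, and collapsing the resulting double sum with Lemma \ref{lem:2F1_Cauchy_product}, but the integral route is cleaner.)

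Third, I would settle convergence directly from the integrand. Since $\pFq{1}{1}{1}{\alpha_2}{x}\sim\Gamma(\alpha_2)e^{x}x^{1-\alpha_2}$ as $x\to\infty$, the exponential part of the integrand behaves like $e^{(2\alpha_2/\kappa_1-\beta_2)y}$, which decays iff $2\alpha_2/\kappa_1<\alpha_2/\kappa_2$, i.e.\ iff $\kappa_1>2\kappa_2$. This is exactly the condition under which the ${}_2F_1$-argument $\kappa_2^2/(\kappa_1-\kappa_2)^2$ lies strictly inside the unit disk, so the two convergence criteria coincide; when $\kappa_1\le 2\kappa_2$ the same asymptotic shows the integrand is not integrable and $\var\mathscr U=\infty$. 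Finally, the statement for $\mathscr V$ requires no new computation: up to an overall sign (irrelevant to the variance), $\mathscr V$ is obtained from $\mathscr U$ by interchanging the indices $1\leftrightarrow 2$, the same interchange used via the reflection formula of Theorem \ref{thm:reflection_formula} in the proof of Theorem \ref{thm:T_est_nlim}. Repeating the computation with $(\kappa_1,\kappa_2,\alpha_2)\mapsto(\kappa_2,\kappa_1,\alpha_1)$ and using $(\kappa_1-\kappa_2)^2=(\kappa_2-\kappa_1)^2$ yields the claimed expression, finite iff $\kappa_2>2\kappa_1$.

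I expect the main obstacle to be discharging the hypotheses of the product-integral identity and nailing the divergence direction cleanly, rather than the bookkeeping: I would take care that the argument stays strictly inside the unit disk, handle the threshold $\kappa_1=2\kappa_2$ through the confluent asymptotics instead of the boundary series, and verify that the prefactor cancellation is exact so that no residual powers of $\alpha_2$ or $\kappa_i$ survive.
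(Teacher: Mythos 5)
Your derivation is correct in its main line, but it reaches the formula by a genuinely different route than the paper. The paper starts from the discrete second moment of Lemma \ref{lem:T_est_variance_discrete}, passes to the limit $n,\alpha_1\to\infty$ using the asymptotic $F(k+1,\ell+1;\alpha_1;1)\sim 1+\mathcal O\{1/\alpha_1\}$, recognizes the resulting double series as Appell's $F_2(\alpha_2;1,1;\alpha_2,\alpha_2;\zeta,\zeta)$ (absolutely convergent iff $\zeta<1/2$, which is where the finiteness condition $\kappa_1>2\kappa_2$ enters), and then collapses it through the reduction chain $F_2\to F_1\to{}_2F_1$ via \cite[Eqs.~$16.16.3$, $15.12.2$]{nist_2010}. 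You instead write $\ev\mathscr U^2$ as a single Laplace transform of a product of two Kummer functions with common bottom parameter and invoke the classical product-transform identity; I verified your bookkeeping ($s-k=\alpha_2(\kappa_1-\kappa_2)/(\kappa_1\kappa_2)$, prefactor $\kappa_1^{-2}s^2(s-k)^{-2}=(\kappa_1-\kappa_2)^{-2}$, argument $\kappa_2^2/(\kappa_1-\kappa_2)^2$) and it is exact. Your route is shorter and avoids the Appell machinery entirely, at the cost of importing an off-the-shelf transform formula whose validity region ($s>k+k'$, i.e.\ $\kappa_1>2\kappa_2$) must be checked, which you do. The paper's series route buys consistency with how $\mathscr U$ was constructed as the limit of $\mathscr T_n$, with convergence read off the $F_2$ series; notably, your integrand-asymptotics argument for divergence is essentially what the paper itself supplies in the Remark following the lemma, so your proposal in effect promotes the paper's supplementary check to the primary convergence proof.

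One genuine inaccuracy: your claim that \enquote{when $\kappa_1\le 2\kappa_2$ the same asymptotic shows the integrand is not integrable} fails at the threshold $\kappa_1=2\kappa_2$. There the exponential rate $\alpha_2(2/\kappa_1-1/\kappa_2)$ vanishes and the integrand decays like a constant times $y^{1-\alpha_2}$, which \emph{is} integrable whenever $\alpha_2>2$; indeed the paper's Remark computes the finite boundary value $\ev\mathscr U^2=4\kappa_1^{-2}(\alpha_2-1)/(\alpha_2-2)$ for $\alpha_2>2$ (equivalently, $F(1,1;\alpha_2;1)$ converges since $\gamma_q=\alpha_2-2>0$). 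So your asymptotic argument proves divergence only for the strict inequality $\kappa_1<2\kappa_2$, and at equality only when $\alpha_2\le 2$. You should either restrict your divergence claim accordingly or note explicitly that the lemma's \enquote{infinite otherwise} carries this boundary caveat, exactly as the paper's own Remark does. The same qualification applies to your symmetric treatment of $\mathscr V$ at $\kappa_2=2\kappa_1$.
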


\begin{proof}
We will present the proof for $\var\mathscr U$ with the proof for $\var\mathscr V$ being essentially the same. We begin by evaluating $\lim_{n,\alpha_1\to\infty}\ev\mathscr T_n^2$ from the expression in Lemma \ref{lem:T_est_variance_discrete}. According to \cite[Eq.~$15.12.2$]{nist_2010} we have the asymptotic relation
\[
\pFq{}{}{k+1,\ell+1}{\alpha_1}{1}\sim 1+\mathcal O\{1/\alpha_1\},\quad \alpha_1\to\infty.
\]
Thus, upon taking the appropriate limits and writing ${_2}F_1(-k,-\ell;\alpha_1;1)$ in terms of Pochhammer symbols one has
\begin{equation}
\label{eq:EUsqrd_double_series}
\ev\mathscr U^2=\frac{1}{\kappa_1^2}\sum_{k,\ell=0}^\infty\frac{(\alpha_2)_{k+\ell}(1)_k(1)_\ell}{(\alpha_2)_k(\alpha_2)_\ell\,k!\,\ell!}\,\zeta^k\,\zeta^\ell.
\end{equation}
where we again use the shorthand $\zeta=\kappa_2/\kappa_1$. According to Definition \ref{def:appell_F2}, this double series can be expressed in terms of Appell's second hypergeometric function and is absolutely convergent if $\zeta<1/2\implies \kappa_1>2\kappa_2$. To simplify this result we apply the reduction formula in \cite[Eq.~$16.16.3$]{nist_2010} to find
\[
\begin{aligned}
\ev\mathscr U^2%
&=\frac{1}{\kappa_1^2}F_2(\alpha_2;1,1;\alpha_2,\alpha_2;\zeta,\zeta)\\
&=\frac{1}{\kappa_1^2}(1-\zeta)^{-1}F_1(1;\alpha_2-1,1;\alpha_2;\zeta,\zeta(1-\zeta)^{-1}),
\end{aligned}
\]
where $F_1(\cdot)$ is Appell's first hypergeometric function as defined in Definition \ref{def:appell_F1}. Taking advantage of the symmetry $F_1(a ;b,b^\prime ;c ;s,z)=F_1(a;b^\prime,b;c;z,s)$ and applying the reduction formula \cite[Eq.~$15.12.2$]{nist_2010} then yields
\[
\ev\mathscr U^2=\frac{1}{\kappa_1^2}(1-\zeta)^{-2}\pFq{}{}{1,1}{\alpha_2}{\frac{\zeta^2}{(1-\zeta)^2}}.
\]
Subtracting $(\ev\mathscr U)^2$ and simplifying yields the desired result.
\end{proof}

\begin{remark}
The divergence of $\var\mathscr U$ for $\kappa_1<2\kappa_2$ is not just a consequence of the double series representation $(\ref{eq:EUsqrd_double_series})$. Indeed, using the integral definition of $\ev\mathscr U^2$ and $\zeta=\kappa_2/\kappa_1$ one may deduce with a simple substitution
\[
\ev\mathscr U^2\propto\int_0^\infty [{_1F_1}(1;\alpha_2;t)]^2t^{\alpha_2-1}e^{-t/\zeta}\,\mathrm dt.
\]
But according to \cite[Eq.~$13.7.1$]{nist_2010}, as $t\to\infty$
\[
[{_1F_1}(1;\alpha_2;t)]^2t^{\alpha_2-1}e^{-t/\zeta}\sim \Gamma^2(\alpha_2)t^{1-\alpha_2}e^{(2-1/\zeta)t},
\]
which diverges for $\zeta^{-1}<2$, i.e.~$\kappa_1<2\kappa_2$. At the boundary $\kappa_1=2\kappa_2$ we use Relation \ref{rel:2F1_zeq1} to further note that
\[
\ev\mathscr U^2=
\begin{cases}
4\kappa_1^{-2}(\alpha_2-1)/(\alpha_2-2), &\alpha_2>2\\
\infty, &\alpha_2\leq 2.
\end{cases}
\]
\end{remark}

\begin{proof}[Proof of Theorem \ref{thm:no_finite_variance_estimator}]
Assume $\mathscr T$ exists and let $\zeta=\kappa_2/\kappa_1$ with $\zeta<1$. Then by the law of total variance we have
\[
\mathsf{Var}\mathscr T(Y_1,Y_2)=\mathsf E(\mathsf{Var}(\mathscr T(Y_1,Y_2)|Y_2))+\mathsf{Var}g(\kappa_1,Y_2),
\]
where $g(\kappa_1,Y_2)=\mathsf E(\mathscr T(Y_1,Y_2)|Y_2)$ and by assumption $\ev g(\kappa_1,Y_2)=(\kappa_1-\kappa_2)^{-1}$. Now, from Lemma \ref{lem:Y1_Y2_complete_statistic} we know $Y_2$ is a complete-sufficient statistic for $\kappa_2$ such the Lehmann-Scheff\' e theorem asserts $g(\kappa_1,Y_2)$ is the unique {\sc umvue} of its expected value when $\kappa_1$ is known. But if $g(\kappa_1,Y_2)$ is unique and $\zeta<1$ then Theorem \ref{thm:T_est_nlim} proves $g(\kappa_1,Y_2)=\mathscr U$. Given $\mathsf E(\mathsf{Var}(\mathscr T(Y_1,Y_2)|Y_2))\geq 0$ it follows that $\var\mathscr U\leq\var\mathscr T$. Furthermore, Lemma \ref{lem:limiting variance} tells us that $\var\mathscr U=\infty$ if $\zeta>1/2$; thus, it must be that $\var\mathscr T=\infty$ on $1/2<\zeta<1$. By a similar argument if $\zeta>1$ then $\var\mathscr V\leq\mathscr T$ with $\mathscr V$ also given in Theorem \ref{thm:T_est_nlim}. Since $\var\mathscr V=\infty$ for $\zeta<2$ we can combine the previous result to conclude $\var\mathscr T=\infty$ on $1/2<\zeta<2$ which completes the proof.
\end{proof}

\section{Proofs of differential operator identities}
\label{sec:differential_op_props}

Here present the proofs associated with the differential operator identities in Lemma \ref{lem:diff_op_identities}.

\begin{proof}[Proof of Lemma \ref{lem:diff_op_identities} $(i)$]
Begin by using the Definition \ref{def:lowering_operator} to write
\[
\Lambda_\omega z^s=\omega z^s+z\partial_z z^s.
\]
Then by the chain rule we find
\[
\Lambda_\omega z^s=\omega z^s+z(sz^{s-1}+z^s\partial_z)=z^s(\omega+s+z\partial_z)=z^s\Lambda_{\omega+s},
\]
which completes the proof.
\end{proof}

\begin{proof}[Proof of Lemma \ref{lem:diff_op_identities} $(ii)$]
See \cite{fleury_1994}.
\end{proof}

\begin{proof}[Proof of Lemma \ref{lem:diff_op_identities} $(iii)$]
Let $P(n):(z\vartheta)^n=z(\vartheta z)^nz^{-1}$. It is trivial to show that $P(0)$ holds; thus, assuming $P(n)$ we have for $P(n+1)$
\[
(z\vartheta)^{n+1}=(z\vartheta) z(\vartheta z)^nz^{-1}=z(\vartheta z)(\vartheta z)^nz^{-1}=z(\vartheta z)^{n+1}z^{-1},
\]
thus, $P(n)\implies P(n+1)$. Substituting the result in $(ii)$ for $(\vartheta z)^n$ subsequently produces the desired result.
\end{proof}

\begin{proof}[Proof of Lemma \ref{lem:diff_op_identities} $(iv)$]
Let $P(n):\Lambda_\omega^n=z^{-\omega}\vartheta^nz^\omega$ and note that $P(0)$ trivially holds.  Assuming $P(n)$ we have for $P(n+1)$
\[
\Lambda_\omega^{n+1}=(\omega+z\partial_z) \Lambda_\omega^n=(\omega+z\partial_z) z^{-\omega}\vartheta^nz^\omega=z^{-\omega}(z\partial_z)\vartheta^nz^\omega=z^{-\omega}\vartheta^{n+1}z^\omega,
\]
where the second to last equality is due to result $(i)$. Thus, $P(n)\implies P(n+1)$ which completes the proof.
\end{proof}

\begin{proof}[Proof of Lemma \ref{lem:diff_op_identities} $(v)$]
Let $P(n):(\vartheta)^{(n)}=\frac{1}{z}(z\vartheta)^nz^{1-n}$. By Definition \ref{def:factorial_differential_operator}, it immediately follows that $P(0)$ holds.  Assuming $P(n)$ we have for $P(n+1)$
\[
(\vartheta)^{(n+1)}=(\vartheta)^{(n)}(\vartheta-n)=\frac{1}{z}(z\vartheta)^nz^{1-n}\Lambda_{-n}.
\]
Then, making use of $(iv)$ we find
\[
(\vartheta)^{(n+1)}=\frac{1}{z}(z\vartheta)^nz^{1-n}z^n\vartheta z^{-n}=\frac{1}{z}(z\vartheta)^{n+1} z^{1-(n+1)}.
\]
Therefore, $P(n)\implies P(n+1)$. Substituting the result of $(iii)$ for $(z\vartheta)^n$ then completes the proof.
\end{proof}

\begin{proof}[Proof of Lemma \ref{lem:diff_op_identities} $(vi)$]
Let $P(n):(\Lambda_\omega)^{(n)}=z^{-\omega}(\vartheta)^{(n)}z^\omega$. By Definition \ref{def:factorial_differential_operator}, it immediately follows that $P(0)$ holds. Now use Definition \ref{def:factorial_differential_operator} to write
\[
(\Lambda_\omega)^{(n+1)}=(\omega-n+\vartheta)(\Lambda_\omega)^{(n)}.
\]
Assuming $P(n)$ we have for $P(n+1)$
\[
(\Lambda_\omega)^{(n+1)}=(\omega-n+\vartheta)z^{-\omega}(\vartheta)^{(n)}z^\omega=z^{-\omega}(-n+\vartheta)(\vartheta)^{(n)}z^\omega=z^{-\omega}(\vartheta)^{(n+1)}z^\omega,
\]
where the second to last equality is due to result $(i)$. Thus, $P(n)\implies P(n+1)$ and upon substituting the result of $(v)$ for $(\vartheta)^{(n)}$ the desired result is obtained.
\end{proof}

\begin{proof}[Proof of Lemma \ref{lem:diff_op_identities} $(vii)$]
From \cite[Eq.~$26.8.10$]{nist_2010} we have
\[
\vartheta^n=\sum_{k=0}^n{_2\mathcal S}_n^{(k)}(\vartheta)^{(k)}.
\]
Substituting the result of $(v)$ for $(\vartheta)^{(k)}$ produces the desired result.
\end{proof}
\section{Estimate of truncation error for the $\acv\mathscr T_\nu$ series expansion}
\label{sec:trunc_error_estimate}

An important ingredient in the computation of confidence intervals in Section \ref{sec:g_estimation_demo} and optimal sample sizes in Section \ref{subsec:comp_of_opt_samp_sizes_for_Tv} is the ability to evaluate $\acv^2\mathscr T_\nu$ via its series expansion. To evaluate the series expansion we write
\[
\acv^2\mathscr T_\nu=\acv_n^2\mathscr T_\nu+E_n,
\]
where
\[
\acv_n^2\mathscr T_\nu=\sum_{k=1}^n\sum_{\ell=0}^k\frac{\tilde g_{k,\ell}^2(z,\nu)}{(\alpha_1)_\ell(\alpha_2)_{k-\ell}\,\ell!\,(k-\ell)!}
\]
and
\[
E_n=\sum_{k=n+1}^\infty\sum_{\ell=0}^k\frac{\tilde g_{k,\ell}^2(z,\nu)}{(\alpha_1)_\ell(\alpha_2)_{k-\ell}\,\ell!\,(k-\ell)!}
\]
and then determine the number $n$ such that the error incurred in the approximation $\acv^2\mathscr T_\nu\approx \acv_n^2\mathscr T_\nu$ falls within some specified tolerance. Of course, we do not know a closed form for $E_n$ and so whatever procedure used to find $n$ will ultimately require finding a useful upper bound.

\begin{definition}[Complementary incomplete Hypergeometric function]
\label{def:comp_inc_2F1}
For $\nu\in\Bbb C$
\[
F(\alpha,\beta;\gamma;z)_\nu^c\coloneqq F(\alpha,\beta;\gamma;z)-F(\alpha,\beta;\gamma;z)_\nu,
\]
where $F(\alpha,\beta;\gamma;z)_\nu$ is the incomplete hypergeometric function.
\end{definition}

\begin{proposition}
\label{prop:comp_inc_2F1_explicit_forms}
For $\nu\in\Bbb C$
\[
F(\alpha,\beta;\gamma;z)_\nu^c=\frac{(\alpha)_\nu(\beta)_\nu}{(\gamma)_\nu}\frac{z^\nu}{\Gamma(1+\nu)}\pFq{3}{2}{1,\alpha+\nu,\beta+\nu}{1+\nu,\gamma+\nu}{z}.
\]
Likewise, if $\nu=n\in\Bbb N_0$
\[
F(\alpha,\beta;\gamma;z)_n^c=F(\alpha,\beta;\gamma;z)-\frac{(\alpha)_{n-1}(\beta)_{n-1}}{(\gamma)_{n-1}}\frac{z^{n-1}}{\Gamma(n)}\pFq{3}{2}{1,1-n,2-n-\gamma}{2-n-\alpha,2-n-\beta}{\frac{1}{z}}.
\]
\end{proposition}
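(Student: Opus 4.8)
The plan is to work straight from Definition \ref{def:comp_inc_2F1}, which sets $F(\alpha,\beta;\gamma;z)_\nu^c = F(\alpha,\beta;\gamma;z) - F(\alpha,\beta;\gamma;z)_\nu$. The key structural observation is that the incomplete hypergeometric function $F(\alpha,\beta;\gamma;z)_\nu$ is the unique summability-calculus generalization (Theorems \ref{thm:FracSum_uniqueness} and \ref{thm:FracSum_form1}) of the partial sums $f(n)=\sum_{k=0}^{n-1}g(k)$ with $g(k)=\frac{(\alpha)_k(\beta)_k}{(\gamma)_k}\frac{z^k}{k!}$. Hence, exactly as in the proof of Lemma \ref{lem:incHyper_unique_generalization} but without the sine modulation, the complementary function is nothing but the shifted tail, so that $F(\alpha,\beta;\gamma;z)_\nu^c \overset{\mathfrak T}{=}\sum_{k=0}^\infty g(k+\nu)$. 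The whole proposition then reduces to evaluating this tail in two different ways.

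For the first identity I would factor the tail directly. Using the Pochhammer product $(s)_{k+\nu}=(s)_\nu(s+\nu)_k$ (Relation \ref{rel:pochhammer_product}) on each of $\alpha$, $\beta$, $\gamma$, together with $\Gamma(1+\nu+k)=\Gamma(1+\nu)(1+\nu)_k$, I can pull the common factor $\frac{(\alpha)_\nu(\beta)_\nu}{(\gamma)_\nu}\frac{z^\nu}{\Gamma(1+\nu)}$ out front. Inserting $(1)_k/k!$ (which is harmless since $(1)_k=k!$) to supply the missing $z^k/k!$, the residual series is recognized as $\pFq{3}{2}{1,\alpha+\nu,\beta+\nu}{1+\nu,\gamma+\nu}{z}$, yielding the stated closed form. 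This part is essentially bookkeeping and should be routine.

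For the second identity, specializing to $\nu=n\in\Bbb N_0$, the incomplete function $F(\alpha,\beta;\gamma;z)_n$ collapses to the genuine finite partial sum $\sum_{k=0}^{n-1}a_k$ with $a_k=\frac{(\alpha)_k(\beta)_k}{(\gamma)_k}\frac{z^k}{k!}$, a polynomial of degree $n-1$. The plan is to reverse this sum via the reindexing $j=n-1-k$ and to compute the ratio $a_{n-1-j}/a_{n-1}$. Here I would apply the reflection identity $(s)_{m-j}=(-1)^j(s)_m/(1-s-m)_j$ (a consequence of Relation \ref{rel:pochhammer_inversion1}) with $m=n-1$ to each of the four factors $(\alpha)$, $(\beta)$, $(\gamma)$, and $k!=(1)_k$. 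After the four factors of $(-1)^j$ cancel, the ratio becomes $\frac{(1)_j(1-n)_j(2-n-\gamma)_j}{(2-n-\alpha)_j(2-n-\beta)_j}\frac{(1/z)^j}{j!}$, whose sum over $j$ is exactly $\pFq{3}{2}{1,1-n,2-n-\gamma}{2-n-\alpha,2-n-\beta}{\frac{1}{z}}$ (the series truncating automatically at $j=n-1$ through $(1-n)_j$). Multiplying back by $a_{n-1}=\frac{(\alpha)_{n-1}(\beta)_{n-1}}{(\gamma)_{n-1}}\frac{z^{n-1}}{\Gamma(n)}$ reproduces the claimed subtracted term.

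The main obstacle will be the sign and shift bookkeeping in this second part: keeping the four Pochhammer reflections consistent and confirming that their $(-1)^j$ factors genuinely cancel is the only place an error could slip in. I would also verify the degenerate endpoint $n=0$ for consistency, where $1/\Gamma(n)=1/\Gamma(0)=0$ forces the subtracted term to vanish and gives $F(\alpha,\beta;\gamma;z)_0^c=F(\alpha,\beta;\gamma;z)$, matching the empty partial sum.
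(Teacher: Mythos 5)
Your proposal is correct: the paper states Proposition \ref{prop:comp_inc_2F1_explicit_forms} without an accompanying proof, but your argument for the first identity --- identifying $F(\alpha,\beta;\gamma;z)_\nu^c$ with the shifted tail $\sum_{k\geq 0}g(k+\nu)$ via Theorem \ref{thm:FracSum_form1} and splitting the Pochhammer symbols by $(s)_{k+\nu}=(s)_\nu(s+\nu)_k$ together with $\Gamma(1+\nu+k)=\Gamma(1+\nu)(1+\nu)_k$ --- is precisely the computation the paper itself performs for the sine-modulated analogue in Lemma \ref{lem:incHyper_unique_generalization}, minus the sine factor. Your reversal $k\mapsto n-1-j$ for the second identity also checks out: the four $(-1)^j$ factors arising from Relation \ref{rel:pochhammer_inversion1} applied to $(\alpha)$, $(\beta)$, $(\gamma)$, and $(1)_k=k!$ genuinely cancel, the factor $(1-n)_j$ truncates the resulting ${_3F_2}(1,1-n,2-n-\gamma;2-n-\alpha,2-n-\beta;1/z)$ at $j=n-1$ so the formal series equals the finite reversed sum, and the $n=0$ endpoint degenerates correctly through $1/\Gamma(0)=0$.
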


\begin{theorem}
\label{thm:ACVTv_tail_error_bound}
Let $E_n$ denote the truncation error incurred in approximating $\acv^2\mathscr T_\nu$ with $\acv_n^2\mathscr T_\nu$. Then, $E_n\leq E_{n,m}^\ast$, where
\begin{multline*}
E_{n,m}^\ast=\sum_{j=0}^{m-1}(\ev h_{k^\prime+j,k^\prime})^2b_j\pFq{}{}{1+\nu,1+\nu}{\alpha_1}{1}_{k^\prime}^c\\
+(\ev h_{k^{\prime\prime}+m,k^{\prime\prime}\mathds 1_{z\in[0,1]}})^2%
\Biggl(%
\pFq{}{}{1+\nu,1+\nu}{\alpha_1}{1}\pFq{}{}{1-\nu,1-\nu}{\alpha_2}{1}_m^c\\%
-b_m\sum_{k=0}^{k^{\prime\prime}-1}a_k\pFq{5}{4}{-k,1-k-\alpha_1,1,1-\nu+m,1-\nu+m}{1+m,m+\alpha_2,-k-\nu,-k-\nu}{1}%
\Biggr),
\end{multline*}
$k^\prime=\max(0,n+1-j)$, $k^{\prime\prime}=\max(0,n+1-m)$, $a_j=(1+\nu)_j^2/((\alpha_1)_j j!)$, $b_j=(1-\nu)_j^2/((\alpha_2)_j j!)$, $F(\alpha,\beta;\gamma;z)_\nu^c$ is the complementary incomplete hypergeometric function of Definition \ref{def:comp_inc_2F1}, and $\ev h_{n,\omega}$ is given in (\ref{eq:Ehnw_closed_form}). Furthermore, $E_{n,m}^\ast<\infty$ if $\alpha_1>2(1+\nu)$ and $\alpha_2>2(1-\nu)$ and infinite otherwise.
\end{theorem}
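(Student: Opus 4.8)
The plan is to bound the double series for $E_n$ termwise by replacing the squared quantities $\tilde g_{k,\ell}^2$ with suitable expected-value representations, then resumming the bounds into closed-form hypergeometric expressions. First I would recall from Proposition \ref{prop:gTilde_expected_val_form} that $\tilde g_{n,\omega}(z,\nu)=C_{n,\omega}(\nu)\,\ev h_{n,\omega}(X,z)$, and from Corollary \ref{cor:cvT_nu} that the summand of $\acv^2\mathscr T_\nu$ is $\tilde g_{k,\ell}^2/((\alpha_1)_\ell(\alpha_2)_{k-\ell}\ell!(k-\ell)!)$. The key observation, established in Lemma \ref{lem:Ehnw_monotonicity_in_n_and_w}, is that $\ev h_{n,\omega}(X,z)$ is monotone in its indices; this lets me bound the factor $\ev h_{k,\ell}$ uniformly over the tail by a single controlling value, pulling it outside the sum. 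Specifically, for the portion of the tail where the first index exceeds the second by at most $m$, I would bound $(\ev h_{k,\ell})^2$ by $(\ev h_{k'+j,k'})^2$, and for the remaining portion by $(\ev h_{k''+m,k''\mathds 1_{z\in[0,1]}})^2$, using the directionality of monotonicity in $\omega$ dictated by whether $z\in[0,1]$ or $z\in[1,\infty)$.

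After extracting the expected-value factors, the remaining sums over the Pochhammer coefficients $a_k=(1+\nu)_k^2/((\alpha_1)_k k!)$ and $b_j=(1-\nu)_j^2/((\alpha_2)_j j!)$ must be identified as (complementary) hypergeometric series. Here I would use the product-of-hypergeometric-series machinery of Lemma \ref{lem:2F1_Cauchy_product} together with Definition \ref{def:comp_inc_2F1} and Proposition \ref{prop:comp_inc_2F1_explicit_forms}. The idea is that $\sum_k a_k = {_2F_1}(1+\nu,1+\nu;\alpha_1;1)$ and $\sum_j b_j = {_2F_1}(1-\nu,1-\nu;\alpha_2;1)$, so the truncated tails become complementary incomplete hypergeometric functions $F(\cdot)_{k'}^c$ and the mixed terms collapse via the ${_5F_4}$ appearing in the statement, which is precisely the Cauchy-product coefficient of the two ${_2F_1}$'s cut off at the appropriate index. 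Splitting the tail index set into the ``$j<m$'' band and the ``$\geq m$'' block, and carefully tracking $k'=\max(0,n+1-j)$ and $k''=\max(0,n+1-m)$, reproduces the two-part structure of $E_{n,m}^\star$.

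Finally, the convergence claim $E_{n,m}^\star<\infty$ requires that both generating hypergeometric functions of unit argument converge. By Relation \ref{rel:2F1_zeq1}, ${_2F_1}(1+\nu,1+\nu;\alpha_1;1)$ is finite exactly when $\alpha_1-2(1+\nu)>0$ and similarly ${_2F_1}(1-\nu,1-\nu;\alpha_2;1)$ is finite when $\alpha_2-2(1-\nu)>0$; these are the stated conditions $\alpha_1>2(1+\nu)$ and $\alpha_2>2(1-\nu)$. I would verify that each of the finitely many complementary pieces and the ${_5F_4}$ term inherit finiteness from these two conditions, since they are finite truncations or unit-argument evaluations dominated by the full series.

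The main obstacle I anticipate is the bookkeeping in splitting the tail region correctly so that the monotonicity bounds apply with the right index shifts. The tail $E_n$ runs over $k\geq n+1$ and $0\leq\ell\leq k$, and the reindexing into diagonal bands $j=k-\ell$ must be matched against the cutoffs $k',k''$ so that no term is double-counted or omitted; getting the complementary-incomplete cutoffs $k'$ and $k''$ to line up exactly with the $\pFq{}{}{1+\nu,1+\nu}{\alpha_1}{1}_{k'}^c$ and the subtracted ${_5F_4}$ block is delicate. The monotonicity direction also flips with the sign of $z-1$, so the indicator $\mathds 1_{z\in[0,1]}$ in the controlling $\ev h$ factor must be justified case by case from Lemma \ref{lem:Ehnw_monotonicity_in_n_and_w}. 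Once the index accounting is pinned down, the resummation and the convergence check are comparatively routine applications of the hypergeometric identities already assembled in the paper.
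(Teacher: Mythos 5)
Your proposal follows essentially the same route as the paper's proof: rewrite the tail as $\sum_{k\geq n+1}\sum_{\ell=0}^k a_\ell b_{k-\ell}(\ev h_{k,\ell})^2$ via Proposition \ref{prop:gTilde_expected_val_form}, split it into the diagonal bands $j=k-\ell<m$ and the remaining block with cutoffs $k^\prime,k^{\prime\prime}$, pull out the maximal $\ev h$ factors using the monotonicity directions of Lemma \ref{lem:Ehnw_monotonicity_in_n_and_w} (including the $\mathds 1_{z\in[0,1]}$ case split), resum the coefficient tails as complementary incomplete hypergeometric functions with the finite correction collapsing to the ${_5F_4}$ (the paper invokes \cite[Eq.~$16.2.4$]{nist_2010} where you cite the equivalent Cauchy-product machinery), and read off convergence from Relation \ref{rel:2F1_zeq1}. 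The index bookkeeping you flag as the main obstacle is indeed exactly where the paper's proof does its work, and your stated decomposition matches it term for term.
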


\begin{proof}
We begin by writing
\[
E_n=\sum_{k=n+1}^\infty\sum_{\ell=0}^k a_\ell b_{k-\ell}(\ev h_{k,\ell})^2,
\]
where $a_j=(1+\nu)_j^2/((\alpha_1)_j j!)$ and $b_j=(1-\nu)_j^2/((\alpha_2)_j j!)$. The series $E_n$ is absolutely convergent and so we may rearrange its terms as
\[
E_n=\sum_{j=0}^{m-1}b_j\sum_{k=k^\prime}^\infty a_k(\ev h_{k+j,k})^2+\sum_{k=k^{\prime\prime}}^\infty\sum_{\ell=0}^k a_\ell b_{k-\ell+m}(\ev h_{k+m,\ell})^2,
\]
where $k^\prime=\max(0,n+1-j)$ and $k^{\prime\prime}=\max(0,n+1-m)$. Since all terms are nonnegative we may call on Lemma \ref{lem:Ehnw_monotonicity_in_n_and_w} to obtain the upper bound $E_n\leq E_{n,m}^\ast$, where
\[
E_{n,m}^\ast=\sum_{j=0}^{m-1}b_j(\ev h_{k^\prime+j,k^\prime})^2\sum_{k=k^\prime}^\infty a_k+(\ev h_{k^{\prime\prime}+m,k^{\prime\prime}\mathds 1_{z\in[0,1]}})^2\sum_{k=k^{\prime\prime}}^\infty\sum_{\ell=0}^k a_\ell b_{k-\ell+m}.
\]
Upon inspection,
\[
\sum_{k=k^\prime}^\infty a_k=\pFq{}{}{1+\nu,1+\nu}{\alpha_1}{1}_{k^\prime}^c,
\]
which converges when $\alpha_1>2(1+\nu)$. Furthermore,
\[
\sum_{k=k^{\prime\prime}}^\infty\sum_{\ell=0}^k a_\ell b_{k-\ell+m}=\sum_{k=0}^\infty a_k\sum_{k=m}^\infty b_k-\sum_{k=0}^{k^{\prime\prime}-1}\sum_{\ell=0}^k a_\ell b_{k-\ell+m},
\]
where
\[
\sum_{k=0}^\infty a_k=\pFq{}{}{1+\nu,1+\nu}{\alpha_1}{1},
\]
which converges for $\alpha_1>2(1+\nu)$ and
\[
\sum_{k=m}^\infty b_k=\pFq{}{}{1-\nu,1-\nu}{\alpha_2}{1}_m^c,
\]
which converges for $\alpha_2>2(1-\nu)$. Lastly, with a bit of algebra and working with the properties of the Pochhammer symbol we may use \cite[Eq.~$16.2.4$]{nist_2010} to write
\[
\sum_{\ell=0}^ka_\ell b_{k-\ell+m}=a_kb_m\pFq{5}{4}{-k,1-k-\alpha_1,1,1-\nu+m,1-\nu+m}{1+m,m+\alpha_2,-k-\nu,-k-\nu}{1}.
\]
Bringing all results together gives the desired form for $E_{n,m}^\ast$. The proof is now complete.
\end{proof}

\begin{corollary}
\label{cor:rel_truncation_error_bound}
Let, $p>0$ and
\[
R_{n,p}=\left\lvert\frac{\acv_n^p\mathscr T_\nu}{\acv^p\mathscr T_\nu}-1\right\rvert
\]
denote the absolute relative error incurred when approximating $\acv^p\mathscr T_\nu$ with $\acv_n^p\mathscr T_\nu$. Then, $R_{n,p}\leq R_{n,m,p}^\ast$ where
\[
R_{n,m,p}^\ast=\left\lvert\left(1+\frac{E_{n,m}^\ast}{\acv_n^2\mathscr T_\nu}\right)^{-p/2}-1\right\rvert
\]
and $E_{n,m}^\ast$ is given in Theorem \ref{thm:ACVTv_tail_error_bound}. Furthermore, if $\nu>1$ is constant then for all $z\in Z\subset\Bbb R^+$, $R_{n,p}^\ast\leq R_{n,m,p}^\star$ where
\[
R_{n,m,p}^\star=\left\lvert\left(1+\frac{E_{n,m}^\ast|_{z=\sup Z}}{\acv_n^2\mathscr T_\nu|_{z=\inf Z}}\right)^{-p/2}-1\right\rvert\times 100\%.
\]
If instead $\nu<-1$ interchange $\inf Z$ and $\sup Z$.
\end{corollary}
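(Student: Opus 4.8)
The plan is to reduce everything to the identity $\acv^2\mathscr T_\nu=\acv_n^2\mathscr T_\nu+E_n$ together with the tail bound $0\le E_n\le E_{n,m}^\ast$ supplied by Theorem \ref{thm:ACVTv_tail_error_bound}. First I would write $\acv^p\mathscr T_\nu=(\acv^2\mathscr T_\nu)^{p/2}$ and factor the ratio as
\[
\frac{\acv_n^p\mathscr T_\nu}{\acv^p\mathscr T_\nu}=\left(\frac{\acv_n^2\mathscr T_\nu}{\acv_n^2\mathscr T_\nu+E_n}\right)^{p/2}=\left(1+\frac{E_n}{\acv_n^2\mathscr T_\nu}\right)^{-p/2}.
\]
Since $\acv_n^2\mathscr T_\nu>0$ (a finite sum of the positive terms indexed by $k\ge 1$) and $E_n\ge 0$, the bracketed quantity lies in $(0,1]$, so the absolute value collapses to $R_{n,p}=1-(1+E_n/\acv_n^2\mathscr T_\nu)^{-p/2}$.

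Next I would exploit that $x\mapsto 1-(1+x)^{-p/2}$ is increasing on $x\ge 0$ for $p>0$. Substituting the tail bound $E_n\le E_{n,m}^\ast$ then immediately gives $R_{n,p}\le R_{n,m,p}^\ast$, establishing the first claim with no further calculation.

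For the uniform bound over $Z$, the idea is to maximize the ratio $E_{n,m}^\ast(z)/\acv_n^2\mathscr T_\nu(z)$ across $z\in Z$ by separately controlling numerator and denominator through monotonicity in $z$. For $\nu>1$ I would argue that $\acv_n^2\mathscr T_\nu=\sum_{k=1}^n f_k(z,\nu)$ is increasing in $z$, since each $f_k$ is nonnegative and increasing by Theorem \ref{thm:gnw_is_monotone}; hence its minimum on $Z$ occurs at $z=\inf Z$. Likewise, inspecting the structure of $E_{n,m}^\ast$ in Theorem \ref{thm:ACVTv_tail_error_bound}, all of its $z$-dependence enters only through the squared expectations $(\ev h_{\cdot,\cdot})^2$, each multiplied by a $z$-independent coefficient, and Lemma \ref{lem:hyper_ratio_monotonicity} gives that each $\ev h_{n,\omega}(X,z)$ is nonnegative and increasing in $z$ for $\nu>1$; so $E_{n,m}^\ast$ is increasing and attains its maximum on $Z$ at $z=\sup Z$. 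Combining these, the ratio is bounded above by $E_{n,m}^\ast|_{z=\sup Z}/\acv_n^2\mathscr T_\nu|_{z=\inf Z}$, and feeding this through the same increasing transformation yields the stated uniform bound $R_{n,m,p}^\star$. The $\nu<-1$ case then follows verbatim after invoking the reflection formula of Lemma \ref{lem:tilde_gnw_reflection_formula}, which flips every monotonicity direction and thereby interchanges the roles of $\inf Z$ and $\sup Z$.

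The main obstacle I anticipate is verifying the monotonicity of $E_{n,m}^\ast$ cleanly, which requires two checks. First, one must confirm that the coefficient multiplying $(\ev h_{k^{\prime\prime}+m,\,k^{\prime\prime}\mathds 1_{z\in[0,1]}})^2$ — the large parenthesized difference of hypergeometric terms — is genuinely nonnegative, so that a larger $(\ev h)^2$ really does increase $E_{n,m}^\ast$; I would justify this by recalling that in the proof of Theorem \ref{thm:ACVTv_tail_error_bound} this factor arose as the convergent tail sum $\sum_{k\ge k^{\prime\prime}}\sum_\ell a_\ell b_{k-\ell+m}$ of nonnegative terms. Second, the indicator $\mathds 1_{z\in[0,1]}$ in that same factor switches the $\omega$-index at $z=1$, so the monotonicity argument must be carried out on $[0,1]$ and $[1,\infty)$ separately before patching the conclusion across all of $Z$.
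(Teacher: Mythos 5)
Your proposal is correct and follows essentially the same route as the paper's proof: the first bound comes from writing $\acv^p\mathscr T_\nu=(\acv_n^2\mathscr T_\nu+E_n)^{p/2}$ and substituting $E_n\mapsto E_{n,m}^\ast$, and the uniform bound over $Z$ comes from the monotonicity in $z$ of the $\ev h_{n,\omega}$ factors (Lemma \ref{lem:hyper_ratio_monotonicity}) combined with the nonnegativity of the $z$-independent coefficients, with all monotonicity directions reversed when $\nu<-1$. The two obstacles you flag are precisely the points the paper addresses: the parenthesized factor is the nonnegative tail sum you identify, and the indicator is patched across $z=1$ by observing that $\ev h_{k^{\prime\prime}+m,k^{\prime\prime}}=\ev h_{k^{\prime\prime}+m,0}=(k^{\prime\prime}+m+1)^{-1}$ at that point, so the piecewise-increasing function is increasing on all of $\Bbb R^+$.
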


\begin{proof}
The first result follows from writing $\acv^p\mathscr T_\nu= \left(\acv_n^2\mathscr T_\nu+E_n\right)^{p/2}$ and then substituting $E_n\mapsto E_{n,m}^\ast$ to obtain an upper bound on $R_{n,p}$. For the second result we may use Lemma \ref{lem:hyper_ratio_monotonicity} to claim that $\ev h_{k^\prime+j,k^\prime}$, $\ev h_{k^{\prime\prime}+m,k^{\prime\prime}}$, and $\ev h_{k^{\prime\prime}+m,0}$ are all increasing functions of $z$ for some constant $\nu>1$ on $z\in\Bbb R^+$. Furthermore, since $\ev h_{k^{\prime\prime}+m,k^{\prime\prime}}=\ev h_{k^{\prime\prime}+m,0}=(k^{\prime\prime}+m+1)^{-1}$ at $z=1$ it follows that $\ev h_{k^{\prime\prime}+m,k^{\prime\prime}\mathds 1_{z\in[0,1]}}$ must also be increasing in $z$ on $z\in\Bbb R^+$. Combinbing these observations with the fact that $E_{n,m}^\ast$ and $\acv_n^2\mathscr T_\nu$ are sums of nonnegative terms implies that both of these functions must also be increasing in $z$ on $z\in\Bbb R^+$; hence, for all $z\in Z$: $E_n^\ast\leq E_n^\ast|_{z=\sup Z}$ and $\acv_n^{-2}\mathscr T_\nu\leq\acv_n^{-2}\mathscr T_\nu|_{z=\inf Z}$, which gives the upper bound on $R_{n,m,p}^\ast$. Noting the monotonicity of these functions is reversed if $\nu<-1$ gives the complementary bound.
\end{proof}

\section{Bounding functions for $|\partial_\beta^n F(\alpha,\alpha;\beta;x)|$}
\label{sec:parameter_derivatives_of_2F1}

In this section we derive several results that will be used to determine bounding functions for $|\partial_\beta^n F(\alpha,\alpha;\beta;x)|$ on $x\in[0,1]$ when $\alpha\in\Bbb R$ and $\beta\in\Bbb R^+$. The main results are found in Theorems \ref{thm:2F1_c_derivative_order_n_bound} and \ref{thm:hyper2f1_c_derivative_bounding_functions}.

Before we start deriving the bounding functions we must find a suitable expression for higher order derivatives of the hypergeometric function w.r.t.~its bottom parameter. The following Lemma turns out to be key for doing just that.

\begin{lemma}
\label{lem:nth_order_reciprocal_Pochhammer_derivative}
For $a,b>0$, $n\in\Bbb N_0$, and $X\sim\operatorname{Beta}(a,b)$
\[
(a)_b\partial_a^n(a)_b^{-1}=\ev\log^nX.
\]
\end{lemma}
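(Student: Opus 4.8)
The plan is to establish the identity by recognizing that $(a)_b^{-1} = \Gamma(a)/\Gamma(a+b)$ and that the Beta$(a,b)$ density carries exactly the factor $\Gamma(a+b)/(\Gamma(a)\Gamma(b))$, so the ratio $(a)_b$ will cancel cleanly against the normalizing constant of the expectation. First I would write out the claimed expectation explicitly. For $X\sim\operatorname{Beta}(a,b)$ the density is $f_X(x)=\frac{\Gamma(a+b)}{\Gamma(a)\Gamma(b)}x^{a-1}(1-x)^{b-1}$ on $x\in(0,1)$, so that
\[
\ev\log^n X=\frac{\Gamma(a+b)}{\Gamma(a)\Gamma(b)}\int_0^1 (\log x)^n\, x^{a-1}(1-x)^{b-1}\,\mathrm dx.
\]
The key observation is that $\log^n x\, \cdot\, x^{a-1}=\partial_a^n x^{a-1}$, since differentiating $x^{a-1}=e^{(a-1)\log x}$ with respect to $a$ brings down one factor of $\log x$ each time.

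The main technical step is then to justify interchanging the $n$-fold derivative $\partial_a^n$ with the integral over $x$. The plan is to pull $\partial_a^n$ outside the integral, yielding
\[
\int_0^1 (\log x)^n x^{a-1}(1-x)^{b-1}\,\mathrm dx=\partial_a^n\int_0^1 x^{a-1}(1-x)^{b-1}\,\mathrm dx=\partial_a^n \operatorname B(a,b),
\]
where $\operatorname B(a,b)=\Gamma(a)\Gamma(b)/\Gamma(a+b)$ is the beta function of Definition \ref{def:beta_fun}. The differentiation under the integral sign is licensed by dominated convergence: on any compact neighborhood of a fixed $a>0$ the integrand and its $a$-derivatives up to order $n$ are dominated by an integrable envelope (the only singularities are the integrable $x^{a-1}$ near $0$, where $|\log x|^n x^{a-1}$ remains integrable for $a>0$, and $(1-x)^{b-1}$ near $1$). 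I expect this interchange to be the one place requiring genuine care, though it is standard.

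Combining these, we obtain
\[
\ev\log^n X=\frac{\Gamma(a+b)}{\Gamma(a)\Gamma(b)}\,\partial_a^n\!\left(\frac{\Gamma(a)\Gamma(b)}{\Gamma(a+b)}\right)=\frac{1}{\operatorname B(a,b)}\,\partial_a^n\operatorname B(a,b).
\]
Since the $\Gamma(b)$ factors are constants in $a$, we may write $\operatorname B(a,b)=\Gamma(b)\,\Gamma(a)/\Gamma(a+b)=\Gamma(b)\,(a)_b^{-1}$, so $\partial_a^n\operatorname B(a,b)=\Gamma(b)\,\partial_a^n (a)_b^{-1}$ and $1/\operatorname B(a,b)=\Gamma(b)^{-1}(a)_b$. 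The $\Gamma(b)$ factors cancel, leaving $\ev\log^n X=(a)_b\,\partial_a^n(a)_b^{-1}$, which is precisely the claimed identity. Finally I would note that the restriction $a,b>0$ guarantees $X\sim\operatorname{Beta}(a,b)$ is well defined and all the integrals converge, completing the argument.
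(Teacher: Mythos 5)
Your proof is correct, and it takes a mildly different route from the paper's. The paper proves the identity by induction on $n$: writing $P(n)$ for the claim, it notes $P(0)$ is trivial, and in the inductive step it expresses $(a)_b^{-1}\ev\log^n X$ as the integral $\int_0^1(\log^n t)\,t^{a-1}(1-t)^{b-1}\Gamma(b)^{-1}\,\mathrm dt$ and differentiates once under the integral sign with respect to $a$ to pick up one more factor of $\log t$, giving $P(n)\implies P(n+1)$. You instead do everything in one shot: observe $\log^n x\cdot x^{a-1}=\partial_a^n x^{a-1}$, pull $\partial_a^n$ out of the Beta integral, and read off $\ev\log^n X=\partial_a^n\operatorname B(a,b)/\operatorname B(a,b)$, with the constant $\Gamma(b)$ cancelling to leave $(a)_b\,\partial_a^n(a)_b^{-1}$. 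The underlying mechanism --- differentiating the Beta integral in $a$ --- is identical in both arguments, so the difference is chiefly organizational, but each version buys something: the induction lets the paper get away with a single interchange of derivative and integral at each step (which it in fact performs without explicit justification), while your direct version needs the $n$-fold interchange and compensates by actually supplying the dominated-convergence envelope (the integrable bound $|\log x|^k x^{a-\delta-1}$ near $0$ for $a$ ranging over a compact set, together with $(1-x)^{b-1}$ near $1$), making your write-up slightly more complete on the analytic side. Your formulation also makes visible the clean intermediate identity $(a)_b\partial_a^n(a)_b^{-1}=\partial_a^n\operatorname B(a,b)/\operatorname B(a,b)$, which the paper never states explicitly.
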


\begin{proof}
Denoting $P(n):(a)_b\partial_a^n(a)_b^{-1}=\ev\log^nX$ we immediately conclude that $P(0)$ holds. Assuming $P(n)$ we find
\[
\begin{aligned}
P(n)\implies\partial_a^{n+1}(a)_b^{-1}%
&=\partial_a(a)_b^{-1}\ev\log^nX\\
&=\partial_a\int_0^1(\log^nt)\,\frac{t^{a-1}(1-t)^{b-1}}{\Gamma(b)}\,\mathrm dt\\
&=\int_0^1(\log^{n+1}t)\,\frac{t^{a-1}(1-t)^{b-1}}{\Gamma(b)}\,\mathrm dt\\
&=(a)_b^{-1}\ev\log^{n+1}X.
\end{aligned}
\]
Thus, $P(n)\implies P(n+1)$ and the proof is complete.
\end{proof}

Using this probabilistic interpretation for derivatives of the Pochhammer symbol now facilitates a simple integral representation for $\partial_\gamma^nF(\alpha,\beta;\gamma;x)$ as seen in the following result.

\begin{lemma}
\label{lem:2F1_c_derivative_order_n_integral_rep}
For $\gamma>0$, $|x|<1$, and $n\in\Bbb N_0$
\[
\partial_\gamma^nF(\alpha,\beta;\gamma;x)=\int_0^1\log^n(1-t)\,(1-t)^{\gamma-1}\partial_t F(\alpha,\beta;1;xt)\,\mathrm dt+\mathds 1_{n=0}
\]
\end{lemma}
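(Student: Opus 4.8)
The plan is to differentiate the defining hypergeometric series term by term in $\gamma$, isolate the $k=0$ term as the source of the additive $\mathds 1_{n=0}$, convert each remaining term into a $\operatorname{Beta}$-expectation of $\log^n$ via Lemma \ref{lem:nth_order_reciprocal_Pochhammer_derivative}, and then reassemble the integrand after a single change of variable.

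First I would write $F(\alpha,\beta;\gamma;x)=\sum_{k=0}^\infty c_k(x)\,(\gamma)_k^{-1}$ with $c_k(x)=(\alpha)_k(\beta)_k x^k/k!$, noting that the only $\gamma$-dependence sits in $(\gamma)_k^{-1}$. For $|x|<1$ this series converges locally uniformly in $\gamma$ on $(0,\infty)$ — indeed it extends to an analytic function of $\gamma$ on a complex neighborhood avoiding the nonpositive integers — so termwise differentiation of every order is legitimate and $\partial_\gamma^n F=\sum_{k=0}^\infty c_k(x)\,\partial_\gamma^n(\gamma)_k^{-1}$. The $k=0$ term has $(\gamma)_0^{-1}\equiv 1$ and $c_0(x)=1$, so its contribution is $\partial_\gamma^n 1=\mathds 1_{n=0}$, which is precisely the indicator appearing on the right-hand side.

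For $k\ge 1$ I would invoke Lemma \ref{lem:nth_order_reciprocal_Pochhammer_derivative} with $a=\gamma$ and $b=k$, giving $\partial_\gamma^n(\gamma)_k^{-1}=(\gamma)_k^{-1}\ev\log^nX_k$ for $X_k\sim\operatorname{Beta}(\gamma,k)$; using $(\gamma)_k^{-1}/\operatorname{B}(\gamma,k)=1/\Gamma(k)$ this is $\int_0^1\log^n t\,t^{\gamma-1}(1-t)^{k-1}/\Gamma(k)\,\mathrm dt$. Substituting into the $k\ge1$ sum and swapping sum and integral produces $\int_0^1\log^n t\,t^{\gamma-1}\sum_{k\ge1}\frac{(\alpha)_k(\beta)_k}{k!\,(k-1)!}x^k(1-t)^{k-1}\,\mathrm dt$. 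The swap is justified by Tonelli: since $(1-t)^{k-1}\le 1$ on $[0,1]$, each inner integral is bounded by $\int_0^1|\log t|^n t^{\gamma-1}\,\mathrm dt$, which is finite for $\gamma>0$ and independent of $k$, while $\sum_{k\ge1}|(\alpha)_k(\beta)_k|\,|x|^k/(k!\,(k-1)!)<\infty$ for $|x|<1$ by the ratio test.

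Finally I would apply the substitution $t\mapsto 1-t$ and recognize the inner series. Since $F(\alpha,\beta;1;xt)=\sum_{k\ge0}(\alpha)_k(\beta)_k(xt)^k/(k!)^2$, term-by-term differentiation gives $\partial_t F(\alpha,\beta;1;xt)=\sum_{k\ge1}\frac{(\alpha)_k(\beta)_k}{k!\,(k-1)!}x^k t^{k-1}$, which is exactly the bracketed sum after the change of variable; collecting this with the $k=0$ contribution yields the claimed identity. The one genuine subtlety, and the step I would be most careful to document, is the pair of interchanges (the termwise $\gamma$-differentiation and the sum--integral swap): both hinge on the absolute convergence available for $|x|<1$ together with the integrability of $|\log t|^n t^{\gamma-1}$ at $t=0$ for $\gamma>0$, so I would record these bounds explicitly rather than treat either interchange as automatic.
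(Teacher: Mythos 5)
Your proposal is correct and follows essentially the same route as the paper's proof: both reduce $\partial_\gamma^n(\gamma)_k^{-1}$ to a $\operatorname{Beta}(\gamma,k)$-expectation of $\log^n$ via Lemma \ref{lem:nth_order_reciprocal_Pochhammer_derivative}, interchange sum and integral (you via absolute summability and Fubini, the paper via dominated convergence on the partial sums), recognize the inner series as $\partial_t F(\alpha,\beta;1;\cdot)$, and finish with the substitution $t\mapsto 1-t$. Your additional explicit justification of the termwise $\gamma$-differentiation and the isolation of the $k=0$ term as the source of $\mathds 1_{n=0}$ are details the paper leaves implicit, and they are handled correctly.
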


\begin{proof}
From Lemma \ref{lem:nth_order_reciprocal_Pochhammer_derivative} we write
\[
\partial_\gamma^nF(\alpha,\beta;\gamma;x)=\sum_{k=1}^\infty\frac{(\alpha)_k(\beta)_k}{(\gamma)_k}\frac{x^k}{k!}\ev\log^nX+\mathds 1_{n=0},\quad X\sim\operatorname{Beta}(\gamma,k).
\]
Now writing the expected value in integral form subsequently gives
\[
\partial_\gamma^nF(\alpha,\beta;\gamma;x)=\lim_{m\to\infty}\int_0^1(\log^nt) t^{\gamma-1}\sum_{k=1}^m\frac{(\alpha)_k(\beta)_k}{\Gamma(k)}\frac{x^k}{k!}(1-t)^{k-1}\,\mathrm dt+\mathds 1_{n=0}.
\]
Since $|x|<1$ the magnitude of the integrand is bounded above by $C(-\log t)^n t^{\gamma-1}$ for some $C>0$, which is integrable for all $n\in\Bbb N_0$ and $\gamma>0$. Consequently, we have by argument of dominated convergence
\[
\partial_\gamma^nF(\alpha,\beta;\gamma;x)=-\int_0^1(\log^nt) t^{\gamma-1}\partial_tF(\alpha,\beta;1;x(1-t))\,\mathrm dt+\mathds 1_{n=0},
\]
which upon substituting $t\mapsto 1-t$ gives the desired form. The proof is now complete.
\end{proof}

Upon inspection of the integral representation in Lemma \ref{lem:nth_order_reciprocal_Pochhammer_derivative}, one bounding function immediately stands out.

\begin{theorem}
\label{thm:2F1_c_derivative_order_n_bound}
Let $\alpha\in\Bbb R$, $\beta>0$, $x\in[0,1]$, and $n\in\Bbb N_0$. Then for any $d>1$
\[
|\partial_\beta^nF(\alpha,\alpha;\beta;x)|<e^{-n}\left(\frac{d n}{\beta}\right)^n(F(\alpha,\alpha;(1-d^{-1})\beta;x)-1)+\mathds 1_{n=0},
\]
where for the case $n=0$ we define $0^0\coloneqq 1$.
\end{theorem}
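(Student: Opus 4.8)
The plan is to reduce everything to the integral representation of Lemma~\ref{lem:2F1_c_derivative_order_n_integral_rep}. Specializing that lemma to equal top parameters $\alpha$ and bottom parameter $\beta$ gives
\[
\partial_\beta^n F(\alpha,\alpha;\beta;x)=\int_0^1\log^n(1-t)\,(1-t)^{\beta-1}\partial_t F(\alpha,\alpha;1;xt)\,\mathrm dt+\mathds 1_{n=0}.
\]
First I would pin down the sign of the integrand. Writing $F(\alpha,\alpha;1;xt)=\sum_{k\ge0}(\alpha)_k^2(xt)^k/(k!)^2$ shows it is a power series with nonnegative coefficients (since $(\alpha)_k^2\ge0$), so for $x,t\ge0$ its $t$-derivative $\partial_t F(\alpha,\alpha;1;xt)$ is nonnegative. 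Because $(1-t)^{\beta-1}\ge0$ and $\log^n(1-t)$ carries the sign $(-1)^n$ on $(0,1)$, the whole integrand has constant sign $(-1)^n$. Hence for $n\ge1$ the $\mathds 1_{n=0}$ term drops and
\[
\bigl|\partial_\beta^n F(\alpha,\alpha;\beta;x)\bigr|=\int_0^1(-\log(1-t))^n(1-t)^{\beta-1}\partial_t F(\alpha,\alpha;1;xt)\,\mathrm dt.
\]

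Second, I would apply the same lemma with $n=0$ and bottom parameter $(1-d^{-1})\beta$ to express the right-hand factor as an integral against the \emph{same} positive density:
\[
F\bigl(\alpha,\alpha;(1-d^{-1})\beta;x\bigr)-1=\int_0^1(1-t)^{(1-d^{-1})\beta-1}\partial_t F(\alpha,\alpha;1;xt)\,\mathrm dt.
\]
The crux is then the pointwise estimate, for every $t\in(0,1)$,
\[
(-\log(1-t))^n(1-t)^{\beta-1}\le e^{-n}\Bigl(\tfrac{dn}{\beta}\Bigr)^n(1-t)^{(1-d^{-1})\beta-1}.
\]
To prove this I would factor $(1-t)^{\beta-1}=(1-t)^{\beta/d}\,(1-t)^{(1-d^{-1})\beta-1}$ (the exponents add to $\beta-1$), cancel the common factor $(1-t)^{(1-d^{-1})\beta-1}$, and reduce to $(-\log(1-t))^n(1-t)^{\beta/d}\le e^{-n}(dn/\beta)^n$. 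Substituting $u=-\log(1-t)\ge0$ turns the left side into $u^n e^{-(\beta/d)u}$, whose maximum over $u\ge0$ is $(n/(ea))^n$ with $a=\beta/d$, attained at $u=nd/\beta$ and equal to $(nd/\beta)^n e^{-n}$ --- exactly the claimed bound. Multiplying the pointwise inequality by the nonnegative density $\partial_t F(\alpha,\alpha;1;xt)$ and integrating yields the theorem for $n\ge1$; the case $n=0$ follows from the same factoring with the convention $0^0=1$, reducing to the elementary monotonicity $(1-t)^{\beta-1}\le(1-t)^{(1-d^{-1})\beta-1}$.

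Finally, the strictness. The pointwise bound is an equality only at the single point $u=nd/\beta$, i.e.\ at $t^\ast=1-e^{-nd/\beta}$, so the deficit between the two integrands is strictly positive off a set of measure zero. Because the integrating density is absolutely continuous and not identically zero (for $x>0$ and $\alpha\neq0$), the integral inequality is strict, giving the stated $<$. The main obstacle is really just the clean bookkeeping here: verifying the exponent split $\beta/d+\bigl((1-d^{-1})\beta-1\bigr)=\beta-1$, recognizing the standard maximization $\max_{u\ge0}u^n e^{-au}=(n/(ea))^n$, and confirming that equality occurs only at an isolated point so that the passage from the pointwise estimate to a strict integral inequality is justified.
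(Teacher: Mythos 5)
Your proof is correct, and it shares the paper's skeleton --- the integral representation of Lemma \ref{lem:2F1_c_derivative_order_n_integral_rep}, the factorization $(1-t)^{\beta-1}=(1-t)^{\beta/d}(1-t)^{(1-d^{-1})\beta-1}$, and the supremum bound $\sup_{u\geq 0}u^ne^{-(\beta/d)u}=e^{-n}(dn/\beta)^n$ --- but it closes by a genuinely different and cleaner route. After the pointwise estimate, the paper evaluates the remaining integral $\int_0^1(1-t)^{\beta^\prime-1}\partial_tF(\alpha,\alpha;1;xt)\,\mathrm dt$ by citing the table integral \cite[Eq.~$7.512.12$]{gradshteyn_ryzhik_2014}, which produces $\frac{\alpha^2 x}{\beta^\prime}\,{_3F_2}(1,\alpha+1,\alpha+1;2,\beta^\prime+1;x)$, and then invokes the contiguous relation \cite[Eq.~$07.27.17.0012.01$]{wolfram_functions} to collapse this to $F(\alpha,\alpha;\beta^\prime;x)-1$. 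You instead recognize that this same integral is precisely the $n=0$ instance of Lemma \ref{lem:2F1_c_derivative_order_n_integral_rep} with bottom parameter $\beta^\prime$, which identifies it with $F(\alpha,\alpha;\beta^\prime;x)-1$ in one line; this eliminates both external references and makes the argument self-contained. Two further points are in your favor: you justify pulling the absolute value inside the integral (constant sign $(-1)^n$ of the integrand, via nonnegativity of the series coefficients $(\alpha)_k^2$), which the paper leaves implicit, and your strictness bookkeeping is more careful than the paper's --- equality in the pointwise bound occurs only at $t^\ast=1-e^{-nd/\beta}$, so the integral inequality is strict exactly when the density $\partial_tF(\alpha,\alpha;1;xt)$ is not almost everywhere zero, i.e.\ $x>0$ and $\alpha\neq 0$; at $x=0$ (or $\alpha=0$) both sides coincide, a degenerate boundary case the paper's strict ``$<$'' does not flag. (Like the paper, you apply the lemma, stated for $|x|<1$, at $x=1$; a monotone-convergence passage in $x$ covers that endpoint, with both sides possibly infinite when $\beta^\prime\leq 2\alpha$.)
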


\begin{proof}
For brevity denote $\beta^\prime=(1-d^{-1})\beta$. Then by Lemma \ref{lem:2F1_c_derivative_order_n_integral_rep} we have
\[
|\partial_\beta^nF(\alpha,\alpha;\beta;x)|=\int_0^1(1-t)^{\beta/d}(-\log(1-t))^n\,(1-t)^{\beta^\prime-1}\partial_t F(\alpha,\alpha;1;xt)\,\mathrm dt+\mathds 1_{n=0}.
\]
Now, $0\leq (1-t)^{\beta/d}(-\log(1-t))^n\leq e^{-n}(d n/\beta)^n$; thus, with the help of \cite[Eq.~$7.512.12$]{gradshteyn_ryzhik_2014} if $d>0$ then $\beta^\prime>0$ and we have
\[
|\partial_\beta^nF(\alpha,\alpha;\beta;x)|<e^{-n}\left(\frac{d n}{\beta}\right)^n\frac{\alpha^2x}{\beta^\prime}\pFq{3}{2}{1,\alpha+1,\alpha+1}{2,\beta^\prime+1}{x}+\mathds 1_{n=0}.
\]
The contigious relation in \cite[Eq.~$07.27.17.0012.01$]{wolfram_functions} then reduces this result to its final form. The proof is now complete.
\end{proof}

Our ultimate goal in this effort is to find bounding function for the purposes of studying the convergence of higher order derivatives of $\acv^2\mathscr T_\nu$ w.r.t.~$\alpha_1$ and $\alpha_2$. However, observe that a key feature of the bounding function given in Theorem \ref{thm:2F1_c_derivative_order_n_bound} is a reduction of the bottom parameter by a multiplicative factor of $(1-d^{-1})$, $d>1$. Using this bounding function for studying higher order derivatives of $\acv^2\mathscr T_\nu$ would effectively reduce $\alpha_1$ and $\alpha_2$ by this same factor and thus would alter the convergence properties of the integral representing the higher order derivatives of $\acv^2\mathscr T_\nu$. As a consequence, we would not determine the full range of possible values for $\alpha_1$ and $\alpha_2$ and must turn to finding a more suitable bounding function. We proceed with two more preliminary results and then onto Theorem \ref{thm:hyper2f1_c_derivative_bounding_functions} which gives us the result were looking for.

\begin{lemma}
\label{lem:indefinite_hypergeom_integral}
\[
\int (1-t)^{s-1}(1-xt)^{-s}\,\mathrm dt=-\frac{1}{s}(1-t)^s(1-xt)^{-s}\pFq{}{}{1,s}{1+s}{x\frac{1-t}{1-xt}}+C.
\]
\end{lemma}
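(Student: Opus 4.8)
The plan is to verify the identity by differentiating the right-hand side and recovering the integrand, which is the standard way to confirm an indefinite integral. Write $R(t)$ for the right-hand side with the constant $C$ dropped. The key simplification comes from observing that the hypergeometric factor collapses to a single Pochhammer ratio: since $\frac{(1)_k(s)_k}{(1+s)_k\,k!}=\frac{(s)_k}{(1+s)_k}=\frac{s}{s+k}$, Definition \ref{def:pFq_function} gives
\[
\pFq{}{}{1,s}{1+s}{u}=s\sum_{k=0}^\infty\frac{u^k}{s+k},
\]
valid for $|u|<1$ and extended by analytic continuation. Substituting $u=x\frac{1-t}{1-xt}$ and absorbing the prefactor $(1-t)^s(1-xt)^{-s}$ into each term yields the power series
\[
R(t)=-\sum_{k=0}^\infty\frac{x^k}{s+k}\frac{(1-t)^{s+k}}{(1-xt)^{s+k}}.
\]

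First I would establish that this series, together with its series of termwise derivatives, converges locally uniformly in $t$ on any region where $\left|x\frac{1-t}{1-xt}\right|<1$, so that term-by-term differentiation is justified. Then I would differentiate a single term via the product and chain rules to obtain
\[
\frac{d}{dt}\frac{(1-t)^{s+k}}{(1-xt)^{s+k}}=-(s+k)(1-x)\frac{(1-t)^{s+k-1}}{(1-xt)^{s+k+1}},
\]
where the factor $(s+k)$ comes from the exponent and the factor $(1-x)$ emerges after collecting the numerator $-(1-xt)+x(1-t)=-(1-x)$.

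The crucial cancellation is that this $(s+k)$ annihilates the weight $\frac{1}{s+k}$ in $R(t)$. Carrying this through gives
\[
R'(t)=(1-x)(1-t)^{s-1}(1-xt)^{-s-1}\sum_{k=0}^\infty\left(\frac{x(1-t)}{1-xt}\right)^k,
\]
and the remaining geometric series sums to $\frac{1-xt}{1-x}$ because $1-\frac{x(1-t)}{1-xt}=\frac{1-x}{1-xt}$. The factors $(1-x)$ and $(1-xt)$ then telescope, leaving $R'(t)=(1-t)^{s-1}(1-xt)^{-s}$, exactly the integrand; this proves the identity up to the additive constant $C$.

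The one technical point deserving care, such as it is, is justifying the interchange of summation and differentiation, i.e. the local uniform convergence of the differentiated series on the domain where the hypergeometric argument lies in the unit disk; away from the singular locus $t=1$, $t=1/x$, and the branch set this is routine dominated-convergence bookkeeping, and the identity then extends by analytic continuation in $x$ and $t$ wherever both sides are defined. An alternative route, bypassing the series, is to differentiate $R(t)=-\tfrac{1}{s}(1-t)^s(1-xt)^{-s}\pFq{}{}{1,s}{1+s}{u}$ directly by the product rule, using $\frac{d}{du}\pFq{}{}{1,s}{1+s}{u}=\frac{s}{1+s}\pFq{}{}{2,1+s}{2+s}{u}$ together with a contiguous relation to reduce the two resulting hypergeometric terms to the integrand; the series computation is, however, more transparent and self-contained.
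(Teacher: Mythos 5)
Your proof is correct, but it proceeds in the opposite direction from the paper. You \emph{verify} the identity by differentiating the right-hand side: expanding via the collapsed Pochhammer ratio $\tfrac{(1)_k(s)_k}{(1+s)_k\,k!}=\tfrac{s}{s+k}$, noting that the factor $(s+k)$ produced by differentiating $\bigl((1-t)/(1-xt)\bigr)^{s+k}$ exactly cancels the weight $\tfrac{1}{s+k}$, and summing the resulting geometric series. The paper instead \emph{derives} the antiderivative: it substitutes $u=(1-t)/(1-xt)$, which turns the integral into $-\int u^{s-1}(1-xu)^{-1}\,\mathrm du$, and then a scaling substitution puts this into the Euler integral representation of Relation \ref{rel:Hyper2F1_integral} with parameters $(a,b,c)=(1,s,1+s)$, yielding the closed form directly. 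Your route buys self-containedness and transparency — it needs only the series definition of $\pFq{}{}{1,s}{1+s}{u}$ and elementary calculus, and the key cancellation is laid bare — at the cost of presupposing the answer and of the (routine, as you note) justification of termwise differentiation on $\lvert x(1-t)/(1-xt)\rvert<1$ followed by analytic continuation. The paper's route is constructive and shorter, avoids series-convergence bookkeeping altogether by working under the hypotheses of the Euler representation ($\Re c>\Re b>0$, which here is just $\Re s>0$ — the regime actually used downstream in Theorem \ref{thm:hyper2f1_c_derivative_bounding_functions}), but leans on the tabulated integral representation rather than first principles. Your closing remark about differentiating via $\tfrac{\mathrm d}{\mathrm du}F(1,s;1+s;u)$ plus a contiguous relation is also viable but, as you say, less transparent; the series computation you chose is the cleaner variant of the verification strategy.
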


\begin{proof}
Let $I$ denote the integral in question. Substituting $s=(1-t)/(1-xt)$ yields
\[
I=-\int(1-xs)^{-1} s^{c-1}\,\mathrm ds=-\int_0^s(1-xz)^{-1} z^{c-1}\,\mathrm dz.
\]
Another substitution of $z=sw$ subsequently allows us to write the resulting integral in the form of the integral representation for the hypergeometric function in Relation \ref{rel:Hyper2F1_integral}. The proof is now complete.
\end{proof}


\begin{lemma}[{\cite[Lem.~$2.3$]{anderson_1995}}]
\label{lem:balanced_hypergeometric_bound}
For $\alpha,\beta\in\Bbb R^+$ and $x\in[0,1]$
\[
F(\alpha,\beta;\alpha+\beta;x)\leq 1-\frac{1}{\operatorname{B}(\alpha,\beta)}\log(1-x).
\]
\end{lemma}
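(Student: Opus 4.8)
The plan is to reduce the global inequality to a pointwise bound on the derivative and then integrate. Writing $f(x)=F(\alpha,\beta;\alpha+\beta;x)$, I would start from $f(0)=1$ together with the differentiation formula $f'(x)=\frac{\alpha\beta}{\alpha+\beta}F(\alpha+1,\beta+1;\alpha+\beta+1;x)$ (the same contiguous relation $\partial_xF(a,b;c;x)=\frac{ab}{c}F(a+1,b+1;c+1;x)$ already used in the proof of Lemma \ref{lem:hyper2f1_bounding_functions}), so that $f(x)=1+\int_0^x f'(u)\,\mathrm du$. Since $-\log(1-x)=\int_0^x(1-u)^{-1}\,\mathrm du$, it suffices to prove the pointwise estimate $f'(u)\le \frac{1}{\operatorname B(\alpha,\beta)(1-u)}$ for every $u\in[0,1)$ and then integrate both sides from $0$ to $x$; the monotone form of $-\log(1-x)$ makes the integrated inequality immediate.

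The next step is to recast this pointwise estimate in a form the available transformation theory can handle. Using the elementary Beta-function identity $\frac{\alpha\beta}{\alpha+\beta}\operatorname B(\alpha,\beta)=\operatorname B(\alpha+1,\beta+1)$ (immediate from $\operatorname B(s,z)=\Gamma(s)\Gamma(z)/\Gamma(s+z)$ and $s\Gamma(s)=\Gamma(s+1)$), the desired bound is equivalent to $(1-u)F(\alpha+1,\beta+1;\alpha+\beta+1;u)\le \operatorname B(\alpha+1,\beta+1)^{-1}$. The key maneuver is then to apply the Euler transformation $(\mathrm{iii})$ of $(\ref{eq:hyper_2F1_xForms})$ with $a=\alpha+1$, $b=\beta+1$, $c=\alpha+\beta+1$: since $c-a-b=-1$, $c-a=\beta$, and $c-b=\alpha$, this yields $F(\alpha+1,\beta+1;\alpha+\beta+1;u)=(1-u)^{-1}F(\alpha,\beta;\alpha+\beta+1;u)$, so the troublesome factor $(1-u)$ cancels and the claim collapses to $F(\alpha,\beta;\alpha+\beta+1;u)\le \operatorname B(\alpha+1,\beta+1)^{-1}$.

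Finally I would close the argument exactly as in Lemma \ref{lem:hyper2f1_bounding_functions}: the series for $F(\alpha,\beta;\alpha+\beta+1;u)$ has strictly positive coefficients (all Pochhammer symbols carry positive arguments), so it is increasing in $u$ on $[0,1)$, and because its parametric excess $c-a-b=1$ is positive it converges at $u=1$ with value, by Gauss's theorem (Relation \ref{rel:2F1_zeq1}), $F(\alpha,\beta;\alpha+\beta+1;1)=\frac{\Gamma(\alpha+\beta+1)}{\Gamma(\alpha+1)\Gamma(\beta+1)}=\operatorname B(\alpha+1,\beta+1)^{-1}$. Hence $F(\alpha,\beta;\alpha+\beta+1;u)\le \operatorname B(\alpha+1,\beta+1)^{-1}$ on all of $[0,1]$, which gives the pointwise derivative bound and, after integration, the stated inequality $F(\alpha,\beta;\alpha+\beta;x)\le 1-\frac{1}{\operatorname B(\alpha,\beta)}\log(1-x)$.

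I expect the main obstacle to be spotting the correct reduction rather than any delicate estimate: the naive bound $(1-ut)^{-\alpha-1}\le(1-u)^{-\alpha-1}$ coming from the Euler integral representation is far too lossy (it produces $(1-u)^{-\alpha-1}$ instead of $(1-u)^{-1}$), and one must instead see that transformation $(\mathrm{iii})$ is exactly what turns $(1-u)F(\alpha+1,\beta+1;\alpha+\beta+1;u)$ into the convergent function $F(\alpha,\beta;\alpha+\beta+1;u)$. This is also why the derivative bound is asymptotically sharp, both $f'(x)$ and $\frac{1}{\operatorname B(\alpha,\beta)(1-x)}$ behaving like $\frac{1}{\operatorname B(\alpha,\beta)}(1-x)^{-1}$ as $x\to 1^-$. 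Everything after the reduction is routine: one Beta identity, a term-by-term monotonicity observation, and a single application of Gauss's summation.
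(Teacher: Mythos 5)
Your argument is sound, and since the paper itself supplies no proof of this lemma (it is imported verbatim as Lemma~2.3 of \cite{anderson_1995}), yours is a genuinely self-contained derivation. It is, moreover, essentially the classical route: reduce to a pointwise derivative bound via $f(x)=1+\int_0^x f'(u)\,\mathrm du$, use the Euler transformation $(\mathrm{iii})$ of $(\ref{eq:hyper_2F1_xForms})$ with parametric excess $c-a-b=-1$ to absorb the factor $(1-u)$, and close with monotonicity of the positive-coefficient series $F(\alpha,\beta;\alpha+\beta+1;u)$ together with Gauss's summation (Relation~\ref{rel:2F1_zeq1}), which is legitimate here because the excess of the transformed function is $1>0$. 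The derivative formula you invoke matches the one used in the paper's proof of Lemma~\ref{lem:hyper2f1_bounding_functions}, and your sharpness remark at $x\to 1^-$ is accurate.

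One slip should be fixed, though it happens to be harmless: the ``elementary Beta-function identity'' $\frac{\alpha\beta}{\alpha+\beta}\operatorname B(\alpha,\beta)=\operatorname B(\alpha+1,\beta+1)$ is false. The correct statement is
\[
\frac{\alpha\beta}{\alpha+\beta}\operatorname B(\alpha,\beta)=\frac{\Gamma(\alpha+1)\Gamma(\beta+1)}{\Gamma(\alpha+\beta+1)}=(\alpha+\beta+1)\operatorname B(\alpha+1,\beta+1),
\]
as the test $\alpha=\beta=1$ shows ($\tfrac 12$ versus $\operatorname B(2,2)=\tfrac 16$). Consequently the constant your reduction actually requires is $\Gamma(\alpha+\beta+1)/(\Gamma(\alpha+1)\Gamma(\beta+1))$, not $\operatorname B(\alpha+1,\beta+1)^{-1}$; and in your final display, the evaluation $F(\alpha,\beta;\alpha+\beta+1;1)=\frac{\Gamma(\alpha+\beta+1)}{\Gamma(\alpha+1)\Gamma(\beta+1)}$ is right while the further identification with $\operatorname B(\alpha+1,\beta+1)^{-1}$ is wrong by the same factor $\alpha+\beta+1$. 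Because you attach the same mislabel to the same quantity on both sides of the reduction, the two errors cancel and the chain still closes:
\[
f'(u)=\frac{\alpha\beta}{\alpha+\beta}\,\frac{F(\alpha,\beta;\alpha+\beta+1;u)}{1-u}\leq\frac{\alpha\beta}{\alpha+\beta}\cdot\frac{\Gamma(\alpha+\beta+1)}{\Gamma(\alpha+1)\Gamma(\beta+1)}\cdot\frac{1}{1-u}=\frac{1}{\operatorname B(\alpha,\beta)(1-u)},
\]
and integrating from $0$ to $x$ gives the lemma. Replace the two occurrences of $\operatorname B(\alpha+1,\beta+1)^{-1}$ by the Gamma ratio above and the proof is complete as written.
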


We now present the 

\begin{theorem}
\label{thm:hyper2f1_c_derivative_bounding_functions}
For $\alpha\in\Bbb R$, $\beta\in\Bbb R^+$, $x\in[0,1]$, and $n\in\Bbb N_0$:
\[
|\partial_\beta^nF(\alpha,\alpha;\beta;x)|\leq \upsilon_n(\alpha,\beta,x),
\]
where
\[
\upsilon_n(\alpha,\beta,x)=%
\begin{cases}
\frac{n!}{\operatorname B(|\alpha|,|\alpha|)}\frac{1}{(\beta-2\alpha\mathds 1_{\alpha>0})^{n+1}}+\mathds 1_{n=0}, &\beta>2\alpha\\[1ex]
\frac{n!}{\operatorname B(\alpha,\alpha)}(\frac{1}{2\alpha}-\log(1-x))^{n+1}+\mathds 1_{n=0}, &\beta=2\alpha\\[1ex]
\frac{n!}{\operatorname B(\alpha,\alpha)}(1-x)^{\beta-2\alpha}(\frac{1}{\beta}-\log(1-x))^{n+1}+\mathds 1_{n=0}, &\beta<2\alpha.
\end{cases}
\]
\end{theorem}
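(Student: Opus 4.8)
The plan is to start from the integral representation of Lemma~\ref{lem:2F1_c_derivative_order_n_integral_rep}, which with both top parameters equal to $\alpha$ and $\gamma=\beta$ reads
\[
\partial_\beta^n F(\alpha,\alpha;\beta;x)=\int_0^1\log^n(1-t)\,(1-t)^{\beta-1}\partial_t F(\alpha,\alpha;1;xt)\,\mathrm dt+\mathds 1_{n=0}.
\]
The series for $F(\alpha,\alpha;1;u)$ has coefficients $(\alpha)_k^2/(k!)^2\ge 0$, so $\partial_t F(\alpha,\alpha;1;xt)\ge 0$ on $[0,1]$; since $\log^n(1-t)$ has constant sign $(-1)^n$ there and $(1-t)^{\beta-1}>0$, taking absolute values collapses everything into the clean starting inequality $|\partial_\beta^n F(\alpha,\alpha;\beta;x)|\le\int_0^1|\log(1-t)|^n(1-t)^{\beta-1}\partial_t F(\alpha,\alpha;1;xt)\,\mathrm dt+\mathds 1_{n=0}$. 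The whole proof is then a matter of bounding the factor $\partial_t F(\alpha,\alpha;1;xt)$ and evaluating the resulting $t$-integral, and it is exactly the fact that we keep $(1-t)^{\beta-1}$ untouched that avoids the loss of the $\beta$-factor incurred in Theorem~\ref{thm:2F1_c_derivative_order_n_bound}.

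The key pointwise estimate is $\partial_u F(\alpha,\alpha;1;u)\le B(|\alpha|,|\alpha|)^{-1}(1-u)^{-2\alpha\mathds 1_{\alpha>0}}$. I would obtain it from the derivative relation $\partial_u F(\alpha,\alpha;1;u)=\alpha^2 F(\alpha+1,\alpha+1;2;u)$ (the same relation used in Lemma~\ref{lem:hyper2f1_bounding_functions}). For $\alpha>0$, apply transformation $(\mathrm{iii})$ of~(\ref{eq:hyper_2F1_xForms}) to rewrite this as $\alpha^2(1-u)^{-2\alpha}F(1-\alpha,1-\alpha;2;u)$; since $(1-\alpha)_k^2\ge 0$ the trailing ${}_2F_1$ is nondecreasing and bounded by its value at $u=1$, which Relation~\ref{rel:2F1_zeq1} evaluates to $\Gamma(2\alpha)/(\alpha^2\Gamma(\alpha)^2)$, leaving the constant $B(\alpha,\alpha)^{-1}$. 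For $\alpha\le 0$ no transformation is needed: $F(\alpha+1,\alpha+1;2;u)$ again has nonnegative coefficients and a finite value at $u=1$, and the same evaluation produces the constant $B(|\alpha|,|\alpha|)^{-1}$ with no $(1-u)$ factor.

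With this in hand I would substitute $\partial_t F(\alpha,\alpha;1;xt)=x\,\partial_uF(\alpha,\alpha;1;u)|_{u=xt}$ and perform the change of variable $w=-\log(1-t)$, which sends the $t$-integral to $\int_0^\infty w^n e^{-\beta w}(\cdots)\,\mathrm dw$. For $\alpha\le 0$ (where necessarily $\beta>2\alpha$) the bracket is $1$ and $\int_0^\infty w^n e^{-\beta w}\,\mathrm dw=n!/\beta^{n+1}$, which together with $x\le 1$ gives the first case. For $\alpha>0$ the bracket is $((1-x)+xe^{-w})^{-2\alpha}$, for which two elementary bounds are available: $((1-x)+xe^{-w})^{-2\alpha}\le e^{2\alpha w}$ (from $(1-x)+xe^{-w}\ge e^{-w}$) and $\le(1-x)^{-2\alpha}$ (from $(1-x)+xe^{-w}\ge 1-x$). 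When $\beta>2\alpha$ the first bound alone yields $n!/(\beta-2\alpha)^{n+1}$. When $\beta\le 2\alpha$ I would split the integral at the threshold $w^\star=-\log(1-x)$ where the two bounds cross, using $e^{2\alpha w}$ on $[0,w^\star]$ and $(1-x)^{-2\alpha}$ on $[w^\star,\infty)$; after shifting $s=w-w^\star$ in the tail, the factor $e^{-(2\alpha-\beta)w^\star}=(1-x)^{\beta-2\alpha}$ falls out, and a term-by-term comparison against the binomial expansion of $n!\,(\beta^{-1}-\log(1-x))^{n+1}$ (via the elementary inequality $\binom{n+1}{j+1}(n-j)!\ge 1$) closes the $\beta=2\alpha$ and $\beta<2\alpha$ cases simultaneously.

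The hard part will be precisely these borderline cases $\beta\le 2\alpha$: getting the logarithmic (respectively power-law) blow-up at $x=1$ to come out with the exact shape $(\beta^{-1}-\log(1-x))^{n+1}$ and the correct prefactor $(1-x)^{\beta-2\alpha}$ requires the sharp threshold split at $w^\star$ rather than any single global bound on $((1-x)+xe^{-w})^{-2\alpha}$. As a sanity check I would verify that at $n=0$ the bound reduces (up to the harmless additive $\mathds 1_{n=0}$) to Lemma~\ref{lem:hyper2f1_bounding_functions}; moreover, for $\beta=2\alpha$ the borderline integral $\int(1-t)^{2\alpha-1}(1-xt)^{-2\alpha}\,\mathrm dt$ can alternatively be evaluated in closed form by Lemma~\ref{lem:indefinite_hypergeom_integral} and then controlled through Anderson's estimate (Lemma~\ref{lem:balanced_hypergeometric_bound}), which is consistent with, and cross-checks, the threshold-splitting argument above.
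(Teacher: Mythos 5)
Your proposal is correct, and its first half coincides with the paper's proof: the same integral representation from Lemma \ref{lem:2F1_c_derivative_order_n_integral_rep}, the same sign observations, and the same pointwise bound $\alpha^2 F(\alpha+1,\alpha+1;2;u)\leq \operatorname B(|\alpha|,|\alpha|)^{-1}(1-u)^{-2\alpha\mathds 1_{\alpha>0}}$ obtained via the Euler transformation for $\alpha>0$ and direct evaluation at $u=1$ for $\alpha\leq 0$; this settles all cases with $\beta>2\alpha$ identically (the paper's estimate $(1-xt)^{-2\alpha}\leq(1-t)^{-2\alpha}$ is precisely the $t$-variable form of your $e^{2\alpha w}$ bound). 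Where you genuinely diverge is the hard regime $\beta\leq 2\alpha$. The paper first extracts $(1-x)^{\beta-2\alpha}$ via $(1-xt)^{-2\alpha}\leq(1-x)^{\beta-2\alpha}(1-xt)^{-\beta}$ and then attacks $I_n=\int_0^1(-1)^n\log^n(1-t)(1-t)^{\beta-1}(1-xt)^{-\beta}\,\mathrm dt$ by repeated integration by parts, using the closed-form antiderivative of Lemma \ref{lem:indefinite_hypergeom_integral} and Anderson's estimate (Lemma \ref{lem:balanced_hypergeometric_bound}) to bound the resulting $F(1,\beta;1+\beta;\cdot)$ factor by $1-\beta\log(1-x)$; this yields the recursion $I_n\leq\tfrac{n}{\beta}(1-\beta\log(1-x))I_{n-1}$ and hence $I_n\leq\tfrac{n!}{\beta^{n+1}}(1-\beta\log(1-x))^{n+1}$, which is exactly your target $n!\,(\beta^{-1}-\log(1-x))^{n+1}$ after factoring. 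Your threshold-split argument — substituting $w=-\log(1-t)$, using $e^{2\alpha w}$ on $[0,w^\star]$ and $(1-x)^{-2\alpha}$ on $[w^\star,\infty)$ with $w^\star=-\log(1-x)$, shifting the tail, and closing with $\binom{n+1}{j+1}\geq 1/(n-j)!$ — checks out: the head contributes $(w^\star)^{n+1}/(n+1)$ against the target coefficient $n!$, and the shifted tail contributes $\binom{n}{j}j!\,\beta^{-j-1}(w^\star)^{n-j}$ against $n!\binom{n+1}{j+1}\beta^{-j-1}(w^\star)^{n-j}$, so the comparison goes through for both $\beta<2\alpha$ and $\beta=2\alpha$ at once. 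What your route gives up is the paper's clean one-line recursion; what it buys is complete elementarity, dispensing with Lemmas \ref{lem:indefinite_hypergeom_integral} and \ref{lem:balanced_hypergeometric_bound}, which exist in the paper essentially only to serve this proof. One small caveat: your sanity check that the $n=0$ bound ``reduces to'' Lemma \ref{lem:hyper2f1_bounding_functions} is loose — for $\beta\leq 2\alpha$ your $n=0$ bound carries an extra factor $(\beta^{-1}-\log(1-x))$ plus the additive $\mathds 1_{n=0}$, so it majorizes that lemma's bound in shape but does not coincide with it; this does not affect the validity of the argument.
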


\begin{proof}
We begin with the integral representation of Lemma \ref{lem:2F1_c_derivative_order_n_integral_rep} and write
\[
|\partial_\beta^nF(\alpha,\alpha;\beta;x)|\leq\alpha^2(-1)^n\int_0^1\log^n(1-t)\,(1-t)^{\beta-1} F(\alpha+1,\alpha+1;2;xt)\,\mathrm dt+\mathds 1_{n=0}.
\]
Each case will now be proven from this integral representation.

\begin{enumerate}
\item[$(1)$] $\beta>2\alpha$.

\begin{subproof}
The case $\beta>2\alpha$ must itself be broken down into three subcases: $\alpha<0$, $\alpha=0$, and $\alpha>0$. If $\alpha<0$ then we have from Lemma \ref{lem:hyper2f1_bounding_functions} that
\[
F(\alpha+1,\alpha+1;2;xt)\leq\frac{1}{\alpha^2\operatorname B(-\alpha,-\alpha)};
\]
hence,
\[
\begin{aligned}
|\partial_\beta^nF(\alpha,\alpha;\beta;x)|%
&\leq\frac{(-1)^n}{\operatorname B(-\alpha,-\alpha)}\int_0^1\log^n(1-t)\,(1-t)^{\beta-1}\,\mathrm dt+\mathds 1_{n=0}\\
&=\frac{n!}{\operatorname B(-\alpha,-\alpha)}\frac{1}{\beta^{n+1}}+\mathds 1_{n=0}.
\end{aligned}
\]
For $\alpha=0$ we have the trivial calculation
\[
|\partial_\beta^nF(0,0;\beta;x)|=\mathds 1_{n=0}.
\]
Now if $\alpha>0$ we again call on Lemma \ref{lem:hyper2f1_bounding_functions} to deduce
\[
F(\alpha+1,\alpha+1;2;xt)=(1-xt)^{-2\alpha}F(1-\alpha,1-\alpha;2;xt)\leq\frac{(1-t)^{-2\alpha}}{\alpha^2\operatorname B(\alpha,\alpha)}.
\]
It follows that
\[
\begin{aligned}
|\partial_\beta^nF(\alpha,\alpha;\beta;x)|%
&\leq\frac{(-1)^n}{\operatorname B(\alpha,\alpha)}\int_0^1\log^n(1-t)\,(1-t)^{\beta-2\alpha-1}\,\mathrm dt+\mathds 1_{n=0}\\
&=\frac{n!}{\operatorname B(\alpha,\alpha)}\frac{1}{(\beta-2\alpha)^{n+1}}+\mathds 1_{n=0}.
\end{aligned}
\]
Combining these three results yields the desired form for the case $\beta>2\alpha$.
\end{subproof}


\item[$(2)$] $\beta\leq 2\alpha$.

\begin{subproof}
First observe that $\beta>0\land\beta\leq2\alpha\implies\alpha>0$ and so we have from Lemma \ref{lem:hyper2f1_bounding_functions}
\[
F(\alpha+1,\alpha+1;2;xt)\leq\frac{(1-xt)^{-2\alpha}}{\alpha^2\operatorname B(\alpha,\alpha)}\leq \frac{(1-x)^{\beta-2\alpha}(1-xt)^{-\beta}}{\alpha^2\operatorname B(\alpha,\alpha)}
\]
and
\[
|\partial_\beta^nF(\alpha,\alpha;\beta;x)|%
\leq\frac{(1-x)^{\beta-2\alpha}}{\operatorname B(\alpha,\alpha)}I_n+\mathds 1_{n=0},
\]
where
\[
I_n=\int_0^1(-1)^n\log^n(1-t)\,(1-t)^{\beta-1}(1-xt)^{-\beta}\,\mathrm dt.
\]
Performing integration by parts with $u=(-1)^n\log^n(1-t)$ and $\mathrm dv=(1-t)^{\beta-1}(1-xt)^{-\beta}\,\mathrm dt$ we use Lemma \ref{lem:indefinite_hypergeom_integral} to write
\begin{multline*}
I_n=-\frac{(-1)^n}{\beta}\log^n(1-t)(1-t)^\beta(1-xt)^{-\beta}\pFq{}{}{1,\beta}{1+\beta}{x\frac{1-t}{1-xt}}\bigg|_{t=0}^1\\
+\frac{n}{\beta}\int_0^1(-1)^{n-1}\log^{n-1}(1-t)(1-t)^{\beta-1}(1-xt)^{-\beta}\pFq{}{}{1,\beta}{1+\beta}{x\frac{1-t}{1-xt}}\,\mathrm dt.
\end{multline*}
Evaluating the limit term gives
\begin{equation}
\label{eq:uv_term_cases}
uv|_{t=0}^1=%
\begin{cases}
\frac{1}{\beta}F(1,\beta;1+\beta;x), &n=0\\
0, &n\in\Bbb N
\end{cases}
\end{equation}
with the latter case being due to $\log^n(1-t)(1-t)^\beta\to 0$ in both limits. Furthermore, calling in Lemma \ref{lem:balanced_hypergeometric_bound} we find
\[
\pFq{}{}{1,\beta}{1+\beta}{x\frac{1-t}{1-xt}}\leq 1-\beta\log\left(1-\frac{x(1-t)}{1-xt}\right)\leq 1-\beta\log(1-x)
\]
Hence, $I_n\leq\frac{n}{\beta}(1-\beta\log(1-x))I_{n-1}$ and
\[
I_n\leq\frac{n!}{\beta^n}(1-\beta\log(1-x))^nI_0.
\]
Note  that $I_0$ is given by the $n=0$ case of $(\ref{eq:uv_term_cases})$ and $I_0\leq\frac{1}{\beta}(1-\beta\log(1-x))$. These observations lead us to conclude
\[
|\partial_\beta^nF(\alpha,\alpha;\beta;x)|%
\leq\frac{(1-x)^{\beta-2\alpha}}{\operatorname B(\alpha,\alpha)}\frac{n!}{\beta^{n+1}}(1-\beta\log(1-x))^{n+1}+\mathds 1_{n=0},
\]
which is the final result for the $\beta<2\alpha$ case. Substituting $\beta\mapsto 2\alpha$ into the r.h.s.~of the inequality then gives the desired result for the remaining case of $\beta=2\alpha$. The proof is now complete.
\end{subproof}


\end{enumerate}
\end{proof}

\bibliographystyle{plain}
\bibliography{mybibfile}

\end{document}